\newtheorem{thm}{Theorem}[section]
\newtheorem{lem}[thm]{Lemma}
\newtheorem{prop}[thm]{Proposition}
\newtheorem{cor}[thm]{Corollary}
\theoremstyle{definition}
\newtheorem{NN}[thm]{}
\theoremstyle{definition}
\newtheorem{df}[thm]{Definition}
\theoremstyle{definition}
\newtheorem{rem}[thm]{Remark}
\theoremstyle{definition}
\renewcommand{\phi}{\varphi}
\newcommand{\N}{\mathbb{N}}
\newcommand{\Z}{\mathbb{Z}}
\newcommand{\Q}{\mathbb{Q}}
\newcommand{\R}{\mathbb{R}}
\newcommand{\C}{\mathbb{C}}
\newcommand{\T}{\mathbb{T}}
\numberwithin{equation}{section}
\newcommand{\Aff}{\operatorname{Aff}}
\newcommand{\id}{\operatorname{id}}
\newcommand{\aff}{\rm aff}
\newcommand{\cpc}{completely positive contractive linear map}
\newcommand{\morp}{contractive completely positive linear map}
\newcommand{\hm}{homomorphism}
\newcommand{\dt}{\delta}
\newcommand{\ep}{\varepsilon}
\newcommand{\ld}{\lambda}
\newcommand{\cd}{\cdots}
\newcommand{\qq}{{\quad \quad}}
\newcommand{\gm}{\gamma}
\newcommand{\sm}{\sigma}
\newcommand{\la}{\langle}
\newcommand{\ra}{\rangle}
\newcommand{\andeqn}{\,\,\,{\rm and}\,\,\,}
\newcommand{\rforal}{\,\,\,{\rm for\,\,\,all}\,\,\,}
\newcommand{\CA}{$C^*$-algebra}
\newcommand{\SCA}{$C^*$-subalgebra}
\newcommand{\af}{{\alpha}}
\newcommand{\bt}{{\beta}}
\newcommand{\dist}{{\rm dist}}
\newcommand{\one}{{\bf 1}}
\newcommand{\diag}{{\rm diag}}
\newcommand{\wilog}{without loss of generality}
\newcommand{\Wlog}{Without loss of generality}
\newcommand{\beq}{\begin{eqnarray}}
\newcommand{\eneq}{\end{eqnarray}}
\newcommand{\tforal}{\,\,\,\text{for\,\,\,all}\,\,\,}
\newcommand{\tand}{\,\,\,\text{and}\,\,\,}
\newcommand{\zo}{{\cal Z}_0}
\newcommand{\p}{\mathfrak{p}}
\newcommand{\q}{\mathfrak{q}}
\title{On classification  of  non-unital  amenable  simple  C*-algebras, II}
\author{Guihua Gong and  Huaxin Lin
 }
\date{
}
\begin{document}

\maketitle

\begin{abstract}
We present a classification theorem for separable amenable simple stably projectionless \CA s
with generalized tracial rank one whose $K_0$ vanish on traces which satisfy the Universal Coefficient Theorem.
One of \CA s in the class
is denoted by ${\cal Z}_0$ which has  a unique tracial state,
$K_0(\zo)=\Z$   and $K_1(\zo)=\{0\}.$
 Let $A$ and $B$ be two separable simple $C^*$-algebras satisfying the UCT and have finite nuclear dimension.
We show that $A\otimes \zo\cong B\otimes \zo$ if and only if ${\rm Ell}({{A}}\otimes \zo)={\rm Ell}(B\otimes \zo).$
A class of simple separable $C^*$-algebras which are  approximately sub-homogeneous whose spectra having bounded
dimension is shown to exhaust all possible Elliott invariant for $C^*$-algebras of the form
$A\otimes \zo,$ where $A$ is any finite separable simple amenable $C^*$-algebras.
Suppose that  $A$ and $B$ are two finite separable simple $C^*$-algebras with finite nuclear dimension satisfying the UCT
such that
traces vanishe on  $K_0(A)$ and $K_0(B)$
 (but arbitrary $K_1$).  One consequence of the main results
in this situation is that $A\cong B$
if and only if $A$ and $B$ have the isomorphic Elliott invariant.
\end{abstract}

\section{Introduction}
Recently some  sweeping progresses have been made in the Elliott  program (\cite{Ellicm}),
the program of classification of  separable amenable \CA s by the Elliott invariant
(a $K$-theoretical set of invariant) (see \cite{GLN}, \cite{TWW} and \cite{EGLN}).
These are the results of decades of work by many  mathematicians (see also \cite{GLN},
\cite{TWW} and \cite{EGLN} for the historical discussion there).
These progresses   could be summarized  briefly as the following:
Two unital  finite separable simple \CA s $A$ and $B$ with finite nuclear dimension which satisfy
the UCT are isomorphic if and only if  their Elliott invariant ${\rm Ell}(A)$ and ${\rm Ell}(B)$ are
isomorphic.  Moreover, all  weakly unperforated Elliott invariant
can be achieved by a finite separable simple \CA s in UCT class with finite nuclear dimension
(In fact these can be constructed as so-called ASH-algebras--see \cite{GLN}).
Combining with the previous classification of purely infinite simple \CA s, results
of Kirchberg and Phillips (\cite{Pclass} and \cite{KP}), now all unital separable simple \CA s in the UCT class with
finite nuclear dimension are classified by the Elliott invariant.

This research  studies the non-unital cases.

Suppose that $A$ is a separable simple \CA.
In the case that $K_0(A)_+\not=\{0\},$ then $A\otimes {\cal K}$ has a non-zero projection, say
$p.$ Then $p(A\otimes K)p$ is unital.  Therefore if $A$ is in the UCT class and has finite
nuclear dimension, then $p(A\otimes {\cal K})p$ falls into the class of \CA s which has been classified.
Therefore isomorphism theorem  for  these \CA s  is an immediate
consequence of that in \cite{GLN} (see section 8.4 of \cite{Lncbms}) using
the stable isomorphism theorem of \cite{Br1}.

Therefore this paper considers the case that $K_0(A)_+=\{0\}.$
Simple \CA s with $K_0(A)_+=\{0\}$ are stably projectionless in the sense
that not only $A$ has no non-zero projections but $M_n(A)$ also has no non-zero
projections for every integer $n\ge 1.$   However, as one may see in this paper,
$K_0(A)$ could still exhaust any countable abelian groups as well as any
possible $K_1(A).$ In particular, the results in \cite{GLN} cannot be applied
in the stably projectionless case. It is entirely new situation.   If one views \CA\, theory
 as the study of non-commutative topological spaces,  then unital \CA s correspond to the compact
  spaces and non-unital ones correspond
to non-compact spaces. However, stably projectionless simple \CA s may be viewed as
non-commutative topological spaces which are not even locally compact. This causes
great difficulties. Different methods have to be developed.  In fact, the current paper 
is mostly independent of \cite{GLN}.

{{In \cite{eglnp1},}}
we introduce  a  class  of stably projectionless simple \CA s
${\cal D}$
(see \ref{DD0} below).
We also introduced
the notion of generalized tracial rank one for stably projectionless simple
\CA s. These are separable stably projectionless simple \CA s which are stably isomorphic to
\CA s in ${\cal D}$ (see \ref{DD0} below).  If $A$ is stably isomorphic to one in ${\cal D},$
we will write $gTR(A)\le 1.$
Some study of the structure of these \CA s were also presented in \cite{eglnp1}.
For example, among other things, we show that \CA s have stable rank one.
Let $A$ and $B$ be two stably projectionless simple amenable \CA s satisfy the UCT. Suppose that
$K_0(A)=K_1(A)=K_0(B)=K_1(B)=\{0\}.$  In the first part of this research {{(see \cite{eglnkk0}),}} we show
that $A\cong B$ if and only if  ${\rm Ell}(A)\cong  {\rm Ell}(B)$
(see \cite{eglnkk0}).
In {{this}} case  the Elliott invariant is reduced to ${\rm Ell}(A)=({\tilde T}(A), \Sigma_A)$ (see \ref{DEll} below).
Combining the above mentioned results,
this also gives a classification for separable stably finite projectionless simple \CA s with finite
nuclear dimension in the UCT class with trivial $K_i$-theory.

In  {{the current paper,}}
 we study the general case that $K$-theory of \CA s are non-trivial.
We give the following theorem:

\begin{thm}\label{TTT1}{\rm (see \ref{T1main})}
Let $A$ and $B$ be two separable simple amenable \CA s which satisfy the UCT.
Suppose that $gTR(A)\le 1$ and $gTR(B)\le 1$ and
$K_0(A)={\rm ker}\rho_A$ and $K_0(B)={\rm ker}\rho_B.$
Then $A\cong B$ if and only if
\beq
{\rm Ell}(A)\cong {\rm Ell}(B).
\eneq
\end{thm}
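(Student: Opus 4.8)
The plan is to follow the Elliott intertwining strategy in its "by approximation" form: reduce the classification to an existence theorem and a uniqueness theorem for maps between the relevant building blocks, and then use an approximate intertwining argument to produce the isomorphism. First I would invoke the range-of-invariant results (the ASH model theorem alluded to in the abstract, for $C^*$-algebras of the form $A\otimes\zo$): given $A$ and $B$ as in the statement, construct separable simple ASH algebras — in the class $\mathcal D$ with $gTR\le 1$ — whose Elliott invariants are $\mathrm{Ell}(A)$ and $\mathrm{Ell}(B)$ respectively, and whose $K_0$ vanishes on traces. By the first part of this research (the $K_i=0$ classification, together with the structure theory showing algebras with $gTR\le 1$ have stable rank one and the relevant regularity), it suffices to treat $A$ and $B$ themselves as inductive limits (or locally approximated) by such building blocks; so the real content is a pairwise comparison theorem for these models. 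Concretely, writing $A=\varinjlim(A_n,\iota_n)$ and $B=\varinjlim(B_n,\jmath_n)$ with $A_n,B_n$ in the model class, and assuming $\Phi\colon\mathrm{Ell}(A)\xrightarrow{\ \cong\ }\mathrm{Ell}(B)$, I would build compatible maps back and forth.

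The existence step is to realize, for each $n$ and each $\ep>0$ and each finite subset $\mathcal F\subset A_n$, a homomorphism $\phi_n\colon A_n\to B$ (mapping into some $B_m$ up to $\ep$ on $\mathcal F$) that induces, up to the allowed tolerance, the prescribed $KL$-datum together with the tracial-simplex and the Hausdorffized-$K_1$ (de la Harpe–Skandalis determinant) data coming from $\Phi$. Here the hypothesis $K_0(A)=\ker\rho_A$ is crucial: it means there is no order structure on $K_0$ to match (the positive cone is $\{0\}$), so the invariant that must be matched is essentially $(\underline K(A), T(A), \rho_A, \overline{K_1}(A))$ with the pairing, and the existence theorem becomes the stably-projectionless analogue of the existence results used in \cite{GLN}. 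I would derive it from the first-part classification (the $K_i=0$ case) by a mapping-torus / pullback bootstrapping: attach the $K$-theory and trace data via extensions built from $\zo$-like pieces, using that tensoring with $\zo$ is the identity on the relevant invariant and absorbs the $K_0=\ker\rho$ condition.

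The uniqueness step — which I expect to be the main obstacle — is an approximate-uniqueness (Basic Homotopy / "two maps agreeing on the invariant are approximately unitarily equivalent") theorem for homomorphisms from a building block $C$ in $\mathcal D$ into a $C^*$-algebra $D$ with $gTR(D)\le 1$: if $\phi,\psi\colon C\to D$ agree on $\underline K$, on traces, and on the Hausdorffized unitary group, then for every finite $\mathcal F$ and $\ep$ there is a unitary (in the unitization, or a multiplier) $u$ with $\|u\phi(c)u^*-\psi(c)\|<\ep$ for $c\in\mathcal F$. This is where stable projectionlessness bites hardest: one cannot cut down by projections, so the usual projection-pushing arguments must be replaced by arguments using that $D$ has stable rank one, strict comparison, and a suitable tracial approximation by the subalgebras in $\mathcal D$; the control of the de la Harpe–Skandalis determinant of $u$ (so that the construction can be iterated without drift in $\overline{K_1}$) is the delicate bookkeeping. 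Granting the existence and uniqueness theorems, the intertwining is routine: alternately apply existence to lift $\Phi$ and $\Phi^{-1}$ to $\phi_n\colon A_n\to B$ and $\psi_n\colon B_n\to A$, use uniqueness to make the triangles $\psi\circ\phi\approx\iota$ and $\phi\circ\psi\approx\jmath$ commute up to $\ep_n$ with $\sum\ep_n<\infty$ after small unitary perturbations, and pass to the limit to obtain $A\cong B$. The converse direction is immediate since $\mathrm{Ell}$ is a functor.
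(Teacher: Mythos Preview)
Your high-level architecture --- existence theorem, uniqueness theorem, Elliott intertwining --- is exactly what the paper does, and you correctly identify the Hausdorffized $K_1$/determinant bookkeeping as the delicate point. But your reduction step has a real gap: you write ``it suffices to treat $A$ and $B$ themselves as inductive limits (or locally approximated) by such building blocks; \ldots writing $A=\varinjlim(A_n,\iota_n)$''. Nothing in the hypotheses gives you this. The definition of $gTR\le 1$ only gives \emph{tracial} approximation by subalgebras in $\mathcal C_0'$ --- a small-remainder decomposition, not an honest inductive limit or local approximation by simple model algebras. Knowing that $A$ is (stably) isomorphic to an ASH model is the \emph{conclusion} of the classification, so you cannot invoke it at the outset. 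Likewise your proposed existence (``derive it from the $K_i=0$ case by a mapping-torus / pullback bootstrapping'') is too vague to carry the argument; it is not clear how such a construction would produce approximately multiplicative maps $A\to B$ realizing an arbitrary $\kappa\in KL(A,B)$ together with prescribed trace and determinant data.

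The paper avoids the inductive-limit reduction entirely. It builds genuine homomorphisms $A\to B$ and $B\to A$ directly, and the intertwining diagram has $\mathrm{id}_A$ and $\mathrm{id}_B$ as connecting maps. The existence theorem is obtained in three stages: (i) a functor $\mathcal E(A,-)$ of ``asymptotic sequential morphisms'' is constructed and shown (via Higson's abstract characterization of $KK$) to surject onto $KL(A,-)$, producing approximately multiplicative c.p.c.\ maps realizing any $\kappa\in KL(A,B)$; (ii) these are upgraded to match the tracial data; (iii) the determinant data is matched by first proving existence of homomorphisms into and out of the \emph{concrete} models $B_T$ of Section~6 (where the inductive-limit structure \emph{is} available and is used heavily), and then composing through $B_T$. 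Only one side of each existence step needs the explicit model; the other side needs only $A\in\mathcal D_0$. The uniqueness theorem (\ref{TUNIq}) is proved directly for algebras in $\mathcal D$, again without any inductive-limit hypothesis on the target, using the tracial approximation and a stable-uniqueness theorem from Part~I. So the role of the models is not to replace $A$ and $B$, but to serve as a bridge through which the determinant can be controlled.
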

%
Among all stably projectionless separable simple \CA s,   one particularly interesting one
is ${\cal W},$ a separable \CA\, with only one tracial state such that
$K_0({\cal W})=K_1({\cal W})=\{0\}.$ ${\cal W}$ is also an inductive limit of sub-homogeneous \CA s (see \cite{Raz}).
It was shown in the first part {{(\cite{eglnp1}
and \cite{eglnkk0})}} of this research that if $A$ is a separable simple \CA\, in the UCT class,
with finite nuclear dimension, with a unique tracial state and zero $K_i(A),$ then $A\cong {\cal W}.$

In this part of the research, another stably projectionless simple \CA\, $\zo$ with a unique tracial state
plays a prominent role. This \CA\, has the property that $K_0(\zo)=\Z$
 and $K_1(\zo)=\{0\}.$ As abelian groups,
$K_i(\zo)=K_i(\C),$ $i=0,1.$  Therefore, by {{the}} K\"unneth Formula, for any separable \CA\, $A,$
$K_i(A\otimes \zo)=K_i(A),$ as abelian group, $i=0,1.$ Moreover,  if the tracial state
space of $A$ is not empty, then
$T(A\otimes \zo)=T(A),$ since $\zo$ has only one tracial state.
As consequence of our main
results, $\zo\otimes \zo\cong \zo.$  Moreover, we show that
if $A$ is a separable simple \CA\, in the UCT class, with finite nuclear dimension,  unique tracial state,
$K_1(A)=\{0\}$ and $K_0(A)={\rm ker}\rho_A\cong \Z,$ then $A\cong \zo$ (see \ref{Czo}
below).
Therefore we are particularly interested in  $\zo$-stable \CA s, i.e., those \CA s with
the property that $A\otimes \zo\cong A.$

We prove the following theorem:

\begin{thm}\label{TTT3}{\rm (see \ref{Mclass3})}
Let $A$ and $B$ be two   separable simple \CA s with finite nuclear dimension which
satisfies the UCT.
Then $A\otimes \zo\cong B\otimes \zo$ if and only if
\beq
{\rm Ell}(A\otimes \zo)\cong {\rm Ell}(B\otimes \zo).
\eneq
\end{thm}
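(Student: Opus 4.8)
The plan is to deduce Theorem~\ref{TTT3} from Theorem~\ref{TTT1} by showing that for any separable simple \CA\, $A$ with finite nuclear dimension satisfying the UCT, the \CA\, $A\otimes\zo$ falls into the class classified by Theorem~\ref{TTT1}. First I would dispose of the ``only if'' direction, which is trivial: an isomorphism $A\otimes\zo\cong B\otimes\zo$ induces an isomorphism of Elliott invariants. For the ``if'' direction, the key reduction is to verify the two structural hypotheses of Theorem~\ref{TTT1} for $C:=A\otimes\zo$ and $D:=B\otimes\zo$: namely that $C$ and $D$ are separable simple amenable \CA s satisfying the UCT with $gTR(C)\le 1$, $gTR(D)\le 1$, and $K_0(C)={\rm ker}\,\rho_C$ (similarly for $D$). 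Separability, simplicity and amenability are clear (tensoring with the nuclear \CA\, $\zo$ preserves all of these, and simplicity of the tensor product follows since $\zo$ is simple and nuclear, hence the minimal tensor product is simple); the UCT is preserved under tensoring by a \CA\, in the UCT class. The vanishing of $K_0$ on traces, $K_0(C)={\rm ker}\,\rho_C$, follows from the K\"unneth formula computation $K_0(A\otimes\zo)=K_0(A)\otimes K_0(\zo)\oplus {\rm Tor}(K_1(A),K_1(\zo))=K_0(A)$ together with the fact that $A\otimes\zo$ is stably projectionless (since $\zo$ is), so that the pairing map $\rho_C:K_0(C)\to {\rm Aff}(T(C))$ must vanish because there are no nonzero projections in any matrix amplification to detect; more precisely, every element of $K_0(C)$ is represented by a formal difference of projections in $M_n(\widetilde C)$, and one shows the induced real-valued function on traces is identically zero using that $\zo$ is tracially small.

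The genuinely substantive point, which I expect to be the main obstacle, is establishing $gTR(A\otimes\zo)\le 1$, i.e.\ that $A\otimes\zo$ is stably isomorphic to a \CA\, in the class ${\cal D}$ of \ref{DD0}. This is where finite nuclear dimension of $A$ is essential. The strategy I would follow is: (i) first handle the case where $A$ itself is one of the model ASH-algebras (the ``exhaustion'' class of approximately subhomogeneous algebras with spectra of bounded dimension mentioned in the abstract) — for these, one can verify directly, using the explicit structure, that $A\otimes\zo$ has generalized tracial rank one, perhaps by writing $A\otimes\zo$ as an inductive limit with the right kind of building blocks; (ii) for a general separable simple $A$ with finite nuclear dimension in the UCT class, invoke the exhaustion theorem to find a model ASH-algebra $A'$ with ${\rm Ell}(A'\otimes\zo)\cong {\rm Ell}(A\otimes\zo)$; then apply Theorem~\ref{TTT1} to $A'\otimes\zo$ and $A\otimes\zo$ — but wait, this is circular unless we already know $gTR(A\otimes\zo)\le 1$. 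So the correct order is to prove $gTR(A\otimes\zo)\le 1$ directly. The cleanest route: since $A$ has finite nuclear dimension, $A\otimes\zo$ also has finite nuclear dimension, and $A\otimes\zo$ is $\zo$-stable (because $\zo\otimes\zo\cong\zo$, which is itself a consequence of the main results — so one must be careful about logical dependencies and perhaps prove $\zo\otimes\zo\cong\zo$ first as a special case, or take it as already established at this point in the paper). A $\zo$-stable separable simple \CA\, with finite nuclear dimension in the UCT class should, by the structural theory developed in Part I and this paper, satisfy $gTR\le 1$; I would cite the relevant classification-of-models / range-of-invariant results to identify $A\otimes\zo$ with an ASH model, or alternatively use a direct argument that $\zo$-stability plus finite nuclear dimension forces the tracial approximation by the building blocks of ${\cal D}$.

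Concretely, the proof I would write is short: assuming the reduction lemmas, given ${\rm Ell}(A\otimes\zo)\cong{\rm Ell}(B\otimes\zo)$, observe that $C=A\otimes\zo$ and $D=B\otimes\zo$ satisfy the hypotheses of Theorem~\ref{TTT1} (separable, simple, amenable, UCT, $gTR\le 1$, $K_0={\rm ker}\,\rho$), hence $C\cong D$, i.e.\ $A\otimes\zo\cong B\otimes\zo$. The whole content is therefore packaged into the preparatory lemmas, and the main obstacle is the lemma asserting $gTR(A\otimes\zo)\le 1$ for every separable simple \CA\, $A$ with finite nuclear dimension satisfying the UCT. I would prove that lemma by reducing to the $\zo$-stable case (automatic here), then using that a $\zo$-stable algebra absorbs the building blocks of ${\cal D}$: one produces, for each finite subset and tolerance, a \SCA\, of the form $C\otimes\zo$ with $C\in{\cal D}$ sitting appropriately inside, exploiting that $\zo$ is itself an inductive limit of such building blocks and that finite nuclear dimension gives the requisite almost-central approximations. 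An alternative, if the exhaustion theorem (Theorem in the abstract realizing all invariants of the form ${\rm Ell}(A'\otimes\zo)$ by ASH models with $gTR\le 1$) is available and independent, is: realize ${\rm Ell}(A\otimes\zo)$ by such a model $M$ with $gTR(M)\le 1$, apply Theorem~\ref{TTT1} to conclude nothing directly about $A\otimes\zo$ — so this still requires the direct $gTR$ estimate. Hence I would commit to the direct argument via $\zo$-stability and finite nuclear dimension as the backbone of the proof.
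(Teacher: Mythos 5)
Your reduction to Theorem~\ref{TTT1} is the right overall idea for the finite case, but the proposal has two genuine gaps. First, you never treat the infinite case, and your blanket claims there are false: if $A$ is a traceless (e.g.\ purely infinite) simple \CA\, with finite nuclear dimension, then $A\otimes\zo$ is a simple, ${\cal Z}$-stable, traceless \CA, hence purely infinite by R\o rdam's theorem; it is \emph{not} stably projectionless, has no traces, and certainly does not satisfy $gTR\le 1$, so Theorem~\ref{TTT1} cannot be invoked. The statement of Theorem~\ref{TTT3} allows such $A$ and $B$, so a case distinction is unavoidable: when ${\tilde T}(A)=\emptyset$ (equivalently, ${\tilde T}(A\otimes\zo)=\emptyset$, which by the isomorphism of invariants forces the same for $B$), both $A\otimes\zo$ and $B\otimes\zo$ are purely infinite and the conclusion follows from the Kirchberg--Phillips classification, which is exactly how the paper disposes of this case before assuming $A$ and $B$ finite.

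Second, even in the finite case your argument for $K_0(A\otimes\zo)={\rm ker}\rho_{A\otimes\zo}$ does not work: being stably projectionless does not make the pairing with traces vanish. The hypothesis $K_0={\rm ker}\rho$ in Theorem~\ref{TTT1} is a genuine restriction precisely because stably projectionless simple \CA s can have nonzero $\rho$ (elements of $K_0$ are represented by projections in matrices over the unitization, and traces can detect them). The correct mechanism, and the one the paper uses, is that tensoring with $\zo$ forces every tracial state (of a hereditary subalgebra with continuous scale) to be a $W$-trace (18.5 of \cite{GLp1}); since $K_*(A\otimes W)=\{0\}$ by the K\"unneth formula, such traces vanish on $K_0$, giving $K_0={\rm ker}\rho$ (18.3 of \cite{GLp1}; compare the argument in the proof of \ref{Mainrange}). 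The same $W$-trace statement, combined with finite nuclear dimension, is what yields the $gTR\le 1$ hypothesis (18.6 of \cite{GLp1}, cf.\ Theorem~\ref{TD0=fn}), so the step you correctly single out as the main obstacle is settled by these Part~I results rather than by a direct ``$\zo$-stability absorbs the building blocks of ${\cal D}$'' argument; your sketch of that step points in a plausible direction but is not a proof, and as written the chain of reductions does not close.
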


When $A$ and $B$ are infinite,  then  both $A\otimes \zo$ and
$B\otimes \zo$ are purely infinite simple. This case is covered by Kirchberg-Phillips
classification theorem (see \cite{KP} and \cite{Pclass}).

We also present models for \CA s stably isomorphic to \CA s in ${\cal D}.$
These model  \CA s are  locally approximated by sub-homogeneous \CA s whose spectra
have dimension no more than 3.
We show that these \CA s exhaust all possible  Elliott invariant for separable $\zo$-stable \CA s  as
stated as follows (see also \ref{Ctsang} below):

\begin{thm}\label{TTT4} {\rm (see \ref{Mainrange})}
Let $A$ be a finite separable simple amenable \CA. Then
there exists a stably projectionless simple \CA\, $B$ which is locally approximated
by sub-homogeneous \CA s  and which is stably isomorphic to a \CA\, in ${\cal D}$ such that
\beq
{\rm Ell}(A\otimes \zo)={\rm Ell}(B).
\eneq

\end{thm}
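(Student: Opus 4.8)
The plan is to realize the target Elliott invariant by an explicit inductive-limit construction. Given a finite separable simple amenable \CA\, $A$, the invariant ${\rm Ell}(A\otimes\zo)$ consists of the scaled ordered $K_0$-group (which by the K\"unneth formula is $K_0(A)$ with the order inherited from the pairing with $T(A\otimes\zo)=T(A)$, but note $K_0$ now vanishes on traces since $\zo$ kills the order), the group $K_1(A)$, the cone $\widetilde{T}(A\otimes\zo)$ (equivalently, the tracial cone of $A$, suitably normalized so that $\zo$-stability forces it to be the cone with no bounded-from-zero constraint apart from the projectionless structure), and the pairing $\rho$, which is zero on $K_0$. So the data to be matched is: a pair of countable abelian groups $(G_0,G_1)$, a metrizable Choquet-simplex-type cone $C$ (the cone over $T(A)$), and the trivial pairing. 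First I would verify that this abstract invariant lies in the range covered by the model algebras in ${\cal D}$.

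The key construction is to build $B$ as an inductive limit $B=\varinjlim (B_n,\psi_n)$ where each $B_n$ is a finite direct sum of sub-homogeneous building blocks — these should be the Elliott–Thomsen-type "point–line" algebras (mapping tori / dimension-drop-like algebras over finite CW complexes of dimension $\le 3$) already used in the part-I construction of ${\cal D}$ and in the $W$ and $\zo$ constructions. The three ingredients to arrange independently and then splice are: (i) the tracial cone — realize $\widetilde{T}(A)$ as an inverse limit of the simplex-valued data of the $B_n$, exactly as in the Razak/Tsang–Elliott construction of $W$-like algebras, choosing the connecting maps' "multiplicities" to be compatible with the traces; (ii) $K_1$ — since $K_1$ of a point–line algebra can be an arbitrary finitely generated group and we are free to choose the one-dimensional and two-dimensional strata, realize $K_1(A)$ as a direct limit $\varinjlim K_1(B_n)$ with the prescribed connecting homomorphisms (the spectra stay of dimension $\le 3$, indeed $\le 2$ suffices for $K_1$); (iii) $K_0$ — here we need $K_0(B)=\ker\rho_B=K_0(A)$ with the zero pairing, which is achieved by making the building blocks stably projectionless (so $K_0$ is entirely "infinitesimal") and arranging $\varinjlim K_0(B_n)=K_0(A)$ via the connecting maps; this is where using unital building blocks would fail and we genuinely need the projectionless point–line algebras. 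One then checks that $B$ is simple (diagonal/standard form connecting maps with eventually full multiplicities), separable, has the right local sub-homogeneous structure with spectra of dimension $\le 3$, and is stably isomorphic to an algebra in ${\cal D}$ (this is where the structural results of part I on ${\cal D}$ — stable rank one, the characterization of $gTR\le 1$ — are invoked). Finally one computes ${\rm Ell}(B)$ by continuity of $K$-theory and of the tracial functor under inductive limits and matches it with ${\rm Ell}(A\otimes\zo)$; since $\zo$-stabilizing does not change $K_i$ or $T$, it suffices to produce $B$ with ${\rm Ell}(B)\cong{\rm Ell}(A\otimes\zo)$ directly, but one should also observe $B\otimes\zo\cong B$ so that $B$ itself is the desired model.

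The main obstacle I anticipate is the simultaneous realization of all three invariants — group $K_0$, group $K_1$, and the cone of traces — by a single coherent system of connecting maps, while (a) keeping the dimension of the spectra uniformly bounded by $3$ and (b) maintaining stable projectionlessness and simplicity at every stage. Decoupling $K_0$ from the traces is the delicate point: the pairing must be identically zero, so the "sizes" of the summands as seen by traces must be governed entirely by the continuous-trace data and not leak into $K_0$; this forces a careful choice of the rank functions and of how the connecting maps act on the boundary $\C$-summands of the Elliott–Thomsen blocks. I expect the construction to follow the pattern of the range-of-invariant theorems in \cite{GLN} and the $W$-algebra constructions, adapted to the projectionless point–line setting of part I, with the bookkeeping lemma (matching a direct/inverse system of the three pieces of data, and then showing the resulting limit is in the right class) being the technical heart of the argument.
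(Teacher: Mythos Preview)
Your high-level plan is in the right spirit, but there is a genuine gap and a genuine difference from the paper's approach.

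\textbf{The gap.} You assert that ``$K_0$ now vanishes on traces since $\zo$ kills the order,'' and then proceed as if the target invariant is already known to be of the form $(G_0,G_1,\tilde T,\Sigma,\rho=0)$. But this is precisely the nontrivial content of the paper's proof of \ref{Mainrange}: one must \emph{show} that $K_0(A\otimes\zo)=\ker\rho_{A\otimes\zo}$ for an arbitrary finite separable simple amenable $A$. The paper's argument is not a K\"unneth computation on $K_0$ alone (which only gives the group, not the pairing). Instead, it constructs, via Robert's $Cu^{\sim}$ classification, a \hm\ $\phi_{z,w}\colon\zo\to W$ preserving strictly positive elements, hence $\psi={\rm id}_A\otimes\phi_{z,w}\colon A\otimes\zo\to A\otimes W$ with $t_W\circ\phi_{z,w}=t_Z$. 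Every tracial state of a continuous-scale hereditary subalgebra of $A\otimes\zo$ then factors through $A\otimes W$ at the level of $\rho$, and since $K_i(A\otimes W)=0$ by K\"unneth, one concludes $\rho=0$. Without this step your reduction to the abstract range problem is unjustified.

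\textbf{The construction difference.} Once the invariant is known to have $\rho=0$, the paper does \emph{not} build the model from Elliott--Thomsen point--line blocks directly as you propose. Instead, Section~6 constructs $B_T$ as an inductive limit of $B_n=W_n\oplus M_{(n!)^2}(A(W,\af_n))$, where $W_n$ is a Razak-type algebra carrying the trace simplex (with trivial $K$-theory) and $A(W,\af_n)$ is a mapping torus of a unital simple AH algebra $A$ (chosen with $K_0(A)=\Q\oplus G_0$, $K_1(A)=G_1$) into $Q\otimes Q$, carrying the $K$-theory. The connecting maps deliberately mix the two summands to force simplicity. The local sub-homogeneous structure with $3$-dimensional spectrum is then obtained \emph{a posteriori} (Remark~\ref{Zstable}) by approximating the AH algebra and $Q$ by finite-stage pieces. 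This sidesteps exactly the obstacle you correctly identify --- simultaneously arranging $K_0$, $K_1$, and the trace cone in a single system of finite-dimensional blocks --- by delegating each to a separate, already-understood model and gluing. Your direct approach is not obviously wrong, but it would amount to re-proving a projectionless range-of-invariant theorem from scratch, which is considerably more work than the paper's route.
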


Finally, we show that the condition that $A$ and $B$ have generalized tracial rank at most one  in Theorem \ref{TTT1} can be replaced by
finite nuclear dimension when traces vanishe on $K_0(A)$ and $K_0(B).$  In fact, we have the following:

\begin{thm}[see \ref{TMT} below]\label{TTT6}
Let $A$ and $B$ be two finite separable simple \CA s with finite nuclear dimension which satisfy the UCT.
Suppose that
%
$K_0(A)={\rm ker}\rho_A$ and $K_0(B)={\rm ker}\rho_B.$
Then $A\cong B$ if and only if ${\rm Ell}(A)\cong {\rm Ell}(B).$
\end{thm}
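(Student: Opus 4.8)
The plan is to reduce the statement to Theorem \ref{TTT1} by showing that, under the stated hypotheses, finite nuclear dimension together with the UCT and $K_0$ being torsion forces $gTR(A)\le 1$ and $gTR(B)\le 1$, and that the condition $K_0(A)=\ker\rho_A$ is automatic. First I would note that by a deep structural result on $\mathcal Z$-stability --- finite nuclear dimension implies $\mathcal Z$-stability for separable simple nuclear \CA s --- both $A$ and $B$ are $\mathcal Z$-stable; since $\zo$ is $\mathcal Z$-stable with $\zo\otimes\zo\cong\zo$ (a consequence of the main results quoted above) and $\zo$ absorbs $\mathcal Z$, one checks that $A\cong A\otimes\zo$ when $A$ is stably projectionless, or more precisely that it suffices to classify $A$ by first passing to $A\otimes\zo$. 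Second, because $K_0(A)$ is torsion, the pairing $\rho_A\colon K_0(A)\to\Aff(T(A))$ (or the appropriate pairing with traces in the stably projectionless setting) is automatically zero, since $\Aff(T(A))$ is torsion-free; hence $K_0(A)=\ker\rho_A$, and likewise for $B$. This is the easy part.

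The substantive step is to verify $gTR(A)\le 1$. Here I would invoke the classification-up-to-$\zo$-stabilization encoded in Theorems \ref{TTT3} and \ref{TTT4}: by Theorem \ref{TTT4} there is a stably projectionless simple \CA\ $B_0$, locally approximated by sub-homogeneous \CA s with spectra of bounded dimension and stably isomorphic to a \CA\ in $\mathcal D$, with ${\rm Ell}(A\otimes\zo)={\rm Ell}(B_0)$. Such $B_0$ has $gTR(B_0)\le 1$ by construction (it is stably isomorphic to a member of $\mathcal D$). Then Theorem \ref{TTT3} gives $A\otimes\zo\cong B_0\otimes\zo\cong B_0$ --- using once more that the model can be arranged $\zo$-stable, or that $\zo$-stabilizing a \CA\ with $gTR\le1$ preserves $gTR\le1$ --- so $A\otimes\zo$ itself has generalized tracial rank at most one. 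Combined with $A\cong A\otimes\zo$ (from $\mathcal Z$-stability and $\zo\otimes\zo\cong\zo$, in the stably projectionless case; in the case $K_0(A)_+\neq\{0\}$ one is in the already-classified unital situation after cutting down, as explained in the introduction), we get $gTR(A)\le1$, and similarly $gTR(B)\le1$.

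With $gTR(A)\le1$, $gTR(B)\le1$, $K_0(A)=\ker\rho_A$, $K_0(B)=\ker\rho_B$, and the UCT in hand, Theorem \ref{TTT1} applies verbatim: $A\cong B$ if and only if ${\rm Ell}(A)\cong{\rm Ell}(B)$. The forward implication is trivial since isomorphic \CA s have isomorphic Elliott invariants. One should also dispatch the infinite case separately --- if $A$ or $B$ is not finite, then being simple, separable, nuclear of finite nuclear dimension and satisfying the UCT, it is purely infinite (Kirchberg--Phillips territory, \cite{KP}, \cite{Pclass}), but the hypothesis of Theorem \ref{TTT6} restricts to finite \CA s, so this need not be treated; alternatively, note the torsion-$K_0$ hypothesis is compatible with the Kirchberg algebra classification and handled there.

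The main obstacle I anticipate is the precise bookkeeping in the reduction $A\cong A\otimes\zo$: one must confirm that for a \emph{finite} separable simple \CA\ with finite nuclear dimension and torsion $K_0$, $\zo$-absorption genuinely holds, which requires either that $A$ is already stably projectionless (so the tracial and $K$-theoretic data match those of $A\otimes\zo$ and uniqueness via Theorem \ref{TTT3}/\ref{TTT1} closes the loop) or that one passes through a projection cut-down to the unital classified class. Making this dichotomy airtight --- and in particular checking that the Elliott invariant of $A$ equals that of $A\otimes\zo$ so that Theorem \ref{TTT3} can be bootstrapped into an honest isomorphism $A\cong A\otimes\zo$ --- is where the care is needed; the rest is an assembly of the quoted theorems.
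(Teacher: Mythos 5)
There is a genuine gap, and it sits exactly at the point you flag as ``where the care is needed'': the claim $A\cong A\otimes\zo$. Your justification --- ``from $\mathcal Z$-stability and $\zo\otimes\zo\cong\zo$'' --- does not work. $\mathcal Z$-absorption does not imply $\zo$-absorption: $\zo$ is a non-unital, stably projectionless algebra, it is not strongly self-absorbing in the usual sense, and the fact that $\zo$ absorbs itself says nothing about whether $A$ absorbs it. In this paper $\zo$-absorption is itself a \emph{consequence} of classification: Corollary \ref{zoselfabsorbing} gives $A\otimes\zo\cong A$ only under the hypothesis $gTR(A)\le 1$ (plus $K_0=\ker\rho_A$), which is precisely what you are trying to establish. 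Likewise, Theorems \ref{T1main} and \ref{Mclass3} cannot be applied to $A$ itself: \ref{T1main} needs $gTR(A)\le 1$ as a hypothesis, and \ref{Mclass3} only compares $\zo$-stabilized algebras, so it can identify $A\otimes\zo$ with a model (your use of \ref{Mainrange} to see that $A\otimes\zo$ has $gTR\le 1$ is fine), but it gives no way to descend from $A\otimes\zo$ to $A$. As written, your argument is circular, and the Elliott-invariant bookkeeping ($K_*(A\otimes\zo)\cong K_*(A)$, $T(A\otimes\zo)=T(A)$) does not rescue it, because no quoted classification theorem has $A$ in its domain at that stage. The easy parts of your proposal (torsion $K_0$ forces $\rho_A=0$ hence $K_0(A)=\ker\rho_A$; finiteness plus torsion $K_0$ forces stable projectionlessness) are correct but are not where the difficulty lies.

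The paper closes this gap by a different and genuinely substantive route (Theorem \ref{Ltorsion}, proved via Theorem \ref{TD0=fn}): one passes to hereditary subalgebras $A_0\subset A$, $B_0\subset B$ with continuous scale, quotes 20.5 of \cite{GLp1} to see that the hypotheses (finite nuclear dimension, UCT, torsion $K_0$) force every tracial state to be a $W$-trace, and then proves directly that $A_0, B_0\in{\cal D}_0$. The proof of \ref{TD0=fn} is the analytic core: using $\mathcal Z$-stability (from \cite{aTz}), the classification of the UHF-stabilizations $A\otimes M_{\mathfrak p}$, $A\otimes M_{\mathfrak q}$ via Theorem \ref{Misothm}, the uniqueness theorem \ref{TUNIq}, and the homotopy lemma \ref{Chomotopy}, one builds an approximate intertwining along ${\cal Z}_{\mathfrak p,\mathfrak q}$ to show $A\otimes{\cal Z}$ is tracially approximately divisible, whence $A_0\in{\cal D}_0$ and Theorem \ref{T1main} applies. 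Some argument of this kind --- establishing membership of $A$ itself (not merely $A\otimes\zo$) in the classifiable class --- is indispensable, and it is the ingredient your proposal is missing.
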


The paper also includes an appendix which shows every separable and amenable \CA\, in ${\cal D}$ is ${\cal Z}$-stable
which is based on \cite{MS}.
This research is also  benefitted from  previous results related to the classification of simple projectionless \CA s (such as   \cite{SL}, \cite{Rl}, \cite{Raz}, \cite{Rz}, and \cite{aTz}, as well as many others).





{\small {\bf Acknowledgement}:    This research
began when both authors stayed
in the Research Center for Operator Algebras in East China Normal University
in the summer of 2016
and December 2016.    Both authors acknowledge the support by the Center
 which is in part supported  by NNSF of China (11531003)  and Shanghai Science and Technology
 Commission (13dz2260400)
 and  Shanghai Key Laboratory of PMMP.
The second named author was also supported by NSF grants (DMS 1361431 and DMS 1665183).
}

\section{Preliminaries}

\begin{df}\label{Dceil}
Let $A$ be a {{unital}}
 \CA\, and let  $x\in A.$ Suppose
that
$\|xx^*-1\|<1$ and $\|x^*x-1\|<1.$ Then $x|x|^{-1}$ is a unitary.
Let us use $\lceil x \rceil $ to denote $x|x|^{-1}.$

Denote by $U(A)$  the unitary group of $A$ and denote by $U_0(A)$ the  normal subgroup
of $U(A)$ consisting of those unitaries which are path connected with $1_A.$  Denote by $CU(A)$
the closure of the commutator subgroup of $U(A).$

If $u\in A$ is a unitary, then ${\bar u}$ is the image of $u$ in $U(A)/CU(A),$ and if ${\cal U}\subset U(A)$
is a subset, then $\overline{{\cal U}}=\{{\bar u}: u\in {\cal U}\}.$

\end{df}

\begin{df}\label{DtensorMr}
Let $A$ be a \CA. Denote by $A^{\bf 1}$ the unit ball of $A.$

Let $B$ be another \CA\, and let $\phi: A\to B$ be a {{completely}} positive linear map.
Suppose that $r\ge 1$ be an integer.
This map induces a {{completely}} positive linear map  $\phi\otimes {\rm id}_{M_r}: A\otimes M_r \to B\otimes M_r.$
{\it Throughout this paper, we will use notation $\phi$ instead of  $\phi\otimes {\rm id}_{M_r}$
whenever it is convenient.}

Let $A$ be a non-unital \CA\, and let $\phi: A\to B$ (for some \CA\, $B$) be a linear map.
{\it   {{Sometime  in}}
the paper, we will continue to use $\phi$ for the {{unital}} extension from ${\tilde A}$ to ${\tilde B},$
whenever it is convenient.}
\end{df}

\begin{df}\label{DTtilde}
Let $A$ be a \CA.  Denote by $T(A)$ the tracial state of $A$ {{(which could be an empty set).}}
Let $\Aff(T(A))$ be the space of all real valued affine continuous functions on $T(A)$.
Let ${\tilde{T}}(A)$ be the cone of densely defined,
positive lower semi-continuous traces on $A$ equipped with the topology
of point-wise convergence on elements of the Pedersen ideal  ${\rm Ped}(A)$ of $A.$
Let $B$ be another \CA\, with $T(B)\not=\emptyset$ and let $\phi: A\to B$ be a \hm.
We will use then  $\phi_T: T(B)\to T(A)$ for the induced continuous affine map.

Let $r\ge 1$ be an integer and $\tau\in {\tilde T}(A).$
We will continue to use $\tau$  on $A\otimes M_r$ for $\tau\otimes {\rm Tr},$ where ${\rm Tr}$ is the standard
trace on $M_r.$
Let  $S\subset {\tilde T}(A)$  be a convex subset. Define (see \cite{Rl})
\beq
\Aff(S)_+&=&\{f: C(S, \R)_+: f \,\, 
{{\rm affine}}, f(\tau)\ge 0\},\\
\Aff_+(S)&=&\{f: C(S, \R)_+: f \,\, 
{{\rm affine}}, f(\tau)>0\,\,{\rm for}\,\,\tau\not=0\}\cup \{0\},\\
{\rm LAff}_f(S)_+&=&\{f:S\to [0,\infty): \exists \{f_n\}, f_n\nearrow f,\,\,
 f_n\in \Aff(S)_+\},\\
{\rm LAff}_{f,+}(S)&=&\{f:S\to [0,\infty): \exists \{f_n\}, f_n\nearrow f,\,\,
 f_n\in \Aff_+(S)\},\\
{\rm LAff}(S)_+&=&\{f:S\to [0,\infty]: \exists \{f_n\}, f_n\nearrow f,\,\,
 f_n\in \Aff(S)_+\},\\
{\rm LAff}_+(S)&=&\{f:S\to [0,\infty]: \exists \{f_n\}, f_n\nearrow f,\,\,
 f_n\in \Aff_+(S)\}\andeqn\\
 {\rm LAff}^{\sim}(S) &=&\{f_1-f_2: f_1\in {\rm LAff}_+(S)\andeqn f_2\in
 {\rm Aff}_+(S)\}.
 \eneq
 For most part of this paper, $S={\tilde T}(A)$ {{or}} $S=T(A)$
in the above definition will be used.
Moreover, ${\rm LAff}_{b,+}({{\tilde{T}}}(A))$ is the subset of those bounded functions
 in ${\rm LAff}_{f,+}({\tilde{T}}(A)).$ 
\end{df}

\begin{df}\label{DJc}
Let $A$ be a  \CA\, with $T(A)\not=\emptyset.$
Let $\pi_A: {\tilde A}\to \C$ be the quotient map and $s: \C\to {\tilde A}$ be
the \hm\, such that $\pi\circ s={\rm id}_{\C}.$
Recall that we also use $\pi_A$ for the induced \hm\, $\pi_A\otimes {\rm id}_{M_r}: M_r({\tilde A})\to M_r$
and use $s$ for the induced \hm\, $s\otimes {\rm id}_{M_r}: M_r\to M_r({\tilde A})$
for all integer $r\ge 1.$
Let $\rho_A: K_0(A)\to \Aff(T(A))$ be the order preserving
\hm\, defined by $\rho([p]-[s\circ \pi_A(p)])(\tau)=\tau(p-s\circ \pi_A(p))$ for any projections
in $M_r({\tilde A})$ for all integer $r\ge 1.$

Suppose that $A$ is non-unital and  separable, and  {{${\tilde T}(A)\not=\{0\}.$}}
Suppose that there exists $a\in {\rm Ped}(A)_+$  which is full.
Let $A_a=\overline{aAa}.$ Then $T(A_a)\not=\emptyset.$
We define
\beq
{\rm ker}\rho_A=\{x\in K_0(A_a): \rho_A(x)=0\}
\eneq
Here we also identify $K_0(A_a)$ with $K_0(A)$ using the Brown's stable isomorphism
theorem (\cite{Br1}).

Suppose that $A$ is unital and has stable rank one. Then we have  (by \cite{Thomsen} and \cite{GLX})
the following splitting short exact sequence (we will fix one such $J_c$)
\beq\label{CUsplit}
0\longrightarrow \Aff(T(A))/{\overline{\rho_A(K_0(A))}} \longrightarrow U(A)/CU(A)\rightleftarrows^{\kappa_1^A}_{J_c} K_1(A)\longrightarrow 0.
\eneq

If $u\in U_0(A)$ and $\{u(t): t\in [0,1]\}$ is a piece-wise smooth and continuous path of unitaries in
$A$ such that $u(0)=u$ and $u(1)=1.$
Then, for each $\tau\in T(A),$
\beq
D_A(u)(\tau)={1\over{2\pi i}}\int_0^1 \tau({du(t)\over{dt}}u(t)^*) dt
\eneq
modulo $\overline{\rho_A(K_0(A))}$  induces  (independent of  the path)  an isomorphism (denote by  ${\bar D}_A$)
from $U_0(A)/CU(A)$ onto ${\rm Aff}(T(A))/{\overline{\rho_A(K_0(A))}}$ as mentioned
above (see also 2.15 of \cite{GLN} ).

Now suppose that $A$ is a non-unital  separable \CA\, and  ${\rm Ped}(A)=A$  with $T(A)\not=\emptyset.$
Suppose that ${\rm ker}\rho_A=K_0(A).$
Then
\beq
\Aff(T({\tilde A}))/\overline{\rho_A(K_0({\tilde A}))}=\Aff(T({\tilde A}))/\Z.
\eneq
\end{df}

\begin{df}\label{Dalsr1}
Let $A$ be a non-unital \CA. We say that $A$ has almost stable rank one (see \cite{Rz} and  \cite{eglnp1})
if, for each $n,$ the invertible elements in  any nonzero hereditary \SCA\, ${\tilde B}$ of $M_n({\tilde A})$ is dense 
in $B,$ i.e., for any $b\in B$ and any $\ep>0,$ there exists an invertible element $x\in {\tilde B}$ such that $\|b-x\|<\ep.$
\end{df}

\begin{prop}[cf. Theorem 3 of \cite{CEI}; see also 1.5 of \cite{LnHilbert}]\label{Srk1}
{{Let $A$ be a $\sigma$-unital \CA\, which has almost stable rank one and let $a, b\in A_+\setminus\{0\}$ such that
$a\sim b$ in Cuntz semigroup. Then there is a partial isometry $w\in A$ such that
$w^*x, xw\in A$ and $ww^*x=xww^*=x$ for all $x\in \overline{aAa},$ $wy, yw^*\in {{A}}$ {{for all}} ${{y\in}} \overline{bAb}$ and
$w^*aw$ is a strictly positive element of $\overline{bAb}.$}}
\end{prop}

\begin{proof}
Let $H_1=\overline{aA}$ and $H_2=\overline{bA}$ be Hilbert $A$-modules.
By 3.3 of \cite{Rz}, there is a  Hilbert $A$-module isomorphism $\phi:H_1\to H_2.$
Since $a^{1/2}\in \overline{aA},$ $\phi(a^{1/2})\phi(a^{1/2})^*$ is a strictly positive element of $\overline{bAb}$
and $\phi(a^{1/2})^*\phi(a^{1/2}){{=\la \phi(a^{1/2}), \phi(a^{1/2})\ra_{H_2}=\la a^{1/2}, a^{1/2}\ra_{H_1}= a.}}$
Consider $H=H_1\oplus H_2,$ $a_1=\diag(a, 0)$ and $b_1=\diag(0,b)\in M_2(A).$
Let $\{e_{i,j}\}_{1\le i,j\le 2}$ be a matrix unit for $M_2.$
Set $B=\overline{(a_1+b_1)M_2(A)(a_1+b_1)}.$  There is $T_1\in LM(K(H))=LM(B)$ (see  Theorem 1.5 of \cite{Linbmp}) such
that $T_1(x_1\oplus x_2)=0\oplus \phi(x_1)$ for all $(x_1, x_2)\in H.$
Put $T=e_{1,2}T.$ Then $Tx=\phi(x)$ for all $x\in H_1$ and
 $a^{1/2}T^*Ta^{1/2}=\phi(a^{1/2})^*\phi(a^{1/2})=a.$
Moreover $TaT^*=\phi(a^{1/2})\phi(a^{1/2})^*$ is a strictly positive element in $\overline{bAb}$ {{and $Ta^{1/2}\in A.$}}
{{Write}}  $Ta^{1/2}=v|a^{1/2}T^Ta^{1/2}|=v|a|$
as polar decomposition in {{$A^{**}.$}}
One then checks that $w:=v^*$ satisfies the requirement.
\end{proof}

\begin{df}\label{DLddag}
Let $A$ be a unital separable amenable \CA.
For any finite subset ${\cal U}\subset U(A),$ there exists $\dt>0$ and  a finite subset ${\cal G}\subset  A$
satisfying the following:
If $B$ is another unital \CA\, and if $L: A\to B$ is {{a ${\cal G}$-$\dt$}}-
multiplicative \cpc,
then
$\overline{\lceil L(u)\rceil}$ is a well defined element in $U(B)/CU(B)$  for all $u\in {\cal U}.$
{{We may assume that $[L]|_{{\cal S}}$ is well defined, where ${\cal S}$ is the image of ${\cal U}$
in $K_1(A)$ (see, for example, 2.12 of \cite{GLN}).}}
Let $G({\cal U})$ be the subgroup generated by ${\cal U}.$
{{Suppose that $1/2>\ep>0$ is given.  By  Appendix in \cite{Lnloc},  we may assume that
there is a \hm\, $L^\dag: G({\cal U})\to U(B)/CU(B)$ such that
\beq\label{Ddag-n1}
{\rm dist}(L^\dag(\bar u),\overline{\lceil L(u)\rceil})<\ep\rforal u\in {\cal U}.
\eneq}}
{{Moreover, as in  Definition 2.17 of  \cite{GLN},  we may also assume that
\beq\label{Ddag-n2}
L^{\ddag}((G({\cal U})\cap U_0(A))/CU(A))\subset U_0(B)/CU(B).
\eneq}}
{{It follows that
$\kappa_1^B\circ L^{\ddag}(\overline{u})=[L]\circ \kappa_1^A([u])\rforal u\in G({\cal U}),$
where $\kappa_1^A$ and $\kappa_1^B$ are defined as in \eqref{CUsplit}
(see Definition 2.17 of \cite{GLN}).}}
In what follows,  {{when $1/2>\ep>0$ is given, whenever we write
 $L^{\dag},$ we mean that $\dt$ is small enough and ${\cal G}$ is large enough so that $L^{\dag}$ is
 defined,  \eqref{Ddag-n1} and \eqref{Ddag-n2} holds (see 2.17 of \cite{GLN}).}}
 Moreover, for an integer $k\ge 1,$ we will also use $L^{\dag}$
 {{for the map on  some given subgroup of}} $U(M_k(A))/CU(M_k(A))$ induced by $L\otimes {\rm id}_{M_k}.$  In particular, when $L$ is a unital \hm, the map $L^{\dag}$ is well defined
 on $U(M_k(A))/CU(M_k(A)).$

{{If $A$ is not unital, $L^\dag$ is defined to be ${\tilde L}^\dag,$ where ${\tilde L}: {\tilde A}\to {\tilde B}$
 is the unital extension of $L.$}}

\end{df}

\begin{df}\label{Dfep}
{{Let $1>\ep>0.$  Define
\beq
f_{\ep}(t)=\begin{cases} 0, &\text{if}\, t\in [0,\ep/2];\\
                                      {t-\ep/2\over{\ep/2}}, &\text{if}\, t\in (\ep/2, \ep];\\
                                       1 & \text{if}\, t\in (\ep, \infty).\end{cases}
                                       \eneq}
                                       }
\end{df}

\begin{df}\label{Ddimf}
{{Let $A$ be a \CA\, and let $a\in A_+.$  Suppose that {{${\tilde T}(A)\not=\{0\}.$}}
Recall that
$$
d_\tau(a)=\lim_{\ep\to 0} \tau(f_\ep(a))
$$
with possible infinite value.  Note that $f_\ep(a)\in {\rm Ped}(A)_+$ {{for any $\ep>0.$}} Therefore
$\tau\mapsto d_\tau(a)$ is a lower semi-continuous affine function  on ${\tilde T}(A)$
(to $[0,\infty]$).
Suppose that $A$ is non-unital. Let $a\in A_+$ be a strictly positive element.
Define
$$
\Sigma_A(\tau)=d_\tau(a)\rforal \tau\in {\tilde T}(A).
$$
It is standard and routine to check that $\Sigma_A$ is independent of the choice
of $a.$
The lower semi-continuous affine function $\Sigma_A$ is called the scale function of $A.$
(see
2.3 of \cite{eglnp1}). }}
\end{df}

\begin{df}\label{DEll}
{{
Let $C_1$ and $C_2$ be two cones. A cone map $\gamma: C_1\to C_2$
is an additive  map such that $\gamma(0)=0,$ $\gamma(rc)=r\gamma(c)$ for all $r\in \R_+.$
}}

{{Let $A$ be a stably projectionless simple \CA s such that $K_0(A)={\rm ker}\rho_A.$
Then the Elliott invariant
is defined as follows:
$$
{\rm Ell}(A)=(K_0(A), K_1(A), {\tilde T}(A), \Sigma_A).
$$
Suppose that $B$ is another stably projectionless simple \CA s such that $K_0(B)={\rm ker}\rho_B.$
Then we write
$$
{\rm Ell}(A)\cong {\rm Ell}(B),
$$
if there are group isomorphisms $\kappa_i: K_i(A)\to K_i(B),$ $i=0,1,$
a  cone homeomorphism $\kappa_T: {\tilde T}(A)\to {\tilde T}(B),$ i.e.,
$\kappa_T$ is $1-1$ and onto, $\kappa_T$ and $\kappa^{-1}$ are both cone maps which
are continuous (regarding topology  of point-wise convergence on elements in ${\rm Ped}(A)$),
and $\Sigma_A(\tau)=\Sigma_B(\kappa_T(\tau))$ for all $\tau\in {\tilde T}(A).$
}}
{{
{{In the case}} that $A$ has continuous scale, then one can simplify {{${\rm Ell}(A)$ to}}
$$
{\rm Ell}(A)=(K_0(A), K_1(A), T(A)).
$$
}}
\end{df}

\begin{df}\label{Dcomparible}
{{Let $A$ and $B$ be  \CA s with $T(A)\not=\emptyset$  and
$T(B)\not=\emptyset$ and both have stable rank one. Let $\kappa\in KL(A,B),$ $\kappa_T: T(B)\to T(A)$
be an affine continuous map and $\kappa_u: U({\tilde A})/CU({\tilde A})\to  U({\tilde B})/CU({\tilde B})$ {{be
a continuous \hm.}}
We say $(\kappa, \kappa_T, \kappa_u)$ {{is}} compatible, if
$\rho_B(\kappa(x))(t)=\rho_A(x)(\kappa_T(t))$ for all $x\in K_0(A)$ and $t\in T(B),$
$\kappa(\kappa_1^A({\bar w}))={{\kappa_1^B(\kappa_u({\bar w}))}}$ for all ${\bar w}\in U({{\tilde{A}}})/CU({{\tilde{A}}})
$
 and
$D_{{\tilde B}}(z)(t)=D_{{\tilde A}}(w)(\kappa_T(t))$ for all $t\in T(B),$ where
$w\in U_0(A),$ $z\in U_0(B)$ such that ${\bar z}=\kappa_u({\bar w})$ for all $w\in U_0(A),$}}  {{where $\kappa_1^A$ (and
$\kappa_1^B$) are  as in \eqref{CUsplit}.}}
\end{df}

\begin{df}\label{Dappmul}
Let $A$ and $B$ be two separable \CA s and let $\phi_n: A\to B$ be a sequence
of linear maps. We say that $\{\phi_n\}$ is approximately multiplicative, if
\beq
\lim_{n\to\infty}\|\phi_n(a)\phi_n(b)-\phi_n(ab)\|=0\rforal a, b\in A.
\eneq

{{Recall that $\tau$ is said to be a ${\cal W}$-trace in \cite{eglnp1}
if there exists a sequence of approximately multiplicative
\cpc s $\{\phi_n\}$ from $A$ into ${\cal W}$ such that
\beq\nonumber
&&\tau(a)=\lim_{n\to\infty}{ \tau_{\mathcal W}}(\phi_n(a))\rforal a\in A,
\eneq
where ${ \tau_{\mathcal W}}$ is the unique tracial state on ${\cal W}.$}}

\end{df}

\begin{df}\label{DQ}
Throughout this paper, $Q$ will be the universal UHF-algebra
with $K_0(Q)=\Q$ {{and}} $[1_Q]=1.$
\end{df}

\begin{df}\label{Dlocapp}
Let ${\cal B}$ be a class of \CA s and let $A$ be a separable \CA.
We say $A$ is {\it locally approximated by} \CA s in ${\cal B},$
if, for $\ep>0$ and any finite subset ${\cal F}\subset A,$ there exists a {{\SCA}}\, $B\in {\cal B}$
such that
${\rm dist}(a, B)<\ep$ for all $a\in {\cal F}.$
\end{df}

\begin{df}\label{Dlambdas}
Let $A$ be a \CA\, with $T(A)\not=\emptyset.$
Suppose that $A$ has a strictly positive element $e_A\in {\rm Ped}(A)_+$ with $\|e_A\|=1.$
Then $0\not\in \overline{T(A)}^w,$ the closure of $T(A)$ in ${\tilde T}(A)$
(see section 5 of \cite{eglnp1}).
Define
$$
\lambda_s(A)=\inf\{d_\tau(e_A): \tau\in A\}.
$$
Let $A$ be a \CA\, with $T(A)\not=\{0\}$ such that $0\not\in \overline{T(A)}^w.$
There is an affine  map
$r_{\aff}: A_{s.a.}\to \Aff(\overline{T(A)}^w)$ defined by
$$
r_{\aff}(a)(\tau)=\hat{a}(\tau)=\tau(a)\tforal \tau\in \overline{T(A)}^w
$$
and for all $a\in A_{s.a.}.$ Denote by $A_{s.a.}^q$ the space  $r_{\aff}(A_{s.a.})$ and
$A_+^q=r_{\aff}(A_+).$  
\end{df}

\begin{df}\label{Dconsc}(see 2.5 of \cite{Lncs1})
Let $A$ be a $\sigma$-unital, nonunital,  non-elementary, simple \CA\, and
$\{e_n\}$ be an approximate identity such that $e_{n+1}e_n=e_n$ for all $n.$
We say $A$ has continuous scale if, for any $a\in A_+\setminus \{0\},$ there exists
$n_0\ge 1$ such that $e_m-e_n\lesssim a$ for all $m\ge n\ge n_0.$
\end{df}

\begin{df}[5.5 of \cite{eglnp1}]\label{Dfulln}
{{Let $A$ be a separable \CA, let $B$ be {{a}} non-unital \CA\, and let $L: A\to B$ be a positive linear map.
 Let $F: A_+\setminus \{0\}\to \N\times \R_+\setminus\{0\}.$
Suppose that ${\cal H}\subset A_+\setminus \{0\}$ is  a subset.
We shall say that $L$ is $F$-${\cal H}$-full, if, for any $a\in {\cal H},$  for any $b\in B_+$ with $\|b\|\le 1,$ any $\ep>0,$
there are $x_1, x_2,...,x_m\in B$ such that
$m\le N(a)$ and $\|x_i\|\le M(a),$ where
$(N(a), M(a))=F(a),$  and
\beq\label{localfull-1}
\|\sum_{i=1}^mx_i^*L(a)x_i-b\|{{< }}\ep.
\eneq
This term is consistent with the uniformly $F$-${\cal H}$-fullness (3.11 of \cite{eglnkk0}) since $F$ does not depends on $\ep.$}}

\end{df}

\section{Non-commutative 1-dimensional complices, revisited}

 \begin{df}[See \cite{ET-PL} and  \cite{point-line}]\label{DfC1}
{\rm
Let $F_1$ and $F_2$ be two finite dimensional \CA s.
Suppose that there are two (not necessary unital)  \hm s
$\phi_0, \phi_1: F_1\to F_2.$
Denote the mapping torus $M_{\phi_1, \phi_2}$ by
$$
A=A(F_1, F_2,\phi_0, \phi_1)
=\{(f,g)\in  C([0,1], F_2)\oplus F_1: f(0)=\phi_0(g)\andeqn f(1)=\phi_1(g)\}.
$$


Denote by ${\cal C}$ the class of all  \CA s of the form $A=A(F_1, F_2, \phi_0, \phi_1)$ and  all finite dimensional \CA s.
These \CA s {{are called  Elliott-Thomsen building blocks as well as
one  dimensional non-commutative CW complexes.}}


{{Recall that ${{\cal C}_0}$ is the class of all $A\in {\cal C}$
with $K_0(A)_+=\{0\}$ such that $K_1(A)=0$,  and ${\cal C}_0^{(0)}$ the class of all $A\in {\cal C}_0$ such that $K_0(A)=0.$  Denote by {{${\cal C}'$,}} ${\cal C}_0'$  and ${\cal C}_0^{0'}$ the class of all full hereditary \SCA s of \CA s in {{$\cal C$,}} ${\cal C}_0$ and
{{${\cal C}_0^{0},$}} respectively.}}
}

Recall that ${\cal R}$ denotes the class of finite direct sums of Razak algebras and
${\cal M}_0$ denotes the class of all simple inductive limits of \CA s in ${\cal R}$ (with injective connecting maps)
{{(see 6.1 and 9.5 of \cite{eglnp1} and also
10.1, 16.2  and  16.5 of \cite{GLp1}).}}
\end{df}

\begin{NN}\label{2Rg11}
Let $F_1=M_{R_1 }(\C)\oplus M_{R_2 }(\C)\oplus \cdots \oplus M_{R_l}(\C),$  let
 $F_2=M_{r_1}(\C)\oplus M_{r_2 }(\C)\oplus \cdots \oplus M_{r_k }(\C)$  {{and}}  let $\phi_0,\,\,
  \phi_1:~ F_1\to F_2$ be (not necessary unital) homomorphisms, where $R_j$ and $r_i$ are positive integers.  Then $\phi_0$ and $\phi_1$ induce homomorphisms $$\phi_{0*}, \phi_{1*}: ~ K_0(F_1)=\Z^l \longrightarrow K_0(F_2)=\Z^k$$
 by matrices $(a_{ij})_{k\times l}$ and $(b_{ij})_{k\times l}$, respectively, { and $\sum_{j=1}^l a_{ij}R_j\leq r_i$ for $i=1,2,...,k.$}
{{ We may write $C([0,1], F_2)=\oplus_{j=1}^k C([0,1]_j, M_{r_j}{{)}},$ where $[0,1]_j$ denotes
 the $j$-th interval.}}
\end{NN}

\begin{thm}\label{Ldert}
Let $A$ {{be a full hereditary \SCA\, of a \CA\, in ${\cal C}.$}}
Then ${\rm cer}(u)\le 2+\ep$ if $u\in U_0({{\tilde A}}).$
Moreover, if $u\in CU({\tilde{A}})$ then, for any $\ep>0,$   there exists a
continuous path $\{u(t): t\in [0,1]\}\subset CU({\tilde A})$ with
$u(0)=u,$ $u(1)=1_{\tilde A}$ and ${\rm length}(\{u(t): t\in [0,1]\})\le 4\pi+\ep.$
In particular, ${\rm cel}(u)\le 4\pi.$
\end{thm}

\begin{proof}
Let $e\in B:=A(F_1, F_2, \phi_0, \phi_1)$ with $\|e\|=1$ and $A=\overline{eBe}.$
Let $u\in U_0({\tilde A})$ and let $\ep>0.$
\Wlog, we may assume
that $\ep<{{\frac{1}{4\max\{R(i)r_j: 1\le i \le l ,1\le j\le k\}}}}.$

It follows  8.8 of \cite{eglnp1}
that
$e$ is approximately unitarily equivalent  (in ${\tilde B}$) to
another positive element $e'$ which has the following form $e'=(g,a)\in B$ such that
\beq
g_j:=g|_{[0,1]_j}=\sum_{i=1}^{r_j}\lambda_{i,j}p_{i,j}, \,\,\,j=1,2,...,k,
\eneq
where $\lambda_{1,j}, \lambda_{2,j},...,\lambda_{r_j,j}\in C([0,1])$ and
{{$p_{1, j},p_{2, j},...,p_{r_j, j}\in C([0,1], M_{r_j})$ are mutually orthogonal
rank one projections.}}

It follows that $\la e'\ra =\la e\ra$ in the Cuntz semi-group.  Since $B$ has stable rank one,
by \cite{CEs},
$A$ is isomorphic  to $C:=\overline{e'Be'}.$  Therefore, \wilog,
we may assume that $u\in {{\tilde C}}.$
Note that, for any $f\in {{C([0,1])_+}},$
\beq
f(e')|_{[0,1]_j}=\sum_{i=1}^{r_j} f(\lambda_{i,j})p_{i,j},\,\,\,j=1,2,...,k.
\eneq

Write
$u=\prod_{i=1}^m\exp(\sqrt{-1}a_i),$ where each $a_i=\af_i\cdot 1_{\tilde A}+x_i$ with $\af_i\in \R$
{{and $x_i\in C_{s.a.},$}}
$i=1,2,...,m.$
Let $\dt>0.$
There is $1/2>\eta>0$  such
that $\|f_{\eta}(e')x_if_{\eta}(e')-x_i\|<\dt,$
 $i=1,2,...,m.$
By choosing $\dt$ small enough, we have that
\beq
\|u-\prod_{i=1}^m\exp(\sqrt{-1}\af_i\cdot 1_{\tilde C}+ f_{\eta}(e')x_if_{\eta}(e'))\|<\ep/4.
\eneq
To simplify notation, \wilog,
we may  further assume that
$f_{\eta}(e')x_if_{\eta}(e')=x_i,$ $i=1,2,...,m.$
Let $\dt_1>0.$
It follows from
8.9 of \cite{eglnp1} that  {{there is }}   $e''\le f_{\eta}(e')$ such that
\beq
\|e''-f_{\eta}(e')\|<\dt_1
\eneq
and $\overline{e''Ce''}\in {\cal C}.$
With sufficiently small $\dt_1,$  we may assume that
\beq
\|u-\prod_{j=1}^m \exp(i \af_j\cdot 1_{\tilde C} +e''x_je'')\|<\ep/3.
\eneq
Put $v=\prod_{j=1}^m \exp(
{{\sqrt{-1}}}\af_j\cdot 1_{\tilde C}+e''x_je'').$
We may now view $v\in {\tilde D},$ where $D=\overline{e''Ce''}.$
Since ${\tilde D}\in {\cal C}$
(see 6.2 of \cite{eglnp1}),  it follows from  5.19 of  \cite{Lncbms} that
there are $b_1, b_2\in {\tilde D}_{s.a.}$
such that
$\|v-\exp(i b_1)\exp(i b_2)\|<\ep/3.$
Note that, if we view $v\in U_0({\tilde A}),$ $b_1, b_2$ may be viewed as elements in ${\tilde C}_{s.a.}$
since $e''\le f_{\eta}(e').$
This follows that ${\rm cer}(A)\le 2+\ep.$

Now suppose that $u\in CU({\tilde A}).$
There exists
{{$v\in CU({\tilde A})$}}
such that $\|u-v\|<\ep/4,$
$v=\prod_{s=1}^{m_1} v_s,$ and $v_s=v_{s,1}v_{s,2}\cdots v_{s,r(s)}v_{s,1}^*v_{s,2}^*\cdots v_{s,r(s)}^*,$
where each $v_{s,i}\in U({\tilde A}),$ $s=1,2,...,m_1.$
Write $v_{s,i}=\bt_{s,i}\cdot 1_{\tilde A}+z_{s,i},$ where $\bt_{s,i}\in \C$ with
$|\bt_{s,i}|=1$ and $z_{s,i}\in A.$
For any $\dt_2>0,$
 with sufficiently small $\eta>0,$
 we may assume that
 \beq
 \|z_{s,i}-f_{\eta}(e')z_{s,i}f_{\eta}(e')\|<\dt_2/16m_1(\sum_{i=1}^{m_1}r(s)),\,\,\, 1\le i\le r(s), \,\,1\le s\le m_1.
 \eneq
 So we may assume that
 \hspace{-0.13in}\beq
 \|z_{s,i}-e''z_{s,i}e''\|<\dt_2/8m_1(\sum_{i=1}^{m_1}r(s)), \,\,\, 1\le i\le r(s), \,\,1\le s\le m_1.
 \eneq
It follows that there is a unitary in $w_{s,i}\in \C\cdot 1_{\tilde A}+\overline{e''Ae''}$  such that
\beq
\|v_{s,i}-w_{s,i}\|<\dt_2/4m_1(\sum_{i=1}^{m_1}r(s)), \,\,\, 1\le i\le r(s), \,\,1\le s\le m_1.
\eneq
Put $w_s=w_{s,1}w_{s,2}\cdots w_{s,r(s)}w_{s,1}^*w_{s,2}^*\cdots w_{s,r(s)}^*$
and $w=\prod_{s=1}^{m_1}w_s.$
With sufficiently small $\dt_2,$ we may assume that
\beq
\|w-v\|<\ep/4.
\eneq
Now $v\in CU(\C\cdot 1_{\tilde A}+\overline{e''Ae''}).$  As mentioned above,
$\C\cdot 1_{\tilde A}+\overline{e''Ae''}\in {\cal C}.$  By 3.16 of \cite{GLN},  in $\C\cdot 1_{\tilde A}+\overline{e''Ae''},$
there is a continuous path $\{u(t): t\in [1/2,1]\}\subset  CU(\C\cdot 1_{\tilde A}+\overline{e''Ae''})$
such that $u(1/2)=w$ and $u(1)=1_{\tilde A}$ which has the length no more than $4\pi+\ep/16\pi.$
Note
$v\in CU({\tilde A})$ and
\beq
\|w-u\|<\ep/2,\,\,\, {\rm or}\,\,\, \|uw^*-1\|<\ep/2.
\eneq
Write $uw^*=\exp(\sqrt{-1}d)$ for some $d\in {\tilde A}_{s.a.}.$  Then $\|d\|<2\arcsin(\ep/4).$
{{Note that $uw^*\in CU({\tilde A}).$
Therefore, for each irreducible representation $\pi$ of ${\tilde A}_{s.a.},$
$
{\rm Tr}_\pi(d)=2m'\pi
$
for some integer $m',$ where ${\rm Tr}_\pi$ is the standard trace on $\pi({\tilde A}).$}}
Since we choose $\ep<{{\frac{1}{4\max\{R(i)r_j: i,j\}}}},$
$
{{{\rm Tr}_\pi(d)=0.}}
$
{{It follows that $\tau(d)=0$ for all $\tau\in T({\tilde A}).$}}
Define $u(t)=\exp(\sqrt{-1} (1-2t)d )w$ for $t\in [0,1/2].$
Note that $u(t)$ is in $CU({\tilde A})$ for all $t\in [0,1]$ with $u(0)=u,$ $u(1)=1$
and total length no more than $4\pi+\ep.$
\end{proof}

\begin{NN}\label{2Rg15}
Let $A=A(F_1, F_2, \phi_0, \phi_1)\in {\cal C},$
where  $F_1=M_{R_1 }(\C)\oplus M_{R_2 }(\C)\oplus \cdots \oplus M_{R_l}(\C), ~~
 F_2=M_{r_1}(\C)\oplus M_{r_2 }(\C)\oplus \cdots \oplus M_{r_k }(\C)$.
Recall  {{that}} the irreducible representations of $A$,  are given by
\vspace{-0.1in}$$\coprod_{i=1}^k (0,1)_i \cup \{ \rho_1, \rho_2, ..., \rho_l\} = \mathrm{Irr}(A),$$
where $(0,1)_i$ is the same open interval $(0,1)$.
Any trace $\tau\in T(A) $ is corresponding to\\
 $(\mu_1, \mu_2,\cdots, \mu_k, s_1,s_2,\cdots, s_l)$, where $\mu_i$ are {{nonnegative}}  measures on $(0,1)_i$ and $s_j\in \R_+$ and  we have
$$\|\tau\|=\sum_{i=1}^k\int_0^1  {{d\mu_i}}~+~\sum_{j=1}^l s_j.$$
Let $t \in(0, 1)_i$ and $\dt_t$ be the canonical point measure at point $t$ with measure $1$,
then
$$\lim_{t\to 0} \dt_t=(\mu_1, \mu_2,\cdots, \mu_k, s_1,s_2,\cdots, s_l)~~~~\mbox{and}~~~ \lim_{t\to 1} \dt_t=(\mu_1, \mu_2,\cdots, \mu_k, s'_1,s'_2,\cdots, s'_l)$$
with $\mu_j=0$,  $s_j=a_{ij}\cdot\frac{R_j}{r_i}$ and $s'_j=b_{ij}\cdot\frac{R_j}{r_i}$, where $(a_{ij})_{k\times l}=\phi_{0*}$ and $(b_{ij})_{k\times l}=\phi_{1*}$ as in \ref{2Rg11}. Let
$$\lambda=\min_i \{\frac{\sum_{j=1}^l a_{ij}R_j} {r_i}, \frac{\sum_{j=1}^l b_{ij}R_j} {r_i} \}.$$
A direct calculation shows that if $\tau_n\in T(A)$ converge to $\tau$ in weak* topology, then $\|\tau\|\geq \lambda\cdot\limsup \|\tau_n\|.$
{{ In notation of \ref{Dlambdas}, we
have
\hspace{-0.14in}\beq
\lambda_s(A)=\lambda.
\eneq}}
Evidently, the number $\lambda$ above is the largest positive number satisfying the following conditions
$$\phi_{0*}([\one_{F_1}])\geq \lambda\cdot [\one_{F_2}], ~~~\phi_{1*}([\one_{F_1}])\geq \lambda\cdot [\one_{F_2}]~~\mbox{in} ~~K_0(F_2).$$

{\ {In the notation  {{of}} \ref{DTtilde},  {{both  affine  spaces}} $\mathrm{Aff}({\tilde T}(A))$}} and $\mathrm{Aff}(\mathrm{T}(A))$ can be identified with the subset of
$$\bigoplus_{j=1}^k C([0,1]_j,\R)\oplus \R^l=\bigoplus_{j=1}^k C([0,1]_j,\R)\oplus \underbrace{(\R\oplus\R\oplus\cd\oplus \R)}_{l~~ copies}$$
consisting of $(f_1,f_2,..., f_k, g_1, g_2,...,g_l)$ satisfying the condition
$$f_i(0)=\frac1{r_i}\sum_{j=1}^l a_{ij}g_j\cdot R_j \qq\qq \mbox{and} \qq\qq
f_i(1)=\frac1{r_i}\sum_{j=1}^l b_{ij}g_j\cdot R_j.$$
{{The positive cone $\Aff({\tilde T}(A))_+$ is the subset of $\mathrm{Aff}({\tilde T}(A))$ consisting all
elements of those elements $(f_1,f_2,..., f_k, g_1, g_2,...,g_l)$ with $f_i(t)\geq 0$ and $g_j\geq 0$ for all $i,j, t$.}}
{{Set $\R^{\sim}=\R\cup\{\infty\}$, $\R_+^{\sim}=\R_+\cup\{\infty\}$.}}
Then ${\rm LAff}({\tilde{T}}(A))_+$ (${\rm LAff}^{\sim}({\tilde T}(A))$, respectively)
is identified with the subset of
\beq\label{FaffC=LSC}
\bigoplus_{j=1}^k LSC([0,1]_j,\R_+^{\sim})\oplus (\R_+^{\sim})^l
\,\,{\rm (or}\,\,\,\bigoplus_{j=1}^k LSC([0,1]_j,\R^{\sim})\oplus (\R^{\sim})^l{\rm)}
\eneq
\vspace{-0.1in}consisting of $(f_1,f_2,..., f_k, g_1, g_2,...,g_l)$ satisfying the same condition
$$f_i(0)=\frac1{r_i}\sum_{j=1}^l a_{ij}g_j\cdot R_j \qq\qq \mbox{and} \qq\qq
f_i(1)=\frac1{r_i}\sum_{j=1}^l b_{ij}g_j\cdot R_j.$$

\end{NN}







\begin{NN}\label{DCusim}  Suppose that $A=A(F_1, F_2, \phi_0, \phi_1)$ is not unital.
Let $e_{i,F_2}=({{e_{i,1}}}, e_{i,2},...,e_{i, k})\in F_2$ be a projection such
that ${\bf 1}_{F_2}-\phi_i({\bf 1}_{F_1})=e_{i, F_2},$ $i=0,1.$
Put $F_{2,i}=e_{i, F_2}F_2e_{i, F_2},$ $i=0,1.$
Define $\phi_i': \C\to F_{2,i}$ by
$\phi_i'(\lambda)=\lambda e_{i, F_2},$ $i=1,2.$
Define  $F_1^{\sim}=F_1\oplus \C$ and $\phi_i^{\sim}: F_1^{\sim}\to F_2$
by $\phi_i^{\sim}(a\oplus \lambda)=\phi_i(a)\oplus \lambda e_{i,F_2},$ $i=0,1.$
Then ${\tilde A}=A(F_1^{\sim}, F_2, \phi_0^{\sim}, \phi_1^{\sim}).$

{{In what follows, we will use notations $\Z^{\sim}=\Z\cup\{\infty\}$, and $\Z_+^{\sim}=\Z_+\cup\{\infty\}$.}}
Let $B=A(F_1, F_2, \phi_0, \phi_1).$
Let $a\in B_+$, {{define}}  ${{r_a}}\in {\rm LAff}({\tilde{T}}(A))_+$ by $r_a(\tau)=d_\tau(a)=\lim\limits_{n\to \infty}\tau(a^{1/n})$. When one identifies ${\rm LAff}({\tilde{T}}(A))_+$ with the subspace of
$\bigoplus_{j=1}^k LSC([0,1]_j,{{\R_+^{\sim})\oplus (\R_+^{\sim}}})^l$ as in \ref{2Rg15},
$r_a \in \bigoplus_{j=1}^k LSC([0,1]_j,\frac{1}{r_j}\Z_+^{\sim})\oplus \bigoplus_{i=1}^l(\frac{1}{R_i}\Z_+^{\sim})$. (Recall that  map  $\phi_{i,*}: K_0(F_1)=\Z^l \to K_0(F_2)=\Z^k$  ($i=0,1$), induced by $\phi_i: F_1\to F_2$ is given by the matrix ${{(a_{ij})_{k\times l}}}$ and  $(b_{{ij}})_{k\times l}$ with nonnegative integer entries, which can be extended to maps (still denoted by $\phi_{i,*}$) from $(\Z^{\sim})^l$ to $(\Z^{\sim})^k$.) If we identify each $\frac{1}{r_j}\Z$ (or  $\frac{1}{R_i}\Z$ respectively) with $\Z$ by identifying $\frac{1}{r_j}$ with $1\in \Z$ (or by  identifying $\frac{1}{R_i}$ with $1\in \Z$), $r_a$ is identified with $$\big((f_1,f_2,\cdots, f_k), (j_1, j_2, \cdots, j_l)\big)\in \bigoplus_{j=1}^k LSC([0,1]_j,\Z_+^{\sim})\oplus (\Z_+^{\sim})^l $$

\vspace{-0.1in}
\noindent
which satisfy
$$(f_1(0), f_2(0), \cdots, f_k(0))=\phi_{0,*}(j_1, j_2, \cdots, j_l)~~\mbox{and}~~(f_1(1), f_2(1), \cdots, f_k(1))=\phi_{1,*}(j_1, j_2, \cdots, j_l).$$


Let $LSC([0,1], \R^{\sim})$ be the set of lower-semicontinuous function{{s}} from $[0,1]$ to $\R^{\sim}$. We will use the notation $LSC([0,1],(\R^{\sim})^k)\bigoplus_{(\phi_{0,*}, \phi_{1,*})} (\R^{\sim})^l)$ to denote the subset of $LSC([0,1],(\R^{\sim})^k)\bigoplus(\R^{\sim})^l)$ consisting of elements $\big((f_1,f_2,\cdots, f_k), (j_1, j_2, \cdots, j_l)\big)\in LSC([0,1],(\R^{\sim})^k)\bigoplus(\R^{\sim})^l$ satisfying
$$(f_1(0), f_2(0), \cdots, f_k(0))=\phi_{0,*}(j_1, j_2, \cdots, j_l)~~\mbox{and}~~(f_1(1), f_2(1), \cdots, f_k(1))=\phi_{1,*}(j_1, j_2, \cdots, j_l).$$
Let $LSC([0,1],(\R_+^{\sim})^k)\bigoplus_{(\phi_{0,*}, \phi_{1,*})} (\R_+^{\sim})^l$ ( $LSC([0,1],(\Z^{\sim})^k)\bigoplus_{(\phi_{0,*}, \phi_{1,*})} (\Z^{\sim})^l$, or \\ $LSC([0,1],(\Z_+^{\sim})^k)\bigoplus_{(\phi_{0,*}, \phi_{1,*})} (\Z_+^{\sim})^l$ respectively) be the subset of $LSC([0,1],(\R^{\sim})^k)\bigoplus_{(\phi_{0,*}, \phi_{1,*})} (\R^{\sim})^l)$ consisting of the above elements with $f_i(t)\,\, \mbox{and}\,\, j_i \in  \R_+^{\sim}$ ( $\in \Z^{\sim}$ or $\in Z_+^{\sim}  $ respectively).
If we insist not take the value $+\infty$, then we will use the notation $LSC_f$ instead of $LSC$.  {{So}} the sets $LSC_f([0,1],(\R_+)^k)\bigoplus_{(\phi_{0,*}, \phi_{1,*})}(\R_+)^l)$ and $LSC_f([0,1],(\Z_+)^k)\bigoplus_{(\phi_{0,*}, \phi_{1,*})}(\Z_+)^l)$ can  {{also}}  be defined similarly.

Now let $B\in {\cal C}_0.$ Let {{$C$}}
be a full hereditary subalgebra of {{$B.$}}  Using the rank function in 3.17  of \cite{GLN} and applying 3.18 of \cite{GLN},
{{The map $r:\la a\ra \mapsto r_a$  gives an injective semi-group \hm\,}}
from $W(C)$ to $LSC_f([0,1],(\Z_+)^k)\bigoplus_{(\phi_{0,*}, \phi_{1,*})}(\Z_+)^l)$
(see also 3.18 of \cite{GLN}) {{which extends to an order injective semi-group \hm\,}}
from $Cu(C)$ to \\
{{$LSC([0,1],(\Z_+^{\sim})^k)\bigoplus_{(\phi_{0,*}, \phi_{1,*})} (\Z_+^{\sim})^l$. }}
{{Note  ${\tilde C}\in {\cal C}.$  Also note that  $Cu^{\sim}(C)$ (see \cite{Rl}) is the semigroup of the  formal differences
$f-n[1_{\tilde C}],$ with $n\in \Z_+$ and $f\in Cu({\tilde C})$ such that $Cu(\pi_C)(f)=[n],$
where $Cu(\pi_C)$ is the map induced by the quotient map $\pi_C: {\tilde C}\to \C.$}}
With the help of discussion of  {{8.8 of \cite{eglnp1},}}
it is straight forward   to check the  following:
\end{NN}

\begin{prop}\label{PCuR}
Let $C\in C_0'.$
{{Then}}
\beq
W(C)=LSC_f([0,1],(\Z_+)^k)\bigoplus_{(\phi_{0,*}, \phi_{1,*})}(\Z_+)^l)\andeqn \\
Cu(C)=LSC([0,1],(\Z_+^{\sim})^k)\bigoplus_{(\phi_{0,*}, \phi_{1,*})} (\Z_+^{\sim})^l.
\eneq
Moreover (see {\rm \cite{Rl}} for the definition of $Cu^{\sim}$)
\beq
Cu^{\sim}(C)=LSC([0,1],(\Z^{\sim})^k)\bigoplus_{(\phi_{0,*}, \phi_{1,*})} (\Z^{\sim})^l.
\eneq
\end{prop}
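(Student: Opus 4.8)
The plan is to identify $W(C)$ and $Cu(C)$ with the indicated lower-semicontinuous function spaces by first establishing the inclusion $W(C)\hookrightarrow LSC_f([0,1],(\Z_+)^k)\bigoplus_{(\phi_{0,*}, \phi_{1,*})}(\Z_+)^l)$ via the rank map $r:\la a\ra \mapsto r_a$, and then showing this map is onto. The injectivity and the very fact that $r$ lands in the prescribed subspace have already been recorded in the discussion preceding the statement (using 3.17 and 3.18 of \cite{GLN} together with the boundary compatibility conditions coming from $\phi_{0,*}$ and $\phi_{1,*}$ as spelled out in \ref{DCusim}); so the only substantive point is surjectivity. For this, let $C=\overline{eBe}$ with $B=A(F_1,F_2,\phi_0,\phi_1)\in{\cal C}_0$ and $e\in B_+$ full. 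Given a target element $\big((f_1,\dots,f_k),(j_1,\dots,j_l)\big)$ in $LSC_f([0,1],(\Z_+)^k)\bigoplus_{(\phi_{0,*},\phi_{1,*})}(\Z_+)^l$, I would first approximate each lower-semicontinuous integer-valued $f_i$ from below by a step function with finitely many jumps, realize that step function as the rank of a continuous positive element $h_i\in C([0,1]_j,M_{r_j})$ built from finitely many mutually orthogonal rank-one projections (this is exactly the normal form produced in \ref{Ldert1} and used in \ref{LherC} and \ref{L<thomsen}), arrange the boundary behavior so that at $0$ and $1$ the range projection agrees with $\pi^j(\phi_0(p))$ and $\pi^j(\phi_1(p))$ as dictated by $(j_1,\dots,j_l)$, and then take a supremum over such approximants. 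Combining over $j$ gives $h=(g,a)\in C_+$ with $r_{\la h\ra}$ equal to the prescribed element, which proves $r$ is onto $W(C)$ and hence an isomorphism of ordered semigroups onto $LSC_f([0,1],(\Z_+)^k)\bigoplus_{(\phi_{0,*},\phi_{1,*})}(\Z_+)^l$.

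Next I would pass to the completed Cuntz semigroup. Since $C$ has stable rank one (it is a hereditary subalgebra of a one-dimensional NCCW complex, hence covered by the structural results quoted from \cite{GLp1}), $Cu(C)$ is the order-theoretic completion of $W(C)$ in the category $\mathbf{Cu}$: every element of $Cu(C)$ is a supremum of an increasing sequence from $W(C)$, and conversely such suprema always exist. On the function side, $LSC([0,1],(\Z_+^{\sim})^k)\bigoplus_{(\phi_{0,*},\phi_{1,*})}(\Z_+^{\sim})^l$ is precisely the set of pointwise suprema of increasing sequences from $LSC_f([0,1],(\Z_+)^k)\bigoplus_{(\phi_{0,*},\phi_{1,*})}(\Z_+)^l$ — any $\Z_+^{\sim}$-valued lower-semicontinuous function on $[0,1]$ is an increasing pointwise limit of $\Z_+$-valued ones, and the boundary compatibility conditions are preserved in the limit because $\phi_{0,*},\phi_{1,*}$ extend continuously (indeed order-continuously) to $(\Z^{\sim})^l\to(\Z^{\sim})^k$. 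Since both completions are computed the same way and $r$ already identifies the dense subsemigroups compatibly with the order and with the way-below relation, the map $r$ extends to an isomorphism $Cu(C)\cong LSC([0,1],(\Z_+^{\sim})^k)\bigoplus_{(\phi_{0,*},\phi_{1,*})}(\Z_+^{\sim})^l$; here I would invoke 3.18 of \cite{GLN} for the statement that $r$ is an order injection on $Cu(C)$, so that only surjectivity remains, and that follows from the $W$-level surjectivity by taking suprema.

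Finally, for $Cu^{\sim}(C)$ I would use its description, recalled in \ref{DCusim}, as the semigroup of formal differences $f-n[1_{\tilde C}]$ with $f\in Cu({\tilde C})$ satisfying $Cu(\pi_C)(f)=[n]$. Since ${\tilde C}\in{\cal C}$ (by 10.2 of \cite{GLp1}), the already-proved description of $Cu$ applies to ${\tilde C}$, whose structure matrices are $\phi_{0,*}^{\sim},\phi_{1,*}^{\sim}$ from \ref{DCusim}; the quotient map $\pi_C$ corresponds to evaluation on the $\C$-summand of $F_1^{\sim}$, so the condition $Cu(\pi_C)(f)=[n]$ translates into fixing the value $n$ on that coordinate, and subtracting $n[1_{\tilde C}]$ shifts all coordinates down by $n$. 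Tracking this bookkeeping through the identification turns $Cu^{\sim}(C)$ into $LSC([0,1],(\Z^{\sim})^k)\bigoplus_{(\phi_{0,*},\phi_{1,*})}(\Z^{\sim})^l$, now with $\Z^{\sim}$ rather than $\Z_+^{\sim}$ because the formal differences remove the positivity constraint. The main obstacle is the surjectivity argument at the $W(C)$ level: one must verify that every admissible step function — in particular one whose boundary values are forced by a prescribed element of $(\Z_+)^l$ through $\phi_{0,*}$ and $\phi_{1,*}$ — is genuinely realized by a positive element of the hereditary subalgebra $C$, which is where the explicit projection constructions of \ref{Ldert1}, \ref{LherC} and \ref{L<thomsen} do the real work; everything after that is a formal completion argument.
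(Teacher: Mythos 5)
Your overall route is the one the paper intends: the rank map $r:\la a\ra\mapsto r_a$ is already known to be an injective order embedding into the prescribed function space (3.17/3.18 of \cite{GLN}, as recalled in \ref{DCusim}), so the content is surjectivity, realized by the normal forms of \ref{Ldert1}/\ref{LherC}/\ref{L<thomsen}; the $Cu$ statement then follows by completion and $Cu^{\sim}$ by the unitization bookkeeping of \ref{DCusim}. Your last paragraph and your treatment of $Cu^{\sim}$ are consistent with that.

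The one step that does not work as written is the surjectivity argument at the level of $W(C)$: "approximate each $f_i$ from below by a step function with finitely many jumps, realize it, and then take a supremum over such approximants. Combining over $j$ gives $h=(g,a)\in C_+$." A supremum of an increasing sequence of Cuntz classes exists only in the completed semigroup $Cu(C)$; it is not a priori the class of a positive element of some $M_n(C)$, which is what membership in $W(C)$ requires. Moreover a $\Z_+$-valued lower semicontinuous function on $[0,1]$ need not be a finite step function (its level sets are arbitrary open sets), so the approximation genuinely discards the elements you are trying to hit at the $W$ level. The repair is to realize the target exactly rather than by suprema: write each $f_i=\sum_m \chi_{U_{i,m}}$ with $U_{i,1}\supseteq U_{i,2}\supseteq\cdots$ open (only finitely many are nonempty since $f_i$ is bounded by the relevant matrix size), choose $\lambda_{i,m}\in C([0,1])_+$ vanishing precisely on $[0,1]\setminus U_{i,m}$, and set $g_j=\sum_m \lambda_{j,m}p_{j,m}$ with mutually orthogonal rank-one projections, adjusting the boundary values so that $g_j(0)=\pi^j(\phi_0(a))$ and $g_j(1)=\pi^j(\phi_1(a))$ for a positive $a\in F_1$ of ranks $(j_1,\dots,j_l)$; this is exactly the form produced in \ref{Ldert1} and \ref{L<thomsen}. (Alternatively, one can keep your supremum but must then add the argument that the target is dominated by $N\la e\ra$ for the full element $e$ defining $C\in{\cal C}_0'$, so by stable rank one the supremum class has a representative inside the hereditary subalgebra generated by an element of $M_N(C)$, hence lies in $W(C)$; the same device is what guarantees that your representative can be taken in $C$ rather than merely in $B$, the point you correctly flag as "the real work.") With that step replaced, the passage to $Cu(C)$ by taking increasing limits and your $Cu^{\sim}$ computation go through as you describe.
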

 Since $C$ is stably projectionless, it follows
that  the order $Cu^{\sim}(C)$ is determined by $Cu(C).$


\begin{df}\label{DRaf1}
{{Fix an integer $a_1\ge 1.$ Let $\af={a_1\over{a_1+1}}.$
For each $r\in \Q_+\setminus \{0\},$ let $e_r\in Q$ {{(see \ref{DQ})}} be a projection with ${\rm tr}(e_r)=r.$
Let ${\bar Q}_r:=(1\otimes e_r)(Q\otimes Q)(1\otimes e_r).$
Define $q_r: Q\to {\bar Q}_r$ by $a\mapsto a\otimes e_r$ for $a\in Q.$ We will also use $q_r$ to denote any homomorphism from $B$ to $B\otimes e_rQe_r$ (or to $B\otimes Q$) defined by sending $b\in B$ to $b\otimes e_r\in B\otimes e_rQe_r \subset B\otimes Q$.}}

{{For $r=\af={a_1\over{a_1+1}}$, one can identify $Q$ with $Q\otimes M_{a_1+1}$, then the projection $e_\af$ is identified with
$\one_Q\otimes \diag(\underbrace{1,\cdots, 1}_{a_1},0)$. }}

Let
\vspace{-0.1in}$$
R(\af,1)=\{(f,a)\in C([0,1], Q\otimes Q)\oplus Q: f(0)=q_\af(a)\andeqn f(1)=a\otimes 1_Q\}.
$$
{{Note that an element $(f,a)$ is full in $R(\af, 1)$ if and only
if $a\not=0$ and $f(t)\not={{0}}$ for all $t\in (0,1).$
}}
Let $a_{\af}=(f, 1)$ be defined as follows.
Let
\beq\label{Dconsteaf}
f(t)=(1-t)(1\otimes e_\af)+t(1\otimes 1)\rforal t\in (0,1).
\eneq
{{Note that $a_\af$ is a  {\em strictly positive element} of $R(\af,1),$ {{morover,}} for any $1/2>\eta>0,$ $f_\eta(a_\af)$ is full.}}
{\em \CA\, $R(\af,1)$ and $a_\af$ will appear frequently in this paper.}
\vspace{0.1in}

Let $LSC([0,1], \R^{\sim})\oplus_\af \R^{\sim}$ (or $LSC_f([0,1], \R_+)\oplus_\af \R_+$ respectively) be the subset of \\$LSC([0,1],  \R^{\sim})\oplus \R^{\sim}$ (or $LSC_f([0,1],  \R_+)\oplus\R_+$ respectively) consisting of elements $(f, x)$ such that $f(0)=\af x$ and $f(1)=x$.  {{The rank function ${{r}}: \la a\ra \mapsto
r(a)=d_\tau(a) $ gives maps from}} $W(R(\af,1))$ to ${{LSC_f([0,1], \R_+)}}\oplus_\af \R_+$ and from $Cu(R(\af,1))$ to $LSC([0,1],  \R^{\sim}_+)\oplus_\af \R^{\sim}_+$
{{which are order semi-group  \hm s.}} But these {{maps}} are only surjective {{ not}} injective.

Recall that $W(Q)$ and $Cu(Q)$ can be identified with the semi-groups
${{\R_+\setminus \{0\}\sqcup \Q_+}}$ and
${{\R_+^{\sim}\setminus \{0\}\sqcup \Q_+}}$,  where the second copy of $\Q$ is identified with
$K_0(Q)$ and $\R_+^{\sim}\setminus \{0\}$ identified with the rank functions of non-projection  and non-zero positive elements.  If ${{s}}\in \Q\subset \R,$ we will use $[{{s}}]$ for the corresponding element in $K_0(Q).$
With the order in $Cu(Q),$ in  ${{\R^{\sim}}}\sqcup \Q,$ $t<[t]$ for $t\in \Q\subset \R$ and $[t]\in K_0(Q)=\Q.$
But $s>[t]$ if $s>t$ as in $\R^{\sim}.$  The addition on $\R\sqcup \Q$ is defined by $s+[r]=s+r$ and $[s]+[r]=[s+r]$.

A function $f: [0,1] \to {{\R^{\sim}\sqcup \Q}}$ is called lower-semicontinuous if, for each $t_0\in [0,1],$
and if ${{ f(t_0)=[r]\in K_0(Q)}},$
there exists $\dt>0$ such
that $f(t)\ge f(t_0)$ for all $t\in (t_0-\dt, t_0+\dt)\cap [0,1],$
or, if ${{f(t_0)=r\in \R^{\sim}}},$
for any non zero $\ep\in   \R_+^{\sim}\setminus\{0\}\sqcup \Q_+,$ there exists $\dt>0$ such that
$$f(t)+\ep\ge f(t_0) \rforal \in [0,1]\cap (t_0-\dt, t_0+\dt)\setminus \{t_0\},$$
 where the order is in  $\R^{\sim}\sqcup \Q$ mentioned above.

  Let $LSC([0,1],  {{\R^{\sim}\sqcup \Q}})$ be the set of all  lower-semicontinuous functions.
Let $LSC([0,1],  {{\R^{\sim}}}\sqcup \Q)\oplus_\af {{\R^{\sim}}}\sqcup \Q$ be the subset of
$LSC([0,1],  {{\R^{\sim}}}\sqcup \Q)\oplus {{\R^{\sim}}}\sqcup \Q$ consisting of elements $(f, x)$ such that $f(0)=\af x$ and $f(1)=x$. (Here we define $\af [r]= [\af r].$ {{Note that $\af$ is rational.}}).
 The sets
 $LSC([0,1],
 (\R^{\sim}\setminus\{0\}\sqcup \Q)_+)\oplus_\af (\R^{\sim}\setminus \{0\}\sqcup \Q)_+$ and
 $LSC_f([0,1],  (\R\setminus \{0\}\sqcup \Q)_+)\oplus_\af (\R\setminus \{0\}\sqcup \Q)_+$ can be defined similarly. Then we have the following fact.
\end{df}

\begin{cor}\label{CcuR}
{{
Let $A=R(\af,1)$ for some $1>\af>0.$
Then}}
{{\beq
W(A)=LSC_f([0,1],  (\R\setminus \{0\}\sqcup \Q)_+)\oplus_\af (\R\setminus \{0\}\sqcup \Q)_+,\\
Cu(A)=LSC([0,1],
 (\R^{\sim}\setminus\{0\}\sqcup \Q)_+)\oplus_\af (\R^{\sim}\setminus \{0\}\sqcup \Q)_+\andeqn\\\label{Fcusim1}
Cu^{\sim}(A)=LSC([0,1],  {{\R^{\sim}}}\sqcup \Q)\oplus_\af {{\R^{\sim}}}\sqcup \Q.
\eneq
}}
\end{cor}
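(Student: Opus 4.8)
The plan is to compute $W(A)$, $Cu(A)$, and $Cu^{\sim}(A)$ for $A=R(\af,1)$ by combining the structural description of $R(\af,1)$ as a pullback with the known Cuntz semigroup of $Q$, and then applying the rank-function machinery already developed in \ref{PCuR} and the preceding discussion in \ref{DCusim}. First I would observe that $R(\af,1)$ is, up to the stabilization $Q\otimes Q\cong Q$, a mapping-torus type algebra built from $F_1=Q$ and $F_2=Q$ with the two boundary maps $q_\af$ and $b\mapsto b\otimes 1$; it is not one of the Elliott--Thomsen building blocks because $Q$ is not finite-dimensional, but the same rank-function argument applies verbatim once we replace the finite-dimensional data $(\Z_+)^k\oplus(\Z_+)^l$ by the Cuntz semigroup $W(Q)=\R_+\setminus\{0\}\sqcup\Q_+$ (resp. $Cu(Q)=\R_+^\sim\setminus\{0\}\sqcup\Q_+$), with the hybrid order on $\R^\sim\sqcup\Q$ recalled in \ref{DRaf1}.

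The key steps, in order, would be: (1) establish that for $a\in R(\af,1)_+$ the rank function $r_a(\tau)=d_\tau(a)$ lies in the claimed subset, i.e.\ its restriction to the interior $(0,1)$ is a lower-semicontinuous $\R^\sim\sqcup\Q$-valued function, its values at a point $t$ where $a(t)$ is a projection lie in $\Q=K_0(Q)$, and the two boundary conditions $r_a(0)=\af\, r_a(1)$ hold because $q_\af$ multiplies the normalized trace by $\af$ while $b\mapsto b\otimes 1$ preserves it; here I would quote Theorem 1.2 of \cite{Thoms2} and the diagonalization discussion of \ref{Ldert1} to reduce any positive element to one of the normal form $g_j=\sum\ld_i p_i$, so that the rank function is read off coordinatewise. (2) Show the map $\la a\ra\mapsto r_a$ is an order-semigroup homomorphism — additivity is clear from $d_\tau(a\oplus b)=d_\tau(a)+d_\tau(b)$ on orthogonal sums, and order-compatibility from the fact that $\la a\ra\le\la b\ra$ in $Cu$ iff $d_\tau(a)\le d_\tau(b)$ for all $\tau$ (using that $A$ has stable rank one, which was proved in part~I, together with the strict comparison that holds here). (3) Prove surjectivity: given a target element $(f,x)$ in the claimed semigroup, build a positive element of $R(\af,1)$ (or of a matrix amplification) realizing it, splicing together positive elements over subintervals whose spectral data match $f$ and whose boundary values match $x$ via $q_\af$ and $\otimes 1$, using \ref{L<thomsen} to arrange the range-projection conditions. (4) Prove injectivity on the Cuntz-semigroup level (not merely on $W$): this is where the ordered structure on $\R^\sim\sqcup\Q$ does the work, since two positive elements with the same rank function are Cuntz equivalent — again invoking stable rank one and the classification of Cuntz classes by rank functions for algebras of this type, as in 3.17--3.18 of \cite{GLN}. (5) Finally, obtain $Cu^{\sim}(A)$ from the description of $Cu^{\sim}$ as formal differences $f-n[1_{\tilde A}]$ recalled in \ref{DCusim}: since $\tilde A\in{\cal C}$ is obtained from $R(\af,1)$ by the unitization recipe, and $R(\af,1)$ is stably projectionless, the formal-difference semigroup is exactly $LSC([0,1],\R^\sim\sqcup\Q)\oplus_\af\R^\sim\sqcup\Q$, where now negative integer values are permitted in the $K_0(Q)=\Q$-part but the boundary gluing $f(0)=\af x$, $f(1)=x$ persists (with $\af[r]=[\af r]$).

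The main obstacle I expect is step (4), injectivity of the rank map on $Cu(A)$ rather than just $W(A)$: passing from classes of elements of $A$ (or $A\otimes M_n$) to the completed Cuntz semigroup requires controlling suprema of increasing sequences, so one must check that the claimed $LSC$ semigroup is closed under the relevant suprema and that the rank map is continuous and order-reflecting on these limits. The subtlety is aggravated by the hybrid order on $\R^\sim\sqcup\Q$ (where a real value $r$ satisfies $r<[r]$ but $s>[r]$ for $s>r$), which means "lower semicontinuity" has the nonstandard meaning spelled out in \ref{DRaf1}, and one must verify that a supremum of such functions, taken pointwise in $Cu(Q)$, is again lower-semicontinuous in that sense and again satisfies the boundary relations. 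Once the $W$-level statement is in hand, though, this should follow the template of \cite{Rl} and \cite{CEs} for algebras of almost stable rank one, so I would treat it as a careful but routine verification rather than a new difficulty; the genuinely new ingredient compared to \ref{PCuR} is simply bookkeeping the $Q$-coefficients and the factor $\af$ at the left endpoint.
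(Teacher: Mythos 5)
The paper gives no written proof of this corollary: it is stated as a ``fact'' following from the preceding discussion in \ref{DRaf1}--\ref{DCusim} (the identification of $W(Q)$ and $Cu(Q)$ with $\R_+\setminus\{0\}\sqcup\Q_+$ and $\R_+^{\sim}\setminus\{0\}\sqcup\Q_+$, the hybrid order and the corresponding notion of lower semicontinuity, the diagonalization analysis of \ref{Ldert1} behind \ref{PCuR}), together with the Cuntz-semigroup technology of \cite{Rl} and \cite{CEs}. So your level of detail is not the issue, and your overall architecture -- pointwise bookkeeping over $[0,1]$ plus the $Q$-fiber, the boundary relations $f(0)=\af x$, $f(1)=x$ with $\af[r]=[\af r]$, surjectivity by splicing, and $Cu^{\sim}$ via formal differences $f-n[1_{\tilde A}]$ -- is the intended one.

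There is, however, a concrete flaw in steps (2) and (4). You assert that $\la a\ra\le \la b\ra$ in $Cu(A)$ if and only if $d_\tau(a)\le d_\tau(b)$ for all $\tau$, and that two positive elements with the same rank function are Cuntz equivalent. For $A=R(\af,1)$ this is exactly what fails: the paper points out immediately before the corollary that the trace-rank map from $Cu(R(\af,1))$ onto $LSC([0,1],\R^{\sim}_+)\oplus_\af\R^{\sim}_+$ is surjective but \emph{not} injective, because the fibers $Q\otimes Q$ contain projections, so at a point $t$ one must remember whether $a(t)$ is Cuntz equivalent to a projection (a class in $\Q_+=K_0(Q)_+$) or is ``soft'' (a value in $\R^{\sim}_+\setminus\{0\}$); two elements with identical $d_\tau$ can differ in this datum and then are not comparable both ways. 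You gesture at the finer invariant in step (1) (values in $\Q$ where $a(t)$ is a projection), but then steps (2) and (4), read literally, would prove the wrong statement, namely the real-valued identification that the paper explicitly disclaims. The fix is to work throughout with the map $a\mapsto\bigl(t\mapsto \la a(t)\ra\in Cu(Q\otimes Q),\ \la a_{Q}\ra\in Cu(Q)\bigr)$, and to obtain its injectivity (and order-reflection, including on suprema) not from trace comparison but from the computation of $Cu$ for inductive limits of one-dimensional NCCW complexes in \cite{Rl} -- recalling that $R(\af,1)\cong A(m,(a_1+1)m)\otimes Q$ is such a limit -- or from an upgraded version of the diagonalization in \ref{Ldert1} that keeps track of the projection-versus-soft dichotomy fiberwise. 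With that replacement, and your observation that the hybrid $LSC$ sets are closed under the relevant pointwise suprema and boundary relations, the rest of the outline (including the passage to $Cu^{\sim}(A)$) is sound.
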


{{Note, with \eqref{Fcusim1}, map $r$ can be extended to an order semi-group
\hm\, from $Cu^{\sim}(A)$ to $LSC([0,1],  \R^{\sim})\oplus_\af \R^{\sim}$
defined by $r(f(s), a)=(r(f(s)),r(a)),$   where $r(t)=t$ for all $t\in \R^{\sim}$ and $r([t])=t$ for all
$t\in \Q.$}}

 \begin{df}[cf. 8.1 and 8.2 of \cite{eglnp1}]\label{DD0}
{{Recall}} the definition of class ${\cal D}$ and ${\cal D}_0.$

 Let $A$ be a non-unital simple \CA\,  with a strictly positive element $a\in A$
with $\|a\|=1.$   Suppose that there exists
$1> \mathfrak{f}_a>0,$ for any $\ep>0,$  any
finite subset ${\cal F}\subset A$ and any $b\in A_+\setminus \{0\},$  there are ${\cal F}$-$\ep$-multiplicative \cpc s $\phi: A\to A$ and  $\psi: A\to D$  for some
\SCA\, $D\subset A$ with $D\in {\cal C}_0^{0'}$ (or ${\cal C}_0'$),  $D\perp \phi(A),$ and
\beq\label{DNtr1div-1++}
&&\|x-(\phi(x)+\psi(x))\|<\ep\rforal x\in {\cal F}\cup \{a\},\\\label{DNtrdiv-2}
&&c\lesssim b,\\\label{DNtrdiv-4}
&&t(f_{1/4}(\psi(a)))\ge \mathfrak{f}_a\rforal t\in T(D),
\eneq
where $c$ is a strictly positive element of $\overline{\phi(A)A\phi(A)}.$
  Then
 we say $A\in {\cal D}_{0}$ (or ${\cal D}$).

 {{ Note, by  Remark 8.11 of \cite{eglnp1},
 $D$ can {\it always} be chosen to be in ${\cal C}_0$ (or ${\cal C}_0^0$).}}

{{When $A\in {\cal D}$ and is separable, {{then}}  $A=\mathrm{Ped}(A)$ (see 11.3  of \cite{eglnp1}).
Let $a\in A_+$ with $\|a\|=1$ be a strict positive element.
Put
\beq\label{1225dd}
d=\inf\{\tau(f_{1/4}(a)): \tau\in T(A)\}.
\eneq
Then, for any $0<\eta<d,$ $\mathfrak{f}_a$ can be chosen to  be
$d-\eta$ (see  Remark 9.8 of \cite{eglnp1}).
One may also assume that $f_{1/4}(\psi(a))$ is
{{full}} in $D.$
Furthermore,  there exists a map:  $T: A_+\setminus \{0\}\to \N\times \R$
($a\mapsto (N(a), M(a))\rforal a\in A_+\setminus \{0\}$) which is independent of
${\cal F}$ and $\ep$ such that, for any  finite subset ${\cal H}\subset A_+\setminus \{0\},$ we can further require that $\psi$ is $T$-${\cal H}$-full (see 8.3
and 9.2
of \cite{eglnp1}).
}}
{{
For any $n\ge 1,$ one can choose a strictly positive element $b\in A$ with $\|b\|=1$ such that
$f_{1/4}(b)\ge f_{1/n}(a).$ Therefore, if $A$ has continuous scale, $d$ can be chosen to be $1,$
if the strictly positive element is chosen accordingly.
}}


Let $A$ be a separable stably projectionless simple \CA.  Recall that  $A$ has generalized tracial rank at most
one and write $gTR(A)\le 1,$ if there exists $e\in {\rm Ped}(A)_+$ with $\|e\|=1$ such that
$\overline{eAe}\in {\cal D}$ (see  11.6 of \cite{eglnp1}).

 \end{df}

 \begin{df}\label{DD1}
 Let $A\in {\cal D}$ as defined \ref{DD0}.
 If, in  addition,
 for any integer $n,$
 $D=M_n(D_1)$ for some $D_1\in {\cal C}_0$ such that
\vspace{-0.14in} \beq\label{DD1-1}
 \psi(x)=\diag(\overbrace{\psi_1(x), \psi_1(x),...,\psi_1(x)}^n)\rforal x\in {\cal F},
 \eneq
where $\psi_1: A\to D_1$ is an ${\cal F}$-$\ep$-multiplicative \cpc, then we say $A\in {\cal D}^d.$

Note that here, as in  8.3 and 9.2 of \cite{eglnp1}, the map  $T$ mentioned in \ref {DD0}  is also assumed
{{to exist}} and
$\mathfrak{f}_a$ can be also
chosen as $d-\eta$ for any $\eta>0$ with $d$ as in \eqref{1225dd} for a certain strictly positive element $a.$

\end{df}

\begin{rem}\label{RDd}
  It follows  from
  {{10.4 and 10.7 of \cite{eglnp1}}} that, if $A\in {\cal D}_0,$  then $A\in {\cal D}^d.$ Moreover,
  $D_1$ can be chosen in {{${\cal C}_0^{(0)},$}}
  and if $A\in {\cal D},$ then $D_1$ can be chosen in ${\cal C}_0.$ 
  If $A$ is a separable simple \CA\, in ${\cal D}$ and $A$ is tracially approximate divisible  {{(in the sense of
  10.1 of \cite{eglnp1}),}}
  then $A\in {\cal D}^d.$
\end{rem}

\begin{prop}\label{Pdivisiblehere}
Let $A$ be a non-unital     simple \CA\, which is tracially approximate divisible.
Then every hereditary \SCA\, is also tracially approximate divisible.
Consuquently, if $A\in {\cal D}^d,$ then every hereditary \SCA\, is in ${\cal D}^d.$
\end{prop}

\begin{proof}
Let $B\subset A$ be a hereditary \SCA.  Fix $\ep>0,$ a finite subset
${\cal F}\subset B,$ a nonzero element $b\in B_+$ and an integer $n\ge 1.$
By choosing a member $b_e$ in an approximate identity of $B,$ \wilog
(with an error within, say $\ep/2$),
we may assume that $xb_e=b_ex{{=x}}$ for all $x\in {\cal F}.$

Since $A$ is tracially approximate divisible,  there {{are \SCA s}} $A_0$ and $A_1$ of $A$
such that
\beq
{\rm dist}(x, C_d)<\ep\rforal x\in {\cal F},
\eneq
where $C_d\subset C\subset A,$  $C=A_0\oplus M_n(A_1),$
$$
C_d=\{(y_0,\diag(\overbrace{y_1, y_1,...,y_1}^n)): y_0\in A_0, y_1\in A_1\},
$$
and where $a_0\lesssim b,$ where $a_0$ is a strictly positive element of $A_0.$

Let $B_0$ be the \SCA\, generated by $b_eab_e$ for all $a\in A_0$ and let
$B_1$ be the \SCA\, generated by $b_ecb_e$ for all $c\in A_1.$
Then $B_0$ and $B_1$ are \SCA s of $B.$  {{Since}} $B_0\subset \overline{b_ea_0b_eAb_ea_0b_e},$
$b_ea_0b_e$ is a strictly positive element of $B_0.$ Moreover, $b_ea_0b_e\lesssim a_0\lesssim b.$
Put
\beq
B_1^d=\{(x_0, \overbrace{x_1,x_1,...,x_1}^n): x_0\in B_0, x_1\in B_1\},
\eneq
$B_1^d\subset B_3,$ where $B_3=B_0\oplus M_n(B_1).$
For each $x\in {\cal F},$ let $y_x=(y_{0,x}, y_{1,x},...,y_{1,x})\in C_d$ such that
$
{{\|x-y_x\|<\ep/2.}}
$
Then
\beq
\|x-b_ey_xb_e\|<\ep\rforal x\in {\cal F}.
\eneq
Note that $b_ey_xb_e\in B_1^d.$   This proves the first part of the statement.
If $A\in {\cal D}^d,$ then, $B\in {\cal D}$ for any hereditary \SCA\, $B,$ by
{{8.6 of \cite{eglnp1}.}}  By the first part of the statement,
$B$ is tracially approximately divisible. {{Therefore $B\in {\cal D}^d.$}}
\end{proof}

\begin{prop}\label{Pwwdivisible}
Let $A\in {\cal D}$ be with continuous scale and let $e\in A_+$ with $\|e\|=1$ be a strictly positive element,
and $1>{\mathfrak{f}}_e>0$ be as in \ref{DD0}. Then, for any finite subset ${\cal F}\subset A,$ any $\ep>0,$
 any $b\in A_+\setminus \{0\}$ and any integer $n\ge 1,$  there are ${\cal F}$-$\ep$-multiplicative \cpc s $\phi: A\to A$ and  $\psi: A\to M_n(D)$  for some
\SCA\, $D\in {\cal C}_0$  with $M_n(D)\subset A$ and $\phi(A)\perp M_n(D)$ such that
\beq\label{Pwwdiv-1}
&&\|x-(\phi(x)\oplus \psi(x))\|<\ep\tforal x\in {\cal F}\cup \{e\},\\\label{}
&&\phi(e)\lesssim b,\\\label{Pwwdiv4}
&&t(f_{1/4}(\psi(e)))\ge \mathfrak{f}_e/2\tforal t\in T(D).
\eneq

 \end{prop}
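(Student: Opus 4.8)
The plan is to bootstrap the case $n=1$ (which is essentially the definition of ${\cal D}$ in \ref{DD0}, together with the upgrade to ${\cal D}^d$ in \ref{DD1}) to arbitrary $n$ by an iteration and a ``spreading out'' argument. First I would apply \ref{DD0} to obtain, for the given ${\cal F}$, $\ep$ and $b$ (and an auxiliary $b'\lesssim b$ to be fixed below), ${\cal F}$-$\ep$-multiplicative \cpc s $\phi_0\colon A\to A$ and $\psi_0\colon A\to D_0$ with $D_0\in {\cal C}_0'$ (indeed ${\cal C}_0^{0'}$) such that $\|x-\diag(\phi_0(x),\psi_0(x))\|<\ep$ on ${\cal F}\cup\{e\}$, $\phi_0(e)\lesssim b'$, and $t(f_{1/4}(\psi_0(e)))\ge {\mathfrak f}_e$ for all $t\in T(D_0)$. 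Since $A\in{\cal D}_0$ would give $A\in{\cal D}^d$ via 14.5 of \cite{GLp1}, I would like a matricial-amplified version of $\psi_0$; but the hypothesis here is only $A\in{\cal D}$, so instead the idea is to apply the ${\cal D}$-decomposition $n$ more times, each time on the ``small'' corner.

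The key steps, in order, are as follows. (1) Choose $b_1,\dots,b_n\in A_+\setminus\{0\}$ pairwise orthogonal, each $\lesssim b$, and with $b_0:=b_1$; this is possible since $A$ is simple and non-unital. (2) Starting from $\phi_0$, recursively apply \ref{DD0} inside the hereditary subalgebra generated by $\phi_{i-1}(A)$: there the relevant strictly positive element is (a cutdown of) $\phi_{i-1}(e)$, and we obtain ${\cal G}_i$-$\ep/2^i$-multiplicative maps $\phi_i\colon A\to A$ and $\psi_i\colon A\to D_i$, $D_i\in{\cal C}_0'$, with $\phi_i(e)\lesssim b_i$, $t(f_{1/4}(\psi_i(e)))\ge{\mathfrak f}_e$ on $T(D_i)$, and $D_i\perp\overline{\phi_i(A)A\phi_i(A)}$ and $D_i\perp D_{i'}$ for $i'<i$ (this orthogonality is automatic from the construction in \ref{DD0}, where $D\perp\overline{\phi(A)A\phi(A)}$ and all later blocks live inside $\overline{\phi(A)A\phi(A)}$). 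Here the finite sets ${\cal G}_i$ must be enlarged at each stage so that the telescoping estimate $\|x-\diag(\phi_n(x),\psi_1(x),\dots,\psi_n(x))\|<\ep$ survives the $n$-fold composition — a routine but bookkeeping-heavy choice of tolerances. (3) Set $\phi:=\phi_n$ and, after a small perturbation identifying the mutually orthogonal isomorphic-up-to-$\ep$ pieces, realize $\oplus_{i=1}^n\psi_i$ as a map into $M_n(D)$ for a single $D\in{\cal C}_0'$: concretely, take $D$ to be a single $D_i$ (or a common hereditary subalgebra $D\subset A$ into which each $D_i$ embeds up to the allowed error, using that we may arrange each $\psi_i$ to factor approximately through a fixed building block by shrinking ${\cal F}$), and use the orthogonality $D_i\perp D_{i'}$ to view $\psi:=\oplus_i\psi_i$ as landing in $M_n(D)\subset A$. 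Then $\phi(e)=\phi_n(e)\lesssim b_n\lesssim b$ gives \eqref{Pwwdiv-1}'s second line, and $t(f_{1/4}(\psi(e)))\ge {\mathfrak f}_e\ge{\mathfrak f}_e/2$ on $T(D)$ gives \eqref{Pwwdiv4} (the factor $1/2$ buys the room lost in the perturbation identifying the $D_i$ with blocks of a common $D$).

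The main obstacle is Step (3): assembling $n$ a priori \emph{different} building blocks $D_i\in{\cal C}_0'$ into a genuine matrix algebra $M_n(D)$ over a \emph{single} $D\in{\cal C}_0'$, rather than merely into a direct sum $\oplus_i D_i$. The honest way around this is to not insist the $D_i$ be literally equal but to use that ${\cal C}_0'$ is closed under the relevant manipulations and that, since we only need an ${\cal F}$-$\ep$-multiplicative \cpc\ (not a homomorphism), we may first replace each $\psi_i$ by its compression to a building block that dominates the relevant spectral data, then absorb all of them into one sufficiently large $D\in{\cal C}_0'$ with an embedding $D_i\hookrightarrow D$ as full hereditary subalgebras — using the structure theory of ${\cal C}_0$ (Notation \ref{2Rg11}, \ref{2Rg15} and Proposition \ref{PCuR}) to check that such a $D$ and such embeddings exist, and that $M_n(D)$ still sits as a \SCA\ of $A$ because $A\in{\cal D}$ is $M_n$-absorbing in the weak sense supplied by \ref{DD1}. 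If one is willing to invoke $A\in{\cal D}_0\Rightarrow A\in{\cal D}^d$ directly (valid whenever $A\in{\cal D}_0$), Step (3) collapses entirely; the content of the present proposition is precisely to get the $M_n(D)$ conclusion for general $A\in{\cal D}$ with continuous scale, so the argument above is the route I would write out, with the continuous-scale hypothesis used (as in \ref{Dconsc}) to guarantee the auxiliary orthogonal positive elements $b_i$ and to control the traces $T(D)$ uniformly across the iteration.
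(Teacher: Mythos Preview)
Your Step (3) is a genuine gap, and the workarounds you sketch do not close it. The iteration you describe produces pairwise orthogonal $D_1,\dots,D_n$ sitting in nested hereditary corners of rapidly decreasing trace (each $D_i$ lives inside $\overline{\phi_{i-1}(A)A\phi_{i-1}(A)}$, which is already $\lesssim b_{i-1}$), so the $\psi_i$ are wildly different in size and there are no partial isometries in $A$ linking them. A common $D\in{\cal C}_0'$ absorbing all the $D_i$ as full hereditary subalgebras would give you at best $\bigoplus_i D_i$, not $M_n(D)$; matrix structure requires the $n$ copies to be Murray--von~Neumann equivalent, and your construction provides no such equivalences. (Also, as written, your telescoped decomposition drops the large piece $\psi_0$ from the first application of \ref{DD0}, so it does not even approximate $x$.)

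The paper's route is quite different and avoids this entirely: it builds the $M_n$ structure \emph{inside $A$ before} decomposing. Using continuous scale and the surjectivity of $W(A)\to{\rm LAff}_{b+}(\overline{T(A)}^w)$ (15.8 of \cite{GLp1}), one finds a single positive element $e_0\in A$ that is close to a unit on ${\cal F}\cup\{e\}$ and decomposes as $e_0=\sum_{i=1}^n e_{0,i}$ with the $e_{0,i}$ mutually orthogonal and mutually equivalent via elements $w_i$; these generate a copy of $C_0((0,1])\otimes M_n$, which is semiprojective. Now apply the ${\cal D}$-decomposition \emph{once} (with a suitably fine tolerance on ${\cal F}\cup\{e_{0,i},w_i\}$) to get $\phi$ and $\psi_0\colon A\to B$, $B\in{\cal C}_0'$. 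Semiprojectivity yields a genuine homomorphism $h\colon C_0((0,1])\otimes M_n\to B$ close to $\psi_0$ on the generators, so $h(e_0)Bh(e_0)\cong M_n(D)$ with $D=\overline{e_1'Be_1'}\in{\cal C}_0'$; compressing $\psi_0$ by $h(e_0)$ gives the desired $\psi\colon A\to M_n(D)$, and the factor $\tfrac12$ in $\mathfrak f_e/2$ absorbs the loss from this compression.
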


\begin{proof}
Fix $\ep>0,$ $b$ and ${\cal F}$ as described in the statement.
Let $\eta=\inf\{\tau(b): \tau\in \overline{T(A)}^w\}>0.$
Choose $e_0\in A_+$ with $\|e_0\|=1$ such
that $\|e_0ee_0-e\|<\ep/16.$ \Wlog, we may also assume
that $e_0f=fe_0=f$ for all $f\in {\cal F}.$
It follows from
{{11.8 of \cite{eglnp1}}}  that  the map from $Cu(A)$ to ${\rm LAff}_{b+}(\overline{T(A)}^w)$
{{is an isomorphism.}}  {{Therefore there is $e_{0,1}\in A_+$ such that $n\la e_{0,1}\ra =\la e_0\ra$ and $\la e_0\ra =\la b\ra,$
where $b=\diag(e_{0,1}, e_{0,1},...,e_{0,1})$ ($e_{0,1}$ repeated $n$ times) in  $M_n(A)_+.$
By 11.5 of \cite{eglnp1}, $A$ has stable rank one. It follows that $\overline{e_0Ae_0}$ and $\overline{bM_n(A)b}$ are isomorphic.
In particular, $\overline{e_0Ae_0}\cong M_n(\overline{e_{0,1}Ae_{0,1})}.$}}
Therefore, \wilog, {{(replacing $e_0$ by another strictly positive element in $\overline{e_0Ae_0}$),}} we may also write that
$e_0=\sum_{i=1}^ne_{0,i},$ where
$\{e_{0,1}, e_{0,2},...,e_{0,n}\}$ are mutually orthogonal and
there exists $w_i\in A$ such that $w_i^*w_i=e_{0,1}$ and
$w_iw_i^*=e_{0,i},$ $i=1,2,,,,n.$

Since $A$ is stably projectionless, \wilog, we may assume that
$sp(e_0)=[0,1].$
Then elements $e_{0,i}$ and $w_i$ generate a \SCA\, $C$ which
is isomorphic to $C_0((0,1])\otimes M_n$ which is semi-projective.
Let ${\cal G}_1=\{
{{e_{0,i},}} w_i: 1\le i\le n\}.$

Put $\dt_0=\min\{\ep/16(n+1), \eta/2(n+1), {\mathfrak{f}}_e/(4(n+1)\}.$
Choose $\dt_1>0$ such that
for any ${\cal G}_1$-$\dt_1$-multiplicative \cpc\,  $L$ from $C$ to a \CA\,
$B,$
there is a \hm\, $\phi': C\to B$ such that
\beq
\|\phi'(g)-L(g)\|<{{\dt_0/4}}\rforal g\in {\cal G}_1.
\eneq
Put ${\cal F}_1={\cal F}\cup {\cal G}_1\cup \{ab: a, b\in {\cal F}\cup {\cal G}_1\}.$

Fix
a positive number $\ep_1<\min\{\dt_0, \dt_1/2\}/(4(n+1)).$
Since $A\in {\cal D},$ there  are ${\cal F}_2$-$\ep_1$-multiplicative \cpc s $\phi: A\to A$ and  $\psi_0: A\to B$  for some
\SCA\, $B\subset A$ with $B\in {\cal C}_0$ such that $\phi(e)\lesssim b,$ $\phi(A)\perp B,$
\beq\label{Pwwdiv-1}
&&\|x-(\phi(x)\oplus \psi_0(x))\|<\ep_1\rforal x\in {\cal F}_1\cup \{e,e_0\},\\\label{}
&&t(f_{1/4}(\psi(e)))\ge \mathfrak{f}_e \rforal t\in T(B).
\eneq
By the choice of ${\cal G}_1$ and $\dt_1,$
we obtain a \hm\, $h: C\to B$ such that
\beq
\|h(g)-\psi_0(g)\|<\dt_0/4\rforal g\in {\cal G}_1.
\eneq
Let $e'_i=h(e_i)$ and $v_i=h(w_i),$ $i=1,2,...,n.$
Let $B'=h(e_0)Bh(e_0).$  Since $h$ is a \hm\, and $e', v_i\in B',$
$B'\cong M_n(\overline{e_1'Be_1'}).$ Set $D=\overline{e_1'Be_1'}.$
Define  $\psi: A\to B'$ by $\psi(a)=h(e_0)\psi(a)h(e_0).$
One checks
\beq
\tau(\psi(e))\ge {\mathfrak{f}}_a/2\rforal \tau\in T(B')
\eneq
and $\psi$ is ${\cal F}$-$\ep$-multiplicative. Moreover,
\beq
\|x-(\phi(x)\oplus \psi(x))\|<\ep\rforal x\in {\cal F}.
\eneq

\end{proof}








\section{The unitary group}

\begin{lem}\label{L2matrix}
Let $A$ be a non-unital \CA\, and let $e_1, e_2\in A_+$ with $\|e_i\|=1$
($i=1,2 $) such that
$$
e_1e_2=e_2e_1=0
$$
and there is a unitary $u\in {\tilde A}$ such that
$u^*e_1u=e_2.$
Suppose that $w=1_{{\tilde A}_0}+x_0 \in {\tilde A_0}$ is a unitary
with $x_0\in A_0,$ where $A_0=\overline{e_1Ae_1}.$
Then
$w_1=1+x_0+u^*x_0^*u\in CU({\tilde A}),$
${\rm cel}(w_1)\le \pi$ and ${\rm cer}(w_1)\le 1+\ep.$
\end{lem}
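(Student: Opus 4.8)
The plan is to build the path connecting $w_1$ to $1_{\tilde A}$ inside $CU(\tilde A)$ by a ``rotation-by-a-conjugate'' trick. Since $w = 1_{\tilde A} + x_0$ is a unitary in $\tilde A_0 = \widetilde{\overline{e_1Ae_1}}$, and since $u^* e_1 u = e_2$ with $e_1 \perp e_2$, the element $w' := 1_{\tilde A} + u^* x_0^* u = u^* w^* u$ is a unitary in $\widetilde{\overline{e_2 A e_2}}$, sitting in a corner orthogonal to the one containing $w$. Because $e_1 \perp e_2$ and the supports of $x_0$ and $u^* x_0^* u$ are orthogonal, $w$ and $w'$ commute and $w_1 = w w' = w'w$. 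So I would first rewrite $w_1 = (1 + x_0)(1 + u^* x_0^* u)$ as a product of two commuting unitaries, one supported in $\overline{e_1 A e_1}$ and one in $\overline{e_2 A e_2}$.

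The key step is to observe that $w_1 = w \cdot (u^* w^* u)$ has the form of a ``commutator-like'' element: in the $2\times 2$ picture where $e_1, e_2$ are the two orthogonal corners swapped by $u$, $w_1$ is unitarily equivalent to $\mathrm{diag}(w, w^*)$ via the partial symmetry built from $u$ and $e_1, e_2$. Precisely, let $v$ be the unitary in $\tilde A$ implementing the swap on the subalgebra generated by $e_1, e_2$ (coming from $u$), extended by the identity elsewhere; then a short computation shows $w_1 = w \, (v^* w v)^{?}$ realizes $\mathrm{diag}(w,w^*)$ up to this conjugation, which is a standard commutator. The classical fact that $\mathrm{diag}(w, w^*)$ lies in $CU$ with $\mathrm{cel} \le \pi$ and $\mathrm{cer} \le 1 + \ep$ comes from the explicit Whitehead-type rotation path: writing $\mathrm{diag}(w,w^*) = \mathrm{diag}(w,1)\,\mathrm{diag}(1,w^*)$ and using the rotation homotopy
\[
R(t) = \begin{pmatrix} \cos(\pi t/2) & -\sin(\pi t/2) \\ \sin(\pi t/2) & \cos(\pi t/2) \end{pmatrix}
\]
to interpolate $\mathrm{diag}(w, w^*) = R(1)^* \,\mathrm{diag}(w, 1)\, R(1)\,\mathrm{diag}(w^*, 1) \cdot (\text{correction})$; this gives a length-$\pi$ path of unitaries in $CU$, and one exponential suffices up to $\ep$ after absorbing the phase of $w$. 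I would invoke whichever earlier lemma or standard reference (e.g.\ the cited results in \cite{Lncbms}, \cite{GLN}) makes this precise rather than redo it.

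The main obstacle I anticipate is bookkeeping the unitization: $u \in \tilde A$ rather than $A$, so $u^* x_0^* u \in A$ needs to be checked carefully (it is, since $x_0 \in A_0 \subset A$ is a two-sided annihilator issue only at the level of $e_1$; conjugation by a unitary of $\tilde A$ preserves $A$), and one must ensure the whole rotation path stays in $CU(\tilde A)$ and not merely in $CU$ of some corner — this is where orthogonality $e_1 e_2 = e_2 e_1 = 0$ and the fact that the ambient $\tilde A$ genuinely contains the swap unitary are used. Once $w_1$ is identified with $\mathrm{diag}(w, w^*)$ (in the hereditary subalgebra generated by $e_1, e_2$, unitized), the commutator-subgroup membership and the length/exponential-rank estimates are immediate from the standard rotation argument, and the $\ep$ in $\mathrm{cer}(w_1) \le 1 + \ep$ absorbs the approximation needed to replace a length-$\pi$ path by a single exponential.
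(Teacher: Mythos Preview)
Your proposal is correct and follows essentially the same route as the paper: both recognize $w_1=(1+x_0)\,u^*(1+x_0^*)\,u$ as a single commutator and build the length-$\pi$ path via the explicit Whitehead rotation
\[
X(t)=\cos(t\pi/2)\,p_{1,1}+\sin(t\pi/2)\,p_{1,1}u+\sin(t\pi/2)\,u^*p_{1,1}+\cos(t\pi/2)\,p_{2,2}+(1_{\tilde A}-p_{1,1}-p_{2,2})
\]
in the $2\times2$ corner determined by $e_1,e_2$, setting $W(t)=(1+x_0)X(t)(1+x_0^*)X(t)^*$ and checking via the quotient $M_2(\tilde A_0)\to M_2$ that $W(t)$ lands in $U(\tilde A)$. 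The paper carries this out explicitly rather than by citation, but the argument is the one you describe.
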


\begin{proof}
Let $B$ be the \SCA\, of $A$ generated by
$A_0$ {{and}} ${{ue_2}}.$ Note that $u^*A_0u=\overline{e_2Ae_2}.$  {{One can define a map from $M_2(\overline{e_1Ae_1})= M_2(\overline{e_1^2Ae_1^2})$ to $B$ by
$$M_2(\overline{e_1Ae_1})\ni\left(
\begin{array}{cc}
e_1^2a_{11}e_1^2 & e_1^2a_{12}e_1^2 \\
e_1^2a_{21}e_1^2 & e_1^2a_{22}e_1^2
\end{array}\right)\mapsto e_1^2a_{11}e_1^2+e_1^2a_{12}e_1ue_2+e_2u^*e_1a_{21}e_1^2+
e_2u^*e_1a_{22}e_1ue_2.$$
It is easy to verify that this is an isomorphism by using $e_1e_2=e_2e_1{{=0}}$ and {{$u^*e_1u=e_2$.}}}} Therefore $B\cong M_2(A_0)\in B.$
Consider $M_2({\tilde A_0}).$
Put $p_{1,1}=1_{{\tilde A_0}}.$ We view $p_{1,1}$ as the open projection
associated to $A_0.$
Let  $p_{2,2}=u^*p_{1,1}u.$ Since $1_{{\tilde A}_0}+x_0 $ is a unitary, we have
$$(p_{11}+x_0^*)(p_{11}+x_0)=(p_{11}+x_0)(p_{11}+x_0^*)=p_{11}.$$
Define, for $t\in [0, 1],$
$$
X(t)=((\cos(t\pi/2))p_{1,1}+(\sin(t\pi/2))p_{1,1}u+(\sin(t\pi/2))u^*p_{1,1}+(\cos(t\pi/2))p_{2,2})+((1_{\tilde A})-p_{1,1}-p_{2,2})).
$$
Define
$$
W(t)=(1+x_0)X(t)(1+x_0^*)X(t)^*\rforal t\in [0,1].
$$
Let $X'(t)=X(t)-((1_{\tilde A})-p_{1,1}-p_{2,2}))\in M_2({\tilde A_0})$ {{(by identifying $p_{11}u$ with $\left(
\begin{array}{cc}
0& 1\\
0 & 0
\end{array}\right)$, $u^*p_{11}$ with $\left(
\begin{array}{cc}
0& 0\\
1 & 0
\end{array}\right)$, and $p_{22}$ with $\left(
\begin{array}{cc}
0& 0\\
0 & 1
\end{array}\right)$). Set}}
$$
W'(t)=(p_{1,1}+p_{2,2}+x_0)X'(t)(p_{1,1}+p_{2,2}+x_0^*)X'(t)^*\in M_2({\tilde A_0}).
$$
We have
$$
X'(0)=p_{1,1}+p_{2,2}\andeqn X'(1)=p_{1,1}u+u^*p_{1,1}.
$$
Then
$$
W'(0)=p_{1,1}+p_{2,2}\andeqn W'(1)=(p_{1,1}+x_0)+(p_{2,2}+u^*x_0^*u).
$$

Let $\pi: M_2({\tilde A_0})\to M_2$ be the quotient map.
Then
$\pi(W'(t))=1_{M_2}$ for all $t\in [0,1].$ This implies that $W'(t)\in {\widetilde{M_2(A_0)}}$ for all
$t\in [0,1].$ It follows that
$W(t)\in U({\tilde A})$ for all $t\in [0,1].$
Note that
$W(0)=1_{\tilde A}$ and $W(1)=1+x_0+u^*x_0^*u.$
Moreover, one computes that (since each $W(t)\in U_0({\tilde A})$),
$$
{\rm cel}(\{W(t)\})\le \pi.
$$
It follows that ${\rm cer}(W(1))\le 1+\ep.$
Moreover
$$
1+x_0+u^*x_0^*u=(1+x_0)u^*(1+x_0^*)u.
$$
It follows that $1+x_0+u^*x_0u\in CU({\tilde A}).$
\end{proof}

The following is a variation of a lemma of N. C. Phillips
\begin{lem}[Lemma 3.1 of \cite{Ln-hmtp}]\label{ph}
Let $H>0$ be a positive number and let $N\ge 2$ be an integer.
Then, for any non-unital \CA\, which has almost stable rank one,  any
positive element $e_0\in A_+$ with $\|e_0\|=1,$ and $u=\lambda\cdot 1_{{\tilde A}_0} +
{{x_0'}}\in {\tilde A_0}$
(where $
{{x_0'}}\in A_0$ and $|\lambda|=1$)  such
that ${\rm cel}_{{\tilde A_0}}(u)\le H,$ where $A_0=\overline{e_0Ae_0}.$
Suppose that there are mutually orthogonal positive elements
$e_1, e_2,...,e_{2N}\in A_0^{\perp}$ such that
$e_0\sim e_i,$ $i=1,2,...,2N.$
Then  there exists $z\in CU({\tilde A})$ with
${\rm cel}(z)\le2 \pi$ and ${\rm cer}(z)\le 2+\ep$ such that
$$
\|u'-\lambda \cdot z\|<2H/N,
$$
where $u'=\lambda\cdot 1_{\tilde A}+
{{x_0'}}.$
\end{lem}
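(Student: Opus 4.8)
The plan is to mimic the standard Phillips rotation-trick argument, but carried out in the non-unital, hereditary-subalgebra setting using the orthogonal copies $e_1,\dots,e_{2N}$ of $e_0$ as "room" in which to perform rotations. First I would use the hypothesis ${\rm cel}_{{\tilde A}_0}(u)\le H$ to write $u$ (up to the scalar $\ld$, i.e. for $u_1:=\bar\ld u=1_{{\tilde A}_0}+\bar\ld x_0$) as a product $u_1=\prod_{j=1}^{M}\exp(\sqrt{-1}h_j)$ with $h_j\in (A_0)_{s.a.}$ and $\sum_j\|h_j\|\le H+\ep'$; after a perturbation I may assume $M$ is as convenient and that each individual $\exp(\sqrt{-1}h_j)$ is close to the identity. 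Since $A$ has almost stable rank one, each such exponential factor lives in $U_0(\widetilde{A_0})$ with small ${\rm cel}$, and I can cut the total length $H$ into $N$ pieces, i.e. group the factors so that $u_1=\prod_{k=1}^{N} v_k$ with each $v_k\in U_0(\widetilde{A_0})$, $v_k=1_{{\tilde A}_0}+y_k$, $y_k\in A_0$, and ${\rm cel}_{{\tilde A}_0}(v_k)\le H/N+\ep'$.

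Next, for each $k$ pick a partial isometry implementing $e_0\sim e_{2k-1}$ (and another for $e_{2k}$), giving a unitary $u^{(k)}\in{\tilde A}$ with $(u^{(k)})^*e_0u^{(k)}=e_{2k-1}$. Then I would invoke Lemma \ref{L2matrix}: with $w=v_k$ a unitary in $\widetilde{A_0}$, the element $w_1^{(k)}:=1_{\tilde A}+y_k+(u^{(k)})^*y_k^*u^{(k)}$ lies in $CU({\tilde A})$, with ${\rm cel}(w_1^{(k)})\le\pi$ and ${\rm cer}(w_1^{(k)})\le 1+\ep$; moreover $w_1^{(k)}=v_k\cdot\big((u^{(k)})^*(1_{{\tilde A}_0}+y_k^*)u^{(k)}\big)$, so modulo the commutator subgroup $v_k$ agrees with $w_1^{(k)}$ times the "conjugated inverse" which is supported on $e_{2k-1}\le A_e^{\perp}$. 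The point of having $2N$ orthogonal copies is that the $N$ correction terms $(u^{(k)})^*y_k^*u^{(k)}$ are supported on the mutually orthogonal $e_{2k-1}$'s, so their product $\prod_k (u^{(k)})^*(1_{{\tilde A}_0}+y_k^*)u^{(k)}$ is again a unitary of the form $1_{\tilde A}+(\text{element supported on }\bigoplus e_{2k-1})$, with ${\rm cel}$ controlled (each factor has length $\le H/N+\ep'$, and being supported on orthogonal pieces these can be done simultaneously, so the whole product still has ${\rm cel}\le H/N+\ep'$, hence is close to $1_{\tilde A}$ once $H/N$ is small — this is where the $2H/N$ in the conclusion comes from).

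Assembling: $u_1=\prod_k v_k$ equals, modulo $CU({\tilde A})$, the product $\prod_k w_1^{(k)}$ times a unitary $r$ with $r=1_{\tilde A}+(\text{small}) $, more precisely $\|r-1_{\tilde A}\|\le 2H/N$ (combining the $N$ simultaneous small rotations on orthogonal supports, or iterating the elementary estimate $\|1+y-(1+y)\text{-type}\|$). Set $z:=\ld\cdot\prod_k w_1^{(k)}\cdot\ld^{-1}$-normalized appropriately; since each $w_1^{(k)}\in CU({\tilde A})$ we get $z\in CU({\tilde A})$, with ${\rm cel}(z)\le \sum_k\pi$ collapsed to $\le 2\pi$ after the usual argument that products of short commutator loops can be reorganized (here using again the orthogonality to do the $\pi$-length rotations in parallel, giving total ${\rm cel}(z)\le 2\pi$ and ${\rm cer}(z)\le 2+\ep$). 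Then $u'=\ld u_1$ satisfies $\|u'-\ld z\|=\|u_1-z\|=\|r-1_{\tilde A}\|\le 2H/N$, as desired.

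The main obstacle I anticipate is bookkeeping the length estimates honestly: ensuring that splitting ${\rm cel}_{{\tilde A}_0}(u)\le H$ into $N$ factors $v_k$ each of length $\le H/N+\ep'$ is legitimate (one must take a piecewise-smooth path realizing the length and subdivide the parameter interval), and then checking that the $N$ corrections, supported on the orthogonal $e_{2k-1}$'s, can genuinely be performed \emph{simultaneously} so that the accumulated deviation from $1_{\tilde A}$ is $O(H/N)$ rather than $O(H)$, and that the accumulated ${\rm cel}$ of $z$ is $\le 2\pi$ rather than $\le N\pi$. Both are exactly the points where the orthogonal supports are essential: conjugating by $u^{(k)}$ moves everything into $\overline{e_{2k-1}A e_{2k-1}}$, and a direct sum of $N$ unitaries each connected to $1$ by a path of length $L$ is itself connected to $1$ by a path of length $L$ (done coordinatewise), not $NL$. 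Once this is set up carefully, the rest is the routine application of Lemma \ref{L2matrix} and functional calculus perturbations.
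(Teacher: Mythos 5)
Your decomposition $u_1=\bar\lambda u'=\prod_{k=1}^N v_k$ with ${\rm cel}(v_k)\le H/N$, the identity $v_k=w_1^{(k)}\cdot (u^{(k)})^*v_ku^{(k)}$ coming from Lemma \ref{L2matrix}, and the commutation of the corrections (supported in the mutually orthogonal corners $\overline{e_{2k-1}Ae_{2k-1}}$) past everything else are all fine, and they do give $u_1=Z\cdot R$ with $Z=\prod_k w_1^{(k)}\in CU({\tilde A})$ and $\|R-1\|\le 2H/N$. The genuine gap is the claim that ${\rm cel}(Z)\le 2\pi$ and ${\rm cer}(Z)\le 2+\ep$. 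The $N$ factors $w_1^{(k)}=1+y_k+(u^{(k)})^*y_k^*u^{(k)}$ are \emph{not} supported on mutually orthogonal corners: every one of them has the leg $1+y_k$ sitting in the same corner $A_0$. The $\pi$-length rotation path furnished by Lemma \ref{L2matrix} for $w_1^{(k)}$ rotates between $A_0$ and $\overline{e_{2k-1}Ae_{2k-1}}$, so the paths for different $k$ overlap on $A_0$ and cannot be run "in parallel"; your coordinatewise-path principle (a direct sum of unitaries each of length $\le L$ has length $\le L$) applies to the correction $R$, but not to $Z$. What the construction actually yields is ${\rm cel}(Z)\le N\pi$ and ${\rm cer}(Z)\le N(1+\ep)$; equivalently, commuting the corrections out gives $Z=u_1\cdot C$ with $\|C-1\|\le 2H/N$, so the only available length bound on $Z$ is essentially ${\rm cel}_{\tilde A}(u_1)+H/N$, which is exactly the quantity the lemma is trying to beat. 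Since producing a $z$ that is simultaneously $2H/N$-close to $\bar\lambda u'$ \emph{and} of exponential length $\le 2\pi$ is the whole content of the lemma, this is not a bookkeeping issue but a missing idea.

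The paper's proof avoids this by a different decomposition: it does not factor $u$ as a product of $N$ short unitaries all living in $A_0$. Instead it takes $N+1$ snapshots $u_0=u,\dots,u_N=1$ of a path with consecutive gaps $<H/N$, compresses them to unitaries $w_i=1+y_i$ with $y_i$ in a corner of $A_0$, and uses the $2N$ orthogonal copies of $e_0$ to spread these snapshots out as a \emph{direct sum} ("staircase") $X_1=1+y_0+\sum_i U_{2i-1}^*y_i^*U_{2i-1}+\sum_i U_{2i}^*y_iU_{2i}$. The index-shifted staircase $X_2$ and the pairing element $X_3$ are each genuine direct sums, over mutually orthogonal corners, of Lemma \ref{L2matrix}-type pairs $(y,y^*)$, so each lies in $CU({\tilde A})$ with ${\rm cel}\le\pi$ and ${\rm cer}\le 1+\ep$ (here the rotations really are parallel); the algebraic cancellation gives $X_1X_3=1+y_0\approx\bar\lambda u'$, and $\|X_1-X_2\|<2H/N$ because corresponding orthogonal slots differ only by consecutive gaps $\|y_i-y_{i-1}\|$ (a maximum, not a sum). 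Setting $z=X_2X_3$ then delivers both the closeness and the bounds ${\rm cel}(z)\le 2\pi$, ${\rm cer}(z)\le 2+\ep$. In short, you need the "snapshots spread as a direct sum and shifted by one slot" mechanism (using all $2N$ copies), not a factorization of $u$ into short pieces multiplied inside $A_0$.
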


\begin{proof}
Since ${\rm cel}_{{\tilde A_0}}(u)\le H,$ there are $u_0, u_1,...,u_N\in {\tilde A_0}$
such that
\beq\label{ph-1}
u_0=u,\,\,\,u_N=1_{\tilde A_0}\andeqn \|u_i-u_{i-1}\|<H/N,\,\,\, i=1,2,...,N.
\eneq
Write $u_i=\lambda_i\cdot 1_{{\tilde A_0}}+x_i',$
where $x_i'\in A_0,$ $i=1,2,...,N.$ In particular, $x_N'=0$.
It follows from \eqref{ph-1} that {{($\lambda_0=\lambda$)}}
$$
|\lambda_i-\lambda_{i-1}|<H/N,\,\,\, i=1,2,...,N.
$$
Let $v=v_0={\bar \lambda}u=1_{\tilde A_0}+{\bar \lambda}x_0'$ and
$v_i={\bar \lambda}_iu_i=1_{\tilde A_0}+{\bar \lambda}_ix_i',$ $i=1,2,...,N.$
Put $x_i={\bar \lambda}_ix_i',$ $i=0,1,...,N.$  We have $x_N=0$ and $v_N=1_{\tilde A_0}$.
Now
\beq\label{ph-4}
\|v_i-v_{i-1}\|=\|{\bar \lambda}_iu_i-{\bar \lambda}_{i-1}u_{i-1}\|<2H/N,\,\,\,i=1,2,...,N.
\eneq

Let
$$
\ep_0=2H/N-\sup\{\|v_i-v_{i-1}\|: , i=1,2,...,N\}.
$$
Choose $1>\dt>0$ such that
$$
\|x_i-f_\dt(e_0)x_if_\dt(e_0)\|<\ep_0/16N,\,\,\,i=0,1,2,....,N.
$$
Put $B_0=\overline{f_\dt(e_0)Af_\dt(e_0)}.$
There is a unitary  $w_i\in 1_{\tilde A_0}+B_0$ such that
$$
\|v_i-w_i\|<
{{\ep_0/4N}},\,\,\,i=0,1,...,N.
$$
Write $w_i=1_{\tilde A_0}+y_i,$ where $y_i\in B_0$ and $y_N=0.$
Since $A$ has almost stable rank one, there are unitaries $U_i\in {\tilde A}$ such that
$$
U_i^*f_{\dt/2}(e_0)U_i\in \overline{e_iAe_i} ,\,\,\,i=1,2,...,2N.
$$

Let
\beq\label{ph-5}
X_1&=& 1_{\tilde A}+y_0+\sum_{i=1}^NU_{2i-1}^*y_i^*U_{2i-1}+\sum_{i=1}^N U_{2i}^*y_iU_{2i}\\
X_2&=&1_{\tilde A}+y_0+\sum_{i=1}^N
{{U_{2i-1}^*}}y_{i-1}^*U_{2i-1}+\sum_{i=1}^N U_{2i}^*y_{i}U_{2i}\andeqn\\
X_3&=& 1_{\tilde A}+
\sum_{i=1}^NU_{2i-1}^*y_iU_{2i-1}+\sum_{i=1}^N U_{2i}^*y_i^*U_{2i}.
\eneq
Note that $X_1\in U({\tilde A}).$ Since $y_N=0$, as in \ref{L2matrix}, for $i=2,3$, we have
\beq\label{ph-5+}
X_i\in CU({\tilde A}),\,\,\, {\rm cel}(X_i)\le \pi\andeqn {\rm cer}(X_i)\le 1+\ep.
\eneq
Moreover
\beq\label{ph-6}
\|X_1-X_2\|{{\leq\sup\{\|y_i^*-y_{i-1}^*\|: , i=1,2,...,N\}}}\\{{<\ep_0/4N}}
+\sup\{\|v_i-v_{i-1}\|: , i=1,2,...,N\}
\eneq
Furthermore,
\vspace{-0.14in}\beq\label{ph-7}
1_{\tilde A}+y_0=X_1X_3.
\eneq
Put $z=X_2X_3.$    Then,  by \eqref{ph-5+},
$$
z\in CU({\tilde A}),\,\,\, {\rm cel}(z)\le 2\pi \andeqn {\rm cer}(z)\le 2+\ep.
$$
Moreover,
\beq
\|{\bar \lambda}\cdot u'-z\| &\le & \|(1_{\tilde A}-1_{\tilde{A_0}})+v_0-(1_{\tilde A}+y_0)\|+\|(1_{\tilde A}+y_0)-z\|\\
&<&
{{\ep_0/8N+\ep_0/4N}}+\sup\{\|v_i-v_{i-1}\|: , i=1,2,...,N\}<2H/N.
\eneq
\end{proof}

\begin{thm} {\rm (cf. Theorem 6.5 of \cite{LnTAI})}\label{UL1}
Let $A$ be a non-unital separable  simple \CA\,  in ${\cal D}$ and
let $u\in U_0({\tilde A})$  with $u=\lambda\cdot 1+x_0,$ where
$\lambda\in \C$ with $|\lambda|=1$ and $x_0\in A.$
Then, for any $\ep>0,$  there exists a unitary
$u_1, u_2\in {\tilde A}$ such that $u_1$ has exponential length no more than $2\pi,$ $u_2$ has exponential rank $3$ and
$$
\|u-u_1u_2\|<\ep.
$$
Moreover, ${\rm cer}(A)\le 5+\ep.$
\end{thm}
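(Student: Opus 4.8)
The plan is to reduce the general unitary $u \in U_0(\tilde A)$ to the situation covered by Lemma \ref{ph} (the Phillips-type lemma \ref{ph}), which already delivers a factorization $u' = \lambda z$ up to small error with $z \in CU(\tilde A)$ of controlled exponential length and rank, and then absorb the small error by one more short path. First I would fix $\ep > 0$. Since $A$ is simple, non-unital, in ${\cal D}$, it has almost stable rank one (this is recalled in the excerpt, e.g.\ via the structural results from \cite{GLp1} cited around Definition \ref{DD0}), which is the standing hypothesis needed for Lemma \ref{ph}. The key preparatory step is: given the strictly positive element $e_A$ and $u = \lambda \cdot 1 + x_0$ with $x_0 \in A$, cut down to a hereditary subalgebra $A_e = \overline{e_0 A e_0}$ with $e_0 \in A_+$, $\|e_0\| = 1$, $e_0 x_0 \approx x_0 \approx x_0 e_0$, so that $u$ is within $\ep/4$ of a unitary $\lambda \cdot 1_{\tilde A} + x_0'$ with $x_0' \in A_e$; one also needs a uniform bound $H$ on ${\rm cel}_{\tilde{A_e}}(\lambda \cdot 1 + x_0')$, which follows because $A_e$ again lies in (or is stably isomorphic into) the class under discussion and the earlier results (Theorem \ref{Ldert}, applied after the $\cal D$-approximation, or the general fact that hereditary subalgebras of algebras in ${\cal D}$ have finite, indeed uniformly bounded, exponential length on $U_0$) give such a bound.

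Next I would use simplicity and non-unitality to produce, inside $A_e^\perp$, a large number of mutually orthogonal copies $e_1, \dots, e_{2N}$ of $e_0$ (using that $M_n(A)$ is projectionless and stable rank one, one can always translate and halve supports; this is where the ${\cal D}$-decomposition \eqref{DNtr1div-1++}--\eqref{DNtrdiv-4} or the divisibility Proposition \ref{Pwwdivisible} is invoked to guarantee enough room). Choosing $N$ with $2H/N < \ep/4$, Lemma \ref{ph} then yields $z \in CU(\tilde A)$ with ${\rm cel}(z) \le 2\pi$, ${\rm cer}(z) \le 2 + \ep$, and $\|u' - \lambda z\| < \ep/4$ where $u' = \lambda \cdot 1_{\tilde A} + x_0'$. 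Combining with the initial cutdown gives $\|u - \lambda z\| < \ep/2$. Then $\lambda z$ is the desired product: set $u_1 = z$ after absorbing the scalar $\lambda$ appropriately (note $\lambda \cdot 1 \in U_0(\tilde A)$ has cel $\le \pi$, so it can be folded into the cel-bound-$2\pi$ factor, or, more carefully, one writes $\lambda z = (\lambda \cdot 1)(z)$ and merges $\lambda \cdot 1$ with the short path defining $z$; a cleaner route is to apply Lemma \ref{ph} to $u$ directly so $\lambda$ is already accounted for there). To pass from $\|u - \lambda z\| < \ep/2$ to an exact factorization $u = u_1 u_2$, write $u(\lambda z)^* = \exp(\sqrt{-1}\,a)$ with $\|a\|$ small, so $\exp(\sqrt{-1}\,a) \in U_0(\tilde A)$ has cel $< \ep/2$; then $u = \exp(\sqrt{-1}\,a)\lambda z$, and one distributes: $u_1 := \exp(\sqrt{-1}\,a)\cdot(\text{the cel} \le 2\pi \text{ part of } z)$ still has cel $\le 2\pi + \ep/2$, while $u_2$ is the cer-$3$ part. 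Since $z$ itself has ${\rm cer} \le 2+\ep$ we may instead split $z = z_1 z_2$ with $z_1$ a single exponential and $z_2$ a product of two, giving the claimed ${\rm cer}(u_2) = 3$ after incorporating $\exp(\sqrt{-1}a)$ and $\lambda\cdot 1$ into $u_1$.

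Finally, the bound ${\rm cer}(A) \le 5 + \ep$: any unitary $w \in U(\tilde A)$ has $w = (\lambda \cdot 1 + x_0)$ form automatically, and $\tilde A$ is projectionless-based so $K_1$ considerations aside, $w \in U_0(\tilde A)$ need not hold in general — but for the cer estimate one argues that $w$ is a product of $u_1 u_2$ with ${\rm cer}(u_1) \le 2$ (cel $\le 2\pi$ forces cer $\le 2+\ep$) and ${\rm cer}(u_2) = 3$, hence ${\rm cer}(w) \le 5 + \ep$; if $A$ has nontrivial $K_1$ one first multiplies $w$ by a representative to land in $U_0$, but the excerpt's statement is about $u \in U_0(\tilde A)$ and the companion cer bound follows by the same decomposition applied uniformly. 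I expect the main obstacle to be the bookkeeping in the second paragraph: ensuring that $A_e^\perp$ genuinely contains $2N$ orthogonal full copies of $e_0$ for $N$ as large as needed (this must be extracted from the ${\cal D}$-structure and the continuous-scale / divisibility results rather than assumed), and then tracking the exponential length versus exponential rank bookkeeping so that exactly one factor has cel $\le 2\pi$ and the other has cer exactly $3$, rather than cer $3+\ep$ or cel $2\pi + \ep$ leaking into the wrong factor.
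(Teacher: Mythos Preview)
Your approach has a genuine gap at exactly the point you flagged as the main obstacle, and it is fatal rather than cosmetic. You choose $e_0$ so that $x_0 \in A_e = \overline{e_0 A e_0}$; since $x_0$ can be full in $A$ (and typically is, $A$ being simple), $e_0$ must be tracially large --- for instance if $A$ has continuous scale and $d_\tau(e_0) > 1/2$ for all $\tau$, then $A_e^\perp$ cannot contain even one Cuntz-copy of $e_0$, let alone $2N$. Proposition \ref{Pwwdivisible} does not rescue this: it lets you write $A$ approximately as $\phi(A) \oplus M_n(D)$ with $\phi(A)$ \emph{small}, but your $A_e$ is the \emph{large} piece, and there is no room for copies of it in its complement. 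The bound $H$ on ${\rm cel}_{\tilde{A_e}}(u')$ is also not given by Theorem \ref{Ldert} (which is about ${\cal C}'$, not ${\cal D}$) nor by any ``general fact'' already available --- though $H$ is automatically finite since $u' \in U_0(\tilde{A_e})$, so the real obstruction is the absence of orthogonal copies.

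The paper's proof inverts your picture. It uses the ${\cal D}$-decomposition to write $x \approx \operatorname{diag}(\phi_0(x),\phi_1(x))$ with $\phi_1(x) \in M_n(D)$, $D \in {\cal C}_0'$, and $\phi_0$ landing in a hereditary subalgebra $B_0$ of \emph{arbitrarily small} trace. This splits $u$ (up to $\ep$) as $u_1 u_2$ with $u_1 = 1 + y_0$, $y_0 \in B_0$, and $u_2 = \lambda(1 + z_1)$, $z_1 \in M_n(D)$. For $u_2$ one applies Theorem \ref{Ldert} directly in the ${\cal C}_0'$-algebra $M_n(D)$ to get ${\rm cer}(u_2) \le 2+\ep$ (after absorbing the scalar, $\le 3$). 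For $u_1$, the original path $v_0,\dots,v_n$ pushes down to a path $w_0,\dots,w_n$ inside $\tilde{B_0}$, giving a finite $H = {\rm cel}_{\tilde{B_0}}(u_1)$; and now because $B_0$ is tracially tiny, strict comparison gives $\phi_0(e) \lesssim \phi_1(e)$, so each of the $n$ diagonal blocks of $M_n(D) \subset B_0^\perp$ contains a copy of the strictly positive element of $B_0$. One chooses $n$ large relative to $H$ and applies Lemma \ref{ph} to get ${\rm cel}(u_1) \le 2\pi$. The factorization $u_1 u_2$ and the two distinct estimates arise naturally from this decomposition --- there is no need for your final-paragraph attempt to split a single $z$ into two factors with different properties.
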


\begin{proof}
Let $1/2>\ep>0.$  Let $u'={\bar \lambda} \cdot u.$
Let
$v_0, v_1,...,v_n\in U_0({\tilde A})$ such that
$$
v_0=u', \,\,\, v_n=1\andeqn \|v_i-v_{i-1}\|<\ep/32,\,\,\,i=0,1,...,n-1.
$$
Write
$v_i=\lambda_i\cdot 1+x_i,$ where $|\lambda_i|=1$ and $x_i\in A,$ 
{{$i=1,...,n-1,$ and $v_0=1+\tilde{x}_0$, where $\tilde{x}_0=\bar{\ld}x_0$.}}
Note that $x_n=0.$

As demonstrated in the proof of \ref{ph}, we may assume
that there is a strictly positive element
$e\in A_+$ such that $\|e\|=1$ such that
\beq\label{UL1-5}
f_\eta(e)x_i=x_if_\eta(e)=x_i,\,\,\,i=0,1,2,...,n,
\eneq
for some $\eta>0.$ Let
$$
{\cal G}_1=\{e, f_\eta(e), f_{\eta/2}(e), {{\tilde{x}_0,}}\, x_i,\, 0\le i\le n\}.
$$
Put
$$
d=\inf\{d_\tau(e): \tau\in \overline{T(A)}^w\}>0.
$$
\Wlog, we may assume that $\tau(f_{1/2}(e))\ge d/2$ for all $\tau\in \overline{T(A)}^w.$

 Note that we may assume that $A$ is infinite dimensional.
Hence we may choose mutually orthogonal positive non-zero elements  $c_0, c_1,...,c_{n+1}$
such that $c_0\sim c_i$ ($1\le i\le n+1$) and
\beq\label{UL-n100}
d_\tau(c_0)<d/5(n+1)
\rforal \tau\in T(A).
\eneq
Let $\dt>0$
and let 
{{${\cal G}\supset {\cal G}_1$ be a finite subset of $A$}}.
Since $A\in {\cal D},$ there  are $A_0$ and $D\subset A$ with
$D\in {\cal C}_0'$ and $A_0\perp D,$ ${\cal G}$-$\dt$-multiplicative \cpc s
$\phi_0: A\to A_0$ and $\phi_1: A\to D,$
such that
\beq\label{UL1-6}
&&\|x-(\phi_0(x)\oplus  \phi_1(x))\|<\dt \rforal x\in {\cal G}\\\label{UL1-6+}
&&\phi_0(e)\lesssim  c_0,\\\label{UL1-6++}
&&\tau(f_{1/4}(\phi_1(e)))\ge  d/4\rforal \tau\in \overline{T(A)}^w.
\eneq
By choosing 
{{smaller $\dt$ and larger ${\cal G},$}} we may assume the following:
there are $y_i\in \overline{\phi_0(f_{\eta/2}(e))A\phi_0(f_{\eta/2}(e))}$
such that {{$1+y_0$,}} $\lambda_i\cdot 1+y_i$ 
{{are unitaries}} with $y_n=0$ such that
$\|\phi_0(x_i)-y_i\|<\ep/32,$ 
{{$i=1,2...,n,$ and $\|\phi_0(\tilde{x}_0)-y_0\|<\ep/32$.}} Consequently,
\beq\label{UL1-7}
\|y_i-y_{i+1}\|<\ep/16,\,\,\, \|(\lambda_i\cdot 1+y_i)-(\lambda_i\cdot 1+y_{i-1})\|<\ep/16,
\eneq
$i=0,1,...,n.$
Moreover,
there is $z_1\in  \overline{f_{\eta/2}(\phi_1(e))Df_{\eta/2}(\phi_1(e))}$ such that
$1_{\tilde A}+z_1$ is a unitary and
\beq\label{UL1-8}
\|v_0-(1_{\tilde A}+y_0+z_1)\|<\ep/16.
\eneq
Put $u_1'=1+ y_0,$ $u_2'=1+z_1$ and $u_2=\lambda u_2'.$
Then
$$
\|u-u_1'\cdot u_2\|<{{\ep/4}}.
$$
Put  $B_0=\overline{\phi_0(f_{\eta/2}(e))A\phi_0(f_{\eta/2}(e))}.$
Let $w_i=\lambda_i\cdot 1_{{\tilde B_0}}+y_i,$
$i=0,1,...,n.$
Then $w_n=1_{\tilde B_0},$ $w_0=1\cdot 1_{\tilde B_0}+ y_0$ and
$$
\|w_i-w_{i-1}\|<\ep/16,\,\,\,i=1,2,...,n.
$$
This implies that $w_0\in U_0({\tilde B_0})$ and {{$H:={\rm cel}(w_0)\le n\pi \ep/8.$}}
By \eqref{UL-n100} and \eqref{UL1-6++}, there are mutually orthogonal elements $c_i'\in A_0^{\perp},$
with $c_i'\sim c_0,$ $i=0,1,...,n+1.$
Then, by \eqref{UL1-6+}
and by  Lemma \ref{ph}, ${\rm cel}(u_1')\le {{2\pi+2H/n}}<2\pi+\pi\ep/8.$
On the other hand, by \ref{Ldert}, ${\rm cer}(u_2)\le 2+\ep.$
{{Lemma then follows.}}

\end{proof}


\begin{thm}\label{Ulength}
Let $A$ be a separable  simple \CA\, in ${\cal D}$  and let $u\in CU({\tilde A}).$
Then $u\in U_0({\tilde A})$ and ${\rm cel}(u)\le {{6\pi}}.$
\end{thm}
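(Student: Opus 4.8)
The statement to prove is that for $A$ a separable simple \CA\, in ${\cal D}$ and $u\in CU({\tilde A})$, one has $u\in U_0({\tilde A})$ and ${\rm cel}(u)\le 5\pi.$ The first point, that $u\in U_0({\tilde A})$, should be automatic here: every element of $CU({\tilde A})$ is a limit of products of commutators, and commutators of unitaries always lie in $U_0$, which is closed, so $CU({\tilde A})\subseteq U_0({\tilde A}).$ (This uses nothing about ${\cal D}$ beyond ${\tilde A}$ being a unital \CA.) The content is therefore the length estimate. My plan is to reduce, via a finite approximation, to a unitary living in a hereditary subalgebra coming from the ${\cal D}$-decomposition, exactly as in the proof of \ref{UL1}, and then run the commutator-path argument of \ref{Ldert} on the ${\cal C}$-piece together with the orthogonal-halving trick of \ref{L2matrix}/\ref{ph} to absorb the remaining length cheaply.

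\textbf{Key steps.} First, write $u=\lambda\cdot 1+x_0$ with $|\lambda|=1$; since $u\in CU({\tilde A})$ its image in $\C$ is $1$, so $\lambda=1$ and $x_0\in A$. Approximate $u$ within $\ep/4$ by a finite product of commutators $v=\prod_s v_{s,1}\cdots v_{s,r(s)}v_{s,1}^*\cdots v_{s,r(s)}^*$ with all $v_{s,i}\in U({\tilde A})$; note $v\in CU({\tilde A})\cap U_0({\tilde A}).$ Choose $\eta>0$ and a strictly positive $e\in A_+$ with $\|e\|=1$ so that $f_\eta(e)$ essentially fixes all the relevant $x$'s (the non-scalar parts of the $v_{s,i}$), and replace the $v_{s,i}$ by unitaries $w_{s,i}\in \C\cdot 1_{\tilde A}+\overline{f_\eta(e)Af_\eta(e)}$ agreeing with them up to a tiny error, so that $w=\prod_s w_{s,1}\cdots w_{s,r(s)}w_{s,1}^*\cdots\in CU(\C\cdot 1_{\tilde A}+\overline{f_\eta(e)Af_\eta(e)})$ and $\|w-u\|<\ep/2.$ Now invoke $A\in {\cal D}$ (as in \ref{UL1}, via \ref{DD0}/\ref{Pwwdivisible}): decompose the ``large'' part so that $\overline{f_\eta(e)Af_\eta(e)}$ sits, up to small perturbation, inside a \CA\, with a hereditary ${\cal C}'$-block $C$ admitting many mutually orthogonal copies of its unit — this is the mechanism that feeds both \ref{Ldert} (for $C$, giving a commutator path of length $\le 4\pi+\ep$) and \ref{ph} (the orthogonal-halving that converts an exponential-length bound into a $CU$ element of small length). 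Concretely, after perturbing $w$ into a commutator unitary in $\C\cdot 1_{\tilde A}+\overline{e''Ae''}$ with $\overline{e''Ae''}$ a hereditary subalgebra of some $D\in {\cal C}$ with ${\tilde D}\in {\cal C}$, apply 3.16 of \cite{GLN} inside ${\tilde D}$ to get a path in $CU({\tilde D})$ from $w$ to $1$ of length $\le 4\pi+\ep/2$; then correct the final discrepancy $uw^*$, which is $\ep/2$-close to $1$ and lies in $CU({\tilde A})$, by a short path — here one uses that $\tau(d)=0$ for the logarithm $d$ of $uw^*$ (because $uw^*\in CU$ forces the trace, hence after the spectral smallness the path, to stay in $CU$), contributing length $\le \ep/2$. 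Assembling: total length $\le 4\pi+\ep.$ To reach the stated $5\pi$, one has slack; in fact the cleaner route is to combine this with \ref{UL1}: $u\approx u_1u_2$ with ${\rm cer}(u_1)\le 2\pi$ and ${\rm cer}(u_2)\le 2+\ep$, but since $u\in CU$ the defect of $u_2$ from being in $U_0$ with controlled length is itself controllable, and the orthogonal-halving lemma \ref{ph} lets one trade the ``$cer$'' estimate of the ${\cal C}$-part for a genuine length estimate by pushing into orthogonal copies — this is precisely how one gets from the $\le 2+\ep$ exponential-rank bound of \ref{Ldert}/\ref{UL1} to a $cel$ bound of roughly $\pi\cdot(\text{rank})$, i.e. $\le 3\pi$, and adding the $2\pi$ from $u_1$ yields $\le 5\pi.$

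\textbf{Main obstacle.} The genuinely delicate step is controlling the \emph{length} (not merely the exponential rank) of the ${\cal C}'$-part and the ``junk'' piece $\phi_0(A)$-part simultaneously inside ${\tilde A}$. On the ${\cal C}$-block, \ref{Ldert} directly gives a $CU$-path of length $\le 4\pi+\ep$; the subtlety is that after the ${\cal D}$-decomposition the unitary $w$ is only approximately supported on a hereditary subalgebra of a ${\cal C}$-algebra, and one must ensure the perturbation from $w$ to an honest commutator unitary in $\C\cdot 1+\overline{e''Ae''}$ costs negligible length — this is routine but must be arranged by choosing $\eta$, then $e''\le f_\eta(e)$ via \ref{L<thomsen}, small enough, exactly as in \ref{Ldert}. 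The other place requiring care is the final ``closing up'' of $uw^*$: one needs $uw^*\in CU({\tilde A})$ (true, since $u,w\in CU$) and then the trace-vanishing of its logarithm, which for \CA s in ${\cal D}$ follows because the commutator subgroup dies on every trace; granting that, $\exp(\sqrt{-1}(1-t)d)w$ is a $CU$-path of arbitrarily small extra length. So the whole difficulty is bookkeeping the three length contributions — $4\pi$ (or $\le 3\pi$ from the refined $cer$-to-$cel$ conversion) from the ${\cal C}'$-part, $\le 2\pi$ from the $A_0$-part handled by \ref{ph}, and $\le \ep$ from the correction — and checking they sum to at most $5\pi.$
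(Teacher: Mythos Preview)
Your broad outline matches the paper's, but the concrete execution has a real gap and two smaller issues. First, a minor correction: $CU({\tilde A})\subseteq U_0({\tilde A})$ is not automatic --- a commutator of unitaries has trivial $K_1$-class, so lies in $U_0(M_m({\tilde A}))$ for some $m$, and one needs stable rank one of ${\tilde A}$ (from $A\in{\cal D}$, via 15.5 of \cite{GLp1} and \cite{Ref}) to descend to $U_0({\tilde A})$; the paper makes exactly this argument. Second, your ``cleaner route'' is a dead end: a bound ${\rm cer}(u_2)\le 2+\ep$ gives no control on ${\rm cel}(u_2)$, since the self-adjoints in $\exp(ih_1)\exp(ih_2)$ may have arbitrarily large norm. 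The paper never converts ${\rm cer}$ to ${\rm cel}$; it uses the $CU$-path half of \ref{Ldert} directly.

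The main gap is your ``concretely'' step, where you assert one can perturb the entire commutator $w$ into $\C\cdot 1_{\tilde A}+\overline{e''Ae''}$ with $\overline{e''Ae''}$ hereditary in some $D\in{\cal C}$, citing \ref{L<thomsen}. But \ref{L<thomsen} is a statement about positive elements already sitting inside a ${\cal C}$-algebra --- it powers \ref{Ldert} when $A\in{\cal C}'$, not when $A\in{\cal D}$. The ${\cal D}$-decomposition $x\approx\phi_0(x)\oplus\psi(x)$ always leaves a residual piece $\phi_0(x)$ in a small hereditary subalgebra $B_0\subset A$ that is \emph{not} inside any ${\cal C}$-algebra and cannot be perturbed away; your bound ``$\le 4\pi+\ep$'' arises precisely from having dropped this piece. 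The paper instead splits $u_0\approx(1+x_0')(1+z)$ orthogonally with $x_0'\in B_0$ and $z$ in the ${\cal C}'$-block $C$, and treats the two pieces separately: \ref{Ldert} on the product of commutators close to $(1+z)$ gives ${\rm cel}\le 4\pi$ in $CU({\tilde C})$, while $(1+x_0')$ is handled by \ref{ph}. Your ``Main obstacle'' paragraph does mention the $A_0$-piece and \ref{ph}, but misses the essential order of quantifiers: one must first record $H={\rm cel}_{\tilde A}(u_0)$, then choose $\dt,{\cal G}$ so that the approximately multiplicative $\phi_0$ carries a length-$H$ path in ${\tilde A}$ to one of comparable length in ${\tilde B}_0$, and only then pick the multiplicity $n$ in $M_n(D)$ with $4H/n$ small, so that (with $\phi_0(e)\lesssim\phi_1(e)$ supplying the orthogonal copies) \ref{ph} bounds ${\rm cel}_{\tilde A}(1+x_0')$. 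Finally, your accounting $4\pi+2\pi+\ep$ gives $6\pi$, not $5\pi$; one needs the orthogonality of $x_0'$ and $z$ to run the two paths simultaneously rather than sequentially.
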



\begin{proof}
Let $\pi: {\tilde A}\to \C$ be the quotient map. Since $u\in CU({\tilde A}),$
$\pi(u)=1.$  So we write $u=1+x_0,$ where  $x_0\in A.$

Let $1/2>\ep>0.$ There are $v_1, v_2,...,v_k\in U({\tilde A})$ such that
$$
\|u-v_1v_2\cdots v_k\|<\ep/32,
$$
and $v_i=
{{a_ib_ia_i^*b_i^*}},$ $a_i, b_i\in U({\tilde A}).$
It is standard that  $v_1v_2\cdots v_k\oplus 1_{M_{4k}}\in U_0(M_{4k+1}({\tilde A})).$
Since ${\tilde A}$ has stable rank one (see  11.5 of \cite{eglnp1} and 15.5 of \cite{GLp1}),  by \cite{Ref}, $v_1v_2\cdots v_k\in U_0({\tilde A}).$
It follows that $u\in U_0({\tilde A}).$
Put $u_0=v_1v_2\cdots v_k.$
Let $H={\rm cel}(u_0).$

Write $a_i=\lambda_i+x_i$ and $b_i=\mu_i+y_i,$
where $|\lambda_i|=|\mu_i|=1$ and $x_i, y_i\in A,$
$i=1,2,...,k.$

The rest of the  proof  is similar to that of \ref{UL1}.  We will repeat some of the argument.
we may assume
that there is a strictly positive element
$e\in A_+$ such that $\|e\|=1$  and
\beq\label{UL2-5}
f_\eta(e)x_i=x_if_\eta(e)=x_i, f_\eta(e)y_i=y_if_\eta(e)=y_i,\,\,\,i=0,1,2,...,k,
\eneq
for some $\eta>0.$
Let
$$
{\cal G}_1=\{e, f_\eta(e), f_{\eta/2}(e), x_i, 
{{y_i,}}\, 0\le i\le k\}.
$$
Put
$$
d=\inf\{d_\tau(e): \tau\in \overline{T(A)}^w\}>0.
$$
\Wlog, we may assume that $\tau(f_{1/2}(e))\ge d/2$ for all $\tau\in \overline{T(A)}^w.$

Choose $n\ge 1$ such that
\vspace{-0.13in}$$
4H/n<\ep/64k.
$$
There are mutually orthogonal elements $c_0, c_1,...,c_{n+1}$ in $A$ such that
$c_0\sim c_i$ and $d_\tau(c_0)<\ep d/n64k$ for all $\tau\in T(A).$
Let $\dt>0$
and let 
{{${\cal G}\supset {\cal G}_1$ be a finite subset of $A$}}.

Since $A\in {\cal D},$  there are $A_0$ and
$D\subset A$ with
$D\in {\cal C}_0'$ and $A_0\perp D,$  ${\cal G}$-$\dt$-multiplicative \cpc s
$\phi_0: A\to A_0$ and $\phi_1: A\to D,$
such that
\beq\label{UL2-6}
&&\|x-(\phi_0(x)\oplus \phi_1(x))\|<\dt \rforal x\in {\cal G}\\\label{UL2-6+}
&&\phi_0(e)\lesssim  c_0,\\
&&\tau(f_{1/4}(\phi_1(e)))\ge  d/4\rforal \tau\in \overline{T(A)}^w.
\eneq

By choosing 
{{smaller $\dt$ and larger ${\cal G},$}} we may assume the following:
there is $x_0'\in \overline{\phi_0(f_{\eta/2}(e))A\phi_0(f_{\eta/2}(e))}$
such that $1+x_0'$ is a unitary,
${\rm cel}(p+x_0')\le {{2H}},$ where $p$ is the unit of unitization of ${\tilde B},$
where $B=\overline{\phi_0(f_{\eta/2}(e))A\phi_0(f_{\eta/2}(e))},$
and
there are $z,\, z_i,\,x_i',\, y_i' \in  \overline{f_{\eta/2}(\phi_1(e))Df_{\eta/2}(\phi_1(e))}$ such that
$\lambda_i+a_i'$ and $\mu_i+b_i'$ are unitaries,  {{and}}
\beq\label{UL1-8+}
\|(1+z)-(1+z_1)(1+z_2)\cdots (1+z_k)\|<\ep/16\andeqn \|u_0-(1+x_0'+z)\|<\ep/16,
\eneq
where
$$
1+z_i=(\lambda_i\cdot 1+x_i')(\mu_i\cdot 1+y_i')(
{{\lambda_i\cdot 1}}+x_i')^*(\mu_i+y_i')^*,\,\,\,i=1,2,...,k.
$$
In particular, $(1+z_1)(1+z_2)\cdots (1+z_k)\in CU({\tilde C}),$
where $C=\overline{f_{\eta/2}(\phi_1(e))Df_{\eta/2}(\phi_1(e))}.$
It follows from \ref{Ldert} that
$$
{\rm cel}((1+z_1)(1+z_2)\cdots (1+z_k))\le 4\pi.
$$
As in the proof of \ref{UL1}, using \eqref{UL2-6+},  by applying \ref{ph},
we have
$$
{\rm cel}(1+x_0')\le 4H/n+2\pi +\ep<2\pi +2\ep.
$$
It follows that
\vspace{-0.13in}$$
{\rm cel}(u)<6\pi.
$$

\end{proof}



\begin{prop}{\rm (cf. Theorem 4.6 of \cite{GLX})}\label{PCue}
{{Let $A$ be a separable simple \CA\,  with continuous scale and let
$e\in A_+\setminus \{0\}.$ Then the map
${\imath_e}:U_0({\widetilde{\overline{eAe}}})/CU({\widetilde{\overline{eAe}}})\to U_0({\tilde A})/CU({\tilde A})$
is surjective. If, in addition, $A$ has stable rank one,  then
the map is also injective.}}
\end{prop}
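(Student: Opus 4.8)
The plan is to study $\imath_e$ via the de la Harpe--Skandalis determinant together with the restriction map on trace spaces. Put $B=\overline{eAe}$, which is a $\sigma$-unital full hereditary $C^*$-subalgebra of the simple separable algebra $A$ (full since $A$ is simple), so that $\imath_e$ is the map induced on $U_0/CU$ by the unital inclusion $\widetilde B\hookrightarrow\widetilde A$. Since a scalar unitary $\lambda\cdot 1\in U_0(\widetilde A)$ is the image of the corresponding scalar of $\widetilde B$, I may work only with unitaries of the form $1+x$. The first step is to analyze the restriction map $r\colon T(\widetilde A)\to T(\widetilde B)$, $\tau\mapsto\tau|_{\widetilde B}$. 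It is affine and continuous; I claim it is injective --- a bounded trace on the simple algebra $A$ is determined by its restriction to the full hereditary subalgebra $B$, and the ``evaluation'' trace is the unique one vanishing on $B$ --- and (using that $A$ is simple with continuous scale, hence $T(A)$ is a nonempty compact simplex and $\tau\mapsto d_\tau(e)$ is bounded below on it) that its image is closed with the same affine span as $T(\widetilde B)$. Combined with Brown's identification $K_0(B)\cong K_0(A)$, compatible with $\rho$ and $r$, this yields that
$$r^{\#}\colon \mathrm{Aff}(T(\widetilde B))/\overline{\rho_{\widetilde B}(K_0(\widetilde B))}\longrightarrow \mathrm{Aff}(T(\widetilde A))/\overline{\rho_{\widetilde A}(K_0(\widetilde A))},\qquad \phi\longmapsto\phi\circ r,$$
is a well-defined isomorphism.

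The second step records the naturality of the determinant: if $v\in U_0(\widetilde B)$ and $\{v(t)\}$ is a piece-wise smooth path in $\widetilde B$ from $v$ to $1$, then $\{\imath_e(v(t))\}$ is such a path in $\widetilde A$, and since $\tau$ and $\tau|_{\widetilde B}$ agree on $\widetilde B$ one gets $D_{\widetilde A}(\imath_e(v))=D_{\widetilde B}(v)\circ r$, i.e.\ $\bar D_{\widetilde A}\circ\imath_e=r^{\#}\circ\bar D_{\widetilde B}$ wherever the $\bar D$'s are defined. For \emph{surjectivity}: given $u=1+x_0\in U_0(\widetilde A)$, approximate it by a product of exponentials (so $\mathrm{cel}(u)<\infty$); since $\bar D_{\widetilde B}$ has dense image and $r^{\#}$ is onto, I can choose $v_0\in U_0(\widetilde B)$ so that $D_{\widetilde A}(u)$ and $D_{\widetilde A}(\imath_e(v_0))$ agree modulo $\overline{\rho_{\widetilde A}(K_0(\widetilde A))}$, whence $w:=u\,\imath_e(v_0)^{*}$ has trivial determinant. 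It then remains to prove that any $w\in U_0(\widetilde A)$ with $D_{\widetilde A}(w)\equiv 0$ lies in $\imath_e(U_0(\widetilde B)/CU(\widetilde B))$ modulo $CU(\widetilde A)$; for this I would write $w$ as a limit of products of commutators in $U(\widetilde A)$, and, exactly as in the proofs of Lemma~\ref{ph} and Theorem~\ref{UL1}, use that $A$ is simple with continuous scale to produce, for any prescribed finite set, enough mutually orthogonal positive elements equivalent to a sub-element of $e$ to absorb each commutator --- via the folding of Lemma~\ref{L2matrix} --- into (conjugates inside) the corner $B$ modulo $CU(\widetilde A)$. This yields surjectivity with no stable-rank hypothesis.

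For \emph{injectivity} assume, in addition, that $A$ has stable rank one. Then the hereditary subalgebra $B$ has stable rank one as well, so $\widetilde A$ and $\widetilde B$ both have stable rank one and the splitting \eqref{CUsplit} applies to each: $\bar D_{\widetilde A}$ and $\bar D_{\widetilde B}$ are \emph{isomorphisms} of $U_0(\widetilde A)/CU(\widetilde A)$ and $U_0(\widetilde B)/CU(\widetilde B)$ onto the respective quotients $\mathrm{Aff}(T(\,\cdot\,))/\overline{\rho(K_0)}$. Together with the naturality of the previous step this gives $\imath_e=\bar D_{\widetilde A}^{-1}\circ r^{\#}\circ\bar D_{\widetilde B}$ on $U_0/CU$, and since $r^{\#}$ is an isomorphism, so is $\imath_e$; in particular it is injective.

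I expect the main difficulty to be the last part of the surjectivity argument --- showing, without any stable-rank assumption, that a determinant-trivial unitary of $\widetilde A$ is, modulo $CU(\widetilde A)$, the image of a corner unitary. This is where continuous scale of $A$ is genuinely needed (to guarantee the ``room'', i.e.\ sufficiently many orthogonal copies of a piece of $e$, required to fold each commutator into the corner), and where the argument should closely follow Theorem~6.5 of \cite{LnTAI} and Theorem~4.6 of \cite{GLX}. A secondary point that must be handled carefully is the claim in the first step that $r$ is injective with closed, affinely spanning image, since the whole reduction to $r^{\#}$ rests on it; this again uses simplicity of $A$ in an essential way.
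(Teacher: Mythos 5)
Your surjectivity argument has a genuine gap, and it sits exactly at the step you defer. After choosing $v_0\in U_0(\widetilde{B})$ so that $w:=u\,\imath_e(v_0)^{*}$ has $\bar D_{\widetilde A}(\bar w)=0$, you still must show that $\bar w$ is trivial (or at least lies in the image of $\imath_e$). Your plan --- ``write $w$ as a limit of products of commutators'' and then fold the commutators into the corner --- presupposes precisely the statement that is missing, namely $\ker\bar D_{\widetilde A}\cap U_0(\widetilde A)=CU(\widetilde A)$, i.e.\ injectivity of $\bar D_{\widetilde A}$ on $U_0(\widetilde A)/CU(\widetilde A)$. In the framework of the paper (see \ref{DJc}, via Thomsen and \cite{GLX}) this is only available under stable rank one, whereas the surjectivity half of the proposition assumes no stable rank; so it cannot be invoked, and you do not prove it. Note also that once $w\in CU(\widetilde A)$ you are already finished ($\bar u=\imath_e(\bar v_0)$), so the proposed absorption of commutators into the corner via Lemma \ref{L2matrix} adds nothing: the entire content of your remaining step is the unproved kernel statement, so the claim that this route ``yields surjectivity with no stable-rank hypothesis'' does not hold up. The paper avoids inverting $\bar D_{\widetilde A}$ altogether: it first proves, using fullness of a strictly positive element $a_0$ of $\overline{eAe}$ in the simple algebra $A$ (9.5 of \cite{GLp1}), that every $h\in A_{s.a.}$ admits $h'\in(\overline{eAe})_{s.a.}$ with $\tau(h')=\tau(h)$ for all $\tau\in T(A)$ (write $h_{\pm}$ as $\sum_i x_i^*a_0x_i$ and pass to the trace-equal element $\sum_i a_0^{1/2}x_ix_i^*a_0^{1/2}$ of the corner), and then replaces each exponent in a factorization $u=\prod_j\exp(2\pi i h_j)$ by a trace-equal exponent from $\widetilde{\overline{eAe}}$; the only general input is that changing an exponent by a trace-zero self-adjoint element does not change the class modulo $CU$, which is far weaker than injectivity of $\bar D$. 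This trace-matching of self-adjoint elements is also exactly what would be needed to substantiate your assertion that $r^{\#}$ is onto; as written, that assertion (and the ``closed image with full affine span'' claim behind it) is stated rather than proved.

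Your injectivity argument, by contrast, is a legitimately different route from the paper's and could in principle be completed under stable rank one, but it too rests on $r^{\#}$ being an isomorphism of $\mathrm{Aff}(T(\widetilde B))/\overline{\rho_{\widetilde B}(K_0(\widetilde B))}$ onto $\mathrm{Aff}(T(\widetilde A))/\overline{\rho_{\widetilde A}(K_0(\widetilde A))}$: injectivity of $r^{\#}$ requires a quantitative extrapolation estimate from the (proper) image $r(T(\widetilde A))\subset T(\widetilde B)$ to all of $T(\widetilde B)$, using $\inf_{\tau\in T(A)}d_\tau(e)>0$ (this is where simplicity and continuous scale actually enter), and its surjectivity again requires the trace-matching above; none of this is carried out. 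The paper instead uses continuous scale and strict comparison to get $K\langle a_0\rangle\ge\langle e_A\rangle$ in the Cuntz semigroup, stable rank one to realize $A\subset M_K(\overline{eAe})$, and then reduces injectivity of $\imath_e$ to the unital matrix-stability theorem (Theorem 4.6 of \cite{GLX}) for $\widetilde{\overline{eAe}}$.
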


\begin{proof}
The proof is almost identical to  that of the unital case (see Theorem 4.6 of \cite{GLX}).

First, we claim that, for any $h\in A_{s.a.},$ there exists
$h'\in (\overline{eAe})_{s.a.}$ such that
$\tau(h')=\tau(h)$ for all $\tau\in T(A).$
{{Let $h=h_+-h_-.$}}

Put $A_0=\overline{eAe}.$ By Proposition {{5.6 of \cite{eglnp1},}}
there are $x_i, y_j\in A$ ($1\le i\le n$ and $1\le j\le m$) such that
\vspace{-0.12in}\beq
\sum_{i=1}^n x_i^*ex_i=h_+\andeqn\sum_{j=1}^my_j^*ey_j=h_-.
\eneq
\vspace{-0.12in}Then
\beq
h':=\sum_{i=1}^n{{e}}^{1/2}x_i^*x_ie^{1/2}-\sum_{j=1}^me^{1/2}y_j^*y_je^{1/2}\in A_0.
\eneq
Moreover, $\tau(h')=\tau(h)$ for all $\tau\in T(A).$  This proves the claim.

To show $\imath_e$ is surjective, let $u\in U_0({\tilde A})$ with $u=\prod_{j=1}^l\exp(i 2\pi h_j)$
with $h_j\in {\tilde A}_{s.a.}.$ Write $h_j=\af_i\cdot 1_{\tilde A}+h_j',$
where $\af_j\in \R$ with $|\af_j|=1$ and $h_j'\in A_{s.a.}.$ By the claim
that there {{exists}} $h_{0,j}'\in (A_0)_{s.a.}$ such that
$\tau(h_{0,j}')=\tau(h_j')$ for all $\tau\in T(A).$
Let $h_{0,j}=\af_j\cdot 1_{{\tilde A_0}}+h_{0,j}',$ $j=1,2,...,l.$
Put $w=\prod_{j=1}^l \exp(i h_{0,j}).$ Then $w\in U_0({\tilde A_0}).$
Put $v=\prod_{j=1}^l \exp(i {\tilde h}_{0,j}),$
where ${\tilde h}_{0,j}=\af_j\cdot 1_{{\tilde A}}+h_{0,j}',$ $j=1,2,...,l.$
Then $v\in U_0({\tilde A}).$
Moreover, $\imath_e({\bar w})={\bar v}.$
Since
\beq
D_{{\tilde A}}(v)(\tau)=\sum_{j=1}^l \tau({\tilde{h}}_{0,j})=\sum_{j=1}^l\tau({\tilde{h}_j})=D_{\tilde A}(u)(\tau)
\eneq
for all $\tau\in T({\tilde A}),$  {{by Lemma 3.1 of \cite{Thomsen},}}  $\imath_e({\bar w})=\bar u.$
This proves that $\imath_e$ is surjective.

To see it is injective,  let $e_A\in A$ be a strictly positive element
of $A$ with $\|e_A\|=1.$
Since $A$ has continuous scale,  by (the proof of) Proposition {{5.6 of \cite{eglnp1},}}
 there exists an integer $K\ge 1$
such that
\beq
K\la a_0 \ra > \la e_A \ra
\eneq
(in Cuntz semi-group).
Since $A$ has stable rank one,
\wilog, we may write $A\subset M_K(A_0).$
Put $E_0=1_{\tilde A_0}.$
Let $u\in {\tilde A}_0$ with $u=\lambda\cdot E_0+x$
for some $\lambda\in \C$ with $|\lambda|=1$ and
$x\in (A_0)_{s.a.}.$
Write $w=\lambda \cdot 1_{\tilde A}+x.$
Then $\imath_e(\bar u)={\bar w}.$
Suppose that $w\in CU({\tilde A}).$
Write $E=1_{M_K({\tilde A}_0)}$  {{and}}
$w'=\lambda \cdot E+x.$ Then $w'\in CU(M_K({\tilde A}_0)).$
However, since ${\tilde A}_0$ has stable rank one,
it follows from Theorem 4.6 of \cite{GLX} that ${\bar u}\in CU({\tilde A}_0).$
This shows that $\imath_e$ is injective.

\end{proof}

\begin{lem}\label{LKlength}
Let $A$ be a non-unital and $\sigma$-unital simple \CA\, of stable rank one with continuous scale.
Suppose that there is $H>0$ such that,  for any hereditary \SCA\, $B$ of $A,$ ${\rm cel}(z)\le H$ for
any  $z\in
CU({\tilde B}).$
Suppose that there are two mutually orthogonal $\sigma$-unital hereditary \SCA s
$A_0$ and $A_1$ (of $A$) with strictly positive elements $a_0$ and $a_1$ with
$\|a_0\|=1$ and $\|a_1\|=1,$ respectively.
Suppose  that $x\in A_0$ and suppose
that for some $\lambda\in \C$ with $|\lambda|=1,$  $w=\lambda+x\in U_0({\tilde A}).$
Suppose also that there  is an integer $K\ge 1$
%
such that
\beq\label{Llngth-n1}
Kd_\tau(a_0) \ge 1\tforal \tau\in T(A).
\eneq
 Let $u=\lambda\cdot 1_{{\tilde A}_0}+x.$
Suppose   that, for some $\eta\in (0,2],$
$$
{\rm dist}({\bar w},{\bar 1})\leq \eta\,\,\, {{{\rm in}\,\,\, U_0({\tilde A})/CU({\tilde A}).}}
$$
Then, if $\eta<2,$ one has
\vspace{-0.12in}$$
{\rm cel}_{{\tilde A}_0}(u)<({K\pi\over{2}}+1/16)\eta+H
\andeqn {\rm
dist}({\bar u}, {\bar 1_{{\tilde A_0}}})<(K+1/8)\eta,
$$
and if $\eta=2,$ one has
$$
{\rm cel}_{{\tilde A}_0}(u)<{K\pi\over{2}}{\rm cel}(w)+1/16+H.
$$
\end{lem}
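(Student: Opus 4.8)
The plan is to route the whole estimate through the de la Harpe--Skandalis determinant $\bar D$ of \ref{DJc}, using the comparison $K\,d_\tau(a_0)\ge 1$ only to control how much this determinant can grow in passing from $\widetilde A$ down to $\widetilde{A_0}$; the constant $K$ in the conclusion is exactly that growth factor. (Write $\operatorname{size}$ for the sup-norm of the best representative of a coset in $\mathrm{Aff}(T(\cdot))/\overline{\rho}$.) Note that $\widetilde{A_0}$ is a unital \CA\ of stable rank one sharing the unit of $\widetilde A$, so \ref{DJc} applies to it; by Brown's stable isomorphism theorem the inclusion induces isomorphisms $K_i(\widetilde{A_0})\cong K_i(\widetilde A)$, and then \ref{PCue} (available since $A$ has continuous scale and stable rank one) together with the five lemma shows that $\imath_{a_0}\colon U(\widetilde{A_0})/CU(\widetilde{A_0})\to U(\widetilde A)/CU(\widetilde A)$ is an isomorphism carrying $U_0/CU$ onto $U_0/CU$. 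Since $w=\lambda\cdot 1+x\in U_0(\widetilde A)$ and $\imath_{a_0}(\bar u)=\bar w$, this forces $u\in U_0(\widetilde{A_0})\cdot CU(\widetilde{A_0})=U_0(\widetilde{A_0})$ (the last equality since stable rank one gives $CU\subseteq U_0$). A routine reduction lets us take $\lambda=1$, so $w=1+x$, $u=1+x$ with $x\in A_0$, the scalar correction being absorbed by $\rho_{\widetilde{A_0}}([1_{\widetilde{A_0}}])$ in the quotient below.

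Assume first $\eta<2$. For small $\epsilon>0$ choose $c\in CU(\widetilde A)$ with $\|w-c\|<\eta+\epsilon$; then $wc^{*}=\exp(i h)$ for some $h\in A_{s.a.}$ with $\|h\|\le 2\operatorname{arcsin}\!\big(\tfrac{\eta+\epsilon}{2}\big)\le\tfrac{\pi}{2}(\eta+\epsilon)$, and since $c\in U_0(\widetilde A)$ has $\bar D_{\widetilde A}(c)=0$ we get $\bar D_{\widetilde A}(w)=\big[\pm\tfrac{1}{2\pi}\widehat h\,\big]$, hence $\operatorname{size}\big(\bar D_{\widetilde A}(w)\big)\le\tfrac{1}{2\pi}\|h\|$. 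Now transfer to $\widetilde{A_0}$: each $\tau_0\in T(\widetilde{A_0})$ restricts on $A_0$ to a trace $\sigma_0$ with $d_{\sigma_0}(a_0)=\|\sigma_0\|\le\tau_0(1)=1$, whose unique densely-defined lower semicontinuous extension $\widetilde\sigma$ to $A$ satisfies, in the homogeneous form of the hypothesis, $K\,d_{\widetilde\sigma}(a_0)\ge d_{\widetilde\sigma}(e_A)=\|\widetilde\sigma\|$, so $\|\widetilde\sigma\|\le K\,d_{\sigma_0}(a_0)\le K$. Because $u-1=x\in A_0$, the determinant integral is ``local'' to $A_0+\C\cdot 1$; writing $\tau_0$ in terms of the normalized extension $\tau:=\widetilde\sigma/\|\widetilde\sigma\|\in T(\widetilde A)$ (the remaining scalar part contributing nothing in the quotient, and the $K_0$-images matching under Brown) one obtains $\bar D_{\widetilde{A_0}}(u)(\tau_0)=\|\widetilde\sigma\|\,\bar D_{\widetilde A}(w)(\tau)$ in $\mathrm{Aff}(T(\widetilde{A_0}))/\overline{\rho}$. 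Taking best representatives and the supremum over $\tau_0$ gives $\operatorname{size}\big(\bar D_{\widetilde{A_0}}(u)\big)\le K\cdot\operatorname{size}\big(\bar D_{\widetilde A}(w)\big)\le\tfrac{K}{2\pi}\|h\|$.

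By the realization results behind \ref{DJc} (Thomsen, \cite{GLX}) applied inside $\widetilde{A_0}$, pick $h_0\in(\widetilde{A_0})_{s.a.}$ with $[\widehat{h_0}]=\bar D_{\widetilde{A_0}}(u)$ and $\|h_0\|\le\operatorname{size}\big(\bar D_{\widetilde{A_0}}(u)\big)+\epsilon'$. Then $c_0:=u\exp(-2\pi i h_0)$ lies in $U_0(\widetilde{A_0})$ with trivial determinant, hence $c_0\in CU(\widetilde{A_0})$ and ${\rm cel}(c_0)\le H$ by hypothesis; and $u=\exp(2\pi i h_0)c_0$. Thus ${\rm cel}_{\widetilde{A_0}}(u)\le 2\pi\|h_0\|+H\le K\|h\|+2\pi\epsilon'+H<\big(\tfrac{K\pi}{2}+\tfrac1{16}\big)\eta+H$ once $\epsilon,\epsilon'$ are small (using $\|h\|\le\tfrac\pi2(\eta+\epsilon)$); and ${\rm dist}(\bar u,\bar 1_{\widetilde{A_0}})\le\|u-c_0\|=\|\exp(2\pi i h_0)-1\|\le 2\sin(\pi\|h_0\|)\le 2\sin\!\big(K\operatorname{arcsin}\tfrac{\eta+\epsilon}{2}+\pi\epsilon'\big)$, which is $<(K+\tfrac18)\eta$ because $\sin(K\theta)\le K\sin\theta$ for $0\le K\theta\le\pi$ and, in the range where the assertion is not vacuous, $\|h_0\|\le\tfrac12$ and $K\operatorname{arcsin}\tfrac\eta2<\tfrac\pi2$. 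The case $\eta=2$ is identical after writing $w=\prod_j\exp(ih_j)$ with $\sum_j\|h_j\|<{\rm cel}(w)+\epsilon$, so that $\operatorname{size}(\bar D_{\widetilde A}(w))\le\tfrac1{2\pi}({\rm cel}(w)+\epsilon)$ and hence ${\rm cel}_{\widetilde{A_0}}(u)<\tfrac{K\pi}{2}{\rm cel}(w)+\tfrac1{16}+H$.

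The two delicate points are: (a) the trace book-keeping in the second paragraph --- that every tracial state of $\widetilde{A_0}$ arises from a trace on $A$ of mass $\le K$, and that $\bar D$ and the $\overline{\rho}$-subgroups transform compatibly under restriction/extension of traces (together with the harmless $\lambda$-reduction); and (b) producing an $h_0$ that simultaneously represents the prescribed determinant class \emph{and} has operator norm within $\epsilon'$ of the size of that class. I expect (b) to be the main obstacle; should a norm-controlled realization not be available off the shelf, the fallback is a Phillips-type rotation --- embed $A$ as a hereditary \SCA\ of $M_K(A_0)$ via $K\,d_\tau(a_0)\ge1$ and stable rank one (as in the proof of \ref{PCue}), then apply the $2\times2$ rotation trick of \ref{L2matrix} with the orthogonal copy $A_1$ to push a short path for $w$ into $\widetilde{A_0}$ at the cost of the factor $K$.
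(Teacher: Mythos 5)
Your argument for the case $\eta<2$ is, in substance, the paper's own proof: you control $\overline{D}_{\tilde A}(w)$ by the distance hypothesis (writing $wc^{*}=\exp(ih)$ with $\|h\|\le 2\arcsin(\tfrac{\eta+\ep}{2})$), transfer the bound to $\overline{D}_{\tilde A_0}(u)$ with the factor $K$ coming from \eqref{Llngth-n1}, and then split $u$ as a small exponential times an element of $CU(\tilde A_0)$, absorbing the latter by the hypothesis ${\rm cel}\le H$. The paper implements the transfer by producing $h_0'\in (A_0)_{s.a.}$ with the same traces as $h_0$ and invoking \ref{PCue} together with \eqref{Llngth-n1}, which is exactly your trace-rescaling bookkeeping (every tracial state of $\tilde A_0$ restricts to a trace on $A_0$ whose extension to $A$ has mass at most $K$); and the norm-controlled realization you flag as the delicate point (b) is just the Cuntz--Pedersen fact that $(\tilde A_0)_{s.a.}\to \Aff(T(\tilde A_0))$ is a quotient map combined with the Thomsen isomorphism \eqref{CUsplit} of \ref{DJc}, which the paper also uses without comment -- so your fallback via \ref{L2matrix} is not needed. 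Where you genuinely diverge is the case $\eta=2$: the paper amplifies to $M_{R+1}(A)$ with $R=[{\rm cel}(w)+1]$ and uses the Phillips-type Lemma \ref{ph} to reduce to the case $\eta<2$, whereas you bound $\overline{D}_{\tilde A}(w)$ directly by $\tfrac{1}{2\pi}({\rm cel}(w)+\ep)$ from a finite product of exponentials and rerun the same transfer. Your route is simpler (it avoids \ref{ph} and the matrix amplification altogether, and makes no use of $A_1$), and it even yields the slightly sharper estimate $K\,{\rm cel}(w)+H$ in place of $\tfrac{K\pi}{2}{\rm cel}(w)+\tfrac1{16}+H$; the paper's reduction, by contrast, recycles the already-proved case and keeps the constants uniform with the first half of the statement. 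The only places where your write-up is looser than it should be are the initial reduction to $\lambda=1$ and the final distance estimate (the passage from $\|h_0\|$ to $2\sin(\pi\|h_0\|)$ is only valid when $\|h_0\|\le \tfrac12$, and otherwise one must note that $(K+\tfrac18)\eta>2$ makes the claim vacuous), but these are exactly the points you flag and they close as you indicate.
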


\begin{proof}
Let $L={\rm cel}(w).$
{{Since $A$ is simple and has stable rank one,}}
$u\in U_0({\tilde A_0}).$

First consider the case that $\eta<2.$
Let $c\in CU({\tilde A})$ such that
\vspace{-0.12in}$$
\|c-w\|\le \eta.
$$
Choose ${\eta\over{32K(K+1)\pi}}>\ep>0$ such that $\ep+\eta<2.$
Choose $h\in {{\tilde A}}_{s.a.}$ such that
with $\|h\|\le
2\arcsin({\ep+\eta\over{2}})$ such that
\vspace{-0.12in}\beq\label{LKl-2}
w\exp(ih)=c.
\eneq
Thus
\vspace{-0.12in}\beq\label{LKl-3}
\overline{D_{\tilde A}}(w\exp(ih))={\bar 0} \,\,\,{\rm ( in} \,\,\, \Aff(T({\tilde A}))/\overline{\rho_{\tilde A}(K_0({\tilde A}))}{\rm )}.
\eneq
It follows that
\vspace{-0.12in}\beq\label{LKl-4}
|\overline{D_{\tilde A}}(w)(\tau)|\le 2\arcsin({\ep+\eta\over{2}}).
\eneq
Put $h=\af\cdot 1_{\tilde A}+h_0,$ where
$\af\in \R$ with $|\af|\le 2\arcsin({\ep+\eta\over{2}})$ and $h_0\in A_{s.a.}.$
As in the proof of surjectivity of $\imath_e$ in \ref{PCue}, there is $h_0'\in (A_0)_{s.a}$
such that $\tau(h_0')=\tau(h_0)$ for all $\tau\in T(A).$
Put $h_0''=\af\cdot 1_{{\tilde A}}+h_0'.$
Moreover, $\tau(h_0'')=\tau(h)$ for all $\tau\in T({\tilde A}).$
Therefore
\beq
\overline{D_{\tilde A}}(w\exp(i
{{h''_0}}))(\tau)={\bar 0}.
\eneq
It follows from  \ref{PCue} that
\beq
D_{\tilde A_0}(u\exp(ih_{00}))={\bar 0} \,\,\, {\rm ( in} \,\,\Aff(T({\tilde A}_0))/
\overline{\rho_{\tilde A_0}(K_0({\tilde A}_0))}{\rm )},
\eneq
where $h_{00}=\af\cdot 1_{\tilde A_0}+h_0'.$
By \eqref{Llngth-n1}, {{$\|\tau|_{A_0}\|\ge 1/K.$ Then,  by \eqref{LKl-4},}}  in ${\tilde A}_{0},$ {{one computes}}
\beq\label{LKl-5}
|\overline{D_{\tilde A_{0}}}(u)|\le K2\arcsin({\ep+\eta\over{2}}).
\eneq
Thus there is $v\in CU({\tilde A_0})$  and $h_1\in {\tilde A}_{s,a.}$ such that
\beq\label{LKl-6}
u=v\exp(2\pi ih_1)\andeqn \|h_1\|\le K2\arcsin({\ep+\eta\over{2}}).
\eneq
Therefore
\vspace{-0.14in}\beq\label{LKl-7}
{\rm cel}(u)&\le & H+K2\arcsin({\ep+\eta\over{2}})\le H+K(\ep+\eta){\pi\over{2}}\\
&\le&  H+(K{\pi\over{2}}+{1\over{64(K+1)}})\eta.
\eneq
One can also compute that
$$
{\rm dist}({\bar u}, {\bar 1_{\tilde A_0}})\le K(\ep+\eta)\le K\eta+{\eta\over{32(K+1)\pi}}.
$$
This proves the case that $\eta<2.$

Now suppose that $\eta=2.$
Define $R=[{\rm cel}(w)+1].$ Note that ${{\rm cel}(w)\over{R}}<1.$
Put $w'=\lambda \cdot 1_{M_{R+1}}+x.$
It follows from \ref{ph} that
\beq\label{PH-111}
{\rm dist}(\overline{w'},{\overline{1_{M_{R+1}}}})<{{\rm
cel}(w)\over{R+1}}
\eneq
Put $K_1=K(R+1).$ To simplify notation,  replacing $A$ by $M_{R+1}(A),$ without loss of
generality, we may now consider that
\beq\label{PH-102}
K_1d_\tau(a_0)\ge 1
\andeqn {\rm dist}({\bar w},{\bar 1})<{{\rm
cel}(w)\over{R+1}}.
\eneq
Then we can apply the case that $\eta<2$ with $\eta={{\rm
cel}(w)\over{R+1}}.$

\end{proof}

\section{A Uniqueness theorem for \CA s in ${\cal D}$}

\begin{prop}\label{Porth}
Let $A$ be a separable amenable \CA.
Let $\ep>0$ and ${\cal F}\subset A$ be a finite subset.
Then there {{exist}} $\dt>0$ and  {{a finite subset}} ${\cal G}\subset A$ satisfy the following:
Suppose that there are {{two}} mutually
orthogonal \SCA s $A_0$ and $A_1$ and two ${\cal F}$-$\ep/2$-multiplicative \cpc s
$\phi_0: A\to A_0$ and $\phi_1: A\to A_1$
such that
$$
\|x-(\phi_0(x)\oplus \phi_1(x))\|<\ep/2 \rforal x\in {\cal F}
$$
and suppose that there
is
$\psi: A\to B$ (for any \CA\, $B$) which  is a ${\cal G}$-$\dt$-multiplicative \cpc.
Then
there exist a pair of mutually orthogonal \SCA s $B_0$ and $B_1$ of $B$
and ${\cal F}$-$\ep$-multiplicative \cpc s
$\psi_0: A\to B_0\subset B$ and $\psi_1: A\to B_1\subset B$
such that
\beq
&&\|\psi_0(x)-\psi\circ \phi_0(x)\|<\ep\andeqn \\
&&\|\psi_1(x)-\psi\circ \phi_1(x)\|<\ep\rforal x\in {\cal F}.
\eneq
\end{prop}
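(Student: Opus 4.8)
The plan is to reduce the statement to the existence of a continuous-path / semiprojectivity-type lifting. First I would recall that the decomposition $\|x-\diag(\phi_0(x),\phi_1(x))\|<\ep/2$ for all $x\in {\cal F}$ says, roughly, that the direct sum map $\phi_0\oplus\phi_1: A\to A_0\oplus A_1\subseteq A$ is ${\cal F}$-$\ep$-close to the inclusion, and that $A_0\perp A_1$. The key point is that orthogonality can be detected approximately by a finite set of norm conditions: if $\{a_i\}$ is a finite subset of $A_0$ and $\{b_j\}$ a finite subset of $A_1$, then $\|a_ib_j\|$ is small (zero), so any sufficiently-multiplicative map $\psi$ on a large enough ${\cal G}$ sends these to approximately orthogonal elements. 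So the real content is: (i) produce candidate maps $\psi_0,\psi_1$ that are ${\cal F}$-$\ep$-multiplicative and compatible with $\psi\circ\phi_0$, $\psi\circ\phi_1$; (ii) correct them to land in genuinely orthogonal hereditary subalgebras $B_0,B_1$ of $B$.

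For step (i), the natural choice is simply $\psi_0=\psi\circ\phi_0$ and $\psi_1=\psi\circ\phi_1$ (viewed as maps out of $A_0$, $A_1$ — here I would use the standard trick of composing with completely positive "conditional-expectation-like" maps, or just note that since $A_0\subset A$ we may precompose appropriately). These are ${\cal F}$-$\ep$-multiplicative once ${\cal G}$ is large enough relative to ${\cal F}$, $\phi_0$, $\phi_1$ and $\dt$ is small, because composition of an almost-multiplicative map with a genuinely (or almost) multiplicative map is again almost multiplicative, with controlled constants. The images $B_0':=\overline{\psi\circ\phi_0(A_0)B\psi\circ\phi_0(A_0)}$ and $B_1'$ defined analogously are then \emph{almost} orthogonal: $\|bb'\|<\ep'$ for $b$ in a prescribed generating set of $B_0'$, $b'$ in one for $B_1'$, where $\ep'\to 0$ as $\dt\to 0$, ${\cal G}\nearrow A$.

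For step (ii) — which I expect to be the main obstacle — I would invoke a standard "approximately orthogonal implies perturbable to orthogonal" lemma for hereditary subalgebras of a $C^*$-algebra: given two positive contractions (or approximate units) $e_0,e_1$ with $\|e_0e_1\|$ small, one can perturb them (functional calculus plus a small unitary or similarity conjugation) to genuinely orthogonal positive elements $e_0',e_1'$ close to the originals, hence get honestly orthogonal hereditary subalgebras $B_0,B_1$ with compressions $b\mapsto e_i'be_i'$ moving the relevant elements only by $O(\ep)$. Concretely, take a strictly positive element $e\in A$ supported appropriately, set $e_i$ = image of an approximate unit of $A_i$ under $\psi$, note $e_0e_1\approx 0$, replace $e_1$ by $(1-e_0)^{1/2}e_1(1-e_0)^{1/2}$ (or use $f_{\ep}$ functional calculus from \ref{Dfep}) to kill the overlap, and define $B_0=\overline{e_0Be_0}$, $B_1=\overline{e_1'Be_1'}$; finally set $\psi_i = (\text{compression by }e_i')\circ\psi\circ\phi_i$. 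One then checks $\|\psi_i(x)-\psi\circ\phi_i(x)\|<\ep$ using that $\psi\circ\phi_i(x)$ is already ${\cal G}$-$\dt$-approximately inside the hereditary algebra generated by $e_i$. Throughout, the quantifiers are chosen in the order: given $\ep,{\cal F}$, first fix an auxiliary $\ep_1\ll\ep$ and a finite ${\cal F}_1\supseteq{\cal F}$ large enough to witness almost-multiplicativity and almost-orthogonality of the compositions, then let $\dt$ and ${\cal G}$ be produced by the perturbation lemma applied with tolerance $\ep_1$ and test set ${\cal F}_1$. Amenability of $A$ is used only to have the usual supply of completely positive contractive approximate factorizations so that the compositions and compressions stay completely positive and contractive.
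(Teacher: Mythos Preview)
Your approach is correct in outline and genuinely different from the paper's. The paper does not perturb directly; it argues by contradiction through the sequence algebra $B_q=\prod_n B_n/\bigoplus_n B_n$. If the conclusion failed for every $(\delta_n,\mathcal G_n)$ with $\delta_n\to 0$ and $\mathcal G_n\nearrow A$, one would get maps $\phi_n:A\to B_n$ whose induced map $\Pi\circ\Phi:A\to B_q$ is a genuine $*$-homomorphism; the images of strictly positive elements $a_0\in A_0$, $a_1\in A_1$ are then \emph{exactly} orthogonal in $B_q$, and since orthogonal positive contractions always lift (this is the projectivity of $C_0((0,1])\oplus C_0((0,1])$) one obtains coordinatewise orthogonal $b_n^{(0)},b_n^{(1)}\in B_n$, sets $B_{n,i}=\overline{b_n^{(i)}B_n b_n^{(i)}}$, and takes $\psi_i(a)=b_n^{(i)}\phi_n(a)b_n^{(i)}$ for large $n$. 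Your route replaces the sequence-algebra passage with an explicit ``almost orthogonal $\Rightarrow$ perturbable to orthogonal'' step, which is precisely the semiprojective form of the same fact; this is more constructive and avoids $B_\infty$, at the price of tracking constants by hand.

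Two remarks. First, a quantifier issue affects both arguments: the statement, read literally, asks for $\delta,\mathcal G$ depending only on $\ep,\mathcal F$, with $A_0,A_1,\phi_0,\phi_1$ chosen afterward. Your $\mathcal G$ must contain $\phi_i(\mathcal F)$ and approximate units of the $A_i$, which are not available at that stage; the paper's proof likewise fixes $A_0,A_1$ before forming the contradicting sequence. In the only application (Theorem~\ref{TUNIq}) the decomposition $A_0\perp A_1$ is chosen first, so the weaker form---$\delta,\mathcal G$ allowed to depend on the decomposition---is all that is needed or actually proved. Second, your claim about where amenability enters is off: in your argument it plays no role, since compressing a c.p.c.\ map by a positive contraction is automatically c.p.c.; in the paper it is invoked for a Choi--Effros lift that, on inspection, is not actually used in the rest of the argument either.
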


\begin{proof}
Fix $1/2>\ep>0$ and a finite subset ${\cal F}\subset A.$
Let $\{B_n\}$ be any sequence of \CA s and let $\phi_n: A\to B_n$ be any sequence
of \cpc s such that
\beq
\lim_{n\to\infty}\|\phi_n(a)\phi_n(b)-\phi_n(ab)\|=0\rforal a, b\in A.
\eneq
Let $B_{\infty}=\prod_{n=1}^{\infty}B_n,$ $B_q=B_{\infty}/\oplus_{n=1}^{\infty}B_n$ and
$\Pi: B_{\infty}\to B_q$ {{be}} the quotient map.
Define $\Phi: A\to B_{\infty}$ by $\Phi(a)=\{\phi_n(a)\}$ for all $a\in A.$
Then $\Pi\circ \Phi: A\to B_q$ is a \hm.
Suppose $A_0$ and $A_1$ are in the statement of the {{proposition}}.
Let $a_0\in (A_0)_+$ with $\|a_0\|=1$ and $a_1\in (A_1)_+$ with $\|a_1\|=1$ be
strictly positive elements of $A_0$ and $A_1,$ respectively.
Then
$a_0a_1=a_1a_0=0.$
Therefore there are $b^{(0)}, b^{(1)}\in B_{\infty}$ such that
$b^{(0)}b^{(1)}=b^{(1)}b^{(0)}=0$  and such that $\Pi(b^{(i)})={{\Pi\circ \Phi(a_i)}},$ $i=0,1$ {{(see, for example, 10.1.10 of \cite{Lor}).}}
Write $b^{(i)}=\{b_n^{(i)}\}.$
Let $B_{n,i}=\overline{b_n^{(i)}B_nb_n^{(i)}},$ $i=0,1.$ Then
$B_{n,0}$ and $B_{n,1}$ are mutually orthogonal.
Since $A$ is amenable, there is a \cpc\, $\Psi: A\to B_\infty$ such that
$\Psi=\Pi\circ \Phi.$
Define $\psi_n': A\to B_n$ by
\beq
\psi_n'(a)=b_n^{(i)}\phi_n(a) b_n^{(i)}\rforal a\in A.
\eneq
Let $\psi_0=\psi_n'\circ \phi_0$ and $\psi_1=\psi_n'\circ \phi_0.$
If $n$ is sufficiently large, then $\psi_0$ and $\psi_1$ can be ${\cal F}$-$\ep$-multiplicative.
Moreover, if $n$ sufficiently large,
\beq
&&\|\psi_0(a)-\phi_n\circ \phi_0(a)\|<\ep\rforal a\in {\cal F}\andeqn\\
&&\|\psi_1(a)-\phi_n\circ \phi_1(a)\|<\ep \rforal a\in {\cal F}.
\eneq

If the proposition fails,  then such  $\{\phi_n\}$ could not exists for some choice of $\{B_n\}$,
$\ep$ and ${\cal F}.$
This proves the proposition.
\end{proof}

\begin{NN}\label{RbTuniq}
Fix a map ${\bf T}(n,k): \N\times \N\to \N.$ Let $A\in {\cal D}.$
Denote by ${\cal D}_{{\bf T}(n,k)}$ the class of \CA s in
${\cal D}\cap {\bf C}_{(r_0, r_1, T,  s, R)}$
with $r_0=0, $ $r_1=0,$  ${\bf T}={\bf T}(n,k),$ $s=1$ and $R=7,$ as defined in 3.13 of \cite{eglnkk0}.

Note if $A\in {\cal D},$ then $A$ has stable rank one {{(see 11.5 of \cite{eglnp1})}}
(so $r_0=0$ and $r_1=0$ in 3.14 of \cite{eglnp1}) and  by \ref{Ulength}, ${\rm cer}(M_n({\tilde A}))\le 6+\ep$
($R\le 7$) for all $n.$
If  $A$ is  also ${\cal Z}$-stable, then
$K_0({\tilde A})$ is weakly unperforated. Thus  $A\in {\cal D}_{{\bf T}(n,k)}$
for ${\bf T}(n,k)=n$ for all $(n,k)\in \N\times \N$ (see \ref{Tweakunp} below).

In the appendix to this paper, we show that every  {{amenable}} \CA\, in ${\cal D}$ are ${\cal Z}$-stable.
{{In fact, it is shown that $K_0({ A})$ is always weakly unperforated in   the appendix of \cite{eglnkk0}
for all $A\in {\cal D}.$  Therefore $A\in {\cal D}_{{\bf T}(n,k)}$ for the above ${\bf T}.$}}

\end{NN}

\begin{thm}\label{TUNIq}
Fix ${\bf T}(n,k).$ Let $A$ be a non-unital separable simple \CA\, in ${\cal D}^d$ with continuous scale
which satisfies the UCT.
Let $T: A_+\setminus \{0\}\to \N\times (\R_+\setminus \{0\})$ be a map.
For any $\ep>0$ and any finite subset ${\cal F}\subset A,$  there exists
$\dt>0,$ $\gamma>0,$
$\eta>0,$
a finite subset ${\cal G}\subset A,$ a finite subset ${\cal H}_1\subset A_+\setminus \{0\},$
a finite subset ${\cal P}\subset \underline{K}(A),$ a finite subset ${\cal U}=\{v_1, v_2,...,v_{m_0}\}\subset U({\tilde A})$
such that $\{[v_1], [v_2],...,[v_{m_0}]\}={\cal P}\cap K_1(A),$
and a finite subset ${\cal H}_2\subset A_{s.a.}$ satisfy the following:
Suppose that $\phi_1, \phi_2: A\to B$ are two ${\cal G}$-$\dt$-multiplicative \cpc s
which are $T$-${\cal H}_1$-full {{(see \ref{Dfulln}),}}  where
$B\in {\cal D}_{{\bf T}(n,k)}$  with continuous scale such that
\beq\label{TUNIq-1}
[\phi_1]|_{\cal P}&=&[\phi_2]|_{\cal P},\\\label{TUNIq-1+}
|\tau\circ \phi_1(h)-\tau\circ \phi_2(h)|&<&\gamma\tforal h\in {\cal H}_2\tand \tau\in T(B)\tand\\\label{TUNIq-1++}
{\rm dist}(\overline{\lceil \phi_1(v_i)\rceil},  \overline{\lceil \phi_2(v_i)\rceil} ) &<&\eta \tforal v_i\in {\cal U}\,\,\,{{\rm{(recall\,\,  \ref{Dceil}
\,\,for\,\, \lceil  -  \rceil)}.}}
\eneq
Then there exists a unitary $w\in {\tilde B}$ such that
\beq\label{TUNIq-2}
\|{\rm Ad}\, w\circ \phi_1(a)-\phi_2(a)\|<\ep\rforal a\in {\cal F}.
\eneq

\end{thm}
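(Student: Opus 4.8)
The plan is to follow the now-standard "Basic Homotopy Lemma + uniqueness via stable uniqueness" architecture adapted to the stably projectionless setting of class $\mathcal D$, in the spirit of Theorem 6.5 of \cite{LnTAI} but with the target algebras $B\in \mathcal D_{{\bf T}(n,k)}$ rather than TAI algebras. First I would reduce the problem, using the decomposition property of $\mathcal D^d$ (Definition \ref{DD1}) applied to $A$ itself: for the given $\ep$ and $\mathcal F$, write elements of $A$ approximately as $\diag(\phi(x),\psi(x))$ with $\psi$ landing in $M_n(D_1)$, $D_1\in \mathcal C_0^{0'}$, and $\phi$ small in the Cuntz sense. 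Transporting this decomposition through the two maps $\phi_1,\phi_2$ by means of Proposition \ref{Porth}, one obtains on the $B$-side a pair of orthogonal subalgebra decompositions of each $\phi_i$; the "large" part factors approximately through a homomorphism from a fixed $C^*$-algebra in $\mathcal C_0^{0'}$ (or a matrix amplification), while the "small" part is controlled via the Cuntz comparison and the $T$-$\mathcal H_1$-fullness hypothesis. The role of the invariant data $({\cal P},{\cal H}_2,{\cal U})$ and conditions \eqref{TUNIq-1}--\eqref{TUNIq-1++} is to guarantee that the two large parts agree on $\underline K$-theory, on traces, and on the rotation/determinant invariant in $U(\tilde B)/CU(\tilde B)$.

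Next I would invoke the existence (and the rigidity) half of the classification machinery for maps out of algebras in $\mathcal C_0^{0'}$. Concretely: since the two "large" pieces are approximate homomorphisms from a semiprojective algebra $D_1$-type object into $B$ which are close on $\underline K(A)|_{\cal P}$, on $\mathcal H_2$ modulo $\gamma$, and on the unitary classes $\overline{\lceil\phi_i(v_j)\rceil}$ modulo $\eta$, one applies a stable uniqueness theorem (the version for $\mathcal C_0'$-maps into $\mathcal D_{{\bf T}(n,k)}$, which is where the parameters $(r_0,r_1,T,s,R)$ and the bound ${\rm cer}(\tilde B)\le 5+\ep$, $\mathrm{cel}$-bounds from Theorems \ref{UL1}, \ref{Ulength}, \ref{Ldert} enter) to produce a unitary $w_0\in \tilde B$ with $\|\mathrm{Ad}\,w_0\circ(\text{large}_1)-(\text{large}_2)\|<\ep/4$ on the relevant finite set, after absorbing the small pieces into an ambient matrix amplification inside $B$ (which is possible because $B$ is simple, has continuous scale, stable rank one, and the small parts are dominated in Cuntz comparison by an arbitrarily small positive element, via $T$-fullness). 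The $\mathrm{cel}$-estimates of Section 4 are precisely what makes the correcting unitary path-connected with controlled length, so that the correction can be done inside $\tilde B$ rather than a stabilization, using Proposition \ref{PCue} and Lemma \ref{LKlength} to pull the unitary back from a matrix amplification to $\tilde B$ itself.

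Then I would glue: the unitary $w$ is obtained as a product of $w_0$ (matching the large parts) and a further unitary matching the small parts, the latter being handled because the small parts, being $T$-$\mathcal H_1$-full and Cuntz-small, can be conjugated within a corner of $B$ that is Cuntz-subequivalent to the complement of the large part; here one uses that $B$ has continuous scale and stable rank one to realize the required Cuntz comparisons as honest conjugations (again via the "almost stable rank one" manipulations of Lemma \ref{ph} and Lemma \ref{L2matrix}). Choosing $\delta,\gamma,\eta$ small enough and $\mathcal G,\mathcal H_1,\mathcal H_2,\mathcal P,\mathcal U$ large enough — all quantified by the stable uniqueness statement plus the finitely many applications of \ref{Porth}, \ref{Pwwdivisible}, and the $\mathrm{cel}$-bounds — yields \eqref{TUNIq-2}.

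The main obstacle, as always in this circle of results, is the stable uniqueness theorem feeding the argument: one must control approximate morphisms from building blocks in $\mathcal C_0^{0'}$ into the non-unital, stably projectionless targets $B\in \mathcal D_{{\bf T}(n,k)}$ simultaneously in $KL$, in the tracial (affine-function) data, and in the $U/CU$ determinant data, and then convert "close invariants" into "unitarily close maps" with a correcting unitary living in $\tilde B$ with bounded exponential length. The stably projectionless setting makes the last point delicate — there is no unit to anchor the unitary — so the genuine work is in assembling the $\mathrm{cel}$/$\mathrm{cer}$ bounds of Section 4 (Theorems \ref{Ldert}, \ref{UL1}, \ref{Ulength} and Lemmas \ref{ph}, \ref{LKlength}) with the Cuntz-comparison absorption of the "small" parts provided by the defining property of $\mathcal D$, so that everything can be carried out without passing to a stabilization that one cannot undo.
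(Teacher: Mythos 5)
Your proposal takes essentially the same route as the paper's own proof: decompose $A$ via the ${\cal D}^d$ property into a Cuntz-small piece plus $2K+1$ copies of a map into some $D\in {\cal C}_0'$, transport the decomposition through $\phi_1,\phi_2$ using Proposition \ref{Porth} and weak semiprojectivity to get honest homomorphisms from $M_{2K}(D)$, match those repeated copies by the uniqueness theorem for ${\cal C}_0'$-maps (11.5 of \cite{GLp1}), control the exponential length of the relevant $U/CU$-discrepancy via Lemma \ref{LKlength} (together with the Section 4 cel/cer bounds), and finish with the stable uniqueness theorem 7.9 of \cite{GLp1} plus comparison and stable rank one to realize the correcting unitary inside $\tilde B$. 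The only cosmetic difference is in the bookkeeping: in the paper the $KL$-, trace- and $U/CU$-hypotheses \eqref{TUNIq-1}--\eqref{TUNIq-1++} are applied to the ``small-plus-one-copy'' maps $\phi_i\circ\phi_{00}$, while the large repeated parts are matched by tracial data alone, whereas your sketch attributes all three invariants to the large parts; this does not change the architecture.
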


\begin{proof}
Fix a finite subset ${\cal F}$ and $1/4>\ep>0.$
As pointed out in \ref{RbTuniq}, $B\in {\bf C}_{(0,0, {\bf T}(n,k), 1,7)}$ for all $B\in {\cal D}={\cal D}_{{\bf T}(n,k)},$
{{where ${\bf T}(n,k)=n$ for all $(k,n).$}}
\Wlog, we may assume that ${\cal F}\subset A^{\bf 1}.$

 Since $A$ has the continuous scale,  $T(A)$ is compact {{(see 5.3 of \cite{eglnp1}).}}
Fix a strictly positive element $a_0\in A_+$ with $\|a_0\|=1.$
We may assume, \wilog,
that
\beq\label{Tuniq2-1}
a_0y=ya_0=y,\,\, a_0\ge y^*y\andeqn  a_0\ge yy^* \rforal y\in {\cal F}\andeqn\\\label{Tuniq2-1+}
\tau(f_{1/4}(a_0))\ge 1-\ep/2^{12}\rforal \tau\in T(A).
\eneq
Let $T_1: A_+\setminus \{0\}\to \N\times (\R_+\setminus \{0\})$
 with $T_1(a)=(N(a), M(a))$ ($a\in A_+\setminus \{0\}$) be the map described
 {{after }} \eqref{1225dd} in  \ref{DD1} and \ref{DD0}
 (see also
 8.3 and 10.8 of \cite{eglnp1})
 (in place of $T$).
 Suppose that $T(a)=(N_T(a), M_T(a))$ for $a\in A_+\setminus \{0\}.$


Define $T_2, T_3: A_+\setminus \{0\}\to \N\times (\R_+\setminus \{0\})$
 by $T_2(a)=(N(a), (4/3)M(a))$   and
 $T_3(a)=(N_T(a)N(a), (8/6) (M_T(a)+1)M(a))$ for all $a\in  A_+\setminus \{0\}.$
Define ${\bf L}(u)=8\pi$ for all $u\in U({\tilde A}).$

 Let $\dt_1>0$ (in place of  $\dt$), let
 ${\cal G}_1\subset A$ (in place of ${\cal G}$) be a finite subset,
 let ${\cal H}_{1,0}\subset A_+\setminus \{0\}$ (in place of ${\cal H}$)
 be a finite subset, ${\cal P}_1\subset \underline{K}(A)$ (in place of ${\cal P}$)
 be a finite subset,   let ${\cal U}_1\subset U({\tilde A})$ (in place of ${\cal U}$) be a finite subset
 and  let $K_1\ge 1$ (in place of $K$) be an integer
 given by  {{3.14 and 3.15 of \cite{eglnkk0}, or  by
 7.9 (together with 7.13 of \cite{GLp1})}}
 for the above $T_3$ (in place of $F$), $\ep/16$ (in place of $\ep$) and ${\cal F}$ and ${\bf L}$  with
 ${\bf L}(u)=8\pi.$
 We assume that $a_0, f_{1/16}(a_0),$ $f_{1/8}(a_0)$ and $f_{1/4}(a_0)\in {\cal F}\cup{\cal H}_{1,0}$
{{ (with $r_0=r_1=0,$  $T=T(k,n)$ above,  $s=1$ and $R=7$).}}


We may also assume  {{that}} $\dt_1$ is sufficiently small and ${\cal G}_1$ is sufficiently
 large that
 $
 [L_i]|_{\cal P}
 $
 is well-defined,  and
 $$
 [L_1]|_{\cal P}=[L_2]|_{\cal P},
 $$
 provided that $L_i$ is ${\cal G}_1$-$2\dt_1$-multiplicative and
 $$
 \|L_1(x)-L_2(x)\|<\dt_1\rforal x\in {\cal G}_1.
 $$
 \Wlog, we may also assume that
 $$
 {\cal F}\cup {\cal H}_{1,0}\cup \{xy: x, y\in {\cal F}\}\subset {\cal G}_1\subset A^{\bf 1}.
 $$
 Choose $b_0\in A_+\setminus \{0\}$ with
 \beq\label{513-n-1}
 d_\tau(b_0)<1/2^{10}(2K_1+1)\rforal \tau\in T(A).
 \eneq



{{Choose also}}  a larger finite subset ${\cal G}_1'$ of $A$  and a smaller  $\dt_1'$ so  that
\beq\label{Tuniq2-2+1}
&&\|\lceil L(u)\rceil -L(u)\|<\min\{1/4, \ep\cdot\dt_1/2^{10}\}/ 8\pi
{{\rforal u\in {\cal U}_1\andeqn}}\\
&&\label{Tuniq2-2+n}
\|f_{1/8}(L(a_0))-L(f_{1/8}(a_0))\|<1/2^{10}(K_1+1)
\eneq
%
provided that
$L$ is a ${\cal G}_1'$-$\dt_1'$-multiplicative \cpc\, (to any other \CA).

We may assume that $0<\dt_1'\le {\ep\cdot \dt_1\over{2^{12}(K_1+1)}}.$
For each $v\in {\cal U}_1,$ there is $\af(v)\in \C$ and $a(v)\in A$
such that
\beq\label{Tuniq2-2+2}
v=\af(v)\cdot 1_{{\tilde A}}+a(v),\,\,\, |\af(v)|=1\andeqn \|a(v)\|\le 2.
\eneq

Let $\Omega=\{a(v): v\in {\cal U}_1\}.$
We may also assume that
${\cal G}_1'\supset {\cal G}_1\cup {\cal F}\cup {\cal H}_{1,0}\cup \{xy: x, y\in {\cal G}_1\}\cup \Omega.$
 It follows from \ref{DD1} and \ref{DD0} (see also 8.3 and 10.8 of \cite{eglnp1})
 that there are ${\cal G}_1'$-$\dt_1'/64$-multiplicative \cpc s
 $\phi_0: A\to A$ and $\psi_0: A\to D$ for some $M_{2K_1+1}(D)\subset A$ with
 $D\in {\cal C}_0'$   and $A\perp M_{2K_1+1}(D)$ such that
 \beq\label{Tuniq2-3}
&&\hspace{-0.7in} \|x-(\phi_0(x),\diag(\overbrace{\psi_0(x), \psi_0(x),...,\psi_0(x)}^{2K_1+1})\|<\min\{\ep/K_12^{12}, \dt_1'/128K_1\}\rforal x\in {\cal G}_1',\\\label{Tuniq517n}
&&a_{00}'\lesssim b_0\andeqn
\tau(f_{1/4}(\psi_0(a_0)))\ge 1-\ep/2^{10}\rforal \tau\in T(D),
\eneq
and  $\psi_0(a_0)$ is strictly positive,
 where $a_{00}'$ is a strictly positive element of $\overline{\phi_0(a_0)A\phi_0(a_0)}.$
  Moreover,
 $\psi_0$ is $T_1$- ${\cal H}_{1,0}$-full
 in $\overline{DAD}.$

 We compute that, by \eqref{Tuniq2-1+}, \eqref{Tuniq2-3} and \eqref{Tuniq2-2+n},
 \beq\label{Tuniq2-3+n1}
 2\tau(f_{1/8}(\psi_0(a_0)))\ge 3/4K_1\rforal \tau\in T(A).
 \eneq
We also compute  that  (see \eqref{Tuniq2-1+}, \eqref{513-n-1}, \eqref{Tuniq2-3} and \eqref{Tuniq517n}), for all $\tau\in T(A),$
\vspace{-0.1in}\beq\label{Tuniq2-3+n2}
 \tau((f_{1/4}(\phi_0(a_0)), \diag(\overbrace{f_{1/4}(\psi_0(a_0)), f_{1/4}(\psi_0(a_0)),...,f_{1/4}(\psi_0(a_0))}^{2K_1+1}))>1-\ep/2^9.
 \eneq

 Let $A_{00}=\overline{(\phi_0(a_0), \psi_0(a_0))A(\phi_0(a_0), \psi_0(a_0))}$
 and let $\phi_{00}: A\to A_{00}$ be defined by
 $$
 \phi_{00}(x)=\phi_0(x)\oplus  \psi_0(x)\rforal x\in A.
 $$
 Let $a_{00}=a_{00}' \oplus \psi_0(a_0)\in A_{00}$ be a strictly positive element of $A_{00}.$

By choosing even possibly smaller $\dt_1'$ and larger ${\cal G}_1',$ if necessary,
we may assume that
$[\phi_{00}]|_{{\cal P}_1}$ is well defined and
denote ${\cal P}_2=[\phi_{00}]({\cal P}_1).$
Moreover, we may also assume, \wilog, that
\vspace{-0.1in}\beq\label{Tunq2-5+n1}
[L']|_{{\cal P}_2}=[L'']|_{{\cal P}_2},
\eneq
if
\vspace{-0.13in}$$
\|L'(x)-L''(x)\|<\dt_1'\rforal x\in {\cal G}_1'
$$
and $L'$ and $L''$ are ${\cal G}_1'$-$\dt_1'$-multiplicative \cpc s.
We may also assume that
 \beq\label{Tuniq2-3+}
 &&\|f_{\dt'}(a_{00})\phi_{00}(x)-\phi_{00}(x)\|<\dt_1'/2^{10}\andeqn\\
 &&\|f_{\dt'}(a_{00})\phi_{00}(x)f_{\dt'}(a_{00})-\phi_{00}(x)\|<\dt_1'/2^{10}\rforal x\in {\cal G}_1'
 \eneq
 for some $1/64>\dt'>0.$
Furthermore,
  \beq\label{Tuniq2-3+nn}
 &&\|f_{\dt'}(\psi_0(a_{0}))\psi_0(x)-\psi_{0}(x)\|<\dt_1'/2^{10}\andeqn\\
 &&\|f_{\dt'}(\psi_0(a_{0}))\psi_{0}(x)f_{\dt'}(\psi_0(a_{0}))-\psi_{0}(x)\|<\dt_1'/2^{10}\rforal x\in {\cal G}_1'.
 \eneq
 It follows from \eqref{Tuniq2-3+n1} that  $a_{00}'\lesssim b_0\lesssim f_{1/8}(\psi_0(a_0))$ and,  {{by 3.1 of \cite{eglnp1},}}
 there exists $x_0\in A$ {{such that}}
\vspace{-0.05in} \beq\label{Tuniq2-3+n1+1}
f_{\dt'/256}(a_{00}')(x_0^*f_{1/8}(\psi_0(a_0))x_0)=f_{\dt'/256}(a_{00}').
 \eneq
Let $g\in C_0((0,1])_+$ be such that $\|g\|=1,$
 $g(t)=0$ for all $t\in (0,\dt'/64)$ and $t\in (\dt'/8, 1].$

 Put (keep in mind that $A$ is projectionless and simple)
 \beq\label{Tuniq2-3+1}
 \sigma_0=\inf\{\tau(g(a_{00})): \tau\in T(A)\}>0.
 \eneq
Let
 ${\bar D}=M_{2K_1}(D).$
 Let $j_1: D\to M_{2K_1}(D)={\bar D}$ be defined by
 $$
 j_1(d)=\diag(d,d,...,d)\rforal d\in D.
 $$


 \noindent
 Let $\imath_1: {\bar D}\to A$ be the embedding.
Let $\ep_1=\min\{\ep/2^{10}, \dt_1/2^{10}, \dt_1'/2^{10}\}.$
Choose  a  finite subset ${\cal G}_2'\subset {\bar D}$
 which contains $\bigoplus_{i=1}^{2K_1}\pi_i\circ \psi_0({\cal G}_1'),$
 where $\pi_i: \bigoplus_{i=1}^{2K_1} D\to D$ is the projection to the $i$-th summand.
 Let $e_d\in { D}_+$ with $\|e_d\|=1$
 such that
 \beq\label{Tuniq2-nn-1}
 \|f_{1/4}(e_d)y-y\|< \ep_1/16\ \andeqn \|yf_{1/4}(e_d)-y\|<\ep_1/16
 \eneq
 for all $y\in \psi_0({\cal G}_1').$
 Let ${\bar e_d}=\diag(\overbrace{e_d, e_d,...,e_d}^{2K_1}).$
 \Wlog, we may assume that ${\bar e_d}, f_{1/4}({\bar e}_d)\in {\cal G}_2'.$

 Define $\Delta: D^{q, {\bf 1}}_+\setminus \{0\}\to (0,1)$ by, \, for $d\in D^{\bf 1}_+\setminus \{0\},$
 \beq\label{Tuniq2-5}
 \Delta(\hat{d})=\min\{\inf\{\tau\circ \imath_1\circ j_1(d): \tau\in T(A)\}, \min\{{1\over{2^{10}M(d)^2N(d)}}:d\in \hat{d}\}\}.
 \eneq

For $\ep_1,$  choose $\ep_2>0$  (in place of $\dt$) associated with $\ep_1/16$ (in place of $\ep$)
 and $1/16$ (in place of $\sigma$)  required by {{Lemma 3.3 of \cite{eglnp1}.}}
 \Wlog, we may assume that $\ep_2<\ep_1.$

 Let ${\cal G}_d$ (in place of ${\cal G}$) be a finite subset, ${\cal P}_d\subset K_0({\bar D})$ (in place of ${\cal P}$)
 be a finite subset,
 ${\cal H}_{1,d}\subset  ({\bar D})_+^{\bf 1}\setminus \{0\}$ (in place of ${\cal H}_1$) be a finite subset,
 ${\cal H}_{2,d}\subset ({\bar D})_{s.a.}$ (in place of ${\cal H}_2$) be a finite subset,  $\dt_d>0$ (in place of $\dt$),
 $\gamma_d>0$ (in place of $\gamma$) required by  {{Theorem 7.8 of \cite{eglnp1}}}
 for $C={\bar D},$
 $\ep_2/4$ (in place of $\ep$), ${\cal G}_2'$ (in place of ${\cal G}$) and $\Delta$ above.

 By \eqref{Tuniq2-2+1}, there is a finite subset ${\cal U}_2\subset  {{U({\widetilde{A}_{00}})}}$
 such that, for any $w\in {\cal U}_1,$ there is $w'\in {\cal U}_2$  {{with}}
 \beq\label{Tunq2-5+}
 \|\phi_{00}(w)-w'\|<\min\{1/4, \ep_1/2^{10}\}/8\pi.
 \eneq
 %
 For each $w'\in {\cal U}_2,$ there is $\af(w')\in \C$ with $|\af(w')|=1$ and
 $a(w')\in A_{00}$ with $\|a(w')\|\le 2$ such that
 $$
 w'=\af(w')\cdot 1_{{\tilde A}_{00}}+a(w').
 $$
Define
 $$
 \Omega_0=\{a(w'): w'\in {\cal U}_2\}.
 $$
 Note that by viewing ${\widetilde{A}_{00}}$ as
  a \SCA\, of ${\tilde A},$ we may also view ${\cal U}_2$ as a subset of ${\tilde A}.$

 Let
 $$
 {\cal G}_2=\{a_{00}, f_{\dt'/4}(a_{00}), g(a_{00}), x_0, x_0^*\}\cup {\cal G}_1'\cup \phi_0({\cal G}_1')\cup
{{ \psi_0({\cal G}_1')\cup}} {\cal G}_2'\cup {\cal G}_d\cup {\cal H}_{1,d}\cup {\cal H}_{2,d}\cup \Omega_0\subset A_{00},
 $$
 $${\cal H}_1=\{a_{00}, f_{\dt'/4}(a_{00}), f_{1/4}(a_0), f_{1/4}(\psi_0(a_0)),  g(a_{00})\}\cup {\cal H}_{1,0}\cup {{\psi_0({\cal H}_{1,0})\cup}} {\cal H}_{1,d},$$
   $${\cal H}_2={\cal H}_1\cup {\cal H}_{2,d},\,\,\,
   K_2=2^8\max\{M(a)^2N(a)^2: a\in {\cal H}_1\},$$
    $$\sigma_{00}={\sigma_0\over{K_2}},\,\,\,\dt_2={\min\{\dt_1/16, \dt_d/4, \gamma/2, \eta/2, \dt'/256, \sigma_{00}/4\}\over{4(K_1+1)}},$$
  $${\cal P}={\cal P}_1\cup {\cal P}_2\cup (j_1)_{*0}({\cal P}_d)\cup\{[w']: w'\in {\cal U}_2\},$$
  $$\gamma={\gamma_d\cdot \dt'\cdot \sigma_{00}\over{128(K_1+1)}},$$
  $\eta=1/2^{10}(K_1+1)K_2,$
  and ${\cal U}={\cal U}_1\cup {\cal U}_3$

  Now let ${\cal G}_0$ (in place of ${\cal G}$) and $\dt_0$ (in place of $\dt$)
be as required by \ref{Porth} for ${\cal G}_2$ (in place of ${\cal F}$) and $\dt_2$ (in place of $\ep$).
Since ${\bar D}$ is  weakly semi-projective, we may choose even large ${\cal G}_0$ and
smaller $\dt_0$ such that there is a \hm\, $\Phi$ from ${\bar D}$ such that
$$
\|L(x)-\Phi(x)\|<\dt_2/2\rforal x\in {\cal G}_2\cap {\bar D}
$$
for any ${\cal G}_0$-$\dt_0$-multiplicative \cpc\,  {{$L$}} from ${\bar D}.$
We also assume that
\beq\label{Tunq2-5+1}
\|L(f_{\dt'/4}(a_{00}))-f_{\dt'/4}(L(a_{00}))\|&<&\min\{\dt_2/2, \dt'/32\},\\
\|L(g(a_{00}))-g(L(a_{00}))\|&<&\min\{\dt_2/2, \dt'/32\},\\\label{Tuniq2-5+3}
\tau(g(L(a_{00})))&>&(1/2)\sigma_{00}\rforal  \tau\in T(C)\andeqn\\\label{Tuniq2-5+4}
\tau(f_{\dt'/128}(L(a_{00}')))&<&1/16(2K_1+1)\rforal \tau \in T(C)\\\label{Tuniq2-5+5}
&&\hspace{-1.8in}\tau(f_{1/8}(L(\psi_0(a_{0}))))\ge 1/K_2\rforal \tau\in T(C)
\hspace{0.1in}{\rm (since} \,f_{1/4}(\psi_0(a_0))\in {\cal H}_1{\rm )}
\eneq
for any ${\cal G}_0$-$\dt_0$-multiplicative \cpc\, $L$  from $A$ to $C$
which is also $T$-${\cal H}_1$-full (used for \eqref{Tuniq2-5+3} and \eqref{Tuniq2-5+5}),
where $C$ is any \CA\, with $T(C)\not=\emptyset.$


Let ${\cal G}={\cal G}_2\cup {\cal G}_0$ and $\dt=\min\{\dt_0/2, \dt_2/2\}.$

  Now suppose that $\phi_1, \phi_2: A\to B$ satisfy the assumption of the theorem for the
  above chosen ${\cal G},$ $\dt,$ $\gamma,$ ${\cal P},$ $\eta,$ ${\cal H}_1,$ ${\cal H}_2$ and ${\cal U}$
  (for $T$).

  Let $\phi_{i,0}=\phi_i\circ \phi_{00},$
  $i=1,2.$
  Let $\psi_{i,1}': {\bar D}\to B$ be defined by $(\phi_i)|_{{\bar D}}.$
  By applying \ref{Porth}
  \wilog, we may assume
  that  there are two pairs of  hereditary \SCA s $B_0, B_1$ and $B_0'$ and $B_1',$
  with $B_0\perp B_1$ and $B_0'\perp B_1'$ such that
  $\phi_1(A_{00})\subset B_0$ and $\psi_{1,1}({\bar D})\subset B_1,$
  $\phi_2(A_{00})\subset B_0'$ and $\psi_{2,1}({\bar D})\subset B_1',$
  and $\phi_i|_{A_{00}}$ is ${\cal G}_2$-$\dt_2$-multiplicative, $\psi_{1,1}: {\bar D}\to B_1$
  and $\psi_{2,1}: {\bar D}\to B_1'$  are \hm s such that
  \beq\label{Uni205-2}
  \|\psi_{i,1}'(x)-\psi_{i,1}(x)\|<\dt_2/2\rforal x\in {\cal G}_2\cap {\bar D},\,\,i=1,2.
  \eneq
We may further assume, by \eqref{Tuniq2-3+n1+1} (and $x_0\in {\cal G}_2$),
  \vspace{-0.1in}\beq\label{Uni204}
  f_{\dt'/128}(\phi_1(a_{00}'))\lesssim \psi_{1,1}(f_{1/16}(\psi_0(a_{0}))).
  \eneq

  Choose $b_{00}\in B_+$ such that $\tau(b_{00})\ge 1/2$ for all $\tau\in T(B).$
  Since both $\phi_1, \phi_2$ are $T$-${\cal H}_{1}$-full,
  $\psi_{1,0}$ and $\psi_{2,0}$ are $(4/3)T$-$(\psi_0({\cal H}_{1,0})\cup {\cal H}_{1,d})$-full.
  We then  compute that
  \beq\label{Tuniq2-15}
  \tau(\psi_{i,0}(x))\ge \Delta(\hat{x})\rforal x\in {\cal H}_{1,d} \andeqn \rforal \tau\in T(B).
  \eneq
  Then, by the choice of ${\cal P},$ ${\cal H}_{2,d}$ and $\gamma,$
  by applying 7.8 of \cite{eglnp1}
 we obtain a  unitary
  $U_1'\in {\tilde B}$ such that
  \beq\label{Tuniq2-16}
  \|{\rm Ad}\, U_1'\circ\psi_{2,1}(x)-\psi_{1,1}(x)\|<\ep_2/4\rforal x\in {\cal G}_2'.
  \eneq
  In particular,
 \vspace{-0.1in} \beq\label{Tuniq2-16+}
  \|{\rm Ad}\, U_1'\circ\psi_{2,1}({\bar e_d})-\psi_{1,1}({\bar e_d})\|<\ep_2/4.
  \eneq
  By applying {{Lemma 3.3 of \cite{eglnp1},}}
 there is a unitary $U_1''\in {\tilde B}$ such that
  \beq\label{Tuniq2-16+2}
  {\rm Ad}\, U_1''\circ {\rm Ad}U_1'\circ \psi_{2,1}(x)\in  \overline{ \psi_{1,1}({\bar e_d})B\psi_{1,1}({\bar e_d})}\rforal
  x\in \overline{\psi_{2,1}({\bar e_d})B\psi_{2,1}({\bar e_d})}\andeqn\\
  \|(U_1'')^*cU_1''-c\|<(\ep_1/16)\|c\| \rforal c\in \overline{\psi_{2,1}({\bar e_d})B\psi_{2,1}({\bar e_d})}.
  \eneq
  Put $U_1=U_1'U_1''.$ Then
  we have
  \beq\label{Tuniq2-16+3}
  {\rm Ad}\, U_1\circ \psi_{2,1}(f_{1/4}({\bar e_d})xf_{1/4}({\bar e_d}))\in   \overline{ \psi_{1,1}({\bar e_d})B\psi_{1,1}({\bar e_d})}\rforal x\in A\andeqn\\
  \|{\rm Ad}\, U_1\circ \psi_{2,1}(x)-\psi_{1,1}(x)\|<\ep_1/4\rforal x\in j_1\circ \psi_0({\cal G}_1').
  \eneq

  Let  $B'=\overline{({\rm Ad}\, U_1\circ \psi_{2,1}(f_{1/4}({\bar e_d}))B({\rm Ad}\,U_1\circ \psi_{2,1}(f_{1/4}({\bar e_d}))}$
 and let
  \beq\label{Tuniq2-16+4}
\hspace{-0.3in}  B_p=\{b\in B: bx=xb=0\rforal x\in  B'\}.
  \eneq
  By the choice of ${\cal H}_2$ and the assumption \eqref{TUNIq-1+},  for all $\tau\in T(B),$
  \beq\label{Tuniq2-17}
 |\tau(\phi_{1,0}(f_{\dt'/4}(a_{00})))-\tau(\phi_{2,0}(f_{\dt'/4}(a_{00})))|<\min\{\gamma/2, \dt_2/2\},
  \eneq
 With \eqref{Tunq2-5+1}  in mind, by the assumption,  we have that
 \beq\label{Tuniq2-18}
\|f_{\dt'/4}(\phi_i(a_{00}))-\phi_i(f_{\dt'/4}(a_{00}))\|&<&\min\{\dt_2/2, \dt'/32\}\andeqn\\
\|g(\phi_i(a_{00}))-\phi_i(g(a_{00}))\|&<&\min\{\dt_2/2, \dt'/32\},
 \eneq
  $i=1,2.$ We then compute that,  by \eqref{Tunq2-5+1},
  by the choice of ${\cal H}_2$ and $\gamma,$ and by \eqref{Tuniq2-5+3},
  \beq\label{Tuniq2-19}
  \hspace{-0.6in}\tau(f_{\dt'/4}(\phi_2(a_{00})))&\le &
  \min\{\dt_2/2, \dt'/32\}+\tau(\phi_{2}(f_{\dt'/4}(a_{00}))\\
%
&\le & \min\{\dt_2/2, \dt'/32\}+
\gamma +\tau(\phi_1(f_{\dt'/4}(a_{00}))\\
 &< &  \min\{\dt_2/2, \dt'/32\}+\gamma +\min\{\dt_2/2, \dt'/32\}\\
  &&\hspace{0.6in}+\tau(f_{\dt'/4}(\phi_1((a_{00})))\\
  &<& \tau(g(\phi_1(a_{00})))+\tau(f_{\dt'/4}(\phi_1((a_{00})))\\
  &\le &  \tau(f_{\dt'/64}(\phi_1(a_{00})))
  \eneq
  for all $\tau\in T(B).$
It is important to note that
  $$
  U_1^*f_{\dt'/2}(\phi_2(a_{00}))U_1,\,\,f_{\dt'/64}(\phi_1(a_{00}))\in B_p.
  $$
  Also note that $B_p$ is a hereditary \SCA\, of $B.$
  Since $B$ has
  {{strictly comparison}} for positive elements and $B$ has stable rank one,
  by
  {{3.2 of \cite{eglnp1},}}
 there is a unitary $U_2'\in {\tilde B_p}$ such that
  \beq\label{Tuniq2-20}
  (U_2')^*U_1^*f_{\dt'/2}(\phi_2(a_{00}))U_1(U_2')\in \overline{f_{\dt'/128}(\phi_1(a_{00}))Bf_{\dt'/128}(\phi_1(a_{00}))}:=
 B_{00}.
  \eneq
  Write $U_2'=\af\cdot 1_{{\tilde B_p}}+z$ with $z\in B_p$ and
  $\af\in \C$ with $|\af|=1.$
  Put $U_2=\af\cdot 1_{{\tilde B}}+z.$ Then \eqref{Tuniq2-20} still holds by replacing $U_2'$ by $U_2.$
  Moreover,
\beq\label{Tuniq2-20+}
  U_2^*xU_2=x
  \eneq
  for any  $x\in B'.$
In particular,
  \beq\label{Tuniq2-20+1}
   \|U_2^*({\rm Ad}\, U_1\circ \psi_{2,1}(x))U_2-\psi_{1,1}(x)\|<\ep_1/4+\ep_1/16=5\ep_1/16
   \eneq
   for all $x\in j_1\circ \psi_0({\cal G}_1').$

  Put  $\phi_{2,0}'={\rm Ad}\, U_2\circ {\rm Ad}\, U_1\circ \phi_{2,0}$
   and define
   $\phi_{2,0}'': A\to B_{00}$ by
   \beq\label{Tuniq2-20+1+1}
   \phi_{2,0}''(x)=U_2^*U_1^*f_{\dt'/2}(\phi_2(a_{00}))\phi_{2,0}(x)f_{\dt'/2}(\phi_2(a_{00}))U_1U_2\rforal x\in A.
   \eneq
  By \eqref{Tuniq2-3+},
  $\psi_{2,1}''$ is ${\cal G}_1'$-$\dt_1'/2^4$-multiplicative \cpc.
  Define $\phi_{1,0}': A\to B_{00}$ by
  \beq\label{Tuniq2-20+n1}
  \phi_{1,0}'(x)=f_{\dt'/2}(\phi_1(a_{00}))\phi_{1,0}(x)f_{\dt'/2}(\phi_1(a_{00}))\rforal x\in A
  \eneq
  which is also ${\cal G}_1'$-$\dt_1'/2^4$-multiplicative \cpc.
  Now both $\phi_{1,0}'$ and $\phi_{2,0}''$ are \cpc s from $A$ into
  $B_{00}.$
  Note that $B$ is separable and simple and has stable rank one.
  From the assumption, \eqref{Tuniq2-3+}  and \eqref{Tunq2-5+n1},
  we have
 \vspace{-0.14in} \beq\label{Tuniq2-21}
  [\phi_{2,0}'']|_{\cal P}=[\phi_{2,0}]|_{\cal P}=[\phi_{1,0}]|_{\cal P}=[\phi_{1,0}']|_{\cal P}.
 \eneq
  It follows from  the choice of ${\cal U}_2$ and
  assumption \eqref{TUNIq-1++} (as well as \eqref{Tuniq2-3+} and \eqref{Tuniq2-17} among
  others) that
  \beq\label{Tuniq2-22}
  {\rm dist}(\overline{\lceil \phi_{2,0}''(v)\rceil}, \overline{\lceil \phi_{1,0}'(v)\rceil})<\eta+\dt_1'/2^4
  \rforal v\in {\cal U}_1
  \eneq
  as elements in $U({\tilde B})/CU({\tilde B}).$
  It follows from \eqref{Tuniq2-5+5}
that
\beq\label{Tuniq2-23-1}
\tau(f_{\dt'/128}(\phi_1(a_{00})))>\tau(f_{\dt'/128}(\phi_1(\psi_0(a_{0}))))\ge 1/K_2\rforal \tau\in T(B).
\eneq
   It follows from \ref{LKlength} that, in $U({\tilde B_{00}}),$ for all $v\in {\cal U}_1,$
  \beq\label{Tuniq2-23}
  {\rm cel}_{{\tilde B}_3}(\overline{\lceil \phi_{2,0}''(v)\rceil}\overline{\lceil \phi_{1,0}'(v)\rceil ^*})&<&
  ({K_2\pi\over{2}} +{1\over{16}})(\eta+\dt_2) +6\pi\\
  &\le & 7\pi<{\bf L}(v).
\eneq

  Now let $\tilde{\psi}_d=\psi_{1,1}\circ \diag(\psi_0, \psi_0)$ and $B_2=\overline{{\tilde \psi}_d(A)B{\tilde \psi}_d(A)}.$
  Let $b_{00}'\in B_{00}$ be a strictly positive element with $\|b_{00'}\|=1$
  and let $b_2\in B_2$ be a strictly positive element with $\|b_2\|=1.$
  It follows from \eqref{Uni204} that
  \beq\label{Tuniq2-24}
  b_{00}'\lesssim b_2.
  \eneq
  Recall that $\psi_{1,1}$ and $\psi_{2,1}$ are assumed to be \hm s which
  are $T$-$\psi_0({\cal H}_{1,0})$-full.
  Since $\psi_0$ is $T_1$-${\cal H}_{1,0}$-full in $D,$  ${\tilde \psi}_d$ is also $T_3$-${\cal H}_{1,0}$-full in
  $B_2.$
  Recall that
 \vspace{-0.14in} \beq\label{Tuniq2-25}
  \psi_{1,1}(j_1\circ \psi_0(x))=\diag(\overbrace{{\tilde \psi}_d(x), {\tilde \psi}_d(x),...,{\tilde \psi}_d(x)}^{K_1})\rforal x\in A.
  \eneq
  Now we are ready to apply the stable uniqueness theorem {{3.14 of \cite{eglnkk0}.}}
  By that theorem,  viewing $B$ as a hereditary \SCA\, of $M_{K_1+1}(B_2),$
  there exists a unitary $U_3\in {\widetilde{M_{K_1+1}(B_2)}}$ such that
  \beq\label{Tuniq2-26}
  \|U_3^*(\phi_{2,0}''(x)\oplus \psi_{1,1}(j_1\circ \psi_0(x)))U_3-(\phi_{1,0}'(x)\oplus \psi_{1,1}(j_1\circ \psi_0(x))))\|<\ep/16 \eneq
  for all $x\in {\cal F}.$
  It follows from  \eqref{Tuniq2-nn-1} ,\eqref{Tuniq2-20+n1},\eqref{Tuniq2-3+}  and \eqref{Tuniq2-20+1+1} that
  \beq\label{Tuniq2-26+1}
    \|U_3^*(\phi_{2,0}'(x)\oplus  \psi_{1,1}(j_1\circ \psi_0(x)))U_3-(\phi_{1,0}(x)\oplus \psi_{1,1}(j_1\circ \psi_0(x))))\|\\
    <\ep/16 +\ep_1/4+\dt_1/2^8<\ep/8
    \eneq
  for all $x\in {\cal F}.$
  Since $B$ has stable rank one, one easily find a unitary $U_3'\in {\tilde B}$
  such that the above hold with $\ep/7$ instead of $\ep/8.$

Put $U_4=U_1U_2U_3'.$
  It follows   from \eqref{Tuniq2-3}, \eqref{Tuniq2-20+1}, \eqref{Uni205-2} and {{the above}} that,  for all $x\in {\cal F},$
  \beq\nonumber
  &&\hspace{-0.3in}\|{\rm Ad}\, U_4\circ \phi_2(x)-\phi_1(x)\|\\\nonumber
  &&\le \|U_4^*\phi_2(\phi_{00}(x)\oplus  j_1\circ \psi_0(x))U_4-
  \phi_1(\phi_{00}(x)\oplus  j_1\circ \psi_0(x))\| +2\min\{\ep/128, \dt_1'/128\}\\\nonumber
  &&<\|(U_3')^*( \phi_{2,0}'(x)\oplus  \psi_{1,1}(j_1\circ \psi_0(x)))U_3'-
  (\phi_{1,0}(x)\oplus \psi_{1,1}(j_1\circ \psi_0(x)))\|\\\nonumber
 &&\hspace{2in} +5\ep_1/16 +\dt_2/2+\ep/64\\
 &&<\ep/7++5\ep_1/16 +\dt_2/2+ \ep/64<\ep.
  \eneq

\end{proof}

\begin{rem}\label{Rsec51}
It is easy to see that, with \eqref{TUNIq-1+},
we may assume that $[v_i]\not=\{0\}$ (see \eqref{CUsplit}).
\end{rem}

%
The following  follows from A6 and A7 of the appendix of \cite{eglnkk0}.
We keep here since the proof is much simpler.

\begin{prop}\label{Tweakunp}
Let $A$ be a non-unital separable stably projectionless   exact simple \CA\,
with continuous scale  which is ${\cal Z}$-stable and $T(A)\not=\emptyset.$
Then $K_0({\tilde A})$ is weakly unperforated, i.e.,
if $x\in K_0({\tilde A})$ with $kx\in K_0({\tilde A})_+\setminus \{0\}$ for
some integer $k\ge 1,$ then $x\in K_0({\tilde A})_+.$
Furthermore, if $p, q\in M_s({\tilde A})$ (for some $s\ge 1$) are two projections
such that $\tau(q)<\tau(p)$ for all $\tau\in T({\tilde A}),$ then
$q\lesssim p.$
\end{prop}

\begin{proof}
Put $A_1={\widetilde{A\otimes {\cal Z}}}.$ Note that, since $A$ is ${\cal Z}$-stable,
$A_1={\tilde A}.$
Let $B={\tilde A}\otimes {\cal Z}$ and let $\imath: A_1\to B$ be the embedding.
Then $\imath_{*0}: K_0(A_1)\to K_0(B)$ is an isomorphism.
Let $\pi_A: A_1\to \C$  and $\pi_z: B\to {\cal Z}$ be the quotient maps.
Note that $\pi_z\circ \imath=
{{\imath_{\C, {\cal Z}}\circ \pi_A,}}$ {{where $\imath_{\C, {\cal Z}}$ is the embedding from $\C$ to ${\cal Z}$.}}
Let $t_0$ be the tracial state of $A_1$ which vanishes on $A\otimes {\cal Z}=A$
and $t_z$ be the tracial state of ${\cal Z}.$ Note $T(A_1)=T(A)\cup {{\{t_0\}}}$
and $T(B)=T(A)\cup \{t_z\circ \pi_z\}.$

Let $x\in K_0(A_1)$ such that $kx>0$ in $K_0(A_1)$ for some integer $k\ge 1.$
Suppose that $p, \, q\in M_s(A_1)$ are two projections
such that $[p]-[q]=x$ in $K_0(A_1).$ {{Suppose that $k[p]-k[q]$ is realized by a projection $r\in M_n({\tilde A})$. If $\pi_A(r)=0,$ then $r\in M_n(A)$ which is contradicted with $A$ being stable projectionless. That is $r$ is a full projection in $A_1$. Hence, for all $\tau\in T(A_1), \tau (r)>0$. That is}} 
$\tau(p)>\tau(q)$ for all $\tau\in T(A_1).$ It follows that
$\tau(\imath(p))>\tau(\imath(q))$ for all $\tau\in T(B).$
Also $pM_s(A\otimes {\cal Z})p\not=\{0\}.$
Note $p\not\in M_s(A\otimes {\cal Z})$ since $A$ is stably projectionless. Therefore
the ideal generated by $p$ in $M_s(B)$ contains $q.$
Since $B$ is ${\cal Z}$-stable, by 4.10 of \cite{Rrzstable},
 $q\lesssim p$ in $M_s(B).$ Therefore there is a projection $p_1\le p$  in $M_s(B)$
such that {{$[p_1]=\imath_*(x).$}} 
There is a unitary $w\in {\cal Z}$ such that
$w^*\pi_z(p_1)w=1_{M_k},$ where $1_{M_k}\in M_s(\C)$ is a scalar matrix of rank $k\le s.$
Since $K_1({\cal Z})=\{0\},$ there exists a unitary $W\in M_s(B)$ such
that $\pi_z(W)=w.$ Then $W^*p_1W-1_{M_k}\in {\rm ker}\pi_z=M_s(A\otimes {\cal Z}).$
Let $e=W^*p_1W.$ Then $e\in M_s(A_1).$ We compute that $[e]=x$ in $K_0(A_1).$
This implies that $x>0$ and $K_0(A_1)$ is weakly unperforated.

\end{proof}


\begin{rem}\label{Rsec53}
In Theorem \ref{TUNIq}, if both $\phi_1$ and $\phi_2$ map strictly positive elements
to strictly positive elements,
then, by the virtue of
{{5.7 of \cite{eglnp1},}}
 the fullness condition can be replaced
by
 $\tau(f_{1/2}\phi_1(e))),\,\tau(f_{1/2}(\phi_2(e)))\ge d$ for some given $1>d>0$
and a strictly positive element $e\in A.$
for all $\tau\in T(B).$ If furthermore, $\phi_1$ and $\phi_2$ are assumed to be
\hm s, then, $\tau\circ \phi_i$ are tracial states of $T(A)$ for all $\tau\in T(B).$
Therefore,  the  fullness condition can be dropped.
\end{rem}

\section{\CA s of the form $B\otimes {\cal W}$}
The main purpose of this section is to prove Proposition \ref{CLuniq}.

The following is known {{(in particular, the case that $n=1$).}}
\begin{lem}\label{Thelemma}
Let $B$ be a \CA\, and {{$n\ge 1$ be an integer. Let {{$u\in 1_{M_n({\tilde B})}+M_n(B).$}}
Suppose that
$u\in U_0(M_n ({\tilde B})).$ Then there exists a continuous path  $\{u(t): t\in [0,1]\}\subset U_0(M_n({\tilde B}))$
such that $u(0)=u,$ $u(1)=1_{M_n(\tilde B)}$ and $\phi(u(t))=1_{M_n({\tilde B})}$ for all $t\in [0,1],$ where
$\phi: M_n({\tilde B})\to M_n$ is the quotient map.}}

Moreover, one may write $u=\prod_{k=1}^m \exp( i h_j)$ for some $h_j\in M_n({\tilde B})_{s.a.}$ with
$\phi(h_j)=0$ {\rm{(}}and $\phi(\exp(i h_j))=1_{M_n({\tilde B})}$\rm{)}, $j=1,2,...,m$ (for some $m\ge 1$).
\end{lem}
\begin{proof}
Let $\{w(t): t\in [0,1]\}$ be a continuous path of unitaries such that $w(0)=u$ and $w(1)=1_{M_n({\tilde B})}.$
Let ${\bar w}(t)=\phi(w(t))\in M_n.$  Let $w'(t)\in M_n\subset M_n({\tilde B})$ be the same scalar matrix as ${\bar w}(t)$ with
$\phi(w'(t))={\bar w}(t).$ Note that $w'(0)={{1_{M_n({\tilde B})}}}=w'(1).$
 Define $u(t)=w(t)(w'(t))^*.$  Then $u(0)=u$ and $u(1)=1_{M_n({\tilde B})}.$
 Moreover $\phi(u(t))=1_{M_n({\tilde B})}$ for all $t\in [0,1].$

{{To see the last part, one chooses a partition $0=t_0<t_1<\cdots t_m=1$ of $[0,1]$ such that $\|u(t_{j-1})u(t_j)^*-1\|<1.$
Define $h_j={1\over{2\pi i}}\log(u(t_{j-1})u(t_j)^*),$ $j=1,2,...,m.$  Then $u=\prod_{j=1}^m \exp(ih_j).$ Note
that $\phi(u(t_j))=1_{M_n({\tilde B})},$ whence $\phi(h_j)=0,$ $j=1,2,...,m.$}}

\end{proof}

\begin{lem}\label{Lreduction}
Let $B$ be a \CA\, and $u=1_{\tilde B}+x\in {\tilde B}$ {{be a unitary,}} where $x\in B$.
~ Suppose
that $\diag(u, 1_{M_m({\tilde B})})\in U_0(M_{m+1}({\tilde B})).$   Let $v=1_{C}+x\otimes 1_{Q}\in
C,$ where $C={\widetilde{B\otimes Q}}.$ Then $v\in U_0(C).$ Moreover,
there exists a continuous path of unitaries $\{v(t): t\in [0,1]\}\subset C$ such that
$v(0)=v,$ $v(1)=1_C$ and $\pi(v(t))=1_C$ for all $t\in [0,1],$  where $\pi: C\to \C$ is the quotient map.

\end{lem}

\begin{proof}

By \ref{Thelemma}, there exists a continuous path of unitaries $\{u(t): t\in [0,1]\}\subset U_0({\widetilde{M_{m+1}(B)}})$ such that
\beq
u(0)= \diag(u, 1_{M_m({\tilde B})}),\,\,\, u(1)={{1_{M_{m+1}(\tilde B)}}}\andeqn \pi(u(t))={{1_{M_{m+1}}}},
\eneq
where $\pi: {\tilde B}\to \C$ is the quotient map.
Write  $u(t)=1_{m+1}({\tilde B})+x(t),$ where $\{x(t): t\in [0,1]\}\subset M_{m+1}(B)$ is a
continuous path such that $x(t)+x(t)^*+x(t)^*x(t)=0,$  $x(t)+x(t)^*+x(t)x^*(t)=0$ and
$x(1)=0.$

Let $e_1, e_2, ...,e_{m+1}\in Q$ be mutually orthogonal and mutually equivalent projections
such that $\sum_{i=1}^{{m+1}}e_i=1_Q.$
Put
\beq
E_i=1_{m+1}\otimes e_i=\diag(\overbrace{e_i, e_i, ...,e_i}^{m+1}) \in M_{m+1}({{{\tilde B}\otimes Q}}),\,\,\,i=1,2,...,m+1.
\eneq
Define
\beq
w_i=1_C+x(0)\otimes e_i=1_C+x\otimes e_i,\,\,\,i=1,2,...,m.
\eneq
Then
\beq
u=w_1w_2\cdots w_{m+1}.
\eneq
Let  $X_i\in
{{M_{m+1}({\tilde B}\otimes Q)}}$ be  a unitary such that
\beq
X_iE_iX_i^*={{1_{{\tilde B}\otimes Q}=1_C.}}
\eneq
{{Note that $X_i(x(t)\otimes e_i)X_i^*\in B\otimes Q$.}} Define
$w_i(t)=1_C+X_i(x(t)\otimes e_i)X_i^*\in C$
 for $t\in [0,1],$ $i=1,2,...,m+1.$
Then
\beq
\hspace{-0.2in}w_i(t)^*w_i(t)&=&1_C+X_i(x(t)\otimes e_i)X_i^*+X_i(x^*(t)\otimes e_i)X_i^*+X_i(x(t)x^*(t)\otimes e_i)X_i^*\\
                     &=&1_C+X_i(x(t)+x^*(t)+x(t)x^*(t))X_i^*=1_C.
\eneq
Similarly,
\beq
w_i^*(t)w_i(t)=1_C,\,\,\,i=1,2,...,m+1.
\eneq
So $\{w_i(t): t\in [0,1]\}\subset U_0(C)$ with
$w_i(0)=w_i$ and $w_i(1)=1_C.$ Moreover,
\beq
\pi(w_i(t))=1\rforal t\in [0,1],\,\,\, i=1,2,...,m+1.
\eneq
Define $v(t)=w_1(t)w_2(t)\cdots w_{m+1}(t)$ for $t\in [0,1].$
Then
\beq
v(t)\in U(C),\,\,\,v(0)=w_1(0)w_2(0)\cdots  w_{m+1}(0)=1_C+x=u\andeqn v(1)=1_C.
\eneq
\end{proof}

\begin{thm}\label{T1228T1}
Let $B$ be a \CA\, and  $C={\widetilde{B\otimes {\cal W}}}.$
Then $U(M_m(C))=U_0(M_m(C))$ for all integer $m\ge 1.$
Moreover, if $u{=}1_C+x$  {{is a unitary in $C$}} for some $x\in B\otimes {\cal W},$
then there exists a continuous path of unitaries $\{u(t): t\in [0,1]\}\subset C$
such that $\pi(u(t))=1$ for all $t\in [0,1],$ where $\pi: C\to\C$ is the quotient map.
\end{thm}

\begin{proof}
Note $K_1({\cal W})=\{0\}.$ Therefore $K_1(B\otimes {\cal W})=\{0\}.$

Let $u\in U(M_m(C)).$  Write $u=w+x,$ where $w\in M_m$ is a scalar unitary and $x\in M_m(C).$
By considering $w^*u,$
\wilog, we may {{assume}} $u=1_{M_m}+x$ for some $x\in M_m(C).$
{{Hence (by \ref{Thelemma})}} there exists a continuous path $\{w(t): t\in [0,1]\}\subset 1_{M_{n+m}(C)}+
M_{n+m}{{(B\otimes {\cal W})}}$
such that $w(0)=\diag(u, 1_n)$ and $w(1)=1_{n+m}$ for some integer $n\ge 1.$
Since ${\cal W}\cong {\cal W}\otimes Q$ and $Q$ is strong self absorbing,
\wilog, we may assume that $u=1_{M_m(C)}+x\otimes 1_Q.$
Thus \ref{Lreduction} applies.   This  proves the second part of the lemma.
To see the first part, we let $m=1.$

\end{proof}

\begin{lem}\label{1228proj}
Let $B$ be a \CA\, and
let $q\in M_m({\tilde C})$ be a projection, where $C=B\otimes {\cal W},$
such that $\pi_C(q)=p~ { {\in M_m(\C)}},$ a projection matrix, where $\pi_C: {\tilde C}\to \C$ is the quotient map. Then
there exists an integer $r_1\ge r\ge 0$ and   unitary  $w\in M_{m+r_1}({\tilde C})$ such that
$w(q\oplus 1_r)w^*=P\oplus 1_r,$ where $P$ is the matrix in $M_m(\C\cdot  1_{\tilde C})$ which is the same matrix as $p.$
 Moreover, $\pi_C(w)=1_{m+{r_1}}.$
\end{lem}

\begin{proof}
Since ${\cal W}$ is $KK$-contractible, $K_0(B\otimes {\cal W})=\{0\}.$
Therefore, for some large $r_1\ge r\ge 0,$ there is $w_1\in M_{m+r_1}({\tilde C})$
such that
\beq
w_1(q\oplus 1_r)w_1^*=P\oplus 1_r.
\eneq
Note $\pi_C(w_1)(P\oplus 1_r)\pi_C(w_1)^*=P\oplus 1_r,$ where we identify {{these elements with}} matrices with scalar entries.
Write $W=\pi_C(w_1)$ as a unitary matrix with scalar entries.
Note that $W(P\oplus 1_r)W^*=P\oplus 1_r.$
Let  $w=W^*w_1.$ Then, $\pi_C(w)=1_{m+r_1}$ and
\beq
w(1\oplus 1_r)w^*=P\oplus 1_r.
\eneq

\end{proof}

\begin{lem}\label{L1228L1}
Let $B$ be a \CA\, and $u=1_{\tilde B}+x\in U_0({\tilde B}),$ where $x\in B.$
Then, for any $\ep>0,$ there exists $m\ge 1$ such that
there exists a continuous path of unitaries $\{y(t): t\in [0,1]\}\subset 1_{M_{m+1}}+M_{m+1}(B)$
such that $y(0)=\diag(u, 1_{M_{m}}),$ $y(1)=1_{M_{m+1}}$ and
${\rm length}(\{y(t):t\in [0,1]\})\le 4\pi+\ep.$

Moreover,
 let $C=\widetilde{B\otimes {\cal K}}$ and $v=1_C+z\in U_0(C)$ for some $z\in C.$
Then, for any $\ep>0,$ there exists a continuous path of unitaries $\{v(t): t\in [0,1]\}$ such that
$v(0)=v,$ $v(1)=1_C, $ $\Pi(v(t))=1$ for all $t\in [0,1]$ and ${\rm cel}(v(t))\le 4\pi +\ep,$ where
$\Pi: C\to \C$ is the quotient map.  Consequently,
$v\in U_0(C)$ and ${\rm cel}(v)\le 4\pi+\ep.$
\end{lem}

\begin{proof}
By \ref{Thelemma},
we may assume that there exists a continuous path of unitaries $\{u(t): t\in [0,1]\}\subset {\tilde B}$ such that
 $u(t)=1_{\tilde B}+x(t),$ where $x(t)\in B$ such that $u(0)=u$ and $u(1)=1_{\tilde B}.$
 Note that  $\{x(t): t\in [0,1]\}$ is continuous,
$x(0)=x$ and $x(1)=0.$ Moreover, for all $t\in [0,1],$
\beq\label{L11228n-4}
x(t)+x(t)^*+x(t)x^*(t)=0\andeqn x(t)+x(t)^*+x(t)^*x(t)=0.
\eneq
Fix $1/4>\ep>0.$ There is a partition $0=t_0<t_1<\cdots t_n=1$ such that
\beq\label{L11228n-4+}
\|x(t)-x(t_i)\|<\ep/2\rforal t\in [t_i, t_{i+1}],\,\,\, i=0,1,2...,n-1.
\eneq

Put  $D=M_{2n+1}(B).$ Then
\beq\label{L11228n-5}
Z_0=1_{\tilde D}+\diag(x(0), x^*(0), x(t_1), x^*(t_1), ..., x(t_{n-1}), x^*(t_{n-1}),0 )\in {\tilde D}.
\eneq
Put $y_i=x(t_i)+x^*(t_i),$  $i=0,1,...,n-1.$
Then,  by \eqref{L11228n-4},
\beq\nonumber
Z_0Z_0^*
&=&1_{\tilde D}+\diag(y_0+x(0)x^*(0), y_0+x(0)^*x(0),  y_1+x(t_1)x^*(t_1), y_1+x(t_1)^*x(t_1), ..., 0)\\\nonumber
&=&1_{\tilde D}.
\eneq
Similarly,
$
Z_0^*Z_0=1_{\tilde D}.
$
In other words, $Z_0$ is a unitary in ${\tilde D}.$
Put, for $t\in [0,1],$
\beq
V(t)=\begin{pmatrix} \cos(t\pi/2) & -\sin(t\pi/2)\\
                                    \sin(t\pi/2) & \cos(t\pi/2)\end{pmatrix}
\eneq
Note that $V(t)$ is a unitary in $M_2({\tilde B}),$
$V(0)=1_{M_2({\tilde B})}$
and $V(t)^*V(t)=1_{M_2({\tilde B})}.$
Put $w_i(t)=V(t)\diag(1_{\tilde B}+x(t_i), 1_{\tilde B}) V^*(t) \diag(1_{\tilde B}, 1_{\tilde B}+x^*(t_i)),$ $t\in [0,1]$ and
$i=0,1,...,n-1.$
Note that
\beq
w_i(0)=\diag(1_{\tilde B}+x(t_i), 1_{\tilde B}+x^*(t_i)),\andeqn
w_i(1)=\diag(1_{\tilde B}, 1_{\tilde B}),\,\,\,i=0,1,...,n-1.
\eneq
Let $\phi: M_2({\tilde B})\to M_2$ be the quotient map.
We also have
\beq\label{L1228n-8}
\phi(w_i(t))=\phi(V(t))\diag(1, 1)\phi(V^*(t))\diag(1,1)=\diag(1, 1).
\eneq
Define
\beq
Z(t)=\diag(w_0(t), w_1(t),...,w_{n-1}(t), 1_{\tilde B})
\eneq
Then $Z(0)=Z_0,$ $Z(1)=1_{\tilde D}.$ Moreover,  by \eqref{L1228n-8},
$$
\pi_D(Z(t))=1_{\pi_D({\tilde D})},
$$
where $\pi_D: {\tilde D}\to \C$ is the quotient map.  Note that $\{Z(t): t\in [0,1]\}$ is a continuous path of unitaries
in ${\tilde D}.$  It is standard to compute that ${\rm length}(\{Z(t):t\in [0,1]\})=2\pi.$
Put
\beq
Z_{-1}=
{{1_{\tilde D}}}+\diag(x(0), x^*(t_1), x(t_1), x^*(t_2), x(t_2), ..., x^*(t_n), x(t_n)).
\eneq
By \eqref{L11228n-4+},
\beq
\|Z_0-Z_{-1}\|<\ep.
\eneq
There exists a continuous path of unitaries $\{Z_{-1}(t): t\in [0, 1]\}\subset {\tilde D}$ such that
$Z_{-1}(0)=Z_{-1},$ $Z_{-1}(1)=Z_0$ and ${\rm length}(\{Z_{-1}(t): t\in [0,1]\})\le 
{{2\arcsin(\ep/2)}}.$
Define
$$\omega_i(t)=V(t)\diag(1_{\tilde B}+x^*(t_i), 1_{\tilde B})V^*(t) \diag(1_{\tilde B}+x(t_i), 1_{\tilde B})$$
for $t\in [0,1]$ and $i=1,2,...,n.$
Then, similar to some computation above,
$\{\omega_i(t): t\in [0,1]\}\subset M_2({\tilde B})$ is a continuous path of unitaries such that
$\omega_i(0)=1_{M_2({\tilde B})},$ $\omega_i(1)=\diag(1_{\tilde B}+x^*(t_i), 1_{\tilde B}+x(t_i))$ and
\beq
\phi(w_i(t))=\phi(V(t))\diag(1,1)\phi(V^*(t))\diag(1,1)=\diag(1,1)\rforal t\in [0,1],
\eneq
$i=1,2,...,n.$ Define
\beq
Z_{-2}(t)=\diag(1_{\tilde B}+x(0), \omega_1(t), \omega_2(t),...,\omega_n(t))\rforal t\in [0,1].
\eneq
Then $\{Z_{-2}(t): t\in [0,1]\}\subset U_0({\tilde D})$ is a continuous path such that
\beq
Z_{-2}(0)&=&1_{\tilde D}+x(0)=1_{\tilde D}+x=\diag(u,1_{2n})\\
 Z_{-2}(1)&=&1_{\tilde D}+\diag(x(0), x^*(t_1), x(t_1),..., x_n^*(t_n), x_n(t_n))=Z_{-1}.
\eneq
Moreover, ${\rm length}(\{Z_{-2}(t)\}: t\in [0,1]\})\le 2\pi.$
Now define
\beq
y(t)=\begin{cases} Z_{-2}(3t) & \text{if $t\in [0,1/3]$ }\\
                              Z_{-1}(3( t-1/3)) & \text{if  $t\in [1/3, 2/3]$}\\
                              Z (3(t-2/3)) &\text{if $t\in [2/3, 1].$}\end{cases}
\eneq
Now $\{y(t): t\in [0,1]\}\subset U_0({\tilde D}),$ $y(0)=Z_{-2}(0)=\diag(u,1_{2n})$  and $y(1)=Z(1)=1_{\tilde D}.$
Moreover, for any $1/4>\ep>0,$
\beq
{{{\rm length}(\{y(t): t\in [0,1]\})}}\le 4\pi+ 2\arcsin(\ep/2).
\eneq
This proves the first part of the statement.
The second part follows the same way.
\end{proof}

\begin{thm}\label{TUnituniform}
Let $\{B_n\}$ be a sequence of \CA s
and $C=\prod_{n=1}^{\infty}(B_n\otimes {\cal W}\otimes {\cal K}).$
 Then $K_1(C)=\{0\}.$
Moreover $K_1(E)=\{0\},$ where $E=\prod_{n=1}^{\infty} (B_n\otimes {\cal W}\otimes {\cal K})^\sim.$

\end{thm}

\begin{proof}
Let $u\in M_m({\tilde C}).$
Let $\pi_C:  {\tilde C}\to \C$ be the quotient map and let $z=\pi_C(u)\in M_m(\C)$ be a unitary matrix.
{{Denote}} by $Z$ the same scalar unitary matrix (as $z$) in $M_m(\C 1_{\tilde C}).$
{{Replacing}} $u$ by $Z^*u,$ \wilog, we may assume that $u\in 1_{M_m(\tilde C)}+M_m(C).$
To simplify notation, \wilog, we may further assume that $u\in 1_{\tilde C}+C.$
 We write $u=\{u_n\},$ where
$u_n=1_{(B_n\otimes {\cal W}\otimes {\cal K})^\sim_n}+x_n$ {{and  where}} $x_n\in B_n\otimes {\cal W}\otimes {\cal K},$ $n=1,2,....$

By applying \ref{L1228L1}, {{we obtain,}}  for each $n,$  a continuous path of unitaries
$v_n(t)\in 1_{(B_n\otimes {\cal W}\otimes {\cal K})^\sim}+B_n\otimes {\cal W}\otimes {\cal K}$ with $v_n(0)=u_n$ and $
{{v_n(1)}}=1_{(B_n\otimes {\cal W}\otimes {\cal K})^\sim}$ and
${\rm cel}(v_{{n}}(t))< 5\pi.$ Thus, we {{obtain (see Lemma 1.1 of \cite{GL1})}} a sequence of equi-continuous paths of unitaries $\{w_n(t): t\in [0,1]\}\subset
1_{(B_n\otimes {\cal W}\otimes {\cal K})^\sim}+B_n\otimes {\cal W}\otimes {\cal K}$ with $w_n(0)=u_n$ and $w_n(1)=1_{(B_n\otimes {\cal W}\otimes {\cal K})^\sim},$ $n=1,2,....$
Define
\beq
u(t)=\{w_n(t)\}\subset 1_{\tilde C}+C.
\eneq
Then $\{u(t):t\in [0,1]\}$ is a continuous path of unitaries in $1_{\tilde C}+C$ such
that $u(0)=u$ and $u(1)=1_{\tilde C}.$ Thus $K_1(C)=\{0\}.$

If $u\in U(E),$ we write $u=\{u_n\},$ where $u_n\in U((B_n\otimes {\cal W}\otimes {\cal K})^\sim).$
Denote by $\pi_n:  (B_n\otimes {\cal W}\otimes {\cal K})^\sim \to \C$ the quotient map.
Let $\lambda_n=\pi_n(u_n).$ Then $\lambda_n\in \T.$ Consider the unitary
$Z=\{\lambda_n\}\in U(E).$ Then $Z\in U_0(E).$ Now consider $v=uZ^*.$ Then $v\in {\tilde C}.$
Thus the above shows
 that $K_1(E)=\{0\}.$

\end{proof}

\begin{lem}\label{1229C1}
Let $B_n$ be a sequence of \CA s and let $C=\prod_{n=1}^{\infty} B_n\otimes {\cal W}.$
Then
$K_i(C)$ is  divisible and the map from $K_i(C)$  to
$K_i(C, \Z/k\Z)$  is zero, $i=0,1$ and $k=2,3,....$
\end{lem}

\begin{proof}
Let $\psi: Q\otimes Q\to Q$ be an isomorphism.
Since ${\cal W}\otimes Q\cong {\cal W},$ we may write $C=\prod_{n=1}^{\infty} B_n\otimes {\cal W}\otimes Q.$
Define $\Phi: C\to C\otimes Q$ by $\Phi(\{a_n\})=\{a_n\}\otimes 1_Q$
for all $\{a_n\}\in C.$  It is an injective \hm.
Let $\Psi: C\otimes Q\to C$  be a \hm\,
defined by $\Psi(\{(b_n\otimes r_n)\otimes r\})=\{b_n\otimes \psi(r_n\otimes r)\}$ for all
$\{b_n\otimes r_n\}\in C,$ where $b_n\in B_n\otimes {\cal W}$ and $r_n\in Q.$
{{Denote by ${\tilde \Phi}: {\tilde C}\to {\widetilde{C\otimes Q}}$ and
${\tilde \Psi}: {\widetilde{C\otimes Q}}\to {\tilde C}$ the extensions.}}

Fix $z\in K_0(C).$ We will show that, for any integer $k\ge 2,$ there exists $y\in K_0(C)$
such that $ky=z.$  \Wlog, we may assume that $z=[p]-[q],$ where
$p\in M_r(\C)$ is a matrix with scaler entries and $q=p+x,$ where $x\in M_r(C)_{s.a},$ and
both $p$ and $q$ are projections.  Let $D$ be a separable \SCA\, of $C$ such that
$M_r(D)$ contains $x.$  Let $\iota: D\to C$ be the embedding.
Consider $\Phi(D).$ Then, $\Phi(D)\subset D_1:=\{\iota(d)\otimes r: d\in D, r\in Q\} \cong D\otimes Q.$
Note, for each $\ep>0$ and   each finite subset ${\cal F}\subset Q,$ there exists a unitary $u\in Q$ such that
\beq
u^*\psi(y\otimes 1_Q)u\approx_{\ep} y\rforal y\in {\cal F}.
\eneq
Now write $x=\{(x_{i,j}^{(n)})_{r\times r}\}.$
For each $n, i, j,$ there are $a_{i,j,k}^{(n)}\in B_n\otimes {\cal W}$ and $r_{i,j,k}^{(n)}\in Q,$
$k=1,2,..., N(i,j, n),$ such that
\beq
\|x_{i,j}^{(n)}-\sum_k a_{i,j,k}^{(n)}\otimes r_{i,j,k}^{(n)}\|<1/(4^{n+1}(\|x\|+1) r^2),\,\,\, 1\le i,j\le r.
\eneq
Let ${{M_n=\max\{\|a_{i,j,k}^n\|: 1\le k\le N(i,j,n)\}}}.$ Therefore there is a unitary $u_n\in Q$ such that
\beq
\|u_n^*\psi(r_{i,j,k}^{(n)}\otimes 1_Q)u_n-r_{i,j,k}^{(n)}\|<1/(4^n (\|x\|+1)N(i,j,n)M_n r^2),\,\,\, 1\le i, j\le r, \,\,n=1,2,....
\eneq
%
Let $u=\{1_{{\tilde B}_n}\otimes u_n\}$ and $U=\diag(\overbrace{u, u, ...,u}^r).$ Then
$u^*\{b_n\}u\in C\rforal \{b_n\}\in C.$
In other words, ${\rm Ad}\, u: C\to C$ is an automorphism.
Moreover
\beq\label{18-616}
\|U^*(\Psi\circ \Phi(x))U-x\|<1/4.
\eneq
Let $H_1={\rm Ad}\, u\circ \Psi: C\otimes Q\to C$ and $H_2=H_1\circ \Phi: C\to C.$
Put $\imath_1=\imath\otimes {\rm id}_Q:  D_1:=D\otimes Q\to C\otimes Q.$
 Recall that $p,q \in M_r({\tilde D})$ and $p-q=x\in M_r(D).$
Thus  $[p]-[q]$ also defines an element $z'\in K_0(D)$ and $z=\iota_{*0}(z')$ in $K_0(C)$. Since  $\Phi (D)\subset D_1\cong D\otimes Q$, We {{may}} regard $\Phi|_D$ as a map from $D$ to $D_1$---
let us {{denote it by}} $\Phi',$  that is $\Phi':D \to D_1$ is the same map as  $\Phi|_D=\Phi\circ \iota$ but with different codomain algebra $D_1$ (instead of $C\otimes Q$). Formally, we have $\Phi|_D=\iota_1\circ \Phi'$. Then
 $$\Phi_{*0}(z)=\Phi_{*0}(\iota_{*0}(z'))=(\Phi|_D)_{*0}(z')=(\iota_1\circ\Phi')_{*0}(z')=(\iota_1)_{*0}(\Phi'_{*0}(z')). $$

{{By K\"unneth formula,}}  $K_0(D_1)$ is divisible.
{{Therefore}} there is $y'\in K_0(D_1)$ such that $\Phi'_{*0}(z')=k y'\in K_0(D_1)$. That is $\Phi_{*0}(z)=(\iota_1)_{*0}(ky')= k(\iota_1)_{*0}(y')$. Hence $(H_2)_{{*0}}(z)=(H_1)_{{*0}}(\Phi_{{*0}}(z))=ky$, where $y=(H_1)_{{*0}}((\iota_1)_{*0}(y'))
{{\in K_0(C)}}$. That is, $(H_2)_{{*0}}(z)$  {{divisible}} by $k$.

Since $p=(\lambda_{i,j})_{r\times r},$ where $\lambda_{i,j}\in \C.$ {{(by \eqref{18-616})}},
\beq
{{\tilde{H_2}}}(p)=U^*pU=p\andeqn
\|({{{\tilde H_2}}})(q)-q\|=\|H_2(x)-x\|<1/4,
\eneq
{{where we use $\tilde{H}_2:{\tilde C}\to {\tilde C}$
{{for}} the {{unitization}}  of $H_2: C\to C$ and its induced map $\tilde{H}_2: M_r({\tilde C}) \to M_r({\tilde C})$.}} {{It follows that}}
\beq
(H_2)_{*0}(z)=z\,\,\, {\rm in}\,\,\, K_0(C).
\eneq
Hence $z$ is divisible by $k.$
This shows that $K_0(C)$ is divisible. It follows that $K_0(C)/kK_0(C)=\{0\}$ for
all $k\ge 2.$

A similar argument shows that $K_1(C)$ is also divisible.

\end{proof}

\begin{lem}\label{LpartK}
Let $A$ be a separable \CA\,  and let $\phi_n
: A\to B_n\otimes {\cal W}$ be a
sequence of  approximately multiplicative \cpc s.
Then there exists a sequence  of integers $\{m(n)\}$ {{satisfying}} the following condition:
Put $C=\prod_n(M_{m(n)}(B_n\otimes {\cal W}))$ and $C_0=\bigoplus_{n=1}^{\infty} M_{m(n)}(B_n\otimes {\cal W}).$
Then $[\pi\circ (\{\phi_n\})]=0$
in ${\rm Hom}_{\Lambda}(\underline{K}(A), \underline{K}(C/C_0)),$
 where $\pi: C\to C/C_0$  is the quotient map and where we view $\phi_n$ maps $A$ into $M_{m(n)}(B_n\otimes {\cal W})$
 (as $\phi_n(a)=\diag(\phi_n(a),0,...,0)$).

\end{lem}

\begin{proof}
Let $K_0(A)=\{x_1,x_2,...,x_n,...\}.$
Put $D_n= (B_n\otimes {\cal W})^\sim.$
Suppose that, \wilog, that $[\phi_n(x_i)]$ is well defined{{, for all $i\leq n$}}.
We may write $x_i=[p_{i}]-[q_{i}]\in 
{{K_0(A)}},$ where $p_{i}\in M_{r(i)}(\C)$ is a projection and
$q_{i}=p_{i}+b_{i}$ and $b_{i}\in M_{r(i)}(A)_{s.a.}.$
By \ref{1228proj}, {{for any $i\leq n$,}} there are integers $r(i,n)'$ and $m(n)'= r(i)+r(i,n)',$ and
 a unitary $u_{i,n}\in M_{m(n)'}(D_{n})$
with $\pi_{d,n}(u_{i,n})=1_{m(n)'}
,$ where $\pi_{d,n}: M_{m(n)'}(D_{n})\to M_{m(n)'}$ is
the quotient map, such that
\beq
&&\|u_{i, n}^*(\phi_n(p_{i})\oplus 1_{r(i,n)'})u_{i,n}-(\phi_n(q_{i})\oplus 1_{r(i,n)'})|<1/2^{n+1}
\eneq
for all large $n.$
Note that $\{u_{i,n}\}
\in (\prod_n M_{m(n)'}(B_n\otimes {\cal W}))^\sim.$
It follows that, for any $i\ge 1,$ there exists $k(i)\ge i$  {{such that}}
\beq
[\{\phi_n(x_i)\}_{n\ge k(i)}]=0
\,\,\,{\rm in}
\,\,\, K_0(\prod_{n\ge m(k)} M_{m(n)'}(B_n\otimes {\cal W})).
\eneq
Thus, for any $x\in K_0(A),$ $[\pi'(\{\phi_n(x))]=0,$ where $\pi': \prod_n M_{m(n)'}(B_n\otimes {\cal W})
\to \prod_n M_{m(n)'}(B_n\otimes {\cal W})/\oplus_n (M_{m(n)'}(B_n\otimes {\cal W})$ is the quotient map.

Let $K_1(A)=\{g_1, g_2,...,g_n,...\}$ and $z_i\in M_{d(i)}$ be a unitary so that $[z_i]=g_i,$ $i=1,2,....$
Let $G_m=\{g_1, g_2,...,g_m\}.$
By the first part of \ref{L1228L1},  there exist $l(i)\ge 1,$ $k_1(i)\ge i$ and $m(n)''=d(i)+l(i,n)$ such that
\beq
[\{\la \phi_n(z_i)\ra\}_{n\ge k_1(i)} ]=0
\,\,\, {\rm in}\,\,\,
 K_1(\prod_{n\ge k_1(i)}M_{m(n)''}(B_n\otimes {\cal W})),
\eneq
by viewing $\phi_n$ as a map from $A$ into $M_{m(n)''}(B_n\otimes {\cal W}).$
Let $m(n)=\max\{m(n)', m(n)''\},$ $n=1,2,....$
Put $C=\prod_{n=1}^{\infty}M_{m(n)}(B_n\otimes {\cal W}).$
Then $(\pi\circ \{\phi_n\})_{*j}=0$  ($j=0,1$) as we view $\{\phi_n\}$ as {{a}} map from $A$ to $C,$
where $\pi: C\to C/C_0$ is the quotient map.
Fix  an integer $k\ge 2$ and a finite  subset  $F'\subset K_0(A, \Z/k\Z),$ we may assume
that the image of $F'$ is in $G_i=\{g_1,g_2,...,g_i\}.$ Then, by the following commutative diagram
\beq
\xymatrix{
K_0(A)\ar[r]\ar[d]_{[\{\phi_n\}_{n\ge k_1(i)}]}&
K_0(A,\Z/k\Z)\ar[r]\ar[d]_{[\{\phi_n\}_{n\ge k_1(i)}]} & K_1(A)\ar[d]^{[\{\phi_n\}_{n\ge k_1(i)}]}\\
K_0(C')\ar[r] & K_0(C',\Z/k\Z)
\ar[r]& K_1(C')),
}
\eneq
where $C'=\prod_{n\ge k_1(i) }  M_{m(n)}(B_n\otimes {\cal W}),$
since $[\{\phi_n\}_{n\ge k_1(i)}]|_{G_i}=0,$
$[\{\phi_n\}_{n\ge k_1(i)}]|_{F'}\subset K_0(C')/kK_0(C').$ However, by
\ref{1229C1}, $K_0(C')/kK_0(C')=0.$
It follows that  $[\pi{{(\{\phi_n\})}}]|_{F'}=0.$   It follows
that
\beq
[\pi(\{\phi_n\})]|_{K_0(A, \Z/k\Z)}=0,\,\,\, k=2,3,....
\eneq
Exactly the same argument shows that
\beq
[\pi(\{\phi_n\})]|_{K_1(A, \Z/k\Z)}=0,\,\,\, k=2,3,....
\eneq
This implies that
\beq
[\pi\circ (\{\phi_n\})]=0\,\,\,{\rm in}\,\,\, {\rm Hom}_{\Lambda}(\underline{K}(A), \underline{K}(C/C_0)),
\eneq
where
$C_0=\bigoplus_{n=1}^{\infty}M_{m(n)}(B_n\otimes {\cal W}).$

\end{proof}

{{ We would like to recall
Definition  \ref{Dfulln}
for definition of $T$-${\cal H}$-fullness (see also
5.5 of \cite{eglnp1}).}}

\begin{thm}\label{CLuniq}
Let $A$ be a non-unital separable amenable  \CA\, which satisfies the UCT
and let $T: A_+\setminus \{0\}\to \N\times \R_+\setminus \{0\}$
be a  map.
  For any $\ep>0$ and any finite subset ${\cal F}\subset A,$ there exists
$\dt>0,$ a finite subset ${\cal G}\subset A,$
a finite subset ${\cal H}\subset A_+\setminus \{0\}$
satisfying the following {{condition}}:
For any two ${\cal G}$-$\dt$-multiplicative \morp s $\phi, \psi: A\to B\otimes {{{\cal W}}},$  where
$B$ is any  $\sigma$-unital \CA,
and any
${\cal G}$-$\dt$-multiplicative \morp\, $\sigma: A\to M_l(B\otimes {\cal W})$ (for any integer $l\ge 1$) which is also
$T$-${\cal H}$-full, 
there {{exist  integers}} $N_1, N_2\ge 1$ and
a unitary $U\in M_{1+N_1l+N_2}(B\otimes {\cal W})^\sim $ such that
\beq\label{CLauct-2}
\|{\rm Ad}\, U\circ ((\phi\oplus S_{N_1})(a)\oplus 0_{N_2})-((\psi\oplus S_{N_1})(a)\oplus 0_{N_2})\|<\ep\tforal a\in {\cal F},
\eneq
where
\vspace{-0.1in} $$
S_K(f)={\rm diag}(\overbrace{\sigma(a), \sigma(a),...,\sigma(a)}^K)\tforal a\in A
$$
for integer $K\ge 1$ and where $0_{N_2}=\diag(\overbrace{0,0,...,0}^{N_2}).$
\end{thm}

\begin{proof}
This follows from the proof of  {{3.14 of \cite{eglnp1}.}}
Suppose that the conclusion of the theorem is false, then there exist  $\ep_0>0$ and a finite subset ${\cal F}\subseteq A$
such that there are a sequence of positive numbers $(\dt_n)$ with $\dt_n\searrow 0,$
an increasing sequence
$({\cal G}_n)$ of finite subsets of $A$ such that $\bigcup_n {\cal G}_n$ is dense in $A,$
an increasing sequence $({\cal H}_n)$ of finite subsets of
$A_+^{\boldsymbol{1}}\setminus \{0\}$ such that, if $a\in {\cal H}_n$
and $f_{1/2}(a)\not=0,$ then
$f_{1/2}(a)\in {\cal H}_{n+1}$, and $\bigcup_{{n}}{\cal H}_n$ is dense in
$A^{\boldsymbol  1}$, and has dense intersection with the unital ball of each closed two-sided ideal of $A$, 
two sequences of ${\cal G}_n$-$\dt_n$-multiplicative completely positive contractive maps $\phi_{n}, \psi_{n}: A\to B_n$
a sequence of unital ${\cal G}_n$-$\dt_n$-multiplicative completely positive contractive linear maps $\sigma_n: A\to
\mathrm{M}_{l(n)}(B_n)$ which are
$F$-${\cal H}_n$-full and satisfy, for each $n=1, 2,...$,
\beq\label{stableun2-2}
&&\hspace{-0.7in} \inf\{\sup\|v_n^*( \phi_{n}(a)\oplus S_{k_1(n)}(a)\oplus 0_{k_2(n)})v_n-(\psi_n(a)\oplus S_{k_1(n)}(a)\oplus 0_{k_2(n)})\|: a\in {\cal F}\}\ge \ep_0,
\eneq
where the infimum is taken among all
$k_1(n), k_2(n)\to\infty,$ and all  unitaries $v_n\in \mathrm{M}_{k_1(n)l(n)+1+k_2(n)}(B_n),$
and $S_{k_1(n)}: A \to \mathrm{M}_{k_1(n)l(n)}(B_n)$
 is as above.

Let $\{m(n)\}$ be as in \ref{LpartK}.
 Set $\mathrm{M}_{m(n)l(n)}(B_n)=B_n',$ $\bigoplus_{n=1}^{\infty}B_n'=C_0,$ $\prod_{n=1}^{\infty}B_n'=C,$
and $C/C_0=Q(C),$ and denote by
$\pi: C\to Q(C)$ the quotient map. Consider the maps $\Phi, \Psi, S: A\to C$ defined by
$\Phi(a)=(\phi_n(a))_{n\ge 1},$ $\Psi(a)=(\psi_n(a))_{n\ge 1}$, and $S(a)=({\bar \sigma}_n(a))_{n\ge 1}$ for all $a\in A,$
where
\beq
{\bar \sigma_n}(a)=\diag(\overbrace{\sigma_n(a), \sigma_n(a),...,\sigma_n(a)}^{m(n)})\rforal a\in A.
\eneq

Note that $\pi\circ \Phi,$
$\pi\circ \Psi$ and $\pi\circ S$ are \hm s.  
Consider also the truncations
$\Phi^{(m)}, \Psi^{(m)}, S^{(m)}: A\to \prod_{{n\ge m}} B_n'$ defined by
$\Phi^{(m)}(a)=(\phi_n(a))_{n\ge m},$ $\Psi^{(m)}(a)=(\psi_n(a))_{n\ge m},$
and $S^{(m)}(a)=({\bar \sigma}_n(a))_{n\ge m}.$

 It follows from \ref{LpartK}  that
\beq
[\pi\circ \Phi]=[\pi\circ \Psi]\,\,\,{\rm in}\,\,\, {\rm Hom}_{\Lambda}( \underline{K}(A), \underline{K}(C/C_0)).
\eneq


We will show  that ${\bar \sigma_n}$ is
$T$-${\cal H}_n$-full in $M_{m(n)l(n)}(B_n\otimes {\cal W}).$  {{Let $T(a)=(N(a), M(a))\in \N\times \R\setminus \{0\}$
for all $a\in A_+\setminus \{0\}.$}}
Fixed any  nonzero element $0\le a\le 1$ in  ${\cal H}_n.$
Let $b\in  M_{m(n)l(n)}(B_n\otimes {\cal W})_+$ with $\|b\|\le 1,$
and $\ep_1>0.$
{{Since $B_n$ is $\sigma$-unital, there exists $0\le e\le 1$ in $B_n\otimes {\cal W}$ such that}}
\beq\label{Aeabsorb-2}
\|b-b^{1/2}(1_{m(n)l(n)}\otimes {{e}})b^{1/2}\|<\ep_1/2.
\eneq
Choose $\ep_1/2>\eta>0$ such that
\beq\label{Aeabsorb-2+}
\|b-b^{1/2}(1_{m(n)l(n)}\otimes {{(e-\eta)_+}})b^{1/2}\|<\ep_1.
\eneq
{{Since $\sigma_n$ is
$T$-${\cal H}_n$-full, by also applying 3.1 of \cite{eglnp1},}}
there are $x_1, x_2,...,x_{N(a)}
\in M_{l(n)}(B_n\otimes {\cal W})$
with $\|x_i\|\le M(a),$ $1\le i\le N(a)$ such that
$(e-\eta)_+\otimes 1_{l(n)}=\sum_{i=1}^{N(a)}x_i^*\sigma_n(a)x_i.$
Therefore (identifying ${\bar \sigma}_n(a)$ with $1_{m(n)}\otimes {\sigma}_n(a)$)
$$
\|\sum_{i=1}^{N(a)}b^{1/2}(1_{m(n)}\otimes x_i)^*{\bar \sigma}_n(a)(1_{m(n)}\otimes x_i)b^{1/2}-b\|<\ep_1.
$$
This shows that ${\bar \sigma}_n$ is
$T$-${\cal H}_n$-full in
$M_{m(n)l(n)}((B\otimes {\cal W})).$
As in the proof of 3.14 of \cite{eglnp1}, this implies
$\pi\circ \{{\bar \sigma_n}\}$ is full  in $C/C_0.$

Then, by  the proof 3.14 of \cite{eglnp1}, there exists an integer $K\ge 1$ and
there exists a unitary $v\in M_{Km(n)l(n)+m(n)l(n)}(C/C_0)$
such that
$$
\|v^*\diag(\pi\circ \Phi(a), \Sigma_n(a))v-\diag(\pi\circ \Phi_n(a), \Sigma_n(a))\|<\ep_0/4\rforal a\in {\cal F},
$$
where
$$
\Sigma_n(a)=\diag(\overbrace{\pi\circ {\bar \sigma_n}(a),\pi\circ {\bar \sigma_n}(a),...,\pi\circ {\bar \sigma_n}(a)}^K).
$$
Lifting this to $C,$ one obtains, for all sufficiently large $n\ge 1,$ a
unitary\\ $u_n\in M_{Km(n)l(n)+m(n)l(n)}((B_n\otimes {\cal W})^\sim)$
such that
$$
\|u_n^*\diag(\phi_n(a)\oplus 0_{m(n)l(n)-1}, {\bar \sigma}_n(a))u_n-\diag(\psi_n(a)\oplus 0_{m(n)l(n)-1}, {\bar \sigma}_n(a))\|<\ep_0/2\rforal a\in {\cal F}.
$$
By replacing  $u_n$ by another unitary $w_n,$ if necessary, we have, for all sufficiently large $n\ge1,$
\beq
\|u_n^*\diag(\phi_n(a), {\bar \sigma}_n(a)\oplus 0_{m(n)l(n)-1})u_n-\diag(\psi_n(a), {\bar \sigma}_n(a)\oplus 0_{m(n)l(n)-1})\|<\ep_0/2,
\eneq
for all $ a\in {\cal F}.$

This contradicts \eqref{stableun2-2}. Lemma follows.

\end{proof}


\section{Models and range of invariant}

\begin{lem}\label{Dcc0} Let $A$ be an AF algebra and $\phi_1,\phi_2: A\to Q$ be two unital homomorphisms with
${{(\phi_1)_{*0}=(\phi_2)_{*0}}}$. Let $n$ be a positive integer. Define $B_i$ ($i=1,2$) to be the {{\SCA}}\,of $C([0,1], Q\otimes M_{n+1})\oplus A$ given by
\vspace{-0.1in}\beq
B_i=\{(f, a)\in C([0,1], Q\otimes M_{n+1})\oplus A: \begin{matrix} {\small  f(0)=\phi_i(a)\otimes \diag(\overbrace{1,\cdots,1}^n,0)}\\
                             {\small \hspace{-0.1in}f(1){{=}}\phi_i(a)\otimes \diag(\underbrace{1,\cdots,1, 1}_{n+1})}\end{matrix}\}
                            \eneq
for $i=1,2$. Then $B_1\cong B_2$.

\end{lem}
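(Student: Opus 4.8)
\emph{Setup.} First I would recognize each $B_i$ as an Elliott--Thomsen-type building block: setting $F_2=Q\otimes M_{n+1}$ and defining $\sigma_0^{(i)},\sigma_1^{(i)}\colon A\to F_2$ by $\sigma_0^{(i)}(a)=\phi_i(a)\otimes\diag(\overbrace{1,\dots,1}^{n},0)$ and $\sigma_1^{(i)}(a)=\phi_i(a)\otimes 1_{M_{n+1}}$, we have $B_i=\{(f,a)\in C([0,1],F_2)\oplus A:\ f(0)=\sigma_0^{(i)}(a),\ f(1)=\sigma_1^{(i)}(a)\}$, i.e.\ $B_i$ is the pullback of the evaluation map $C([0,1],F_2)\to F_2\oplus F_2$ along $\sigma_0^{(i)}\oplus\sigma_1^{(i)}$. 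Thus it suffices to show that the two pairs $(\sigma_0^{(1)},\sigma_1^{(1)})$ and $(\sigma_0^{(2)},\sigma_1^{(2)})$ are equivalent in a way that is known to preserve the isomorphism class of such a pullback; concretely, it is enough to produce homotopies through homomorphisms $\sigma_0^{(1)}\simeq\sigma_0^{(2)}$ and $\sigma_1^{(1)}\simeq\sigma_1^{(2)}$, since an Elliott--Thomsen building block depends up to isomorphism only on the homotopy classes of its two structure maps (as in the references for the class $\mathcal{C}$ cited in \ref{DfC1}). I would also record here that $Q\otimes M_{n+1}\cong Q$ and that $U(Q\otimes M_{n+1})$ is connected.

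\emph{The homotopy.} Since both $\sigma_j^{(i)}$ are obtained from $\phi_i$ by a fixed amplification, it suffices to connect $\phi_1$ to $\phi_2$ by a path of unital homomorphisms $A\to Q$; this is where the two hypotheses enter. Write $A=\overline{\bigcup_k A_k}$ with $A_k$ finite-dimensional and $A_1\subseteq A_2\subseteq\cdots$; then $B_i=\overline{\bigcup_k B_i^{(k)}}$, where $B_i^{(k)}$ is the same pullback with $A$ replaced by $A_k$. Because $A_k$ is finite-dimensional, $Q$ is UHF, and $(\phi_1|_{A_k})_{*0}=(\phi_2|_{A_k})_{*0}$ (the restriction of the common map $(\phi_1)_{*0}=(\phi_2)_{*0}$), the maps $\phi_1|_{A_k}$ and $\phi_2|_{A_k}$ are unitarily equivalent in $Q$; as $U(Q)$ is connected, this unitary equivalence is implemented by a path, so $\phi_1|_{A_k}\simeq\phi_2|_{A_k}$ through unital homomorphisms, and hence $B_1^{(k)}\cong B_2^{(k)}$ via conjugation by $u_k\otimes 1$, where $u_k\phi_1(\cdot)u_k^{*}=\phi_2(\cdot)$ on $A_k$. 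To assemble these into an isomorphism of the limits I would run Elliott's approximate-intertwining argument: the $u_k$ can be chosen by a back-and-forth selection so that $\operatorname{Ad}(u_{k+1})$ and $\operatorname{Ad}(u_k)$ agree to within a summable error on $\phi_1(A_k)$ — which is precisely the statement that $\phi_1$ and $\phi_2$ are approximately unitarily equivalent, itself a consequence of the finite-dimensional case just used — so that the maps $B_1^{(k)}\to B_2$ induced by $\operatorname{Ad}(u_k\otimes 1)$ form an approximate intertwining whose limit is the desired isomorphism $B_1\cong B_2$.

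\emph{Main obstacle.} The technical heart is this last bookkeeping step: the finite-stage isomorphisms $B_1^{(k)}\cong B_2^{(k)}$ are not a priori compatible with the connecting inclusions, and one must exploit the approximate unitary equivalence of $\phi_1,\phi_2$ (not merely equality of their $K_0$-maps) to correct the implementing unitaries $u_k$ while keeping them exactly compatible with the boundary conditions defining $B_i^{(k)}$. Equivalently, the obstacle is to upgrade ``$\phi_1$ and $\phi_2$ induce the same map on $K_0$'' to an honest homotopy $\phi_1\simeq\phi_2$ through unital homomorphisms; once that (standard for AF algebras) is in hand, the invariance of Elliott--Thomsen building blocks under homotopy of the structure maps finishes the proof, and the remaining verifications — that the reparametrizations and conjugations involved are $*$-isomorphisms with the stated inverses — are routine.
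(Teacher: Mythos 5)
There is a genuine gap, and it sits exactly at the point you yourself flag as the ``technical heart.'' Your finite-stage isomorphisms $\psi_k=\mathrm{Ad}(u_k\otimes 1)\colon B_1^{(k)}\to B_2^{(k)}$ conjugate the \emph{entire function part} by a constant unitary, so the discrepancy between consecutive stages on an element $(f,a)$ is governed by $\sup_t\|[u_{k+1}^*u_k,\,f(t)]\|$, where $f(t)$ is an essentially arbitrary element of $Q\otimes M_{n+1}$ at interior points $t$. Approximate unitary equivalence of $\phi_1$ and $\phi_2$ (which is all the hypothesis $(\phi_1)_{*0}=(\phi_2)_{*0}$ buys you) only controls commutators of $u_{k+1}^*u_k$ with $\phi_1(A_k)$, not with arbitrary elements of $Q$; making the errors summable on sets exhausting $B_1$ would force $u_{k+1}^*u_k$ to be nearly scalar, i.e.\ would require $\mathrm{Ad}\,u_k$ to converge point-norm on all of $Q$ to an automorphism $\alpha$ with $\alpha\circ\phi_1=\phi_2$ \emph{exactly} --- far stronger than what is available. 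So the approximate intertwining cannot be run with these maps. The first reduction in your setup has a similar problem: the principle ``an Elliott--Thomsen building block depends only on the homotopy classes of its structure maps'' is drawn from the class ${\cal C}$, where $F_1$ is finite dimensional and homotopy coincides with unitary equivalence (and the isomorphism is implemented by conjugating the fibre along a unitary path); for the infinite-dimensional AF algebra $A$ appearing here that principle is not established, and a homotopy $\phi_1\simeq\phi_2$ through homomorphisms does not by itself yield an isomorphism of the pullbacks (the naive map built from a homotopy is injective but not surjective).

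What actually makes the lemma work --- and is the paper's proof --- is a \emph{$t$-dependent} conjugation coming from asymptotic unitary equivalence along a continuous path. Since $A$ and $Q$ are AF and $(\phi_1)_{*0}=(\phi_2)_{*0}$, by \cite{Lnamj} there is a norm-continuous path of unitaries $\{u(t)\}_{0\le t<1}$ in $Q$ with $\lim_{t\to 1}u(t)\phi_1(a)u(t)^*=\phi_2(a)$ for all $a\in A$. One then defines $\psi\colon B_1\to B_2$ directly by $(f,a)\mapsto(g,a)$ with
$g(t)=\bigl(u(|2t-1|)\otimes 1_{n+1}\bigr)\,f(t)\,\bigl(u(|2t-1|)\otimes 1_{n+1}\bigr)^*$ for $t\in(0,1)$, and with $g(0)=\phi_2(a)\otimes\diag(1,\dots,1,0)$, $g(1)=\phi_2(a)\otimes 1_{n+1}$. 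Continuity at the endpoints is exactly the asymptotic relation, the boundary conditions are converted from the $\phi_1$- to the $\phi_2$-form, and bijectivity is immediate because at each interior point this is conjugation by a unitary-valued continuous function (the inverse uses $u(\cdot)^*$). In other words, the decisive step you are missing is to upgrade equality on $K_0$ to asymptotic unitary equivalence via a \emph{continuous path} and to absorb that path into the interval coordinate, rather than to patch constant unitaries by an intertwining argument, which cannot control the function fibres.
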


\begin{proof}

Since both $A$ and $Q$ are AF algebras and ${{(\phi_1)_{*0}=(\phi_2)_{*0}}}$, there is a unitary path $\{u(t)\}_{0\leq t <1}$
such that $\phi_2 (a)=\lim\limits_{t\to 1} u(t) \phi_1(a) u(t)^*$ ({{see}} \cite{Lnamj}). Define the
{ isomorphism} $\psi: B_1\to B_2$ by sending
$(f,a)\in B_1$ to $(g,a)\in B_2$, where $g$ is given by
$$
g(t)=\left\{\begin{array}{lll}
             (u(|2t-1|)\otimes \one_{n+1}) f(t) (u(|2t-1|)\otimes \one_{n+1})^* &  & ~~\mbox{if}~~t\in (0,1),\\
               \phi_2(a)\otimes \diag(\overbrace{1,\cdots,1}^n,0)&   & ~~\mbox{if} ~~t=0, \\
         \phi_2(a)\otimes \diag(\underbrace{1,\cdots 1}_{n},1)&   & ~~\mbox{if} ~~t=1.
                    \end{array}
           \right.
   $$
It is straight forward to verify that $g$ is continuous, that $(g,a)\in B_2$, and that $\psi$ defines a desired isomorphism.

\end{proof}


\begin{df}\label{Dcc1}

Let $G_0$ and $G_1$ be two countable abelian groups.
Let $A$ be a unital  AH-algebra with $TR(A)=0,$ unique tracial state,
$K_1(A)=G_1$ and $K_0(A)=\Q\oplus G_0$ with ${\rm ker}\rho_A=G_0$
{{and $[1_A]=(1,0).$}}

There is a unital  \hm\,  $s: A\to Q$ such that $s_{*0}(r,g)=r$
for $(r,g)\in \Q\oplus G_0.$
Fix a  unital embedding $j: Q\to A$ with $j_{*0}(r)=(r,0)$ for $r\in \Q.$
(Note that both $j\circ s$ and $s\circ j$ induce the identity maps on ${{T(A)}}$ and ${{T(Q)}}$ respectively. Furthermore the {{homomorphism}} $j$ and $s$ identify the spaces $T(A)$ and ${{T(Q)}}$)

Fix an integer $a_1\ge 1.$ Let $\af={a_1\over{a_1+1}}.$
For each $r\in \Q_+\setminus \{0\},$ let $e_r\in Q$ be a projection with ${\rm tr}(e_r)=r.$
Let ${\bar Q}_r:=(1\otimes e_r)(Q\otimes Q)(1\otimes e_r).$
Define $q_r: Q\to {\bar Q}_r$ by $a\mapsto a\otimes e_r$ for $a\in Q.$ {{We will also use $q_r$ to denote
{{a}} homomorphism from $B$ to $B\otimes e_rQe_r$ (or to $B\otimes Q$) defined by sending $b\in B$ to $b\otimes e_r\in B\otimes e_rQe_r \subset B\otimes Q$.}}







We fix an isomorphism $Q\otimes Q\to Q$ which will be denoted by
$\iota^Q.$ Moreover the composition of the maps which  first maps $a$ to $a\otimes 1_Q$ and then
to $Q$ via $\iota^Q$ is approximately inner.   In fact every unital endomorphism on $Q$ is
approximately inner.
If we identify $Q$ with $Q\otimes 1_Q$ in $Q\otimes Q$ then $\iota^Q$ is an approximately inner
endomorphism.

For each $1>r>r'>0,$  we assume that $e_r\ge e_{r'}.$
Fix $1>r>0,$ define $\iota_r^Q: {\bar Q}_r\to Q_r:=e_rQe_r$ by $\iota_r^Q={\rm Ad}\, v_r\circ \iota^Q|_{{\bar Q}_r},$
where $v_r^*(\iota^Q(1\otimes e_r))v_r=e_r.$

Let
$$
R(\af, r)=\{(f,a)\in C([0,1], Q\otimes Q_r)\oplus Q: f(0)=a\otimes e_{r\af}\andeqn f(1)=a\otimes e_r\}.
$$
(Recall that $R({{\af}},1)$ has been defined in \ref{DRaf1}.)

Let
$$
A(W,\af)=\{(f, a)\in C([0,1], Q\otimes Q)\oplus A: f(0)=q_\af\circ s(a)\andeqn f(1)=s(a)\otimes 1_Q\}.
$$
We also note that $(f, a)$ is full in $A(W, \af)$ if and only if $a\not=0$ and $f(t)\not=0$
for all $t\in (0,1).$

Let ${\mathcal M}_+$ denote the set of nonnegative regular measure{{s}} on $(0,1)$.
As in {{\ref{2Rg15}, trace spaces}} $  \tilde{T}{{(A(W,\af))}}$ and $  \tilde{T}{{(R(\af,1))}}$ are isomorphic, and each $\tau \in \tilde{T}{{(R(\af,1))}}\cong \tilde{T}{{(A(W,\af))}}$ corresponds to $(\mu, s)\in {{\mathcal M}_+}(0,1)\times \R_+$.
{{ Furthermore  we have
$$\|\tau\|={ {\|\mu\|+s=}}\int_0^1d \mu~+~   s.$$}}
 Note that in the weak topology of $  \tilde{T}{{(A(W,\af))}}$ (or $  \tilde{T}{{(R(\af,1))}}$), under {{the}} above identification, one has that $$\lim\limits_{t\to 0}(\dt_t,0)=(0,\af)\in {{\mathcal M}_+}(0,1)\times \R_+~~~~~\mbox{ and }~~~~\lim\limits_{t\to 1}(\dt_t,0)=(0,1)\in {{\mathcal M}_+}(0,1)\times \R_+,$$ where $\dt_t$ is the unit measure of the point mass at $t$.

The affine space $\mathrm{Aff}(\mathrm{\tilde{T}}(A(W,\af)))$ and $\mathrm{Aff}(\mathrm{\tilde{T}}(R(\af,1)))$ can be identified with
\beq
{{\{(f,x)\in C([0,1],\R)\oplus \R: f(0)=\af\cdot x\andeqn
f(1)=x\},}}
\eneq
{{ a   subspace of $C([0,1],\R)\oplus \R$.}}

Let
$$
A(W,\af,r)=\{(f, a)\in C([0,1], Q\otimes Q_r)\oplus A: f(0)=q_{r\af}\circ s(a)\andeqn f(1)=q_r\circ s(a)\}.
$$

Define $\phi_{A,R,\af}: A(W,\af)\to R(\af,1)$ by
$$
\phi_{A, R,\af}((f,a))=(f, s(a))\rforal (f,a)\in A(W, \af).
$$

Define ${\tilde{sj}}: C([0,1], Q\otimes Q)\to C([0,1],Q\otimes Q)$
by
$$
{\tilde{ sj}}(f)(t)=((s\circ j)\otimes {\rm id}_Q)(f(t)).
$$

Define $\phi_{R, A, \af}: R(\af, 1)\to A(W,\af,1)$ by
$$
\phi_{R, A, \af}((f,a))=({\tilde{sj}}(f), j(a)) \rforal (f,a)\in R(\af, 1).
$$
Note that
\beq
{\tilde{sj}}(f)(0)=((s\circ j)\otimes {\rm id}_Q)(a\otimes e_\af)=s\circ j(a)\otimes e_\af\andeqn\\
{\tilde{sj}}(f)(1)=((s\circ j)\otimes {\rm id}_Q)(a\otimes 1)=s\circ j(a)\otimes 1.
\eneq
Also
$$
q_\af\circ s\circ j(a)=s\circ j(a)\otimes e_\af.
$$
In particular, $\phi_{R, A, \af}$ does map $R(\af, 1)$ into $A(W, \af,1).$
Moreover $\phi_{R, A, \af}$ is injective {{and map the strictly positive element
$a_\af$ to a strictly positive element (with the same form{{--see \ref{DRaf1}}}).}}

With the identification of both $\mathrm{Aff}(\mathrm{\tilde{T}}(A(W,\af)))$ and $\mathrm{Aff}(\mathrm{\tilde{T}}(R(\af,1)))$  with the same subspace  of
$ C([0,1],\R)\oplus \R$, the homomorphism $\phi_{A,R,\af}$ and $\phi_{R,A,\af}$ induce the identity map on that subspace at the level of ${{\mathrm{Aff}(\tilde{T}(-))}}$ maps. They also induce the identity maps at level of trace {{spaces}}, when we identify the corresponding trace spaces.  In particular,  $\phi_{A,R,\af}^*: \mathrm{\tilde{T}}(R(\af,1)) \to \mathrm{\tilde{T}}(A(W,\af))$ (or  $\phi_{R,A,\af}^*:   \mathrm{\tilde{T}}(A(W,\af)) \to \mathrm{\tilde{T}}(R(\af,1))$, respectively) takes the subset
$\mathrm{{T}}(R(\af,1))$ to the subset $\mathrm{{T}}(A(W,\af))$ (or takes $\mathrm{{T}}(A(W,\af))$ to $\mathrm{{T}}(R(\af,1))$, respectively)

Fix $\af, r\in \Q_+\setminus \{0\}.$
There are  unitaries  $u_{\af, r},\, u_{1,r}\in {\bar Q}_r$ such that
$$
u_{\af,r}^*(e_\af\otimes e_r)u_{\af,r}=(\iota_r^Q)^{-1}(e_{r\af})\andeqn u^*_{1,r}(1\otimes e_r)u_{1,r}=
(\iota_r^Q)^{-1}(e_r){{=1\otimes e_r}}.
$$
{{(Note that $u_{1,r}$ can be chosen to  be $1_{\bar Q_r}$.) }}
There is a continuous path of unitaries $\{u(t): t\in [0,1]\}$ in ${\bar Q}_r$
such that $u(0)=u_{\af,r}$ and $u(1)=u_{1,r}.$

Let $v(t)=1\otimes u(t)\in Q\otimes  {\bar Q}_r$ for $t\in [0,1].$
Note if $f(t)\in Q\otimes Q,$
then
$$
v(t)^*(f(t)\otimes e_r)v(t)\in Q\otimes {\bar Q}_r\rforal t\in (0,1).
$$


Let $\phi_{R,r}: R(\af, 1)\to R(\af, r)$ {{be defined}} by
$$
\phi_{R, r}((f,a))=({\rm id}_Q\otimes \iota_r^Q)\circ {\rm Ad}\, v\circ q_r(f), a).
$$
Note that, for $t\in (0,1),$
\beq
({\rm id}_Q\otimes \iota_r^Q)\circ {\rm Ad}\, v(t)\circ q_r(f)(t)&=&({\rm id}_Q\otimes \iota_r^Q)\circ {\rm Ad}\, v(t)(f(t)\otimes e_r)\\
&=&({\rm id}_Q\otimes \iota_r^Q)(v(t)^*f(t)\otimes e_r)v(t))\in Q\otimes Q_r,
\eneq
\beq
({\rm id}_Q\otimes \iota_r^Q)\circ {\rm Ad}\, v(0)\circ q_r(f)(0)&=&({\rm id}_Q\otimes \iota_r^Q)\circ {\rm Ad}\, v(0)(a\otimes e_\af\otimes e_r)\\
&=&({\rm id}_Q\otimes \iota_r^Q)(a\otimes (\iota_r^{Q})^{-1}(e_{\af r}))\\
&=& a\otimes e_{\af r}\andeqn
\eneq
\beq
({\rm id}_Q\otimes \iota_r^Q)\circ {\rm Ad}\, v(1)\circ q_r(f)(1)&=&({\rm id}_Q\otimes \iota_r^Q)\circ {\rm Ad}\, v(1)(a\otimes
1\otimes e_r)\\
&=&({\rm id}_Q\otimes \iota_r^Q)(a\otimes (\iota_r^{Q})^{-1}(e_{r}))\\
&=& a\otimes e_{r}.
\eneq
 Evidently, when we identify $\mathrm{\tilde{T}}(R(\af,r))$ and $\mathrm{\tilde{T}}(R(\af,1))$ with ${\mathcal M}_+(0,1)\times \R_+$, the map $\phi_{R,r}^*$ is the identity map and takes the subset $\mathrm{{T}}(R(\af,r))$ to the subset $\mathrm{{T}}(R(\af,1))$.




Define $s^{(2,3)}: Q\otimes Q\otimes Q\to Q\otimes Q\otimes Q$
by
$$
s^{(2,3)}(x\otimes y\otimes z)=
{{x\otimes z\otimes y}}$$
for all $x, y, z\in Q.$ {{To make the notation {{clearer}}, we will often write the above $x\otimes z\otimes y$ as $(x\otimes z)\otimes y$, later.}}
Define  a \hm\, ${\widetilde{\iota^{Q}}}: R(\af, 1)\otimes Q\to R(\af,1)$
 by
$$
{{{\widetilde{\iota^{Q}}}}}(f\otimes b, a\otimes b)=((\iota^{Q})\otimes {\rm id}_Q)\circ s^{(2,3)}(f\otimes b), \iota^{Q}( a\otimes b))
$$
for $(f,a)\in R(\af, 1)$ and $b\in Q.$

Note, at $t=0,$
\beq
(\iota^{{Q}}\otimes {\rm id}_Q)\circ s^{(2,3)}(f\otimes b)(0))&=&(\iota^{{Q}}\otimes {\rm id}_Q)\circ s^{(2,3)}(a\otimes e_\af\otimes b)\\
&=&(\iota^{Q}\otimes {\rm id}_Q)((a\otimes b)\otimes e_\af)\\
&=& \iota^{Q}(a\otimes b)\otimes e_\af;
\eneq
 and, at $t=1,$
 \beq
(\iota^{Q}\otimes {\rm id}_Q)\circ s^{(2,3)}(f\otimes b)(1))&=&(\iota^{Q}\otimes {\rm id}_Q)\circ s^{(2,3)}(a\otimes 1\otimes b)\\
&=&(\iota^{Q}\otimes {\rm id}_Q)((a\otimes b)\otimes 1)\\
&=& \iota^{Q}(a\otimes b)\otimes 1.
\eneq

Let $m\ge 2$ be an integer.
Viewing $M_m$ as a unital \SCA\, of $Q,$
 Put $\iota^{M_m}=\iota^Q|_{Q\otimes M_m}.$
Define  ${\widetilde{\iota^{M_m}}}: R(\af,1)\otimes M_m\to R(\af,1)$
by ${\widetilde{\iota^{M_m}}}={\widetilde{\iota^{Q}}}|_{R(\af,1)\otimes M_m}.$
Note also that (recall \eqref{Dconsteaf})
\beq
{\widetilde{\iota^{Q}}}(a_\af\otimes 1_Q)=a_\af \andeqn
{\widetilde{\iota^{M_m}}}(a_\af\otimes 1_{M_m})=a_\af.
\eneq
{{We need one}}  more map.
Let $\psi_{A_w}: A(W,\af)\to C([0,1], Q)\oplus A$ {{be defined}}
by
$$
\psi_{A_w}(f,a)=(g,a){{,}}
$$
where $g(t)=s(a)$ for all $t\in [0,1].$
{{Define}} $\psi_{A_w,r}: A(W,\af)\to C([0,1], Q\otimes Q_r)\oplus A$ by
$$
\psi_{A_w,  r}{{((f,a))}}=(q_r(g),a)
$$
with  $g(t)=s(a)$ (and $q_r(g)=q_r\circ s(a)$).
{{Note that $\psi_{A_w,r}(a_\af)=(1\otimes e_r,1)$ is the unit of $C([0,1], Q\otimes Q_r)\oplus A.$
It follows that $\psi_{A_w,r}$ maps strictly positive elements to strictly positive elements.}}

 {{ When we identify $\mathrm{\tilde{T}}(A(W,\af))$ with ${{\mathcal M}_+}(0,1)\times \R_+$, and $\mathrm{\tilde{T}}(C([0,1], Q\otimes Q_{{r}})\oplus A)$ with ${{\mathcal M}_+}[0,1]\times \R_+$, the map $\psi_{A_w,r}^*$ is given by
$$\psi_{A_w,r}^* (\mu, s)= (0, s+\int_0^1 d\mu),$$
which takes $\mathrm{{T}}(C([0,1], Q\otimes Q_{{r}})$ to $\mathrm{{T}}(A(W,\af))$.}}

 Warning: $C([0,1], Q\otimes Q_r)\oplus A\not=A(W, \af).$


One more notation:  {{define}} $P_f: (f,a)\to f$ and $P_a: (f,a)=a.$

Now let $\af<\bt <1.$
Let us choose $x$ such that
$
\bt(1/2+x)= (\af/2+x).
$
So
$$
x={(1/2)(\bt-\af)\over{1-\bt}}>0.
$$

Let
$$
y=1/2+x={1\over{2}}+{(1/2)(\bt-\af)\over{(1-\bt)}}={(1-\af)\over{2(1-\bt)}}.
$$

Let $r_1=(1/2)(1/y)={(1-\bt)\over{(1-\af)}}$ and  $r_2=x(1/y)={(\bt-\af)\over{(1-\af)}}.$
Then
$$
\af r_1+r_2=(1/y)(1/2+x)=\bt\andeqn r_1+r_2=(1/y)(1/2+x)=1.
$$


%
 Define
 $\Phi_{A_w, \af, \bt}: A(W, \af)\to A(W,\bt)$ by
 \beq\nonumber
&&P_a( \Phi_{A_w, \af, \bt}((f,a)))=a\andeqn\\\nonumber
&&P_f( \Phi_{A_w, \af, \bt}((f,a)))=\diag( P_f\circ \phi_{R,r_{{1}}}\circ \phi_{A,R, \af}((f,a)), P_f\circ  \psi_{A_w, r_2}((f,a))).
\eneq

One computes that, for $t\in (0,1),$
\beq
P_f(\phi_{R,r_1}\circ \phi_{A,R, \af}((f,a))(t)&=&({\rm id}_Q\otimes \iota_{r_1}^Q)\circ {\rm Ad}\, v(t)\circ q_{r_1}(f)(t)\\
&=&({\rm id}_Q\otimes \iota_{r_1}^Q)(v(t)^*f(t)\otimes e_{r_1})v(t))\\
&&\in Q\otimes Q_{r_{{1}}}\subset Q\otimes Q\andeqn\\
P_f(\psi_{A_w, r_2}((f,a)))(t)&=&q_{r_2}(s(a))=s(a)\otimes e_{r_2}\in Q\otimes Q.
\eneq
At $t=0,$
\beq
P_f(\phi_{R,r_1}\circ \phi_{A,R, \af}((f,a))(0)&=&({\rm id}_Q\otimes \iota_{r_1}^Q)\circ {\rm Ad}\, v(0)\circ q_{r_1}(f)(0)\\
&=&({\rm id}_Q\otimes \iota_{r_1}^Q)\circ {\rm Ad}\, v(0)(s(a)\otimes e_\af\otimes e_{r_1})\\
&=&({\rm id}_Q\otimes \iota_{r_1}^Q)(a\otimes (\iota_{r_1}^{Q})^{-1}(e_{\af r_1}))\\
&=& s(a)\otimes e_{\af r_1}.
\eneq
Hence
\beq
P_f( \Phi_{A_w, \af, \bt}((f,a)))(0)&=&\diag(s(a)\otimes e_{\af r_1}, s(a)\otimes e_{r_2})\\
&=&s(a)\otimes e_{\af r_1+r_2}=s(a)\otimes e_\bt.
\eneq
At $t=1,$
\beq
P_f(\phi_{R,R,r_2}\circ \phi_{A,R, \af}((f,a))(1)&=&({\rm id}_Q\otimes \iota_{r_1}^Q)\circ {\rm Ad}\, v(1)\circ q_{r_1}(f)(1)\\
&=&({\rm id}_Q\otimes \iota_{r_1}^Q)\circ {\rm Ad}\, v(1)(s(a)\otimes
1\otimes e_{r_1})\\
&=&({\rm id}_Q\otimes \iota_{r_1}^Q)(s(a)\otimes (\iota_{r_1}^{Q})^{-1}(e_{r_1}))\\
&=& s(a)\otimes e_{r_1}.
\eneq
Hence
\beq
P_f( \Phi_{A_w, \af, \bt}((f,a)))(1)&=&\diag(s(a)\otimes e_{r_1}, s(a)\otimes e_{r_2})\\
&=&s(a)\otimes e_{ r_1+r_2}=s(a)\otimes 1.
\eneq

Therefore, indeed, $\Phi_{A_w, \af, \bt}$ defines a \hm\, from $A(W, \af)$ to  $A(W,\bt).$
It is injective.  {{We also check that
$\Phi_{A_w,\af, \bt}(a_\af)$ is a strictly positive element of $A(W, \bt)$ (recall \eqref{Dconsteaf}).}}

 {{Furthermore $\Phi_{A_w, \af, \bt}^*: \mathrm{\tilde{T}}(A(W,\bt)) (\cong {{\mathcal M}_+}(0,1)\times \R_+)\to \mathrm{\tilde{T}}(A(W,\af)) (\cong {{\mathcal M}_+}(0,1)\times \R_+)$ is given by
$$\Phi_{A_w, \af, \bt}^*(\mu, s)=(r_1\mu, r_2(\int_0^1d \mu)+s),$$
which takes $\mathrm{{T}}(A(W,\bt))$  to $\mathrm{{T}}(A(W,\af))$.}}

Fix any $a\in A_+$ with $\|a\|=1.$
Define $f(t)=(1-t)(s(a)\otimes e_\af)+t(s(a)\oplus 1).$
Then $(f,a)\in A(\af,1)$ is a {{full}} positive element.  Note that $\Phi_{A_w, \af, \bt}((f,a))$
is also a {{full}} positive element.


 Let $m, m'$ be two positive integers such that ${{m|m'}}$. Let $\frac{m'}m=a+1$. Let  $F_2=M_{m'}(\C)$,  $F_1=M_m(\C)$, and $\phi_0,\phi_1:F_1\to F_2$ be defined by
$$\phi_0(x)=\diag(\underbrace{x,\cdots, x}_{a},0), ~~~\mbox{and}~~~\phi_0(x)=\diag(\underbrace{x,\cdots, x}_{a+1}).$$
Denote that $$A(m, m')= A(F_1, F_2, \phi_0, \phi_1)=\{(f,x)\in C([0,1], M_m(\C)\otimes M_{a+1}(\C))\oplus M_m(\C): $$
$$~~f(0)=x\otimes \diag(\underbrace{1,\cdots 1}_a, 0)~~f(1)=x\otimes \diag(\underbrace{1,\cdots 1}_{a+1})\}.$$ Then $A(m, m')\in {\cal C}_0^0$  {{with $\lambda_s(A(m,m'))= \frac{a}{a+1}.$}}

  In {{ \cite{Jb},}}
the author constructed a simple  inductive limit
${\cal W}=\lim W''_i=\lim (A(m_i, (a_i+1)m_i), \omega_{i,j})$ such that $K_0({\cal W})=0=K_1({\cal W})$ and $T({\cal W})=\{pt\}$,
{{In the construction, one has $a_i+1=2(a_{i-1}+1)$ and $m_i=a_im_{i-1}$. Consequently $\lim_{i\to \infty}a_i=\infty$. }}{{ From the construction in \cite{Jb}, the map $\omega_{i,j}$ takes strictly positive elements to strictly positive elements, and $\omega_{i,j}^*$ maps tracial state space $T{{(W''_j)}}$ to tracial state space $T{{(W''_i)}}$.}} {{ Furthermore, $A_i\in  {\cal C}_0^0$ with $\lambda_s(A_i)=\frac{a_i}{a_{i+1}}\to 1$ as $i \to \infty$.}}

{{ Note that  ${\cal W}\otimes Q\cong {\cal W}$. Identify $Q\otimes M_m$  and $Q\otimes M_{a+1}$ with $Q$,  we can identify }}
$A(m, ({{a}}{{+1}})m){{\otimes Q}}$ {{with $R(\af, 1)$ for $\af=a/(a+1)$.  {{Moreover,}} ${\cal W}=\lim(W'_n=R(\af_n, 1), \imath'_{W,n})$, where $\imath'_{W,n}: R(\af_n,1)\to R(\af_{n+1}, 1)$ are injective.}} {{Again, we have that $(\imath'_{W,n})^*$ takes $T{{(R(\af_{n+1}, 1))}}$ to $T{{(R(\af_n,1))}}$.}}


Let $C$ be a unital AF-algebra so that $T({{C}})=T.$
We write $C=\lim_{n\to \infty} (F_n, \imath_{F,n}),$ where
${\rm dim}(F_n)<\infty$ and $\imath_{F,n}: F_n\to F_{n+1}$
are unital injective \hm s.

Let ${\cal W}$ be as before.
Write
$$
W_T={\cal W}\otimes C.
$$
{{Then}} $T(W_T)=T$ and $W_T$ has continuous scale.

Suppose that
$$
F_n=\bigoplus_{i=0}^{k(n)} M_{n_i}.
$$
By identifying $R(\af_n,1)$ with $R(\af_n,1)\otimes M_{n_i}$  and $R(\af_n,1)\otimes Q$,  we may write that
$$
W_T=\lim_{n\to\infty} (W_n, \imath_n),
$$
where $W_n$ is a  direct sum of  $k(n)$  {{ summand}} of $R(\af_n,1):$
$
W_n=\bigoplus_{i=0}^{k(n)}R(\af_n{{,}} 1)^{{(i)}},
$
where $\af_1<\af_2<\cdots <1.$ {{Again, we have that $\imath_n^*$ takes $T{{(W_{n+1})}}$ to $T{{(W_n)}}$.
}}

We write
$$
W_n=R_{0,n}\bigoplus  D_n{{,}}
$$
where $R_{0,n}=R(\af_n,1)^{(0)}$ and
$$
D_n=\bigoplus_{i=1}^{k(n)}R(\af_n{{,}} 1)^{(i)}.
$$

In the case that $W_n$ has only one summand, we understand that
$W_n=R_{0,n}$ and $D_n=\{0\}.$
We also use
$$
P_{0,n}: W_n\to R_{0,n}\andeqn P_{1,n}: W_n\to D_n
$$
for the {{projection map}}, i.e., $P_{0,n}(a\oplus b)=a$ and
$P_{1,n}(a\oplus b)=b$ for all $a\in R_{0,n}$ and $b\in D_n.$

Consider
$$
B_n=W_n\oplus   M_{(n!)^2}(A(W, \af_n)),\,\,\,n=1,2,
$$

Let $r_n=\frac{1}{2^{n+1}k(n)},$ $n=1,2,....$

Let us define a \hm\, $\Psi_{n,n+1}: B_n\to B_{n+1}$ as follows.

On $M_{(n!)^2}(A(W, \af_n))$
define $\Psi_{n,n+1,A,A}: M_{(n!)^2}(A(W, \af_n))\to M_{((n+1)!)^2}(A(W, \af_{{n+1}}))$ by
$$
\Psi_{n,n+1,A,A}(a)=\diag(\Phi_{A_w, \af_n, \af_{n+1}}(a),\overbrace{0,0,...,0}^{((n+1)!)^2-(n!)^2})\rforal a\in M_{(n!)^2}(A(W, \af_n))
$$
and define
$
\Psi_{n,n+1, A, W}: M_{(n!)^2}(A(W, \af_n))\to R_{0,n+1}\otimes e_{r_n}{{Qe_{r_n}}}$
by
$$
\Psi_{n,n+1, A, W}= q_{r_n}\circ  \imath_{W,n}'\circ {\widetilde{\iota^{M_{(n!)^2}}}}\circ (\phi_{A,R,\af_n}\otimes {\rm id}_{M_{(n!)^2}}).
$$
(Recall that ${\widetilde{\iota^{Q}}}: R(\af, 1)\otimes Q\to R(\af,1)$ is an isomorphism and   ${\widetilde{\iota^{M_m}}}: R(\af,1)\otimes M_m\to R(\af,1)$  is defined
by ${\widetilde{\iota^{M_m}}}={\widetilde{\iota^{Q}}}|_{R(\af,1)\otimes M_m}.$)
It is injective.

On $W_n$ define
$\Psi_{n,n+1,W,W}: W_n\to  {{R_{0,{n+1}} \otimes (1-e_{r_n})Q(1-e_{r_n}) \oplus D_{n+1}
\subset W_{n+1}}}$
by,
 for $a\in R_{0,n},$ $b\in D_n,$
\beq\nonumber
&&\hspace{-0.3in}\Psi_{n,n+1,W,W}((a\oplus b)){{=\Psi^0_{n,n+1,W,W}((a\oplus b))\oplus \Psi^1_{n,n+1,W,W}((a\oplus b))}}=\\\nonumber
&&\hspace{-0.1in}
q_{1-r_n}((P_{0,{n+1}}\circ \imath_{n,n+1}(a))
\oplus  (P_{0,{n+1}}\circ \imath_{n,n+1}(b)))\\
&&\hspace{0.4in}\oplus (P_{1, n+1}\circ \imath_{n, n+1}(a)
\oplus P_{1,n+1}\circ  \imath_{n,n+1}(b).
\eneq

Suppose that $a, b\ge 0.$  Then, for any $t\in T(W_{n+1}),$
\beq\label{eomega1}
t(\Psi_{n,n+1,W,W}(a\oplus b))\ge  (1-r_n)t(\imath_{n, n+1}(a\oplus b)).
\eneq

Define
$\Psi_{n,n+1, W,A}:  R_{0,n}\to  M_{((n+1)!)^2}(A(W, \af_{n+1}))$ by
$$
\Psi_{n,{{n+1}}, W,A}(a)=\diag(0, \overbrace{ (\phi_{R, A,\af_{n+1}}\circ \imath_{W,n}')(a), ...,
(\phi_{R, A, \af_{n+1}}\circ \imath_{W,n}')(a)}^{((n+1)!)^2-(n!)^2}).
$$

Now if $(a\oplus b)\oplus c\in   W_n \oplus A(W, \af_n)$
(with $a\in M_{(n!)^2}(R_{0,n}), $ $b\in D_n,$ and $c\in A(W,\af_n),$
define
$$
\Phi_{n,n+1}((a\oplus b)\oplus c)=d \oplus c',
$$
where
$$
d={{{\widetilde{\iota^{Q}}}\big(}}\Psi_{n,n+1, A, W}(c)\oplus \Psi^{{0}}_{n,{n+1},W,W}(a\oplus b){{\big)}}{{\oplus \Psi^1_{n,{n+1},W,W}(a\oplus b)}}\in W_{n+1}$$
({{$\Psi_{n,n+1, A, W}(c)\in R_{0,n+1}\otimes(e_{r_n}Qe_{r_n}), \Psi^0_{n,n+1,W,W}(a\oplus b)\in R_{0,n+1}\otimes(e_{1-r_n}Qe_{1-r_n})$, and  }}\\
$\Psi^{{1}}_{n,n+1,W,W}(a\oplus b){{\in}} D_{n+1}$) and
$$
c'=\diag(\Psi_{n,n+1, A,A}(c),\Psi_{n,n+1, W,A}(a))\in M_{((n+1)!)^2}(A(W, \af_{n+1}).
$$

Since all partial map{{s}} of $\Phi_{n,n+1}$ take the strictly positive elements to the strictly positive elements in corresponding {{corners}},  $\Phi_{n,n+1}$ itself takes strictly positive elements to strictly positive elements.
{{This also implies that}} $\Phi_{n,n+1}^*(T{{(B_{n+1})}})\subset  {{T(B_n)}}$.
Note {{also}} that $\Phi_{n, n+1}$ maps full elements to full elements and it is injective.

Define
$$
B_T=\lim_{n\to\infty} (B_n, \Phi_{n,n+1}).
$$

\end{df}

\begin{rem}\label{Zstable}
In the construction above, $C^*$ algebras $A$ and $Q$ are ${\cal Z}$-stable, one can also choose the homomorphism $s: A\to Q$ and $j: Q\to A$ to be of the form $s'\otimes id_{\cal Z}: A\otimes {\cal Z}\to Q\otimes {\cal Z}$ and $j'\otimes id_{\cal Z}: Q\otimes {\cal Z}\to A\otimes {\cal Z}$ respectively, when one identifies $A\cong A\otimes {\cal Z}$ and $Q\cong Q\otimes {\cal Z}$. Then $R(\af,1)$, $A(W,\af_n)$, $W_n$, $B_n$ are all ${\cal Z}$-stable. One can also make the map $\Phi_{n,n+1}: B_n\otimes {\cal Z}\to B_{n+1}\otimes {\cal Z}$ to be of form of $\Phi'\otimes {\rm id}_{\cal Z}$. In such a way, we will have that $B_T$  is {{${\cal Z}$}}-stable.

 By section 4 of \cite{EG}, one can  write
 $A=\lim_{n\to\infty}(A_n, \phi_n),$ where each $A_n=M_{k(n)}(C(X_n)),$ where each $X_n$ is a finite
 CW complex with dimension no more than 3.  Let $s:A\to Q$ be at the beginning of \ref{Dcc1}.
Then, by the proof of 4.7 (and using 2.29) of \cite{EG},  there exists a sequence of $M_{l(n)}\subset Q$ and \hm s
 $s_n: A_n\to M_{l(n)}$ such that, for each  fixed $m,$
 \beq\label{63-1}
 \lim_{n\to\infty} s\circ \phi_{m, \infty}(a)=\lim_{n\to\infty} s_n\circ\phi_{m,n}(a)\rforal a\in A_m.
 \eneq
 This also follows from the following.
 Note $s_{*i}(G_i)=0,$ $i=0,1.$ Since $K_1(Q)=\{0\}$ and $K_0(Q)=\Q$ which is divisible,  by Theorem 3.9
 of \cite{HLX},  for each fixed $m,$ there exists a sequence of \hm s $\psi_k: A_m\to Q$
 such that $\psi_k(A_m)$ has finite dimension and
 $\lim_{k\to\infty}\psi_k(a)=s\circ \phi_{m, \infty}(a)$ for all $a\in A_m.$ Since finite dimensional
 \CA s are semiprojective, one also obtains \eqref{63-1}.
Then for any finite set  ${{{\cal F}}}\subset A(W, \af)$ and any $\ep>0,$ there is  a  {{\CA}}\, of the form
$${{D_n}}'=\big\{(f,a)\in C([0,1], M_{l(n)}\otimes M_{l(n)})\oplus A_n:~~f(0)=s_n(a)\otimes \diag(\underbrace{1,\cdots,1}_{\af l(n)},0),~~~~~~~~~~~$$
$$~~~~~~~~~~~~~~~~~~~~~~~~~~~~~~~~~~~~~~f(1){{=}}s_n(a)\otimes \diag(\underbrace{1,\cdots,1}_{l(n)})\big\}$$
such that ${{\cal F}}\subset_{\ep} D_{{n}}'$, where $\af l(n)$ is an integer.
Put $D_n=D_n'\oplus W_n.$ Then
that $D_{{n}}$ is a sub-homogeneous \CA s with 3-dimensional spectrum.
Moreover, $D_n\in \overline{{\cal D}_2}$ defined in 4.8 of \cite{GLN}.

{{H}}ence $B_T$ has {{the}} decomposition rank at most three.
(In fact, one can prove that $B_T$ is an inductive limit sub-homogeneous \CA s with spectrum having dimension no more than $3$, but we do not need this fact.) \end{rem}

\begin{lem}\label{LBTsimplR}
Suppose that $a\in (W_n)_+.$
Then, for any integer $k\ge 1$ and any $t\in T(W_{n+{{k}}}),$
\beq
t(\Psi_{n,n+k,W,W}(a))\ge  (1-\sum_{j=0}^{k-1} r_{n+j})t(\imath_{n, n+k}(a)).
\eneq
\end{lem}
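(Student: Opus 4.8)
The statement is an iterated version of the inequality \eqref{eomega1}, which handles the case $k=1$: for $a\oplus b\in (W_n)_+$ one has $t(\Psi_{n,n+1,W,W}(a\oplus b))\ge (1-r_n)\,t(\imath_{n,n+1}(a\oplus b))$ for all $t\in T(W_{n+1})$. So the plan is a straightforward induction on $k$. First I would set up the base case $k=1$, which is exactly \eqref{eomega1}. For the inductive step, assume the inequality holds for $k$, i.e. $t'(\Psi_{n,n+k,W,W}(a))\ge (1-\sum_{j=0}^{k-1} r_{n+j})\,t'(\imath_{n,n+k}(a))$ for all $t'\in T(W_{n+k})$ and all $a\in (W_n)_+$. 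I want to deduce the statement for $k+1$.

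The key is that the partial maps $\Psi_{\bullet,\bullet,W,W}$ are compatible with composition in the obvious way: $\Psi_{n,n+k+1,W,W}$ restricted to the $W$-summand behaves, up to the identification implicit in the inductive limit, like $\Psi_{n+k,n+k+1,W,W}\circ \Psi_{n,n+k,W,W}$. More precisely, from the definition of $\Phi_{n,n+1}$ the $W_{n+1}$-component of the image of an element coming from $W_n$ is given by $\Psi_{n,n+1,W,W}$ (the extra contribution from the $A(W,\af_n)$-summand only enriches the image, it does not subtract), so iterating, $\imath_{n,n+k+1}|_{W_n}$ has $W$-part dominating $\Psi_{n+k,n+k+1,W,W}\circ\Psi_{n,n+k,W,W}$ on positive elements. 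Now fix $t\in T(W_{n+k+1})$ and let $t'=t\circ\Psi_{n+k,n+k+1,W,W}$, viewed appropriately as (a positive multiple bounded below by) a trace on $W_{n+k}$; applying \eqref{eomega1} at level $n+k$ gives
\[
t(\Psi_{n+k,n+k+1,W,W}(c))\ge (1-r_{n+k})\,t(\imath_{n+k,n+k+1}(c))
\]
for $c\in (W_{n+k})_+$. Taking $c=\Psi_{n,n+k,W,W}(a)$ and then applying the inductive hypothesis to the trace $t\circ\imath_{n+k,n+k+1}$ on $W_{n+k}$ yields
\[
t(\Psi_{n,n+k+1,W,W}(a))\ge (1-r_{n+k})\,t\bigl(\imath_{n+k,n+k+1}(\Psi_{n,n+k,W,W}(a))\bigr)\ge (1-r_{n+k})\Bigl(1-\sum_{j=0}^{k-1}r_{n+j}\Bigr)t(\imath_{n,n+k+1}(a)),
\]
and since $(1-r_{n+k})(1-\sum_{j=0}^{k-1}r_{n+j})\ge 1-\sum_{j=0}^{k}r_{n+j}$ (all $r_{n+j}>0$), the induction closes.

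The only genuinely delicate point is bookkeeping: one must check that composing the partial maps really does reproduce $\Psi_{n,n+k+1,W,W}$ on the $W$-summand, and that the ``extra'' summands ($\Psi_{n,n+1,A,W}$ feeding into $R_{0,n+1}$, and the $A(W,\af_n)\to W$ pieces) only add to the trace rather than interfere — this is where the fact that all the partial maps send strictly positive elements to strictly positive elements and preserve positivity is used. I do not expect any real obstacle here; it is an unwinding of the definition of $\Phi_{n,n+1}$ in \ref{Dcc1} together with telescoping the product $\prod_{j=0}^{k-1}(1-r_{n+j})\ge 1-\sum_{j=0}^{k-1}r_{n+j}$.
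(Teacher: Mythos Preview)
Your proposal is correct and follows essentially the same approach as the paper: induction on $k$, with \eqref{eomega1} as the base case and inductive step, using that $t\circ\imath_{n+k,n+k+1}\in T(W_{n+k})$ for $t\in T(W_{n+k+1})$ (this is the single fact the paper's two-line proof highlights). Your concerns about bookkeeping are unnecessary: $\Psi_{n,n+k,W,W}$ is, by definition, the composition $\Psi_{n+k-1,n+k,W,W}\circ\cdots\circ\Psi_{n,n+1,W,W}$, so no ``extra summands'' enter the picture; and the sentence introducing $t'=t\circ\Psi_{n+k,n+k+1,W,W}$ is a red herring you never use---the trace you actually (and correctly) apply the inductive hypothesis to is $t\circ\imath_{n+k,n+k+1}$.
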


\begin{proof}
Note
${{\tau}}\circ {{\Phi}}_{n+1, n+2}$ is in $T(W_{n+1})$ for all $\tau\in T(W_{n+2}).$
Thus this lemma follows from \eqref{eomega1} and induction immediately.
\end{proof}


\begin{lem}\label{LfullW}
Let $n\ge 1$ be an integer.
There is a strictly positive element $e_0'\in W_n$ with $\|e_0'\|=1$ such
that $\imath_{n, \infty}(e_0')$ is a strictly positive element.
Moreover,
for any  $a\in (W_n)_+\setminus \{0\},$ there exists $n_0\ge n,$
$x_1,x_2,..., x_m\in W_{n_0}$ such that
$$
\sum_{i=1}^mx_i^*\imath_{n, n_0}(a)x_i=\imath_{n, n_0}(e_0').
$$
Moreover,
$$
t(\imath_{n, m}(e_0'))\ge 7/8 \tforal t\in T(W_{{{m}}}) \tand \tforal m\ge n_0,
$$
$$
\tand \tau(\imath_{n, \infty}(e_0'))>15/16\rforal \tau\in T(W_T).
$$
\end{lem}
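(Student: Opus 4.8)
The plan is to exploit the concrete structure of $W_n = R_{0,n}\oplus D_n$, a finite direct sum of copies of $R(\af_n,1)$, together with the simplicity of $W_T$ and the ``almost-identity'' lower bound \eqref{eomega1} (and its iterate, Lemma \ref{LBTsimplR}).

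First I would produce the strictly positive element. Each summand $R(\af_n,1)^{(i)}$ has a canonical strictly positive element of the form $a_{\af_n}$ (see \ref{DRaf1}); take $e_0' = \diag(\overbrace{a_{\af_n},\dots,a_{\af_n}}^{k(n)+1})\in W_n$ with $\|e_0'\|=1$. Because every connecting map $\imath_{n,n+1}$ was built from the partial maps $\Phi_{A,R,\af}$, $\phi_{R,r}$, $q_r$, $\Psi_{n,n+1,W,W}$, etc., all of which were explicitly checked to send strictly positive elements to strictly positive elements, $\imath_{n,\infty}(e_0')$ is strictly positive in $W_T$ (and likewise $\imath_{n,m}(e_0')$ is full and strictly positive in each $W_m$).

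Next, the Cuntz-domination statement. Since $W_T = \lim (W_n,\imath_n)$ is simple (this is the content of the construction in \cite{Jb}, as recorded in \ref{Dcc1}), for $a\in (W_n)_+\setminus\{0\}$ the element $\imath_{n,\infty}(a)$ is full in $W_T$, hence $\imath_{n,\infty}(e_0')\lesssim \imath_{n,\infty}(a)$ in $Cu(W_T)$. Unwinding the inductive limit: there is $\ep>0$ with $f_\ep(\imath_{n,\infty}(e_0')) \precsim \imath_{n,\infty}(a)$, and since $Cu$ commutes with inductive limits there exist $n_0\ge n$ and $x_1,\dots,x_m\in W_{n_0}$ with
\[
\Big\|\sum_{i=1}^m x_i^*\,\imath_{n,n_0}(a)\,x_i - \imath_{n,n_0}(e_0')\Big\|
\]
as small as we like; passing to a further stage and absorbing a small perturbation (here one uses that $W_{n_0}$ has stable rank one, or simply replaces $e_0'$ by $f_\delta(e_0')$ which has the same strictly-positive/full properties) one gets an exact equality $\sum_{i=1}^m x_i^*\,\imath_{n,n_0}(a)\,x_i = \imath_{n,n_0}(e_0')$. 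I would be slightly careful to state the Lemma (or its use) so that this harmless renormalization of $e_0'$ is permitted.

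Finally, the trace estimates. The normalization $r_j = \tfrac{1}{2^{j+1}k(j)}$ was chosen precisely so that $\sum_{j\ge 1} r_j < 1$; more to the point $\sum_{j=0}^{\infty} r_{n_0+j} \le \sum_{j\ge n_0+1} 2^{-j} \le 2^{-n_0}$, which for the first few indices is already below $1/16$ (and one simply starts the construction, or shifts $n_0$, so that $n_0$ is large enough that $\sum_{j\ge 0} r_{n_0+j} < 1/16$). By Lemma \ref{LBTsimplR}, for every $m\ge n_0$ and every $t\in T(W_m)$,
\[
t(\imath_{n,m}(e_0')) \;=\; t\big(\imath_{n_0,m}(\imath_{n,n_0}(e_0'))\big)\;\ge\; \Big(1-\sum_{j=0}^{m-n_0-1} r_{n_0+j}\Big)\, t\big(\imath_{n,n_0}(e_0')\big),
\]
and since $e_0'$ has norm one and each $R(\af_n,1)$-trace applied to a normalized strictly positive element is at least some value close to $1$ — here one uses that $t(\imath_{n,n_0}(e_0'))$ is itself bounded below by the same kind of $\prod(1-r_j)$-product coming from the tower below $n_0$, which we again arrange (by the choice of the $r_j$'s) to exceed $15/16$ — we obtain $t(\imath_{n,m}(e_0'))\ge 7/8$. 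Taking $m\to\infty$ and using that $\tilde T$-limits of these traces exhaust $T(W_T)$ gives $\tau(\imath_{n,\infty}(e_0'))>15/16$ for all $\tau\in T(W_T)$.

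The routine parts are the Cuntz-semigroup bookkeeping and the geometric-series estimates; \textbf{the only genuine subtlety} is the lower bound $t(\imath_{n,n_0}(e_0'))$ at the \emph{base} stage $n_0$: one must make sure that the finitely many connecting maps from the very bottom of the tower up to level $n_0$ (each of which only loses a factor $1-r_j$ by \eqref{eomega1}) together with the fact that $a_{\af_n}$ is ``almost the unit'' of $R(\af_n,1)$ at the trace level already pin $t(\imath_{n,n_0}(e_0'))$ above $15/16$. This is arranged by choosing the starting index of the whole inductive system (equivalently, by only asserting the lemma for $n$ beyond a fixed threshold, which is all that is needed later), so that $\sum_{\text{all relevant }j} r_j$ is comfortably below $1/16$.
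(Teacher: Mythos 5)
Your skeleton (choose a compatible strictly positive element, use simplicity to express it over $a$ at a finite stage, then do trace bookkeeping) is the right shape, but two key steps fail as written. First, fullness of $\imath_{n,\infty}(a)$ in the simple algebra $W_T$ does \emph{not} give $\imath_{n,\infty}(e_0')\lesssim \imath_{n,\infty}(a)$ in $Cu(W_T)$: Cuntz comparison here is governed by traces, and $d_\tau(e_0')=1$ while $d_\tau(a)$ can be arbitrarily small, so the comparison you invoke is simply false. What is true, and what the paper uses, is the ideal statement: by simplicity, $\imath_{n,\infty}(e_0')$ is approximated by finite sums $\sum_i x_i^*\,\imath_{n,\infty}(a)\,x_i$, hence at some stage $n_0$ one has $\|\sum_i (x_i')^*\imath_{n,n_0}(a)x_i'-\imath_{n,n_0}(e_0')\|$ small. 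To upgrade this to the exact identity, your plan of ``replacing $e_0'$ by $f_\delta(e_0')$'' changes the statement, and in fact $f_\delta(e_0')$ is no longer strictly positive (since $W_n$ is projectionless, ${\rm sp}(e_0')=[0,1]$). The paper keeps $e_0'$: it applies Lemma 2.2 of \cite{Rr11} to get an exact identity $\sum_i (y_i')^*\imath_{n,n_0}(a)y_i'=\imath_{n,n_0}(f_\eta(e_0'))$ with $\eta<1/16$, and then uses that $\imath_{n,n_0}(f_\eta(e_0'))$ is full in $W_{n_0}$ (its traces there are $\ge 3/4$) to write $\imath_{n,n_0}(e_0')$ exactly as a further finite sum over $\imath_{n,n_0}(a)$.

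Second, the trace estimates, which are the real content. Lemma \ref{LBTsimplR} compares $t(\Psi_{n,n+k,W,W}(\cdot))$ with $t(\imath_{n,n+k}(\cdot))$, i.e.\ the $W$-corner of the $B_T$ connecting maps against the $W_T$ connecting maps; it says nothing about $t(\imath_{n,m}(e_0'))$, and indeed the maps $\imath_{n,m}^*$ send tracial states to tracial states, so there is no $(1-r_j)$-loss along the $W_T$ system at all. The genuine obstruction is different: for your concrete choice $e_0'=\diag(a_{\af_n},\dots,a_{\af_n})$ one only gets $t(\imath_{n,m}(e_0'))\ge \af_n$ (and the same in the limit), which for small $n$ need not exceed $7/8$, let alone $15/16$; the numbers $r_j$ cannot repair this, and your remedy of reindexing the system or asserting the lemma only for large $n$ proves a weaker statement than the one claimed for every $n\ge 1$. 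The missing idea in the paper's proof is to take an arbitrary strictly positive $e_0'$ and replace it by $g(e_0')$ for a suitable $g\in C_0((0,1])_+$, so that $\tau(\imath_{n,\infty}(e_0'))>15/16$ for all $\tau\in T(W_T)$ (possible because $d_\tau(e_0')=1$ and $T(W_T)$ is compact), and then to deduce $t(\imath_{n,m}(e_0'))\ge 7/8$ for all $m\ge n_0'$ and all $t\in T(W_m)$ from a weak*-approximation argument, using that the tracial states $t\circ\imath_{n,m}$ accumulate only at restrictions of traces of $W_T$.
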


\begin{proof}
To simplify the notation, \wilog, we may let $n=1.$
Since $W_T$ is simple,  pick a strictly positive  element in $e_0'\in (W_1)_+$ with $\|e_0'\|=1$
so that $e'=\imath_{1, \infty}(e_0')$ is a strictly positive in $W_T.$
By replacing $e_0'$ by $g(e_0')$ for some $g\in C_0((0,1])_+$ we may assume that
$$
\tau(e_0')>15/16\rforal \tau\in T(W_T).
$$

There is an integer $n_0'\ge 1$ such that
that
\beq\label{Tsimple-1}
t(\imath_{1, n}(e_0'))\ge 7/8\rforal n\ge n_0'\andeqn t\in T(W_n).
\eneq

Note that this implies that
\beq\label{Tsimple-2}
t(\imath_{1, n}(f_\eta(e_0'))\ge 3/4\rforal n\ge n_0 \andeqn t\in T(W_n)
\eneq
whenever $1/16>\eta>0.$

{{Fixed $a\in (W_1)_+\setminus \{0\}$.}} Since $W_T$ is simple, there exists $n_0\ge n_0'\ge 1$ and $x_1',x_2',...,x_{m'}'\in W_{n_0}$
such that
\beq\label{Tsimple-3}
\|\sum_{i=1}^{m'} (x_i')^* \imath_{1,n_0}(a)x_i'-\imath_{1,n_0}(e_0')\|<1/128.
\eneq
It follows from Lemma 2.2 of \cite{Rr11}
that there are $y_1',y_2',...,y_m'\in W_{n_0}$ such that
\beq\label{Tsimple-4}
\sum_{i=1}^{m'}(y_i')^*\imath_{1,n_0}(a)y_i'=\imath_{1,n_0}(f_\eta(e_0'))
\eneq
for some $1/16>\eta>0.$
By \eqref{Tsimple-2}, $\imath_{1, n_0}(f_\eta(e_0'))$ is full in $W_{n_0}.$
Therefore there are $x_1, x_2,...,x_m\in W_{n_0}$ such that
\beq\label{Tsimple-5}
\sum_{i=1}^m x_i^* \imath_{1, n_0}(a)x_i=\imath_{1,n}(e_0').
\eneq
\end{proof}


\begin{prop}\label{PBTsimple}
$B_T$ is a simple \CA.
\end{prop}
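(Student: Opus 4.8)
The plan is to verify simplicity of $B_T=\lim_{n\to\infty}(B_n,\Phi_{n,n+1})$ by showing that every nonzero positive element $a\in B_n$ becomes full in some later stage $B_m$; since $B_T$ is an inductive limit with injective connecting maps, this suffices (any nonzero closed two-sided ideal $I$ of $B_T$ meets some $B_n$ in a nonzero ideal, which contains a nonzero positive element, and if that element is full in $B_m$ then $I\supseteq \imath_{m,\infty}(B_m)$ generates everything). First I would recall the structure $B_n=W_n\oplus M_{(n!)^2}(A(W,\af_n))$ with $W_n=R_{0,n}\oplus D_n$, and observe that because $\Phi_{n,n+1}$ maps full elements to full elements and is injective (as noted at the end of \ref{Dcc1}), it is enough to handle an element supported in a single summand of $B_n$: either the $W_n$-summand or the $M_{(n!)^2}(A(W,\af_n))$-summand.

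The key mixing is already built into the definition of $\Phi_{n,n+1}$: the $A(W,\af_n)$-part is routed by $\Psi_{n,n+1,A,W}$ into a corner $R_{0,n+1}\otimes e_{r_n}Qe_{r_n}$ of the $W_{n+1}$-summand (and this map is injective, hence a nonzero positive element of $A(W,\af_n)$ produces a nonzero positive element of $W_{n+1}$), while the $W_n$-part is routed by $\Psi_{n,n+1,W,A}$ into (a corner of) the $A(W,\af_{n+1})$-summand. So the two summands feed into each other. Thus: given $0\neq a\in B_n$ positive, after one application of $\Phi_{n,n+1}$ I get a positive element of $B_{n+1}$ whose component in the $W_{n+1}$-summand is nonzero. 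Replacing $a$ by this element, it suffices to prove that a nonzero positive element $b\in (W_n)_+$ becomes full in $B_m$ for some $m\geq n$. For this I would use \ref{LfullW}: there exists $n_0\geq n$ and $x_1,\dots,x_m\in W_{n_0}$ with $\sum_i x_i^*\imath_{n,n_0}(b)x_i=\imath_{n,n_0}(e_0')$ where $e_0'$ is a strictly positive element of $W_{n_0}$ with $\imath_{n_0,\infty}(e_0')$ strictly positive in $W_T$; hence $\imath_{n,n_0}(b)$ is full in $W_{n_0}$. Now inside $B_{n_0}$, a full positive element of the summand $W_{n_0}$ is full in $B_{n_0}$ precisely when it also becomes full, after $\Phi_{n_0,n_0+1}$, in the $A(W,\af_{n_0+1})$-summand: but $\Psi_{n_0,n_0+1,W,A}$ restricted to $R_{0,n_0}$ is built from $\phi_{R,A,\af_{n_0+1}}\circ\imath_{W,n_0}'$ which sends strictly positive elements to strictly positive elements and is injective, and the component of a full element of $W_{n_0}$ in $R_{0,n_0}$ is nonzero, so its image in $M_{((n_0+1)!)^2}(A(W,\af_{n_0+1}))$ is a nonzero positive element, which (using \ref{LfullW} again together with injectivity of $\Psi_{n_0+1,n_0+2,A,W}$ into $W_{n_0+2}$ and then full-ness in $W_{n_0+2}$) eventually becomes full.

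To make this clean I would organize it as a single "spreading" lemma: a nonzero positive element of any one summand of $B_n$ has, for all sufficiently large $m$, nonzero image in \emph{every} summand of $B_m$ and indeed full image there; and then fullness in $W_m$ and $A(W,\af_m)$ follows from \ref{LfullW} and the analogous full hereditary structure of $A(W,\af)$ (a full positive element of $A(W,\af)$ is one with $a\neq 0$ and $f(t)\neq 0$ for all $t\in(0,1)$, as recorded in \ref{Dcc1}, and \ref{Ldert1}-type arguments show a nonzero positive element can be cut down / spread to such). The main obstacle, and the step requiring care, is bookkeeping the passage between the two types of summands: one must check that the corner $R_{0,n+1}\otimes e_{r_n}Qe_{r_n}$ into which $\Psi_{n,n+1,A,W}$ lands is genuinely a \emph{full} corner of $W_{n+1}$ (so fullness there implies fullness in $W_{n+1}$), and symmetrically that the copies of $\phi_{R,A,\af_{n+1}}\circ\imath_{W,n}'$ inside $\Psi_{n,n+1,W,A}$ land fully enough in $A(W,\af_{n+1})$ after further stages — this is where the simplicity of $W$ (via \ref{LfullW}) and the injectivity of all the partial maps are used repeatedly; the scale estimates \eqref{eomega1} and \ref{LBTsimplR} guarantee nothing gets "lost at infinity."
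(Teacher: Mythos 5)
Your proposal is correct and follows essentially the same route as the paper's proof: reduce to fullness of a nonzero positive element coming from some $B_n$, use injectivity of the cross maps to produce a nonzero $W$-component, apply \ref{LfullW} together with the tracial estimate of \ref{LBTsimplR} to obtain fullness in $W_{n_0}$ (and at all later $W$-stages), and use that $\phi_{R, A, \af_{n_0+1}}\circ \imath_{W,n_0}'$ carries full elements of $R_{0,n_0}$ to full elements of the $A(W,\af_{n_0+1})$-summand, whence fullness in $B_{n_0+1}$ and in $B_T$. The only (cosmetic) difference is that your worry about the corner $R_{0,n+1}\otimes e_{r_n}Qe_{r_n}$ being full is not needed: the paper uses $\Psi_{n,n+1,A,W}$ only for injectivity, to create a nonzero $W$-component, and then gets fullness in the $W$-summands through $\Psi_{n,n+1,W,W}$ via \eqref{eomega1} and \ref{LBTsimplR}.
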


\begin{proof}
It suffices to show that every element in $(B_T)_+\setminus \{0\}$ is full in $B_T.$
It suffices to show that every non-zero positive element in
$\cup_{n=1}^{\infty}\Phi_{n, \infty}(B_n)$ is full.
Take $b\in \cup_{n=1}^{\infty}\Phi_{n, \infty}(B_n)$ with $b\ge 0$ and $\|b\|=1.$
To simplify notation , \wilog, we may assume
that there is $b_0\in B_1$ such that
$\Phi_{1, \infty}(b_0)=b.$

Write $b_0=b_{00}\oplus b_{0,1},$ where $b_{00}\in  (W_1)_+$ and $b_{0,1}\in (A(W,\af_1))_+.$

First suppose that $b_{00}\not=0.$

By applying \ref{LfullW}, one obtains an integer $n_0>1,$ $x_1,x_2,...,x_m\in W_{n_0}$
such that
\beq\label{Psimple-n1}
\sum_{i=1}^m x_i^*(\imath_{1, n_0}(b_{00}))x_i=\imath_{1,n_0}(e_0').
\eneq

Let $M=\max\{\|x_i\|: 1\le i\le m\}.$ The above implies that
\beq\label{Psimple-n1+}
t((\imath_{1, n_0}(b_{00}))\ge {7\over{8mM^2}}\rforal t\in T(W_{n_0}).
\eneq

Let $P_{W,m}: B_{m}\to W_{m}$  and $P_{A, m}: B_{m}\to M_{(m!}(A(W, \af_{m}))$ be the projections
($m\ge 1$).  Then, by
 \ref{LBTsimplR},
\beq\label{Psimple-n2}
t(P_{W,n_0}(\Phi_{1, n_0}(b_{00}))) &\ge& t(\Psi_{1,n_0,W,W}(b_{00})))\\
&\ge & (1-\sum_{j=0}^{n_0-1} r_{1+j})t(\imath_{1, n_0}(b_{00}))\\
 &\ge &   (1-\sum_{j=0}^{n_0-1} r_{1+j}) ( {7\over{8mM^2}}) \rforal t\in T(W_{n_0}).
\eneq
It follows that $P_{W,n_0}(\Phi_{1, n_0}(b_{00})))$ is full in $W_{n_0}.$
Put $b_{00}'=P_{W,n_0}(\Phi_{1, n_0}(b_{00}))).$
By applying \ref{LBTsimplR} again,
one concludes that $P_{W, n_0+1}\circ \Phi_{n_0,n_0+1}(b_{00}')$ is full in $W_{n_0+1}.$

Since $b_{00}'$ is full in $W_{n_0},$  $P_{0,n_0}(b_0')$ is full in $R_{0,n_0}=R(\af_{n_0}, 1).$
Since $\phi_{R, A, \af_{n+1}}\circ \imath_{W,n}'$ maps full elements
of $R_{\af_{n_0},1}$ to full elements  in $A(W, \af_{n_0+1}),$
$P_{A, n_0+1}\circ \Phi_{n_0,n_0+1}(b_{00}')$ is full in $M_{(n+1)!}(A(W, \af_{n_0+1}).$
It follows that $\Phi_{n_0, n_0+1}(b_{00}')$ is full in $B_{n_0+1}.$

Note that what has been proved:
for any $b'\in (W_n)_+\setminus \{0\},$ there is $m_0\ge 1$
such that $\Phi_{n,m_0}(b')$ is full in  $B_{m_0}.$
Therefore $\Phi_{n, m}(b')$ is full in $B_m$ for all $m\ge m_0.$

In particular, this shows that $\Phi_{n,\infty}(b_{00})$ is full. Therefore  $b\ge \Phi_{n, \infty}(b_{00})$
is full.

Now consider the case that $b_{00}=0.$
Then $b_{1,0}\not=0.$ Since $\Psi_{1, 2, A,W}$ is injective,
$P_{W,1}(\Phi_{1,2}(b_{1,0})\not=0.$ Applying what has been proved,
$\Phi_{2, \infty}(P_{W,1}(\Phi_{1,2}(b_{1,0}))$ is full in $B_T.$
But
$$
\Phi_{1, \infty}(b_{1,0})\ge \Phi_{2, \infty}(P_{W,1}(\Phi_{1,2}(b_{1,0})).
$$
This shows that, in all cases, $b$ is full in $B_T.$
Therefore $B_T$ is simple.
\end{proof}

\begin{prop}\label{PWTtrace}
$B_T\in {\cal D}_0$ and $T(B_T)=T.$  In particular, $B_T$ has continuous scale.
Moreover $B_T$  {{is locally approximated by
sub-homogenous \CA s with spectrum having
dimension no more than 3,}}
has finite nuclear dimension,
${\cal Z}$-stable and has stable rank one.
\end{prop}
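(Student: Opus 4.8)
The plan is to verify each assertion in turn, using the inductive limit structure $B_T=\lim_{n\to\infty}(B_n,\Phi_{n,n+1})$ and the properties of the building blocks already recorded in Definition \ref{Dcc1} and Remark \ref{Zstable}. First I would establish $T(B_T)=T$: since $T(W_T)=T$ and $W_T=W\otimes C$ with $T(W)=\{pt\}$, it suffices to track what happens to traces under the maps $\Psi_{n,n+1,\ast,\ast}$. The key computation is that each partial map of $\Phi_{n,n+1}$ sends strictly positive elements to strictly positive elements (already noted after the definition of $\Phi_{n,n+1}$), hence $\Phi_{n,n+1}^*(T(B_{n+1}))\subset T(B_n)$; combined with the identification $\tilde T(B_n)\cong\tilde T(W_n)\oplus\tilde T(M_{(n!)^2}(A(W,\af_n)))$ and the explicit formulas for $\Phi^*_{A_w,\af,\bt}$, $\phi^*_{A,R,\af}$, $\psi^*_{A_w,r}$ recorded in \ref{Dcc1}, one reads off that the inverse limit of the trace cones is exactly $\tilde T(W_T)$, and the tracial state space is $T$. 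That $B_T$ has continuous scale then follows because it has compact tracial state space together with the fact (once $B_T\in{\cal D}_0$ is shown) that stably projectionless simple algebras in ${\cal D}$ with compact $T$ have continuous scale (see \ref{Dconsc} and 9.3 of \cite{GLp1}).

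Next I would prove $B_T\in{\cal D}_0$, which is the main point. Fix $\ep>0$, a finite subset ${\cal F}\subset B_T$, a nonzero $b\in(B_T)_+$, and an integer $n$; after a small perturbation assume ${\cal F}\subset\Phi_{n,\infty}(B_n)$. I would then use the decomposition $\Phi_{n,n+1}=$ (on the $A(W,\af_n)$-summand) a sum of $\Psi_{n,n+1,A,A}$, which lands in $M_{((n+1)!)^2}(A(W,\af_{n+1}))$ — a ${\cal C}_0^{0'}$-type summand, iterated into a matrix algebra over ${\cal C}_0'$ — and $\Psi_{n,n+1,A,W}$, together with $\Psi_{n,n+1,W,W}$ and $\Psi_{n,n+1,W,A}$ on the $W_n$-summand. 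The idea is that for large $k$ the composite $\Phi_{n,n+k}$ decomposes as $\diag(\phi,\psi)$ where $\psi$ maps into a copy of $M_N(A(W,\af_{n+k}))$ (a hereditary subalgebra in ${\cal C}_0'$, up to matrices), and $\phi$ is the "leftover" piece supported on a corner whose trace is controlled by the scaling constants $r_j=\frac1{2^{j+1}k(j)}$. Using Lemma \ref{LBTsimplR}, $t(\Psi_{n,n+k,W,W}(a))\ge(1-\sum_{j=0}^{k-1}r_{n+j})t(\imath_{n,n+k}(a))$, and since $\sum_j r_j<\infty$ one can make the leftover piece as Cuntz-small as desired, in particular $\lesssim b$, while the $\psi$-part retains a definite fraction ${\mathfrak f}$ of the trace of a strictly positive element (using \ref{LfullW} to get the lower bound $7/8$, hence a uniform ${\mathfrak f}$ independent of the data). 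This is exactly the defining condition \eqref{DNtr1div-1++}–\eqref{DNtrdiv-4} of ${\cal D}_0$, with $D$ a direct sum of matrix algebras over algebras in ${\cal C}_0'$ — and since we get the block-diagonal form \eqref{DD1-1}, in fact $D_1\in{\cal C}_0^{0'}$, giving ${\cal D}_0$ rather than just ${\cal D}$.

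Finally, the remaining structural claims: local approximation by subhomogeneous algebras of spectral dimension $\le 3$ is precisely the content of Remark \ref{Zstable} (each $B_n$ is perturbed into a $D_n$ of that form, so $B_T$ is locally approximated by such); finite nuclear dimension follows from the same remark, since $D_n\in\overline{{\cal D}_2}$ in the sense of 4.8 of \cite{GLN} forces decomposition rank $\le 3$ and hence finite nuclear dimension; ${\cal Z}$-stability is again Remark \ref{Zstable}, where the whole construction is arranged to be $-\otimes{\cal Z}$; and stable rank one follows from \ref{DD0}–\ref{DD1} together with 15.5 of \cite{GLp1}, which says every algebra in ${\cal D}$ has stable rank one (alternatively it follows from ${\cal Z}$-stability plus stable projectionlessness). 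The main obstacle I anticipate is the bookkeeping in the ${\cal D}_0$ verification: one must carefully choose how far $k$ must advance (depending on $\ep$, ${\cal F}$, and $d_\tau(b)$) so that the accumulated scaling error $\sum_{j=n}^{n+k-1}r_j$ dominates the trace of the leftover corner while the approximate-multiplicativity and norm estimates from the connecting maps stay within $\ep$ on ${\cal F}$ — this is where \ref{LBTsimplR}, \ref{LfullW}, and the injectivity/fullness properties of $\Psi_{n,n+1,A,W}$ all get used simultaneously.
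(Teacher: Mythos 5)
Your treatment of $T(B_T)=T$, of continuous scale, and of the remaining structural claims (local approximation by sub-homogeneous algebras of spectral dimension at most $3$, finite nuclear dimension and ${\cal Z}$-stability via Remark \ref{Zstable}, stable rank one via 15.5 of \cite{GLp1}) is in line with the paper's route; for the trace identification the paper actually builds mutually inverse continuous affine maps between ${\tilde T}(W_T)$ and ${\tilde T}(B_T)$ from the approximate intertwining with errors $k(n)r_n=1/2^{n+1}$ and $1/(n+1)^2$, and your sketch is compatible with that.

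The genuine gap is in the ${\cal D}_0$ verification, where you assign the two summands the wrong roles. You propose that the large part $\psi$ lands in ``a copy of $M_N(A(W,\af_{n+k}))$ (a hereditary subalgebra in ${\cal C}_0'$, up to matrices)'' and that the leftover is made Cuntz-small using $\sum_j r_j<\infty$ and Lemma \ref{LBTsimplR}. But $A(W,\af)$ is not (matrices over) anything in ${\cal C}_0'$ or ${\cal C}_0^{0'}$: it has the simple AH algebra $A$ as a quotient and carries the full K-theory of $B_T$ ($K_0=G_0$, $K_1=G_1$ by \ref{PBTKT}), whereas the definition of ${\cal D}_0$ requires the target $D$ to lie in ${\cal C}_0^{0'}$, i.e.\ (hereditary subalgebras of) one-dimensional NCCW complexes with vanishing $K$-theory; so even if your decomposition existed it would not certify membership in ${\cal D}$, let alone ${\cal D}_0$. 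In the construction the situation is the reverse of what you describe: the $A(W,\af)$-direction is the tracially \emph{small} leftover, because $\Psi_{n,n+1,A,A}$ embeds $M_{(n!)^2}(A(W,\af_n))$ into the corner $\diag(\,\cdot\,,0,\dots,0)$ of $M_{((n+1)!)^2}(A(W,\af_{n+1}))$, which gives $d_\tau(\phi(a))\le 1/(n+1)^2$, and it is this bound (together with strict comparison, after choosing $n$ with $1/(n+1)^2<\inf_\tau d_\tau(b)$) that yields $\phi(a)\lesssim b$; the algebra $D$ must be taken to be the $W$-flavored part, $D=\Psi_{n,n+1,W,A}(W_n)\oplus W_{n+1}$, which has trivial $K$-theory, and Lemma \ref{LBTsimplR} with the summability of the $r_j$ is used for the \emph{opposite} inequality, namely that $\psi$ retains a definite fraction of the trace of the strictly positive element, $t(f_{1/4}(\psi(a)))\ge {\mathfrak f}_a=5/16$ for all $t\in T(D)$. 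As written, your smallness mechanism (the tail of $\sum r_j$) is attached to the wrong summand and misses the $1/(n+1)^2$ corner estimate that actually drives the comparison with $b$, so the core step of the proposition is not established.
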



\begin{proof}

Let us first  show that $T(B_T)=T.$
Recall ${ \tilde{T}(A)}$ is the set of all lower semi-continuous traces on $A$ and ${ T(A)}$ is the set of tracial states on $A$. In {{the rest of the}} proof, for all  $C^*$ algebras $A= B_n$ and $A= W_n$ , we have that $0<\af_n\leq\inf\{d_\tau(a):  \tau\in \overline{T(A)}^w\}$, and that all traces $\tau\in { \tilde{T}(A)}$ are bounded trace.


Note the homomorphisms $\Phi_{n,n+1}: B_n\to B_{n+1}$ and $\imath_{n,n+1}: W_n\to W_{n+1}$ induce  maps
$\Phi_{n,n+1}^*: { \tilde{T} (B_{n+1})\to \tilde{T}(B_{n})}$ and $\imath_{n,n+1}^*: { \tilde{T}(W_{n+1})\to \tilde{T} (W_{n})}$. From the construction above,
(see  also {{\cite{Jb}}}),
since $\Phi_{n, n+1}$ and $\imath_{n, n+1}$  map strictly positive elements to
strictly positive elements,
$\Phi_{n,n+1}^*$ and $\imath_{n,n+1}^*$ take tracial states to tracial states. That is, $\Phi_{n,n+1}^*: {{T}(B_{n+1})\subset  {T}(B_{n})}$ and ${\imath_{n,n+1}^*: {T}(W_{n+1}) \subset {T}(W_{n})}$.  Consequently for any $\tau\in { {\tilde T}(B_{n+1})}$ (or $\tau \in { {\tilde T}(W_{n+1})}$), we have $\|\Phi_{n,n+1}^*(\tau)\|=\|\tau\|$ (or $\|\imath_{n,n+1}^*(\tau)\|=\|\tau\|$).

 Hence we have the following inverse limit systems of compact convex spaces:
$$
\xymatrix{
{\overline{T(B_1)}}^w  &   {\overline{T(B_2)}}^w\ar[l]_{\Phi_{1,2}^*} &    {\overline{T(B_3)}}^w\ar[l]_{\Phi_{2,3}^*} &  \cdots\cdots\ar[l] &  { \lim\limits_{\leftarrow}{\overline{T(B_n)}}^w}\ar[l]~~,
 }
$$
\vspace{-0.2in}$$
\xymatrix{
{\overline{T(W_1)}}^w  &    {\overline{T(W_2)}}^w\ar[l]_{\imath_{1,2}^*} &    {\overline{T(W_3)}}^w\ar[l]_{\imath_{2,3}^*} &    \cdots\cdots\ar[l] &    { \lim\limits_{\leftarrow}{\overline{T(W_n)}}^w}\ar[l]~~.
 }
$$
Here we write that
$$ \lim\limits_{\leftarrow}{\overline{T(B_n)}}^w=\{(\tau_1,\tau_2,\cdots, \tau_n,\cdots)\in \Pi_n {\overline{T(B_n)}}^w : \Phi_{n, m}^*(\tau_m)=\tau_n\},$$
which is a subspace of the product space $\Pi_n {\overline{T(B_n)}}^w $ with product toplogy. On the other hand, since all the map $\Phi_{n, m}^*$ are affine map, $\lim\limits_{\leftarrow}{\overline{T(B_n)}}^w$ has a natural affine structure defined by
$$t(\tau_1,\tau_2,\cdots, \tau_n,\cdots)+(1-t)(\tau'_1,\tau'_2,\cdots, \tau'_n,\cdots)=(t\tau_1+(1-t)\tau'_1,\tau_2+(1-t)\tau'_2,
\cdots, \tau_n+(1-t)\tau'_n),$$
for any $(\tau_1,\tau_2,\cdots, \tau_n,\cdots), (\tau'_1,\tau'_2,\cdots, \tau'_n,\cdots)\in \lim\limits_{\leftarrow}{\overline{T(B_n)}}^w$ and $t\in (0,1)$.


Note that each element in  $\lim\limits_{\leftarrow}{\overline{T(B_n)}}^w$ is given by $(\tau_1, \tau_2,\cdots \tau_n,\cdots, )$ with $\Phi_{n, m}^*(\tau_m)=\tau_n$, for $m>n$. This element decides a unique element $\tau \in { {\tilde T}(B)}$ defined by $\tau|_{B_n}=\tau_n$.  { However, since $\|\tau_n\|\geq \af_n$ and $\lim\limits_n \af_n=1,$
$\tau\in T(B_T).$}
On the other hand,
 each element $\tau \in { T(B_T)}$ defines a sequence $\{\tau_n= \tau|_{B_n}\in {{\tilde T}(B_n)}\}_n$. Since $\cup_n B_n$ is dense in $B$, $\|\tau\|=\lim\limits_{n\to \infty}\|\tau_n\|$. From $\|\Phi_{n,n+1}^*(\tau')\|=\|\tau'\|$  for any $\tau'\in { {\tilde T}(B_{n+1})}$, we know that $\|\tau_n\|=\|\tau_{n+1}\|.$  { {Consequently $\|\tau_n\|=\|\tau\|=1$ for all $n$.}}

  Hence $\tau_n\in T{{(B_n)}}\subset {\overline{T(B_n)}}^w$. Consequently,
$T{{(B_T)}}=\lim\limits_{\leftarrow}{\overline{T(B_n)}}^w$. Similarly, $T{{(W_T)}}=\lim\limits_{\leftarrow}{\overline{T(W_n)}}^w$.
(Note that the map $T(B_T)\to {\overline{T(B_n)}}^w$  from the reverse limit is the same as
 $\Phi_{n,\infty}^*: T(B_T)\to {\overline{T(B_n)}}^w$, the restrict map. That is, $\tau\in T(B_T)$ corresponds to
 the sequence $$(\Phi_{1,\infty}^*(\tau),\Phi_{2,\infty}^*(\tau),\cdots, \Phi_{n,\infty}^*(\tau), \cdots, )=(\tau|_{B_1},\tau|_{B_2},\cdots, \tau|_{B_n}, \cdots).$$
In other word, the homeomorphism between $T(B_T)$ and $\lim\limits_{\leftarrow}{\overline{T(B_n)}}^w$ also preserve the convex structure.)

Similarly, we also have  the following  inverse limit systems of the topological cones:
$$
\xymatrix{
{{{\tilde T}(B_1)}} &   {{{\tilde T}(B_2)}} \ar[l]_{\Phi_{1,2}^*} &    {{{\tilde T}(B_3)}} \ar[l]_{\Phi_{2,3}^*} &     \cdots\cdots\ar[l]   & {{{\tilde T}(B_T)}} \ar[l]~~,
 }
$$
$$
\xymatrix{
{{{\tilde T}(W_1)}}  &   {{{\tilde T}(W_2)}} \ar[l]_{\imath_{1,2}^*} &    {{{\tilde T}(W_3)}} \ar[l]_{\imath_{2,3}^*} &     \cdots\cdots\ar[l] &   {{{\tilde T}(W_T)}} \ar[l]~~.
 }
$$
(Again, the reverse limit is taking in the category of topological space in weak* topology, but it automatically preserves cone structure)


 Let $\pi_n: B_n=W_n\oplus   M_{(n!)^2}(A(W, \af_n))\to  W_n$ be the projection and let $\tilde{\Phi}_{n,n+1}=\Phi_{n,n+1}|_{W_n}$, then we have the following (not commutative) diagram:
$$
\xymatrix{
B_1\ar[r]^{\Phi_{1,2}}\ar[d]_{\pi_1}  &    B_2\ar[r]^{\Phi_{2,3}}\ar[d]_{\pi_2}  &    B_3\ar[r]^{\Phi_{3,4}}\ar[d]_{\pi_3}    &  \\
    W_1\ar[r]^{\imath_{1,2}}\ar[ru]^{\tilde{\Phi}_{1,2}}&    W_2\ar[r]^{\imath_{2,3}}     \ar[ru]^{\tilde{\Phi}_{2,3}}&  W_3\ar[r]^{\imath_{3,4}}~~  &  .  \\
 }
$$

Even though the diagram is not commutative,  {{from the constrction,}} it induces an approximate commuting diagram
$$
\xymatrix{
{{{\tilde T}(B_1)}} &   {{{\tilde T}(B_2)}} \ar[l]_{\Phi_{1,2}^*}\ar[ld]_{\tilde{\Phi}_{1,2}^*} &   {{{\tilde T}(B_3)}} \ar[l]_{\Phi_{2,3}^*}\ar[ld]_{\tilde{\Phi}_{2,3}^*}  &     \cdots\cdots\ar[l] &   {{{\tilde T}(B_T)}} \ar[l]~~ \\
   {{{\tilde T}(W_1)}}\ar[u]^{\pi_1^*}  &   {{{\tilde T}(W_2)}} \ar[l]_{\imath_{1,2}^*}\ar[u]^{\pi_2^*} &   {{{\tilde T}(W_3)}} \ar[l]_{\imath_{2,3}^*}\ar[u]^{\pi_3^*} &    \cdots\cdots\ar[l] &   {{{\tilde T}(W_T)}} \ar[l]~~.
  \\
 }
$$

That is
$$|\big(\tilde{\Phi}_{n,n+1}^*(\pi_{n+1}^*(\tau))\big)(g)-\big(\imath_{n, n+1}^*(\tau)\big)(g)|
\leq k(n)r_n\|g\|\|\tau\|~~~\mbox{for all}~ g\in W_n,~ \tau \in {\tilde T}(W_{n+1}); ~\mbox{and}$$
$$|\big(\pi_n^*(\tilde{\Phi}_{n,n+1}^*(\tau))\big)(f)-\big({\Phi}_{n,n+1}^*(\tau)\big)(f)|\leq\big(\frac{1}{(n+1)^2}+k(n)r_n\big)\|f\|\|\tau\|~~~\mbox{for all}~ f\in B_n,~ \tau \in {\tilde T}(B_{n+1}).$$
(Note that $k(n)r_n=\frac{1}{2^{n+1}}.$)

 Note that from the above, for $\tau_{n+1}\in {\tilde T}(W_{n+1})$ if $\tau_n=\imath_{n, n+1}^*(\tau_{n+1})$, then
\beq\label{PWtrace-g1}
\|\pi_{n+1}^*(\tau_{n+1})\| \geq (1-\frac{1}{2^{n+1}})\|\tau_n\|
\eneq
So, we have the following fact:\\
{{if}} $(\tau_1, \tau_2,\cdots, \tau_n,\cdots, )\in \Pi_n {\tilde T}(W_{n+1})$ satisfies $\tau_n=\imath_{n, n+1}^*(\tau_{n+1})$, then $$\lim\limits_{n\to \infty }\|\tau_n\|=\lim\limits_{n\to \infty }\|\pi_n^*(\tau_n)\|.$$

The approximate intertwining induces an affine homeomorphisms $\Pi:  {\tilde T}(W_T)\to {\tilde T}(B_T)$  as follows.

For each $\tau \in {\tilde T}(W_T)$, for fixed $n,$ we define
a sequence of $\{\sm_{n,m}\}_{m> n}\subset {\tilde T}(B_{n})$ by $$ \sm_{n,m}=\big({\Phi}_{n,m}^*\circ \pi_m^*\circ \imath_{m, \infty}^*\big)(\tau)\in {\tilde T}(B_{n}).$$
Recall that each element in ${\tilde T}(B_n)$ is a bounded trace, whence it
is a positive linear functional of $B_n.$
From the above inequalities for approximately commuting diagram, one concludes that
$\{\sm_{n,m}\}_{m> n}$ is a Cauchy sequence  (in norm) in the dual space of $B_n.$

For each $n,$ let
$\tau_n=\lim\limits_{m\to \infty}\sm_{n,m}$. Evidently, from the inductive system above, $\tilde{\Phi}_{n,n+1}^*(\tau_{n+1})=\tau_n$. Hence the sequence $(\tau_1, \tau_2,...,\tau_n,...)$  determines an element $\tau'\in {\tilde T}(B_T)$. Let $\Pi(\tau)=\tau'$.    From \eqref{PWtrace-g1} and the above mentioned fact, we know that $\Pi$  preserves the norm and  $\Pi$ maps $T(W_T)$ to $T(B_T)$ 
Moreover, it is clear that $\Pi$ is also an affine map on $T(W_T).$


We can define  $\Pi': {\tilde T}(B_T) \to {\tilde T}(W_T)$  in exactly same way by replacing  ${\Phi}_{n,m}^*$ by $\imath_{n, m}^*$,   replacing  $\pi_m^*$ by $\tilde{\Phi}_{m,m+1}^*$, and  $\imath_{m, \infty}^*$ by ${\Phi}_{m+1,\infty}^*$.

We now show that
 both $\Pi$ and $\Pi'$ are continuous on $T(W_T)$ and $T(B_T),$ respectively.
Let $\{s_\lambda\}\subset T(W_T)$ be a net which converges to $s\in T(W_T)$
point-wisely on $W_T.$  Write
$s_\lambda=(s_{\lambda, 1}, s_{\lambda,2},...,s_{\lambda,n},...)$ and
$s=(s_1, s_2,...,s_n,...).$
Since $s_{\lambda, n}=\imath_{n, n+1}^*(s_{\lambda, n+1})$ and
$s_n=\imath_{n, n+1}^*(s_{n+1}),$
 for each $n,$ $s_{\lambda, n}$ converges to $s_n$ on $W_n.$
 Write $\Pi(s_\lambda)=(\tau_{\lambda,1},\tau_{\lambda,2},...,\tau_{\lambda,n},...)$
 and $\Pi(s)=(\tau_1,\tau_2,...,\tau_n,...).$
 Then, by the definition,
 \beq
 \tau_{\lambda, n}&=&\lim_{m\to\infty} \sm_{\lambda,n,m}=\lim_{m\to\infty}\big({\Phi}_{n,m}^*\circ \pi_m^*\circ \imath_{m, \infty}^*\big)(s_{\lambda})\\
 &=&\lim_{m\to\infty}\big({\Phi}_{n,m}^*\circ \pi_m^*)(s_{\lambda,m})\andeqn\\
 \tau_n&=&\lim_{m\to\infty} \sm_{n,m}=\lim_{m\to\infty}\big({\Phi}_{n,m}^*\circ \pi_m^*\circ \imath_{m, \infty}^*\big)(s)\\
 &=&\lim_{m\to\infty}\big({\Phi}_{n,m}^*\circ \pi_m^*)(s_{m}).
 \eneq

For   $b \in B_n$ and $m> n,$
\beq
\big({\Phi}_{n,m}^*\circ \pi_m^*)(s_{\lambda,m})(b)&=&s_{\lambda,m}(\pi_m\circ \Phi_{n,m}(b))\andeqn\\
\big({\Phi}_{n,m}^*\circ \pi_m^*)(s_{m})(b)&=&s_{m}(\pi_m\circ \Phi_{n,m}(b)).
\eneq
Let $\ep>0$ and let ${\cal F}\subset B_n$ be a finite subset. We may assume that ${\cal F}$ is in the unit ball of $B_n.$

There exists $m_0\ge 1$ such that, for all $m\ge m_0,$
\beq
&&|s_{\lambda, n}(\pi_m\circ \Phi_{n,m}(b))-\tau_{\lambda,n}(b))|<\ep/3\andeqn\\
&&|s_{n}(\pi_m\circ \Phi_{n,m}(b))-\tau_n(b))|<\ep/3
\eneq
for all $b$ in the unit ball of $B_n.$

Since $s_{\lambda, n}\to s_n$ on $B_n$ point-wisely,
There exists $\lambda_0$ such that, for all $\lambda>\lambda_0,$
\beq
|s_{\lambda,n}(\pi_{m_0}\circ \Phi_{n,m_0}(b))-s_{n}(\pi_{m_0}\circ \Phi_{n,m_0}(b))|<\ep/3
\eneq
for all $b\in {\cal F}.$  It follows that, when $\lambda>\lambda_0,$ for all $b\in {\cal F},$
\beq
\hspace{-0.4in}|\tau_{\lambda,n}(b)-\tau_n(b)| &\le&  |\tau_{\lambda, n}(b)-s_{\lambda, n}(\pi_{m_0}\circ \Phi_{n,m_0}(b))|\\
 && +|s_{\lambda, n}(\pi_{m_0}\circ \Phi_{n,m_0}(b))-s_{n}(\pi_{m_0}\circ \Phi_{n,m_0}(b))|\\
 &&+|s_{n}(\pi_{m_0}\circ \Phi_{n,m_0}(b))-\tau_n(b)|<\ep/3+\ep/3+\ep/3=\ep.
\eneq
This verifies that $\Pi(s_\lambda)$ converges to $\Pi(s)$ on $B_n$ for each $n.$
Since $\cup_{n=1}B_n$ is dense in $B_T,$ it is easy to see that
$\Pi(s_{\lambda})$ converges to $\Pi(s)$  point-wisely. It follows that $\Pi$ is  weak*-continuous
on $T(W_T).$     A similar argument verifies that $\Pi'$ is weak*-continuous on $T(B_T).$
From the definition, one can also verify that $\Pi$ and $\Pi'$ are inverse each other.
Consequently,  they induce the homeomorphism between $T(W_T)$ and $T(B_T)$. Hence $T(B_T)=T(W_T)=T.$

From Remark \ref{Zstable}, we know that $B_T$ is locally approximated by
sub-homogenous \CA s with spectrum having
dimension no more than 3, has finite nuclear dimension and
${\cal Z}$-stable.  It follows from  a theorem of R\o rdam
{{(see 3.5 of \cite{eglnp1})}} that
$B_T$ has strictly comparison for positive elements.
Since $T$ is compact, it follows from
{{ 5.3 of \cite{eglnp1}}} that $B_T$ has continuous scale.

It remains to show that $B_T\in {\cal D}_0.$
Since $B_T$ has continuous scale, to prove $B_T\in {\cal D}_0.$
let $a\in A_+$ be a strictly  positive element with $\|a\|=1.$
\Wlog, we may assume
that $\tau(f_{1/2}(a))\ge 15/16$ for all $\tau\in T(B_T).$
We choose $a$ such that $a=(a_a,a_w)\in B_1=A(W, \af_1)\oplus W_1$
such that
\beq\label{PWtrace-n0}
t(a_w)>3/4, \,\,\,t(f_{1/2}(a_w))>3/4 \rforal t\in T(W_1).
\eneq
Choose ${\mathfrak{f}}_a=5/16.$
Let $b\in A_+\setminus \{0\}$ and let ${\cal F}\subset B_T$ be a finite set and  $\ep>0$.
Let $\dt>0.$ With out lose of generality, we may assume $F\cup \{a, b\} \subset B_n$ for $n$ large enough, and let $\Lambda: B_T\to B_n$ be a \cpc\,
such that
\beq
\|\Lambda(b)-b\|<\min\{\ep/2, \dt\} \rforal b\in {\cal F}.
\eneq
We choose $\dt$ so small that
\beq\label{TWtrace-1-1}
\|f_{1/2}(\Lambda(a))-\Lambda(f_{1/2}(a))\|<1/16\andeqn \|f_{1/2}(\Lambda(a)-\Lambda(f_{1/2}(a))\|<1/16.
\eneq
 Let $P_A: B_n\to M_{(n!)^2}(A(W, \af_n))$ and $P_W: B_n \to W_n$
 be the canonical projections.
 We choose $n\ge 1$ such that
 \beq\label{PWtrace-n1}
 \frac{1}{(n+1)^2}< \mathrm{inf} \{\tau (b): \tau \in T(B_T)\}/2.
 \eneq
 We will choose the algebra $D\in {\cal C}_0^0$ to be $D=\Psi_{n,n+1, W,A}(W_n)\oplus W_{n+1}$ and the map $\phi:B_T\to B_T$ and $\psi: B_T\to D$ be defined by
$$\phi=\Phi_{n+1,\infty}\circ \Psi_{n,n+1,A,A} \circ P_A \circ\Lambda~~~~~~~~~~~~\mbox{and}~~$$
$$ \psi= \Psi_{n,n+1, W,A}\circ P_W \circ\Lambda \oplus \mathrm{diag}(\Psi_{n,n+1,A,W} \circ P_A \circ\Lambda,\Psi_{n,n+1,W,W} \circ P_W \circ\Lambda).$$
Put
\beq
\psi'=\Psi_{n,n+1, W,A}\circ P_W \oplus \mathrm{diag}(\Psi_{n,n+1,A,W} \circ P_A, \Psi_{n,n+1,W,W} \circ P_W)
\eneq
from $B_n$ to $D.$
Since $\Psi_{n, n+1, W, A}$ is injective on $W_n,$ $D\in C_0^0.$
Since $\Phi_{n, \infty}$ is injective, we will identify $D$ with $\Phi_{n, \infty}(D).$
With this identification, we have
\beq\label{PWtrace-n2-1}
\|x-\diag(\phi(x), \psi(x))\|<\ep\rforal x\in {\cal F}.
\eneq
It follows from \ref{LBTsimplR} that
\beq\label{PWtrace-n2}
P_W(\Phi_{1,n}(f_{1/2}(a))) &\ge & \Phi_{1, n}(a_W){{\andeqn}}\\
 t(P_W(\Phi_{1.n}(f_{1/2}(a)))) &\ge & t(\Phi_{1,n}(f{1/2}(a_W)))\ge (1-\sum_{j=0}^{n-1}r_{1+j})t(\imath_{1,n}(f_{1/2}(a_W)))
\eneq
for all $t\in T(W_n).$  Since $t\circ \imath_{1,n}$ is a tracial state  on $W_1$ as proved above,
{{by}} \eqref{PWtrace-n0},
\beq\label{PWtrace-n3}
t(P_W(\Phi_{1,n}(f_{1/2}(a))))\ge (1/2) (3/4)=3/8\rforal t\in T(W_n).
\eneq
Since $\Psi_{n, n+1, W,A}$ sends strictly positive elements of $W_n$ to
those of $\Psi_{n, n+1, W,A}(W_n),$
{{ any $t'\in T(\Psi_{n, n+1, W,A}(W_n))$ gives a tracial state of $W_n,$ therefore}}
\beq\label{PWtrace-n4}
t'(\Psi_{n,n+1,W, A}(P_W(\Phi_{1,n}(f_{1/2}(a))))\ge 3/8\rforal {{\tau'\in T(\Psi_{n, n+1, W,A}(W_n)).}}
\eneq
For any $t\in T(W_{n+1}),$ by applying \eqref{PWtrace-n0} again,
\beq\label{PWtrace-n5}
t(\Psi_{n,n+1, W,W}(P_W(f)))\ge  (1-\sum_{j=0}^nr_{1+j})t(\imath_{1, n+1}(f_{1/2}(a_W)))\ge (1/2)(3/8)=3/8.
\eneq
Combining \eqref{PWtrace-n4} and \eqref{PWtrace-n5}, we have
that
\beq\label{PWtrace-n6}
t(\psi'(\Phi_{1,n}(f_{1/2}(a))))\ge t(\psi'(P_W(\Phi_{1,n}(f_{1/2}(a))))\ge 3/8\rforal t\in T(D).
\eneq
It follows that,  for all $t\in T(D),$
\beq\label{PWtrace-n7}
t(f_{1/2}(\psi(a)))\ge t(\psi'(\Phi_{1,n}(f_{1/2}(a))))\ge t(\psi'(P_W(\Phi_{1,n}(f_{1/2}(a)))))-1/16\ge 5/16={\mathfrak{f}}_a.
\eneq
On the other hand, from the construction, for any $c\in \Psi_{n,n+1,A,A}(M_{(n!)^2}(A(W, \af_n)))_+$
with $\|c\|\le 1,$
\beq\label{PWtrace-n8}
\tau(c)\le \frac{1}{(n+1)^2}\rforal \tau\in T(M_{((n+1)!)^2}(A(W, \af_{n+1}))).
\eneq
Therefore,  for any integer $m\ge 1,$
\beq\label{PWtrace-n9}
\tau(\phi(a)^{1/m}))< \frac{1}{(n+1)^2}\rforal \tau\in T(B_T).
\eneq
{{Consequently,}} by \eqref{PWtrace-n1},
\beq\label{PWtrace-n10}
d_\tau(\phi(a))\le \frac{1}{(n+1)^2} <\inf\{d_\tau(b):\tau\in T(B_T)\}
\eneq
Since we have proved that $B_T$ has strict comparison for positive elements,
\eqref{PWtrace-n10} implies that
\beq\label{PWtrace-n11}
\phi(a)\lesssim b.
\eneq
It follows from \ref{DD0}, \eqref{PWtrace-n2-1}, \eqref{PWtrace-n11} and \ref{PWtrace-n7} that $B_T\in {\cal D}_0.$
Since $B_T\in {\cal D}_0,$ it follows from
{{11.5 of \cite{eglnp1}}} that  $B_T$ has stable rank one.
This completes the proof of this proposition.

\vspace{0.2in}

\end{proof}

\begin{prop}\label{PBTKT}
$K_0(B_T)={\rm ker}\rho_{B_T}=G_0$ and $K_1(B_T)=G_1.$
\end{prop}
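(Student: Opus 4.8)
The plan is to compute the $K$-theory of $B_T=\lim_{n\to\infty}(B_n,\Phi_{n,n+1})$ by continuity of $K_*$, so the whole task reduces to understanding $K_*(B_n)$ and the maps $(\Phi_{n,n+1})_{*i}$ well enough to identify the inductive limit. First I would recall that $B_n=W_n\oplus M_{(n!)^2}(A(W,\af_n))$, and that $A(W,\af_n)$ and $W_n$ (being built from $R(\af_n,1)$ summands) have $K_*$ controlled by the defining mapping-torus structure: for $A(W,\af)$ the six-term sequence attached to $0\to SF_2\to A(W,\af)\to A\to 0$ (with $F_2$ finite-dimensional) together with the fact that $\phi_0,\phi_1$ induce the same map on $K_0$ of the interval-algebra quotient gives $K_0(A(W,\af))=K_0(A)=\Q\oplus G_0$ and $K_1(A(W,\af))=K_1(A)=G_1$; and $R(\af,1)$ has $K_0(R(\af,1))=\Z$ (it is the building block of $W$, whose $K_0$ is $0$ only in the limit, not at finite stage) and $K_1(R(\af,1))=0$. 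Actually the cleanest route is: $K_i(W_n)$ is whatever the finite-stage $K$-theory of $W$ is (a direct sum of copies of $\Z$ in degree $0$, zero in degree $1$), and crucially $\lim_{n} K_i(W_n)=K_i(W)=0$ under the maps $\imath_{n,n+1}$.

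The key steps, in order, would be: (1) Identify $K_0(B_n)\cong K_0(W_n)\oplus K_0(A(W,\af_n))$ and similarly for $K_1$, using that direct sums and matrix amplifications do not change $K$-theory; here $K_0(A(W,\af_n))=\Q\oplus G_0$, $K_1(A(W,\af_n))=G_1$. (2) Trace through the definition of $\Phi_{n,n+1}$ to read off its effect on each summand: the $A\to A$ component $\Psi_{n,n+1,A,A}=\Phi_{A_w,\af_n,\af_{n+1}}$ is an injective homomorphism $A(W,\af_n)\to A(W,\af_{n+1})$ which on $K_*$ is the identity on $\Q\oplus G_0$ and on $G_1$ (since $\Phi_{A_w}$ is built from $\phi_{R,r}$, $\phi_{A,R,\af}$, $\psi_{A_w,r}$, and $\iota^Q$-type maps, all of which are KK-equivalences or induce identity on the relevant $K$-groups — $\psi_{A_w,r}$ factors through $C([0,1],Q\otimes Q_r)\oplus A$ which on $K_0$ kills the interval part and is identity on $K_0(A)$, while $\phi_{R,r}\circ\phi_{A,R,\af}$ compensates). (3) The $A\to W$ component $\Psi_{n,n+1,A,W}$ and the $W\to A$ component $\Psi_{n,n+1,W,A}$ feed the $\Q$-part of $K_0(A(W,\af_n))$ into the $\Z^{k(n+1)}$ of $W_{n+1}$ (via $\phi_{A,R,\af}$ which on $K_0$ sends $\Q\to\Z$? — no, $\phi_{A,R,\af}:A(W,\af)\to R(\af,1)$ induces on $K_0$ the map $\Q\oplus G_0\to \Z$ coming from $s_{*0}$ composed with $K_0(R(\af,1))=\Z$) and vice versa; since $\lim_n K_i(W_n)=K_i(W)=0$, these contributions vanish in the limit and the $\Q$-part also washes out. (4) Conclude $K_0(B_T)=\lim_n K_0(B_n)=\lim_n\big(K_0(W_n)\oplus(\Q\oplus G_0)\big)=G_0$, where the $\Q$ summand dies because $\iota^Q$-type identifications and the $W$-part force it into the vanishing $\lim K_0(W_n)$ (this is exactly the mechanism by which $W$ itself has $K_0=0$ despite $K_0(R(\af,1))=\Z$), and $K_1(B_T)=\lim_n K_1(B_n)=\lim_n\big(0\oplus G_1\big)=G_1$. (5) Finally, verify $\ker\rho_{B_T}=K_0(B_T)$: since the image of $K_0(A(W,\af_n))$ in $K_0(B_T)$ is exactly $G_0={\rm ker}\rho_A$ (the $\Q$-part, which pairs non-trivially with the trace, has been eliminated), every class in $K_0(B_T)$ pairs to zero with $T(B_T)$; alternatively, by the construction $B_T$ is stably projectionless (being in $\cal D_0$ by \ref{PWTtrace}), and one checks $\rho_{B_T}$ vanishes on the surviving classes directly from the trace computation in \ref{PWTtrace}.

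The main obstacle I expect is step (4): carefully tracking how the $\Q=K_0(Q)$ summand of $K_0(A(W,\af_n))$ interacts with $K_0(W_n)=\Z^{k(n)}$ under the twisted maps $\Psi_{n,n+1,A,W}$, $\Psi_{n,n+1,W,A}$, $\Psi_{n,n+1,W,W}$ and the identification maps $\widetilde{\iota^Q}$, $q_{r_n}$, $\imath'_{W,n}$, and confirming that the inductive limit genuinely collapses it to zero rather than leaving a residual divisible subgroup. This is precisely where one must import the known computation $K_0(W)=0$ (from \cite{Jb}): the sub-system $(W_n,\imath_{n,n+1})$ has $\lim K_0=0$, and the extra $A$-summands at each stage are, up to the homomorphisms above, absorbed into later $W$-stages, so the same cancellation propagates. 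I would phrase this as: the natural map $K_0(A(W,\af_n))\to K_0(B_{n+1})$ composed with all further $\Phi$'s factors (on the $\Q$-part) through $\cup_m K_0(W_m)$, on which the limit is $0$, while it is the identity on the $G_0$-part; hence only $G_0$ survives. The $K_1$ side is easier since everything in sight has $K_1$ equal to $G_1$ with identity transition maps, and $K_1(W_n)=0$.
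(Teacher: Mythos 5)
Your step (1) contains a genuine error which the rest of the argument inherits. You claim $K_0(A(W,\af_n))=\Q\oplus G_0$ on the grounds that the two endpoint maps of the mapping-torus "induce the same map on $K_0$", so that the boundary map vanishes. They do not: the evaluations $f(0)=q_{\af_n}\circ s(a)$ and $f(1)=s(a)\otimes 1_Q$ induce $\af_n\, s_{*0}$ and $s_{*0}$ respectively on $K_0(A)$, so the index map $\partial\colon K_0(A)\to K_1\big(C_0((0,1),Q\otimes Q)\big)\cong\Q$ of the extension $0\to C_0((0,1),Q\otimes Q)\to A(W,\af_n)\to A\to 0$ is $(1-\af_n)s_{*0}$, which is \emph{surjective} onto $\Q$. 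The six-term sequence (with $K_0$ of the suspension ideal equal to $0$) then gives $K_0(A(W,\af_n))=\ker\partial=\ker s_{*0}=G_0$ and $K_1(A(W,\af_n))\cong K_1(A)=G_1$ already at every finite stage; this is exactly the paper's computation, and it is also why $K_0(B_n)=\ker\rho_{B_n}$ at finite stages. For the same reason $K_*(R(\af,1))=0$ (the boundary $\Q\to\Q$ is multiplication by $1-\af$, an isomorphism), so your assertion that $K_0(R(\af,1))=\Z$ and that $K_0(W_n)$ is a nontrivial direct sum of copies of $\Z$ which only dies in the limit is also false: $K_*(W_n)=0$ at every $n$.

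Because of this, your step (4) is both unnecessary and internally inconsistent: you assert that the $A\to A$ component of $\Phi_{n,n+1}$ is the identity on $\Q\oplus G_0$, yet you also need the $\Q$ summand to vanish in the limit by "factoring through" the $W$-stages. If the connecting maps on the $A$-summands were really the identity on a $\Q$ summand, that summand would survive in $\lim_n K_0(B_n)$; the cancellation mechanism you invoke (importing $K_0(W)=0$) cannot rescue this, and in any case there is no residual divisible subgroup to kill, since the $\Q$ never appears in $K_0(A(W,\af_n))$ in the first place. With the corrected finite-stage computation the proof collapses to the paper's short argument: $K_0(B_n)=\ker\rho_{B_n}=G_0$ and $K_1(B_n)=G_1$ (the $W_n$-summands contribute nothing), the connecting maps $\Phi_{n,n+1}$ are determined on $K$-theory by the partial map $(f,a)\mapsto(g,\mathrm{diag}(a,0,\dots,0))$ into $M_{((n+1)!)^2}(A(W,\af_{n+1}))$, which induces the identity on $G_0$ and $G_1$, and continuity of $K_*$ gives the conclusion.
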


\begin{proof}
{{ Let $I=C_0\big( (0,1), Q\otimes Q\big)$ be the canonical ideal of $A(W,\af_n)$. Then the short exact sequence
$$0\to I \to A(W,\af_n)\to A\to 0$$
induces six term exact sequence $$
\xymatrix{
K_0(I) \ar[r] & K_0(A(W,\af_n)) \ar[r] & K_0(A) \ar[d]^{\partial} \\
K_1(A) \ar[u] & K_1(A(W,\af_n)) \ar[l]  & K_1(I) . \ar[l]
}
$$
{{Note that $K_0(I)=\{0\}$ and $K_1(I)=K_0(Q)=\Q.$
Moreover, $K_0(A)=\Q\oplus G_0$ and $K_1(A)=G_1.$
}}
The map $\partial: K_0(A)\to K_1(I)\cong K_0(Q\otimes Q) $ is given by
$\partial=(1-\af_n)s_{*0}$ (defined by $\partial(x)=(1-\af_n)s_{*0}(x)\in \Q$
for all $x\in K_0(A)$)
as the difference of two induced \hm s at the end points  {{(recall that $s_{*0}(r, g)=r$ for all $(r,g)\in \Q\oplus G_0,$ see  the beginning of \ref{Dcc1}).}}
Then one checks
$(1-\af_n)s_{*0}$  is surjective as {{$K_1(I)=\Q.$}}
{{From the six-term exact sequence above, one computes that}}
 $K_0(A(W,\af_n))= {\rm ker}\partial ={\rm ker} s_{*0}=G_0={\rm ker}\rho_{A}$ and $K_1(A(W,\af_n))\cong K_1(A)=G_1$. }}
 {{We also note that  $\partial$ gives an isomorphism on $\Q.$}}
{{Recall $B_n=W_n\oplus   M_{(n!)^2}(A(W, \af_n))$.}} Since $K_*(W)=\{0\}$, {{one has}}
\beq
&&K_0(B_n)={\rm ker}\rho_{B_n}={\rm ker}\rho_{M_{(n!)^2}(A(W, \af_n))}={\rm ker}\rho_{A}=G_0, {{\andeqn}}\\
&&K_1(B_n)=K_1(M_{(n!)^2}(A(W, \af_n)))=K_1(A)=G_1.
\eneq
Hence $\Phi_{n,n+1,*}: K_*(B_n) \to K_*(B_{n+1})$ is completely {{decided}} by its partial map\\
 $\Phi':  M_{(n!)^2}(A(W, \af_n)) \to  M_{((n+1)!)^2}(A(W, \af_{n+1}))$. Also this  partial map sends \\
 $(f,a)\in M_{(n!)^2}(A(W, \af_n))$ to $({ g}, \diag(a,0,\cdots,0))\in M_{((n+1)!)^2}(A(W, \af_{n+1})),$
where \\ $g=P_f( \Phi_{A_w, \af_n, \af_{n+1}}((f,a)))$  is
  as  in the Definition 7.2.   {{Therefore $\Phi'$ maps $M_{(n!)^2}(I)$ to $M_{((n+1)!)^2}(I)$ and
  it induces a \hm\, $\Phi'': M_{(n!)^2}(A)\to M_{(n+1)!)^2}(A)$ which is
  given by $a\mapsto \diag(a,0,\cdots 0)$ for all $a\in M_{(n!)^2}(A).$ The latter map
  induces the  identity map $\id$ on $K_i(A),$ $i=0,1.$  Thus we have the following commutative
  diagram:
   $$
\xymatrix{
0 \ar[rr]\ar[rrd]  &&& K_0(A(W,\af_n)) \ar[rr]\ar[d]_{\Phi'_{*0}} &&  G_0\oplus \Q \ar[ld]_{{\id}} \ar[ddd]^{\partial} \\
 &&0 \ar[r] & K_0(A(W, \af_{n+1})) \ar[r]  &G_0\oplus \Q \ar[d]^{\partial} &\\
&& G_1 \ar[u] & K_1(A(W,\af_{n+1})) \ar[l]  & \Q \ar[l]_0 &\\
G_1 \ar[uuu]\ar[rru]^{\id}&&& K_1(A(W,\af_n))\ar[u]^{\Phi'_{*1}} \ar[ll]_{\cong}  && \Q \ar[ll]_0\ar[ul]^{\id}\,.
}
$$}}
{{This commutative diagram shows that $\Phi'_{*0}$ is the identity map on $G_0$ and
$\Phi'_{*1}$ is the identity map on $G_1.$}}  {{ Since $K_i(W)=\{0\},$ this shows
that $(\Phi_{n, n+1})_i: K_i(B_n)\to K_i(B_{n+1})$ ($i=0,1$) is the identity map for each $n.$
It follows that  $K_0(B_T)=G_0$ and $K_1(B_T)=G_1.$}}




\end{proof}


\begin{lem}\label{DC_0} Let $G_0$ be a torsion free abelian group and let $A$ be the unital  AF algebra with
$$(K_0(A), K_0(A)_+, [1_A]) =
\big(\Q\oplus G_0, (\Q\oplus G_0)_+,
(1,0)\big),$$
{{where
$(\Q\oplus G_0)_+=\{(r, g): r\in \Q_+\setminus \{0\}, g\in G_0\}\cup\{(0,0)\}.$}}
Let $\gamma: K_0(A) \to K_0(Q)$ be given by sending $(r,x)\in \Q\oplus G_0$ to $r\in \Q=K_0(Q)$.  Then one can write AF inductive limits $A=\lim\limits_{n} (A_n, \phi_{n,m})$ {{with
injective $\phi_{n,m}$}} and $Q=\lim\limits_{n} (M_{l(n)}(\C), \psi_{n,m})$  such that there are {{injective}} homomorphism{{s}} $s_n:  A_n\to M_{l(n)}(\C)$ satisfying the following conditions:

(1) $(s_n)_*: K_0(A_n) \to K_0(M_{l(n)}(\C))$ is surjective;

(2) $s_{n+1}\circ \phi_{n,n+1}=\psi_{n,n+1}\circ s_n$ and the commutative diagram
$$
\xymatrix{
A_1\ar[r]^{\phi_{1,2}}\ar[d]_{s_1}  &    A_2\ar[r]^{\phi_{2,3}}\ar[d]_{s_2}  &    A_3\ar[r]^{\phi_{3,4}}\ar[d]_{s_3}    &\cdots \cdots ~~A  \\
    M_{l(1)}\ar[r]^{\psi_{1,2}}&    M_{l(2)}\ar[r]^{\psi_{2,3}}      & M_{l(3)}\ar[r]^{\psi_{3,4}}   &\cdots\cdots ~~Q   \\
 }
$$
induces $s: A\to Q$ satisfy $s_*=\gamma$.

\end{lem}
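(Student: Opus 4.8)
## Plan of Proof

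The plan is to build the three inductive systems $A = \lim(A_n,\phi_{n,m})$, $Q = \lim(M_{l(n)},\psi_{n,m})$, and the vertical maps $s_n$ simultaneously, by an inductive construction in which at each stage we choose the next building block of $A$ large enough to support a partial realization of $s$, then correct the map to make the squares commute exactly. The hypothesis that $G_0$ is torsion free is what makes $K_0(A) = \Q \oplus G_0$ a dimension group with the stated positive cone, so that $A$ exists as a unital AF algebra by Elliott's classification of AF algebras; this is the starting point. Since $\gamma$ is the projection onto the $\Q$-summand, it is a positive order-unit-preserving homomorphism $K_0(A) \to \Q = K_0(Q)$, hence is induced by \emph{some} unital homomorphism $s\colon A \to Q$ by the classification of maps between AF algebras (and $Q$ being AF with $K_0(Q) = \Q$, $[1_Q]=1$).

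First I would fix an arbitrary presentation $A = \lim(A_n,\phi_{n,n+1})$ with each $A_n$ finite dimensional and all $\phi_{n,m}$ injective; similarly write $Q = \lim(M_{l'(n)},\psi'_{n,n+1})$. Next, using that $\gamma\circ (\phi_{n,\infty})_{*0}\colon K_0(A_n)\to K_0(Q) = \Q$ is a positive homomorphism and that finite-dimensional $C^*$-algebras are semiprojective, I would invoke the approximate-intertwining / existence results cited in the excerpt (the proof of 4.7 and 2.29 of \cite{EG}, or Theorem 3.9 of \cite{HLX}) to produce, for each fixed $m$, a sequence of homomorphisms $A_m \to Q$ with finite-dimensional image converging pointwise to $s\circ\phi_{m,\infty}$; semiprojectivity of the (finite-dimensional) images then lets us lift these to honest homomorphisms $A_m \to M_{l(k)}$ into a suitable matrix block of $Q$ for $k$ large. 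This is exactly the mechanism already used in Remark~\ref{Zstable} of the excerpt to obtain \eqref{63-1}, so the existence of the approximately commuting triangle is essentially in hand.

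The remaining work is to pass from an \emph{approximately} commuting diagram to an \emph{exactly} commuting one and to arrange the surjectivity in (1). For exactness: having built $s_n\colon A_n \to M_{l(n)}$ with $\psi_{n,n+1}\circ s_n \approx s_{n+1}\circ\phi_{n,n+1}$ on a generating set within a small tolerance, one uses that two homomorphisms from a finite-dimensional algebra into $M_{l(n+1)}$ that are close are unitarily equivalent, and absorbs the unitary into the definition of $\psi_{n,n+1}$ (or of $s_{n+1}$); telescoping the systems and passing to a subsequence, standard in AF theory, then yields $s_{n+1}\circ\phi_{n,n+1} = \psi_{n,n+1}\circ s_n$ on the nose and $s = \lim s_n$ with $s_* = \gamma$. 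For (1): since $(s_n)_{*0}$ need not be surjective as built, I would enlarge $M_{l(n)}$ — replace it by the matrix block $M_{L(n)}$ of $Q$ actually hit by $s_n(A_n)$ together with the unit, i.e. arrange $s_n$ to be unital onto a block whose $K_0$ is generated by $[s_n(\text{minimal projections of }A_n)]$; concretely, choose $l(n)$ so that $s_n(1_{A_n}) = 1_{M_{l(n)}}$, which forces $(s_n)_{*0}([1_{A_n}]) = [1_{M_{l(n)}}]$ a generator of $K_0(M_{l(n)}) = \Z$, giving surjectivity. The main obstacle I anticipate is bookkeeping: simultaneously (a) keeping all connecting maps $\phi_{n,m},\psi_{n,m}$ injective, (b) keeping each $s_n$ \emph{injective}, and (c) keeping $s_n$ unital so that (1) holds — these three can clash, since forcing $s_n$ unital onto a small block may kill injectivity of $s_n$ on part of $A_n$. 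Resolving this requires choosing the block $M_{l(n)}$ with $l(n)$ large enough that $s_n$, built from the pointwise-convergent approximants whose limit $s\circ\phi_{n,\infty}$ is itself eventually injective on the image of $A_n$ (as $s$ is injective — here one uses $\gamma$ is injective on $K_0(A_n)_+$... actually $\gamma$ is \emph{not} injective, so injectivity of $s_n$ must instead be built in by hand at each finite stage, exploiting that $A_n$ is finite dimensional and that an injective homomorphism into $M_{l(n)}$ exists whenever $l(n) \ge \dim$-count of $A_n$), and then merging this injectivity requirement with the unitality and intertwining conditions via one more round of the approximate-intertwining argument. Once these compatibility choices are organized, the verification that the resulting $s$ satisfies $s_* = \gamma$ is immediate from the commuting diagram at the level of $K_0$.
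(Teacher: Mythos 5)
Your overall architecture (produce some $s\colon A\to Q$ with $s_*=\gamma$ via Elliott, realize it by a one-sided intertwining through finite-dimensional stages, correct approximate commutation by unitaries absorbed into the connecting maps, and get injectivity of $s_n$ from strict positivity of the induced $K_0$-maps) is sound and parallels the paper, although the approximate-intertwining/semiprojectivity machinery you invoke is unnecessary: Elliott's classification already gives an exact one-sided intertwining $\alpha_n\colon F_n\to M_{m(n)}$ with all maps injective, which is where the paper starts. The genuine gap is in your treatment of condition (1). You propose to force surjectivity of $(s_n)_{*0}$ by making $s_n$ unital, claiming that then $(s_n)_{*0}([1_{A_n}])=[1_{M_{l(n)}}]$ is ``a generator of $K_0(M_{l(n)})=\Z$.'' This is false: under the standard identification $K_0(M_{l(n)})\cong\Z$ (generated by a rank-one projection), the class of the unit is $l(n)$, not $1$, so unitality of $s_n$ says nothing about surjectivity. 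Surjectivity of $(s_n)_{*0}$ is the statement that the ranks of the images of the minimal projections of the simple summands of $A_n$ have greatest common divisor $1$, and no amount of rechoosing the target block that $s_n(A_n)$ ``hits'' fixes this, because the subgroup generated by those ranks inside $\Z$ is $j\Z$ for some $j$ that is an invariant of the $K_0$-map, not of the embedding.

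What is actually needed (and what the paper does) is to divide out this $j$: compute $(\alpha_{k_n})_*(K_0(F_{k_n}))=j\cdot\Z\subset\Z=K_0(M_{m(k_n)})$, set $l(n)=m(k_n)/j$, and define a \emph{new} homomorphism $s_n\colon F_{k_n}\to M_{l(n)}$ whose $K_0$-map is $(\alpha_{k_n})_*/j$ (this is automatically surjective, and injective because $\alpha_{k_n}$ is, via strict positivity on $K_0(F_{k_n})_+\setminus\{0\}$). One then re-embeds $M_{l(n)}$ unitally into $M_{m(k_n)}$ by some $\xi_n$, corrected by a unitary so that $\xi_n\circ s_n=\alpha_{k_n}$ exactly, and — this is the second point your sketch omits — one must pass to a subsequence $k_{n+1}$ chosen deep enough that $(\psi'_{k_n,k_{n+1}})_*(K_0(M_{m(k_n)}))$ lies inside $(\alpha_{k_{n+1}})_*(K_0(F_{k_{n+1}}))$; only then does the $K_0$-map of the old horizontal connecting map factor through $K_0(M_{l(n+1)})$, giving a backwards map $\beta_n\colon M_{m(k_n)}\to M_{l(n+1)}$ with $\beta_n\circ\alpha_{k_n}=s_{n+1}\circ\phi'_{k_n,k_{n+1}}$ after a unitary correction. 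Setting $\psi_{n,n+1}=\beta_n\circ\xi_n$ makes the squares commute on the nose, and since $m(k_n)\mid l(n+1)$ the new system $(M_{l(n)},\psi_{n,m})$ still has limit $Q$. Without the division-by-$j$ step and the interleaving choice of $k_{n+1}$, neither (1) nor the identification of the new matrix system with $Q$ is achieved.
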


\begin{proof}
By the classification theory of AF algebras due to Elliott, there is   a {{one-sided}} intertwining
$$
\xymatrix{
{ F_1}\ar[r]^{\phi'_{1,2}}\ar[d]_{\af_1}    &  F_2\ar[r]^{\phi'_{2,3}}\ar[d]_{\af_2}    &  F_3\ar[r]^{\phi'_{3,4}}\ar[d]_{\af_3}     &\cdots \cdots ~~A  \\
    M_{m(1)}\ar[r]^{\psi'_{1,2}}  &  M_{m(2)}\ar[r]^{\psi'_{2,3}}      & M_{m(3)}\ar[r]^{\psi'_{3,4}}   &\cdots\cdots ~~Q,   \\
 }
 $$
which induces a homomorphism $\af: A\to Q $ with $\af_*=\gamma$, where
{{$F_n$ are finite dimensional \CA s,}} all {{homomorphisms}} $\af_n$,  $\phi'_{n,n+1}$ and $\psi'_{n,n+1}$ are unital and injective.  We need to modify the diagram to make the condition (1) holds.

We will define {{subsequence}} $F_{k_n}$ and for each $n$ construct a matrix algebra $M_{l(n)}$, unital
{{injective}} homomorphisms
$s_n: F_{k_n}\to M_{l(n)}$, $\xi_n: M_{l(n)} \to M_{m(k_n)}$ and $\bt_{n-1}:M_{m(k_{n-1})} \to M_{l(n)}$ (if $n>1$) to satisfy the following conditions:

(i): $(s_n)_*: K_0(F_{k_n}) \to {{K_0(M_{l(n)})}}$ is surjective;

(ii): $\xi_n\circ s_n=\af_{k_n}$ and $\bt_{n-1}\circ \af_{k_{n-1}}= s_n \circ \phi'_{k_{n-1},k_n}$.

Let $k_1=1$. By identifying $K_0(M_{m(k_1)})$ with $\Z$, there is a positive integer $j |m(k_1)$ such that $(\af_{k_1})_* (K_0(F_{k_1}))= j \cdot \Z$. Let $l(1)=\frac{m(k_1)}{j}$. Choose a homomorphism $s_1: F_{k_1}\to M_{l(1)}$ to satisfy that
$(s_1)_*=\frac{(\af_{k_1})_*}{j}: K_0(F_{k_1})\to K_0(M_{l(1)})=Z$ (which is surjective).  Note that for any finite dimensional $C^*$ algebra $F$ and a matrix algebra $M_k$,  a homomorphism $\bt: F\to M_k$ is injective if and only if $\bt_*(K_0(F)_+\setminus\{0\}) \subset K_0(M_k)_+\setminus\{0\}$. Hence the injectivity of $\af_{k_1}$ implies the injectivity of $s_1$.  Let  $\xi'_1: M_{l(n)} \to M_{m(k_n)}$ be any unital embedding. Then $ (\xi'_1\circ s_1)_*= {{(\af_{k_1})_*}}$. There is a unitary $u\in M_{m(k_1)}$ such that $\mbox{Ad} u\circ \xi'_1\circ s_1= \af_{k_1}$. Define $\xi_1= \mbox{Ad} u\circ \xi'_1$ to finish the initial step $n=1$ for the induction.

Suppose that we {{have}} already carried out the construction until step $n$. There is {{a $k_{n+1}$}} such that
$$(\psi'_{k_n, k_{n+1}})_*(K_0(M_{m(k_n)})) \subset (\af_{k_{n+1}})_* (K_0(F_{k_{n+1}}))\subset K_0(M_{m(k_{n+1})}).$$
Again,  there is a positive integer $j |m(k_{n+1})$ such that $$(\af_{k_{n+1}})_* (K_0(F_{k_{n+1}})= j \cdot \Z\subset \Z(=K_0(M_{m(k_{n+1})})).$$
Let $l(n+1)=\frac{m(k_{n+1})}{j}$.  As  {{what}} we {{have done}} in the case for $k_n=k_1$, there are two {{injective}} unital homomorphisms $s_{n+1}: F_{k_{n+1}} \to M_{l(n+1)}$ and $\xi_{n+1}: M_{l(n+1)}\to M_{m(k_{n+1})}$ such that \\
 $\xi_{n+1}\circ s_{n+1}=\af_{k_{n+1}}$.  {{Note that $\xi_{n+1}$ has to be injective as
 $M_{l(n+1)}$ is simple.}} Since the map $(\psi'_{k_n, k_{n+1}})_*:~K_0(M_{m(k_n)})) \to K_0(M_{m(k_{n+1})})$ factors through $K_0(M_{l(n+1)})$ by  $(\xi_{n+1})_*$, one can find a homomorphism ${{\bt_n'}}: M_{m(k_n)}\to
 M_{l(n+1)}$ such that $ (\xi_{n+1})_*\circ {{(\bt_n')}}_*=  (\psi'_{k_n, k_{n+1}})_*$. Since $ (\xi_{n+1})_*$ is injective, we know that  $({{\bt_n'}}\circ \af_{k_{n}})_*= (s_{n+1} \circ \phi'_{k_{n},k_{n+1}})_*$. Hence we can choose a unitary $u\in  M_{l(n+1)}$  such that $\mbox{Ad} u\circ {{\bt_n'}}\circ \af_{k_{n}}= s_{n+1} \circ \phi'_{k_{n},k_{n+1}}$.
 {{In particular, $\bt_n'$ is injective.}} Choose $\bt_n=\mbox{Ad} u\circ {{\bt_n'}}$, we conclude {{that}} the {{inductive}} construction of $F_{k_n}$, $M_{l(n)}$, $s_n: F_{k_n}\to M_{l(n)}$, $\xi_n: M_{l(n)} \to M_{m(k_n)}$ and $\bt_{n-1}:M_{m(k_{n-1})} \to M_{l(n)}$ to satisfy (i) and (ii) for all $n$.  (Warning: we do not require that
 $\xi_n \circ \bt_{n-1}= \psi'_{k_{n-1}, k_n}$.)

 Finally, let $A_n=F_{k_n}$, $\phi_{n,n+1}= \phi'_{k_{n},k_{n+1}}$ and $\psi_{n, n+1}:M_{l(n)}\to M_{l(n+1)} $ be defined by $\psi_{n, n+1}= \bt_n \circ \xi_{n}$.  {{Therefore both $\phi_{n, n+1}$ and $\psi_{n, n+1}$ are
 injective.}} Then
 $$s_{n+1}\circ \phi_{n,n+1}=\bt_n\circ \af_{k_{n}}=\bt_n\circ \xi_n\circ s_n= \psi_{n,n+1}\circ s_n.$$
Since $m(k_n) | l(n+1)$, we have $\lim(M_{l(n)},  \psi_{n, m})=Q$.

\end{proof}

\begin{lem}\label{DC_1} Let $G_0$ be torsion free and $A$ be the AF algebra as in \ref{DC_0} with $K_0(A)=Q\oplus G_0$. Let $a$ be {{a}}  positive integer and $\af=\frac{a}{a+1}$. Let $A(W,\af)$ be defined in  \ref{Dcc1}. Then $A(W,\af)$
is an inductive limit of a sequence of $C^*$-algebras $C_n\in  {{{\cal C}_0}}$ with ${{\lambda_s(C_N)=\af}}$
{{and with injective connecting maps.}}

\end{lem}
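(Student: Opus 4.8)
The plan is to build the inductive limit decomposition of $A(W,\af)$ by pulling back, at the level of finite stages, the AF decompositions provided by Lemma \ref{DC_0}. First I would apply Lemma \ref{DC_0} to write $A = \lim_n (A_n, \phi_{n,m})$ and $Q = \lim_n (M_{l(n)}, \psi_{n,m})$ with injective connecting maps, together with injective homomorphisms $s_n : A_n \to M_{l(n)}$ satisfying $(s_n)_{*0}$ surjective and $s_{n+1}\circ \phi_{n,n+1} = \psi_{n,n+1}\circ s_n$, and inducing $s : A\to Q$ with $s_{*0}=\gamma$. I want to arrange, moreover (after identifying $M_{l(n)}\otimes M_{\af l(n)+?}$-type matrix amplifications and passing to a subsequence), that $\af l(n)$ is an integer for every $n$ — this is just a matter of passing to a cofinal subsequence of the $A_n$, using that $l(n)\to\infty$ along the tower, and is the kind of routine reindexing I would not spell out in detail.

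Next, mirroring the construction of $A(W,\af)$ itself (and the building blocks $D_n'$ appearing in Remark \ref{Zstable}), I would set
\[
C_n = \{(f,a)\in C([0,1], M_{l(n)}\otimes M_{l(n)})\oplus A_n : f(0)=s_n(a)\otimes \diag(\overbrace{1,\dots,1}^{\af l(n)},0,\dots,0),\ f(1)=s_n(a)\otimes 1_{M_{l(n)}}\}.
\]
Each $C_n$ is of the form $A(F_1,F_2,\psi_0,\psi_1)$ with $F_1 = A_n$ finite dimensional, $F_2 = M_{l(n)}\otimes M_{l(n)}$, and the two boundary maps given by $a\mapsto s_n(a)\otimes \diag(1,\dots,1,0,\dots,0)$ and $a\mapsto s_n(a)\otimes 1$; hence $C_n\in {\cal C}$. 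Since $s_n$ is injective and the boundary multiplicities at the two ends are $\af l(n)\cdot l(n)$ and $l(n)\cdot l(n)$ respectively, a direct $K$-theory computation (via the six-term sequence for $0\to C_0((0,1),M_{l(n)^2})\to C_n\to A_n\to 0$, exactly as in the proof of \ref{PBTKT}) shows the boundary map is $(1-\af)(s_n)_{*0}$, which is nonzero, so $K_0(C_n)_+=\{0\}$ and $K_1(C_n)=0$; thus $C_n\in {\cal C}_0$. The scale computation of \ref{2Rg15} gives $\lambda_s(C_n)=\min\{\af,1\}=\af$.

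The connecting maps $\gamma_{n,n+1}: C_n\to C_{n+1}$ I would define by
\[
\gamma_{n,n+1}(f,a) = \big((\psi_{n,n+1}\otimes\psi_{n,n+1})\circ f,\ \phi_{n,n+1}(a)\big),
\]
where I amplify/tensor suitably so that the target sits inside $M_{l(n+1)}\otimes M_{l(n+1)}$; compatibility at the endpoints is forced by the intertwining relation $s_{n+1}\circ\phi_{n,n+1}=\psi_{n,n+1}\circ s_n$ from \ref{DC_0}, and injectivity of $\gamma_{n,n+1}$ follows from injectivity of $\phi_{n,n+1}$, $\psi_{n,n+1}$. Then $\lim_n(C_n,\gamma_{n,n+1})$ is, on the ``$C([0,1],-)$'' coordinate, $C([0,1], Q\otimes Q)$ (since $\lim M_{l(n)}\otimes M_{l(n)} = Q\otimes Q$), and on the point-evaluation-at-$0,1$ data it reproduces exactly $f(0)=q_\af\circ s(a)$, $f(1)=s(a)\otimes 1_Q$, because $\lim_n s_n = s$; hence the limit is $A(W,\af)$. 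The main obstacle I anticipate is purely bookkeeping: making the tensor amplifications in the definition of $\gamma_{n,n+1}$ literally land in $M_{l(n+1)}\otimes M_{l(n+1)}$ while respecting both endpoint constraints simultaneously — one must choose the unital embeddings $M_{l(n)}\hookrightarrow M_{l(n+1)}$ on the two tensor legs consistently with the ``$\diag(1,\dots,1,0,\dots,0)$'' pattern at $t=0$, which may require an inner perturbation by a continuous path of unitaries (as in Lemma \ref{Dcc0}) to match the $t=0$ boundary condition. Once that is arranged, verifying $C_n\in {\cal C}_0$, $\lambda_s(C_n)=\af$, injectivity, and that the limit is $A(W,\af)$ are all routine given \ref{2Rg15}, \ref{PBTKT}-style computations, and \ref{DC_0}.
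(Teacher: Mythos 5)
There is a genuine gap, and it sits exactly in the step you call routine: the $K$-theory of your building blocks. With your choice $F_2=M_{l(n)}\otimes M_{l(n)}$ and endpoint maps of multiplicities $\af l(n)$ (at $t=0$) and $l(n)$ (at $t=1$), the six-term sequence for $0\to C_0((0,1),M_{l(n)^2})\to C_n\to A_n\to 0$ has index map
$\partial=(h_1)_{*0}-(h_0)_{*0}=(1-\af)\,l(n)\,(s_n)_{*0}=\tfrac{l(n)}{a+1}\,(s_n)_{*0}$
as a map into $K_0(M_{l(n)^2})\cong\Z$; it is \emph{not} $(1-\af)(s_n)_{*0}$ after the identification with $\Z$. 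Since $(s_n)_{*0}$ is surjective, the image of $\partial$ is $\tfrac{l(n)}{a+1}\Z$, so $K_1(C_n)\cong\Z\big/\tfrac{l(n)}{a+1}\Z$, which is nonzero as soon as $l(n)>a+1$ (i.e.\ for all large $n$). Hence your $C_n$ are not in $\mathcal{C}_0$ (which requires $K_1=0$), and the lemma as stated is not proved, even though the inductive limit of your system is still $A(W,\af)$. (Two smaller points: "nonzero boundary map" does not by itself give $K_0(C_n)_+=\{0\}$ --- that uses the injectivity of $s_n$, as in the paper; and "$l(n)\to\infty$" is not the reason $\af l(n)$ can be made integral --- what is actually used is that $Q$ is the universal UHF algebra with unital connecting maps, so $(a+1)\,|\,l(n)$ eventually.)

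The paper avoids all of this by first invoking Lemma \ref{Dcc0} to replace $A(W,\af)$ by the isomorphic algebra sitting inside $C([0,1],Q\otimes M_{a+1})\oplus A$, with endpoint projections $1\otimes\diag(\overbrace{1,\dots,1}^{a},0)$ and $1\otimes 1_{a+1}$, and then takes $C_n\subset C([0,1],M_{l(n)}\otimes M_{a+1})\oplus A_n$ with connecting maps $\psi_{n,n+1}\otimes\mathrm{id}_{M_{a+1}}$ on the function part. Keeping the second tensor factor fixed at size $a+1$ makes the difference of endpoint multiplicities exactly $1$, so $\partial=(s_n)_{*0}$, which is surjective by condition (1) of \ref{DC_0}; this is precisely why that surjectivity was arranged there, and it yields $K_1(C_n)=0$ and $K_0(C_n)=\ker (s_n)_{*0}$ with trivial positive cone (by injectivity of $s_n$), hence $C_n\in\mathcal{C}_0$ with $\lambda_s(C_n)=\af$. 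It also removes your bookkeeping worry about matching the $\diag(1,\dots,1,0)$ pattern, since the second leg is untouched by the connecting maps. Your argument can be repaired by making this substitution, but as written the decomposition does not produce $\mathcal{C}_0$ blocks.
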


\begin{proof}
Let $s: A\to Q$ be as in \ref{DC_0}. By Lemma \ref{Dcc0}, $A(W,\af)$ is isomorphic to the
{{\SCA}}\, of $C([0,1], Q\otimes M_{a+1})\oplus A$ defined by
\vspace{-0.12in}$$
C=\big\{(f,x)\in C([0,1], Q\otimes M_{a+1})\oplus A:
\begin{matrix}f(0)=s(x)\otimes \diag(\overbrace{1,\cdots,1}^a,0),\\
\hspace{-0.1in}f(1){{\,\,=\,}}s(x)\otimes \diag(\underbrace{1,\cdots,1}_{a},1)\end{matrix}\big\}.
$$
%
{{Let $A=\lim\limits_{n} (A_n, \phi_{n,m})$ {{with injective $\phi_{n,m}$,}}  $Q=\lim\limits_{n} (M_{l(n)}(\C), \psi_{n,m})$, and $s_n: A_n\to M_{l(n)}(\C)$ be described as in \ref{DC_0}. }} Evidently $C$ is an inductive limit of 
$$
C_n=\big\{(f,x)\in C([0,1], M_{l({ {n}})}{{(\C)}}\otimes M_{a+1})\oplus A_n:
\begin{matrix}f(0)=s_n(x)\otimes \diag(\overbrace{1,\cdots,1}^a,0),\\
\hspace{-0.1in}f(1){{\,\,=\,}}s_n(x)\otimes \diag(\underbrace{1,\cdots,1}_{a},1)\end{matrix}\big\}{{,}}
$$
{{with connecting homomorphism $\Phi_{n,n+1} :C_n  \to C_{n+1}$ {{given}} by
$$\Phi_{n,n+1}(f,x) =(g,y) ~~~~~~~\mbox{for} ~~~(f,x)\in C_n,$$
where ${{g(t)=(\psi_{n,n+1}\otimes id_{a+1}) (f(t))}}$ and }}$y={{\phi_{n,n+1} }}(x)$.
{{Since both $\phi_{n, n+1}$ and $\psi_{n, n+1}$ are injective, so is $\Phi_{n, n+1}.$}}
The short exact sequence
$$
{{0 \to C_0\big( (0,1), M_{l(n)}(\C)\otimes M_{a+1}\big) \to C_n \to A_n \to 0}}
$$
 induces the six term exact sequence of K-theory. Since ${{(s_n)_{*0}}}: K_0(A_n)\to K_0(M_{l(n)}(\C))$ is surjective, 
 {{exactly as}} the beginning of proof of Proposition \ref{PBTKT}, we have
 $K_0(C_n)= \ker {{((s_n)_{*0})}} \subset K_0(A_n)$ and $K_1(C_n)=0$. From {{a}} standard calculation (see section 3 of \cite{GLN}), we know that $K_0(C_n)_+= {{\ker (s_n)_{*0}}} \cap K_0(A_n)_+$. On the other hand, since $s_n$ is injective, $\ker {{(s_n)_{*0})}}\cap K_0(A_n)_+=\{0\}$. {{In fact, if $x\in \ker {{(s_n)_{*0}}}\cap K_0(A_n)_+\setminus \{0\},$ then there exists a projection $p\in M_r(A_n)$ such that $[p]=x.$
However, since $s_n$ is injective, $s_n(p)=q$ is a non-zero projection in $M_r(M_{l(n)})$ which
is a non-zero element in $K_0(M_{l(n)}),$ whence $x\not\in \ker((s_n)_{*0}).$}} {{This proves that $K_0(C_n)_+=\{0\}.$ Thus
$C_n\in {\cal C}_0.$
 Since $s_n$ are unital, from the very definition (see Definition 3.5), we have
 $\lambda_s(C_n)=\af.$}}

\end{proof}

Summarize the above, we {{obtain}} the following main theorem of this section:

\begin{thm}\label{MainModel} Let $G_0$, $G_1$ be any  {{countable a}}belian groups {{and}}  $T$ be any compact
metrizable Choquet simplex, then there is a simple \CA\, $B\in {\cal D}_0$ with continuous scale such that $K_0(B)=\mathrm{ker}(\rho_B)=G_0$, $K_1(B)=G_1$ and $T{{(B)}}=T$.


Furthermore, if, {{in addition,}} $G_0$ is torsion free and $G_1=0$, then $B=\lim\limits_{n\to \infty} (C_n,\imath_n)$ with each $C_n\in {\cal C}_{0}$, and $\imath_n$
{{map}} strictly positive elements to strictly positive elements.
Moreover, $B$ is locally approximated by \CA s in ${\cal C}_0.$

\end{thm}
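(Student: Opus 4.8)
\textbf{Proof proposal for Theorem \ref{MainModel}.}
The plan is to read off both assertions from the constructions of Section 6 together with the structural results already proved there. For the first assertion, given countable abelian groups $G_0,G_1$ and a compact metrizable Choquet simplex $T$, I would first realize the pair $(\Q\oplus G_0,G_1)$ as the $K$-theory of a suitable unital AH-algebra with $TR=0$: choose a unital AH-algebra $A$ with unique tracial state, $K_1(A)=G_1$, and $K_0(A)=\Q\oplus G_0$ with $\ker\rho_A=G_0$ and $[1_A]=(1,0)$; such an $A$ exists by the classification of $TR=0$ algebras (this is exactly the hypothesis of \ref{Dcc1}). Next, pick a unital AF-algebra $C$ with $T(C)=T$ (Elliott's range theorem for AF-algebras). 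Then form $W_T=W\otimes C$ and run the inductive-limit construction of \ref{Dcc1} verbatim to obtain $B_T=\lim_n(B_n,\Phi_{n,n+1})$. By \ref{PBTsimple} $B_T$ is simple, by \ref{PWTtrace} $B_T\in{\cal D}_0$, $T(B_T)=T$, and $B_T$ has continuous scale, and by \ref{PBTKT} $K_0(B_T)=\ker\rho_{B_T}=G_0$, $K_1(B_T)=G_1$. Setting $B=B_T$ finishes the first assertion.

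For the second assertion, assume in addition that $G_0$ is torsion free and $G_1=0$. Since $G_1=0$ I may take $A$ to be the unital AF-algebra of \ref{DC_0}, i.e. $K_0(A)=\Q\oplus G_0$ with the stated order and order unit; this uses torsion-freeness of $G_0$ so that $\Q_+\setminus\{0\}\oplus G_0$ together with $(0,0)$ really is the positive cone of an AF dimension group. By \ref{DC_0} there are compatible AF decompositions $A=\lim(A_n,\phi_{n,m})$, $Q=\lim(M_{l(n)},\psi_{n,m})$ and injective maps $s_n\colon A_n\to M_{l(n)}$ with $(s_n)_*$ surjective inducing $s\colon A\to Q$ with $s_*=\gamma$. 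By \ref{DC_1}, for the relevant value $\af_n=a_n/(a_n+1)$ each building block $A(W,\af_n)$ is an inductive limit $\lim_k(C_{n,k},\Phi^{(n)}_{k,k+1})$ with $C_{n,k}\in{\cal C}_0$, $\lambda_s(C_{n,k})=\af_n$, and injective connecting maps. Since each $W_n$ is a finite direct sum of copies of $R(\af_n,1)$ and $W$ itself is the simple inductive limit $\lim(W'_n=R(\af_n,1),\imath'_{W,n})$ coming from \cite{Jb}, the finite direct sums $B_n=W_n\oplus M_{(n!)^2}(A(W,\af_n))$ are themselves inductive limits (in $k$) of direct sums of algebras in ${\cal C}_0$. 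A diagonal argument then rewrites $B_T=\lim_n(B_n,\Phi_{n,n+1})$ as $B=\lim_m(C_m,\imath_m)$ with each $C_m\in{\cal C}_0$ (a finite direct sum of blocks in ${\cal C}_0$ being again in ${\cal C}_0$, as ${\cal C}$ is closed under finite direct sums by \ref{DfC1}) and with the $\imath_m$ inherited from the $\Phi_{n,n+1}$, which by construction send strictly positive elements to strictly positive elements. In particular $B$ is locally approximated by ${\cal C}_0$.

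The one point that needs genuine care — and which I expect to be the main obstacle — is the diagonal/reindexing step: the maps $\Phi_{n,n+1}$ in \ref{Dcc1} are built out of several partial maps ($\Psi_{n,n+1,A,A}$, $\Psi_{n,n+1,A,W}$, $\Psi_{n,n+1,W,W}$, $\Psi_{n,n+1,W,A}$) involving the isomorphisms ${\widetilde{\iota^{M_m}}}$, the cut-downs $q_{r_n}$, and the embeddings $\phi_{A,R,\af}$, $\phi_{R,A,\af}$, $\phi_{R,r}$; I must check that after refining $A(W,\af_n)$ into the $C_{n,k}$ of \ref{DC_1} and $W$ into the $R(\af_n,1)$-tower, all these partial maps can be approximated on finite sets by honest homomorphisms between the $C^*$-algebras in ${\cal C}_0$ appearing in the refinements, so that a genuine inductive system with terms in ${\cal C}_0$ results. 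Here one uses semiprojectivity of one-dimensional NCCW complexes (and of finite-dimensional algebras) to absorb the approximations, exactly as in \ref{Zstable} where $A(W,\af)$ is approximated by the algebras $D_n'$; indeed \ref{Zstable} already carries out essentially this approximation, so I would cite it and note that the extra direct summand $W_n$ (itself in ${\cal C}_0$ after identifying $R(\af_n,1)$ blocks) causes no new difficulty. The fact that all connecting maps preserve strict positivity follows, as in \ref{PWTtrace}, from the corresponding property of each partial map, which is recorded in \ref{Dcc1}.
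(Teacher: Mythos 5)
Your proposal is correct and follows essentially the same route as the paper: the first assertion is exactly the combination of the construction in \ref{Dcc1} with Propositions \ref{PBTsimple}, \ref{PWTtrace} and \ref{PBTKT}, and the additional part is obtained, as in the paper, by using \ref{DC_0} and \ref{DC_1} to exhibit each $B_n$ as an inductive limit of algebras in ${\cal C}_0$ (so $B$ is locally approximated by ${\cal C}_0$) and then invoking semiprojectivity of the one-dimensional NCCW complexes to upgrade local approximation to a genuine inductive limit with connecting maps preserving strictly positive elements. The only cosmetic slip is the parenthetical suggesting $R(\af_n,1)$ lies in ${\cal C}_0$; it does not (its fibers involve $Q$), but since $R(\af_n,1)\cong A(m,(a+1)m)\otimes Q$ it is itself a limit of ${\cal C}_0$ algebras, which is all your argument actually needs.
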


\begin{proof} We only need to prove the additional part. But in this case, by Lemma \ref{DC_1}, we know all $B_n$ in the construction of inductive limit of $B$ in \ref{Dcc1} are
{{inductive limits of \CA s}} in ${\cal C}_{0}$ {{with injective connecting maps.} }
Therefore $B$ is locally approximated by \CA s in ${\cal C}_0$ and  $B\in {\cal D}.$
Since the $C^*$-algebras in ${\cal C}_{0}$ are  {{semi-projective,
$B$ itself is an inductive limit  of \CA s in ${\cal C}_0.$}}
\end{proof}
\begin{cor}\label{Ctsang}  Let $G_0$, $G_1$ be any {{countable a}}belian groups.
Let ${\tilde T}$ be a topological cone with a base $T$ which is a metrizable Choquet simplex
and let $\gamma: { T}\to (0, \infty]$ be a lower semi-continuous function and ${\tilde \gamma}: { \tilde T}\to [0, \infty]$
be the extension of  $\gamma$ defined by ${\tilde \gamma} (s\tau)=s\gamma(\tau)$ for any $s\in \R_+$ and $\tau\in T$.
Then there exist a non-unital simple \CA\, $A$, which is stably isomorphic to a $C^*$-algebra
{{with the form $B_T$ (in \ref{PWTtrace}) which is}} in ${\cal D}_0$ such that
$$
(K_0(A), K_1(A), {\tilde T}(A), \Sigma_A, \rho_A)\cong (G_0, G_1, {\tilde T}, {\tilde \gamma}, 0)
$$
(Note that $\rho_A=0$ is equivalent to $K_0(A)=\mathrm{ker}(\rho_A)$.)
\end{cor}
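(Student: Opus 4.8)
The plan is to reduce Corollary \ref{Ctsang} to the previously established model theorem \ref{MainModel} (and the construction of $B_T$ in \ref{PWTtrace}) by a standard dilation/hereditary-subalgebra argument. The target invariant $(G_0, G_1, \tilde T, \tilde\gamma, 0)$ differs from what \ref{MainModel} directly produces only in the scale function: \ref{MainModel} gives a simple $B\in {\cal D}_0$ with continuous scale realizing $(K_0, K_1, T) = (G_0, G_1, T)$, which forces $\Sigma_B$ to be the constant-$1$ function on the base (up to the cone structure), whereas here $\gamma$ is an arbitrary lower semicontinuous function $T\to(0,\infty]$. So the first step is to invoke \ref{MainModel} to get $B = B_T\in {\cal D}_0$ with $K_0(B) = \ker\rho_B = G_0$, $K_1(B) = G_1$, $\tilde T(B) \cong \tilde T$, and $\rho_B = 0$; in particular $B$ is stably isomorphic to a \CA\ in ${\cal D}$, has stable rank one, finite nuclear dimension and is ${\cal Z}$-stable.

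Next I would pass to $B\otimes {\cal K}$ and choose inside it a positive element $a$ whose rank function, viewed as an element of ${\rm LAff}_+(\tilde T(B\otimes{\cal K})) = {\rm LAff}_+(\tilde T)$, equals $\tilde\gamma$. The key point is that the scale of a stably projectionless algebra can be an arbitrary element of ${\rm LAff}_+$ of the cone, so one must produce $a\in (B\otimes{\cal K})_+$ with $d_\tau(a) = \tilde\gamma(\tau)$ for all $\tau\in\tilde T(B)$; this uses the surjectivity of the rank map from the Cuntz semigroup of $B$ (which follows from the structure results for ${\cal D}$, e.g. the analogue of 15.8 of \cite{GLp1} used in \ref{Pwwdivisible}, together with stability). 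Set $A = \overline{a(B\otimes{\cal K})a}$. Since $B\otimes{\cal K}$ is simple with stable rank one, $A$ is a full hereditary \SCA, hence stably isomorphic to $B\otimes{\cal K}$ and therefore stably isomorphic to the \CA\ $B_T\in {\cal D}_0$; in particular $gTR(A)\le 1$ and $A$ inherits all the regularity properties. By Brown's stable isomorphism theorem the $K$-theory is unchanged: $K_0(A) = G_0$, $K_1(A) = G_1$, and $\rho_A = 0$ (equivalently $K_0(A) = \ker\rho_A$) because $\rho_B = 0$ and $\rho$ is a stable-isomorphism invariant. The tracial cone is also unchanged: $\tilde T(A)\cong\tilde T(B\otimes{\cal K})\cong\tilde T$, compatibly with the cone structure and the weak-$*$ topology given by the Pedersen ideal.

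The remaining computation is that $\Sigma_A = \tilde\gamma$ under this identification. By definition $\Sigma_A(\tau) = d_\tau(e_A)$ for a strictly positive element $e_A$ of $A$; but $a$ itself (or $a^{1/2}$) is strictly positive in $A = \overline{a(B\otimes{\cal K})a}$, so $\Sigma_A(\tau) = d_\tau(a) = \tilde\gamma(\tau)$ by the choice of $a$, and this is independent of the choice of strictly positive element by \ref{Ddimf}. This gives the desired isomorphism of Elliott invariants $(K_0(A), K_1(A), \tilde T(A), \Sigma_A, \rho_A)\cong (G_0, G_1, \tilde T, \tilde\gamma, 0)$.

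I expect the main obstacle to be the second step: carefully producing a positive element (in the stabilization) with prescribed rank function equal to an arbitrary lower semicontinuous $\tilde\gamma$ valued in $(0,\infty]$, and verifying that the resulting hereditary subalgebra is genuinely full (so that Brown's theorem applies and simplicity/regularity are preserved) and that $\rho_A = 0$ is really inherited. Concretely one needs the Cuntz-semigroup computations from \S3 (Propositions \ref{PCuR}, \ref{CcuR} and the discussion around \ref{Pwwdivisible}) to know the rank map $Cu(B)\to {\rm LAff}_+(\tilde T(B))$ is surjective onto the relevant class of functions, and one must check the value $+\infty$ is attainable (which is where the non-unital, stably projectionless nature is essential — this is why $A$ is only stably isomorphic to, not isomorphic to, the continuous-scale model $B_T$). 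Everything else is bookkeeping with the already-established invariance properties of $K$-theory, traces, and $\rho$ under stable isomorphism.
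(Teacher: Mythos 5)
Your proposal is correct and follows essentially the same route as the paper: invoke Theorem \ref{MainModel} to get $B=B_T\in{\cal D}_0$ with the prescribed $(G_0,G_1,T)$, choose $a\in (B\otimes{\cal K})_+$ with $d_\tau(a)=\gamma(\tau)$ (the paper cites 6.2.1 of \cite{Rl} for exactly this existence, which is the surjectivity of the rank map you describe), and take $A=\overline{a(B\otimes{\cal K})a}$, using Brown's stable isomorphism theorem to transport the invariant. The only slight over-caution is the fullness worry: since $B\otimes{\cal K}$ is simple, every nonzero hereditary subalgebra is automatically full.
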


\begin{proof}

Let $B$ be the $C^*$-algebra in \ref{MainModel} with $K_0(B)=\mathrm{ker}(\rho_B)=G_0$, $K_1(B)=G_1$ and
$T{{(B)}}=T$.
There is a positive element   (see 6.2.1 of \cite{Rl}, for example)
$a\in B\otimes {\cal K}$ such that $d_\tau(a)=\gamma(\tau)$ for all $\tau\in T=T(B).$
Let $A=\overline{a(B\otimes {\cal K})a}.$
Then $A$ is stably isomorphic to $B\in{\cal D}_0$ and
$$
(K_0(A), K_1(A), {\tilde T}(A), \Sigma_A, \rho_A)\cong (G_0, G_1, {\tilde T}, {\tilde \gamma}, 0).
$$

\end{proof}

\begin{rem}\label{RTchoquet}
{{We would like to recall the following facts:}}

{{Let $A$ be a separable \CA\, with $T(A)\not=\emptyset$ and  {{${\rm Ped}(A))=A.$}}
Then $T(A)$ forms a base for the cone ${\tilde T}(A).$
It follows from 3.3 of \cite{PedMI} and 3.1 of \cite{PedMIII}
that ${\tilde T}(A)$ forms a vector lattice. Therefore, if $T(A)$ is compact, then
$T(A)$ is always a metrizable Choquet simplex.}}

\end{rem}

\begin{df}\label{DBT}
In what follows we will use ${\cal B}_T$ for the class of \CA s with the form $B_T.$
Note that  if $A\in {\cal B}_T$ then $A$ is ${\cal Z}$-stable with weakly unperforated $K_0(A)$
(see \ref{Tweakunp}).
\end{df}
















\section{\CA s  ${\mathcal Z}_0$ and class ${\cal D}_0$}

\begin{df}\label{DZ0}

{{Let $\zo=B_T$ be as constructed  in the previous section with
$G_0=\Z$ and $G_1=\{0\}$ and with unique tracial state. Note also $\zo$ is ${\cal Z}$-{{stable.}}}}



\end{df}

From Theorem \ref{MainModel} and Corollary  \ref{Ctsang}, we have the following fact.

\begin{prop}\label{DZ0C_0}
{{$\zo$ is locally approximated by \CA s in ${\cal C}_0.$}}
In fact that $\zo=\lim_{n\to\infty}(C_n, \imath_n),$ where
each $C_n\in {\cal C}_0,$ $\imath_n$
maps
strictly positive elements to strictly positive elements.
\end{prop}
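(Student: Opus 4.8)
\textbf{Proposal for the proof of Proposition \ref{DZ0C_0}.}
The plan is to combine the two structural results already at hand: Theorem \ref{MainModel}, which (in its ``additional part'') handles exactly the case $G_0$ torsion free and $G_1 = 0$, and Definition \ref{DZ0}, which specifies $\zo = B_T$ with $G_0 = \Z$ and $G_1 = \{0\}$. Since $\Z$ is torsion free and $\{0\}$ is trivial, the hypotheses of the additional part of Theorem \ref{MainModel} are met verbatim. First I would simply invoke that theorem to conclude that $\zo = \lim_{n\to\infty}(C_n, \imath_n)$ where each $C_n \in {\cal C}_0$ and each $\imath_n$ carries strictly positive elements to strictly positive elements; this is essentially a one-line deduction. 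The ``locally approximated'' assertion is then immediate: any inductive limit with injective (indeed, just any) connecting maps is locally approximated by the terms of the sequence, since $\cup_n \imath_{n,\infty}(C_n)$ is dense in $\zo$.

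The only thing that requires a word of care is the consistency check that the $C_n$ produced by Theorem \ref{MainModel} via Lemma \ref{DC_1} really are the building blocks for the particular $B_T$ with $(G_0, G_1, T) = (\Z, \{0\}, \{pt\})$ chosen in Definition \ref{DZ0}. Here I would point back to the proof of Theorem \ref{MainModel}: the argument there shows that in the construction of $B_T$ in \ref{Dcc1} each $B_n = W_n \oplus M_{(n!)^2}(A(W,\af_n))$ is itself an inductive limit of \CA s in ${\cal C}_0$ with injective connecting maps — this uses Lemma \ref{DC_1} applied to $A(W,\af_n)$ (noting $W_n$ is a finite direct sum of copies of $R(\af_n,1)$, which are themselves limits of \CA s in ${\cal C}_0$ by the discussion following \ref{DRaf1}, and that ${\cal C}_0$ is closed under finite direct sums) together with the torsion-freeness of $\Z$. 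Since ${\cal C}_0$ consists of semiprojective \CA s, a telescoping/reindexing argument (exactly as in the final sentence of the proof of \ref{MainModel}) upgrades ``locally approximated by ${\cal C}_0$'' to ``genuine inductive limit of ${\cal C}_0$ with injective connecting maps.'' I would also remark that the strict-positivity-preservation of the $\imath_n$ is inherited from the corresponding property of the $\Phi_{n,n+1}$ established in \ref{Dcc1} and carried through Lemma \ref{DC_1}.

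I do not anticipate a genuine obstacle: the substance of the statement is entirely contained in Theorem \ref{MainModel} and its proof, and this Proposition is a corollary recording that fact for the specific algebra $\zo$. If there is any friction at all, it is purely bookkeeping — making sure the ``additional part'' hypotheses ($G_0$ torsion free, $G_1 = 0$) are explicitly matched to $\zo$'s defining data, and that the passage from ``each $B_n$ is a limit of ${\cal C}_0$ algebras'' to ``$\zo$ itself is such a limit'' is justified by semiprojectivity exactly as in \ref{MainModel}. Accordingly the written proof can be short: cite \ref{DZ0} for the invariant, cite the additional part of \ref{MainModel} for the inductive limit decomposition, and note that an inductive limit is locally approximated by its building blocks.
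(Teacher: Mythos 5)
Your proposal is correct and follows exactly the paper's route: the paper states this proposition as an immediate consequence of the additional part of Theorem \ref{MainModel} (applied to $\zo=B_T$ with $G_0=\Z$ torsion free and $G_1=\{0\}$), which is precisely your one-line deduction, with the local approximation read off from the inductive limit structure.
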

(See \ref{Czo} bellow for the uniqueness of $\zo.$)

\begin{lem}\label{Ltensorscale}
Let $A$ be a  separable exact simple
\CA\, with continuous scale. Then
$A\otimes \zo$ also has continuous scale and $A\otimes \zo$ is ${\cal Z}$-stable.
\end{lem}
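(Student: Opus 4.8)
\textbf{Proof plan for Lemma \ref{Ltensorscale}.}

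The statement has two parts: that $A\otimes\zo$ is ${\cal Z}$-stable, and that it has continuous scale. The ${\cal Z}$-stability part is the easier one and I would dispose of it first. By Definition \ref{DZ0}, $\zo$ is ${\cal Z}$-stable, so $\zo\cong\zo\otimes{\cal Z}$; hence $A\otimes\zo\cong A\otimes\zo\otimes{\cal Z}$, which is exactly ${\cal Z}$-stability of $A\otimes\zo$. (One can invoke the appendix of the paper, which shows every separable amenable \CA\, in ${\cal D}$ is ${\cal Z}$-stable, but the direct route via $\zo\otimes{\cal Z}\cong\zo$ is cleaner and needs no amenability.)

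For the continuous scale part, the plan is to work with strictly positive elements and the criterion in Definition \ref{Dconsc}. Fix strictly positive elements $a\in A_+$ with $\|a\|=1$ and $e\in\zo_+$ with $\|e\|=1$; then $c:=a\otimes e$ is strictly positive in $A\otimes\zo$. First I would identify the tracial data: since $\zo$ has a unique tracial state and $K_0(\zo)=\Z$, $K_1(\zo)=0$, we have (as remarked after Theorem \ref{TTT3} in the introduction) $T(A\otimes\zo)=T(A)$, with $\tau\otimes\tau_{\zo}$ corresponding to $\tau$; and by lower semicontinuity and density of the Pedersen ideal, ${\tilde T}(A\otimes\zo)$ is the cone over $T(A)$ with $\Sigma_{A\otimes\zo}(\tau\otimes\tau_{\zo})=d_{\tau\otimes\tau_{\zo}}(a\otimes e)=d_\tau(a)\cdot d_{\tau_{\zo}}(e)$. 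Because $A$ has continuous scale, $T(A)$ is compact (by 9.3 of \cite{GLp1}), hence $T(A\otimes\zo)$ is compact, and $d_{\tau_{\zo}}(e)$ is a fixed finite positive constant. The heart of the argument is then to show that continuous scale passes through the tensor product: given any $b\in(A\otimes\zo)_+\setminus\{0\}$, I must produce $n_0$ with $e_m-e_n\lesssim b$ for all $m\ge n\ge n_0$, where $\{e_n\}$ is a suitable approximate identity. Here I would use that $A\otimes\zo$ is ${\cal Z}$-stable (just proved) and separable simple, so by R\o rdam's results (3.5 of \cite{GLp1}) it has strict comparison of positive elements; then it suffices to compare the dimension functions: $\sup_\tau d_\tau(e_m-e_n)\to 0$ as $n\to\infty$ uniformly, while $\inf_\tau d_\tau(b)>0$ because $T(A\otimes\zo)$ is compact and $b\ne 0$ (using lower semicontinuity and simplicity). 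The uniform vanishing of $\sup_\tau d_\tau(e_m-e_n)$ is where one uses that $A$ itself has continuous scale: choose the approximate identity of $A\otimes\zo$ of the form $\{h_n\otimes e\}$ (suitably, using an approximate identity $\{h_n\}$ of $A$ witnessing continuous scale together with a fixed positive $e$), so that $d_{\tau\otimes\tau_{\zo}}(h_m\otimes e - h_n\otimes e) = d_{\tau_{\zo}}(e)\,d_\tau(h_m-h_n)$, and continuous scale of $A$ gives the needed uniform estimate over the compact $T(A)$.

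The main obstacle I anticipate is the bookkeeping in the last step: making sure the approximate identity of the tensor product can be chosen compatibly with one that witnesses continuous scale of $A$, and correctly relating $d_{\tau\otimes\tau_{\zo}}$ on the tensor product to the product of the dimension functions on the factors (this requires a small argument, since in general $d$ is not multiplicative on arbitrary elements, but it is on elementary tensors $x\otimes y$ by the spectral picture $f_\varepsilon(x\otimes y)=\lim$ of products of $f$'s, or more robustly one uses $d_{\tau\otimes\tau_\zo}(x\otimes y)=d_\tau(x)d_{\tau_\zo}(y)$ which holds by a direct computation on $C^*(x)\otimes C^*(y)$). Alternatively, and perhaps more cleanly, one can bypass the explicit approximate identity by appealing to the characterization of continuous scale via $0\notin\overline{T(A\otimes\zo)}^w$ being compact together with the surjectivity of the map $W(A\otimes\zo)\to{\rm LAff}_{b+}(\overline{T(A\otimes\zo)}^w)$ (15.8 of \cite{GLp1}) — i.e. show $T(A\otimes\zo)$ is compact (done) and then invoke 9.3 of \cite{GLp1} in the reverse direction to conclude continuous scale. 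I would present the compactness route as the primary argument, since it reduces everything to the already-established facts that $T(A\otimes\zo)=T(A)$ is compact, $A\otimes\zo$ is simple, separable, ${\cal Z}$-stable, and stably finite.
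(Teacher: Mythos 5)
Your ${\cal Z}$-stability argument is the same as the paper's and is fine. The continuous scale part, however, has genuine gaps relative to the stated hypotheses (the lemma assumes only that $A$ is separable, simple, with continuous scale — no exactness, no amenability, and no assumption that $T(A)\not=\emptyset$). First, you never address the traceless case: if $A\otimes\zo$ is purely infinite (which the ${\cal Z}$-stability dichotomy of R\o rdam allows a priori), then $T(A\otimes\zo)=\emptyset$ and your whole compactness/strict-comparison scheme is vacuous; the paper's first move is precisely this dichotomy, disposing of the purely infinite case by the fact that every $\sigma$-unital purely infinite simple \CA\ has continuous scale, and then working in the stably finite case. Second, in the stably finite case your primary route (``$T(A\otimes\zo)=T(A)$ is compact, now invoke 9.3 of \cite{GLp1}'') is exactly the step the paper flags as valid only when $A$ is exact; for non-exact $A$ the strict comparison furnished by ${\cal Z}$-stability is with respect to quasitraces, and the paper deliberately reruns the estimate over $QT$ (after replacing $A$ by $B=A\otimes{\cal Z}$, which inherits continuous scale trivially because ${\cal Z}$ is unital and has strict comparison by 3.5 of \cite{GLp1}). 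Your trace-only argument does not cover this case.

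Third, the concrete device you propose for the comparison step fails: $\{h_n\otimes e\}$ with a \emph{fixed} positive $e\in\zo$ is not an approximate identity of $A\otimes\zo$ (since $e$ is not a unit, $(h_n\otimes e)(x\otimes y)\to x\otimes ey\not=x\otimes y$), and your multiplicativity formula $d_{\tau\otimes t_Z}(x\otimes y)=d_\tau(x)d_{t_Z}(y)$ does not rescue a computation based on it. The paper's fix is to take $c_n=e_n\otimes b_n$ with $\{e_n\}$ and $\{b_n\}$ approximate identities of $B$ and of $\zo$ respectively (with $e_{n+1}e_n=e_n$, $b_{n+1}b_n=b_n$, so $c_{n+1}c_n=c_n$), decompose
$$c_n-c_m=(e_n-e_m)\otimes b_n+e_m\otimes(b_n-b_m),$$
control the first summand by continuous scale of $B$ and the second by the \emph{unique} trace of $\zo$, and then apply strict comparison to get $c_n-c_m\lesssim d$. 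If you repair your alternative route along these lines — dichotomy first, quasitraces instead of traces, and the genuinely two-parameter approximate identity — you recover the paper's proof; as written, the argument does not go through in the generality claimed.
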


\begin{proof}
Since $\zo$ is ${\cal Z}$-stable, so is $A\otimes \zo.$
Therefore, by \cite{Rrzstable}, $A\otimes\zo$ is purely infinite or is stably finite.
Since every separable purely infinite simple \CA\, has continuous scale (\cite{Lncs1}),
we assume that $A\otimes \zo$ is stably finite.
In particular,
$T(A\otimes \zo)\not=\emptyset.$
Since ${\cal Z}$ is unital, it is easy to see that $A\otimes {\cal Z}$ has continuous
scale.   It follows that $T(A)$ is compact. Since
$\zo$ has a unique tracial state, $T(A\otimes \zo)$ is also compact.
The lemma follows if we also assume that $A$ is exact by  {{ 5.3 of \cite{eglnp1}}}.
{{However, the proof 5.3 of \cite{eglnp1} also shows that $T(A\otimes \zo)$ is compact.}}

For general cases,
let $B=A\otimes {\cal Z}.$ We may write $A\otimes \zo=B\otimes \zo.$
We also note that $B$ has strict comparison {{(by Theorem 4.5 of \cite{Rrzstable}).}}

Let $\{e_n\}$ be an approximate identity for $B$ such that
$e_{n+1}e_n=e_ne_{n+1}=e_n,$ $n=1,2,...$
Let $\{{{b_n}}\}$ be an approximate identity for $\zo$ such that
${{b_{n+1}b_n=b_nb_{n+1}=b_n}},$ $n=1,2,....$
It follows that ${{c_n}}=e_n\otimes b_n$ is an approximate identity for $B\otimes \zo$ such that
\beq
c_{n+1}c_n=(e_{n+1}e_n)\otimes (b_{n+1}b_n)=e_n\otimes b_n=c_n,\,\,\, n=1,2,....
\eneq
Fix any $d\in B\otimes \zo.$
Put
\beq
\sigma=\inf\{d_\tau(d): \tau\in T(B\otimes \zo)\}>0.
\eneq
Since $B$ has continuous scale, there exists an integer $n_0\ge 1$
such that
\beq
\tau(e_n-e_m)<\sigma/4\rforal \tau\in T(B)
\eneq
when $n>m\ge n_0.$
Let $t_Z$ be the unique tracial state of $\zo.$
There is $n_1\ge 1$ such that
\beq
t_Z(b_n-b_m)<\sigma/4\rforal n>m\ge n_1.
\eneq

We have, for $n>m\ge n_0+n_1,$
\beq
c_n-c_m&=&e_n\otimes b_n-e_m\otimes b_m= (e_n-e_m)\otimes b_n+(e_m\otimes b_n-e_m\otimes b_m)\\
&=& (e_n-e_m)\otimes b_n+(e_m\otimes (b_n-b_m))
\eneq
Therefore, for $n>m\ge n_0+n_1,$
\beq
(\tau\otimes t_Z)(c_n-c_m)<\sigma/2\rforal \tau\in T(B).
\eneq
By the strict comparison for positive element,
the above inequality implies that $c_n-c_m\lesssim d.$
It follows that $A\otimes \zo$ has continuous scale.
\end{proof}


{{Now we are ready to state the following theorem which is a variation of \ref{Ctsang}:}}

\begin{thm}\label{Mainrange}
For any {{separable}}  finite simple amenable  \CA\, $A$, there is a $C^*$-algebra $B$ which is stably isomorphic to a  \CA\, of the form $B_T$ in ${\cal D}_0$ such that $\mathrm{Ell}(B)\cong \mathrm{Ell}(A\otimes {\cal Z}_0)$
\end{thm}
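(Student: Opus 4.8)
\textbf{Proof proposal for Theorem \ref{Mainrange}.}

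The plan is to reduce the statement to the range-of-invariant results already established in this section, namely \ref{MainModel}, \ref{Ctsang}, and the auxiliary fact \ref{Ltensorscale}. First I would compute the Elliott invariant of $A\otimes \zo$ in terms of the invariant of $A$. Since $K_i(\zo)=K_i(\C)$ as abelian groups and $\zo$ has a unique tracial state, the K\"unneth formula gives $K_i(A\otimes\zo)\cong K_i(A)$ ($i=0,1$), and $\widetilde{T}(A\otimes\zo)\cong\widetilde{T}(A)$ as topological cones (using that $\zo$ has a unique trace). One must also record that $K_0(A\otimes\zo)=\ker\rho_{A\otimes\zo}$: this is where $\zo$ does its work, since $\rho$ on the $K_0(\zo)=\Z$ factor is absorbed — more precisely, the pairing of $K_0(A\otimes\zo)$ with traces factors through the unique trace on $\zo$, and combined with $\zo$-stability ($\zo\otimes\zo\cong\zo$, a consequence of the main results) one checks $\rho_{A\otimes\zo}=0$, so that $\mathrm{Ell}(A\otimes\zo)$ is indeed an object of the form $(G_0,G_1,\widetilde{T},\Sigma,0)$ as in \ref{DEll}. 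The scale function $\Sigma_{A\otimes\zo}$ is a lower semicontinuous affine function on $\widetilde{T}(A\otimes\zo)\cong\widetilde{T}(A)$, and by \ref{RTchoquet} the compact base of this cone (if $A$ has continuous scale) or the cone itself carries a Choquet-simplex structure.

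Second I would invoke \ref{Ctsang}: given the abstract data $(G_0,G_1,\widetilde{T},\widetilde\gamma)$ extracted from $\mathrm{Ell}(A\otimes\zo)$ — with $G_0=K_0(A)$, $G_1=K_1(A)$, $\widetilde{T}$ the cone $\widetilde{T}(A\otimes\zo)$ with metrizable Choquet base, and $\widetilde\gamma=\Sigma_{A\otimes\zo}$ the (lower semicontinuous, homogeneous) scale function — there exists a non-unital simple \CA\ $B$, stably isomorphic to a \CA\ of the form $B_T$ lying in ${\cal D}_0$, with
\[
(K_0(B),K_1(B),\widetilde{T}(B),\Sigma_B,\rho_B)\cong(G_0,G_1,\widetilde T,\widetilde\gamma,0).
\]
This is exactly $\mathrm{Ell}(B)\cong\mathrm{Ell}(A\otimes\zo)$ after unwinding the definition of isomorphism of Elliott invariants in \ref{DEll}: matching $\kappa_i$ on $K_i$, the cone homeomorphism $\kappa_T$ on $\widetilde T$, and $\Sigma_B(\tau)=\Sigma_{A\otimes\zo}(\kappa_T(\tau))$.

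The main obstacle — and the only genuinely non-formal point — is verifying that $\mathrm{Ell}(A\otimes\zo)$ really is of the shape that \ref{Ctsang} can realize, i.e. that $K_0(A\otimes\zo)=\ker\rho_{A\otimes\zo}$ and that the trace cone has a metrizable Choquet base. The first requires care when $A$ is only assumed finite, separable, simple, amenable (not necessarily exact or with continuous scale): one uses that tensoring with $\zo$ forces the $K_0$–trace pairing to vanish because any class in $K_0(A\otimes\zo)$ is, after $\zo$-absorption, represented in a corner where the trace restricted from $\zo$'s unique state kills the scalar part — this is the non-unital stably projectionless analogue of the familiar fact that $K_0$ vanishes on traces for $W$-stable algebras, proved in Part I for the trivial-$K$-theory case and here extended. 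For the second point, by \ref{Ltensorscale} we may first pass to $A\otimes{\cal Z}$ to arrange continuous scale, and then \ref{RTchoquet} (via \cite{PedMI}, \cite{PedMIII}) guarantees the trace space is a metrizable Choquet simplex since $A$ is separable. Once these two checks are in place, the theorem follows immediately by quoting \ref{Ctsang}. I would also remark that no classification input is needed for this range statement — only the explicit model construction of $B_T$ and its properties established in \ref{PWTtrace}, \ref{PBTKT}, and \ref{Ctsang}.
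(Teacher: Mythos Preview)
Your overall strategy matches the paper's: reduce to Corollary~\ref{Ctsang} by verifying that $K_0(A\otimes\zo)=\ker\rho_{A\otimes\zo}$ and that the trace cone has a metrizable Choquet base. The second point is fine (modulo a minor misstep: \ref{Ltensorscale} assumes $A$ already has continuous scale, so you cannot invoke it to \emph{obtain} continuous scale; the paper instead passes to a hereditary subalgebra $\overline{aAa}$ with continuous scale via \cite{Lncs2}, then uses \ref{RTchoquet}).

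The real gap is in your argument for $K_0(A\otimes\zo)=\ker\rho_{A\otimes\zo}$. Your sketch is vague (``the trace restricted from $\zo$'s unique state kills the scalar part'') and your appeal to $\zo\otimes\zo\cong\zo$ is circular: that isomorphism is Corollary~\ref{zoselfabsorbing}, proved \emph{after} the classification Theorem~\ref{T1main}, which in turn sits downstream of the present range result in the logical architecture. The paper avoids this entirely with a concrete device you are missing: since $K_0(\zo)=\ker\rho_{\zo}$, the $Cu^{\sim}$ picture (cf.~6.2.3 of \cite{Rl}) yields an ordered semigroup morphism $Cu^{\sim}(\zo)\to Cu^{\sim}(W)$ sending $K_0(\zo)$ to zero and matching scales, and Robert's theorem \cite{Rl} then produces an honest \hm\ $\phi_{z,w}:\zo\to W$ preserving strictly positive elements and satisfying $t_W\circ\phi_{z,w}=t_Z$. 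Tensoring gives $\psi={\rm id}_A\otimes\phi_{z,w}:A\otimes\zo\to A\otimes W$, and every tracial state $\tau\otimes t_Z$ on the hereditary subalgebra $\overline{aAa}\otimes\zo$ factors as $(\tau\otimes t_W)\circ\psi$. Since $K_i(W)=0$, the K\"unneth formula gives $K_0(\overline{aAa}\otimes W)=0$, so the pairing with traces vanishes on $K_0$. Brown's stable isomorphism then transports this to $K_0(A\otimes\zo)$. This is the missing idea; once you have it, the rest of your outline goes through.
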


\begin{proof}
{{Note that,
by 6.2.3 of \cite{Rl} {{(see also 7.3 of \cite{eglnp1})}},
one may write
\beq
Cu^{\sim}(\zo)=\Z\sqcup {\rm LAff}_+^{\sim}({\tilde T}(\zo))\andeqn Cu^{\sim}({\cal W})={\rm LAff}_+^{\sim}({\tilde T}({\cal W})).
\eneq
Since both $\zo$ and ${\cal W}$ are monotracial, ${\rm LAff}_+^{\sim}({\tilde T}(\zo))={\rm LAff}_+^{\sim}({\tilde T}({\cal W})).$
Since $K_0(\zo)={{{\rm ker}}}_{\rho_{\zo}}$, one has an ordered semi-group
\hm\, $\Lambda: \Z\sqcup{\rm LAff}_+^{\sim}({\tilde T}(\zo))\to {\rm LAff}_+^{\sim}({\tilde T}({\cal W}))$
which maps $\Z$ to zero and identity on ${\rm LAff}_+^{\sim}({\tilde T}(\zo))=\R_+^{\sim}.$
In particular, $\Lambda$ maps $1$ to $1.$ It follows from \ref{DZ0C_0} and \cite{Rl}
that there is a \hm\, $\phi_{z,w}: \zo\to {\cal W}$ which maps strictly  positive elements to strictly positive elements.
Let $t_Z$ and $t_{\cal W}$ be the unique tracial states of $\zo$ and ${\cal W},$ respectively.
Then $t_{\cal W}\circ \phi_{z,w}=t_Z,$ since $\zo$ has only one tracial state. }}

Since ${\cal Z}\otimes \zo\cong \zo,$ \wilog, we may assume that $A$ is ${\cal Z}$-stable.
 Let $a\in {\rm Ped}(A)_+$ be such that
$\overline{aAa}$ has continuous scale (see 5.2 of \cite{eglnp1}).
Put $B=\overline{aAa}\otimes \zo.$ It is easy to verify that $B$ is a hereditary \SCA\,
of $A\otimes \zo.$
Every tracial state of $B$  has the form
$\tau\otimes t_Z,$ where $\tau\in T(\overline{aAa}).$
Fix $\tau\in T(\overline{aAa}),$ then
\beq\label{Mainrange-2}
(\tau\otimes t_z)(a\otimes z)=\tau(a)t_Z(z)=\tau(a)(t_{\cal W}\circ \phi_{z,w}(z))\rforal a\in A\andeqn z\in \zo.
\eneq
 Let $\psi={\rm id}_A\otimes \phi_{z,w}: A\otimes \zo\to A\otimes {\cal W}$ and let $s=\tau\otimes t_z\in T(B).$
Then, by \eqref{Mainrange-2},
$s=(\tau\otimes t_{\cal W})\circ {{\psi}}.$
Since $\zo$ satisfies the UCT, by {{the}} K\"unneth formula (\cite{RS}),
 $K_i(\overline{aAa}\otimes {\cal W})=0,$
$i=0,1.$
Therefore, for any $x\in K_0(B),$
$s(x)=0.$ This implies that ${\rm ker}\rho_B=K_0(B).$
Since $A$ is separable, simple and  $B$ is a hereditary \SCA\, of $A\otimes \zo,$
by \cite{Br1}, $(A\otimes \zo)\otimes {\cal K}\cong B\otimes {\cal K}.$
It follows that $K_0(A\otimes \zo)={\rm ker}\rho_{A\otimes \zo}.$

{{Note that (see \ref{RTchoquet}) $T(B)$ is a metrizable Choquet simplex.
By \ref{Ctsang}, there is a $C^*$-algebra $C$ which is stably isomorphic to a  \CA\, of the form
$B_T$ in ${\cal D}_0$ such that $\mathrm{Ell}(C)\cong \mathrm{Ell}(A\otimes {\cal Z}_0).$}}

\end{proof}







\begin{thm}\label{D0kerrho}
{{Let $A$ be a separable \CA\, which is stably isomorphic to a \CA\, in
${\cal D}_0.$ Then $K_0(A)={\rm ker}\rho_A.$}}
\end{thm}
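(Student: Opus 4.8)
The plan is to reduce the statement to the already-established fact that $B_T\in{\cal B}_T$ has $K_0(B_T)={\rm ker}\rho_{B_T}$ (Proposition \ref{PBTKT}), together with a tracial-vanishing argument analogous to the one used in the proof of \ref{Mainrange}. First I would unwind the definitions: if $A$ is stably isomorphic to some $C\in{\cal D}_0$, then by Brown's stable isomorphism theorem (\cite{Br1}) we have $A\otimes{\cal K}\cong C\otimes{\cal K}$, hence $K_0(A)\cong K_0(C)$ and, picking a full element $a\in P(A)_+$ as in \ref{DJc}, the identification $K_0(A_a)=K_0(A)$ respects the pairing with traces. Thus it suffices to show $K_0(C)={\rm ker}\rho_C$ for $C\in{\cal D}_0$, and by the same stable-isomorphism reasoning we may even assume $C$ has continuous scale (pass to a hereditary subalgebra $\overline{eCe}$ with $e\in P(C)_+$, $\|e\|=1$, using \cite{Lncs2} or the fact that $gTR(C)\le 1$ already provides such an $e$), since ${\rm ker}\rho$ is computed via $T$ and is a stable-isomorphism invariant.

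The core step is to build, for $C\in{\cal D}_0$ with continuous scale, a $^*$-homomorphism (or sufficient approximate maps) from $C$ into a monotracial algebra with vanishing $K_0$, in order to kill the image of $K_0(C)$ under any trace. Concretely, I would use the appendix result quoted in the excerpt (every separable amenable \CA\ in ${\cal D}$ is ${\cal Z}$-stable), so $C\cong C\otimes{\cal Z}$, and then exploit the structure of ${\cal D}_0$: by \ref{DD1} (i.e. 14.5 of \cite{GLp1}) $C\in{\cal D}^d$ with the pieces $D_1$ drawn from ${\cal C}_0^{0'}$, the class of full hereditary subalgebras of algebras in ${\cal C}_0$ with $K_0=0$. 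For any fixed $x\in K_0(C)$ represented by a projection (over $M_r(\tilde C)$) and any $\tau\in T(C)$, the $T$-value $\rho_C(x)(\tau)$ is read off from approximations $x\mapsto {\rm diag}(\phi(x),\psi(x))$ with $\psi$ landing in $D\cong M_n(D_1)$, $D_1\in{\cal C}_0^{0'}$; since $K_0(D_1)=0$ the $\psi$-part contributes no $K$-theory, while the $\phi$-part can be made tracially arbitrarily small (condition \eqref{DNtrdiv-2}, $\phi(a)\lesssim b$ with $b$ of small trace). Letting $b$ shrink and using strict comparison, one gets $|\rho_C(x)(\tau)|<\varepsilon$ for every $\varepsilon>0$, hence $\rho_C(x)=0$, i.e. $x\in{\rm ker}\rho_C$. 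The reverse inclusion ${\rm ker}\rho_C\subseteq K_0(C)$ is the definition of ${\rm ker}\rho_C$ in \ref{DJc}, so equality follows.

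An alternative, cleaner route — and the one I would actually write — is to invoke \ref{Mainrange}/\ref{MainModel}: the range-of-invariant theorem shows $B_T\in{\cal D}_0$ realizes arbitrary $(G_0,G_1,T)$ with $K_0(B_T)={\rm ker}\rho_{B_T}$ (Proposition \ref{PBTKT}); but I cannot directly conclude that \emph{every} $C\in{\cal D}_0$ has this property from the range theorem alone without the classification theorem \ref{TTT1}, which would be circular if \ref{D0kerrho} is used as an input to it. So the honest proof must be the direct tracial argument of the previous paragraph. The main obstacle I anticipate is bookkeeping: making precise that the $K_0$-class of the approximant ${\rm diag}(\phi(x),\psi(x))$ genuinely equals $x$ (this needs the maps to be sufficiently multiplicative on a finite subset of $\underline{K}(C)$ containing $x$, so one must quantify $\varepsilon$ and ${\cal F}$ against a chosen projection representing $x$), and controlling that $\psi$'s target, though a priori only a hereditary subalgebra of something in ${\cal C}_0$, still has $K_0=0$ — which is exactly why \ref{DD1} insists $D_1\in{\cal C}_0^{0'}$ rather than merely ${\cal C}_0'$. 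Once those two points are nailed down, letting $d_\tau(b)\to 0$ and appealing to strict comparison for positive elements (which holds since $C$ is ${\cal Z}$-stable, by R\o rdam, as cited) closes the argument.
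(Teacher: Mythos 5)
Your argument is correct in substance, and it rests on exactly the same structural input as the paper's proof: the defining decomposition of ${\cal D}_0$, in which the $\psi$-corner lands in an algebra from ${\cal C}_0^{0'}$ (whose $K_0$ vanishes, by Brown's theorem applied to a full hereditary subalgebra) and the $\phi$-corner is Cuntz-dominated by a prescribed small element. The difference is one of packaging. The paper does not redo the $K$-theoretic bookkeeping: after the same reduction to $A\in{\cal D}_0$, it invokes 18.3 of \cite{GLp1} to reduce the statement to showing that every tracial state of $A$ is a $W$-trace, and then 18.2 of \cite{GLp1} to reduce that to producing approximately multiplicative completely positive contractive maps $\phi_n\colon A\to D_n\in{\cal C}_0^{0'}$ with $\tau(a)=\lim_n t_n(\phi_n(a))$ for suitable $t_n\in T(D_n)$; such maps come straight from the definition of ${\cal D}_0$ (they are the maps $\phi_{1,n}$ in the proof of 13.1 of \cite{GLp1}, with the tracial identity being (e13.6) there). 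Your direct route --- writing $x=\imath_*[\phi](x)$ modulo the vanishing of $K_0(D)$ and estimating $|\rho_A(x)(\tau)|\le r\, d_\tau(b)$ --- is in effect a hands-on reproof of that implication; it buys self-containedness at the cost of the multiplicativity/finite-subset bookkeeping you flag, which the citation absorbs. Two small remarks: strict comparison (and hence the appeal to ${\cal Z}$-stability) is not needed at that point, since $\phi(e_A)\lesssim b$ already gives $d_\tau(\phi(e_A))\le d_\tau(b)$ for every trace $\tau$; and your preliminary passage to a hereditary subalgebra with continuous scale is what guarantees the existence of $b$ with $\sup_{\tau\in T(A)}d_\tau(b)$ uniformly small, so that reduction should be kept. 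You were also right that the route through \ref{Mainrange} would be circular, and the paper indeed avoids it.
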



\begin{proof}
\Wlog, we may assume that $A\in {\cal D}_0.$
By {{12.3 of \cite{eglnp1},}}
it suffices to show that every tracial state of $A$ is a ${\cal W}$-trace.
By
{{12.2 of \cite{eglnp1}}}, it suffices to produce a sequence of
\cpc s $\{\phi_n\}$ from $A$ into $D_n\in {\cal C}_0^{0'}$ such that
such that
\beq\nonumber
&&\lim_{n\to\infty}\|\phi_n(ab)-\phi_n(a)\phi_n(b)\|=0\rforal a,\,b\in A\andeqn\\\label{Dokerrho-1}
&&\tau(a)=\lim_{n\to\infty}t_n(\phi_n(a))\rforal a\in A,
\eneq
where $t_n\in T(D_n).$

This, of course, follows directly from the definition of ${\cal D}_0.$
In fact, in the proof of  {{9.1 of \cite{eglnp1}}}
$\phi_{1,n}$ would work (note,
we assume that $A\in {\cal D}_0$ instead in ${\cal D},$ therefore
\CA s $D_n\in {\cal C}_0^{0'}$ instead in ${\cal C}_0'$).
Note also that,
{{9.1 of \cite{eglnp1}}} shows that $QT(Q)=T(A).$
Thus \eqref{Dokerrho-1} follows from
{{(e.9.9) of \cite{eglnp1}.}}
\end{proof}

\begin{thm}\label{Ttrzstable}
{{Let $A$ be a separable simple \CA\, in ${\cal D}$ with continuous scale.
Then the map from $Cu(A)$ to ${\rm{LAff}}_+(T(A))$ is  a Cuntz semigroip isomorphism.}}
\end{thm}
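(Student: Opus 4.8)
\textbf{Proof proposal for Theorem \ref{Ttrzstable}.}

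The plan is to reduce the statement to known structural facts about \CA s in ${\cal D}$ together with the local model provided by the class ${\cal C}_0'$ and the computation of their Cuntz semigroups in Proposition \ref{PCuR} and Corollary \ref{CcuR}. First I would recall that, since $A\in {\cal D}$ with continuous scale, $A$ has stable rank one (by 15.5 of \cite{GLp1}), has strict comparison for positive elements (via R\o rdam's theorem, cf.\ 3.5 of \cite{GLp1}, once one knows $A$ is ${\cal Z}$-stable, which is established in the appendix referred to in the introduction), and that $T(A)$ is compact (by 9.3 of \cite{GLp1}). For such a \CA, the Cuntz semigroup decomposes as $Cu(A) = V(A)\sqcup \mathrm{Cu}_+(A)$ where the "soft" part is expected to be $\mathrm{LAff}_+(T(A))$; since $A$ is stably projectionless, $V(A)$ reduces to $\{0\}$ after identifying $A$ with a full hereditary subalgebra of $A\otimes{\cal K}$ where the relevant projections live, so $Cu(A)$ ought to be purely "soft". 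I would make this precise by using that every element of $Cu(A)$ is the supremum of a rapidly increasing sequence of compactly-contained elements, each of which is represented by a positive element in $A\otimes{\cal K}$.

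The key step is the surjectivity of the natural map $\mathrm{rank}\colon Cu(A)\to \mathrm{LAff}_+(T(A))$, i.e.\ that every lower semicontinuous affine function on the (compact) simplex $T(A)$ with values in $(0,\infty]$, vanishing nowhere except possibly at $0$, is realized as $d_\tau(a)$ for some $a\in (A\otimes{\cal K})_+$. Here I would invoke the divisibility/comparison structure of ${\cal D}$: Proposition \ref{Pwwdivisible} gives the $M_n(D)$-decomposition with $D\in {\cal C}_0'$, and for the building blocks in ${\cal C}_0'$ the rank map onto the relevant $\mathrm{LAff}$-type space is surjective by Proposition \ref{PCuR} and the surjectivity statement for $W(A)\to \mathrm{LAff}_{b+}$ cited from 15.8 of \cite{GLp1}. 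Approximating a given $f\in\mathrm{LAff}_+(T(A))$ from below by an increasing sequence of continuous affine functions, realizing each (after scaling) inside a copy of $M_n(D)$ sitting in $A$, and passing to the supremum should produce the desired positive element; continuity of scale keeps everything within $A\otimes{\cal K}$ and compactness of $T(A)$ controls uniformity.

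For injectivity and the order-embedding property, the point is that two positive elements $a,b$ with $d_\tau(a)\le d_\tau(b)$ for all $\tau\in T(A)$ must satisfy $a\lesssim b$: this is exactly strict comparison, which holds as noted above, and it gives that the rank map is an order-embedding on the "soft" part; combined with surjectivity this yields that $\mathrm{rank}$ is an isomorphism of ordered semigroups, and one checks it respects suprema of increasing sequences (hence is a $Cu$-morphism) because $d_\tau$ is already defined as such a supremum and $\mathrm{LAff}_+(T(A))$ is closed under countable suprema. Finally I would verify the semigroup (additivity) property, which is immediate since $d_\tau(a\oplus b)=d_\tau(a)+d_\tau(b)$.

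The main obstacle I anticipate is the surjectivity step in full generality: one must pass from the building-block computation in ${\cal C}_0'$ (where the rank takes values in functions on an interval satisfying endpoint matching conditions) to arbitrary lower semicontinuous affine functions on a general metrizable Choquet simplex $T(A)$, and the approximation must be organized so that the approximants are genuinely realized inside $A$ (not merely inside an ambient larger algebra) and so that their supremum lands in $\mathrm{LAff}_+(T(A))$ rather than only in $\mathrm{LAff}(T(A))_+$ — i.e.\ one must use simplicity and the lower bound $\lambda_s$ on scales to rule out the function vanishing on a nonempty closed face. This is where the detailed structure theory of ${\cal D}_0$ from \cite{GLp1} (in particular results around 15.8 and the ${\cal D}^d$-decomposition) will have to be brought to bear carefully.
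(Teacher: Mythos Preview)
Your overall strategy is sound and arrives at the same conclusion, but it is considerably more elaborate than what the paper does. The paper's proof is essentially two lines: it cites 15.8 of \cite{GLp1} directly, and then sketches an alternative argument following 6.2.1 of \cite{Rl}. In that alternative, injectivity comes from strict comparison (13.4 of \cite{GLp1}) together with stable projectionlessness, and surjectivity is reduced---as in Robert's proof---to the abstract ``almost divisibility'' property (D), which is supplied by the weak tracial approximate divisibility of algebras in ${\cal D}$ (14.4 of \cite{GLp1}). There is no need to go inside the building blocks $D\in {\cal C}_0'$ or to invoke Propositions \ref{PCuR}, \ref{CcuR}, or \ref{Pwwdivisible}.

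Two specific points of comparison. First, for strict comparison you route through ${\cal Z}$-stability (the appendix) and R\o rdam's theorem; this imports an amenability hypothesis that is not in the statement of \ref{Ttrzstable}, whereas the paper's direct appeal to 13.4 of \cite{GLp1} avoids this. Second, your surjectivity argument---realizing continuous affine approximants inside successive $M_n(D)$ pieces and passing to suprema---is effectively an attempt to reprove the content of 15.8 of \cite{GLp1}, a result you already invoke midway through. The obstacle you flag at the end (controlling the approximation on a general simplex rather than on an interval) is real for your constructive route, but it simply does not arise in the paper's approach: once one has strict comparison and almost divisibility in the Cuntz semigroup, Robert's argument gives surjectivity onto $\mathrm{LAff}_+(T(A))$ abstractly, with no reference to the internal ${\cal C}_0'$ structure.
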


\begin{proof}
This follows from
{{11.8 of \cite{eglnp1} (see 15.8 of \cite{GLp1})}} immediately
{{(since $\overline{T(A)}^w=T(A),$ as $A$ is assumed to have continuous scale).}}
\end{proof}

\begin{cor}\label{Ccunsim}
Let $A$ be a separable simple \CA\, in ${\cal D}.$
Then $Cu^{\sim}(A)=K_0(A)\sqcup {\rm LAff}_+^{\sim}({\tilde T}(A)).$
\end{cor}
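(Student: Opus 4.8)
The plan is to combine the preceding two results with the general structure of $Cu^{\sim}$ for a stably projectionless algebra of almost stable rank one. First I would reduce to the continuous scale case: if $A\in{\cal D}$ is separable and simple, then by \ref{DD0} (together with the structure theory in the first part of this research) $A$ is stably isomorphic to a \CA\, in ${\cal D}$ with continuous scale, and both $Cu^{\sim}$ and the right-hand side $K_0(A)\sqcup{\rm LAff}_+^{\sim}({\tilde T}(A))$ are stable-isomorphism invariants (the $K_0$-group, the cone ${\tilde T}(A)$ with its pairing, and hence ${\rm LAff}_+^{\sim}$, are all unchanged under $A\mapsto A\otimes{\cal K}$). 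So I may assume $A$ has continuous scale, in which case $\overline{T(A)}^w=T(A)$ is compact and ${\tilde T}(A)$ is the cone generated by the compact base $T(A)$.

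Next I would recall the general description of $Cu^{\sim}$. Since $A$ is separable, simple, stably projectionless, and has stable rank one (indeed almost stable rank one) by 15.5 of \cite{GLp1}, the semigroup $Cu^{\sim}(A)$ consists of formal differences of elements of $Cu(A\otimes{\cal K})$ modulo the class of a strictly positive element, and because $A$ is stably projectionless the order on $Cu^{\sim}(A)$ is determined by $Cu(A)$ (this is exactly the mechanism used just after \ref{PCuR} for \CA s in ${\cal C}_0'$; see also \cite{Rl}). Concretely, $Cu^{\sim}(A)$ splits as the "compact part" coming from genuine projections over $A\otimes{\cal K}$—but $A$ is projectionless so this part is entirely $K_0(A)$—together with the "soft part" coming from purely positive elements, which by \ref{Ttrzstable} is identified via $\la a\ra\mapsto d_{\bullet}(a)$ with ${\rm LAff}_+^{\sim}({\tilde T}(A))$. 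The model computation in \eqref{Fcusim1} of \ref{CcuR} (where $Cu^{\sim}(R(\af,1))=LSC([0,1],\R^{\sim})\oplus_\af\R^{\sim}$, the $\Z$-summand $K_0$ sitting inside the $\R^{\sim}$'s) is the template for how the pieces fit together, and the order-theoretic point that $t<[t]$ in $Cu(Q)$ while the group part is "invisible" to the order is precisely what makes the disjoint-union decomposition $K_0(A)\sqcup{\rm LAff}_+^{\sim}({\tilde T}(A))$ the correct one.

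The key steps, in order, would then be: (1) apply \ref{Ttrzstable} to get the isomorphism $Cu(A)\cong{\rm LAff}_+(T(A))$ as ordered Cuntz semigroups; (2) pass to $A\otimes{\cal K}$, noting ${\rm LAff}_+(T(A\otimes{\cal K}))={\rm LAff}_+({\tilde T}(A))$ since traces on $A\otimes{\cal K}$ are exactly the densely-defined lower semicontinuous traces on $A$, i.e. ${\tilde T}(A)$; (3) use \ref{D0kerrho}—or rather the fact that $A\in{\cal D}$ need not satisfy $K_0(A)={\rm ker}\rho_A$, so here I must be careful: the compact part of $Cu^{\sim}(A)$ is $K_0(A)$ as an abstract group regardless of $\rho_A$, because $A$ is projectionless means there are no nonzero projections to compare, so no order relation among distinct elements of $K_0(A)$ survives, and the pairing with ${\rm LAff}_+^{\sim}$ is recorded separately by $\rho_A$ inside the ambient partially ordered set; (4) assemble: an element of $Cu^{\sim}(A)$ is $f-n[1]$ with $f\in Cu({\tilde A})$, $Cu(\pi_A)(f)=[n]$, and unwinding this via the splitting gives exactly $K_0(A)\sqcup{\rm LAff}_+^{\sim}({\tilde T}(A))$.

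The main obstacle I expect is step (3)–(4): making precise the claim that in $Cu^{\sim}(A)$ the group $K_0(A)$ embeds as a disjoint "discrete" summand rather than being entangled with the soft part via the order. This requires knowing both that distinct $K_0$-classes are never Cuntz-comparable (which uses stable projectionlessness: any such comparison would produce a projection) and that the formal-difference construction of $Cu^{\sim}$ does not create new order relations between the group part and ${\rm LAff}_+^{\sim}$ beyond those encoded by $\rho_A$ and the obvious absorption $x + s = s$ for $x\in K_0$ and $s$ a nonzero soft element of sufficiently large rank—i.e. the analogue of $t<[t]$ and $s>[t]$ from \ref{DRaf1}. This is a structural fact about $Cu^{\sim}$ for stably projectionless almost-stable-rank-one algebras; I would cite \cite{Rl} for the formalism of $Cu^{\sim}$ and then verify the disjoint-sum identification by hand on the model algebras (for which it is \eqref{Fcusim1}) and transport along the inductive limit / local approximation, or simply invoke 6.2.3 of \cite{Rl} as is done in the proof of \ref{Mainrange}. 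The rest—steps (1), (2)—is immediate from \ref{Ttrzstable} and the definition of ${\tilde T}$.
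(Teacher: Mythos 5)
Your proposal is correct and follows essentially the same route as the paper: the paper's proof simply combines Theorem \ref{Ttrzstable} with the argument of 6.2.3 of \cite{Rl}, which is exactly the pair of ingredients you identify (your steps (1)--(2) being \ref{Ttrzstable} plus stabilization, and your steps (3)--(4) being the content of Robert's 6.2.3). The extra care you take about stable projectionlessness keeping $K_0(A)$ as a disjoint summand is precisely what that cited result supplies, so no further work is needed.
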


\begin{proof}
Note,
{{by 11.5 of \cite{eglnp1},}} $A$ has stable rank one.
This follows from \ref{Ttrzstable} {{(see 7.3 of \cite{eglnp1})}}.
\end{proof}

\begin{thm}\label{TD0=D}
Let $A$ be a separable simple \CA\, in ${\cal D}$ with ${\rm ker}\rho_A=K_0(A).$
Then $A\in {\cal D}_0.$
Moreover,
There exists  $e_A\in A_+$ with $\|e_A\|=1$ and  $0<\sigma_0<1$ which satisfy the following:
For any  $\ep>0,$ $\eta>0$ and   any  finite subset ${\cal F}\subset A,$
there are ${\cal F}$-$\ep$-multiplicative \cpc s $\phi: A\to A$ and  $\psi: A\to D$  for some
\SCA\, $D\in {\cal  R}$ (see \ref{DfC1}) with $\phi(A)\perp D$ such that
\vspace{-0.12in}\beq\label{DNtr1div-1}
&&\|x-(\phi(x)\oplus \psi(x))\|<\ep\tforal x\in {\cal F},\\
&&
d_\tau(\phi(e_A))<\eta\tforal \tau\in T(A)\tand\\
&&t(f_{1/4}(\psi(e_A)))\ge 1-\sigma_0\tforal t\in T(D).
\eneq

\end{thm}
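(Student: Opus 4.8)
\textbf{Proof proposal for Theorem \ref{TD0=D}.}

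The plan is to exploit the structural decomposition already available for $A\in{\cal D}$ together with the hypothesis ${\rm ker}\rho_A=K_0(A)$, which by Theorem \ref{D0kerrho}'s proof technique means every tracial state of $A$ is a W-trace; this is precisely the obstruction to upgrading ${\cal C}_0'$-subalgebras to ${\cal C}_0^{0'}$-subalgebras (equivalently, to Razak blocks in ${\cal R}$). First I would reduce to the case that $A$ has continuous scale and fix a strictly positive element $e_A\in A_+$ with $\|e_A\|=1$ and $\tau(f_{1/4}(e_A))\ge 1-\ep/2^{10}$ for all $\tau\in T(A)$ (possible since $T(A)$ is compact in the continuous-scale case and $A$ is simple projectionless). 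The target constant $\sigma_0$ will be chosen small, say $\sigma_0<1/8$, and adjusted at the end. Given ${\cal F}\subset{\cal F}_1$ and $\ep>0$, apply the definition of ${\cal D}$ (Definition \ref{DD0}) with the error $\ep'$ much smaller than $\ep$, the finite set ${\cal F}_1\cup\{e_A\}$, and a suitably chosen $b\in A_+\setminus\{0\}$ with $d_\tau(b)<\eta$ for all $\tau$, to obtain ${\cal F}_1$-$\ep'$-multiplicative maps $\phi_0:A\to A$ and $\psi_0:A\to D_0$ with $D_0\subset A$, $D_0\in{\cal C}_0'$, satisfying \eqref{DNtr1div-1++}--\eqref{DNtrdiv-4} with ${\mathfrak f}_a$ in place of $1-\sigma_0$.

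The heart of the argument is then to replace the ${\cal C}_0'$-algebra $D_0$ by a Razak algebra (an algebra in ${\cal R}$, hence in ${\cal C}_0^{0'}$ after passing to hereditary subalgebras). The key point is that $D_0$, being a full hereditary subalgebra of some $B_0\in{\cal C}_0$, has $K_0(B_0)_+=\{0\}$ and $K_1(B_0)=0$ by definition of ${\cal C}_0$; but ${\cal C}_0$ still allows nonzero $K_0$. However, since the trace $t$ restricted through $\psi_0$ must be approximately realized by W-traces on $A$ (using ${\rm ker}\rho_A=K_0(A)$), I would argue that up to a further small perturbation $\psi_0$ may be arranged to factor approximately through a finite-dimensional-over-interval building block whose $K_0$ vanishes, i.e. through an algebra in ${\cal R}$. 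Concretely: using Lemma \ref{L<thomsen} and Lemma \ref{LherC}, pass from $\psi_0(e_A)$ to a nearby element whose hereditary subalgebra lies in ${\cal C}$; then cut down further so that the relevant $K_0$-classes (which pair trivially with traces, by hypothesis on $A$ pushed forward through the approximate multiplicativity) are killed, landing in ${\cal C}_0^{0'}$. Since ${\cal C}_0^{0'}\subset{\cal R}$-hereditary-subalgebras up to stable isomorphism, and since 14.5 and the ${\cal D}^d$-structure (Definition \ref{DD1}) let us absorb matrix amplifications, this produces the required $D\subset{\cal R}$. The composition $\psi:=(\text{this reduction})\circ\psi_0$ is still ${\cal F}$-$\ep$-multiplicative for $\ep'$ small, and the trace estimate $t(f_{1/4}(\psi(e_A)))\ge 1-\sigma_0$ follows from \eqref{DNtrdiv-4} by taking ${\mathfrak f}_a$ close to $1$ and controlling the loss in the reduction by $\sigma_0/2$.

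I expect the main obstacle to be exactly this upgrade step: showing that the hypothesis ${\rm ker}\rho_A=K_0(A)$ genuinely forces the building blocks $D$ to be chosen with vanishing $K_0$ (i.e. in ${\cal R}$ rather than merely in ${\cal C}_0$), rather than just having traces that happen to vanish on $K_0(D)$. The resolution should go through the W-trace characterization invoked in the proof of Theorem \ref{D0kerrho}: the approximate maps $\phi_{1,n}$ constructed there land in ${\cal C}_0^{0'}$ precisely because $A\in{\cal D}_0$, but here we only assume $A\in{\cal D}$; so the logical order must be (i) prove $A\in{\cal D}_0$ first using ${\rm ker}\rho_A=K_0(A)$ and the W-trace machinery of 18.2--18.3 of \cite{GLp1}, then (ii) deduce the explicit decomposition \eqref{DNtr1div-1} by unwinding the definition of ${\cal D}_0$ together with 14.5 of \cite{GLp1} to get the $D_1\in{\cal C}_0^{0'}$ form, and finally (iii) note ${\cal C}_0^{0'}$-algebras are full hereditary subalgebras of algebras in ${\cal R}$ (by 16.2/16.5 of \cite{GLp1}), giving the stated $D\subset{\cal R}$. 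The routine parts — choosing $b$, $e_A$, the multiplicativity bookkeeping, and the final $\sigma_0$-adjustment — I would not spell out in detail.
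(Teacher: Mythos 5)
There is a genuine gap, and it sits exactly where you predicted the "main obstacle" would be: the upgrade from ${\cal C}_0'$-blocks to trivial-$K_0$ blocks. Your mechanism for this upgrade — cut $D_0$ (or $\psi_0(e_A)$) down further "so that the relevant $K_0$-classes \dots are killed" — cannot work as stated: the subalgebras produced by Lemmas \ref{LherC} and \ref{L<thomsen} are (full) hereditary subalgebras, so by Brown's stable isomorphism theorem they have the same $K_0$ as the ambient block; and the fact that traces of $A$ vanish on the image of $K_0(D_0)$ does not make $K_0(D_0)$ itself vanish. No amount of cutting down inside the given $D_0$ changes its $K$-theory, so the decomposition you end with still has a ${\cal C}_0'$-block, not one in ${\cal C}_0^{0'}$ or ${\cal R}$. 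The paper's actual proof replaces the block rather than shrinking it: since $Cu^{\sim}(A)=K_0(A)\sqcup {\rm LAff}_+^{\sim}({\tilde T}(A))$ (Corollary \ref{Ccunsim}) and $K_0(A)={\rm ker}\rho_A$, the map killing $K_0(A)$ and fixing the affine part is an ordered-semigroup morphism $Cu^{\sim}(A)\to Cu^{\sim}(W_T)$; composing with $Cu(\imath_D)$ for the inclusion $\imath_D:D\to A$ and invoking Robert's existence theorem gives a homomorphism $D\to W_T\to A$ with the same $Cu^{\sim}$-data as $\imath_D$, which (weak semiprojectivity of $D$) factors approximately through algebras of the form $F_n\otimes W\otimes Q$, i.e. through trivial-$K$-theory blocks; Robert's uniqueness theorem then produces unitaries conjugating this new map back onto $\imath_D$, so the $\psi$-summand of the ${\cal D}$-decomposition can be replaced, up to $\ep$, by one landing in a subalgebra with zero $K$-theory, with the trace estimate surviving after a small $f_\sigma$-cut-down. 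This Cuntz-semigroup existence/uniqueness step is the missing idea in your argument.

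Your proposed logical reordering (first prove $A\in{\cal D}_0$ from "every trace is a W-trace," then unwind the definition) is also not available in the stated generality. The implication "${\rm ker}\rho_A=K_0(A)$ $\Rightarrow$ every trace is a W-trace" for $A\in{\cal D}$ is Proposition \ref{PADw} / Remark \ref{11R1}, and its proof is itself the same Robert-type $Cu^{\sim}$-argument, not Theorem \ref{D0kerrho} (whose cited 18.2--18.3 machinery goes in the opposite direction, from maps into ${\cal C}_0^{0'}$ to $K_0={\rm ker}\rho$). And the passage "every trace is a W-trace $\Rightarrow A\in{\cal D}_0$" is only established under extra hypotheses (amenability, the UCT, finite nuclear dimension / ${\cal Z}$-stability, as in Theorem \ref{TD0=fn}), none of which Theorem \ref{TD0=D} assumes; using it here would be circular or would silently strengthen the hypotheses. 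So both the key step and the fallback route need the Cuntz-semigroup classification argument that your proposal does not supply.
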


\begin{proof}
We may assume, \wilog,  that $A$ has continuous scale, by considering a hereditary \SCA\,
of $A$ {{(see 11.9 of \cite{eglnp1}).
We will use the facts that \CA s in ${\cal D}$ have stable rank one and strict comparison as well as have
the property described in  \ref{Ccunsim}.}}

{{Let $e_A\in A$ be a strictly positive element with $\|e_A\|=1.$ Note that $d_\tau(e_A)$ is now assumed
to be continuous on ${\tilde T}(A).$ Fix any integer $m\ge 2,$ by \ref{Ccunsim}, there is a positive element $e_{00}\in A$
such that $d_\tau(e_{00})=(1/m) d_\tau(e_A)$ for all $\tau\in {\tilde T}(A).$
Moreover, as in the proof of \ref{Pwwdivisible}, $A\cong M_m(\overline{e_{00}Ae_{00}}).$
Let $\Lambda_{0,m}: {\rm Cu}^\sim(A)\to {\rm Cu}^\sim(A)$ be defined by
$(\Lambda_{0,m})|_{K_0(A)}=\id_{K_0(A)}$ and $(\Lambda_{0,m})|_{ {\rm LAff}_+^{\sim}({\tilde T}(A))}=(1/m)\id_{ {\rm LAff}_+^{\sim}({\tilde T}(A))}.$  Then, since $K_0(A)={\rm ker}\rho_A,$ one sees that $\Lambda_{0,m}$ is a morphism in ${\bf {\rm Cu}}.$
}}

{{Claim: For any \SCA\, $D'{{\subset}} A$ with $D'\in {\cal C}_0,$ there is a \hm\, $j_{0, D'}: D'\to A_0:=\overline{e_{00}Ae_{00}}$ such that ${\rm Cu}^\sim(j_{0,D'})=\Lambda_{0,m}\circ  {\rm Cu}^\sim(\iota_{D'}).$}}
{{To see the claim, note that $A_0$ has stable rank one (see 11.5 of \cite{eglnp1}) and $\Lambda_{0,m}\circ  {\rm Cu}^\sim(\iota_{D'})$ is a morphism
in ${\bf Cu}.$ Then, by Theorem 1.0.1. of \cite{Rl}, such  \hm\, $j_{0,D'}$ exists.}}

Let $1>\sigma_0>0.$
We may assume that
$\tau(e_A)\ge 1-\sigma_0/64$ for all $\tau\in T(A).$
Suppose also that
$\tau(f_{1/2}(e_A))>1-\sigma_0/32$ for all $\tau\in T(A).$ Let ${\mathfrak{f}}=1-\sigma_0/4.$
Let $\ep>0$ and let ${\cal F}\subset A$ be a finite subset.
Let $\sigma_0/32>\eta>0.$ Choose $m\ge 2$ such that $1/m<\min\{\sigma_0/2^{12}, \eta/2\}.$

Let $T=T(A).$ {{Let}} $W_T$ be the separable simple
amenable \CA\, with  $K_i(W_T)=\{0\},$ $ i=0,1,$ and $T(W_T)=T$ as in
{{2.8 of \cite{eglnkk0}.}}
Therefore ${\rm LAff}_+^{\sim}({\tilde T}(W_T))={\rm LAff}_+^{\sim}({\tilde T}(A)).$
Let $\Gamma:{\rm LAff}_+^{\sim}({\tilde T}(W_T))\to {\rm LAff}_+^{\sim}({\tilde T}(A))$
be the order semi-group isomorphism.
By \ref{Ccunsim}, $Cu^{\sim}(A)=K_0(A)\sqcup {\rm LAff}_+^{\sim}({\tilde T}(A)).
$ Since $K_0(A)={\rm ker}\rho_A,$ the map $\Gamma^{-1'}: Cu^{\sim}(A)\to Cu^{\sim}(W_T)$
which maps $K_0(A)$ to zero and $\Gamma^{-1'}|_{ {\rm LAff}_+^{\sim}({\tilde T}(A))}={{({m-1\over{m}})}}\Gamma^{-1}$
is  {{a morphism in ${\bf Cu}.$}}

Fix $0<\ep_1<\min\{\ep/4, \sigma/2^{10}\}$ and a finite subset ${\cal F}_1\supset {\cal F}.$
There are ${\cal F}_1$-$\ep_1$-multiplicative \cpc s $\phi_0: A\to A$ and  $\psi: A\to D$  for some
\SCA\,  $D\subset A$  such that $\phi_0(A)\perp D,$
\beq\label{DNtr1div-1}
&&\|x-(\phi_0(x)\oplus \psi(x))\|<\ep_1/4\rforal x\in {\cal F}_1\cup \{e_A\},\\\label{DNtrdiv-2+}
&&D\in {\cal C}_0 ({\rm see}\,\, \ref{RDd}),\,\,\,
d_\tau(\phi_0(e_A))<\eta\rforal \tau\in T(A)\andeqn\\\label{DNtrdiv-4+}
&&t(f_{1/4}(\psi(e_A)))\ge 1-\sigma_0/16\rforal t\in T(D).
\eneq
Let $\imath_D: D\to A$ be the embedding.
Consider $\Gamma^{-1'}\circ Cu^\sim(\imath_D).$ Then, by \cite{Rl}, there exists
a \hm\, $\psi_1: D\to W_T$ such that $Cu^\sim(\psi_1)=\Gamma^{-1'}\circ  Cu^\sim(\imath_D).$
Let $e_d\in D$ be a strictly positive element of $D$ with $\|e_d\|=1$ and let $W_1=\overline{\psi_1(e_d)W_T{{\psi_1(e_d)}}}.$
By \cite{Rl} again, there exists a \hm\, $\psi_{w,a}: W_T\to A$
such that $Cu^\sim(\psi_{w,a})=\Gamma.$

{{Note that $\la \psi_{w,a}\circ \psi_1(e_d)\ra \le (1/n)\la e_A\ra.$  By the claim above,
we may assume that  there also exists a \hm\, $j_{D}: D\to A$ such that ${\rm Cu}^\sim(j_D)=\Lambda_{0,m}\circ {\rm Cu}^\sim (\iota_D)$
and $j_D(D)\perp \psi_{w,a}\circ \psi.$}}
{{Put $\Psi:=j_D\oplus \psi_{w,a}\circ \psi_1.$ Then $Cu^\sim(\imath_D)=Cu^\sim(\Psi).$ }}

By \cite{Rl},
there exists a sequence of  unitaries $u_n\in {\tilde A}$ such that
\beq\label{TD0=D-12}
\lim_{n\to\infty}\|\imath_D(g)-u_n^*\Psi(g)
u_n\|=0\rforal g\in D.
\eneq
Let $\dt>0,$ ${\cal G}\subset D$ be a finite subset, and
$e_n=\Psi(e_d).$
Choose $1/4>\sigma>0$
such that
\beq
\|f_\sigma(e_d)gf_\sigma(e_d)-g\|<\dt/2\rforal g\in {\cal G}.
\eneq
By
\eqref{TD0=D-12},  with sufficiently small
$\dt,$ by Prop.1 of \cite{CEI}, there is $n_0\ge 1$ and  unitaries  $v_n\in {\tilde A},$ for all $n\ge n_0,$
\beq
\hspace{-0.1in}\|\imath_D(g)-
v_n^*f_\sigma(e_n)\Psi'(g)
f_{\sigma}(e_n)v_n\|<\ep_1\rforal g\in {\cal G}\andeqn
v_n^*f_\sigma(e_n)v_n\in \overline{DAD}.
\eneq
Put $\Phi': D\to A$  by
$\Phi'(c)=v_{n_0}^*\psi_{w,a}\circ \psi_1(c)
v_{n_0}$ for all $c\in D,$ and
$\phi: A\to A$ by $\phi(a)=\phi_0(a)\oplus v_n^*f_\sigma(e_d)j_D(a)f_\sigma(e_d)v_n$
for all $a\in A.$
Let   $W_0=v_{n_0}^*\psi_{w,a}(W_1)v_{n_0}.$
By
{{the choice of $m$ and by \eqref{DNtrdiv-4+},}} we may also assume that
\beq
t(f_{1/4}(\Phi'(\psi(e_A))))>
1-\sigma_0/8\rforal t\in T(W_0).
\eneq
{{Note $W_0\perp \phi(A)A\phi(A).$}}
Moreover, with sufficiently small $\dt$ and large ${\cal G},$
\beq
\|x-(\phi(x)\oplus \Phi'(\psi(x))\|<\ep\rforal x\in {\cal F}.
\eneq
Note that $W_0=\overline{\cup_{n=1}^{\infty} F_n\otimes{\cal W}},$ where $F_n\subset F_{n+1}$ are finite dimensional \CA s,
and ${\cal W}=\overline{\cup_{n=1}^{\infty} R_n},$ where $R_n\in {\cal R}.$
Since $D$ is semiprojective, there exists a sequence of \hm s\, $\psi_{0,n}: D\to R_n$ such that
\beq
\lim_{n\to\infty}\|\psi_{0,n}(g)-\Phi'(g)\|=0\rforal g\in D.
\eneq
Then, passing to a subsequence,  applying a weak*-compactness argument,
if necessarily,  we may assume that, for all sufficiently large $n,$
\beq
t(f_{1/4}\psi_{0,n}(\psi(e_A))))>1-\sigma_0/8\ge {\mathfrak{f}}\ge \rforal t\in T(R_n),
\eneq
Moreover,
$$
\|x-(\phi(x)\oplus \psi_{0,n}(\psi(x))\|<\ep\rforal x\in {\cal F}.
$$
The lemma then follows.

\end{proof}

\begin{prop}\label{Ptensor}
{{Let $A$ be a separable simple \CA\, and $B$ be a separable simple  \CA\, which is  tracially approximate
divisibility {\rm (}see definition
{{{\rm{10.1 of \cite{eglnp1}}}}}{\rm ).}
Suppose {{that}} both $A$ and $B$ have  continuous scale, and $B$ has strict comparison.
Let $C=A\otimes B$ (minimal tensor product) be such that $C$ has continuous scale and  also has strict comparison.
Then $A\otimes B$ is tracially approximate divisible.}}

\end{prop}

\begin{proof}
Let $\ep>0,$ let ${\cal F}\subset A\otimes B$ be a finite subset, let $c\in (A\otimes B)_+\setminus \{0\}$ and let
$n\ge 1$ be an integer.
\Wlog, we may assume that
$${\cal F}=\{a\otimes b:  a\in {\cal F}_A \andeqn b\in {\cal F}_B\},$$
where ${\cal F}_A\subset A$ and ${\cal F}_B$ are finite subsets. We may further assume
that $\|a\|, \|b\| \le 1$ for all $a\in {\cal F}_A$ and $b\in {\cal F}_B.$
Since $A\otimes B$ is simple and  $T(C)$ is compact, as $C$ has continuous scale,
\beq
\inf\{d_\tau(c): \tau\in T(C)\}=d>0.
\eneq
 Choose $b_0\in B_+\setminus \{0\}$ with $\|b_0\|=1$ such that
 $d_\tau(b_0)<d/2$ for all $\tau\in T(B).$

Since $B$ has traically approximate divisible property, there are \SCA s  $B_0$ and $B_1$
of $A$ such that
\beq
{\rm dist}(b, D_d)<\ep/2\rforal  y\in {\cal F}_B,
\eneq
where
$D_d\subset  D\subset B$ which has
the form
$$
D_d=\{d_0\oplus \diag(\overbrace{d_1, d_1,...,d_1}^n)\in B_0\oplus M_n(B_1): d_0\in B_0, d_1\in B_1\},
$$
where $D=B_0\oplus M_n(B_1).$  Moreover, $b_{e0}\lesssim b_0,$
where $b_{e0}$ is a strictly positive element of $B_0.$

Now let $A_0=A\otimes B_0,$ $A_1=A\otimes B_1$ and  $A_3=A_0\oplus M_n(A_1).$
Also let $A_d=A\otimes D_d.$
Then
\beq
{\rm dist}(x, A_d)<\ep\rforal x\in {\cal F}.
\eneq
We also compute that
\beq
e_a\otimes b_{e0}\lesssim e_a\otimes b_0\lesssim c.
\eneq
This implies that $C$ is tracially approximate divisible.
\end{proof}

\begin{prop}\label{PtensorB_T}
$B_T\otimes \zo\in {\cal D}_0.$
\end{prop}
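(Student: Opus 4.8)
The plan is to verify directly that $B_T\otimes\zo$ satisfies the defining conditions of $\mathcal{D}_0$ in \ref{DD0}, leveraging two facts already in hand: first, $\zo=B_T$ (for the appropriate choice of $G_0=\Z$, $G_1=0$) lies in $\mathcal{D}_0$ by \ref{PWTtrace}, and second, by \ref{DZ0C_0} it is locally approximated by (indeed an inductive limit of) \CA s in $\mathcal{C}_0$, with connecting maps sending strictly positive elements to strictly positive elements. Since $B_T$ is separable, simple, nuclear and has continuous scale (\ref{PWTtrace}), the tensor product $B_T\otimes\zo$ is again separable, simple, nuclear, and by \ref{Ltensorscale} it has continuous scale and is $\mathcal{Z}$-stable. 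Thus it suffices to produce, for any finite ${\cal F}\subset B_T\otimes\zo$, any $\ep>0$, and any $b\in (B_T\otimes\zo)_+\setminus\{0\}$, the pair of almost multiplicative \cpc s $\phi,\psi$ required by \eqref{DNtr1div-1++}--\eqref{DNtrdiv-4} with $\psi$ mapping into a \SCA\, $D\in {\cal C}_0^{0'}$ (or ${\cal C}_0'$) and with $D\perp\overline{\phi(B_T\otimes\zo)(B_T\otimes\zo)\phi(B_T\otimes\zo)}$.

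First I would reduce the finite subset ${\cal F}$ to elementary tensors: approximating each element of ${\cal F}$ by finite sums $\sum x_i\otimes z_i$ with $x_i\in B_T$, $z_i\in\zo$, it is enough to handle a finite subset ${\cal F}_1\subset B_T$ and a finite subset ${\cal F}_2\subset\zo$ separately, using that the decomposition on the $B_T$ factor and on the $\zo$ factor can be carried out independently and tensored. Next, since $\zo\in{\cal D}_0$ is monotracial, I would apply the definition of $\mathcal{D}_0$ to $\zo$ itself: for suitable parameters there are $\phi_z\colon\zo\to\zo$ and $\psi_z\colon\zo\to D_z$ with $D_z\in{\cal C}_0^{0'}$, $\phi_z(c_\zo)\lesssim$ a small element, and $t(f_{1/4}(\psi_z(c_\zo)))\ge \mathfrak{f}_{\zo}$ for $t\in T(D_z)$, where $c_\zo$ is a strictly positive element of $\zo$. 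Simultaneously, I would decompose on the $B_T$ side — but here a cleaner route is available: tensor the identity of $B_T$ (or a cutdown thereof) against the $\zo$-decomposition. That is, set $\phi=\mathrm{id}_{B_T}\otimes\phi_z$ (suitably cut down) and $\psi=\mathrm{id}_{B_T}\otimes\psi_z$ into $B_T\otimes D_z$. The key structural point is that $B_T\otimes D_z$ need not itself lie in ${\cal C}_0^{0'}$; so instead I would use that $B_T$ is locally approximated by $C_n\in{\cal C}_0$ (\ref{DZ0C_0}), replace the $B_T$ factor by such a $C_n$ up to $\ep$, and observe $C_n\otimes D_z$ is (up to stabilization and the class closure properties of ${\cal C}_0'$) again in ${\cal C}_0'$, since ${\cal C}_0'$ consists of full hereditary subalgebras of \CA s in ${\cal C}_0$ and a one-dimensional NCCW complex tensored with a suitable finite building block remains one. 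Actually the smoothest version is the one used elsewhere in the paper: apply \ref{TD0=D} together with \ref{PtensorB_T}'s hypotheses in reverse — but since $\zo\in\mathcal{D}_0$ directly, the most economical argument simply invokes that $\mathcal{D}_0$ is closed under tensoring with $\zo$, which follows because tensoring a $\mathcal{D}_0$-decomposition of $\zo$ with the (approximately finite-dimensional-over-${\cal C}_0$) structure of $B_T$ preserves all three conditions \eqref{DNtr1div-1++}--\eqref{DNtrdiv-4}: approximate multiplicativity is automatic, $\phi(\text{strictly positive})\lesssim b$ holds because $B_T$ has strict comparison (\ref{PWTtrace}) and the rank of $\phi(e)$ can be made uniformly small on all traces of $B_T\otimes\zo$ (which factor through $T(B_T)\times\{t_\zo\}$), and the trace lower bound \eqref{DNtrdiv-4} transfers since $T(B_T\otimes D_z)$ is governed by $T(B_T)\times T(D_z)$ and the bound on the $\zo$-side is uniform.

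The main obstacle I anticipate is not the trace estimates or the comparison — those follow routinely from strict comparison and continuous scale as above — but the bookkeeping that the target algebra $D$ of $\psi$ genuinely lands in ${\cal C}_0^{0'}$ (or at least ${\cal C}_0'$) rather than in some larger subhomogeneous algebra. Concretely one must check that $C_n\otimes D_z$, where $C_n\in{\cal C}_0$ is a one-dimensional NCCW complex with $K_0$-positivity vanishing and $D_z\in{\cal C}_0^{0'}$, is again (a full hereditary subalgebra of) such a complex: this uses that the mapping-torus description of ${\cal C}$ in \ref{DfC1} is stable under tensoring with finite-dimensional-fibered building blocks, and that the $K$-theoretic conditions defining ${\cal C}_0$ (namely $K_0{}_+=\{0\}$) are preserved because $K_0(D_z)=0$ forces $K_0(C_n\otimes D_z)$ to be torsion/trivial in the relevant sense, while $K_1$ may be absorbed into the ``full hereditary subalgebra'' flexibility of ${\cal C}_0'$. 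Once this closure fact is nailed down — or, alternatively, once one simply cites \ref{TD0=D} to upgrade a $\mathcal{D}$-decomposition (which is cheap to obtain for $B_T\otimes\zo$ since it is $\mathcal{Z}$-stable with $K_0=\ker\rho$ by \ref{D0kerrho} applied after \ref{Mainrange}, giving $K_0(B_T\otimes\zo)=\ker\rho_{B_T\otimes\zo}$) into a $\mathcal{D}_0$-decomposition — the proposition follows. I would in fact organize the final writeup around \ref{TD0=D}: show $B_T\otimes\zo\in\mathcal{D}$ (clear, since $B_T\in\mathcal{D}_0\subset\mathcal{D}$ and $\mathcal{D}$ is closed under tensoring with $\zo$ by the local-approximation-plus-decomposition argument sketched above, or by $\mathcal{Z}$-stability arguments), verify $\ker\rho_{B_T\otimes\zo}=K_0(B_T\otimes\zo)$ via \ref{D0kerrho} combined with the K\"unneth computation $K_*(B_T\otimes\zo)=K_*(B_T)$ and $\rho_{B_T}=0$, and then conclude $B_T\otimes\zo\in\mathcal{D}_0$ by \ref{TD0=D}.
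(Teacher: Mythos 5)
There is a genuine gap, and it is exactly at the point you flag as "the main obstacle": the claim that $C_n\otimes D_z$ (or $B_T\otimes D_z$) lands back in ${\cal C}_0'$ is false. Algebras in ${\cal C}$ are one-dimensional NCCW complexes; tensoring two of them produces an algebra whose spectrum is two-dimensional (an interval times an interval), and the mapping-torus description of \ref{DfC1} is not stable under such tensor products. Since the definition of ${\cal D}_0$ in \ref{DD0} demands that $\psi$ map into a subalgebra $D\in{\cal C}_0^{0'}$ (or ${\cal C}_0'$), the construction $\psi=\mathrm{id}_{B_T}\otimes\psi_z$, even after replacing the $B_T$-factor by a local approximant, does not produce an admissible target. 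Moreover, the local approximation you invoke is itself misapplied: \ref{DZ0C_0} concerns $\zo$ (the case $G_0=\Z$ torsion free, $G_1=0$), and by \ref{MainModel} a general $B_T$ is only locally approximated by subhomogeneous algebras with spectra of dimension up to $3$, not by \CA s in ${\cal C}_0$. Your fallback route through \ref{TD0=D} does not repair this, because \ref{TD0=D} takes membership in ${\cal D}$ as a hypothesis, and your justification that $B_T\otimes\zo\in{\cal D}$ is either the flawed tensoring argument or an unspecified "${\cal Z}$-stability argument" (${\cal Z}$-stability alone does not give membership in ${\cal D}$; that implication is classification-strength and, in this paper, appears only later as \ref{TD0=fn}, whose proof again rests on the external W-trace machinery). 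The appeal to \ref{D0kerrho} is likewise circular as stated, since it presupposes that the algebra is stably isomorphic to one in ${\cal D}_0$; the $K_0=\ker\rho$ statement should instead be extracted from the $W$-factorization argument in the proof of \ref{Mainrange}, which is fine but does not address the main issue.

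For comparison, the paper's proof is a two-line citation: $B_T$ has finite nuclear dimension (\ref{PWTtrace}), and then 18.5 and 18.6 of \cite{GLp1} (every tracial state of $A\otimes\zo$ is a $W$-trace; finite nuclear dimension together with the $W$-trace condition and continuous scale forces membership in ${\cal D}_0$) yield $B_T\otimes\zo\in{\cal D}_0$. The reason that route succeeds where yours stalls is precisely that the required ${\cal C}_0^{0'}$-targets are produced by factoring through $W$, an inductive limit of Razak blocks, rather than by tensoring one-dimensional building blocks against each other. If you want a self-contained argument in the spirit of your sketch, you would need to reproduce that $W$-trace mechanism (as in \ref{TD0=D}, whose proof maps $D$ into $W_T$ via the Cuntz-semigroup classification of \cite{Rl}), not a tensor-product closure property of ${\cal C}_0'$ that does not hold.
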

\begin{proof}
\CA\, $B_T$ has finite nuclear dimension and so does $\zo.$
{{By Proposition 2.3 of \cite{WZ}, $B_T\otimes \zo$ has finite nuclear dimension.
  By \ref{Ltensorscale}, $B_T\otimes \zo$ has continuous scale and
  ${\cal Z}$-stable. Therefore, by  \cite{Rrzstable},  it has
strict comparison.  It follows from \ref{Ptensor} and \ref{Pdivisiblehere} that every
hereditary \SCA\, has tracially approximate divisible property.
It follows from {{6.5 of \cite{eglnkk0}}}
that every tracial state of $B_T\otimes \zo$ is a ${\cal W}$-trace. It follows from
{{6.6 of \cite{eglnkk0}}}
that $B_T\otimes \zo$ is in ${\cal D}_0.$}}
\end{proof}

In the appendix (\ref{TMST}), we will show that
\begin{thm}[\ref{TMST}]\label{Dzstable}
Let $A$ be a separable amenable \CA\, in ${\cal D}.$ 
 Then $A\otimes {\cal Z}\cong A.$
\end{thm}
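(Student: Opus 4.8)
\textbf{Plan of proof for Theorem \ref{Dzstable}.}
The goal is to show that every separable amenable $A\in{\cal D}$ satisfies $A\otimes{\cal Z}\cong A$. The standard strategy for $\cal Z$-stability results of this type is to verify a tracial approximate-divisibility (or ``uniform property $\Gamma$ plus approximate divisibility'') condition which, combined with the available regularity of $A$, forces $\cal Z$-absorption. First I would reduce to the case that $A$ has continuous scale: by passing to a hereditary \SCA\ $\overline{eAe}$ with $e\in P(A)_+$, $\|e\|=1$ (using \cite{Lncs2}), and noting that $\cal Z$-stability is a stable-isomorphism invariant (it passes through Brown's stable isomorphism theorem \cite{Br1}), it suffices to prove $\overline{eAe}\otimes{\cal Z}\cong\overline{eAe}$. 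So assume $A\in{\cal D}$ has continuous scale; then $T(A)$ is compact, $A$ has stable rank one (15.5 of \cite{GLp1}), strict comparison (via R\o rdam, 3.5 of \cite{GLp1}), and by \ref{Ttrzstable}, $Cu(A)\cong{\rm LAff}_+(T(A))$.

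The heart of the argument will be to manufacture, for each $n\ge 2$, an approximately central embedding of a ``dimension-drop''-type building block, or more directly to verify Winter's criterion for $\cal Z$-stability: produce order-zero maps $M_n\to A_\infty\cap A'$ with the complementary positive element dominated in Cuntz comparison. The mechanism is the divisibility built into the definition of ${\cal D}$. By \ref{DD1} (and 14.5 of \cite{GLp1}), $A\in{\cal D}^d$, so for any finite ${\cal F}$, $\ep>0$, $b\in A_+\setminus\{0\}$ and integer $n$, there are ${\cal F}$-$\ep$-multiplicative $\phi\colon A\to A$ and $\psi\colon A\to M_n(D_1)$ with $D_1\in{\cal C}_0^{0'}$, $\psi$ of the diagonal form $\psi=\diag(\psi_1,\dots,\psi_1)$ ($n$ copies), $\phi(a)\lesssim b$, and $t(f_{1/4}(\psi(a)))\ge{\mathfrak f}_a$ on $T(D_1)$. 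The $M_n$-matrix structure of the target together with the smallness (in Cuntz sense) of $\phi$ is exactly what feeds a unitary-path/semiprojectivity argument — using that $M_n(D_1)$ is weakly semiprojective and that $A$ has stable rank one (so approximate morphisms can be straightened to honest ones and conjugated, cf.\ the uses of \cite{CEI}, \cite{CEs}, \cite{Ref} elsewhere in the paper) — to produce the required approximately central order-zero maps. One then invokes a Winter-type theorem (finite nuclear dimension or strict comparison plus this divisibility, e.g.\ via \cite{Rrzstable} and the decomposition-rank machinery) to conclude $A\otimes{\cal Z}\cong A$. The appendix statement \ref{TMST} that this plan points to is where the detailed bookkeeping lives.

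The main obstacle I anticipate is the \emph{approximate centrality}: the maps $\phi,\psi$ supplied by membership in ${\cal D}^d$ are only asserted to give $\|x-\diag(\phi(x),\psi(x))\|<\ep$ on a finite set, with $D_1\perp\overline{\phi(A)A\phi(A)}$, but turning $\psi_1$ (or a corner of it) into a map that is approximately central in $A$ — not merely inside a hereditary subalgebra — requires iterating the decomposition and using continuous scale to absorb the ``$\phi$-part'' into a Cuntz-small error uniformly over traces. Controlling this uniformly, and simultaneously keeping the matrix amplification structure so that the limit order-zero map is genuinely an $M_n$-order-zero map with dominated complement, is the delicate point; it is handled by a reindexing/diagonal argument across a sequence $n\to\infty$ together with the strict comparison of $A$ to dispose of the residual positive elements. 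Everything else — stable rank one, strict comparison, the $Cu$-computation, semiprojectivity of the building blocks — is already available from \cite{GLp1} and the results recalled above, so the proof is essentially an application of the existing structure theory plus Winter's $\cal Z$-stability criterion.
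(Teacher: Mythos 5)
Your reduction to the continuous scale case and your use of the divisibility in ${\cal D}^d$ (14.5 of \cite{GLp1}) to build approximately central order-zero maps $M_n\to A_\infty\cap A'$ whose image is tracially large are exactly how the paper begins (this is Lemma \ref{LMS33} of the appendix). The gap is in your final step: you propose to ``invoke a Winter-type theorem (finite nuclear dimension or strict comparison plus this divisibility, e.g.\ via \cite{Rrzstable} and the decomposition-rank machinery).'' No such off-the-shelf theorem is available here. Finite nuclear dimension is not part of the hypotheses on ${\cal D}$, so Winter's nuclear-dimension criterion cannot be cited, and \cite{Rrzstable} does not give ${\cal Z}$-absorption from strict comparison plus tracial divisibility. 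More importantly, what the known characterization of ${\cal Z}$-stability actually requires is a statement inside the central sequence algebra: the complement of the order-zero image (measured against an approximate identity, modulo $A^\perp$) must be Cuntz-dominated by a corner $\Phi(e_{11})$ \emph{in} $A_\infty\cap A'$. Strict comparison of $A$ itself does not transfer to $A_\infty\cap A'$; bridging this is precisely the content of Matui--Sato's property (SI), and proving (SI) is the real work.

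That is what the paper's appendix does: it establishes a non-unital, continuous-scale version of Matui--Sato, namely that every completely positive map $A\to A$ can be excised in small central sequences (via excision of pure states \cite{AAP}, Voiculescu's theorem, and -- essentially -- the amenability of $A$ to approximate arbitrary c.p.\ maps by the special form $\sum\omega(d_i^*ad_j)c_i^*c_j$), and then deduces property (SI) (Definition \ref{DMS41perp}, Lemma \ref{LMS4to1}). Only then, combining (SI) with the order-zero maps from divisibility, does one get the element $s$ with $s^*s+\Phi(1_{M_k})=1$ and $\Phi(e)s=s$ in $A_\infty\cap A'/A^\perp$, which yields $A\otimes{\cal Z}\cong A$. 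Your proposal never uses amenability in any substantive way, yet it is indispensable for the excision lemma; and your identified ``main obstacle'' (approximate centrality of $\psi$) is in fact the easier part, handled by central approximate identities in the paper. So as written the argument has a genuine missing ingredient: you need to prove (or locate a non-unital version of) property (SI) / excision in small central sequences rather than cite a general regularity theorem.
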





\begin{df}\label{DWZmaps}
By \cite{Rl},
there exists a \hm\, $
{{\phi_{w,z}:}} {\cal W}\to {\cal Z}_0$  which maps
the strictly positive elements to strictly positive elements,
Since $K_0(\zo)={\rm ker}\rho_{\zo},$
 by \ref{DZ0C_0}  and by \cite{Rl},
 there exists  also a \hm\, $
{{\phi_{z,w}:}} {\cal Z}_0\to {\cal W}$ which maps
the strictly positive elements to strictly positive elements. Note as in the proof of  \ref{Mainrange}
we also have 
{{$t_Z=t_{\cal W}\circ \phi_{z,w}$ and $t_{\cal W}=t_Z\circ \phi_{w,z},$}} where
$t_Z$
and $t_{\cal W}$ are the unique tracial states of 
{{ $\zo$ and ${\cal W}$ respectively.}}

There exists also an isomorphism $\phi_{w21}: M_2({\cal W})\to {\cal W}$
and an isomorphism $\phi_{z21}: M_2(\zo)\to \zo$
such that $(\phi_{z21})_{*0}={\rm id}_{K_0(\zo)}.$
We will fixed these four \hm s.

\end{df}

\begin{df}\label{Tnegative}

Let $\kappa^o_0: K_0({\mathcal Z}_0)\to K_0({\mathcal Z}_0)$ be a \hm\, by sending
$x$ to $-x$ for all $x\in K_0(\zo)={\rm ker}\rho_{\zo}.$
Denote also by $\kappa^o$ the automorphism on $Cu^{\sim}({\mathcal Z}_0)$
such that $\kappa^o|_{K_0({\mathcal Z}_0)}=\kappa^o_0$ and
identity on ${\rm LAff}^{\sim}({\tilde T}({\mathcal Z}_0))$ which maps function $1$ to function $1.$
 It follows from \cite{Rl}
that there is an endomorphism $j^{\circledast'}: {\mathcal Z}_0\to {\mathcal Z}_0$ such that
$Cu^{\sim}(j^{\circledast'})=\kappa^o$ and $j^{\circledast'}(a)$ is a strictly positive element of ${\mathcal Z}_0$
for some strictly positive element $a.$
By \cite{Rl} again, $j^{\circledast'}(\zo)$ is isomorphic to $\zo,$ say, given by $j: j^{\circledast'}(\zo)\to \zo.$
Then
$j^{\circledast}=j\circ j^{\circledast'}$ is an automorphism.
The automorphism  $j^{\circledast}$ will be also used in later sections.
\end{df}

\begin{lem}\label{LZ0group}
Define
$\Phi,\,\Psi: \zo \to M_2(\zo) $ by
$$
\Phi(a)=\diag(a, j^{\circledast}(a))\andeqn \Psi(a)=(\phi_{wz}\otimes {\rm id}_{M_2})(\diag(\phi_{zw}(a), \phi_{zw}(a))\rforal a\in \zo.
$$
Then
$\Phi$ is approximately unitarily equivalent to  $\Psi,$
i.e., there exists a sequence of unitaries $\{u_n\}\subset {\widetilde{M_2(\zo)}}$
such that
$$
\lim_{n\to\infty} {\rm Ad}\, u_n\circ \Phi(a)=(\phi_{wz}\otimes {\rm id}_{M_2})\circ \diag( \phi_{zw}(a), \phi_{zw}(a))\rforal a\in \zo.
$$
In particular, $j^{\circledast}_{*0}(x)=-x$ for $x\in K_0(\zo).$

Moreover
$\phi_{z21}\circ \Phi$ is approximately unitarily equivalent to
$\phi_{z21}\circ \Psi.$
\end{lem}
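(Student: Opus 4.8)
The plan is to exploit the fact that two homomorphisms between $C^*$-algebras in the relevant classes are approximately unitarily equivalent as soon as they agree on the Cuntz semigroup (with its $Cu^{\sim}$ refinement), by the classification of such maps in the work of Robert \cite{Rl}. So the first step is to identify both $\Phi$ and $\Psi$ as homomorphisms $\zo\to M_2(\zo)$ that send a strictly positive element to a strictly positive element; indeed $\Phi(a)=\diag(a,j^{\circledast}(a))$ is built from the identity and the automorphism $j^{\circledast}$, each of which is unital-on-strictly-positive in the appropriate corner, and $\Psi$ is built by conjugating through $W$ via $\phi_{zw}$ and $\phi_{wz}$, which by \ref{DWZmaps} also preserve strictly positive elements. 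Since $\zo$ is stably projectionless, simple, separable and amenable and lies in (a class stably isomorphic to one in) ${\cal D}_0$, the hypotheses of Robert's uniqueness theorem for $Cu^{\sim}$-morphisms apply, so it suffices to show $Cu^{\sim}(\Phi)=Cu^{\sim}(\Psi)$ as ordered semigroup maps $Cu^{\sim}(\zo)\to Cu^{\sim}(M_2(\zo))$.

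The second step is the computation of these two $Cu^{\sim}$-maps. By \ref{Ccunsim} (or \ref{Ctsang}) one has $Cu^{\sim}(\zo)=K_0(\zo)\sqcup {\rm LAff}_+^{\sim}({\tilde T}(\zo))$ and similarly for $M_2(\zo)$, and since both algebras are monotracial the ${\rm LAff}_+^{\sim}$ parts are canonically $\R_+^{\sim}$ (up to the normalization by the scale). On the ${\rm LAff}$ part, both $\Phi$ and $\Psi$ act as multiplication-type maps induced by the trace: $\Phi$ doubles (a direct sum of two copies) and so does $\Psi$ (two copies of $\phi_{zw}$ composed with $\phi_{wz}$), and because $t_W=t_Z\circ\phi_{z,w}$ and $t_Z=t_W\circ\psi_{w,z}$ (stated in \ref{DWZmaps}) the trace-level effect of conjugating through $W$ is the identity; hence on the positive affine part both maps agree. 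On the $K_0$ part, $Cu^{\sim}(\Psi)$ kills $K_0(\zo)$ entirely because it factors through $W$ and $K_0(W)=\{0\}$, while $Cu^{\sim}(\Phi)$ sends $x\in K_0(\zo)$ to $x+j^{\circledast}_{*0}(x)$. By \ref{Tnegative}, $Cu^{\sim}(j^{\circledast})=\kappa^o$ which is $-\mathrm{id}$ on $K_0$, so $x+j^{\circledast}_{*0}(x)=x-x=0$. Thus $Cu^{\sim}(\Phi)$ and $Cu^{\sim}(\Psi)$ also agree on $K_0$, completing the identification, and Robert's theorem then yields the unitaries $u_n\in\widetilde{M_2(\zo)}$ with ${\rm Ad}\,u_n\circ\Phi\to\Psi$ pointwise. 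The statement $j^{\circledast}_{*0}(x)=-x$ is then recorded as an immediate byproduct (and in fact is what was used).

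The third and last step is the ``moreover'' clause: applying the fixed isomorphism $\phi_{z21}\colon M_2(\zo)\to\zo$ to the approximate unitary equivalence ${\rm Ad}\,u_n\circ\Phi\sim\Psi$ simply transports everything, since $\phi_{z21}\circ{\rm Ad}\,u_n\circ\Phi={\rm Ad}\,\phi_{z21}(u_n)\circ(\phi_{z21}\circ\Phi)$ and $\phi_{z21}(u_n)$ is a unitary in $\tilde\zo$; so $\phi_{z21}\circ\Phi$ is approximately unitarily equivalent to $\phi_{z21}\circ\Psi$ via the images of the $u_n$.

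The main obstacle I expect is verifying the precise hypotheses needed to invoke Robert's classification of $Cu^{\sim}$-morphisms in the non-unital, stably projectionless setting — in particular that the relevant domain and codomain lie in the class where that uniqueness result is available, and that ``agreeing on $Cu^{\sim}$'' is genuinely sufficient (as opposed to needing to track the pairing with traces or the scale separately). The trace-level bookkeeping through $W$, while conceptually clean, also has to be done carefully so that the normalizations ($t_W\circ\phi_{z,w}=t_Z$ etc., plus the choice $\phi_{z21,*0}=\mathrm{id}$) line up to make $Cu^{\sim}(\Phi)=Cu^{\sim}(\Psi)$ on the nose rather than merely up to a scalar.
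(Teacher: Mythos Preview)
Your proposal is correct and follows exactly the paper's approach: the paper's proof simply states that, using 6.1.1 of \cite{Rl}, one computes $Cu^{\sim}(\Phi)=Cu^{\sim}(\Psi)$ and then invokes Robert's classification \cite{Rl} to conclude approximate unitary equivalence. Your detailed verification of the $Cu^{\sim}$ computation on the $K_0$ and ${\rm LAff}$ parts, and your observation that $\zo$ is an inductive limit of \CA s in ${\cal C}_0$ (Proposition \ref{DZ0C_0}) so that Robert's theorem applies, is precisely the content the paper leaves implicit.
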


\begin{proof}
Using 6.1.1 of \cite{Rl} (see also 7.3 of \cite{eglnp1}), one computes that
$$
Cu^{\sim}(\Phi)=Cu^{\sim}(\Psi).
$$
It follows from \cite{Rl} that $\Phi$ is approximately
unitarily equivalent to $\Psi.$

\end{proof}


\section{${\cal E}(A,B)$}

\begin{df}\label{DE(AB)}
Let $A$ be a separable amenable  \CA\, and let $B$ be another \CA.
We use $B^{\vdash}$ for the \CA\, obtained by adding a unit to $B$
(regardless $B$ has a unit or not).  We will  continue to use the embedding
$\phi_{wz}.: {\cal W}\to {\mathcal Z}_0.$
  Without causing confusion, we will identify
${\cal W}$ with $\phi_{wz}({\cal W})$ from time to time.

An {\it asymptotic sequencial morphism}\, $\phi=\{\phi_n\}$ from
$A$ to $B$ is a
sequence of  approximately multiplicative \cpc s $\phi_n: A\to B^{\vdash}\otimes \zo\otimes {\mathcal K}$
which satisfies the following condition:
there is  $\af\in Hom_{\Lambda}(\underline{K}(A), \underline{K}(B^{\vdash}\otimes {\mathcal Z}_0\otimes {\mathcal K}))$ and  there are  two sequences of approximately  multiplicative \cpc s\\
$h_n,\, h_n': A\to \C \cdot 1_{B^{\vdash}}\otimes \zo\otimes {\mathcal K}$
such that,
for any finite subset ${\mathcal P}\in \underline{K}(A),$ there exists $n_0\ge 1$ such that
\beq\label{sec9-1205-af}
[\phi_n]|_{\cal P} +[h_n]|_{\mathcal P}=\af|_{\mathcal P}  +[h_n']|_{\cal P} \rforal n\ge n_0.
\eneq
Let $\phi=\{\phi_n\}$ and $\psi=\{\psi_n\}$ be two asymptotic sequential morphisms from $A$ to $B.$
We say $\phi$ and $\psi$ are equivalent and write $\phi\sim \psi$ if there exist two sequences
of approximately multiplicative \cpc s $h_n, h_n': A\to \C\cdot 1_{B^{\vdash}}\otimes \zo\otimes {\mathcal K}$ and
a sequence of unitaries $u_n\in {\widetilde{B^{\vdash}\otimes {\mathcal Z}_0\otimes {\mathcal K}}}$ such that
$$
\lim_{n\to\infty}\|u_n^*\diag(\phi_n(a), h_n(a))u_n-\diag(\psi_n(a), h_n'(a))\|=0\rforal a\in A.
$$
Denote by $\la \phi\ra $  the equivalence class of asymptotic sequential morphisms represented by $\phi,$ and
by ${\mathcal E}(A,B)$ the set of all equivalence classes of asymptotic sequential morphisms
from $A$ to $B.$

{{Consider the split short exact sequence
$$
0\to B\otimes \zo\otimes {\cal K}{\stackrel{\imath}{\longrightarrow}} B^{\vdash}\otimes \zo\otimes {\mathcal K}
{\stackrel{\pi}{\rightleftarrows}}_s \C\cdot 1_{B^{\vdash}}\otimes \zo\otimes {\mathcal K}\to 0.
$$
It gives the following split short exact sequence:
\beq\label{DbetaA-2}
0\to KL(A, B\otimes \zo){\stackrel{[\imath]}{\longrightarrow}} KL(A, B^{\vdash}\otimes \zo)
{\stackrel{[\pi]}{\rightleftarrows}}_{[s]} KL(A, \C\cdot 1_{B^{\vdash}}\otimes \zo)\to 0.
\eneq
Define $\lambda_B: {\rm Hom}_{\Lambda}(\underline{K}(A), \underline{K}(B^{\vdash}\otimes \zo))\to
{\rm Hom}_{\Lambda}(\underline{K}(A), \underline{K}(B\otimes \zo)
$
by
\beq\label{lambda-def-91}
\lambda_B(x)=x-[s]\circ [\pi](x)\rforal x\in KL(A, B^{\vdash}\otimes \zo).
\eneq
Note that
$$
s\circ \pi\circ g_n=g_n
$$
for any \cpc \, $g_n: A\to
\C\cdot 1_{B^{\vdash}}\otimes \zo\otimes {\mathcal K}\subset B^{\vdash}\otimes \zo\otimes {\mathcal K}.$}}

{{Let $\la \phi\ra\in {\cal E}(A,B)$ be represented by $\{\phi_n\}$ and
let $\af$ be as in \eqref{sec9-1205-af}.}}
{{Then, for any fixed finite subset ${\cal P}\subset \underline{K}(A),$
\beq\label{DbetaA-5}
\lambda_B\circ ([\phi_n]|_{\cal P}+[h_n]|_{\cal P}-[h_n']|_{\cal P})=[\phi_n]|_{\cal P}-[s\circ \pi\circ \phi_n]|_{\cal P}=
\lambda_B\circ \af|_{\cal P}
\eneq
for all  $n\ge n_0({\cal P})$ for some integer $n_0({\cal P}).$
If $\{\psi_n\}$ is another representation of $\la \phi\ra,$
then,  there exist two sequences
of approximately multiplicative \cpc s $g_n, g_n': A\to \C\cdot 1_{B^{\vdash}}\otimes \zo\otimes {\mathcal K}$ and
a sequence of unitaries $u_n\in {\widetilde{B^{\vdash}\otimes {\mathcal Z}_0\otimes {\mathcal K}}}$ such that
$$
\lim_{n\to\infty}\|u_n^*\diag(\phi_n(a), g_n(a))u_n-\diag(\psi_n(a), g_n'(a))\|=0\rforal a\in A.
$$
}}
{{Thus there is an integer $n_1({\cal P})\ge 1$ such that
\beq\label{DbetaA-6}
[\phi_n]|_{\cal P}+[g_n]|_{\cal P}&=&[\psi_n]|_{\cal P}+[g_n']|_{\cal P}\andeqn\\
{[}s\circ \pi \circ \phi_n{]}|_{\cal P}+[g_n]|_{\cal P}&=&[s\circ \pi \circ \psi_n]+[g_n']|_{\cal P}\rforal n\ge {{n_1({\cal P}).}}
\eneq
Therefore
\beq\label{DbetaA-7}
[\psi_n]|_{\cal P}-[s\circ \pi\circ \psi_n]|_{\cal P})&=&
([\phi_n]|_{\cal P}+[g_n]|_{\cal P}-[g_n']|_{\cal P})\\
&&\hspace{0.2in}-([s\circ \pi\circ \phi_n]|_{\cal P}+[s\circ \pi\circ g_n]|_{\cal P}-
[s\circ \pi\circ g_n']|_{\cal P})\\
&=&[\phi_n]|_{\cal P}-[s\circ \pi\circ \phi_n]|_{\cal P}=\lambda_B\circ \af|_{\cal P}
\eneq
for all $n\ge \max \{n_0({\cal P}), n_1({\cal P})\}.$
Thus {{$\bt_A: {\cal E}(A, B)\to Hom_{\Lambda}(\underline{K}(A), \underline{K}(B\otimes {\mathcal Z}_0\otimes {\mathcal K}))$ given by }} $\bt_A(\la \phi\ra)={{\lambda_B}}\circ \af$,   is {{well}} defined.}}

If $\phi$ and $\psi$ are two asymptotic sequential morphisms from $A$ to $B,$
we define $\phi\oplus \psi$ by $(\phi\oplus \psi)(a)=\diag(\phi(a), \psi(a))$ for all $a\in A.$
Here we identify $M_2({\cal K})$ with ${\cal K}$ in the usual way.
We define  $\la \phi\ra +\la \psi \ra =\la \phi\oplus \psi \ra.$ This clearly defines an addition in
${\mathcal E}(A,B).$ Let $\la \psi \ra \in {\mathcal E}(A,B)$ be represented
by $\{\psi_n\}$ whose images are in $\C \cdot 1_{B^{\vdash}}\otimes \zo\otimes {\mathcal K}.$
Then, for any $\la \phi\ra \in  {\mathcal E}(A,B),$
$\la \phi\oplus \{\psi_n\}\ra =\la \phi \ra.$ In other words
that ${\mathcal E}(A,B)$ is a semigroup with zero represented by zero asymptotic morphism.

\end{df}

\begin{df}\label{Dsyhomtop}
Denote $C=B^{\vdash}\otimes \zo\otimes {\cal K}.$
Let $C_{\infty}=l^{\infty}(C)/c_0(C).$
If $\phi=\{\phi_n\}$ is an asymptotic sequential morphism, then
we may view $\phi$ as a \hm\, from $A$ to $C_{\infty}.$
Two asymptotic sequential morphisms $\phi$ and $\psi$ are {\it  homotopy}
if there is a \hm\, $H: A\to C([0,1], C_{\infty})$ such that
$\pi_0\circ H=\phi$ and $\pi_1\circ H=\psi,$
where $\pi_t: C([0,1], C_{\infty})\to C_{\infty}$ is the point-evaluation at $t\in [0,1].$
Since we assume that $A$ is amenable, there exists a \cpc\,
$L: A\to  C([0,1], l^{\infty}(C))$ such  that $\Pi\circ L=H,$
where $\Pi: l^{\infty}(C)\to C_{\infty}$ is the quotient map.
Denote by $P_n: l^{\infty}(C)\to C$ the $n$-th coordinate map.
Define $\Phi_n'=P_n\circ L,$  $n=1,2,....$
Define $\phi_n'=\pi_0\circ \Phi_n'$ and $\psi_n'=\pi_1\circ \Phi_n'.$
Note that
\beq
\lim_{n\to\infty}\|\phi_n(a)-\phi_n'(a)\|=0
\andeqn
\andeqn
\lim_{n\to\infty}\|\psi_n(a)-\psi_n'(a)\|=0\rforal a\in A.
\eneq
Therefore we may assume, \wilog, as far as in this section,
that $\phi_n$ and $\psi_n$ are homotopy for each $n.$
{{Fix a finite subset ${\cal F}\subset A$ and $\ep>0.$
There is a partition $0=t_0<t_1<\cdots t_m=1$ such
that
\beq\label{92-1}
\|\pi_t\circ L(a)-\pi_{t_i}\circ L(a)\|<\ep/2\rforal a\in \{cd, c, d: c, d\in {\cal F}\}.
\eneq
Since $\Pi\circ L=H,$ there is  $n_0>1$ such that $\pi_{t_i}\circ P_n\circ L$ is  ${\cal F}$-$\ep/2$-multiplicative
for all $a, b\in {\cal F}$ for all $n\ge n_0,$  $i=0,1,...,m.$
It follows from \eqref{92-1} that $\pi_t\circ P_n\circ L$ is ${\cal F}$-$\ep$-multiplicative.
In other words, $\phi_n$ and $\psi_n$ are connected by a path of ${\cal F}$-$\ep$-multiplicative \cpc s
for all large $n.$
}}

\end{df}

\begin{df}\label{DFixA}
We now fixed a separable amenable \CA\, $A$ satisfying the UCT with the following property:
There is a map $T: A_+\setminus \{0\}\to \N\times \R_+\setminus \{0\}$ and
a sequence of approximately multiplicative \cpc s $\phi_n: A\to {\cal W}$ such
that, for any finite subset ${\mathcal H}\subset A_+\setminus \{0\},$
there exists an integer $n_0\ge 1$ such that
$\phi_n$ is
$T$-${\mathcal H}$-full (see 5.5 of \cite{eglnp1} and 7.8 of \cite{GLp1}) for all $n\ge n_0.$

{\it For the rest of this section, $A$ is as above.}
\end{df}

\begin{lem}\label{LEabsorb}
Let $\{\phi_n\}$ be  an asymptotic  sequential morphism
from
$A$ to $B^{\vdash}\otimes \zo\otimes {\cal K}$ such that
the image of $\phi_n$ are all contained in $B^{\vdash}\otimes {\cal W}\otimes {\cal K}.$
Then
%
$\la \{\phi_n\} \ra=0.$
\end{lem}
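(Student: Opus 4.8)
\textbf{Proof proposal for Lemma \ref{LEabsorb}.}

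The plan is to show that an asymptotic sequential morphism whose image lands inside $B^{\vdash}\otimes W\otimes {\cal K}$ is equivalent to the zero element of ${\cal E}(A,B)$. The key structural fact to exploit is that $W$ is absorbed, in the appropriate sense, by $\zo$: recall from \ref{DWZmaps} that we have a \hm\, $\phi_{zw}: \zo\to W$ and $\phi_{wz}: W\to \zo$ mapping strictly positive elements to strictly positive elements, and, more importantly, the ``halving'' isomorphisms $\phi_{w21}: M_2(W)\to W$, $\phi_{z21}: M_2(\zo)\to \zo$ together with the relation from \ref{LZ0group} that $\Phi(a)=\diag(a,j^{\circledast}(a))$ is approximately unitarily equivalent (after applying $\phi_{z21}$) to a map that factors through $W$. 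The underlying mechanism is that $W\otimes\zo\cong\zo$ at the level of the relevant invariants and that $W$ has trivial $K$-theory, so a morphism through $W\otimes {\cal K}$ carries no $KL$-data and can be rotated into a trivializable piece.

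First I would record that since the image of each $\phi_n$ lies in $B^{\vdash}\otimes W\otimes {\cal K}$ and $K_i(W)=0$, the K\"unneth formula forces $\alpha|_{\cal P}$ to be representable by maps $h_n'$ with values in $\C\cdot 1_{B^{\vdash}}\otimes\zo\otimes{\cal K}$ up to the $h_n$-correction; that is, condition (2) of \ref{DE(AB)} is automatically witnessed by ``scalar'' maps, so $\{\phi_n\}$ is already a legitimate representative with $\alpha$ essentially trivial on the nose modulo the error terms. Next I would use the identification $W\cong M_2(W)$ via $\phi_{w21}$, applied with ${\rm id}_{B^{\vdash}}\otimes\phi_{w21}\otimes{\rm id}_{\cal K}$, to write $\phi_n$ as $\diag(\phi_n^{(1)},\phi_n^{(2)})$ with two ``halves'' landing in orthogonal copies of $B^{\vdash}\otimes W\otimes{\cal K}$; iterating, one sees $\la\phi_n\ra = m\la\phi_n\ra$ for every $m$, i.e., $\la\phi_n\ra$ is infinitely divisible inside the semigroup ${\cal E}(A,B)$. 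Then, using that in $W$ (via $\phi_{wz}$ into $\zo$) one can produce, for any approximately multiplicative $\psi$, a sequence of unitaries implementing a Cuntz-type ``rotation'' to a degenerate (scalar-valued) map — this is where \ref{LZ0group} and 6.1.1 of \cite{Rl} enter — I would conclude that $\la\phi_n\ra+\la\phi_n\ra=\la\phi_n\ra$, hence together with infinite divisibility and the semigroup structure that $\la\phi_n\ra=0$.

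The concrete route for the last step: choose an embedding of a ``shift'' into $W\otimes{\cal K}$ absorbing $\phi_n$, i.e., unitaries $u_n\in\widetilde{B^{\vdash}\otimes\zo\otimes{\cal K}}$ (constructed from the almost-unitary equivalences of \ref{LZ0group} tensored with ${\rm id}_{B^{\vdash}}$ and a stabilization) such that $u_n^*\diag(\phi_n(a),\psi_n(a))u_n\to \diag(\psi_n(a), h_n'(a))$ where $\psi_n$ is the fixed $W$-valued full sequence of \ref{DFixA} tensored appropriately and $h_n'$ is scalar-valued. This is an Elliott-intertwining/approximate-absorption argument using that $W$-valued approximately multiplicative maps to $W\otimes{\cal K}$ are, up to approximate unitary equivalence and addition of a degenerate, classified by trivial data (since $K_*(W)=0$ and $W$ is monotracial, so there are no obstructions). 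Combining, $\la\phi_n\ra = \la\phi_n\ra + \la\psi\text{-block}\ra$ where the $\psi$-block is itself a zero element (its image is scalar after the rotation), giving $\la\phi_n\ra=0$.

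The main obstacle I anticipate is making the ``$W$ is absorbed into a zero element'' step rigorous at the level of asymptotic sequential morphisms rather than single \hm s: one needs the uniqueness/existence statements for maps into $W$ (as in part I, via 7.8--7.9 of \cite{GLp1} and the classification of such maps) to be applied uniformly in $n$ with controlled error terms, and one must check that the correction maps $h_n, h_n'$ stay scalar-valued and that the $KL$-bookkeeping in condition (2) survives the rotations. In other words, the hard part is not any single estimate but organizing the approximate-unitary-equivalences into a single sequence $\{u_n\}$ compatible with the definition of $\sim$ in \ref{DE(AB)}, which I expect to handle by a diagonal/reindexing argument over an exhausting sequence of finite subsets ${\cal P}\subset\underline{K}(A)$ and ${\cal F}\subset A$.
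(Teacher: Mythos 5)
There is a genuine gap, in two places. First, your main route --- ``$\la \phi\ra=m\la\phi\ra$ for all $m$ and $\la\phi\ra+\la\phi\ra=\la\phi\ra$, hence $\la\phi\ra=0$'' --- is not a valid inference: at this point ${\cal E}(A,B)$ is only known to be an abelian semigroup with zero (Lemma \ref{LEabsorb} is precisely the ingredient used in \ref{PEab1} to produce inverses), and in a monoid an idempotent, even an infinitely divisible one, need not be the zero element; there is no cancellation available, so the argument is circular. Moreover the halving step itself is unjustified: the isomorphism $W\cong M_2(W)$ identifies $\phi_n\oplus\phi_n$ with another morphism landing in $B^{\vdash}\otimes W\otimes{\cal K}$, but identifying that morphism with $\phi_n$ (or showing $\la\phi\oplus\phi\ra=\la\phi\ra$) already requires exactly the kind of uniqueness/absorption statement you are trying to prove.

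Second, your ``concrete route'' is the right idea --- and is in fact how the paper argues --- but the justification you give for it does not work. The codomain is $B^{\vdash}\otimes W\otimes {\cal K}$ for an \emph{arbitrary} separable $B$; it is not simple, not monotracial, and not covered by any classification of maps into $W$, so ``$K_*(W)=0$ and $W$ monotracial, hence no obstructions'' does not yield the unitaries $u_n$. What the paper uses instead is the stable uniqueness theorem (Cor.\ 7.12, resp.\ Thm.\ 7.9, of \cite{GLp1}), whose hypotheses must be checked in the target $M_{k(n)}(B^{\vdash}\otimes W)$: one takes the $T$-${\cal H}$-full sequence $\psi_n\colon A\to W$ supplied by the standing hypothesis \ref{DFixA} on $A$ (this hypothesis is essential and is absent from your argument), composes with a \hm\, into $\C\cdot 1_{B^{\vdash}}\otimes W\otimes{\cal K}$ so the absorbing maps are scalar-valued, amplifies $k(n)$ times to get ${\bar\psi}_n$, and verifies by a direct computation with a strictly positive element $c_0\in W$ (namely that $1\otimes f_\eta(c_0)$ is an approximate unit for $M_{k(n)}(B^{\vdash}\otimes W)$) that ${\bar\psi}_n$ is $T$-${\cal H}$-full in $M_{k(n)}(B^{\vdash}\otimes W)$. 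Only then does the stable uniqueness theorem give a unitary conjugating $\phi_n\oplus\Psi_n$ onto ${\bar\psi}_n\oplus\Psi_n$, where $\Psi_n$ is a \emph{large multiple} ($K$ copies, with $K$ furnished by 7.12) of ${\bar\psi}_n$ --- absorbing a single copy of $\psi_n$, as written in your proposal, is not enough. Since ${\bar\psi}_n$ and $\Psi_n$ are scalar-valued, this is exactly the equivalence required by \ref{DE(AB)}, giving $\la\phi\ra=\la\{{\bar\psi}_n\}\ra=0$. Your final paragraph correctly anticipates that the bookkeeping over finite subsets is routine; the missing substance is the fullness verification in the $B^{\vdash}$-tensored target and the explicit use of the multiplicity $K$ from the stable uniqueness theorem.
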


\begin{proof}
Let $\ep>0$ and ${\cal F}\subset A$ be a finite subset.
Let $T$ be given in \ref{DFixA}.
Write $T(a)=(N(a), M(a))$ for all $a\in A_+\setminus \{0\}.$
We will  apply \ref{CLuniq}.

Let $\dt>0,$ ${\cal G}$ be a finite subset, ${\cal H}\subset A_+\setminus \{0\}$ be a finite subset
and let $K \ge 1$ be an integer {{as}} required by \ref{CLuniq}
for
$T.$

Suppose that $\phi_n: A\to B^{\vdash}\otimes {\cal W}\otimes {\cal K}$ is a
${\cal G}$-$\dt$-multiplicative \cpc. We may assume, \wilog, that
the image of $\phi_n$ lies in\linebreak
 $M_{k(n)}(B^{\vdash}\otimes {\cal W}).$
Choose  an asymptotic sequential morphism $\{\psi_n\}$ from $A$ to  ${\cal W}$
given by \ref{DFixA}.
We may assume that $\psi_n$ is
${\cal G}$-$\dt$-multiplicative and is $T$-${\cal H}$-full.
Let $\psi_0: {\cal W}\to \C\cdot 1_{B^{\vdash}}\otimes {\cal W}\otimes {\cal K}$ be a
\hm. By replacing $\{\psi_n\}$ by $\{\psi_0\circ \psi_n\},$ we
assume that the image of $\psi_n$ are in $\C\cdot 1_{B^{\vdash}}\otimes {\cal W}\otimes {\cal K}.$
Define ${\bar \psi_n}: A\to M_{k(n)}({\cal W})$ by
\vspace{-0.1in}$$
{\bar \psi}_n(a)=\diag(\overbrace{\psi_n(a), \psi_n(a),...,\psi_n(a)}^{k(n)})\rforal a \in A.
$$
By viewing ${\bar \psi}_n$ as a map from $A$ to $M_{k(n)}((\C \cdot 1_{B^{\vdash}})\otimes {\cal W}),$
it is easy to check that
${\bar \psi}_n$ is $T$-${\cal H}$-full
in $M_{k(n)}((\C \cdot 1_{B^{\vdash}})\otimes {\cal W}) $ (see the proof of \ref{CLuniq}).

{{Then, by \ref{CLuniq},
there exists an integer $K$ and a unitary $v\in M_{(K+1)k(n)}(B^{\vdash}\otimes {\cal W})^\sim
\subset M_{(K+1)k(n)}(B^{\vdash}\otimes \zo)^\sim$
such that
$$
\|v^*\diag(\phi_n(a), \Psi_n(a))v-\diag(0,
\Psi_n(a))\|<\ep\rforal a\in {\cal F},
$$
{{ where $\Psi_n(a)={\bar \psi}_n(a)\otimes 1_K.$ Lemma then follows.}}}}

\end{proof}

\begin{prop}\label{PEab1}
${\mathcal E}(A,B)$ is an abelian group.

\end{prop}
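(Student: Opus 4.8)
\textbf{Proof proposal for Proposition \ref{PEab1}.}
The plan is to verify the group axioms for $(\mathcal{E}(A,B), +)$, where addition is the well-defined operation $\la\phi\ra+\la\psi\ra=\la\phi\oplus\psi\ra$ introduced in \ref{DE(AB)}. Associativity and commutativity are essentially formal: for associativity, the three-fold sums $(\phi\oplus\psi)\oplus\chi$ and $\phi\oplus(\psi\oplus\chi)$ differ only by the fixed permutation unitary in $M_3(\mathcal{K})\cong\mathcal{K}$, and this permutation is implemented by a constant (scalar) unitary in $\widetilde{B^{\vdash}\otimes\mathcal{Z}_0\otimes\mathcal{K}}$, so the two asymptotic sequential morphisms are equivalent in the sense of \ref{DE(AB)} (take $h_n=h_n'=0$). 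Commutativity is the same argument with the flip on $M_2(\mathcal{K})$. The existence of a neutral element is already recorded in \ref{DE(AB)}: any $\{\psi_n\}$ with images in $\C\cdot 1_{B^{\vdash}}\otimes\mathcal{Z}_0\otimes\mathcal{K}$ represents $0$, and $\la\phi\oplus\{\psi_n\}\ra=\la\phi\ra$ because the summand $\psi_n$ can be absorbed by a unitary conjugation (its image sits inside the scalar copy). So the only genuine content is the existence of inverses.

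The heart of the proof is therefore to produce, for a given $\la\phi\ra$ represented by $\{\phi_n\}$ with $\phi_n\colon A\to B^{\vdash}\otimes\mathcal{Z}_0\otimes\mathcal{K}$ and associated $\alpha\in\operatorname{Hom}_\Lambda(\underline K(A),\underline K(B^{\vdash}\otimes\mathcal{Z}_0\otimes\mathcal{K}))$, an asymptotic sequential morphism $\{\phi_n^{\ominus}\}$ with $\la\phi\ra+\la\phi^{\ominus}\ra=0$. The natural candidate is $\phi_n^{\ominus}=(\mathrm{id}\otimes j^{\circledast}\otimes\mathrm{id})\circ\phi_n$ (up to the identifications $B^{\vdash}\otimes\mathcal{Z}_0\otimes\mathcal{K}\cong B^{\vdash}\otimes\mathcal{Z}_0\otimes\mathcal{K}$), where $j^{\circledast}$ is the automorphism of $\mathcal{Z}_0$ from \ref{Tnegative} with $j^{\circledast}_{*0}=-\mathrm{id}$ on $K_0(\mathcal{Z}_0)$. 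Then, by \ref{LZ0group}, the map $a\mapsto\diag(\phi_n(a),\phi_n^{\ominus}(a))$ is, after conjugating by a sequence of unitaries, asymptotically equal to $(\phi_{wz}\otimes\mathrm{id})\circ\diag(\phi_{zw},\phi_{zw})$ applied to $\phi_n$; more precisely, applying $\mathrm{id}_{B^{\vdash}}\otimes(-)\otimes\mathrm{id}_{\mathcal{K}}$ to the approximate unitary equivalence $\phi_{z21}\circ\Phi\sim_{a.u.}\phi_{z21}\circ\Psi$ of \ref{LZ0group} shows $\phi\oplus\phi^{\ominus}$ is equivalent to an asymptotic sequential morphism that factors through $B^{\vdash}\otimes W\otimes\mathcal{K}$. (One must check the $\underline K$-theory bookkeeping condition (2) of \ref{DE(AB)}: the class of $\phi_n\oplus\phi_n^{\ominus}$ is $\alpha+(-\alpha)=0$ in the relevant $KL$-group, matching the class of a map into the scalar copy, with the correction terms $h_n,h_n'$ supplied by \ref{LZ0group} together with the stabilizing $\C\cdot 1_{B^{\vdash}}$-valued maps.)

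Once $\phi\oplus\phi^{\ominus}$ is shown equivalent to a morphism with image in $B^{\vdash}\otimes W\otimes\mathcal{K}$, Lemma \ref{LEabsorb} immediately gives $\la\phi\ra+\la\phi^{\ominus}\ra=\la\phi\oplus\phi^{\ominus}\ra=0$, completing the verification that $\mathcal{E}(A,B)$ is a group; it is abelian by the commutativity step above. The main obstacle I anticipate is the careful matching of the $KL$-level data: condition (2) of \ref{DE(AB)} requires that the equivalences and the inverse be witnessed not just at the level of approximate unitary equivalence of maps but with the prescribed compatibility of $\alpha$ and the scalar-valued correction maps $h_n,h_n'$, so one has to track how $j^{\circledast}_{*0}=-\mathrm{id}$ propagates through $\underline K(\cdot)$ (using that $\mathcal{Z}_0$ satisfies the UCT and has $K_0=\Z$, $K_1=0$, so that $\underline K(B^{\vdash}\otimes\mathcal{Z}_0\otimes\mathcal{K})$ behaves predictably), and confirm that the unitaries produced by \ref{LZ0group} and \ref{LEabsorb} can be chosen to lie in $\widetilde{B^{\vdash}\otimes\mathcal{Z}_0\otimes\mathcal{K}}$ rather than a matrix amplification — which is handled as in the proof of \ref{LEabsorb} by absorbing matrix amplifications into the stable algebra. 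Everything else is routine permutation-unitary manipulation.
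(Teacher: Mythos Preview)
Your proposal is correct and follows essentially the same route as the paper: define the inverse of $\la\phi\ra$ via $\iota^{\circledast}=\mathrm{id}_{B^{\vdash}}\otimes j^{\circledast}\otimes\mathrm{id}_{\mathcal K}$, use Lemma~\ref{LZ0group} to see that $\phi\oplus(\iota^{\circledast}\circ\phi)$ is approximately unitarily equivalent to a map factoring through $B^{\vdash}\otimes W\otimes\mathcal K$, and then apply Lemma~\ref{LEabsorb} to conclude the sum is zero. Your added worry about the $KL$-level condition~(2) is unnecessary---it transfers automatically under post-composition with the homomorphism $\iota^{\circledast}$ (which preserves the scalar copy), and once \ref{LEabsorb} applies the class is zero by definition.
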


\begin{proof}
Define  an endomorphism  $\iota^{\circledast}$ on $B^{\vdash}\otimes \zo \otimes {\mathcal K}$
by
$$
\iota^{\circledast}(a\otimes b\otimes c)=a\otimes j^{\circledast}(b)\otimes c\rforal a\in B^{\vdash}, b\in {\mathcal Z}_0
\andeqn c\in {\cal K}
$$
(see \ref{Tnegative} for the definition of $j^{\circledast}$).
Let $\phi=\{\phi_n\}$ be an asymptotic sequencial morphism from $A$ to $B^{\vdash}\otimes {\mathcal Z}_0\otimes {\mathcal K}.$
Let $\psi_n: A\to B^{\vdash}\otimes \zo\otimes {\cal K}$ be defined by
$$
\psi_n(a)=\iota^{\circledast}\circ \phi_n(a)\rforal a\in A.
$$
Define ${{H:}} B^{\vdash}\otimes \zo \otimes {\mathcal K}\to M_2(B^{\vdash}\otimes \zo \otimes {\mathcal K}))$
by
$$
H(a\otimes b\otimes c)=a\otimes (\phi_{wz}\otimes {\rm id}_{M_2}) (\diag(\phi_{zw}(b), \phi_{zw}(b))\otimes c \rforal a\in B^{\vdash}, b\in \zo \andeqn c\in {\cal K}.
$$
It follows from \ref{LZ0group} that there exists a sequence of unitaries
$\{u_n\}\subset {\widetilde{B^{\vdash}\otimes \zo\otimes {\cal K}}}$ such that
$$
{\rm Ad}\, u_n\circ H({{\phi_n(x)}})=\lim_{n\to\infty}\diag(\phi_n(x), \psi_n(x))\rforal {{x\in A.}}
$$
It follows that $\{\phi_n\oplus \psi_n\}$ is approximately unitarily equivalent to $\{H\circ \phi_n\}.$
By \ref{LEabsorb}, $\la \phi_n\oplus \psi_n\ra =0.$

\end{proof}

\begin{df}\label{Dfunctor}  Fixed $A$ as in \ref{DFixA}, we will consider ${\mathcal E}(A, B)$ for separable \CA\, $B$, and denote ${\mathcal E}(A, B)$ by ${\mathcal E}_A(B).$
Suppose that  $B$ and $C$ are separable \CA s and $h: B\to C$ is a \hm.
Define ${{h_*:}}={\mathcal E}_A(h): {\mathcal E}_A(B)\to {\mathcal E}_A(C)$ by
${\mathcal E}_A(h)(\la \phi\ra)=\la \{h\circ \phi_n\}\ra,$ where $\{\phi_n\}$ is a representation
of $\la \phi \ra$ and where we also use $h$ for $h^{\sim}\otimes {\rm id}_{\zo\otimes {\cal K}}.$ This gives a \hm\, from the abelian group ${\mathcal E}_A(B)$ to the abelian
${\mathcal E}_A(C).$
Clearly ${\mathcal E}_A({\rm id}_B)={\rm id}_{{\mathcal E}_A(B)}.$
If $D$ is another \CA\, and $h_1: C\to D$ is a \hm, then
one checks that ${\mathcal E}_A(h_1\circ h)={\mathcal E}_A(h_1)\circ {\mathcal E}_A(h).$
\end{df}

\begin{thm}\label{Thomtopy}
  ${\mathcal E}(A,-)={\mathcal E}_A(-)$ is a covariant functor from separable \CA s to abelian groups which is
  homotopy invariant and stable, i.e., ${\cal E}_A(D)={\cal E}_A(D\otimes {\cal K}).$
\end{thm}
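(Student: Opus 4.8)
\textbf{Proof proposal for Theorem \ref{Thomtopy}.}

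The plan is to verify the three asserted properties of $\mathcal{E}_A(-)$ one at a time, the functoriality having already been established in \ref{Dfunctor}. First, \emph{functoriality} requires nothing more than reading off from \ref{Dfunctor}: the assignment $B\mapsto \mathcal{E}_A(B)$ lands in abelian groups by \ref{PEab1}, and $\mathcal{E}_A(h_1\circ h)=\mathcal{E}_A(h_1)\circ\mathcal{E}_A(h)$ together with $\mathcal{E}_A(\mathrm{id}_B)=\mathrm{id}$ are checked there. The only subtlety is that composing with a \hm\, $h:B\to C$ should be interpreted via $h^{\sim}\otimes\mathrm{id}_{\zo\otimes\mathcal{K}}:B^{\vdash}\otimes\zo\otimes\mathcal{K}\to C^{\vdash}\otimes\zo\otimes\mathcal{K}$, and one must note this preserves the defining conditions (1) and (2) of an asymptotic sequential morphism and respects the equivalence relation $\sim$; both are immediate since $h$ is contractive, completely positive, and multiplicative, and it carries $\C\cdot 1_{B^{\vdash}}\otimes\zo\otimes\mathcal{K}$ into $\C\cdot 1_{C^{\vdash}}\otimes\zo\otimes\mathcal{K}$, so the "trivial" representatives (those with image in the scalar corner) map to trivial representatives, hence $\mathcal{E}_A(h)$ respects the zero element of the group.

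Second, \emph{homotopy invariance}: given \hm s $h_0,h_1:B\to C$ that are homotopic through a path $H:B\to C([0,1],C)$, I would show $\mathcal{E}_A(h_0)=\mathcal{E}_A(h_1)$ by evaluating $H$ against a representative $\{\phi_n\}$ of a class in $\mathcal{E}_A(B)$. Precisely, compose $H^{\sim}\otimes\mathrm{id}_{\zo\otimes\mathcal{K}}$ with $\phi_n$ to obtain an asymptotic sequential morphism $\{\Theta_n\}$ from $A$ to $C([0,1],C^{\vdash}\otimes\zo\otimes\mathcal{K})$, whose endpoint evaluations are (asymptotically) $h_0\circ\phi_n$ and $h_1\circ\phi_n$. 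As in \ref{Dsyhomtop}, because $A$ is amenable we may lift through $l^{\infty}$ and arrange that the two endpoint families are genuinely homotopic for each $n$; standard reindexing/connecting-path arguments in the mapping cylinder then produce the sequence of unitaries in $\widetilde{C^{\vdash}\otimes\zo\otimes\mathcal{K}}$ and the scalar-corner absorbing maps $h_n,h_n'$ witnessing $\{h_0\circ\phi_n\}\sim\{h_1\circ\phi_n\}$. Here one also needs that the $KL$-datum $\alpha$ in condition (2) is homotopy-invariant, which is automatic since $[h_0\circ\phi_n]=[h_1\circ\phi_n]$ in $\underline{K}$-theory for homotopic $h_i$.

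Third, \emph{stability} $\mathcal{E}_A(D)\cong\mathcal{E}_A(D\otimes\mathcal{K})$: fix a rank-one projection $e\in\mathcal{K}$ and the corner embedding $\iota_e:D\to D\otimes\mathcal{K}$, $d\mapsto d\otimes e$, inducing $\mathcal{E}_A(\iota_e):\mathcal{E}_A(D)\to\mathcal{E}_A(D\otimes\mathcal{K})$. The point is that $D^{\vdash}\otimes\zo\otimes\mathcal{K}$ and $(D\otimes\mathcal{K})^{\vdash}\otimes\zo\otimes\mathcal{K}$ are related by absorbing the extra copy of $\mathcal{K}$ using $\mathcal{K}\otimes\mathcal{K}\cong\mathcal{K}$, and any asymptotic sequential morphism into the larger algebra can, after conjugating by a suitable sequence of unitaries (partial isometries realizing the stable isomorphism on larger and larger finite-dimensional corners), be pushed into the image of $\iota_e$; the scalar-corner freedom in the equivalence relation $\sim$ exactly absorbs the discrepancy on the unitization. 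This shows $\mathcal{E}_A(\iota_e)$ is surjective; injectivity follows by the same conjugation argument applied to a homotopy/equivalence realized in the larger algebra, together with the observation that both $\iota_e$ and the other corner inclusions induce the same map on $\underline{K}$-theory (so condition (2) is unaffected), as in the proof of \ref{LEabsorb}.

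The main obstacle I anticipate is the bookkeeping in the stability step: one must be careful that the unitaries implementing the stable isomorphism genuinely live in the \emph{unitized} algebra $\widetilde{D^{\vdash}\otimes\zo\otimes\mathcal{K}}$ and intertwine the $\underline{K}$-theory data $\alpha$ correctly, rather than merely matching the \cpc\, parts asymptotically; the systematic use of the scalar-corner maps $h_n,h_n'$ (and of \ref{LEabsorb}, which kills any piece factoring through $W$) is what makes this work, but verifying the compatibility of the $KL$-class under the reindexing is the delicate part. Homotopy invariance is comparatively routine once the amenability-based lifting of \ref{Dsyhomtop} is in hand, and functoriality is essentially already done.
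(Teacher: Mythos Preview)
Your handling of functoriality is fine and matches the paper. Stability is over-engineered in your sketch: since the target of every representative is already $B^{\vdash}\otimes\zo\otimes\mathcal{K}$, the identification $\mathcal{K}\otimes\mathcal{K}\cong\mathcal{K}$ and the fact that both scalar corners $\C\cdot 1_{B^{\vdash}}\otimes\zo\otimes\mathcal{K}$ and $\C\cdot 1_{(B\otimes\mathcal{K})^{\vdash}}\otimes\zo\otimes\mathcal{K}$ are just $\zo\otimes\mathcal{K}$ make this essentially tautological; the paper simply says ``obviously stable.''

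The genuine gap is in homotopy invariance. Your phrase ``standard reindexing/connecting-path arguments in the mapping cylinder then produce the sequence of unitaries'' hides exactly the step that carries all the content. Homotopy of asymptotic morphisms does not, by any standard argument, yield approximate unitary equivalence after scalar-corner stabilization; one needs the special structure of $\zo$. The paper's actual mechanism is a telescoping (Eilenberg-swindle) trick: partition the homotopy, set $\gamma_i=\pi_{t_i}\circ L_n$, and form
\[
\Phi_n=\mathrm{diag}\bigl(\gamma_0,\ \iota^{\circledast}\!\circ\gamma_1,\ \gamma_1,\ \ldots,\ \iota^{\circledast}\!\circ\gamma_k,\ \gamma_k\bigr),\qquad
\Psi_n=\mathrm{diag}\bigl(\gamma_k,\ \iota^{\circledast}\!\circ\gamma_1,\ \gamma_1,\ \ldots,\ \iota^{\circledast}\!\circ\gamma_k,\ \gamma_k\bigr),
\]
where $\iota^{\circledast}={\rm id}_{B^{\vdash}}\otimes j^{\circledast}\otimes{\rm id}_{\mathcal{K}}$ uses the $K_0$-negating automorphism of $\zo$ from \ref{Tnegative}. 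A permutation unitary together with $\|\gamma_{i-1}-\gamma_i\|<\ep$ shows $\Phi_n\approx\Psi_n$, so $\phi_n\oplus\Theta$ and $\psi_n\oplus\Theta$ are approximately unitarily equivalent, where $\Theta$ is the common block $\mathrm{diag}(\iota^{\circledast}\!\circ\gamma_i,\gamma_i)_i$. The crucial point, which your sketch nowhere invokes, is \ref{LZ0group}: each pair $\gamma_i\oplus\iota^{\circledast}\!\circ\gamma_i$ is approximately unitarily equivalent to a map factoring through $B^{\vdash}\otimes W\otimes\mathcal{K}$, so by \ref{LEabsorb} the whole $\Theta$ represents zero and can be replaced by a scalar-corner map. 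Without this use of $j^{\circledast}$ and \ref{LZ0group}, there is no reason the stabilizing maps $h_n,h_n'$ you posit should land in $\C\cdot 1\otimes\zo\otimes\mathcal{K}$, and your argument does not go through.
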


\begin{proof}
From \ref{PEab1} and \ref{Dfunctor}, {{${\mathcal E}_A(-)$}} is a covariant functor from separable \CA s to abelian groups.
It is obviously stable. We will show it is homotopy invariant.

Fix a \CA\, $B.$ Set $C=B^{\vdash}\otimes \zo\otimes {\cal K}.$
Let $\phi$ and $\psi$ be two homotopy asymptotic sequential morphisms from
$A$ to $C.$
Let $\dt>0$ and ${\cal G}\subset A.$

Fix a large integer $n.$ As discussed in \ref{Dsyhomtop}, we may assume that
there exists ${\cal G}$-$\dt$-multiplicative \cpc\, $L_n: A\to C([0,1], C)$ which is  such that
$\pi_0\circ L_n= \phi_n$ and $\pi_1\circ L_n=\psi_n.$

Let $\ep>0$ and ${\cal F}\subset A$ be a finite subset.

Let ${\cal F}_1$ be a finite subset which contains ${\cal F}.$
Let ${\cal P}: 0=t_0<t_1<\cdots <t_k=1$ be a partition such that
\beq\label{Thmtp-0}
\|\pi_t\circ L_n(g)-\pi_{t_i}\circ L_n(g)\|<\ep/4\rforal g\in {\cal F}_1
\eneq
for all $t\in [t_{i-1}, t_{i+1}],$ $i=1,2,...,k.$
Put $\gamma_i=\pi_{t_i}\circ L_n,$ $i=0,1,2,...,k.$
Define
$\Phi_n, \Psi_n, \Phi_n',\Psi_n': A\to M_{2k+1}(C)$ as follows.
\beq\label{Thmtp-1}
\Phi_n(a) &=& \diag(\gamma_0(a), \iota^{\circledast}\circ \gamma_1(a), \gamma_{1}(a), ...,\iota^{\circledast}\circ \gamma_{k}(a),
\gamma_{k}(a)),\\
{{\Phi_n'(a)}}&=&\diag(\gamma_0(a), \iota^{\circledast}\circ \gamma_0(a), \gamma_{1}(a), ...,\iota^{\circledast}\circ \gamma_{k-1}(a),
\gamma_k(a)),\\
\Psi_n'(a)&=&\diag(\gamma_k(a),   \iota^{\circledast}\circ \gamma_0(a),\gamma_0(a),..., \iota^{\circledast}\circ \gamma_{k-1}(a),\gamma_{k-1}(a)),\\
\Psi_n(a)&=&\diag(\gamma_k(a), \iota^{\circledast}\circ \gamma_1(a), \gamma_1(a),...,\iota^{\circledast}\circ\gamma_{k}(a), \gamma_{k}(a))
\eneq
for all $a\in A.$
We estimate that, by \eqref{Thmtp-0},
\beq\label{Thmtp-10}
\|\Phi_n(g)-\Phi_n'(g)\|<\ep/4
\andeqn
\|\Psi_n(g)-\Psi_n'(g)\|< \ep/4\rforal g\in {\cal F}_1.
\eneq
There is also a unitary $u\in M_{2k+1}({\tilde{C}})$ such that
\beq\label{Thmtp-11}
\|{\rm Ad}\, u\circ \Phi_n'(g)-\Psi_n'(g)\|<\ep/4\rforal g\in {\cal F}_1.
\eneq
It follows that
\beq\label{Thmtp-12}
\|{\rm Ad}\, u\circ \Phi_n(f)-\Psi_n(f)\|<3\ep/4\rforal f\in {\cal F}.
\eneq
Define   $\Theta: A\to M_{2k}(C)$ by
$$
\Theta(a) =\diag(\iota^{\circledast}\circ \gamma_1(a), \gamma_1(a),...,\iota^{\circledast}\circ\gamma_{k}(a), \gamma_{k}(a))
$$
for all $a\in A.$  Then \eqref{Thmtp-12} becomes
\beq\label{Thmtp-13}
\|{\rm Ad}\, u\circ\diag(\phi_n(g), \Theta(g))-\diag(\psi_n(g), \Theta(g))\|<3\ep/4\rforal g\in {\cal F}_1.
\eneq
On the other hand, by \ref{LZ0group},
there exists a \hm\, $H: B^{\vdash}\otimes \zo\otimes {\cal K}\to B^{\vdash}\otimes {\cal W}\otimes {\cal K}$
and ${\cal G}$-$\dt$-multiplicative \cpc\, ${{\Gamma}}_n: A\to C$
such that
\beq\label{Thmtp-14}
\|H\circ {{\Gamma}}_n(g)-\Theta(g)\|<\ep/8\rforal g\in {\cal F}_1
\eneq
{{($\Gamma_n=\diag(\gamma_1, \gamma_2,...,\gamma_k)$).}}
Finally, we obtain that
$$
\|{\rm Ad}\, u\circ \diag(\phi_n(f), H\circ {{\Gamma}}_n(f))-\diag(\psi_n(f),H\circ {{\Gamma}}_n(f)\|<\ep
$$
for all $f\in {\cal F}.$
Since the image of $H\circ \Gamma_n$ are  in  $B^{\vdash}\otimes {\cal W}\otimes {\cal K},$
the above implies that
that $\la \phi\ra =\la \psi \ra.$
\end{proof}

The proof of the following is essentially the same as that in 6.1.4 of \cite{Lnbk}.
\begin{prop}\label{PEexact}
If
\beq\label{split-exact}
0\to J {\stackrel{j}{\rightarrow}} D{\stackrel{\pi}{\rightarrow}} D/J\to 0
\eneq
 is a split short exact sequence of separable \CA s, then
$$
{\cal E}(A, J){\stackrel{j_*}{\longrightarrow}} {\cal E}(A, D){\stackrel{\pi_*}{\longrightarrow}} {\cal E}(A, D/J)
$$
is exact in the middle.
\end{prop}

\begin{proof}
Suppose that $\la \phi \ra \in {\cal E}(A, J)$ which can be represented by an asymptotic
sequential morphism
$\{\phi_n\}$ which maps $A$ to $J^{\vdash}\otimes \zo\otimes {\cal K}.$
Then $\pi\circ j\circ \phi_n$ has image in $\C\cdot 1_{(D/J)^{\vdash}}\otimes \zo\otimes {\cal K}.$
It follows from the definition that $\pi_*\circ j_*=0.$

Now assume that $\la \phi \ra \in {\cal E}(A,D)$ which is represented by $\{\phi_n\}.$
\Wlog, we may assume that ${\rm im}\phi_n\in M_{k(n)}(D^{\vdash}\otimes\zo
)$
for some sequence $\{k(n)\}.$

Suppose that $\pi_*(\la \phi \ra)=0.$ Thus we may assume that there exist
two asymptotic sequential morphisms $h_n, h_n': A\to \C\cdot 1_{(D/J)^{\vdash}}\otimes \zo\otimes {\cal K}$ and a
sequence of unitaries
$u_n\in ((D/J)^{\vdash}\otimes \zo\otimes  {\cal K})^{\sim}$ such that
\beq\label{PExact-2}
\lim_{n\to\infty}\|u_n^*\diag(\pi\circ \phi_n(a), h_n(a))u_n-h_n'(a)\|=0\rforal a\in A.
\eneq
By the proof of \ref{PEab1}, by adding $\iota^{\circledast}\circ h_n'$ (to both terms),  we
may assume that $[h_n']|_{\cal P}=0$ for all $n\ge n_0.$
We  also assume that there exists $\af\in Hom_{\Lambda}(\underline{K}(A), \underline{K}(D^{\vdash}\otimes {\mathcal Z}_0)$ such that, for any finite subset ${\cal P}\subset \underline{K}(A)$ and for all $n\ge n_0$ (for some $n_0\ge1$),
\beq\label{916-nnn}
[\phi_n]|_{\cal P} +[h_n]|_{\mathcal P}=[J_D]\circ\lambda_D\circ \af|_{\mathcal P}  +[h_n'']|_{\cal P} \,\,\,{\text{(see \eqref{lambda-def-91}
for the definition of $\lambda_D$)}},
\eneq
{{where
$\{h_n''\}$
 is a sequence  approximately multiplicative
\cpc s from $A$ to $\C\cdot 1_{(D)^{\vdash}}\otimes \zo\otimes {\cal K},$ and
we also view
$h_n$ and $h_n'$ as maps from $A$ to  $\C\cdot 1_{(D)^{\vdash}}\otimes \zo\otimes {\cal K},$
and where $J_D: D\otimes \zo\to D^{\vdash}\otimes \zo$ is the embedding.}}
{{Thus, combining  with \eqref{916-nnn}, $[\pi]\circ (\lambda_D(\af))=0.$}}

Denote $\Pi_{D/J}: ((D/J)^{\vdash}\otimes \zo\otimes  {\cal K})^{\sim}\to \C$ the quotient map.
\Wlog, we may assume that ${\rm im}\,(\phi_n\oplus h_n), {\rm im}\, h_n'\subset M_{K(n)}(\C\cdot 1_{(D/J)^{\vdash}}\otimes \zo).$
We may further assume that
$K(n)=2k(n).$
We may view  $\diag(u_n, u_n^*)\in ((D/J)^{\vdash}\otimes  \zo \otimes {\cal K})^{\sim}.$
Replacing $u_n$ by $\diag(u_n, u_n^*),$ we may assume that $u_n\in U_0(((D/J)^{\vdash}\otimes {\cal K})^{\sim}).$
Therefore, we may assume that
there exists a unitary
$z_n\in U((D^{\vdash}\otimes \zo\otimes {\cal K})^{\sim})$ such that
$\pi(z_n)=u_n.$

By identifying $\C\cdot 1_{(D/J)^{\vdash}}\otimes \zo\otimes {\cal K}$ with
$\C\cdot 1_{D^{\vdash}}\otimes \zo \otimes {\cal K}$ and $\C\cdot 1_{J^{\vdash}}\otimes \zo \otimes {\cal K},$
we may view $h_n, h_n': A\to \C\cdot 1_{D^{\vdash}}\otimes \zo\otimes {\cal K}$
as
well as $h_n, h_n': A\to \C\cdot 1_{J^{\vdash}}\otimes \zo\otimes {\cal K},$ whichever
it is convenient.

Let $\Pi: l^{\infty}(D^{\vdash}\otimes \zo\otimes {\cal K})\to l^{\infty}(D^{\vdash}\otimes \zo\otimes {\cal K})/c_0(D^{\vdash}\otimes \zo\otimes {\cal K})$ be the quotient map.
Let
$$
U=\{z_n\},\,\,\, Z=\Pi(U), \Phi=\{\phi_n\},\,\,\, H=\{h_n\}, H'=\{h_n'\},
$$
where we view $\Phi, H, H': A\to l^{\infty}(D^{\vdash}\otimes \zo\otimes {\cal K}).$
Then, by {{\eqref{PExact-2}}}
$$
Z^* (\Pi(\Phi(a)\oplus H(a)))Z-\Pi\circ H'(a)\in l^{\infty}(J^{\vdash}\otimes \zo \otimes {\cal K})/c_0(J^{\vdash}\otimes \zo \otimes {\cal K})
$$
for all $a\in A.$ Since $\Pi\circ H'(a),\, \Pi\circ H(a) \in \C\cdot 1_{J^{\vdash}}\otimes \zo\otimes {\cal K},$  it follows that
$$
Z^* (\Pi(\Phi(a)\oplus  0))
Z\in l^{\infty}(J^{\vdash}\otimes \zo\otimes {\cal K})/c_0(J^{\vdash}\otimes \zo\otimes {\cal K})
$$
for all $a\in A.$
Since $A$ is amenable, by \cite{CE}, there exists a \cpc\,
$L=\{l_n\}: A\to J^{\vdash}\otimes \zo \otimes {\cal K}$ such that
$\Pi\circ L={\rm Ad}\, U\circ (\Phi).$
Also
\beq\label{ln-n12/4}
\lim_{n\to\infty}\|\diag(l_n(a), h_n(a))-z_n^*(\diag(\phi_n(a), h_n(a)))z_n\|=0\rforal a\in A.
\eneq
Since \eqref{split-exact} is split exact,  by Proposition 5.2 of \cite{LnTrans04}, there is a unique $\bt\in {\rm Hom}_{\Lambda}(\underline{K}(A), \underline{K}(J\otimes \zo))$
such that $[j]\circ \bt=\lambda_D\circ \af.$
It follows (by \eqref{916-nnn} and \eqref{ln-n12/4}) that, viewing $l_n$ as maps from $A$ to $J^{\vdash}\otimes \zo\otimes {\cal K},$
 there exist two sequences of approximately multiplicative  \cpc s $H_n, H_n'': A\to \C\cdot 1_{J^{\vdash}}\otimes \zo\otimes {\cal K},$
for any finite subset ${\cal P}\subset \underline{K}(A),$
such that, for all $n\ge n_1$ (for some $n_1\ge 1$),
$$
[l_n]|_{\cal P} +[H_n]|_{\mathcal P}=[J_J]\circ \bt |_{\mathcal P}  +[H_n'']|_{\cal P}\,\,\,\,\,\,\,{\text{($J_J: J\otimes  \zo\to J^{\vdash}\otimes \zo
$ is the embedding)}}.
$$
So $\la \{l_n\}\ra$ is an asymptotic sequential morphism in ${\cal E}(A, J)$
and (by \eqref {ln-n12/4}) $j_*\la \{l_n\}\ra =\la \phi_n\ra$ which implies that
$\la \phi \ra$ is in the $j_*({\cal E}(A, J)).$

\end{proof}

\begin{prop}\label{PEsplit}
${\cal E}_A(-)$ is split exact.
\end{prop}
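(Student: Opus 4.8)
\textbf{Proof proposal for Proposition \ref{PEsplit}.}

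The plan is to use the standard argument for split exactness of homotopy-invariant, stable, half-exact functors (cf. the corresponding argument in \cite{Lnbk}, e.g.\ 6.1.4--6.1.7). Suppose
$$
0\to J {\stackrel{j}{\rightarrow}} D{\stackrel{\pi}{\rightarrow}} D/J\to 0
$$
is split exact, with splitting \hm\, $s: D/J\to D$ so that $\pi\circ s={\rm id}_{D/J}$. Applying the functor ${\cal E}_A(-)$ we get ${\cal E}_A(\pi)\circ {\cal E}_A(s)={\rm id}_{{\cal E}_A(D/J)}$, so ${\cal E}_A(\pi)$ is surjective and ${\cal E}_A(s)$ is injective. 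By \ref{PEexact}, the sequence
$$
{\cal E}_A(J){\stackrel{{\cal E}_A(j)}{\longrightarrow}} {\cal E}_A(D){\stackrel{{\cal E}_A(\pi)}{\longrightarrow}} {\cal E}_A(D/J)
$$
is exact in the middle. The remaining things to check are that ${\cal E}_A(j)$ is injective and that ${\cal E}_A(D)\cong {\cal E}_A(J)\oplus {\cal E}_A(D/J)$ via $({\cal E}_A(j), {\cal E}_A(s))$, equivalently that $\Ker {\cal E}_A(\pi)$ is a complemented subgroup mapped isomorphically by ${\cal E}_A(j)$. The standard trick is to replace the short exact sequence by the one associated with the mapping cone. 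Let
$$
E=\{(d,f)\in D\oplus C_0((0,1], D/J): \pi(d)=f(1)\}
$$
be the mapping cone of $\pi$; projection to the first coordinate gives $p_1: E\to D$, and evaluation-type maps give a short exact sequence $0\to S(D/J)\to E\to D\to 0$ where $S(D/J)=C_0((0,1))\otimes D/J$. Since $s$ splits $\pi$, the map $d\mapsto (d, s(\pi(d)))$ (constant path, after a reparametrization, or rather the path $t\mapsto s(\pi(d))$) exhibits $E$ as homotopy equivalent to $J$: the inclusion $J\hookrightarrow E$, $a\mapsto (j(a),0)$, is a homotopy equivalence because $E$ deformation retracts onto $\{(d,0):\pi(d)=0\}=j(J)$ using the splitting $s$ to push the path $f$ to $0$. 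By homotopy invariance (\ref{Thomtopy}), ${\cal E}_A(J)\cong {\cal E}_A(E)$.

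With this identification, the composite $J\xrightarrow{\simeq} E\xrightarrow{p_1} D$ is exactly $j$, so ${\cal E}_A(j)$ factors through the isomorphism ${\cal E}_A(J)\cong {\cal E}_A(E)$ followed by ${\cal E}_A(p_1)$. Now apply \ref{PEexact} to $0\to S(D/J)\to E\xrightarrow{p_1} D\to 0$: the middle-exactness, combined with the fact that the connecting situation here is controlled by $s$, shows that ${\cal E}_A(p_1)$ is a split injection with a one-sided inverse built from $s$. Concretely, define $r: {\cal E}_A(D)\to {\cal E}_A(E)$ as follows: given an asymptotic sequential morphism $\{\phi_n\}$ representing an element of ${\cal E}_A(D)$, compose with the \hm\, $D\to E$, $d\mapsto (d, \text{const path } s(\pi(d)))$ — wait, that lands in $E$ only if we reparametrize so that the path starts at $s(\pi(d))$ and we need $\pi(d)$ at the endpoint, which forces $\pi(d)=\pi(s(\pi(d)))$, true. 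This gives a \hm\, $\sigma: D\to E$ with $p_1\circ \sigma={\rm id}_D$ up to homotopy, whence ${\cal E}_A(p_1)$ is surjective as well, so it is an isomorphism. Transporting back through ${\cal E}_A(E)\cong{\cal E}_A(J)$ shows ${\cal E}_A(j)$ is a split injection, and the complement is ${\cal E}_A(s)({\cal E}_A(D/J))$ by the middle-exactness of \ref{PEexact}. Hence
$$
0\to {\cal E}_A(J){\stackrel{{\cal E}_A(j)}{\longrightarrow}} {\cal E}_A(D){\stackrel{{\cal E}_A(\pi)}{\longrightarrow}} {\cal E}_A(D/J)\to 0
$$
is split exact.

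The main obstacle I expect is purely bookkeeping of the mapping-cone homotopy at the level of asymptotic sequential morphisms: one must verify that the deformation retraction $E\to j(J)$ and the splitting $\sigma: D\to E$ respect the structure of asymptotic sequential morphisms (approximate multiplicativity, the $KL$-type compatibility condition (2) in \ref{DE(AB)}, and the fact that the auxiliary maps $h_n,h_n'$ take values in $\C\cdot 1\otimes \zo\otimes {\cal K}$). This is routine because $\otimes \zo\otimes {\cal K}$ and the unitization $(-)^{\vdash}$ are functorial and the homotopies are induced by honest \hm s of the coefficient algebras; nonetheless the identifications of $K$-theory classes along these homotopies, using the homotopy invariance established in \ref{Thomtopy}, need to be written carefully. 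Once that is in place, the proposition follows formally from \ref{PEexact}, \ref{Thomtopy}, and \ref{PEab1} exactly as in the classical $KK$-theory / $E$-theory setting (cf. \cite{Lnbk}).
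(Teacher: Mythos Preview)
Your overall strategy---reduce to the mapping cone $E=S(D,D/J)$ and exploit the splitting $s$---is the same route the paper takes. However, the key step where you assert that the inclusion $J\hookrightarrow E$ is a \emph{homotopy equivalence of $C^*$-algebras} is a genuine gap. The retraction you have in mind, $(d,f)\mapsto d-s(\pi(d))$, is linear but not multiplicative: for $a\in J$ and $b\in D/J$ one has $(a+s(b))-s(b)=a$, yet $(a_1+s(b_1))(a_2+s(b_2))-s(b_1b_2)$ is not $(a_1)(a_2)$ unless $J\cdot s(D/J)=0$. Likewise any ``push $f$ to $0$'' deformation $H_t$ fails to be a $*$-homomorphism for each $t$. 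So you cannot invoke homotopy invariance (\ref{Thomtopy}) directly to get ${\cal E}_A(J)\cong{\cal E}_A(E)$. (Your map $\sigma:D\to E$ via constant paths is also ill-defined, since a nonzero constant path is not in $C_0((0,1],D/J)$; and even if one uses $t\mapsto t\,\pi(d)$, this is not multiplicative. Finally, ${\cal E}_A(p_1)$ cannot be an isomorphism in general: that would force ${\cal E}_A(j)$ to be an isomorphism, hence ${\cal E}_A(D/J)=0$.)

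The paper circumvents this by first proving an auxiliary statement: if $D/J$ is contractible, then $j_*:{\cal E}_A(J)\to{\cal E}_A(D)$ is an isomorphism. This uses a second auxiliary algebra $Z(J,D)=\{x\in C([0,1],D):x(0)\in J\}$ and two more applications of middle-exactness (\ref{PEexact}) together with homotopy invariance, yielding injectivity of the composite ${\cal E}_A(Z(J,D))\to{\cal E}_A(S(D,D/J))\to{\cal E}_A(D)$; since $Z(J,D)\simeq J$ genuinely (by evaluation at $0$, which \emph{is} a $*$-homomorphic homotopy equivalence), this gives $j_*$ injective, and surjectivity is immediate from ${\cal E}_A(D/J)=0$. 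Applying this lemma to $0\to J\to E\to C_0([0,1),D/J)\to 0$ gives ${\cal E}_A(J)\cong{\cal E}_A(E)$ without ever asserting a $*$-homotopy equivalence. The injectivity of $j_*$ in the split case then follows exactly as you intended: the inclusion $S(D/J)\hookrightarrow E$ factors through the contractible subalgebra $I=\{(s(b(0)),b):b\in C_0([0,1),D/J)\}$, so its image in ${\cal E}_A(E)$ vanishes and $\ker j_*=0$.
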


\begin{proof}
This is standard from \ref{Thomtopy} and \ref{PEexact} (see \cite{Hg}).
Let
$$
0\to J {\stackrel{j}{\longrightarrow}} D{\stackrel{\pi}{\longrightarrow}} D/J\to 0
$$
be a short exact sequence of separable \CA s.

Let us first assume that $D/J$ is contractible.
Then by \ref{Thomtopy}, ${\cal E}_A(D/J)=\{0\}.$
It follows from \ref{PEexact} that  $j_*$ gives {{a}} surjective  map from
${\cal E}_A(J)$ onto ${\cal E}_A(D).$

For \CA\, $C,$ denote by $S(C)=C_0((0,1), C).$
Then, by \ref{Thomtopy},
$$
{\cal E}_A(D/J)=0={\cal E}_A(S(D/J))
$$
Put
\vspace{-0.1in}\beq\label{PEsplit-2}
S(D, D/J)&=&\{(a,b)\in D\oplus C_0([0,1), D/J): \pi(a)=b(0)\}\andeqn\\
Z(J, D)&=&\{x\in C([0,1], D): x(0)\in J\}.
\eneq
We have the following exact sequence:
\beq\label{PEsplit-3}
0={\cal E}_A(S(D/J))\longrightarrow {\cal E}_A(S(D, D/J))\longrightarrow {\cal E}_A(D).
\eneq
Define $\pi': Z(J, D)\to C_0([0,1), D/J)$ by
$\pi'(f)(t)=\pi(f)(1-t)$ for $t\in [0,1).$ Note $\pi'(f)(1)=\pi(f)(0)=0$
for all $f\in Z(J,D).$
Define $\chi: Z(J,D)\to S(D, D/J)$ by
$$
\chi(f)=(f(1), \pi'(f))\rforal f\in Z(J, D).
$$
One obtains the short exact sequence:
$$
0\to C_0([0,1), J)\to Z(J, D)\to S(D, D/J)\to 0.
$$
This gives the following exact sequence:
\beq\label{PEsplit-4}
0={\cal E}_A(C_0([0,1), J)\longrightarrow {\cal E}_A(Z(J, D))\longrightarrow {\cal E}_A(S(D, D/J)).
\eneq
From \eqref{PEsplit-3} and \eqref{PEsplit-4}, it follows that  composition map
${\cal E}_A(Z(J, D))\to {\cal E}_A(S(D, D/J))\to {\cal E}_A(D)$
is injective.

However, $Z(J, D)$ is homotopically equivalent to $J.$ Moreover, one sees that the composition
$J\to Z(J,D)\to S(D, D/J)\to D$ coincides with $j.$  It follows
that $j_*$ is injective.

Thus we show that, when $D/J$ is contractible,   $j_*$
is an isomorphism from ${\cal E}_A(J)$ onto ${\cal E}_A(D).$

In general, let $\imath: J\to S(D, D/J)$ be defined by
$\imath(b)=(b,0)$ for $b\in J.$   Then
$S(D, D/J)/\imath(J)\cong C_0([0,1), D/J)$ which is contractible. So, from
what has been proved, $\imath_*$ is an isomorphism.

To see that ${\cal E}_A(-)$ is split exact, consider
the short exact sequence of separable \CA s:
$$
0\to J {\stackrel{j}{\longrightarrow}} D{\stackrel{\pi}{\rightleftarrows}}_s D/J\to 0.
$$
By \ref{PEexact},
$$
{\cal E}_A(J) {\stackrel{j}{\longrightarrow}} {\cal E}_A(D){\stackrel{\pi}{\longrightarrow}} {\cal E}_A( D/J)
$$
is exact in the middle. Since $\pi\circ s={\rm id}_{D/J},$
we check that $\pi_*\circ s_*=({\rm id}_{D/J})_*.$

It remains to show that $j_*$ is injective. Using the exact sequence
$$
{\cal E}_A(S(D/J))\to {\cal E}_A(S(D, D/J))\to {\cal E}_A(D),
$$
and identifying ${\cal E}_A(J)$ with ${\cal E}_A(S(D,D/J)),$ we see that ${\rm ker}\, j_*\subset
{\rm im}\, (\imath_1)_*$ where
$\imath_1: S(D/J)\to S(D,D/J)$ is the embedding.

 Let
 $$
 I=\{(s(b(0)), b)\in S(D, D/J): b\in C_0([0,1), D/J)\},
 $$
 where $s$ is the split map given above.
 Since $\pi\circ s={\rm id}_{D/J},$ $I\cong C_0([0,1), D/J)$ which is contractible.
 On the other hand, ${\rm im}\, \imath_1\subset I.$  Therefore $(\imath_1)_*=0.$
 Thus ${\rm ker}\, j_*=0.$ In other words, $j_*$ is injective.

\end{proof}

\section{An existence theorem}

\begin{df}\label{DbetaA}
Fix $A$ as in \ref{DFixA}. We assume that $A$ satisfies the UCT.
There is a \hm\, $\bt_A^B$ from
${\cal E}_A(B)$ to
$KL(A,B)$ defined as follows.

We will identify $KL(A,C)$ with  ${\rm Hom}_{\Lambda}(\underline{K}(A), \underline{K}(C))$
for any separable \CA\, $C$ (see \cite{DL}).
Let $\la \phi\ra\in {\cal E}_A(B):={\cal E}(A,B)$ be
represented  by an asymptotic morphism $\{\phi_n\}.$
{{Recall  (see \ref{DE(AB)}) that  $\beta_A(\la \phi_n\ra)=\lambda_B\circ \af$ is well defined which defines a \hm\, $\bt_A^B: {\cal E}_A(B)
\to KL(A,B).$}}
Consequently $\beta_A^B$ is a morphism which maps \CA\,
$B$ to the abelian
group
$ {\rm Hom}_{\Lambda}(\underline{K}(A), \underline{K}(B\otimes \zo)).$
If $B$ and $C$ are two \CA s and $h: B\to C$ is a \hm\, we have the following
commutative diagram:
 \begin{displaymath}
\xymatrix{
{\mathcal E}_A(B) \ar[r]^{{\mathcal E}_A(h)} \ar[d]_{\bt_A^B} & {\mathcal E}_A(C) \ar[d]^{\bt_A^C}\\
Hom_{\Lambda}(A, B\otimes \zo\otimes {\cal K})
\ar[r]_{[{\rm h}]} & Hom_{\Lambda}(A, C\otimes \zo\otimes {\cal K}).
}
\end{displaymath}
It follows  that
$$
\bt: {\cal E}_A(-)\to Hom_{\Lambda}(\underline{K}(A), \underline{K}(-\otimes \zo))
$$
is a natural transformation.

\end{df}

\begin{thm}\label{Thigson}
The transformation $\bt_A$ maps ${\cal E}_A(B)$ onto $Hom_{\Lambda}(A, B\otimes \zo)$
for
each separable \CA\, $B,$  if $A$ satisfies the UCT.
\end{thm}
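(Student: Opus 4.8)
The plan is to prove surjectivity by combining the functorial formalism of the previous section with an explicit construction of asymptotic sequential morphisms realizing prescribed total $K$-theory. First I would set up the formal reduction. By \ref{Thomtopy}, \ref{PEexact} and \ref{PEsplit}, $B\mapsto{\cal E}_A(B)$ is a stable, homotopy invariant, split exact functor on separable \CA s which is also half-exact; the functor $B\mapsto\mathrm{Hom}_\Lambda(A,B\otimes\zo\otimes{\cal K})$ has the same properties (it is a $KK$-type functor in $B$: homotopy invariant, stable, half-exact), and $\beta_A$ is a natural transformation between them by \ref{DbetaA}. Since $\zo$ satisfies the UCT with $K_0(\zo)=\Z$ and $K_1(\zo)=\{0\}$, it is $KK$-equivalent to $\C$, so $\mathrm{Hom}_\Lambda(A,B\otimes\zo\otimes{\cal K})$ is naturally isomorphic to $\mathrm{Hom}_\Lambda(A,B)$; and because $A$ satisfies the UCT this depends on $B$ only through $\underline{K}(B)$. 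A standard bootstrap argument (Rosenberg--Schochet, together with Higson's homotopy invariance machinery) then reduces surjectivity of $\beta_A^B$ for arbitrary separable $B$ to the two basic cases $B=\C$ and $B=C_0(\R)$: the mapping cone and Puppe exact sequences for both functors, with the naturality of $\beta_A$ and a diagram chase, propagate surjectivity through extensions and inductive limits, and a general separable $B$ is matched through a \CA\ in the bootstrap class having the same total $K$-theory.

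The analytic heart is the case $B=\C$: given $\kappa\in\mathrm{Hom}_\Lambda(A,\zo\otimes{\cal K})$ one must produce an asymptotic sequential morphism $\{\phi_n\}$, $\phi_n\colon A\to \C^{\vdash}\otimes\zo\otimes{\cal K}$, with $\beta_A(\la\phi_n\ra)=\kappa$. Here I would use that $A$ is separable, amenable and satisfies the UCT to realize $\kappa$ by a sequence of approximately multiplicative \cpc s $A\to\zo\otimes{\cal K}$ inducing $\kappa$ on $\underline{K}$. This in turn I would obtain from a geometric resolution of $A$ by building blocks in the UCT bootstrap (finite direct sums and inductive limits of matrix algebras over $\C$, $C(S^1)$, $C_0(\R)$ and dimension-drop algebras): these are (weakly) semiprojective, and any prescribed $\underline{K}$-morphism out of them into the ``large'' \CA\ $\zo\otimes{\cal K}$ can be realized by honest \hm s, using that $\zo\otimes{\cal K}$ is stable and that, by \ref{DZ0C_0}, $\zo$ is locally approximated by \CA s in ${\cal C}_0$, which supplies the projections and the dimension-drop subalgebras needed to carry torsion in $K_0$. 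Assembling these block maps along the resolution, with an Eilenberg swindle to absorb the error term, yields the desired sequence $\{\phi_n\}$.

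It remains to verify that $\la\phi_n\ra$ lies in ${\cal E}_A(\C)$ and that $\beta_A(\la\phi_n\ra)=\kappa$. Membership in ${\cal E}_A(\C)$ requires the correction maps $h_n,h_n'\colon A\to\C\cdot 1\otimes\zo\otimes{\cal K}$ of \ref{DE(AB)}; these are supplied by \ref{DFixA} (a sequence of $T$-$\mathcal H$-full approximately multiplicative maps $A\to W\subset\zo$), whose asymptotic sequential class is $0$ by \ref{LEabsorb}, so adding them does not change $\la\phi_n\ra$ but renders the $\underline{K}$-bookkeeping additive on any given finite subset ${\cal P}\subset\underline{K}(A)$. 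With these in place, $\beta_A(\la\phi_n\ra)=\kappa$ follows directly from the definition in \ref{DbetaA}: by construction $[\phi_n]|_{\cal P}$, corrected by the $\C\cdot1$-valued maps, equals $\kappa|_{\cal P}$ for all large $n$, and $\lambda\circ\kappa=\kappa$ since $\kappa$ already maps into $\zo\otimes{\cal K}$ rather than its unitization. The main obstacle is the construction in the second paragraph --- producing, for the fixed UCT algebra $A$ and each prescribed total-$K$-theory datum, a sequence of approximately multiplicative maps into $\zo\otimes{\cal K}$ realizing it, a ``uniqueness-free'' existence theorem for $\underline{K}$ --- and in particular handling the torsion part and then fitting everything into the rigid format (unitization corrections, fullness conditions) demanded by an asymptotic sequential morphism.
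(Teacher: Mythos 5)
There is a genuine gap, and it occurs at both of the load-bearing points of your outline. First, your reduction of an arbitrary separable $B$ to the cases $B=\C$ and $B=C_0(\R)$ ``through a \CA\ in the bootstrap class having the same total $K$-theory'' is not available: that step presupposes that $B\mapsto {\cal E}_A(B)$ depends on $B$ only through $\underline{K}(B)$, which is exactly the kind of statement the theorem is meant to provide access to and is nowhere established (the paper only proves homotopy invariance, stability, exactness in the middle and split exactness of ${\cal E}_A(-)$; no Puppe sequences and no $\underline{K}$-invariance in the variable $B$). Moreover, propagating \emph{surjectivity} of $\bt_A$ along exact sequences by a diagram chase requires injectivity information at neighboring spots, which you do not have. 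Second, the ``analytic heart'' is not substantiated: $A$ is an arbitrary separable amenable \CA\ satisfying the UCT and the fullness hypothesis of \ref{DFixA}; the UCT gives a $KK$-equivalence with a bootstrap algebra, but it does \emph{not} give a ``geometric resolution'' of $A$ itself by semiprojective building blocks out of which prescribed $\underline{K}$-data can be realized by honest \hm s into $\zo\otimes{\cal K}$. In fact, the existence statement you are trying to build by hand (approximately multiplicative \cpc s from $A$ realizing an arbitrary $\kappa\in KL(A,B)$) is precisely Theorem \ref{TExistence} of the paper, and there it is \emph{deduced from} \ref{Thigson}; so your route is, at best, missing its key ingredient and, at worst, circular.

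For comparison, the paper's proof is purely formal and avoids any construction: since ${\cal E}_A(-)$ is homotopy invariant, stable and split exact (\ref{Thomtopy}, \ref{PEexact}, \ref{PEsplit}), Higson's characterization of $KK$ (Theorem 3.7 of \cite{Hg}) produces a \emph{unique} natural transformation $\af\colon KK(A,-)\to{\cal E}_A(-)$ with $\af_A([{\rm id}_A]_{KK})=\la{\rm id}_A\ra$. Then $\bt_A\circ\af$ and the canonical map $\gamma\colon KK(A,-)\to KL(A,-)$ are two natural transformations agreeing on $[{\rm id}_A]$, so by the uniqueness clause $\bt\circ\af=\gamma$; since the UCT (via the universal multicoefficient theorem) makes $\gamma$ surjective onto ${\rm Hom}_\Lambda(\underline{K}(A),\underline{K}(B\otimes\zo\otimes{\cal K}))$, surjectivity of $\bt_A$ follows at once, for \emph{every} separable $B$ simultaneously. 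You invoke Higson's machinery only in passing; the missing idea is to use its universality and uniqueness to obtain the factorization $\bt\circ\af=\gamma$, which is what eliminates both the reduction step and the hand-built existence theorem from the argument.
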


\begin{proof}
By a theorem of Higson (Theorem 3.7 of \cite{Hg}),  since
${\cal E}_A(-)$ is a covariant functor from separable \CA s to abelian groups
which is homotopy invariant, stable and split exact (Section 8),
there is a unique transformation
$$
\af: KK(A,-)\to {\cal E}_A(-)
$$
such that
$
\af_A([{\rm id}_A]_{KK})=\la {\rm id}_A\ra.
$
Let $\gamma: KK(A, -)\to KL(A, -)$ be the natural transformation induced by
$\Gamma: KK(A,B)\to KL(A,B).$
We have
$$
\bt_A\circ \af_A([{\rm id}_A])=[{\rm id}_A]_{KL},
$$
where $\bt$ is defined in \ref{DbetaA}.
Since $\gamma([{\rm id}_A])=[{\rm id}_A],$ (the first $[{\rm id}_A]$ is in $KK(A,A)$
and the second is in $KL(A,A)$),  by the uniqueness of Higson's theorem (3.7 of \cite{Hg}),
$$
\bt\circ \af=\gamma.
$$
Since $\gamma(KK(A, B))=Hom_{\Lambda}(A, B\otimes \zo\otimes {\cal K}),$ if $A$ satisfies the UCT,
 $\bt_A: {\cal E}_A(B)\to KL(A,B)$
must be surjective for each $B.$

\end{proof}


\begin{lem}\label{LabsorbW}
Let $B$ a non-unital and separable simple \CA\, with continuous scale.
Let $\phi_0,\phi_1, \phi_2: {\cal W}\to M(B)/B$ be three non-zero
\hm s.
Then, for any $\ep>0,$ and any finite subset ${\cal F}\subset {\cal W},$
there exists a unitary $U\in M_2(M(B))$ such that
$$
\|\pi(U)^*\diag(\phi_1(a), \phi_0(a))\pi(U)-\diag(\phi_2(a), \phi_0(a))\|<\ep\tforal a\in {\cal F}.
$$
\end{lem}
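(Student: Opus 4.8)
\textbf{Proof plan for Lemma \ref{LabsorbW}.}

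The plan is to reduce the statement to a known absorption/uniqueness property for homomorphisms from $W$ into a corona-type algebra. The key structural facts I would exploit are: (i) $W$ is a separable amenable simple stably projectionless \CA{} in the UCT class with a unique tracial state and trivial $K$-theory; (ii) $B$ is non-unital, separable, simple, with continuous scale, so $M(B)/B$ is simple (by a theorem on coronas of $\sigma$-unital simple \CA s with continuous scale) and has real rank zero or at least the kind of good comparison properties that make it ``large''; and (iii) any two non-zero homomorphisms from $W$ into such a corona that induce the same data on $K$-theory and traces are approximately unitarily equivalent. Since $K_i(W)=0$ and $W$ has a unique trace, there is essentially no invariant to match: the $\underline K$-maps are automatically zero, and the tracial data is determined by where the unique tracial state of $W$ goes. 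So the heart of the matter is a uniqueness theorem for maps $W\to M(B)/B$, combined with the ``adding a large absorbing summand'' trick.

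First I would set $C=M(B)/B$ and $\pi\colon M(B)\to C$ the quotient map, and observe that $\diag(\phi_1,\phi_0)$ and $\diag(\phi_2,\phi_0)$ are both non-zero homomorphisms $W\to M_2(C)=M_2(M(B))/M_2(B)$, and that $M_2(M(B))=M(M_2(B))$ with $M_2(B)$ again non-unital, separable, simple, with continuous scale. So it suffices to prove: any two non-zero homomorphisms $\Psi_1,\Psi_2\colon W\to M(D)/D$ (here $D=M_2(B)$) with $\Psi_i$ sharing the same ``$\phi_0$-part'' are approximately unitarily equivalent by a unitary lifting to $M(D)$. The role of the common summand $\phi_0$ is exactly to guarantee that, after adding it, each map is ``full enough'' and ``absorbing enough'' — it plays the role of an infinite repeat. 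Concretely, I would show $\diag(\phi_0,\phi_0)$ is approximately unitarily equivalent to $\phi_0$ inside $C$ (using that $\phi_0$ is a non-zero map of a simple \CA{} into a simple corona with continuous scale, so $\phi_0\sim \phi_0\oplus\phi_0\oplus\cdots$ in an approximate sense via a Cuntz–Pedersen / absorption argument), so that $\diag(\phi_i,\phi_0)$ absorbs $\diag(\phi_0,\phi_0)$ and hence absorbs $\phi_0$ on a second coordinate; this is the standard Elliott intertwining setup for an absorption lemma.

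The main work is then the uniqueness input. Here I would invoke, in the corona $C$, the classification of homomorphisms from $W$: because $W\in {\cal C}_0^{0'}$-limits (it is an inductive limit of Razak-type building blocks, $W=\lim (A(m_i,(a_i+1)m_i),\omega_{i,j})$ as recalled in \ref{Dcc1}), a map $W\to C$ is, up to $\ep$ on a finite set ${\cal F}$, a map from one of the building blocks $A(m_i,(a_i+1)m_i)\in {\cal C}_0^0$, and for such building blocks approximate unitary equivalence is governed by the trace and the (trivial) $K$-theory. Since $\Psi_1$ and $\Psi_2$ agree on the $\phi_0$-summand, which dominates the tracial picture, and since $K_*(W)=0$, the relevant uniqueness theorem (a corona version of the uniqueness results for ${\cal C}_0'$-algebras, or more directly an application of \ref{TUNIq}-type arguments transplanted to $M(D)/D$) yields a unitary $\bar u\in C$ with $\|\bar u^*\Psi_2(a)\bar u-\Psi_1(a)\|<\ep$ for $a\in{\cal F}$. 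Finally, since we are in a corona algebra, the unitary group of $C$ in the relevant component lifts to $M(D)$ (using $K_1$ considerations and that $C$ has good index-theoretic behaviour, e.g. vanishing of the relevant obstruction because we built in the absorbing summand $\phi_0$), so $\bar u=\pi(U)$ for some unitary $U\in M_2(M(B))$, completing the proof.

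The hard part will be making the uniqueness step precise in the corona $M(D)/D$: one must control that the two maps are genuinely close to finite-dimensional-over-an-interval building-block maps with comparable full behaviour, and handle the fact that traces on the corona form a more complicated object than $T(D)$. I expect that invoking the continuous-scale hypothesis — which makes $T(D)$ compact and forces $0\notin\overline{T(D)}^w$, and hence makes the corona ``tracially rigid'' — together with the fact that $\phi_0$ is already a fixed common non-zero piece, will be exactly what tames this; but verifying the lifting of the implementing unitary and checking that $\phi_0\oplus\phi_0\sim\phi_0$ in the appropriate approximate sense are the two places where care is needed, and they are the real obstacles rather than the $K$-theory bookkeeping, which is trivial here.
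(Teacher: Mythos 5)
Your overall shape (reduce to a uniqueness statement in the corona, use $\phi_0$ as an absorbing summand, then lift the implementing unitary) is in the right spirit, but the concrete mechanisms you propose have genuine gaps. The structural fact you are missing is that, for a separable simple $B$ with continuous scale, $M(B)/B$ is not merely simple with ``good comparison'': it is purely infinite simple (this is the input from \cite{Lncs1} that the paper uses). In particular the corona has no traces at all, so your worries about ``tracially rigid'' coronas and your plan to transplant a \ref{TUNIq}-type uniqueness theorem are misdirected: the hypotheses of \ref{TUNIq} (codomain in ${\cal D}$ with continuous scale, tracial and $U/CU$ conditions) simply do not hold in $M(M_2(B))/M_2(B)$. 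The uniqueness input that actually works is a stable uniqueness theorem with fullness control (Cor.~7.11 of \cite{GLp1}): one needs $\phi_1\oplus(\text{$K$ copies of a full map})\sim\phi_2\oplus(\text{$K$ copies of the same full map})$. Relatedly, your step ``$\phi_0\oplus\phi_0\sim\phi_0$ via a Cuntz--Pedersen absorption argument'' is both unjustified and unnecessary; the paper instead uses $W\otimes Q\cong W$ to rewrite $\phi_0$, up to approximate unitary equivalence, as $\diag(\phi_{0,1},\dots,\phi_{0,1})$ with exactly $K$ copies of a $T$-full map $\phi_{0,1}$, where $K$ is the constant demanded by the stable uniqueness theorem, and uses Pedersen's double orthogonal complement theorem \cite{PedSW} together with pure infiniteness to place $\phi_1,\phi_2$ and $\phi_{0,1}$ inside a common unital hereditary corner $B_{00}$ where that theorem applies. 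Without either this splitting or a proved absorption statement, your uniqueness step does not go through.

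The lifting step is the second gap. The unitary $u$ produced by any such uniqueness theorem carries an index obstruction in $K_1(M_2(M(B))/M_2(B))\cong K_0(B)$, and ``having built in the absorbing summand $\phi_0$'' does nothing to make it vanish; as stated, your appeal to ``good index-theoretic behaviour'' is not an argument. The paper's fix is concrete: $u$ is chosen inside $M_{K+1}(B_{00})$, a proper unital corner of $M_2(M(B)/B)$, and since the corona is purely infinite simple one can choose a unitary $v$ in the complementary corner with $u\oplus v\in U_0(M_2(M(B)/B))$; a unitary in the connected component of the identity then lifts to a unitary $U\in M_2(M(B))$. So the room needed to kill the $K_1$ class comes from working in a proper corner plus pure infiniteness, not from the presence of $\phi_0$. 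With these two repairs --- replacing your tracial/\ref{TUNIq} uniqueness and the $\phi_0\oplus\phi_0\sim\phi_0$ claim by the $W\otimes Q\cong W$ splitting and Cor.~7.11 of \cite{GLp1}, and replacing the vague lifting claim by the corner-plus-$U_0$ argument --- your outline becomes essentially the paper's proof.
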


\begin{proof}
It follows from \cite{Lncs1} that $M(B)/B$ is simple and purely infinite.

Fix a strictly positive element $a_W\in {\cal W}$ with $\|a_W\|=1.$
Let $b_0=\phi_0(a_W)$ and let $B_0=\overline{b_0(M(B)/B)b_0}.$

Since ${\cal W}$ and $B_0$ are both simple, there is  a map $T: {\cal W}_+\setminus\{0\}\to \N\times \R_+\setminus \{0\}$
such that $\phi_0: {\cal W}\to B_0$ is
$T$-${\cal W}_+\setminus\{0\}$-full.
Let $W_0=\phi_0({\cal W}).$ So $b_0\in W_0.$

Let ${\cal H}\subset {\cal W}_+\setminus \{0\}$ be a finite subset and $K\ge 1$  be an integer  as required by
{{Cor. 3.16 of \cite{eglnkk0}}}
for the
above given $T,$ $\ep/2$ (in place of $\ep$) and ${\cal F}.$

Note that ${\cal W}\otimes Q\cong {\cal W}.$ Moreover, the map from ${\cal W}$ to ${\cal W}\otimes {{1_Q}}$ which maps  $a$ to  $a\otimes {{1_Q}}$
then to ${\cal W}$ is approximately inner. To simplify notation, \wilog, we may assume
that $\phi_0: {\cal W}\to W_0\otimes Q$ has the form $\phi_0(a)\otimes {{1_Q}}.$
Let $e_1, e_2,...,e_K\in Q$ be mutually orthogonal and mutually equivalent projections
such that
$\sum_{i=1}^Ke_i=1_Q.$
Define $\phi_{0,i}: {\cal W}\to W_0\otimes e_i$ by
$$
\phi_{0,i}(a)=\phi_0(a)\otimes e_i\rforal a\in {\cal W}.
$$
Put $B_{0,1}=\overline{(b_0\otimes e_1){{(M(B)/B)}}(b_0\otimes e_1)}.$

Let $b_1=\phi_1(a_W), \, b_2=\phi_2(a_W)\in M(B)/B.$ Since ${\cal W}$ is projectionless,
${\rm sp}(a_W)=[0,1].$  Thus, since ${\cal W}$ is simple,
$b_1$ can not be invertible in
$M(B)/B.$  {{Let $D_1=\overline{b_1Ab_1}.$}} By Pedersen's double orthogonal complement theorem (Theorem 15 of \cite{PedSW}),
there is a projection $E_1\in {{M(B)/B}}$
such that $1_{M(B)/B}-E_1\in {{D^{\perp}}}$ {{is not zero}}
and $E_1b_1=b_1E_1=b_1.$
Similarly, one obtains a projection $E_2\in M(B)/B$
such that $1_{M(B)/B}-E_2\not=0$ and $E_2b_2=b_2E_2=b_2.$
Using the fact that $M(B)/B$ is   purely infinite simple again, one obtains a  unitary $w_1\in M(B)/B$ such that
$$
w_1^*E_2 w_1\le E_1.
$$
Thus, \wilog,  {{replacing $\phi_2$ by ${\rm{Ad}}\, w_1\circ \phi_2$,}} one may assume that
$E_2\le E_1.$

Since $M(B)/B$ is purely infinite and simple,   ${{E_1}}\lesssim p_0'$ for some
projection $p_0'\in B_0.$  Thus we obtain a  unital hereditary \SCA\,
$B_{00}\subset M(B)/B$ such that, we may view that
$\phi_1, \phi_2: {\cal W}\to B_{00}$ and $\phi_{0,1}: {\cal W}\to B_{00}$ is a
$T$-${\cal W}_+\setminus\{0\}$-full.
Moreover, we view
$$
\phi_0(a)=\diag(\overbrace{\phi_{0,1}(a),\phi_{0,1}(a),...,\phi_{0,1}(a)}^K)\rforal a\in {\cal W}.
$$
Furthermore, $M_{K+1}(B_{00})$ is a unital \SCA\, of $M_2(M(B)/B)$
such that $1_{M_{K+1](B_{00})}}$  is not the unit of $M_2(M(B)/B).$
By applying {{3.16 of \cite{eglnkk0},}}
%
there is a unitary $u\in M_{K+1}(B_{00})\subset M_2(M(B)/B)$
such that
$$
\|u^*(\diag(\phi_1(a), \phi_0(a)))u-\diag(\phi_2(a), \phi_0(a))\|<\ep\rforal a\in {\cal F}.
$$
Since $1_{M_2}-1_{M_{K+1}(B_{00})}\not=0$ and $M_2(M(B)/B)$ is purely infinite and simple,
there exists a unitary $v\in (1_{M_2}-1_{M_{K+1}(B_{00})})(M(B)/B)(1_{M_2}-1_{M_{K+1}(B_{00})})$
such that $u\oplus v\in U_0(M_2(M(B)/B)).$
Thus we may assume that $u$ is a unitary in $U_0(M_2(M(B)/B)).$
{{Hence}} there is a unitary $U\in M_2(M(B))$ such that
$\pi(U)=u.$

\end{proof}

\begin{NN}[{\bf Construction of $\phi_W$}]\label{ConstphiW}
Let $B$ be a non-unital separable  simple \CA\, with  stable rank one, with $T(B)\not=\emptyset$ and with continuous scale.

Let $\{e_n\}\subset B\otimes {\cal Z}_0$ be an approximate identity
with
$$
e_{n+1}e_n=e_ne_{n+1}=e_n\tforal n\in \N.
$$
We may assume that $e_{n+1}-e_n\not=0$ for all $n\ge 1.$
Choose  $k(n)\ge 1$ such that
$$
\inf\{d_\tau(e_{4n}-e_{4n-1}): \tau\in T(B\otimes \zo)\}>{1\over{k(n)}},\,\,\, n=1,2,....
$$
Note that $\sum_{n=1}^{\infty}{1\over{k(n)}}<1.$
Put  $B_n:=\overline{(e_{4n}-e_{4n-1})(B\otimes \zo) (e_{4n}-e_{4n-1})}.$
Fix a strictly positive element $a_w\in {\cal W}$ with $\|a_w\|=1.$


It follows from \cite{Rl} that there is a \hm\,
$\phi_{0,n}: {\cal W}\to B_n$ such that
$$
d_\tau(\phi_{0,n}(a_w))={1\over{k(n)}}\rforal \tau\in T(B).
$$
Let $\phi_{even}, \phi_{odd}, \phi_W:
{\cal W}\to M(B\otimes \zo)$ be defined
by
\beq
&&\phi_{even}=\sum_{n=1}^{\infty} \phi_{0,2n},\,\,\,
\phi_{odd}=\sum_{n=1}^{\infty} \phi_{0, 2n+1}\andeqn\\
&&\phi_W=\sum_{n=1}^{\infty} \phi_{0,n}=\diag(\phi_{even}, \phi_{odd}).
\eneq

%
\end{NN}

\begin{prop}\label{AnuniqW}
Let $B$ be a non-unital separable  simple \CA\, with  stable rank one, with $T(B)\not=\emptyset$ and with continuous scale.
Fix an integer $k\ge 1.$
Let $j_{w,z}: {\cal W}\to  M_k(\zo)$ be an embedding  which maps strictly positive elements
to strictly positive elements
and $d: \zo\to \C\cdot 1_{M_k({\tilde B})}\otimes \zo\subset
M_k({\tilde B}\otimes \zo)\subset M(M_k(B\otimes \zo))$ be the embedding defined by $d(z)=1\otimes z$ for all $z\in \zo.$

Let $\ep>0$ and ${\cal F}\subset {\cal W}$ be a finite subset.
Then there is an integer $K\ge 1$ and a unitary $u\in M_{K+1}(M(M_k(B\otimes \zo)))$ such that
$$
\|u^*\diag(d_K\circ j_{w,z}(a),0)u-(d_K\circ j_{w,z}(a)\oplus \phi_{odd}(a))\|<\ep\tforal a\in {\cal F},
$$
where
$$
d_K(z)=\diag(\overbrace{d(z),d(z),...,d(z)}^K)\tforal z\in \zo.
$$

\end{prop}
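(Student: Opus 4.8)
\textbf{Proof proposal for Proposition \ref{AnuniqW}.}
The plan is to absorb the extra summand $\phi_{odd}$ into $d_K\circ j_{w,z}$ by working in the corona algebra $M(M_k(B\otimes\zo))/M_k(B\otimes\zo)$, where the relevant uniqueness theory is available in the purely infinite simple setting. First I would observe that $M_k(B\otimes\zo)$ is itself a non-unital separable simple \CA\, with continuous scale (it is stably isomorphic to $B\otimes\zo$, which has continuous scale by \ref{Ltensorscale} and the hypothesis on $B$), so by \cite{Lncs1} its corona $M(M_k(B\otimes\zo))/M_k(B\otimes\zo)$ is simple and purely infinite. Let $\pi$ denote the quotient map. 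The element $d_K\circ j_{w,z}$ (for any fixed $K$) composed with $\pi$ gives a \hm\, $W\to M(M_k(B\otimes\zo))/M_k(B\otimes\zo)$, and likewise $\phi_{odd}$ descends: note that $\phi_{odd}=\sum_n\phi_{0,2n+1}$ is an ``infinite repeat'' type map into $M(M_k(B\otimes\zo))$ built as in \ref{ConstphiW}, hence $\pi\circ\phi_{odd}$ is a non-zero \hm\, into the corona.

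Next I would invoke Lemma \ref{LabsorbW} (applied with $B$ there taken to be $M_k(B\otimes\zo)$): given the three non-zero \hm s $\phi_0:=\pi\circ d_K\circ j_{w,z}$ (used as the ``infinite tail''), $\phi_1:=\pi\circ d_K\circ j_{w,z}$ and $\phi_2:=\pi\circ(d_K\circ j_{w,z}\oplus\phi_{odd})$ from $W$, for any $\ep>0$ and finite ${\cal F}\subset W$ there is a unitary $\bar u$ in a matrix algebra over the corona such that ${\rm Ad}\,\bar u$ conjugates $\diag(\phi_1,\phi_0)$ to $\diag(\phi_2,\phi_0)$ up to $\ep$ on ${\cal F}$. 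Here I would be a bit careful: Lemma \ref{LabsorbW} as stated produces a unitary $U\in M_2(M(B))$; I would instead use its proof, which actually provides, for a suitable integer $K_0$, a unitary in $M_{K_0+1}$ over the corona absorbing one copy of $\phi_{odd}$ into the ``$K_0$-fold amplification'' of the tail. Choosing $K$ large enough (a multiple of that $K_0$) and regrouping the $K$ copies of $d\circ j_{w,z}$ into blocks matching the amplification, the conjugating unitary $\bar u$ lives in $M_{K+1}(M(M_k(B\otimes\zo))/M_k(B\otimes\zo))$ and satisfies
$$
\|\bar u^*(d_K\circ j_{w,z}(a))\bar u-(d_K\circ j_{w,z}(a)\oplus\phi_{odd}(a))\|<\ep/2\qquad\text{for all }a\in{\cal F}
$$
at the level of the corona. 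Finally, since $M_{K+1}(M(M_k(B\otimes\zo)))\to M_{K+1}(M(M_k(B\otimes\zo))/M_k(B\otimes\zo))$ is surjective on unitaries connected to the identity (as in the last paragraph of the proof of \ref{LabsorbW}, using pure infiniteness and simplicity of the corona to arrange $\bar u\in U_0$), we lift $\bar u$ to a unitary $u\in M_{K+1}(M(M_k(B\otimes\zo)))$, and a standard perturbation argument transports the corona-level estimate back to an honest norm estimate in $M_{K+1}(M(M_k(B\otimes\zo)))$ with constant $\ep$, which is exactly the asserted inequality.

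The main obstacle I anticipate is the bookkeeping needed to match the integer $K$ with the amplification constant produced by (the proof of) Lemma \ref{LabsorbW} and by the stable uniqueness input it relies on — i.e.\ making sure the unit of the matrix corner we conjugate inside is strictly below the unit of the ambient matrix corona so that the constructed unitary can be taken in the connected component of the identity and hence lifts. A secondary point to check is that $\pi\circ\phi_{odd}$ is genuinely non-zero: this follows because each $\phi_{0,2n+1}$ maps a strictly positive element of $W$ to a positive element of $B_{2n+1}$ with $d_\tau$-value $1/k(2n+1)>0$, and infinitely many of the $B_{2n+1}$ are not absorbed by $M_k(B\otimes\zo)$, so $\phi_{odd}(a_w)$ is not compact relative to $M_k(B\otimes\zo)$. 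Everything else — simplicity and pure infiniteness of the corona, existence of the embeddings, and the final approximate-unitary perturbation — is either quoted directly from \cite{Lncs1}, \cite{GLp1}, \cite{Rl}, or is routine.
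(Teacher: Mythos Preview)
Your approach has a genuine gap at the very last step. Working in the corona $M(M_k(B\otimes\zo))/M_k(B\otimes\zo)$ and invoking Lemma \ref{LabsorbW} (or its proof) gives you, after lifting, a unitary $u\in M_{K+1}(M(M_k(B\otimes\zo)))$ for which
\[
u^*\bigl(d_K\circ j_{w,z}(a)\bigr)u-\bigl(d_K\circ j_{w,z}(a)\oplus\phi_{odd}(a)\bigr)=c_a+r_a
\]
with $c_a$ in the ideal $M_{K+1}(M_k(B\otimes\zo))$ and $\|r_a\|<\ep/2$. There is no ``standard perturbation argument'' that kills $c_a$: the ideal elements $c_a$ can have large norm, and adjusting $u$ by a unitary of the form $1+$(ideal) does not in general reduce them. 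Lemma \ref{LabsorbW} itself only asserts a corona-level estimate (note the $\pi(U)$ in its conclusion), so you cannot get more out of it here.

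The paper avoids this by never passing to the corona. It observes that $M(M_k(B\otimes\zo))$ has $M_k(B\otimes\zo)$ as its unique nontrivial closed ideal (continuous scale), and that $d\circ j_{w,z}$ sends a strictly positive element of $W$ to a strictly positive element of $\C\cdot 1\otimes\zo$, which lies outside that ideal; hence $d\circ j_{w,z}(W)$ is full in the unital \CA\ $M(M_k(B\otimes\zo))$ and one gets a fullness map $T$. Then Corollary~7.11 of \cite{GLp1} (stable uniqueness for maps out of $W$, whose $K$-theory is trivial) is applied \emph{directly in the multiplier algebra}: it produces the integer $K$ and the unitary $u\in M_{K+1}(M(M_k(B\otimes\zo)))$ with the desired genuine norm estimate, no lifting or perturbation required.
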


\begin{proof}
The proof has the same spirit as that of \ref{LabsorbW}.
Keep in mind  that $B$ has continuous scale.
Therefore $M(M_k(B\otimes \zo))$ has only one (closed) ideal $M_k(B\otimes \zo)$
{{(see \cite{Lncs1}).}}
Since ${\cal W}$ is simple and
$d\circ j_{w,z}$ maps a strictly positive element
to that of $\C \cdot 1_{M_k({\tilde B})}\otimes \zo$  which is not in
$M_k(B\otimes \zo),$ $d\circ j_{w,z}(a)$ is full in $M(M_k(B\otimes \zo))$ for every
$a\in {\cal W}_+\setminus \{0\}.$
There is a map $T: {\cal W}_+\setminus \{0\}\to \N\times \R_+\setminus \{0\}$
such that
$d\circ j_{w,z}$ is
$T$-${\cal W}_+\setminus \{0\}$-full in $M(M_k(B\otimes \zo)).$

Let $K\ge 1$ be the integer required by
{{Cor. 3.16 of \cite{eglnkk0}}} for $\ep/2$ (in place of $\ep$),
${\cal F}$ and $T.$
By applying {{3.16 of \cite{eglnkk0}}}
(and considering  $\phi_{odd}$ and zero map), one obtains (note that $M(M_k(B\otimes \zo))$ is unital)
a unitary $v\in M_{K+1}(M(M_k(B\otimes \zo)))$ such that
$$
\|u^*\diag(d_K\circ j_{w,z}(a),0)u-(d_K\circ j_{w,z}(a)\oplus \phi_{odd}(a))\|<\ep\tforal a\in {\cal F}.
$$

\end{proof}

%


\begin{lem}\label{Lesthalf}
For any $\ep>0,$ there is $\dt>0$ satisfying the following:
for any $e\in A_+$ with $\|e\| \le 1$ and any $a\in A$ with
$\|a\| \le 1,$
$$
\|e^{1/2}ae^{1/2}-ea\|<\ep
$$
whenever
$
\|ea-ae\|<\dt.
$
\end{lem}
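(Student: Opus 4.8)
The statement is a standard "approximate commutation improves under the functional-calculus square root" lemma, and the natural route is a compactness / contradiction argument exploiting that $t \mapsto t^{1/2}$ is continuous on $[0,1]$. First I would reduce to the universal $C^*$-algebra picture: the pair of norm inequalities only involves the two elements $e$ and $a$ with $\|e\|,\|a\|\le 1$, $e\ge 0$, and the relation $\|ea-ae\|<\delta$, so it suffices to prove the statement with $A$ replaced by the (non-unital) universal $C^*$-algebra $\mathcal U$ generated by one positive contraction $x$ and one contraction $y$; any counterexample in an arbitrary $A$ pulls back to one there. Working in the unitization $\tilde{\mathcal U}$ if convenient, the quantity $\|x^{1/2}yx^{1/2}-xy\|$ is a fixed continuous function (indeed a norm of a fixed element built from $x,y$ via continuous functional calculus) on the state space, and depends only on $x,y$.

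\textbf{Main argument.} Suppose the lemma fails for some $\ep>0$. Then there are $C^*$-algebras $A_n$, positive contractions $e_n\in A_n$ and contractions $a_n\in A_n$ with $\|e_na_n-a_ne_n\|<1/n$ but $\|e_n^{1/2}a_ne_n^{1/2}-e_na_n\|\ge\ep$ for all $n$. Form $A_\infty=\prod_n A_n/\bigoplus_n A_n$ with quotient map $\pi$, set $e=\pi(\{e_n\})$, $a=\pi(\{a_n\})$; then $e$ is a positive contraction, $a$ a contraction, and $ea-ae=0$, i.e. $e$ and $a$ commute in $A_\infty$. Since $e$ is normal and commutes with $a$, so does every element of $C^*(e)\subset A_\infty$ obtained by continuous functional calculus; in particular $e^{1/2}$ commutes with $a$, whence $e^{1/2}ae^{1/2}=e^{1/2}e^{1/2}a=ea$. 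Therefore $\|e^{1/2}ae^{1/2}-ea\|=0$ in $A_\infty$. But $\|e^{1/2}ae^{1/2}-ea\|=\limsup_n\|e_n^{1/2}a_ne_n^{1/2}-e_na_n\|\ge\ep>0$, using that the square root is norm-continuous on positive contractions so $\{e_n^{1/2}\}$ represents $e^{1/2}$, and that the norm in the quotient is the $\limsup$ of the norms of the representing sequence. This contradiction proves the lemma.

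\textbf{Where the work is.} There is essentially no hard step: the only point requiring a word of care is that the functional calculus is "continuous across the ultrapower/sequence-quotient", i.e. that if $\{e_n\}$ represents a positive contraction $e$ in $A_\infty$ then $\{f(e_n)\}$ represents $f(e)$ for any $f\in C_0((0,1])$ with $f(0)=0$ (here $f(t)=t^{1/2}$); this is standard and follows from the Stone–Weierstrass approximation of $f$ by polynomials without constant term together with $\|f(e_n)\|\le\|f\|_\infty$ uniformly. If one prefers a self-contained estimate rather than the abstract nonsense, the same content is obtained quantitatively: approximate $t^{1/2}$ uniformly on $[0,1]$ by a polynomial $p$ with $p(0)=0$, so $\|e^{1/2}-p(e)\|$ is small, note $\|p(e)a-ap(e)\|\le C(p)\|ea-ae\|<C(p)\delta$ since $p$ has no constant term, and then $\|e^{1/2}ae^{1/2}-ea\|\le \|e^{1/2}ae^{1/2}-p(e)ap(e)\|+\|p(e)ap(e)-p(e)^2a\|+\|p(e)^2a-ea\|$, each term controlled by the approximation error plus a multiple of $\delta$; choosing the polynomial first (depending only on $\ep$) and then $\delta$ small finishes it. I would present the compactness version for brevity, as it is cleaner and the quantitative version adds nothing conceptually.
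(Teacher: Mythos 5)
Your argument is correct. The paper actually states this lemma without any proof at all (it is treated as a standard, routine fact), so there is nothing to compare against; either of your two routes settles it. The central sequence-algebra argument is fine: the quotient $\prod_n A_n/\bigoplus_n A_n$ by a $*$-homomorphism intertwines the functional calculus, so $\pi(\{e_n^{1/2}\})=\pi(\{e_n\})^{1/2}$, the limit elements commute, and the quotient norm being $\limsup_n\|\cdot\|$ yields the contradiction exactly as you say. Two small remarks: the opening reduction to a "universal $C^*$-algebra generated by a positive contraction and a contraction" is both imprecise (the relation $\|ea-ae\|<\dt$ is not a relation defining a fixed universal object) and unnecessary, since your main argument already allows the algebras $A_n$ to vary, so you should simply delete that paragraph; and given the elementary nature of the statement, the quantitative version you sketch (approximate $t^{1/2}$ uniformly on $[0,1]$ by a polynomial $p$ with $p(0)=0$, use $\|e^k a-ae^k\|\le k\|ea-ae\|$, and split $\|e^{1/2}ae^{1/2}-ea\|$ into the three terms you list) is arguably the better choice to write down, as it produces an explicit $\dt$ depending only on $\ep$ and avoids any discussion of sequence algebras.
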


In the following statement  {{and the proof}} we keep notations in \ref{ConstphiW} and \ref{AnuniqW}.

\begin{thm}\label{Tappmul}
Let $A$ be a non-unital separable  amenable \CA.
Let $\ep>0$ and ${\cal F}\subset A$ be finite subset.

There exists $\dt>0$ with $\dt<\ep/2,$  a finite subset ${\cal G}\subset A$
with ${\cal F}\subset {\cal G}$ and an integer $K\ge 1$ satisfying the following:
For any ${\cal G}$-$\dt$-multiplicative \cpc\, $\phi: A\to
M_k({\tilde B}\otimes \zo)$ (for any non-unital separable
simple \CA\, $B$ with continuous scale and any integer $k\ge 1$) such that
if  there are  \hm s
$\psi_{z,w}: M_k(\zo)\to {\cal W}$ and $\psi_{w,z}: {\cal W}\to M_k(\C\cdot 1_{{\tilde B}}\otimes \zo)\cong M_k(\zo)$
which
map strictly positive elements to strictly positive elements such that
$$
\|\pi\circ (\phi(a))-(\psi_{w,z}\circ \psi_{z,w}\circ \pi\circ (\phi(a))\|<\dt\rforal a\in {\cal G},
$$
where $\pi: M_k({\tilde B}\otimes \zo)\to M_k(\C\cdot 1_{{\tilde B}}\otimes \zo)$
is the quotient map,
then there exists an ${\cal F}$-$\ep$-multiplicative \cpc\, $L_0: A\to M_{K+2}(M_k(B\otimes \zo))$
and an ${\cal F}$-$\ep$-multiplicative \cpc\, $L_1: A\to M_{K+2}(M_k({\tilde B}\otimes \zo))$
such that
$$
\|L_0(a)\oplus L_1(a)-\phi(a)\oplus d_K\circ s\circ \phi^{\pi}(a)\|<\ep\rforal a\in {\cal F},
$$
where  $\phi^{\pi}=\psi_{w,z}\circ \psi_{z,w}\circ \pi\circ \phi,$ $s: M_k(\C\cdot 1_{\tilde{B}}\otimes \zo)\to M_k({\tilde B}\otimes \zo)$
is  the nature embedding, {{and furthermore, the following are true:}}
$$
\hspace{-1in}{{(1)}}~~~~~~~~~~~~~~~~~~L_0(a)=p_m^{1/2}(\phi(a)\oplus d_K\circ s\circ \phi^{\pi}(a))p_m^{1/2}
\tforal a\in A
$$
for some $m\ge m_0,$  where  $\{p_m\}$ is an approximate identity for  $M_{K+2}(M_k(B\otimes \zo))$ and,

{{{\rm{(2)}}}}~~~
there  are  ${\cal G}$-$\dt$-multiplicative \cpc\, $L_{00}: A\to {\cal W}$ and    $L_{0,0}({\cal F})$-$\ep/2$-multiplicative  \cpc\,
$L_{w,b}: {\cal W}\to M_{K+2}(M_k({\tilde B}\otimes \zo))$ such that
$L_1=L_{w,b}\circ L_{00}.$
\end{thm}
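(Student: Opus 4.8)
\textbf{Proposal for the proof of Theorem \ref{Tappmul}.}
The plan is to use the construction of $\phi_W$ from \ref{ConstphiW} together with the absorption result \ref{AnuniqW} in a cut-down/telescoping argument across the approximate identity $\{e_n\}$ of $M_k(B\otimes\zo)$ (and the associated ideal picture $M(M_k(B\otimes\zo))$, $M(M_k(B\otimes\zo))/M_k(B\otimes\zo)$). First I would fix the data: given $\ep$ and ${\cal F}$, I apply \ref{AnuniqW} to $\ep/4$ and ${\cal F}$ (viewed as a finite subset of $W$ via the $\psi$'s) to obtain the integer $K$ and a unitary $u\in M_{K+1}(M(M_k(B\otimes\zo)))$ with
$$
\|u^*(d_K\circ j_{w,z}(a))u-(d_K\circ j_{w,z}(a)\oplus\phi_{odd}(a))\|<\ep/4\quad\text{for }a\in{\cal F}.
$$
Then I enlarge ${\cal G}$ and shrink $\dt$ so that the approximate factorization hypothesis $\|\pi\circ\phi(a)-\psi_{w,z}\circ\psi_{z,w}\circ\pi\circ\phi(a)\|<\dt$ on ${\cal G}$, combined with the choice $L_{0,0}=\psi_{z,w}\circ\pi\circ\phi$ (which is automatically ${\cal G}$-$\dt'$-multiplicative for a suitable $\dt'$ since $\psi_{z,w}$ is a \hm\ and $\pi\circ\phi$ is ${\cal G}$-$\dt$-multiplicative), lets us set $L_{w,b}=\psi_{w,z}$ composed with the appropriate embedding into $M_{K+2}(M_k({\tilde B}\otimes\zo))$, and $\phi^\pi=L_{w,b}\circ L_{0,0}$ as in the statement.

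The heart of the argument is to realize $\phi\oplus d_K\circ s\circ\phi^\pi$ (up to $\ep$ and up to a unitary, which we can absorb since $B$ has stable rank one and we pass to a large matrix amplification) as a block-diagonal sum in which one block lies genuinely in $M_{K+2}(M_k(B\otimes\zo))$ and the complementary block factors through $W$. Here is the mechanism: the map $d_K\circ s\circ\phi^\pi$ has image in $\C\cdot1_{\tilde B}\otimes\zo$, i.e. it is a ``constant at infinity'' map; its difference with a genuine cut-down $p_m^{1/2}(\cdot)p_m^{1/2}$ by an approximate identity element $p_m$ of $M_{K+2}(M_k(B\otimes\zo))$ is small on any prescribed finite set once $m$ is large, by \ref{Lesthalf} and approximate multiplicativity. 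More precisely, I would choose $m=m_0$ large enough that $p_m$ almost commutes with $\phi({\cal F})$ and with $d_K\circ s\circ\phi^\pi({\cal F})$ to within a tolerance dictated by \ref{Lesthalf}, so that setting
$$
L_0(a)=p_m^{1/2}\big(\phi(a)\oplus d_K\circ s\circ\phi^\pi(a)\big)p_m^{1/2},
$$
$L_0$ takes values in $M_{K+2}(M_k(B\otimes\zo))$ and is ${\cal F}$-$\ep$-multiplicative, while the ``remainder'' $L_1(a)=\big(\phi(a)\oplus d_K\circ s\circ\phi^\pi(a)\big)-L_0(a)$ is, modulo $M_k(B\otimes\zo)$, exactly $d_K\circ s\circ\phi^\pi(a)$ — and this factors through $W$ via $L_{w,b}\circ L_{0,0}$ by construction. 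One then checks $\|L_0(a)\oplus L_1(a)-\phi(a)\oplus d_K\circ s\circ\phi^\pi(a)\|$ is controlled: it is $0$ before grouping, and becomes $<\ep$ after the unitary conjugations from \ref{AnuniqW} are folded in and the error terms from \ref{Lesthalf} and from the hypothesis are summed.

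The main obstacle I anticipate is bookkeeping the passage between the multiplier-algebra picture (where \ref{AnuniqW} and the purely-infinite-corona arguments of \ref{LabsorbW} live) and the honest cut-down picture at finite level $m$, while keeping all three simultaneous requirements on $L_1$ — that it is ${\cal F}$-$\ep$-multiplicative, that it equals $L_{w,b}\circ L_{0,0}$ on the nose, and that $L_{0,0}$ maps into $W$ and $L_{w,b}$ maps into $M_{K+2}(M_k({\tilde B}\otimes\zo))$ — compatible with the definition $L_0(a)=p_m^{1/2}(\cdots)p_m^{1/2}$. The resolution is to first define $L_{0,0}=\psi_{z,w}\circ\pi\circ\phi$ and $L_{w,b}$ as the amplification of $\psi_{w,z}$ \emph{before} cutting down, verify $d_K\circ s\circ\phi^\pi=d_K\circ s\circ L_{w,b}\circ L_{0,0}$ has image in the scalar-at-infinity subalgebra so that its ``ideal part'' is literally itself, and then define $L_1$ as the complement of $L_0$ in the fixed sum — the identity $L_0+L_1=\phi\oplus d_K\circ s\circ\phi^\pi$ then holds by fiat, and the only genuine estimates needed are (a) $L_0$ lands in $M_{K+2}(M_k(B\otimes\zo))$ (from largeness of $m$ and \ref{Lesthalf}), (b) $L_0,L_1$ are approximately multiplicative on ${\cal F}$ (standard, from almost-commutation of $p_m$), and (c) the unitary equivalence from \ref{AnuniqW} converts the trivial grouping into the asserted one with total error $<\ep$. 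A choice of $\dt<\ep/2$ and of ${\cal G}\supset{\cal F}$ large enough to make $\psi_{z,w}\circ\pi\circ\phi$ usefully multiplicative closes the loop.
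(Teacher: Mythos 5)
There is a genuine gap, and it sits exactly at the point you dismiss as holding ``by fiat.'' The theorem requires $L_1$ to be a \cpc\ that factors \emph{exactly} as $L_{w,b}\circ L_{00}$ through $W$ (this exactness is what is used later, e.g.\ in the proof of Theorem \ref{TExistence}, to conclude $[L_1]|_{\cal P}=0$). Your $L_1(a)=\big(\phi(a)\oplus d_K\circ s\circ \phi^{\pi}(a)\big)-p_m^{1/2}\big(\phi(a)\oplus d_K\circ s\circ\phi^{\pi}(a)\big)p_m^{1/2}$ fails on both counts: a difference of this shape is not even positivity-preserving in general (for $x\ge 0$ and a positive contraction $p$ one does \emph{not} have $p^{1/2}xp^{1/2}\le x$ unless $p$ and $x$ commute), and it contains cut-down pieces of $\phi(a)$ itself, so it only agrees with $d_K\circ s\circ\phi^{\pi}$ \emph{modulo} the ideal $M_{K+2}(M_k(B\otimes\zo))$ — which gives no exact factorization through $W$ and hence no control of $[L_1]$. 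Relatedly, your claim that $p_m$ almost commutes with $\phi({\cal F})$ and with $d_K\circ s\circ\phi^{\pi}({\cal F})$ ``once $m$ is large, by \ref{Lesthalf} and approximate multiplicativity'' is unjustified: an approximate identity of the ideal has no reason to almost commute with elements of $M_{K+2}(M_k({\tilde B}\otimes\zo))$ whose images in the quotient are nonzero scalars-tensor-$\zo$, unless one invokes quasi-centrality — and even quasi-centrality would not repair the exact $W$-factorization of $L_1$.

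The paper's mechanism, which your sketch mentions but does not actually use, is to replace $d_K\circ s\circ\phi^{\pi}$ (up to the unitaries $U_1,U_2$ supplied by \ref{AnuniqW}) by $L_{1,1}=\phi_W\circ\phi^{W}$, where $\phi_W:W\to M(M_k(B\otimes\zo))$ is the block-diagonal homomorphism of \ref{ConstphiW} built from orthogonal corners $\overline{(e_{4n}-e_{4n-1})(\cdot)(e_{4n}-e_{4n-1})}$ of the approximate identity, and $\phi^{W}=\psi_{z,w}\circ\pi\circ\phi$. Because of this block structure, $1-p_m$ (with $p_m=U^*\bar e_m U$ and $m\in S=\N\setminus\{4n,4n+1\}$) commutes \emph{exactly} with $U^*L_{1,1}(a)U$; this exact commutation is what yields, first, the derived estimate $\|[p_m,\Phi(a)]\|<8\dt$ for $\Phi=\phi\oplus d_K'\circ s\circ\phi^{\pi}$ (since $U^*L_{1,1}U$ agrees with $\Phi$ up to small errors lying in the ideal, which are killed by $1-p_m$ for large $m$), and second, the legitimate definition $L_1(a)=(1-p_m)^{1/2}\,U^*L_{1,1}(a)U\,(1-p_m)^{1/2}$, which is genuinely cpc and factors on the nose as $L_{w,b}\circ L_{00}$ with $L_{00}=\phi^{W}:A\to W$ and $L_{w,b}(w)=(1-p_m)^{1/2}U^*\phi_W(w)U(1-p_m)^{1/2}$. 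In other words, the order of logic in the paper is the reverse of yours: the $W$-factoring, tail-commuting map $\phi_W\circ\phi^{W}$ is the source of the almost-commutation and of the complement $L_1$, rather than the commutation being assumed and the $W$-factorization read off from the quotient. Without this step your construction produces neither a cpc $L_1$ nor the exact factorization the statement (and its later applications) requires.
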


\begin{proof}
Fix $1/2>\ep>0$ and a finite subset ${\cal F}\subset A.$ We may assume
that ${\cal F}\subset A^{\bf 1}.$

Let ${\cal G}=\{ab: a, b\in {\cal F}\}\cup {\cal F}.$
Let $\{e_n\}\subset M_k(B)$ be  an approximate identity as described in \ref{ConstphiW}.
Let $\dt_1>0$ (in place of $\dt$) be in \ref{Lesthalf} for $\ep/64.$

Let $\dt=\min\{\dt_1/{{2^{12}}}, \ep/{{2^{12}}}\}.$
We view $M_k({\tilde B}\otimes \zo)$ as a  \SCA\, of $M(M_k(B\otimes \zo)).$
Suppose that $\phi: A\to M_k({\tilde B}\otimes \zo)$ is ${\cal G}$-$\dt$-multiplacative \cpc.
Suppose that there are  \hm s
$\psi_{z,w}:  M_k(\zo) \to {\cal W}$ and $\psi_{w,z}: {\cal W}\to M_k(\C\cdot 1_{{\tilde B}}\otimes \zo)$
such that
\beq\label{Tappmul-9}
\|\pi\circ \phi(a)-(\psi_{w,z}\circ \psi_{z,w}\circ \pi\circ (\phi(a)))\|<
\dt\rforal a\in {\cal G}.
\eneq

Recall that  $\phi^\pi=\psi_{w,z}\circ \psi_{z,w}\circ \pi\circ \phi.$
Put $\phi^W=\psi_{z,w}\circ \pi\circ \phi.$ Thus $\psi_{w,z}\circ \phi^W=\phi^{\pi}.$
Let $K$ be  the integer in \ref{AnuniqW} associated with
$\dt$ (in place of $\ep$) and ${{\phi^W({\cal G})}}\subset {\cal W}$ (in place of ${\cal F}$).

By applying \ref{LabsorbW},
 we obtain a unitary
$U_1\in M_{K+2}(M(M_k(B\otimes \zo)))$ such that
\beq\label{Tappmul-10}
\hspace{-0.3in}\|\Pi(U_1)^*\Pi\circ \phi_W(\phi^W(a))\Pi(U_1)-\diag(\Pi\circ d_{K+1}\circ\psi_{w,z}\circ \phi^W(a)), \Pi\circ \phi_{odd}(\phi^{W}(a))\|<\dt
\eneq
for all $ a\in {\cal G},$ {{where $\Pi: M_{K+2}(M(M_k({\tilde B}\otimes \zo)))\to
M_{K+2}(M((M_k(B\otimes \zo))/(M_k(B\otimes \zo))))$ is the quotient map.}}


Let $s: M_k(\C\cdot 1_{\tilde{B}}\otimes \zo)\to M_k({\tilde B}\otimes \zo)$
be the embedding
such that
$$
\pi\circ s(a)=a\rforal a\in M_k(\C\cdot 1_{\tilde{B}}\otimes \zo).
$$
Consider $L_{1,1}: A\to
{{M(M_k(B\otimes \zo)))}}$
defined by
$L_{1,1}= \phi_W\circ  \phi^{W}$  and  $L_{1,0}': A\to M_{K+2}({{M(M_k(B\otimes \zo))}})$ defined by
$$
L_{1,0}'(a)=\diag(d'_{K+1}\circ s\circ\psi_{w,z}\circ \phi^{W}(a)), \phi_{odd}(\phi^{W}(a))\rforal a\in A,
$$
where  notation $d_m'(c)$ means the following:
$$
d'_{m}(c)=\diag(\overbrace{c,c,...,c}^{m}).
$$
By \ref{AnuniqW},  there is another unitary $U_2\in M_{K+2}(M(M_k(B\otimes \zo)))$
such that
\beq\label{Tappmul-n10}
\|U_2^*L_{1,0}'(a)U_2-\diag(d'_{K+1}\circ s\circ\psi_{w,z}\circ \phi^W(a),0)\|<\dt
\rforal a\in {\cal G}.
\eneq
Define {{a \hm\, $L_{1,0}: A\to M_{K+1}(M_k({\tilde B}\otimes \zo))$ by}}
$$
L_{1,0}(a)=d'_{K+1}\circ s\circ\phi^{\pi}(a)\rforal a\in A.
$$
Put
$\Phi=\phi\oplus d'_{K}\circ s\circ \phi^{\pi}$
and $U=U_1U_2.$
By \eqref{Tappmul-10} {{and \eqref{Tappmul-n10},}} for each $a\in {\cal G},$ there exist $b(a),
b'(a)
\in M_{K+2}(M_k(B\otimes \zo))$  with
$\|b(a)\|\le 1,$
$\|b'(a)\|\le 1$
such that
\beq\label{Tappmul-11}
&&\|U^*L_{1,1}(a)U- L_{1,0}(a)+b(a)\|<2\dt\andeqn\\
&& \|U^*L_{1,1}(a)U-\Phi(a) +b'(a)\|<2\dt\rforal a\in {\cal G}.
\eneq

\vspace{-0.05in}
Put ${\bar e}_n=\diag(\overbrace{e_n,e_n,...,e_n}^{K+2}),$ $n=1,2,....$
Let $p_n=U^*{\bar e_n}U,$ $n=1,2,....$
Then $\{p_n\}$ is an approximate identity for $M_{K+2}(M_k(B\otimes \zo)).$
Let $S=\N\setminus {{\{4n-1,\, 4n: n\in \N\}}}.$
If $m\in S,$
\beq\label{Tappmul-12+1}
(1-p_m)(p_{4n}-p_{4n-1})&=&\begin{cases}  (p_{4n}-p_{4n-1})\,\,\, &\text{if}\,\,\, m<4n-1;\\
                                                                       0 \,\,\, &\text{if} \,\,\, {{m>4n}}\end{cases}\andeqn\\\label{Tappmul-12+2}
      p_m(1-p_m)(p_{4n}-p_{4n-1})&=&0\rforal m{\in {\cal S}}.
                                                                      \eneq

There is $N\ge 1$ such that, for any $m\ge N$ and $m\in S,$
\beq\label{Tappmul-13}
&&\|(1-p_m)(U^*L_{1,1}(a)U)-(1-p_m)L_{1,0}(a)\|<4\dt,\\\label{Tappmul-13+1}
&&\|(U^*L_{1,1}(a)U)(1-p_m)-L_{1,0}(a)(1-p_m)\| <4\dt,\\\label{Tappmul-13+2}
&&\|(1-p_m)(U^*L_{1,1}(a)U)-(1-p_m)\Phi(a)\|<4\dt\andeqn\\\label{Tappmul-13+3}
&&\|(U^*L_{1,1}(a)U)(1-p_m)-\Phi(a)(1-p_m)\| <4\dt \rforal a\in {\cal G}.
\eneq
Note that, by the construction of $\phi_W$ and \eqref{Tappmul-12+2}, {{if $m\in S,$}}
\beq\label{Tappmul-14}
(1-p_m)(U^*L_{1,1}(a)U)&=&(U^*L_{1,1}(a)U)(1-p_m)\\\label{Tappmul-14+1}
&=&(1-p_m)(U^*L_{1,1}(a)U)(1-p_m)\rforal a\in A.
\eneq
It follows from \eqref{Tappmul-13}, {{\eqref{Tappmul-13+1}, \eqref{Tappmul-13+2}, \eqref {Tappmul-13+3}}} and \eqref{Tappmul-14}, for all
{{$m\ge N$ and  $m\in S,$}}
\beq\label{Tappmul-15}
\hspace{-0.3in}\|p_m\Phi(a)-\Phi(a)p_m\|<8\dt{{\andeqn \|(1-p_m)L_{1,0}(a)-L_{1,0}(a)(1-p_m)\|<8\dt\,\, \text{for\,\,all}\,\, a\in {\cal G}.}}
\eneq
{{By the choice of $\dt_1$ and \ref{Lesthalf}, for all $a\in {\cal G},$
\beq\label{191026-1}
&&\|p_m^{1/2}\Phi(a)p_m^{1/2}-p_m\Phi(a)\|<\ep/64\andeqn\\\label{191026-2}
&&\|(1-p_m)^{1/2}L_{1,0}(a)(1-p_m)^{1/2}-(1-p_m)L_{1,0}(a)\|<\ep/64.
\eneq}}
Moreover, the map $a\mapsto (1-p_m)(U^*L_{1,1}(a)U)$ is ${\cal G}$-$\dt$-multiplicative.
{{By \eqref{191026-2} and \eqref{Tappmul-13},
$a\to (1-p_m)^{1/2}L_{1,0}(a)(1-p_m)^{1/2}$ is ${\cal F}$-$\ep$-multiplicative.}}
Define
$$
L(a)=p_m\Phi(a)+(1-p_m)(U^*L_{1,1}(a)U)\rforal a\in A.
$$
Then, by \eqref{Tappmul-13+2},
\beq\label{Tappmul-15+}
\|L(a)-\Phi(a)\|<4\dt\rforal a\in {\cal G}.
\eneq
{{Consequently,}}
\beq\label{Tappmul-16}
\|L(ab)-L(a)L(b)\|<{{8\dt \rforal a, b\in {\cal F}.}}
\eneq
We compute that
\beq\label{Tappmul-17}
L(ab)=p_m\Phi(ab)+(1-p_m)(U^*L_{1,1}(ab)U)\rforal a, b\in A,
\eneq
and, for all $a, b\in {\cal G},$ by \eqref{Tappmul-12+2}, \eqref{Tappmul-14+1} and \eqref{Tappmul-15},
\beq\nonumber
&&\hspace{-1in}L(a)L(b)=(p_m\Phi(a)+(1-p_m)(U^*L_{1,1}(a)U))(p_m\Phi(b)+(1-p_m)(U^*L_{1,1}(b)U))\\\nonumber
&=& p_m\Phi(a)p_m\Phi(b)+((1-p_m)((U^*L_{1,1}(a)U))(1-p_m)(U^*L_{1,1}(b)U))\\\nonumber
\hspace{0.2in}&\approx_
{8\dt+\dt}
& p_m\Phi(a)\Phi(b)p_m+(1-p_m)(U^*L_{1,1}(ab)U).
\eneq
Combining this with \eqref{Tappmul-17}, \eqref{Tappmul-16}
\beq\label{Tappmul-18}
\|p_m\Phi(ab)-p_m\Phi(a)\Phi(b)p_m\|<8\dt+{{8\dt+\dt=17}}\dt\rforal a, b\in {\cal F}.
\eneq
Therefore {{(see \ref{Lesthalf})}}
\beq\label{Tappmul-19}
\|p_m^{1/2}\Phi(ab)p_m^{1/2}-p_m^{1/2}\Phi(a)p_m^{1/2}p_m^{1/2}\Phi(b)p_m^{1/2}\|<
{{17\dt}}+3\ep/64<\ep/16.
\eneq
Define $L_0(a)=p_m^{1/2}\Phi(a)p_m^{1/2}$ and
$L_1(a)=(1-p_m)^{1/2}L_{1,0}(a)(1-p_m)^{1/2}.$
{{By \eqref{Tappmul-19}, $L_0$ is ${\cal F}$-$\ep$-multiplicative.}}
By \eqref{Tappmul-15+}, \eqref{Tappmul-13}, {{\eqref{Tappmul-13+2},}}  and the choice of $\dt_1,$
we finally have
$$
\|(L_0(a)+L_1(a))-\Phi(a)\|<\ep\rforal a\in {\cal F}.
$$
{{Let $L_{00}=\phi^W: A\to {\cal W}$ and $L_{w,b}: {\cal W}\to M_{K+2}(M_k({\tilde B}\otimes \zo))$
be defined by $L_{w,b}(b)=(1-p_m)^{1/2}(d_K'\circ s\circ \psi_{w,z}(b))(1-p_m)^{1/2}$ for $b\in {\cal W}.$
Then $L_1=L_{00}\circ L_{w,b}.$}}
\end{proof}


%
%

\begin{thm}\label{TExistence}
Let $A$ be a non-unital separable  amenable \CA\, which satisfies the UCT
and  satisfies the condition in \ref{DFixA} and let $B$ be
a separable simple \CA\, with continuous scale.
For any $\af\in KL(A,B),$
there exists an asymptotic sequential morphism $\{\phi_n\}$ from
$A$ into $B\otimes \zo\otimes {\cal K}$ such that
$$
[\{\phi_n\}]=\af.
$$
\end{thm}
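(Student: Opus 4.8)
\textbf{Proof proposal for Theorem \ref{TExistence}.}

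The plan is to combine the surjectivity of the transformation $\beta_A$ from the previous section with the absorption machinery of \ref{Tappmul}. First, by \ref{Thigson}, since $A$ satisfies the UCT, the map $\beta_A^B: {\cal E}_A(B)\to \Hom_{\Lambda}(A, B\otimes \zo\otimes {\cal K})$ is surjective. Identifying $KL(A,B)$ with $\Hom_{\Lambda}(\underline{K}(A),\underline{K}(B))$ via \cite{DL}, and noting that $K_i(B\otimes \zo\otimes {\cal K})=K_i(B)$ (since $\zo$ has $K_i(\zo)=K_i(\C)$, so the K\"unneth Formula gives the identification, and ${\cal K}$ is stable), the given class $\af\in KL(A,B)$ corresponds to a class in $\Hom_{\Lambda}(A, B\otimes \zo\otimes {\cal K})$; call it $\tilde\af$. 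By \ref{Thigson} there is $\la\phi\ra\in {\cal E}_A(B)$ with $\beta_A^B(\la\phi\ra)=\tilde\af$.

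Next I would unwind what $\beta_A^B(\la\phi\ra)=\tilde\af$ means in terms of a representing asymptotic sequential morphism $\{\phi_n\}$. By Definition \ref{DE(AB)} and the construction of $\beta_A$ in \ref{DbetaA}, there is some $\af'\in KL(A,B^{\vdash}\otimes\zo\otimes{\cal K})$ and sequences of approximately multiplicative \cpc s $h_n,h_n': A\to \C\cdot 1_{B^{\vdash}}\otimes\zo\otimes{\cal K}$ such that $[\phi_n]|_{\cal P}+[h_n]|_{\cal P}=\af'|_{\cal P}+[h_n']|_{\cal P}$ for $n$ large, and $\lambda(\af')=\tilde\af$, where $\lambda(x)=x-[s]\circ[\pi](x)$ is the splitting projection. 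The point of using $\lambda$ is exactly that after subtracting the "scalar part" $[s]\circ[\pi]$, the resulting $KL$-class factors through $B\otimes\zo\otimes{\cal K}$. The issue is that the $\phi_n$ themselves take values in $B^{\vdash}\otimes\zo\otimes{\cal K}$, not in $B\otimes\zo\otimes{\cal K}$, and we must correct this. This is where \ref{Tappmul} enters: the hypothesis of \ref{DFixA} (that $A$ admits $T$-${\cal H}$-full approximately multiplicative maps into $W$) together with the fact that the scalar part $\pi\circ\phi_n$ of each $\phi_n$ factors, up to small error, through $W$ (one composes with $\psi_{z,w}:\zo\to W$ and $\psi_{w,z}:W\to\C\cdot 1\otimes\zo$, using \ref{DWZmaps} and \ref{DZ0C_0}), lets us apply \ref{Tappmul} to each $\phi_n$. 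The conclusion of \ref{Tappmul} produces ${\cal F}$-$\ep$-multiplicative $L_0:A\to M_{K+2}(M_k(B\otimes\zo))$ and $L_1:A\to M_{K+2}(M_k({\tilde B}\otimes\zo))$ with $L_1$ factoring through $W$, and $\|L_0(a)\oplus L_1(a)-\phi(a)\oplus d_K\circ s\circ\phi^\pi(a)\|<\ep$ on ${\cal F}$. Crucially, $L_0$ genuinely lands in $B\otimes\zo\otimes{\cal K}$ (after identifying $M_{K+2}(M_k(B\otimes\zo))$ with a corner of $B\otimes\zo\otimes{\cal K}$).

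I would then set up a sequence $\ep_n\to 0$ and finite subsets ${\cal F}_n\uparrow$ dense in $A$, apply \ref{Tappmul} to each $\phi_n$ with parameters $\ep_n,{\cal F}_n$ to get $L_{0,n}$ and $L_{1,n}$, and define the candidate asymptotic sequential morphism by $\tilde\phi_n:=L_{0,n}$. One checks: (i) $\{\tilde\phi_n\}$ is approximately multiplicative, by construction; (ii) it takes values in $B\otimes\zo\otimes{\cal K}$; (iii) its $KL$-class is correct — here one uses that $\phi_n\oplus d_K\circ s\circ\phi^\pi_n$ is close to $L_{0,n}\oplus L_{1,n}$, that $L_{1,n}$ factors through $W$ hence contributes $0$ to $KL$ (since $K_*(W)=0$; this is the role of \ref{LEabsorb} and the $W$-absorption throughout this section), and that $d_K\circ s\circ\phi^\pi_n$ is the scalar part whose $KL$-contribution is precisely $[s]\circ[\pi](\af')$, so that $[L_{0,n}]|_{\cal P}=\lambda(\af')|_{\cal P}=\tilde\af|_{\cal P}=\af|_{\cal P}$ for $n$ large; (iv) the auxiliary maps $h_n''$ needed for condition (2) of Definition \ref{DE(AB)} come from the $W$-valued pieces $L_{1,n}$ composed with an embedding $W\to\C\cdot 1\otimes\zo\otimes{\cal K}$, absorbing the original $h_n,h_n'$ as well. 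Therefore $[\{\tilde\phi_n\}]=\af$, and viewing ${\cal K}$ and the matrix amplifications appropriately, $\{\tilde\phi_n\}$ is the desired asymptotic sequential morphism from $A$ into $B\otimes\zo\otimes{\cal K}$.

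The main obstacle I anticipate is bookkeeping the $KL$-theory computation in step (iii): one must verify carefully that the "defect" between $\phi_n$ and $L_{0,n}$ is absorbed exactly by the $W$-valued correction term $L_{1,n}$ plus the scalar term, so that the net effect on $\underline{K}$-theory is the single projection $\lambda$. This requires knowing that $\pi\circ\phi_n$ really does factor (up to arbitrarily small error on larger and larger finite sets) through $W$ via $\psi_{w,z}\circ\psi_{z,w}$; this in turn should follow from the definition of $\beta_A$, which guarantees $\pi\circ\phi_n$ is close, in $KL$ and hence (after a standard approximate-factorization / semiprojectivity argument using that $\zo$ is locally approximated by \CA s in ${\cal C}_0$, see \ref{DZ0C_0}) in norm on finite sets, to a map that factors through $W$ — because the scalar target $\C\cdot 1_{B^{\vdash}}\otimes\zo$ is a copy of $\zo$, and any such map's $W$-absorption is controlled by \ref{AnuniqW} and \ref{LabsorbW}. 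A secondary technical point is reconciling the matrix sizes $M_{K+2}(M_k(\cdot))$ produced by \ref{Tappmul} for varying $n$ with the fixed target $B\otimes\zo\otimes{\cal K}$; this is routine since ${\cal K}$ absorbs all finite matrix amplifications, but it must be done so that approximate multiplicativity is preserved in the limit.
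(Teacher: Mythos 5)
Your overall architecture matches the paper's: use the Higson-type surjectivity of $\beta_A$ (\ref{Thigson}) to produce a representing asymptotic sequential morphism with the right $KL$-data, then apply \ref{Tappmul} to split each $\phi_n$ into a piece $L_{0,n}$ landing in $B\otimes\zo\otimes{\cal K}$ plus correction terms factoring through $W$, which vanish in $KL$ since $K_*(W)=0$, so that $\phi_n:=L_{0,n}$ has the prescribed class. However, there is a genuine gap at precisely the step you flag as the main obstacle, and your proposed repair does not work. The hypothesis of \ref{Tappmul} is that the scalar part $\pi\circ\phi_n$ is, on the given finite set, within $\dt$ of $\psi_{w,z}\circ\psi_{z,w}\circ\pi\circ\phi_n$, i.e.\ of a map factoring through $W$. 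You assert this "should follow from the definition of $\beta_A$" by a $KL$-closeness plus semiprojectivity argument. This is false in general: the scalar part takes values in $\C\cdot 1_{B^{\vdash}}\otimes\zo\otimes{\cal K}$, whose $K_0$ is $\Z\neq 0$, and $[\pi\circ\phi_n]$ can be a nonzero class there; any map that factors through $W$ induces the zero map on $\underline{K}$, so $\pi\circ\phi_n$ cannot be approximated by $W$-factorizable maps, no matter how one uses local approximation of $\zo$ by ${\cal C}_0$-algebras, \ref{AnuniqW}, or \ref{LabsorbW}.

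The missing idea is the $\imath^{\circledast}$-doubling trick. Using the automorphism $j^{\circledast}$ of $\zo$ with $j^{\circledast}_{*0}(x)=-x$ (\ref{Tnegative}), one replaces $\Phi_n$ by $\Phi_n\oplus s\circ\pi\circ\imath^{\circledast}\circ\Phi_n$ and the scalar comparison map $\Psi_n$ by $\Psi_n\oplus s\circ\pi\circ\imath^{\circledast}\circ\Phi_n$; this changes nothing in ${\cal E}(A,B)$ (the added summand is scalar-valued) and keeps the identity $s\circ\pi(\Phi_n\oplus s\circ\pi\circ\imath^{\circledast}\circ\Phi_n)=\Psi_n\oplus s\circ\pi\circ\imath^{\circledast}\circ\Phi_n$. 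Now the scalar part is a direct sum whose $\zo$-component is $\mathrm{id}_{\zo}\oplus j^{\circledast}$, and Lemma \ref{LZ0group} says exactly that this is approximately unitarily equivalent to $(\phi_{wz}\otimes\mathrm{id}_{M_2})\circ\mathrm{diag}(\phi_{zw},\phi_{zw})$, i.e.\ to a map factoring through $W$. After implementing the approximating unitary, one may assume $\|\pi\circ\Phi_n(g)-\phi_{w,z}\circ\phi_{z,w}\circ\pi\circ\Phi_n(g)\|<\dt$ on the finite set and that the new $\Psi_n$ factors through $W$, whence $[\Psi_n]|_{\cal P}=0$ and $[\Phi_n]|_{\cal P}=\af|_{\cal P}$; only then is \ref{Tappmul} applicable, and the rest of your bookkeeping ($L_{1,n},L_{2,n}$ factor through $W$, so $[L_{0,n}]|_{\cal P}=\af|_{\cal P}$) goes through as you describe. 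As a side benefit, this also removes the need for your careful tracking of $\lambda(\af')$, $h_n$, $h_n'$: once the scalar part factors through $W$ its $KL$-contribution is zero outright.
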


\begin{proof}

Let ${\cal P}\subset \underline{K}(A)$ be a finite subset.  Let $\ep>0$ and ${\cal F}\subset A$ be a finite subset.
We assume that any ${\cal F}$-$\ep$-multiplicative \cpc\, $L$ from $A,$
$[L]|_{\cal P}$ is well-defined.

If follows from \ref{Thigson} that there exist  sequences of approximately multiplicative \cpc s
$\Phi_n: A\to B^{\vdash}\otimes \zo \otimes {\cal K}$   and $\Psi_n: A\to \C \cdot 1_{B^{\vdash}}\otimes \zo\otimes {\cal K}$
such that, for any
finite subset ${\cal Q}\subset \underline{K}(A),$
$$
[\Phi_n]|_{\cal Q}=\af|_{\cal Q}+[\Psi_n]|_{\cal Q}
$$
for all sufficiently large $n,$  where $\Psi_n=s\circ \pi\circ \Phi_n$
(without loss of generality)
and $\pi: B^{\vdash}\otimes \zo \otimes {\cal K}\to \C \cdot 1_{B^{\vdash}}\otimes \zo\otimes {\cal K}$ be the quotient
map.
Fix a sufficiently large $n.$

Let $\{e_{i,j}\}$ be a system of matrix unit for ${\cal K}$ and
let $E$ be the unit of the unitization of  $1_{B^{\vdash}}\otimes \zo.$
By considering   maps $a\mapsto (E\otimes \sum_{i=1}^k e_{i,i})\Phi_n(a)(E\otimes \sum_{i=1}^k e_{i,i})$
and maps $a\mapsto (E\otimes \sum_{i=1}^k e_{i,i})\Psi_n(a)(E\otimes \sum_{i=1}^k e_{i,i}),$
\wilog, we may assume that
the image of $\Phi_n$ is in $M_k(B^{\vdash}\otimes \zo)$ and
that of $\Psi_n$ is also in $M_k(\C\cdot 1_{B^{\vdash}}\otimes \zo)$ for some
sufficiently large $k.$

Define $\imath^{\circledast}: B^{\vdash}\otimes \zo\otimes {\cal K}\to  B^{\vdash}\otimes \zo\otimes {\cal K}$
by defining $\imath^{\circledast}(b\otimes z\otimes k)=b\otimes j^{\circledast}(z)\otimes k$
for all $b\in B^{\vdash},$ $z\in \zo$ and $k\in {\cal K}$ {{(see \ref{Tnegative} for  $j^{\circledast}$).}}
Note that
$$
s\circ \pi(\Phi_n\oplus s\circ \pi\circ i^{\circledast}\circ \Phi_n)=\Psi_n\oplus s\circ \pi \circ i^{\circledast}\circ \Phi_n.
$$

Let $\dt>0$ and let ${\cal G}\subset A$ be a finite subset.

It follows from virtue of \ref{LZ0group},
replacing $\Phi_n$ by $\Phi_n\oplus  s\circ \pi\circ i^{\circledast}\circ \Phi_n$ and
replacing $\Psi_n$ by $\Psi_n\oplus  s\circ \pi\circ i^{\circledast}\circ \Phi_n ,$
and by implementing a unitary in unitization of $M_k(\C\cdot 1_{B^{\vdash}}\otimes \zo),$
we may assume that
$$
\|\pi\circ \Phi_n(g)-\phi_{w,z}\circ \phi_{z,w}\circ \pi(\Phi_n(a))\|<\dt\rforal g\in {\cal G}.
$$
and  $\Psi_n=s\circ \pi\circ \Phi_n$  approximately factors through ${\cal W},$ in particular,
$[\Psi_n]|_{\cal P}=0.$
In other words,
\vspace{-0.13in}\beq\label{Texist-n1}
[\Phi_n]|_{\cal P}=\af|_{\cal P}.
\eneq
By applying \ref{Tappmul}, we obtains  an integer $K\ge 1,$ ${\cal F}$-$\ep$-multiplicative \cpc s
$L_{0,n}: A\to M_k(B\otimes \zo),$ $L_{1,n}: A\to M_{(K+2)k}(B^{\vdash}\otimes \zo)$
and $L_{2,n}: A\to M_{(K+1)k}(B^{\vdash}\otimes \zo)$  such that
\beq\label{TE-9}
\|L_{0,n}(a)\oplus L_{1,n}(a)-\Phi_n(a)\oplus L_{2,n}(a)\|<\ep\rforal a\in {\cal F},
\eneq
where $L_{1,n}$ and $L_{2,n}$ factor through ${\cal W}.$
In particular,
\beq\label{TE-10}
[L_{1,n}]|_{\cal P}=[L_{2,n}]|_{\cal P}=0.
\eneq
It follows that, using \eqref{Texist-n1} and \eqref{TE-9}
\beq\label{TE-11}
[L_{0,n}]|_{\cal P}=\af|_{\cal P}.
\eneq

Choose $\phi_n=L_{0,n}$ (for all sufficiently large $n$).

\end{proof}

\section{Existence Theorem for  determinant maps}

\begin{lem}\label{Lcompactcon}
Let $A$ be a stably projectionless simple \CA\, such that
$Cu(A)={\rm LAff}_+({\tilde T}(A))$ with strict comparison for positive elements and with continuous scale.
Suppose $a, b\in A\otimes {\cal K}_+.$ Then
$\la a\ra \ll \la b\ra$ ($\la a \ra $ is compact contained in $\la b\ra$) if and only if,  there exists $\dt>0,$ for any ${{t}}\in T(A),$  there exists
a  neighborhood $O({{t}})\subset T(A)$
such that
\beq\label{Lcompact-1}
d_{ t}(b)>d_\tau(a)+\dt\rforal \tau\in O({{t}}).
\eneq
\end{lem}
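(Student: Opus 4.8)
The plan is to translate the order relation $\ll$ in $Cu(A)$ into an inequality of dimension functions, using the identification $Cu(A)={\rm LAff}_+({\tilde T}(A))$ and the compactness of $T(A)$ (which holds since $A$ has continuous scale, by 9.3 of \cite{GLp1}). Throughout, recall that for $a\in (A\otimes {\cal K})_+$ the function $\tau\mapsto d_\tau(a)$ is the image of $\la a\ra$ under this identification, so it is an element of ${\rm LAff}_+({\tilde T}(A))$; restricting to the base $T(A)$, it is a lower semicontinuous, possibly $+\infty$-valued affine function. Since $A$ has strict comparison for positive elements, $\la a\ra\le \la b\ra$ in $Cu(A)$ is equivalent to $d_\tau(a)\le d_\tau(b)$ for all $\tau$.

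First I would prove the easy direction. Suppose the neighborhood condition holds with some $\dt>0$: for every $t_0\in T(A)$ there is a neighborhood $O(t_0)$ with $d_{t_0}(b)>d_\tau(a)+\dt$ for all $\tau\in O(t_0)$. I want to show $\la a\ra\ll\la b\ra$, i.e. whenever $\la b\ra\le \sup_n \la c_n\ra$ for an increasing sequence $\{c_n\}$, already $\la a\ra\le\la c_m\ra$ for some $m$. Fix such a sequence; then $d_\tau(b)\le \sup_n d_\tau(c_n)$ pointwise, and $d_\tau(c_n)\nearrow$ some lower semicontinuous $g$. For each $t_0$, $d_{t_0}(b)>d_\tau(a)+\dt$ on $O(t_0)$, so in particular $g(t_0)\ge d_{t_0}(b)>d_{t_0}(a)+\dt$; using lower semicontinuity of the $d_{c_n}$ and of $g$, together with Dini-type reasoning on the compact space $T(A)$ (covering $T(A)$ by finitely many $O(t_0)$ and using that $d_{c_n}\nearrow g$ uniformly on compacts when $g$ is continuous — more care is needed when $g$ takes the value $\infty$, where one truncates), I obtain $d_{t}(c_m)>d_{t}(a)$ for all $t$ and some $m$, hence $\la a\ra\le\la c_m\ra$ by strict comparison. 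This gives $\la a\ra\ll\la b\ra$.

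Conversely, assume $\la a\ra\ll\la b\ra$. I would use the standard fact that in $Cu(A)$, $\la a\ra\ll\la b\ra$ implies there is $\ep>0$ with $\la a\ra\le\la (b-\ep)_+\ra$ (one writes $\la b\ra=\sup_\ep \la (b-\ep)_+\ra$ as an increasing supremum indexed by $\ep\downarrow 0$, which is an increasing sequence along any sequence $\ep_n\downarrow 0$, and applies compact containment). Then for all $\tau\in T(A)$, $d_\tau(a)\le d_\tau((b-\ep)_+)$. Now $d_\tau((b-\ep)_+)\le \tau(f_{\ep/2}(b))$ while $d_\tau(b)\ge \tau(f_{\ep/2}(b))\ge d_\tau((b-\ep/4)_+)\ge d_\tau((b-\ep)_+)$, and more to the point $\tau\mapsto \tau(f_{\ep/2}(b))$ is \emph{continuous} on $T(A)$ (it is the evaluation of a Pedersen-ideal element, hence in $\Aff(T(A))$), while $d_\tau((b-\ep)_+)\le \tau(f_{\ep/2}(b))\le d_\tau(b)$. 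So, taking $\dt$ to be a positive number strictly below $\inf_{\tau}\big(\tau(f_{\ep/2}(b))-\tau(f_{\ep}(b))\big)$ — which is positive: it is the infimum of a continuous strictly positive function on the compact set $T(A)$, strict positivity because $A$ is stably projectionless and simple so $f_{\ep/2}(b)\ne f_\ep(b)$ in trace unless $b$ is trivial — I get for each $t_0$, using continuity of $\tau\mapsto\tau(f_{\ep/2}(b))$ at $t_0$, a neighborhood $O(t_0)$ on which $\tau(f_{\ep/2}(b))>t_0(f_{\ep/2}(b))-\dt/2$, hence $d_{t_0}(b)\ge t_0(f_{\ep/2}(b))\ge \tau(f_{\ep/2}(b))-\dt/2+\big(t_0(f_{\ep/2}(b))-\tau(f_{\ep/2}(b))\big)$... and after bookkeeping I arrive at $d_{t_0}(b)>d_\tau(a)+\dt'$ for all $\tau\in O(t_0)$ with a possibly smaller but still uniform $\dt'>0$.

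The main obstacle I anticipate is the careful handling of the $+\infty$ values of $d_\tau(b)$ and of the limit function $g$ in the first direction — Dini's theorem is being invoked for an increasing sequence converging to a possibly discontinuous, $\infty$-valued limit — so the argument must be set up via truncations $g\wedge M$ and via the observation that the ``gap'' $\dt$ is uniform and finite, which lets one reduce to the bounded case. A secondary technical point is making precise the passage from ``$\la a\ra\ll\la b\ra$'' to ``$\la a\ra\le\la(b-\ep)_+\ra$ for some $\ep>0$'': this needs $\la b\ra$ to be expressed as the supremum of a \emph{countable} increasing family, which is fine since $A$ is separable and $\{(b-1/n)_+\}$ works. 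Once these two points are dispatched, the equivalence follows by combining the two directions.
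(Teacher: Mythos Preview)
Your forward direction (neighborhood condition $\Rightarrow\la a\ra\ll\la b\ra$) is correct and is exactly the paper's argument: pick for each $t$ an index $n_t$ with $f_{n_t}(t)>d_t(b)-\dt/8$, shrink to a neighborhood $U(t)\subset O(t)$ using lower semicontinuity of $f_{n_t}$, then cover $T(A)$ by finitely many $U(t_i)$ and take $n_0=\max n_{t_i}$. Your worry about infinite values is unnecessary here---the uniform gap $\dt$ absorbs everything, and no Dini is needed.

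For the converse you take a different route from the paper, and the idea is sound, but your chain of inequalities is set up on the wrong side so the ``bookkeeping'' you defer cannot be completed as written. You bound $d_\tau(a)\le\tau(f_{\ep/2}(b))\le d_\tau(b)$ with $\tau\mapsto\tau(f_{\ep/2}(b))$ continuous. But continuity of the middle term only gives $d_\tau(a)\lessapprox t_0(f_{\ep/2}(b))\le d_{t_0}(b)$ for $\tau$ near $t_0$, with no gap; the quantity $\inf_\tau(\tau(f_{\ep/2}(b))-\tau(f_\ep(b)))$ that you single out sits \emph{below} $\tau(f_{\ep/2}(b))$, not between it and $d_\tau(b)$, so it never enters. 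The fix is to use the tighter bound $d_\tau(a)\le d_\tau((b-\ep)_+)\le\tau(f_\ep(b))$ (which also holds, since $f_\eta((b-\ep)_+)\le f_\ep(b)$ for every $\eta>0$). Then with $\dt_0:=\inf_\tau\tau((f_{\ep/2}-f_\ep)(b))>0$ (here you do use stably projectionless: it forces $\sigma(b)=[0,\|b\|]$, so $(f_{\ep/2}-f_\ep)(b)\neq0$ and each faithful trace is strictly positive on it; continuity on the compact $T(A)$ gives the infimum) and continuity of $\tau\mapsto\tau(f_\ep(b))$, one gets for $\tau$ in a small $O(t_0)$:
\[
d_\tau(a)\le\tau(f_\ep(b))<t_0(f_\ep(b))+\tfrac{\dt_0}{2}\le t_0(f_{\ep/2}(b))-\tfrac{\dt_0}{2}\le d_{t_0}(b)-\tfrac{\dt_0}{2},
\]
so $\dt=\dt_0/2$ works uniformly. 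One more point: your justification ``Pedersen-ideal element'' for continuity is right, but the reason $f_\ep(b)\in P(A\otimes{\cal K})$ is that $f_\ep$ vanishes on $[0,\ep/2]$---it is not that $f_\ep\in C_c((0,\infty))$ abstractly.

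For comparison, the paper bypasses all of this by quoting an approximation result (Lemma 11.2 of \cite{GLp1}, after \cite{BT}): there exist \emph{continuous} affine $f_n\nearrow d_\cdot(b)$, so $g_n=f_n-1/n\nearrow d_\cdot(b)$; compact containment yields $d_\cdot(a)\le f_{n_0}-1/n_0$, and continuity of $f_{n_0}$ immediately gives the neighborhood condition with $\dt=1/(2n_0)$. This is cleaner and does not use the stably-projectionless hypothesis for that direction, whereas your route is more self-contained but leans on it for the spectral-gap argument.
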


\begin{proof}
The proof of ``if" part is a standard compactness argument (see, for example  5.4 of \cite{Lnloc}).
{{Recall that $T(A)$ is compact in this case (see \cite{Lncs2}).}}
Suppose that \eqref{Lcompact-1} holds.  Let $f_n\in {\rm LAff}_+({{\tilde{T}(A)}})$
such that $f_n\nearrow \sup f_n\ge  \la b\ra.$
Then, for each $t\in T(A),$ there exist $n_{t}$ such that
\beq
f_{{(n_t)}}(t)>d_{t}(b)-\dt/8.
\eneq
Since each $f_{n_t}$ is lower semi-continuous, there is a neighborhood
$U(t)\subset O(t)$ such that
\beq
f_{{(n_t)}}(\tau)>d_t(b)-\dt/4\rforal \tau\in U(t).
\eneq
It follows that
\beq
f_{n_t}(\tau)>d_t(b)-\dt/4>d_\tau(a)+\dt/2\rforal \tau\in U(t).
\eneq
There are finitely many such  $U(t_1), U(t_2),...,U(t_m)$ covers $T(A).$
Put $n_0=\max\{n_{t_i}: 1\le i\le m\}.$
Then, if $\tau\in U(t_j),$
\beq
f_{n_0}(\tau)> f_{n_{t_j}}(\tau)>d_\tau(a)+\dt/2.
\eneq
This implies that  $f_{n_0}>\la a\ra$ in ${\rm LAff}_+({\tilde T}(A)),$ {{which means $\la a\ra \ll \la b \ra.$}}


For the converse,  as in Lemma 2.2 of \cite{BT}
{{(see
7.2 of \cite{eglnp1}),}}
there exists a sequence of continuous $f_n\in \Aff_+(T(A))$ such that
$f_n\nearrow  b.$  {{Let $g_n=f_n-\frac{1}{n}$. Then $g_n\nearrow  b.$ }}The assumption
that $\la a\ra \ll \la b\ra$ implies that, for some $n_0\ge 1,$
$\la a\ra <{{g_{n_0}=f_{n_0}-\frac{1}{n_0}}}$ in $Cu(A).$
{{Hence}}
\beq
f_{n_0}(\tau)>d_\tau(a)+{{\frac{1}{n_0}}}\rforal \tau\in T(A).
\eneq
Since $f_{n_0}$ is continuous,  for each $t\in T(A),$ there is a
 neighborhood $O(t)$ such that
\beq
f_{n_0}(t)>d_\tau(a)+{{\frac{1}{2n_0}}}\rforal \tau\in O(t).
\eneq
Therefore
\beq
d_t(b)\ge  f_{n_0}(t)> d_\tau(a)+{{\frac{1}{2n_0}}}\rforal \tau\in O(t).
\eneq

%

\end{proof}




\begin{thm}\label{Tdert1}
Let $A$ be a stably projectionless simple  exact  \CA\, with strictly comparison for positive elements,
with stable rank one and with continuous scale such that $Cu(A)={\rm LAff}_+({\tilde T}(A)).$
Fix $1>\af>0$ and $1>\eta\ge3/4.$
Let
$$
h_\eta\in \{f\in C([0,1],\R): f(0)=\af f(1)\}
$$
such that $h_\eta$ is strictly increasing on $[0,\eta],$ $0\le h_\eta\le 1,$ $h_{\eta}(0)=0=h_{\eta}(1),$ and
$h_{\eta}(\eta)=1.$

Let $c\in A_+$ with $\|c\|=1$ and $b\in \overline{cAc}_+$ with $\|b\|=1.$
Suppose that there is a  non-zero \hm\, $\phi: R(\af,1)\to \overline{cAc}.$

Then, for any $\ep>0,$  there exists a \hm\, $\psi: R(\af,1)\to B:=\overline{cAc}$
such that
$$
\sup\{|\tau(\psi(h_\eta))-\tau(b)|: \tau\in T(A)\}<\ep.
$$

\end{thm}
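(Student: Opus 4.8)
\textbf{Proof proposal for Theorem \ref{Tdert1}.}

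The plan is to use the hypothesis $Cu(A)={\rm LAff}_+({\tilde T}(A))$ together with strict comparison and stable rank one (so that $Cu$-morphisms lift to honest homomorphisms, by the results of Robert cited as \cite{Rl}) to build $\psi$ by matching Cuntz invariants. First I would record, via \ref{CcuR}, a convenient description of $Cu^{\sim}(R(\af,1))$ and of the element $\la h_\eta\ra$: since $h_\eta\in R(\af,1)_+$ is full (it vanishes only at the endpoints, where the boundary condition $h_\eta(0)=\af h_\eta(1)$ forces the value $0$, consistent with fullness as noted in \ref{DRaf1}), the rank function $r(h_\eta)$ is a prescribed element of $LSC_f([0,1],\R_+)\oplus_\af \R_+$ which is strictly positive on $(0,1)$ and takes value $1$ at $t=1$ (hence $\af$ at $t=0$). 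The existence of \emph{some} non-zero $\phi\colon R(\af,1)\to B$ guarantees, by composing with $Cu$, that the target cone $Cu(B)={\rm LAff}_+({\tilde T}(A))$ is "large enough" to receive $R(\af,1)$ nontrivially; but more importantly it gives a baseline full positive element in $B$ so that scaling arguments are available.

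The core step is to produce, for the given $\ep$, a $Cu$-morphism (or rather $Cu^\sim$-morphism) $\Lambda\colon Cu^{\sim}(R(\af,1))\to Cu^{\sim}(B)$ with the property that $\Lambda(\la h_\eta\ra)$ is the class of a positive element whose dimension function at every $\tau\in T(A)$ is within $\ep$ of $\tau(b)$. Concretely: $\tau\mapsto \tau(b)$ is a lower semicontinuous affine function on $T(A)$ bounded by $1$, i.e.\ an element $g$ of ${\rm LAff}_{b+}(T(A))$; by \ref{Lcompactcon} and the approximation of such functions by continuous ones (as in the second half of the proof of \ref{Lcompactcon}), there is a continuous affine $g'\le g$ with $g-g'<\ep$ everywhere, and $g'$ is realized as $d_\tau(b')$ for some $b'\in B\otimes{\cal K}$ by the identification $Cu(B)={\rm LAff}_+({\tilde T}(A))$. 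One then defines $\Lambda$ on generators of $Cu^{\sim}(R(\af,1))$ so that $\la h_\eta\ra$ is sent to $\la b'\ra$ and the compact-containment / order relations of $Cu^{\sim}(R(\af,1))$ (which, since $R(\af,1)$ is stably projectionless, are determined by $Cu$ as remarked after \ref{PCuR}) are respected — here one must check that $\Lambda$ is a well-defined ordered semigroup homomorphism preserving suprema and the compact-containment relation $\ll$, using \ref{Lcompactcon} to translate $\ll$ in both $Cu(R(\af,1))$ and $Cu(B)$ into uniform strict inequalities of dimension functions with a margin $\dt$. Since $B$ has stable rank one and $Cu(B)$ has the requisite regularity, Robert's lifting theorem \cite{Rl} then yields a homomorphism $\psi\colon R(\af,1)\to B$ with $Cu(\psi)=\Lambda$, hence $d_\tau(\psi(h_\eta))=g'(\tau)$ and so $|\tau(\psi(h_\eta))-\tau(b)|<\ep$ for all $\tau\in T(A)$; passing from the dimension-function identity to a trace identity uses that $\psi(h_\eta)$ and $b$ can be taken with spectra filling $[0,1]$ so that $\tau(h_\eta(\cdot))$ and $d_\tau(\cdot)$ agree up to the chosen error after a further $f_\eta$-cutdown, which contributes only another term absorbed into $\ep$.

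The main obstacle, I expect, is the bookkeeping needed to pass between the three different "sizes" of invariant: the rank/dimension function $d_\tau$, the trace value $\tau(\psi(h_\eta))$, and the continuous vs.\ merely lower-semicontinuous affine functions on $T(A)$. Matching $d_\tau(\psi(h_\eta))$ to $\tau(b)$ is clean via $Cu$, but $\tau(b)$ is a genuine trace value while $d_\tau(\psi(h_\eta))=\lim_n \tau(\psi(h_\eta)^{1/n})$ differs from $\tau(\psi(h_\eta))$; one fixes this by choosing $h_\eta$ as in the statement (strictly increasing to $1$ on $[0,\eta]$ and $0$ at $1$) and arranging, through a $C_0((0,1])$-functional-calculus adjustment inside $R(\af,1)$, that $\tau(\psi(h_\eta))$ and $d_\tau(\psi(h_\eta'))$ for a nearby $h_\eta'\le h_\eta$ agree within $\ep/3$, uniformly in $\tau$ — this is where $\eta\ge 3/4$ and the strict comparison are used to keep the "tail" contribution controlled. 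The lifting step itself is a black box (\cite{Rl}) once the $Cu^{\sim}$-morphism is correctly defined, so the real work is verifying that $\Lambda$ is order-preserving and continuous, which reduces to \ref{Lcompactcon} plus the concrete descriptions in \ref{CcuR} and \ref{PCuR}.
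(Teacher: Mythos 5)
Your overall skeleton (build a morphism at the level of $Cu^{\sim}$ and lift it to a \hm\ via Robert's theorem) is indeed the paper's skeleton, but two essential steps are missing, and the second one is handled by a mechanism your proposal does not contain. First, you never actually construct the morphism $\Lambda$: "define $\Lambda$ on generators so that $\la h_\eta\ra\mapsto\la b'\ra$" is not meaningful, since $Cu(R(\af,1))$ is the full semigroup of lower semicontinuous functions of \ref{CcuR} and a ${\bf Cu}$-morphism must be prescribed coherently on all of it, compatibly with order, suprema and $\ll$. The paper does this globally: it composes the rank function $r$ with the affine map $\gamma(f)(\tau)=\tau(f\circ h_\eta^{-1}(b))$ given by functional calculus of $h_\eta^{-1}(b)$, and then sets $\gamma_2=(1-\ep/4)\,\gamma_1\circ r+(\ep/4)\,Cu(\phi)$. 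The summand $(\ep/4)Cu(\phi)$ is precisely what makes compact containment preserved (checked with \ref{Lcompactcon}); this is the actual role of the hypothesized non-zero $\phi$, which in your write-up is demoted to "a baseline full positive element so that scaling arguments are available."

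Second, and more seriously, your passage from dimension functions to the trace value does not work. Robert's theorem only controls $Cu(\psi)$, hence $d_\tau(\psi(h_\eta))$; but $\tau(\psi(h_\eta))=\int_0^1 d_\tau\bigl(\psi((h_\eta-t)_+)\bigr)\,dt$ depends on the images of \emph{all} cut-downs $(h_\eta-t)_+$, which your $\Lambda$ never prescribes, so prescribing $\Lambda(\la h_\eta\ra)$ alone gives no bound on $|\tau(\psi(h_\eta))-d_\tau(\psi(h_\eta))|$; neither $\eta\ge 3/4$ nor strict comparison produces such a bound, and replacing $h_\eta$ by a nearby $h_\eta'$ is not allowed since the conclusion concerns the given $h_\eta$. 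The paper avoids this gap by two devices your proposal lacks: (a) because $\gamma$ is built from $h_\eta^{-1}(b)$, one has $\gamma_1(\hat h_\eta)(\tau)=\tau(h_\eta\circ h_\eta^{-1}(b))=\tau(b)$ \emph{exactly}, so the target is the honest trace of $b$ (no lsc approximation $g'$ is needed; $\tau\mapsto\tau(b)$ is already continuous here); and (b) it uses an element $f\in R(\af,1)_+$ whose rank function equals $\hat h_\eta$, i.e.\ $d_s(f)=s(h_\eta)$ for every trace $s$ of $R(\af,1)$ (possible by \ref{2Rg15}), so that $(\tau\circ\psi)(h_\eta)=d_{\tau\circ\psi}(f)=d_\tau(\psi(f))=\gamma_2(\la f\ra)(\tau)$, which differs from $\tau(b)$ only by the $\ep/4$-perturbation. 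Without (a) and (b), or some substitute controlling $\Lambda$ on all cut-downs of $h_\eta$, your argument establishes only the statement with $d_\tau(\psi(h_\eta))$ in place of $\tau(\psi(h_\eta))$, which is strictly weaker than the theorem.
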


\begin{proof}
Let $\ep>0.$
Since $A$ is stably projectionless, we may assume that ${\rm sp}(b)=[0, 1].$

Note that $(h_\eta)|_{[0, \eta]}: [0,\eta]\to [0,1]$ is a bijection. Define $h_\eta^{-1}: [0,1]\to [0, \eta]$ to be the inverse of $(h_\eta)|_{[0, \eta]}.$
Note that $h_\eta\circ h_\eta^{-1}={\rm id}_{[0,1]}.$
For each $f\in C([0,1], \R)_+,$ define
$
\gamma(f)(\tau)=\tau(f\circ h_\eta^{-1}(b))$ for all $\tau\in T(A).$

The $\gamma$ above gives an affine continuous map from $C([0,1], \R)\to \Aff(T(A)).$
Note that $\Aff({\tilde T}(R(\af, 1)))$ and ${\rm LAff}({\tilde T}(R(\af, 1)))_+$ are identified with
\beq\nonumber
&&\{(f,{{s}})\in C([0,1], \R)\oplus \R: f(0)={{s}}\af\andeqn f(1)={{s}}\}=
\{f\in C([0,1], \R): f(0)=\af f(1)\}\\\nonumber
&&\andeqn LSC([0,1], \R_+^{\sim})\oplus_\af \R_+^{\sim}
\eneq
(see \ref{DRaf1}),
respectively.
Let $\gamma_1=\gamma|_{{\rm Aff}({\tilde T}(R(\af,1)))_+}.$
Then
\beq\label{dert1-n2}
\gamma_1(h_\eta)(\tau)=\tau(h_\eta\circ h_\eta^{-1}(b))=\tau(b).
\eneq
It induces an {{order}} semi-group \hm\,
$\gamma_1: {\text{LAff}}({\tilde T}(R(\af,1)))_+\to {\text{LAff}}({\tilde T}(A))_+.$ Note
 $\gm_1$ takes continuous functions to continuous functions.
Let $r: Cu(R(\af,1))\to {\text{LAff}}({\tilde T}(R(\af,1)))_+$ be the rank function defined in \ref{DRaf1}.
Define  an order semi-group \hm\,
$\gamma_2: Cu(R(\af,1))\to  {\text{LAff}}_+({\tilde T}(A))$
by
\beq\label{dert1-n1}
\gamma_2(\la (f,{{s}})\ra)=(1-\ep/4)\gamma_1(r((f,{{s}}))) +(\ep/4) Cu(\phi)((f,{{s}}))).
\eneq

We  verify that $\gamma_2$ is a morphism in ${\bf Cu}.$
Since the rank function {{$r$}} preserves the suprema of increasing sequences,
it is easy to check that $\gamma_2$ also preserves the suprema
of increasing sequences.
Suppose that $\la f \ra{{=\la (f, s_f)\ra  \ll \la g\ra=\la (g, s_g)\ra}}$ in $Cu(R(\af ,1)).$
There is a sequence of $c_n\in Cu(R(\af, 1))$ such that
$r(c_n)$ is continuous and $r(c_n)\nearrow r(\la (g, {{s_g}})\ra)$ (see \ref{DRaf1} {{and \ref{CcuR}}}).
Note that $c_n$ can be identified with an element in $LSC([0,1],  (\R^{\sim}\setminus\{0\}\sqcup \Q)_+)\oplus_\af (\R^{\sim}\setminus \{0\}\sqcup \Q)_+$, at each point $t$, we identify $r(c_n)(t)$ with the corresponding values of $c_n(t)$ in $\R^{\sim}_+$---that is, $[{{s}}]\in \Q_+$ is regarded as ${{s}}\in \R_+$.
 Put $c=\sup_n r(c_n).$  {{Then $c=r(\la a\ra).$}}
 For any $\ep_1>0,$ $(1+\ep_1)r(c)\ge {{r(\la g\ra)}}.$
 Since $\la f\ra \ll \la g \ra,$ there exists
 $n_0\ge 1$ such that
 \beq
 (1+\ep_1)r(c_{n_0})\ge  {{r( \la f \ra).}}
 \eneq
 This, in particular, implies
 that $r(\la f\ra)$ is a bounded function.

Now let $z_n\in Cu(R(\af,1))$ such that $z_n\nearrow \sup z_n\ge \gamma_2((g,{{s_g}})).$
By \ref{Lcompactcon}, there exists $\dt>0$ such that, for each
$t\in T(A),$  there is a neighborhood $U(t)$ such that
\beq\label{Tdert1-n11}
d_t(\phi(g))>d_\tau(\phi(f))+\dt\rforal \tau\in {{U(t)}}.
\eneq
Choose $0<\ep_1 <\ep\cdot \dt/16(M+1).$
Then, for some $n_0\ge 1,$
\beq
(1-\ep/4)(1+\ep_1)\gamma_1(r(c_{n_0}))>(1-\ep/4)(1+\ep_1) \gamma_1( r(\la f\ra)).
\eneq
Since $r(c_{n_0})$ is continuous, $\gamma_1(r(c_{n_0}))$ is also continuous.
Therefore, for each $t\in T(A),$  there is a neighborhood $O(t)$ such that
\beq\label{Tdert1-n12}
(1-\ep/4)\gamma_1(r(c_{n_0}))(t)>(1-\ep/4) \gamma_1(r(\la f\ra))(\tau)-\ep_1\rforal \tau\in O(t).
\eneq
Put $N(t)=O(t)\cap U(t).$  Then, by \eqref{Tdert1-n11} and \eqref{Tdert1-n12} as well as \eqref{dert1-n1},
\beq\label{Tdert11-n13}
\gamma_2(\la g\ra)(t)>\gamma_2(r(\la f\ra ))(\tau)+\ep\dt/2\rforal \tau\in N(t).
\eneq
It follows from \ref{Lcompactcon} that $\gamma_2(\la f\ra )\ll \gamma_2(\la g\ra).$
This shows that $\gamma_2$ is a morphism in ${\bf Cu}.$
Since $K_0(R(\af,1))=\{0\},$ it induces a morphism $\gamma_2^{\sim}: Cu^{\sim}(R(\af,1))\to Cu^{\sim}(A)$
(see 7.3 of \cite{eglnp1}).

It follows from \cite{Rl} that there exists a \hm\, $\psi: R(\af, 1)\to {{B=\overline{cAc}}}$
 such
that
\beq\label{dert1-10}
d_\tau(\psi(g))=\gamma_2(\la g \ra)(\tau)\tforal \tau\in T(A)
\eneq
and for all $g\in R(\af,1)_+.$
There is ${{f}}\in R(\af, 1)_+$ such that $d_\tau({{f}})=\tau(h_\eta)$ for all $\tau\in T(R(\af, 1))$
(see \ref{2Rg15}).
Therefore
\beq
d_\tau(\psi({{f}}))&=&\lim_{n\to\infty}\tau(\psi({{f}}^{1/n}))=\lim_{n\to\infty}\tau\circ \psi({{f}}^{1/n})\\
 &=&d_{\tau\circ \psi}({{f}})=(\tau\circ \psi)(h_\eta)\rforal \tau\in T(A).
\eneq
Then, by \eqref{dert1-n1} and \eqref{dert1-10},
\beq\label{dert1-10+1}
|d_t(\psi({{f}}))-\gamma_1(r({{f}}))(t)| <\ep/4\rforal t\in T(R(\af,1)).
\eneq
Since $\gamma_1(r({{f}}))=\gamma_1(\hat{h_\eta}),$
we estimate that
$$
\sup\{|\tau\circ \psi(h_\eta)-\tau(b)|:\tau\in T(A)\}<\ep.
$$

The lemma follows.
\end{proof}

\begin{NN}\label{DdertF}
Let $A$ be the AH-algebras of real rank zero
with unique tracial {{state}} as associated with $B_T$ in section 6.
So $B_T=\lim_{n\to\infty}(B_n, \Phi_n).$
Write
$$
B_n=W_n\oplus  E_n\andeqn E_n=M_{(n!)^2}(A(W, \af_n)),\,\,\,n=1,2,....
$$
We may write $A=\overline{\cup_{n=1}^{\infty}C_n},$
where $C_n=C_{n,1}\oplus C_{n,2},$
$C_{n,1}\oplus C_{n,2}\subset C_{n+1,1}\oplus C_{n+1,2}$ and
$C_{n,1}$ is a circle algebra and $C_{n,2}$ is a homogeneous \CA\, with torsion $K_1.$
In fact, $C_{n,2}$ may be written as $M_{r(n)}(C(X_n)),$ where $X_n$ is a finite CW complex
with dimension no more than 3 and $r(n)\ge 6$ (see \cite{EG}).
In particular (by \cite{Ref}),  $K_1(C_{n,2})=U(C_{n,2})/U_0(C_{n,2}).$
We use $j_n: C_n\to C_{n+1}$ for the embedding.


 Fix a finitely generated subgroup $F_0\subset K_1(B_T).$
We may assume that $F_0'\subset K_1(B_n)$ such that
$(\Phi_{n, \infty})_{*1}(F_0')=F_0.$
Write $B_n=E_n\oplus W_n,$  where $E_n=M_{(n!)^2}(A(W, \af_n){{)}}.$
We also write
$$
C_{k,1}=M_{r(k(1))}(C(\T))\oplus M_{r(k(2))}(C(\T))\oplus\cdots M_{r(k(m_f)))}(C(\T)).
$$
with the identity of each summand {{being}} $p_j,$ $j=1,2,...,k(m_f){{=m_f,}}${{---here we denote $m_f$ by $k(m_f)$ to emphasis that it is correspond to $C_{k}$.}}
We choose $n\ge 1$ so that
$n\ge m_f.$
Put $F_1''=\pi'_{n*1}(F_0'),$ where $\pi_n': B_n\to A$ defined by
$\pi_n'(a\oplus b)=\pi(a)$ for all $a\in {{M_{(n!)^2}}}(A(W, \af_n))$ and $b\in W_n,$
where $\pi_n: {{M_{(n!)^2}}}(A(W,\af_n))\to {{M_{(n!)^2}(}}A{{)}}$ is the quotient map.
Note that $\pi_{n*1}: K_1(B_n)\to K_1(A)$ is an isomorphism.
We may assume that $F_1''\subset (j_{k, \infty})_{*1}(K_1(C_{k})).$
Let ${\tilde F}=\pi_{n*1}^{-1}((j_{k, \infty})_{*1}(K_1(C_{k})))$ and $F=(\Phi_{n,\infty})_{*1}({\tilde F}).$ {{(Here, we identify $K_1(M_{(n!)^2} (A))$ with $K_1(A)$ and $K_1(M_{(n!)^2} (C_k))$ with $K_1(C_k)$.)}}

The subgroup $F$ may be called {\it the standard subgroup} of $K_1(B_T).$

In what follows ${\rm tr}$ is the unique tracial state on $Q.$
We will define an injective  \hm\, $j_{F,u}: F\to U(B_T)/CU(B_T).$
We identify ${\widetilde{A(W,\af_n)}}$ with the following \CA\,(recall $s: A\to Q$ is defined in the beginning
of \ref{Dcc1}):
$$
\{(f_\lambda, a)\in C([0,1], Q\otimes Q)\oplus A: f_{{\ld}}(0)=(s(a-\lambda)\otimes e_{\af_n})+ \lambda\cdot 1_{Q\otimes Q}\andeqn
f_{{\ld}}(1)=s(a-\lambda)\otimes 1_Q+\lambda\cdot 1_{Q\otimes Q}\},
$$
where $\lambda\in \C$ and $a-\lambda=a-\lambda\cdot 1_A\in A.$
Note that $(f,1_A),$ where $f(t)=1_Q\otimes 1_Q,$ is added to $A(W,\af).$

Write $F=\Z^{k(m_f)}\oplus \Z/k_1\Z\oplus \cdots \Z/k_{m_t}\Z.$
Put $m=k(m_f)+k(m_t).$
Let $x_1,x_2,...,x_{k(m_f)}$ be the free cyclic generators for $\Z^{k(m_f)}$
and $x_{0,j}$ be cyclic generators for each $\Z/k_j\Z,$
$j=1,2,...,k(m_t),$ respectively.

Fix unitaries $z_1',z_2',...,z_{k(m_f)}', z_{0,1}', z_{0,2}',...,z_{0,k(m_t)}'\in C_{k}$
such that $[z_i']=x_i,$ $i=1,2,...,k(m_f)$ and $[z_{0,j}']=x_{0,j},$ $j=1,2,...,{{k(m_t)=m_t}}.$
{{Note that $(z_{0,j}')^{k_j}\in U_0(C_{n,2}).$ We may choose $z_{0,j}'$ so
that $(z_{0,j}')^{k_j}\in CU(C_{n,2}).$}}
We further assume that  {{$z_j'=\diag(z_j^{(0)}, 1,...,1),$  where
$z_j^{(0)}$ is the standard unitary generator for $C(\T),$}}
$j=1,2,...,k(m_f).$

We write $s(z_j')=\exp(ih_{j,0}')\exp(ih_{j,1}'),$
where $h_{j,0}', h_{j,1}'\in {{s(p_j)}}Q_{s.a.}{{s(p_j).}}$ (Note that here we use the fact that the exponential rank
for 
{{$Q$}} is $1+\ep$ (see \cite{LnFU}){{)}}.   Let $h{{''}}_{j,0},\, h{{''}}_{j,1}\in \R$ such
that $h{{''}}_{j,l}={\rm tr}(h_{j,l}'),$ $l=0,1.$ Put $z_j=z_j'\exp(-{{2}}i\pi h{{''}}_{j,1})\exp(-{{2}}i\pi  h{{''}}_{j,0}){{,}}$ $j=1,2,...,m(k).$ {{Then $[z_j]=[z'_j]=x_j$.}}
Note that $s(z_j)=\exp(2i\pi h_{j,0})\exp(2i \pi h_{j,1})$ such that
$h_{j,0}, h_{j,1}\in (s(p_j)Qs(p_j))_{s.a.}$ and
${\rm tr}(h_{j,0})+{\rm tr}(h_{j,1})=0,$ $j=1,2,...,k(m_f).$
We also choose $z_{0,j}$  and $s(z_{0,j})=\exp(ih_{j,0,0})\exp(i h_{j,0,1})$ such that
${\rm tr}(h_{j,0,0})+{\rm tr}(h_{j,0,1})=0${{, and $[z_{0,j}]=x_{0,j}.$}}



Define
$
u_j=(f_j, z_j)
$
as follows.
\beq\label{fromuj}
f_j(t)={{(}}s(z_j)\otimes e_{\af_n}{{)}}\oplus {{(}}(\exp(i 2t\pi h_{j,0})\exp(i 2t \pi h_{j,1}){{)}}\otimes (1-e_{\af_n}){{)}}\rforal t\in [0,1].
\eneq
Note that
\beq
&&\hspace{-0.2in}f_j(0)= {{(}}s(z_j)\otimes e_{\af_n}{{)}} \oplus {{(1}}\otimes (1_Q-e_{\af_n}) {{)}}\andeqn\\
&&\hspace{-0.2in} f_j(1)={{(}}s(z_j)\otimes e_{\af_n} {{)}}\oplus  {{(}} \exp(i 2\pi h_{j,0})\exp(i  2\pi h_{j,1})\otimes (1-e_{\af_n}){{)}}=s(z_j)\otimes 1_Q.
 \eneq

In fact
\beq
f_j(t)=\exp(2i\pi d_{j,0}(t))\exp(2i \pi d_{j,1}(t)),
\eneq
where
\beq
d_{j,0}(t)&=&h_{j,0}\otimes e_{\af_n} +t h_{j,0}\otimes (1_Q-e_{\af_n})\andeqn\\
d_{j,1}(t)&=& h_{j,1}\otimes e_{\af_n}+t  h_{j,1}\otimes (1_Q-e_{\af_n}).
\eneq
In particular, $({{f_j}}, z_j)\in {\widetilde{A(W, {\af_n})}}$ and $u_j\in U({\widetilde{A(W, {\af_n})}}),$
$j=1,2,...,k(m_f).$

Write $u_j=\zeta_j+\mu(u_j),$ where $\zeta_j\in A(W,\af_n)$ and $\mu(u_j)$ is a scalar.
Since $d_{j,0}, d_{j,1}\in A(W,\af_n)_{s.a.},$  $\mu(u_j)=1.$
In particular, $(f, z_j)\in {\widetilde{A(W, {\af_n})}}$ and $u_j\in U({\widetilde{A(W, {\af_n})}}),$
$j=1,2,...,k(m_f).$


Let $u_{0,j}=(f_{0,j}, z_{0,j}) \in {\widetilde{A(W, {\af_n})}}$
be defined as follows:
\beq
f_{0,j}(t)=\exp(2i\pi d_{j,0,0}(t))\exp(2i \pi d_{j,0,1}(t)),
\eneq
where
\beq
d_{j,0,0}(t)&=&h_{j,0,0}\otimes e_{\af_n} +t h_{j,0,0}\otimes (1_Q-e_{\af_n})\andeqn\\
d_{j,0,1}(t)&=& h_{j,0,1}\otimes e_{\af_n}+t  h_{j,0,1}\otimes (1_Q-e_{\af_n}).
\eneq
One has, for some $\zeta_{0,j}\in A(W, \af_n),$
$$
u_{0,j}=\zeta_{0,j}+1_{{\widetilde{A(W, {\af_n})}}}.
$$

The map $J_{u,F,n}: {\tilde F}\to U({\widetilde{B_{n}}})/CU({\widetilde{B_{n}}})$
defined by $x_j\mapsto  {\overline{u_{j}}}$ and $x_{0,j}\mapsto {\overline{u_{0,j}}}$ is an injective \hm\,
and define $J_{u,F}: F\to   U({\widetilde{B_T}})/CU({\widetilde{B_{T}}})$
by identifying ${\bar u}_j$ with $\overline{\Phi_{n, \infty}(u_j)}$
and ${\bar u}_{0,j}$ with $\overline{\Phi_{n, \infty}(u_{0,j})}.$
{{It should be noted, by our choice, $k_j{\overline{u_{0,j}}}=0.$}}

\end{NN}

\begin{NN}\label{Dert-Large}
We keep notation used in \ref{DdertF}.
Define
$$
E_{n,k}=\{(f, a)\in M_{(n!)^2}(C([0,1], Q\otimes Q)\oplus {{M_{(n!)^2}(C_k)}}: f(0)=s(a){{\otimes}} e_{\af_n}\andeqn f(1)\in s(a)\otimes 1_Q\},
$$
$n=1,2,...${{, where $s:M_{(n!)^2}(C_k)\to M_{(n!)^2} (Q)$ is the restriction of $s:M_{(n!)^2}(A)\to M_{(n!)^2} (Q)$ to $M_{(n!)^2}(C_k)\subset M_{(n!)^2}(A)$.}}
Fix $\ep>0$ and a finite subset ${\cal F}\subset B_T.$
\Wlog, we may assume that ${\cal F}\subset B_n.$
Denote by ${\cal F}^{Aw}=q_{E_n}({\cal F}),$ where
$q_{E_n}: B_n\to E_n=M_{(n!)^2}(A(W, \af_n))$ is the projection map.
Let
$$
C_{k,1}=\bigoplus_{i=1}^{k(m_f)} M_{r(k(i))}(C(\T)).
$$
Now write $u_1, u_2,...,u_{{k(m_f)}}\in {\tilde E_n}$ which represent the free generators of $K_1(E_{n,k}).$
We may assume that $\pi_n(u_j)=z_j,$ the unitary generator for $M_{r({{k(j)}})}(C(\T)),$ ${{j}}=1,2,...,k(m_f),$
and where $\pi_n: E_n\to {{M_{(n!)^2}(A)}}$ is the quotient map.
We also assume that $z_j$ and $u_j$ have the form \eqref{fromuj}.

Fix $\ep/2>\dt>0$ and a finite subset ${\cal G}'\subset C_k$ with ${\cal G}'\supset \pi_{n,k}({\cal F}^{Aw}),$
where $\pi_{n,k}: E_{n,k}\to {{M_{(n!)^2}(C_k)}}$ is the quotient map.
Choose a finite subset ${\cal F}_1\supset {\cal F}^{Aw}$ such that
$\pi_{n, k}({\cal F}_1)\supset {\cal G}'.$

We also assume
that there is an ${\cal G}'$-$\dt$-multiplicative \cpc\, $L: A\to C_k$ such
that
\beq\label{Large-1}
\|L(a)-a\|<\dt/4\rforal a\in {\cal G}'{{,}}
\eneq
{{where we also use $L$ to denote $L\otimes \id_{M_{(n!)^2}}:  M_{(n!)^2}(A) \to M_{(n!)^2}(C_k)$.}}
Choose $\dt>\dt_0>0$  such that, for any $(f,a)\in {\cal F}_1,$  if $|t-t'|<2\dt_0,$
\beq
\|f(t)-f(t')\|<\dt/16\rforal t, t'\in [0,1].
\eneq
Define ${\tilde L}: E_n\to E_{n,k}$ as follows:
${\tilde L}((f,a))=(g, L(a)),$
where
\beq\label{DdertF-2}
g(t)=\begin{cases} ((1-2t/\dt_0)s(L(a))\otimes e_{\af_n}+ {2t\over{\dt_0}}s(a)\otimes e_{\af_n}) & \rforal t\in [0,\dt_0/2],\\\nonumber
                              f({t-\dt_0/2\over{1-\dt_0}}) & \rforal t\in (\dt_0/2, 1-\dt_0/2],\\\nonumber
                              {1-t\over{\dt_0/2}}s(a){{\otimes 1_Q}}+
                              {{{t-(1-\dt_0/2)\over{\dt_0/2}}}}s(L(a))\otimes 1_Q &\rforal t\in (1-\dt_0/2, 1].\end{cases}
                              \eneq
One verifies that ${\tilde L}$ is an ${\cal F}_1$-$\dt/2$-multiplicative \cpc\, from
$E_n$ into $E_{n,k}.$

We now assume that $\af_n<\af_{n+1}.$
Let $r_1={1-\af_{n+1}\over{1-\af_n}}$ and $r_2={\af_{n+1}-\af_n\over{1-\af_n}}.$
Let $1>\eta>3/4$ and $\mu_j\ge 0,$ $j=1,2,...,k(m_f).$
Let $\omega_j=\mu_j/{\rm tr}(s(p_j)),$ $j=1,2,...,k(m_f).$

Fix a continuous increasing surjective function $g_1:[0, \eta]\to [0,1]$ such that $g_1(0)=0, g_1(\eta)=1$ and
decreasing surjective function $g_2:[\eta,1]\to [0,1]$ such that $g_2(\eta)=1,\,g_2(1)=0.$
Define $h|_{[0, \eta]}=g_1$ and $h|_{[\eta, 1]}=g_2.$ In particular,
$h=(0)=0$ and $h(1)=0.$

Define a  \hm\, $\phi_{c,R}^f:
{{M_{(n!)^2}(C_{k,1})}}\to M_{(n!)^2}(C([0,1], Q)\otimes e_{r_2})$
{{such that}}
\beq
\phi_{c, R}^f(z_j)(t)=
s(z_j)\exp(i2\pi (\omega_j/r_2) h(t))s(p_j)\otimes e_{r_2}\rforal t\in [0,1].
\eneq
Define $\phi_{c,R}=\phi_{c, R}^f|_{{M_{(n!)^2}(C_{k,1})}}\oplus (\phi_{c,R}^t)|_{{M_{(n!)^2}(C_{k,2})}}: {{M_{(n!)^2}(C_{k})}}\to M_{(n!)^2}(C([0,1], Q)\otimes e_{r_2}),$
where
\beq
\phi_{c,R}^t(a)(t)=s(a){{\otimes e_{r_2}}}\rforal t\in [0,1].
\eneq

Let $\phi_{A,R}: E_n\to {{M_{(n!)^2}(C([0,1], Q)\otimes e_{r_2})}}$ {{be defined}}
by $\phi_{A,R}((f,a))=\psi_{c, R}\circ {{L}}\circ \pi_A(a)$ for all $a\in E_n$
and where $\pi_A: M_{(n!)^2}(A(W, \af_n))\to M_{(n!)^2}(A)$ is the quotient map.


Now define a \cpc\, $\Psi: E_n\to M_{(n!)^2}(A(W,\af_{n+1})$
as follows. We will use some of the notation
in section 7.
Define  (see section {{7}} for the notation)
\beq
&&P_a( \Psi((f,a)))=L(a)\andeqn\\\nonumber
&&P_f( \Psi((f,a)))=\diag( P_f\circ \phi_{R,r_1}\circ \phi_{A,R, \af_n}({\tilde L}(f,a)), (\phi_{A,R}(f{{,a)}}))\\
&&=\diag( P_f\circ \phi_{R,r_1}\circ \phi_{A,R, \af_n}(g,L(a)), (\phi_{A,R}(f{{,a)}})).
\eneq
Note that
\beq
P_f(\Psi(f,a))(0)&=&\diag(s(L(a))\otimes e_{\af_nr_1}, s(L(a))\otimes e_{r_2})=s(L(a))\otimes e_{\af_{n+1}}\andeqn\\
P_f(\Psi(f,a))(1)&=&\diag(s(L(a))\otimes e_{r_1}, s(L(a))\otimes e_{r_2})=s(L(a))\otimes 1_Q.
\eneq
Let
\beq\nonumber
W_j(t)&=&(\exp(i2\pi h_{j,0})\exp(i2\pi h_{j,1})\otimes e_{\af_nr_1}\oplus (\exp(i2\pi t h_{j,0})\exp(i2\pi t h_{j,1})\otimes (e_{r_1}-e_{\af_nr_1})
\\\nonumber
&&\hspace{0.5in}+s(z_j)\exp(i2\pi (\omega_j/r_2) h(t))s(p_j)\otimes e_{r_2},\hspace{0.5in}\,\,\,j=1,2,...,k(m_f).
\eneq
{{Let}} $E_{n+1}':=M_{(n!)^2}(A(W, \af_{n+1}))${{, then in ${\tilde E'_{n+1}}$ }}(with large ${\cal G}'$),
\beq
\|\Psi(u_j)-(W_j, z_j)\|<\dt,\,\,\, j=1,2,...,k(m_f).
\eneq
{{(Here the unitalization of $\Psi$ is also denoted by $\Psi$.)}}
Therefore there exists $H_{j,00}\in (E_{n+1}')_{s.a.}$ with
$\|H_{j,00}\|\le 2\arcsin(\dt/2)$ such that
\beq
\lceil \Psi(u_j)\rceil=\exp(i2\pi  H_{j,00})(W_j, z_j),\,\,\, j=1,2,...,k(m_f).
\eneq
Put
\beq
H_{j,0}(t)&=&h_{j,0}\otimes (e_{\af_n r_1} \oplus e_{r_2})\oplus th_{j,0}\otimes (e_{r_1}-e_{\af_n r_1}),\\
H_{j,1}(t)&=& h_{j,1}\otimes (e_{\af_n r_1}\oplus e_{r_2})\oplus th_{j,1}\otimes (e_{r_1}-e_{\af_n r_1})\andeqn\\
H_{j,2}(t)&=& (\omega_j/r_2) h(t))s(p_j)\otimes e_{r_2}.
\eneq
Noting $h(0)=0$ and $h(1)=0,$ we see that
$H_{j,l}\in M_{(n!)^2}(R(\af_{n+1}, 1)).$
Therefore
\beq
{{\phi}}_{A, R, \af_{n+1}}(\lceil \Psi(u_j)\rceil)=\exp(i2\pi H_{j,00})\exp(i2\pi H_{j,0})\exp(i2\pi H_{j,1})\exp(i2\pi H_{j, 2}).
\eneq

{{Note that (recall that ${\rm tr}(h_{j,0})+{\rm tr}(h_{j,1})=0,$ $j=1,2,...,k(m_f)$), for all $t\in [0,1],$
\beq\label{11-nn-1}
{\rm tr}(H_{j,0}+H_{j,1})(t)=0
\eneq}}
We  {{then}} compute that, for all $t\in [0,1],$
\beq\label{Large-20}
{\rm tr}(H_{j,00}+H_{j,0}+H_{j,1}+H_{j,2})(t)&=&{\rm tr}(H_{j,00})+(\omega_j/r_2)h(t)\cdot {\rm tr}(s(p_j)){\rm tr}(e_{r_2})\\
&=&{\rm tr}(H_{j,00})+\mu_j h(t).
\eneq
It follows that, in $E{{''}}_{n+1}=M_{(n!)^2}(R(\af_{n+1}, 1)),$  for all $t\in [0,1],$
\beq\label{Fdert1}
|D_{E{{''}}_{n+1}}({{\phi}}_{A, R, \af_{n+1}}(\lceil \Psi(u_j)\rceil))(t)-\mu_jh(t)|<\dt.
\eneq

Let
\beq\nonumber
W_{0,j}(t)&=&(\exp(i2\pi h_{j,0,0})\exp(i2\pi h_{j,0,1})\otimes e_{\af_nr_1}\oplus (\exp(i2\pi t h_{j,0,0})\exp(i2\pi t h_{j,0,1})\otimes (e_{r_1}-e_{\af_nr_1})
\\\nonumber
&&\hspace{0.5in}+s(z_j)s(p_j)\otimes e_{r_2},\hspace{0.5in}\,\,\,j=1,2,...,m_t.
\eneq
A similar computation shows that
\beq\label{Fdert2}
|D_{E{{''}}_{n+1}}({{\phi}}_{A, R, \af_{n+1}}(\lceil \Psi(u_{{{0,}}j})\rceil))(t)|<\dt.
\eneq

\end{NN}

We will keep notations in \ref{DdertF} and \ref{Dert-Large} in the following statement.

\begin{lem}\label{L215}
Let $C$ be a non-unital separable simple \CA\, in ${\cal D}$  with  continuous scale
such that ${\rm ker}\rho_C=K_0(C)$ and let
$B=B_T$ be as constructed in \ref{Dcc1}.

Let $\ep>0, $ ${\cal F}\subset B$ be a finite subset,  let
${\cal P}\subset \underline{K}(B)$ be a finite subset  and let $1/2>\dt_0>0.$

For any finitely generated standard subgroup
$F$ (see \ref{DdertF}),  any finite subset $S\subset F,$
there exists an integer $n\ge 1$  with the following property:\\
for
any finite  subset  ${\cal U}\subset  U({\tilde B_T})$ such that
${\overline{\cal U}}\subset
J_{F,u}(F)\subset J_{F,u}((\Phi_{n, \infty})_{*1}(K_1(E_n)))$ (see the end of  \ref{DdertF}) and $\Pi(\overline{{\cal U}})=S,$
where $\Pi: U({\tilde B})/CU({\tilde B})\to K_1(B)$ is the quotient map, {{for}}
any \hm\, \\ $\gamma: J_{F,u}((\Phi_{n, \infty})_{*1}(K_1(E_n)))\to \Aff(T({{\tilde C}}))/\Z,$
such that $\gamma|_{{\rm Tor}(J_{u,F}((\Phi_{n, \infty})_{*1}(K_1(E_n)))}=0$ and
any $c\in C_+$ with $\|c\|=1,$
there exists ${\cal F}$-$\ep$-multiplicative \cpc\,
$\Phi:  B_T
\to \overline{cCc}$
such that
\beq\label{L215-1}
[\Phi]|_{\cal P}=0\tand
{\rm dist}(\Phi^{\dag}({\bar z}),\gamma({\bar z}))<\dt_0\tforal z\in {\cal U}
\\\nonumber
\hspace{1in}{{{\rm(in}\,\, U_0({\tilde C})/CU({\tilde C})\cong \Aff(T({\tilde C}))/\Z).}}
\eneq

\end{lem}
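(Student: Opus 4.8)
\textbf{Proof proposal for Lemma \ref{L215}.}

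The plan is to realize $\Phi$ as a composition of three pieces: (i) a quotient-type map that ``pushes'' $B_T$ down to $E_n$ (or rather to the telescope level $B_n$), composed with the $\delta/2$-multiplicative approximate map ${\tilde L}: E_n\to E_{n,k}$ from \ref{Dert-Large}; (ii) an honest homomorphism from $E_{n,k}$ (or from a suitable $R(\af_{n+1},1)$-type block sitting inside $E_{n+1}$) into $\overline{cCc}$, obtained from the Cuntz-semigroup existence machinery of Theorem \ref{Tdert1} (and \cite{Rl}) applied to $C$; and (iii) a determinant correction using the explicit unitaries $u_j, u_{0,j}$ of \ref{DdertF}, \ref{Dert-Large}. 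First I would fix $\af_n<\af_{n+1}$ large enough that the standard subgroup $F$ and the finite set $S\subset F$ are supported at level $n$ in the sense of \ref{DdertF}; then I would choose, given $\gamma: J_{F,u}((\Phi_{n,\infty})_{*1}(K_1(E_n)))\to \Aff(T({\tilde C}))/\Z$, real parameters $\mu_j\ge 0$ (and $\mu_{0,j}=0$ for the torsion generators, which is consistent since $\gamma$ kills torsion) so that, via \eqref{Large-20}, the determinant of $\phi_{A,R,\af_{n+1}}(\lceil\Psi(u_j)\rceil)$ equals $\mu_j h(t)$ up to $\delta$; here $h$ is the bump function of \ref{Dert-Large} with $h(0)=h(1)=0$, so the resulting element lies in the right relative commutator subgroup and the map $\gamma$ on the free part is matched exactly as an element of $\Aff(T({\tilde C}))/\Z$.

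Next I would invoke Theorem \ref{Tdert1}: since $C\in{\cal D}$ has continuous scale with ${\rm ker}\rho_C=K_0(C)$, by \ref{TD0=D} it lies in ${\cal D}_0$, by \ref{Ttrzstable} $Cu(C)={\rm LAff}_+(T(C))$, and by \ref{Ccunsim} $Cu^\sim(C)=K_0(C)\sqcup {\rm LAff}_+^\sim({\tilde T}(C))$; moreover there is a nonzero homomorphism $R(\af_{n+1},1)\to \overline{cCc}$ because $C$ is simple and $R(\af_{n+1},1)\in{\cal C}_0'$ maps into any hereditary subalgebra of a $\mathcal D$-algebra (using that $C\in{\cal D}_0$ admits embeddings of $\mathcal R$-type and $\mathcal C_0$-type blocks — cf. \ref{PtensorB_T}/\ref{DZ0C_0} and \cite{Rl}). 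So for any prescribed continuous positive rank data one gets homomorphisms $\psi: R(\af_{n+1},1)\to \overline{cCc}$ realizing it; applying this to each summand of $E_{n+1}$ and to the $h$-perturbed generators, one builds a homomorphism $\psi_0: M_{(n!)^2}(A(W,\af_{n+1}))\to \overline{cCc}$ whose induced $K$-theory vanishes on $\mathcal P$ (because $K_0$ of $\overline{cCc}$ equals ${\rm ker}\rho$ and one can arrange $K_1$ to vanish on the relevant finite set by a telescoping/stability argument as in \ref{MainModel}, \ref{DC_1}) and whose determinant on the $\overline{u_j}$ is prescribed by $\mu_j$. Composing $\psi_0\circ\Psi\circ({\rm projection}\ B_T\to E_n)$ with the approximation \eqref{DdertF-2}, \eqref{Large-1} gives an ${\cal F}$-$\ep$-multiplicative $\Phi$, and the determinant-length bookkeeping in \eqref{Fdert1}, \eqref{Fdert2}, together with the identification $U({\tilde C})/CU({\tilde C})\cong \Aff(T({\tilde C}))/\Z$ valid since ${\rm ker}\rho_C=K_0(C)$ and $C$ has stable rank one (\ref{Dalsr1}, \cite{Thomsen, GLX}), yields ${\rm dist}(\Phi^{\dag}(\bar z),\gamma(\bar z))<\dt_0$ for $z\in{\cal U}$ after choosing $\delta$, $n$ appropriately small/large.

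The main obstacle I expect is the \emph{compatibility of the two torsion/freeness bookkeepings}: one must simultaneously (a) kill $[\Phi]|_{\cal P}$ in $\underline K(B_T)$, which forces $K_1$-data to vanish, and (b) hit an \emph{arbitrary} homomorphism $\gamma$ into $\Aff(T({\tilde C}))/\Z$ on the subgroup $J_{F,u}(F)$, which is a purely ``rotation/determinant'' datum living over the part of $U/CU$ coming from $\Aff(T)/\overline{\rho(K_0)}=\Aff(T)/\Z$. The explicit construction of \ref{DdertF} (writing $s(z_j)=\exp(2i\pi h_{j,0})\exp(2i\pi h_{j,1})$ with ${\rm tr}(h_{j,0})+{\rm tr}(h_{j,1})=0$, and then twisting by the bump $h$ supported away from the endpoints) is exactly what decouples these: the endpoint conditions guarantee the $K_1$-class of the perturbation is unchanged while the interior rotation freely adjusts the determinant. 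Verifying that this decoupling survives the passage $B_T\to E_n\to E_{n,k}\to E_{n+1}\to \overline{cCc}$ — i.e. that $\Psi$ is genuinely $\mathcal F_1$-$\delta/2$-multiplicative, that the composed map still has controlled determinant (the estimates \eqref{Large-20}, \eqref{Fdert1} must be propagated through $\psi_0$, using that traces of $C$ pull back to traces of $R(\af_{n+1},1)$), and that the residual torsion generators $u_{0,j}$ contribute zero determinant up to $\delta$ via \eqref{Fdert2} — is the delicate part of the argument, and is where the bulk of the estimates (choosing $n$ large so $k(n)r_n$-type errors and $\af_n\to1$ control everything) will go.
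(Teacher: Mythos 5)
Your overall architecture is the one the paper actually uses: push $B_T$ down to level $n$ via $P_n\circ L$, run the bump construction $\Psi$ of \ref{Dert-Large} composed with $\phi_{A,R,\af_{n+1}}$ so that the map factors through $M_{(n!)^2}(R(\af_{n+1},1))$ (which is what kills $[\Phi]|_{\cal P}$, since that algebra has trivial $\underline{K}$), realize the tracial data inside $\overline{cCc}$ by Theorem \ref{Tdert1}, and control the de la Harpe--Skandalis determinants by \eqref{Fdert1} and \eqref{Fdert2}. On two points, however, your proposal diverges from what actually works, and one of them is a genuine gap.

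The gap is in the step you yourself flag as delicate and then dispose of too quickly: you claim that a single choice of weights $\mu_j\ge 0$ lets the determinant of $\Phi^{\dag}(\bar u_j)$ ``match $\gamma$ exactly'' on the free generators. But Theorem \ref{Tdert1} only produces homomorphisms $\psi:R(\af_{n+1},1)\to\overline{cCc}$ whose value on $h_\eta$ is $\tau\mapsto\tau(b)$ for a \emph{positive} element $b$ of the chosen hereditary subalgebra; such functions are nonnegative and bounded by (roughly) $d_\tau$ of the corner you map into, which may be very small, while $\gamma(\bar u_j)$ is an arbitrary element of $\Aff(T(\tilde C))/\Z$. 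A single block with $\mu_j\ge 0$ therefore only reaches nonnegative multiples of such small realizable functions, and the homomorphism property of $\gamma$ on all of $J_{F,u}(F)$ has to be respected simultaneously. The paper resolves this by first choosing, for each free generator, two mutually orthogonal positive elements $c_{j,1},c_{j,2}\in\overline{cCc}_+$ with $\eta_0\le\inf_\tau d_\tau(c_{j,l})$, writing $\gamma(\bar u_j)=\lambda_{j,+}g_{j,+}-\lambda_{j,-}g_{j,-}$ with $0<g_{j,\pm}\le\eta_0$ as in \eqref{L215-2}, and then using \emph{two} maps $\phi_{j,1},\phi_{j,2}$ per generator (the weight $\mu_j=\lambda_{j,\pm}$ entering through \eqref{Large-20}, the functions $g_{j,\pm}$ entering through \eqref{L215-4}); only the sum of the two blocks, as in \eqref{L215-10}, reproduces a general target. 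Without this plus/minus decomposition into orthogonal corners your construction cannot hit an arbitrary $\gamma$, so the matching step as written would fail.

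The second, lesser, issue is your argument for $[\Phi]|_{\cal P}=0$: the appeal to ``$K_0(\overline{cCc})={\rm ker}\rho$'' together with a telescoping argument as in \ref{MainModel}/\ref{DC_1} is both unnecessary and insufficient (it does not force the induced map on ${\cal P}$ to vanish), and if you take the variant of step (ii) in which the homomorphism is defined on $E_{n,k}$ rather than on the $R(\af_{n+1},1)$-block, the vanishing genuinely fails, since $E_{n,k}$ carries nontrivial $K_0$. The correct and simple reason, which you list only as an alternative, is that the whole map $\Phi'$ factors through $M_{(n!)^2}(R(\af_{n+1},1))$, whose $K$-groups are zero; you should commit to that route. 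Your treatment of the torsion generators via \eqref{Fdert2} (zero determinant up to $\dt$, consistent with $\gamma$ vanishing on torsion) agrees with the paper.
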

({{here we assume $\dist(\Phi^\dag({\bar z}), \overline{\lceil \Phi(z)\rceil})<\dt_0/4$
for all $z\in {\cal U}$}}--see \ref{DLddag} for the definition of $\Phi^{\dag}.$)

\begin{proof}
Fix $\ep>0,$ ${\cal F}$ and ${\cal P}$ as described by this lemma.
Fix $\dt_1>0,$  a finite subset ${\cal G}\subset B_T.$ We assume that ${\cal F}\subset {\cal G}.$
Choose $n_0\ge 1$ such that there exists  finite subset ${\cal G}'\subset B_{n_0}$
such that, for any $b\in {\cal G},$ there exists $b'\in {\cal G}'$
such that
\beq
\|b-\Phi_{n, \infty}(b')\|<\dt_1/64.
\eneq
We assume that $\dt_1<\min\{\dt_0/{{16}}, \ep/16\}.$

Choose $k\ge 1$ as in \ref{DdertF} and
write $F=\Z^{m_f}\oplus \Z/{k_1}\Z\oplus \cdots \Z/k_{m_t}\Z.$
Fix a set of generator $S$ of $F.$

To simplify notation, \wilog, we may assume that ${\cal G}\subset \Phi_{n_0, \infty}({\cal G}').$
We also assume, \wilog, that ${\cal P}\subset [\Phi_{n_0, \infty}](B_n).$
Let ${\cal P}'\subset \underline{K}(B_{n_0})$ be a finite subset such that
${\cal P}\subset [\Phi_{n_0, \infty}]({\cal P}').$

{{Let $\overline{{\cal U}}\subset J_{F,u}(F)$ and,
let $z_j$ and $u_j,$  $j=1,2,...,m_f,$ and $z_{0,j},$ and $u_{0,j},$  $j=1,2,...,m_t,$ be as described in
 \ref{DdertF}.  \Wlog, we may assume that
 $\overline{{\cal U}}=\{{\bar u}_1,{\bar u}_2, ...,{\bar u}_{m_f}, {\bar u}_{0,1},...,{\bar u}_{0,m_t}\}.$}}

We also assume that there exists a \cpc\, $L: B_T\to B_n$ such that, for all $n\ge n_0,$
\beq
\|L(\Phi_{n, \infty}(b'))-b'\|<\dt_1/64\rforal b'\in {\cal G}'
\eneq
We further assume that $\dt_1$ is sufficiently small and ${\cal G}$ is sufficiently large
so that $[L']|_{\cal P}$ is well defined for any ${\cal G}$-$\dt_1$-multiplicative \cpc\, from ${{B_T}}.$
{{Moreover, ${L'}^\dag$ can be defined so that ${\rm dist}({L'}^\dag({\bar z}), \overline{\lceil L'(z)\rceil})<\dt_0/4$
for all $z\in {\cal U}$ (see \ref{DLddag}).}}

Choose $\dt={\dt_1\over{{{16(m_f+m_t+2)}}}}$  and
choose $n\ge n_0+m_f+m_t+2$ as in \ref{Dert-Large} associated with $\dt/64$ (in place $\ep$)
and ${\cal G}$ (in place of ${\cal F}$).

Choose  non-zero elements $c_{i,l}\in \overline{cCc}_+$ which are
mutually orthogonal, $i=1,2,...,m_f,$ $l=1,2.$

Choose $1>\eta_0>0$ such that
$$
\eta_0\le \inf\{d_\tau(c_{j,l}): \tau\in T(C)\},
{{1\le j\le m_f\andeqn
l\in \{1,2\}.}}$$


Choose $g_{j,+}, g_{j,-}\in \Aff(T(C))_+$ and $\lambda_{j,+}, \lambda_{j,-}\in \R_+$  such that
\beq\label{L215-2}
0<g_{j+}(\tau)\le \eta_0, 0<g_{j-}(\tau)\le \eta_0\rforal \tau\in T(C)\andeqn\\
\gamma({\bar u_j})=\lambda_{j,+}g_{j+}-\lambda_{j,-}g_{j-},\,\,\, j=1,2,...,m_f.
\eneq


Let $P_n: B_n\to E_n$ be the projection map, and let  ${\cal G}''\subset E_n$ be a finite subset such that ${\cal G}''\supset P({\cal G}').$

Define $\phi_{j,l}':
M_{(n!)^2}({{A(W, \af_n)}})\to M_{(n!)^2}(R(\af_{n+1}, 1))$
be as defined  (denoted by ${{\phi}}_{A,R,\af_{n+1}}\circ \Psi$ there) in \ref{Dert-Large} {{(with $\mu_j=\ld_{j,+}$ and $\mu_i=0$ if  $i\not=j$  (for $\phi_{j,1}'$); and with $\mu_j=\ld_{j,-}$ and $\mu_i=0$ if  $i\not=j$  (for $\phi_{j,2}'$))}} such that
\beq
\hspace{-0.2in}\lceil \phi_{j,l}'(u_j) \rceil =\exp(\sqrt{-1}2\pi H_{j,00})\exp(\sqrt{-1}2\pi H_{j,0})\exp(\sqrt{-1}2\pi H_{j,1})\exp(\sqrt{-1}2\pi H_{j, 2,l}),
\eneq
where $H_{j,00}, H_{j,0}, H_{j,1}, H_{j,2,l}\in M_{(n!)^2}(R(\af_{n+1}, 1),$ $l=1,2,$ such that
\beq
&&\hspace{-0.4in}{\rm tr}(H_{j,00}(t)+H_{j,0}(t)+H_{j,1}(t)+H_{j,2,l}(t))={\rm tr} (H_{j,00}(t)) +{\rm tr}(H_{j,2,l})(t),\,\,\, l=1,2,\\
&&{\rm tr}(H_{j,2,1})(t)=\lambda_+h(t),\,\,\, {\rm tr}(H_{j,2,2})(t)=\lambda_-h(t)\andeqn\\
&&|{\rm tr}(H_{j,00}(t))|<\dt/4
\eneq
for all $t\in [0,1],$
where
$h(t)$ is  $C([0,1])_+$ such that
$h(0)=0,$ $h(3/4)=1,$ $h(1)=0,$ $h(t)$ is strictly increasing
on $[0, 3/4]$ and strictly decreasing on $[3/4, 1].$ Moreover $\phi_{j,l}'$ is ${\cal G}''$-$\dt/8(m_f)$-multiplicative,
\beq
\hspace{-0.2in}\lceil \phi_{j,l}'(u_i) \rceil &=&\exp(2\pi \sqrt{-1} H_{{{i}},00})\exp(2\pi \sqrt{-1}H_{{{i}},0})\exp(2\pi \sqrt{-1} H_{{{i}},1}),{\rm  if}\,\, i\not=j\andeqn\\
{[}\phi_{j,l}'{]}|_{\cal Q}&=&0,
\eneq
where ${{{\cal Q}}}=[P_n\circ \Phi_{n, \infty}]({\cal P}').$
(Note that $K_i(R(\af_{n+1}, 1))=\{0\},$ $i=0,1$).
Note since $C\in {\cal D},$
for each $j,$  there exists a non-zero \hm\, ${{\phi''_{j,l}}}: M_{(n!)^2}(R(\af_{n+1}, 1))\to C_{j,l}:= \overline{c_{j,l}Cc_{j,l}},$
$j=1,2,...,m_f.$
It follows from \ref{Tdert1} that there is, for each $j$ and $l,$ a \hm\,
$\phi_{j,l}'': M_{(n!)^2}(R(\af_{n+1}, 1))\to C_{j,l}$ such that
\beq\label{L215-4}
&&\sup\{|\tau\circ \phi_{j,1}''(h)-g_{j,+}(\tau)|: \tau\in T(C)\}<\dt/2\andeqn\\
&&\sup\{|\tau\circ \phi_{j,2}''(h)-g_{j,-}(\tau)|:\tau\in T(C)\}<\dt/2.
\eneq
Let  $\phi_{j,l}=\phi_{j,l}''\circ \phi_{j,l}': E_n\to C_{j,l}.$
{{Recall $\phi_{j,l}'$ is of the form $\phi_{A,R,\af_{n+1}}\circ \Psi$, w}}e compute  that (also using
{{\eqref{11-nn-1} and}} \eqref{Fdert1},
{{and,  see \ref{Dceil} for the notation $\lceil\,{\mathbf{\cdot}} \,\rceil$}}),
\beq\label{L215-10}
D_{{\tilde C}}(\sum_{l=1}^2\lceil \phi_{j,l}(u_j)\rceil)&\approx_{2{{\dt_1}}/16(m_f)}&
(\lambda_{j+}g_{j,+}-\lambda_{j-}g_{j,-})\hspace{0.2in} ({\rm in}\,\,\Aff(T({\tilde C}))/\Z)\\
&=& \gamma({\bar u}_j)\hspace{1in}({\rm in}\,\,\Aff(T({\tilde C}))/\Z)\\
D_{{\tilde C}}(\lceil \phi_{j,l}(u_{{i}})\rceil)
&\approx_{2{{\dt_1}}/16(m_f)} &
0\,\hspace{1in}({\rm in}\,\,\Aff(T({\tilde C}))/\Z),\,\,\,i\not=j.
\eneq
{{Similarly, using  \eqref{Fdert2}, we have}}

\beq\label{L215-11}
D_{\tilde C}(\sum_{l=1}^2(\lceil \phi_{j,l}(u_{0,i})\rceil )\approx_{2{{\dt_1}}/16(m_f)} 0\,\,\,({\rm in}\,\,\Aff(T({\tilde C}))/\Z).
\eneq

Now define
$\Phi': E_n\to \bigoplus_{j=1}^{m_f}(\bigoplus_{l=1}^2 C_{j,l})$
by
$
\Phi'=\sum_{j=1}^{m_f} (\sum_{l=1}^2\phi_{j,l}).
$
From the above estimates,
\beq
{\rm dist}(\Phi^{\dag}({\bar z}), \gamma({\bar z}))<{{\dt_0}}\rforal z\in {\cal U}.
\eneq
Moreover, since $\Phi'$ factors through $M_{(n!)^2}(R(\af_{n+1},1)),$
\beq
[\Phi']|_{\cal Q}=0.
\eneq
 Define $\Phi=\Phi'\circ P_n\circ  L.$  We check that $\Phi$ meets the requirements.


%

\end{proof}

\begin{lem}\label{Lderttorsion}
Let $C$ be a
non-unital separable \CA.
Suppose that $u\in U(M_s({\tilde C}))$ (for some integer $s\ge 1$) with $[u]\not=0$ in $K_1(C)$
but $u^k\in CU(M_s({\tilde C}))$
 for some $k\ge 1.$
 Suppose that $\pi_C(u)=e^{2i \pi \theta}$
 for some $\theta\in (M_s)_{s.a.},$
 {{where $\pi_C: M_s({\tilde C})\to M_s$ is the quotient map.}}
Then ${{sk\cdot {\rm tr}(\theta)}}\in \Z,$ where ${\rm tr}$ is the tracial state
of $M_s.$

Let $B$ be a stably projectionless simple separable \CA\, with ${\rm ker}\rho_B=K_0(B)$
and with continuous scale.
For any $\ep>0,$ there exists $\dt>0$ and finite subset ${\cal G}\subset C$ satisfying the following:
If $L_1, L_2: C\to B$ are two ${\cal G}$-$\dt$-multiplicative \cpc s such that $[L_1](u)=[L_2](u)$
in $K_1(B),$
then
\beq\label{Lderttor-1}
{\rm dist}(\overline{\lceil L_1(u)\rceil}, \overline{\lceil L_2(u)\rceil} )<\ep,
\eneq
{{where $u$ is as in  the first paragraph.}}

\end{lem}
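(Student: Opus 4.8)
\textbf{Proof strategy for Lemma \ref{Lderttorsion}.}
The statement splits into two independent parts. For the first (algebraic) part, the plan is to exploit that the de la Harpe--Skandalis determinant (or more precisely the map $\overline{D}_{M_s(\tilde C)}$) vanishes on $CU(M_s(\tilde C))$ up to $\overline{\rho(K_0)}$. Since $u^k\in CU(M_s(\tilde C))$, applying $\pi_C$ we get $\pi_C(u^k)=e^{2\pi i k\theta}$ lying in $CU(M_s(\mathbb C))=CU(M_s)$; but $U(M_s)/CU(M_s)$ is computed by the determinant, and $\det(e^{2\pi i k\theta})=e^{2\pi i k\,\mathrm{Tr}(\theta)}$, so $e^{2\pi i k\,\mathrm{Tr}(\theta)}=1$, i.e. $k\,\mathrm{Tr}(\theta)\in\mathbb Z$; dividing by $s$ gives $k\,\mathrm{tr}(\theta)\in\frac1s\mathbb Z$, and a slightly more careful bookkeeping (looking at $\pi_C(u)$ directly as an element of $U(M_s)/CU(M_s)$, whose $k$-th power is trivial) upgrades this to $k\,\mathrm{tr}(\theta)\in\mathbb Z$. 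First I would set up this computation cleanly via the isomorphism $U(M_s)/CU(M_s)\cong \mathbb T$ induced by $\det$.

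For the second (approximation) part, the key point is that $B$ is stably projectionless with $\ker\rho_B=K_0(B)$ and continuous scale, so by \ref{DJc} and the discussion around \eqref{CUsplit} (applied to $\tilde B$, which has stable rank one by \ref{Ulength}/\ref{RbTuniq}), the quotient $U_0(\tilde B)/CU(\tilde B)\cong \mathrm{Aff}(T(\tilde B))/\overline{\rho_{\tilde B}(K_0(\tilde B))}=\mathrm{Aff}(T(\tilde B))/\mathbb Z$ is identified via $\overline{D}_{\tilde B}$. The plan is: choose $\dt$ small and ${\cal G}$ large enough that (i) $[L_i](u)$ is well-defined and $\lceil L_i(u)\rceil$ makes sense as an element of $U(M_s(\tilde B))/CU$, and (ii) $L_1,L_2$ are close enough on a generating set of the relevant subalgebra that the two lifted unitaries $\lceil L_1(u)\rceil$ and $\lceil L_2(u)\rceil$ differ by an element of $U_0$. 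Since $[L_1](u)=[L_2](u)$ in $K_1(B)$, the product $\lceil L_1(u)\rceil\lceil L_2(u)\rceil^{*}$ lies in $U_0(M_s(\tilde B))/CU$, hence is detected by its value $\overline{D}_{\tilde B}(\,\cdot\,)\in \mathrm{Aff}(T)/\mathbb Z$. I would then bound this affine quantity: for each $\tau\in T(B)$, $\tau(\overline{D}(\lceil L_i(u)\rceil))$ is continuous in $L_i$ (on the multiplicative-enough cone), and the torsion relation $u^k\in CU$ forces $k$ times this value to be an integer-valued function, i.e. $\overline{D}(\lceil L_i(u)\rceil)$ takes values in $\frac1k\mathbb Z/\mathbb Z$, a discrete set; combined with the continuity/closeness from (ii), the two values must coincide, giving \eqref{Lderttor-1} with room to spare.

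The main obstacle I anticipate is part (ii): controlling $\overline{\lceil L_1(u)\rceil\,\lceil L_2(u)\rceil^{*}}$ in $U(\tilde B)/CU(\tilde B)$ purely from $L_1,L_2$ being close on a finite set. One has to be careful that $\lceil\,\cdot\,\rceil$ is only defined via polar decomposition when $\|L_i(u)L_i(u)^*-1\|<1$, that the commutator-subgroup quotient behaves well under small perturbations (this is where the hypotheses that $B$ has stable rank one and $\tilde B\in$ appropriate class, giving finite exponential length bounds via \ref{Ulength}, are used), and that the passage from "$[L_1](u)=[L_2](u)$" to "the quotient lies in $U_0$" genuinely uses that $K_1$ of the relevant hereditary subalgebra injects. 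I would handle this by a standard semiprojectivity-free argument: approximate $u$ by a unitary in a finitely generated matrix-over-$C$ corner, reduce to the case where everything is genuinely multiplicative, and invoke the uniqueness of the de la Harpe--Skandalis determinant together with the torsion constraint from part one to pin down the value.
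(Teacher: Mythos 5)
Your plan for the second (main) part has a genuine gap: step (ii) assumes that $L_1$ and $L_2$ are ``close enough on a generating set,'' but no such closeness is among the hypotheses --- the only link between $L_1$ and $L_2$ is the equality $[L_1](u)=[L_2](u)$ in $K_1(B)$. Without that phantom hypothesis, your discreteness argument only shows that the class of $w=\lceil L_1(u)\rceil\lceil L_2(u)\rceil^*$ in $U_0(M_s(\tilde B))/CU(M_s(\tilde B))\cong \Aff(T(\tilde B))/\Z$ is (approximately) a constant $m/k$ with $m\in\{0,1,\dots,k-1\}$; for $m\neq 0$ this does not give \eqref{Lderttor-1} at all (e.g.\ $k=2$, $m=1$ gives distance $2$). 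The paper pins the value to exactly $0$ by an argument you do not have: since $L_1$ and $L_2$ extend to $\tilde C$ with the same scalar part, $L_1(u)$ and $L_2(u^*)$ both have scalar part $e^{\pm 2\pi i\theta}$, so $\pi_B(w)=1$ and $w$ can be written as $\prod_j\exp(2\pi i h_j)$ with the $h_j$ arranged to lie in $B_{s.a.}$ (scalar parts summing to zero). A cited perturbation result (14.5 of \cite{Lnmemhp}) then produces, for $\dt$ small and ${\cal G}$ large independently of $L_1,L_2$, a correction $h_0$ with $\|h_0\|<\ep/2(k+1)$ so that $(\exp(2\pi i h_0)\,w)^k\in CU(\tilde B)$ \emph{exactly}, whence $k\,\tau(\sum_j h_j+h_0)=m\in\Z$ for all $\tau\in T(\tilde B)$. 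The decisive step, which uses that $B$ is non-unital and stably projectionless, is to evaluate this along the segment of traces $\alpha\tau_0+(1-\alpha)t_0$, where $t_0$ is the character trace of $\tilde B$ killing $B$: since the $h_j,h_0$ lie in $B$, the left side equals $k\alpha\,\tau_0(\sum h_j+h_0)$ for every $\alpha\in(0,1)$, forcing $m=0$ and $\tau_0(\sum h_j+h_0)=0$ for every $\tau_0\in T(B)$; the distance is then controlled by $\|h_0\|$. This trace-mixing mechanism (not any closeness of $L_1,L_2$) is the content of the lemma, and it is what your proposal is missing.

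Two smaller points. First, you treat ``$k$ times the value is integer-valued'' as exact, but $L_i$ is only ${\cal G}$-$\dt$-multiplicative, so $L_i(u)^k$ is only approximately in $CU(\tilde B)$; you need the perturbation step above (or an equivalent) before any integrality or discreteness can be invoked. Second, in the algebraic first part your determinant computation yields $k\,\mathrm{Tr}(\theta)\in\Z$, i.e.\ $k\,\mathrm{tr}(\theta)\in\tfrac1s\Z$ for the normalized trace, and the promised ``bookkeeping upgrade'' to $k\,\mathrm{tr}(\theta)\in\Z$ is not actually supplied; the paper's own treatment of this point is equally terse, but you should not claim the upgrade without an argument.
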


\begin{proof}
Write  $u=e^{2{{\sqrt{-1}}} \pi\theta}+\zeta,$  where $\zeta\in M_s(C)$
and $\theta\in (M_s)_{s.a.}.$
Therefore, if $u^k\in CU(M_s({\tilde C})),$ then $sk{\rm tr}(\theta)\in \Z.$

Note $L_i$ is originally defined on $C$ and the extension
$L_i: M_s({\tilde C})\to M_s({\tilde B})$ has the property  that
$L_i(u)=e^{2{{\sqrt{-1}}}  \pi \theta}+L_i(\zeta),$  $i=1,2.$ To simplify notation,
\wilog, we may assume that $\lceil L_1(u)\rceil \cdot \lceil L_2(u^*)\rceil\in U_0(M_s({\tilde B})).$  {{Note that $$\pi_B(\lceil L_1(u)\rceil \cdot \lceil L_2(u^*)\rceil)=e^{2\sqrt{-1}  \pi \theta } e^{-2\sqrt{-1}  \pi \theta}=1$$}}
{{(where $\pi_B: M_s({\tilde B})\to M_s$  is the quotient map).}}
{{ By \ref{Thelemma}, we may write}}
$$
\lceil L_1(u)\rceil \cdot \lceil L_2(u^*)\rceil =\prod_{j=1}^n \exp(2{{\sqrt{-1}}}  \pi h_j)\rforal \text{some}\,\,h_1,h_2,...,h_n\in M_s({\tilde B})_{s.a}\,\,\text{with}
$$
\vspace{-0.1in}
\beq\label{1802}
{{\pi_B(h_j)=0~\mbox{and}}}~\pi_B( \exp(2\sqrt{-1} \pi h_j))=1 ~{\mbox{ for all}~j.}
\eneq
It follows from 14.5  of \cite{Lnmemhp} that, by choosing small $\dt$ and large ${\cal G}$
(independent of $L_1$ and $L_2$)
there is $h_0\in M_s({\tilde B})_{s.a.}$ such that
$\|h_0\|<{{\min\{1, \ep\}/4s(k+1)}}$ and
\beq\label{116-nn1}
((\exp(2i\pi h_0))(\prod_{j=1}^n \exp(2i \pi h_j)))^k\in CU(M_s({\tilde B})).
\eneq
By \eqref{1802}, $\pi_B(\exp(2i h_0))\in CU(M_s).$
Then $st_0(h_0)\in \Z.$ However, since $\|h_0\|<1/4s(k+1),$
$t_0(h_0)<1/4s(k+1).$
This implies that $t_0(h_0)=0.$ Note also $U({\tilde B})/CU({\tilde B})=\Aff(T({\tilde B}))/\Z.$
{{Therefore (by \eqref{116-nn1}), t}}here is an integer $m\in \Z$ such that, for any $\tau\in T({\tilde B}),$
\beq
k(\tau(\sum_{j=1}^n h_j+h_0))=m.
\eneq
For any $\tau_0\in T(B)$ and any $0<\af<1,$
$t=\af \tau_0+(1-\af)t_0$ is a tracial state of ${\tilde B}.$
Then
(by \eqref{1802}),
\beq\label{ntorsion-1}
kt(\sum_{j=1}^n h_j+h_0)=k(\af\tau_0(\sum_{j=1}^n h_j) +\af\tau_0(h_0))=m.
\eneq
So
$
k\af\tau_0(\sum_{j=1}^nh_j +h_0)=m
$
for any $0<\af<1.$ It follows that
\beq
\tau_0(\sum_{j=1}^n h_j+h_0)=0\rforal \tau_0\in T({\tilde B}).
\eneq
It follows that
\beq
|\tau(\sum_{j=1}^n h_j)|<\ep/2(k+1)\rforal \tau\in T({\tilde B}).
\eneq
Thus \eqref{Lderttor-1} holds.
\end{proof}

\section{Construction of \hm s}

\begin{prop}\label{PADw}
Let $A$ be a separable simple \CA\, in ${\cal D}.$  Suppose
that ${\rm ker}\rho_A=K_0(A).$
Then there exists a sequence of approximately multiplicative \cpc s
$\{\phi_n\}$ from $A$ to ${\cal W}$ which maps  strictly positive elements to strictly positive elements.

\end{prop}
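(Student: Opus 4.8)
The statement asserts the existence of approximately multiplicative c.p.c.\ maps $\phi_n\colon A\to W$ sending strictly positive elements to strictly positive elements, for $A$ a separable simple \CA\, in ${\cal D}$ with ${\rm ker}\rho_A=K_0(A)$. The natural route is to exploit the Cuntz semigroup and the tracial data. First I would reduce to the case where $A$ has continuous scale: by passing to a hereditary \SCA\, $\overline{eAe}$ with $e\in P(A)_+$, $\|e\|=1$, as in \ref{TD0=D} (using \cite{Lncs2}), since approximately multiplicative maps on a dense hereditary subalgebra can be extended. Actually, since $W$ is stably isomorphic to \CA s in ${\cal D}$ (indeed $W\in {\cal M}_0$ and is stably projectionless with trivial $K$-theory), and since $A\in{\cal D}$ with ${\rm ker}\rho_A=K_0(A)$ implies $A\in{\cal D}_0$ by \ref{TD0=D}, one can bring in the approximate decomposition $x\approx\diag(\phi(x),\psi(x))$ with $\psi$ landing in $D\in{\cal C}_0^{0'}$ and $\phi$ small in the Cuntz sense.

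The key mechanism is the following. By \ref{Ttrzstable} and \ref{Ccunsim}, $Cu^{\sim}(A)=K_0(A)\sqcup {\rm LAff}_+^{\sim}({\tilde T}(A))$, and since $K_0(A)={\rm ker}\rho_A$, there is an ordered semigroup homomorphism $\Lambda^{\sim}\colon Cu^{\sim}(A)\to Cu^{\sim}(W)={\rm LAff}_+^{\sim}({\tilde T}(W))$ which kills $K_0(A)$ and realizes an order isomorphism ${\rm LAff}_+^{\sim}({\tilde T}(A))\cong{\rm LAff}_+^{\sim}({\tilde T}(W))$ once we fix an affine homeomorphism of trace cones (here is where we use that the model $W_T$ with $T(W_T)=T(A)$ exists — \ref{TD0=D}, 16.4 of \cite{GLp1}). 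By the Cuntz-category classification results of R\o rdam--Winter / Robert (\cite{Rl}) applied to the semiprojective building blocks $C_n$ with $A=\overline{\cup C_n}$, or more directly to the approximately sub-homogeneous pieces, $\Lambda^{\sim}$ is realized (approximately) by a sequence of c.p.c.\ maps $A\to W$; the scale-preservation comes from $\Lambda^{\sim}(\Sigma_A)=\Sigma_W$, i.e.\ $\Lambda^{\sim}$ sends the class of a strictly positive element of $A$ to that of a strictly positive element of $W$. Concretely, I would: (i) approximate $A$ by $D_n\in{\cal C}_0^{0'}$-type subalgebras via \ref{TD0=D} or its proof; (ii) on each such finite-dimensional-ish piece use weak semiprojectivity of $D_n$ together with \cite{Rl} to produce a genuine homomorphism $D_n\to W$ compatible with $\Lambda^{\sim}$; (iii) precompose with an almost-multiplicative c.p.c.\ approximate left inverse $A\to D_n$ and adjust so that a chosen strictly positive element of $A$ maps (approximately) to a strictly positive element of $W$; (iv) a standard $\ep$/finite-subset diagonal argument over $n$ assembles the sequence $\{\phi_n\}$.

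The main obstacle is step (iii): ensuring \emph{uniform} scale-preservation — that $\phi_n(a_0)$ is strictly positive in $W$ (not merely that $d_\tau(\phi_n(a_0))$ is bounded below), while simultaneously keeping the maps approximately multiplicative and $K$-theory-trivial. Here I would use the fact that $W$ has strict comparison and trivial $K_0$, so that for any positive $f\in W$ with $d_{t_W}(f)=d_{t_W}(a_W)$ (where $a_W,t_W$ are the strictly positive element and unique trace of $W$), $f$ is Cuntz-equivalent to $a_W$, hence its hereditary subalgebra is all of $W$ up to isomorphism. Thus it suffices to arrange $\tau\circ\phi_n(a_0)\to t_W$-value, which is exactly what $\Lambda^{\sim}$ gives on the level of $\Sigma$; a small perturbation (replacing $\phi_n$ by $g(\phi_n(\cdot))$-corrected maps and using semiprojectivity of $C_0((0,1])$) upgrades ``full with the right rank'' to ``strictly positive.'' The compatibility with ${\rm ker}\rho_A=K_0(A)$ guarantees no $K$-theoretic obstruction survives, so $[\phi_n]$ can be taken trivial, which is automatic since $K_i(W)=0$.
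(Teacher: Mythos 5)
Your overall route is the paper's route: use the tracial approximation built into the definition of ${\cal D}$ to get approximately multiplicative \cpc s $\psi_n\colon A\to D_n$ with $D_n$ in the ${\cal C}_0$-type classes (with $\psi_n(a_0)$ strictly positive in $D_n$ and $\tau\circ\psi_n\to\tau$ uniformly, as in the proof of 13.1 of \cite{GLp1}), then use Robert's $Cu^{\sim}$-classification \cite{Rl} to produce \hm s $h_n\colon D_n\to W$ compatible with a $Cu^{\sim}$-morphism of $A$ into $W$ that kills $K_0$, set $\phi_n=h_n\circ\psi_n$, and finally upgrade fullness of $\phi_n(a_0)$ to strict positivity (the paper does this by the argument of 18.4 of \cite{GLp1}).

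However, the central object in your sketch is misconstructed. You claim an order isomorphism ${\rm LAff}_+^{\sim}({\tilde T}(A))\cong{\rm LAff}_+^{\sim}({\tilde T}(W))$ coming from ``an affine homeomorphism of trace cones,'' citing the model $W_T$ with $T(W_T)=T(A)$. This cannot be right: $W$ is monotracial, so ${\tilde T}(W)$ is a single ray, whereas ${\tilde T}(A)$ is in general a big cone; no homeomorphism of trace cones and no isomorphism of the ${\rm LAff}$ parts exists, and the algebra $W_T$ (whose trace simplex is $T(A)$) is simply not the target of the statement. The correct construction — and the one the paper uses — is to \emph{fix one} trace $\tau\in T(A)$ and define the cone map $\gamma\colon {\tilde T}(W)\to {\tilde T}(A)$ by $t_W\mapsto\tau$; this induces a surjective order morphism ${\rm LAff}({\tilde T}(A))\to{\rm LAff}({\tilde T}(W))$, and since ${\rm ker}\rho_A=K_0(A)$ and $K_0(W)=0$ one gets $\Gamma\colon Cu^{\sim}(A)\to Cu^{\sim}(W)$; then $\lambda_n=\Gamma\circ Cu^{\sim}(\imath_n)$ is what you feed into \cite{Rl}. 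Two smaller points: the initial reduction to continuous scale is unnecessary (and extending approximately multiplicative maps from a hereditary subalgebra while preserving strict positivity of a strictly positive element of $A$ is not automatic), since the paper argues directly on $A$; and for the last step, Cuntz equivalence of $\phi_n(a_0)$ with a strictly positive element of $W$ does not by itself make $\phi_n(a_0)$ strictly positive — an additional adjustment (the argument from 18.4 of \cite{GLp1}) is needed, which your ``small perturbation'' remark gestures at but does not supply.
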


\begin{proof}
Fix $\tau\in T(A).$
Define $\gamma: T({\cal W})\to T(A)$ by
$\gamma(t_{\cal W})=\tau,$ where $t_{\cal W}$ is the unique tracial state of ${\cal W}.$
 Then $\gamma$ induces an order semi-group
\hm\,  from ${\rm LAff}({\tilde T}(A))$ onto ${\rm LAff}({\tilde T}({\cal W})).$ Since ${\rm ker}\rho_A=K_0(A)$ { {and $K_0({\cal W})=0$,}}
this in turn induces a \hm\,
$\Gamma: Cu^{\sim}(A)\to  Cu^{\sim}({\cal W})$ {{(see 7.3 of \cite{eglnp1})}}.
Fix a strictly positive element $a_0\in A$ with $\|a\|=1.$
Let $\mathfrak{f}_{a_0}>0$ be the associated number (see \ref{DD0}).
There exists a sequence of approximately multiplicative \cpc s $\psi_n: A\to D_n$
such that
$\psi_n(a_0)$ is a strictly positive element of $D_n,$
$t(f_{1/4}(a_0))\ge \mathfrak{f}_{a_0}$ for all $t\in T(D_n),$
{{(where $D_n$ is the same as constructed in the proof of 9.1 of \cite{eglnp1} and
$\psi_n$ is the same as $\phi_{1,n}$).}}
Moreover,
$$
\lim_{n\to\infty}\sup\{|\tau(a)-\tau\circ \psi_n(a)|: \tau\in \overline{T(A)}^w\}=0\rforal a\in A
$$
(see the proof of
9.1 of \cite{eglnp1}).
In particular, this implies
that $\lim_{n\to\infty}\|\psi_n(x)\|=\|x\|$ for all $x\in A.$
For each $n,$ let $\imath_n: D_n\to A$ be the embedding.

Let $\lambda_n=\Gamma\circ  (Cu^{\sim}(\imath_n)).$
It follows from \cite{Rl} that there is a \hm\,
$h_n: D_n\to {\cal W}$ such that
$$
Cu^{\sim}(h_n)=\lambda_n,\,\,\, n=1,2,....
$$
By passing a subsequence if necessary,  we may assume
that
$$
\lim_{n\to\infty}\|h_n\circ \psi_n(ab)-h_n\circ \psi_n(a)h_n\circ \psi_n(b)\|=0\rforal a, b\in A.
$$
By using an argument used in the proof   of
{{12.4 of \cite{eglnp1}),}}
we {{may}} also assume that $h_n\circ \psi_n(a_0)$ is a strictly positive element of ${\cal W}.$
\end{proof}

\begin{rem}\label{11R1}
In the absence of the condition $K_0(A)={\rm ker}\rho_A,$ the proof of  \ref{PADw} shows
that the conclusion of \ref{PADw} holds if the assumption
is changed to 
{{that}} $A$ has at least one non-zero ${\cal W}$-trace.
The proof of Lemma \ref{PADw}  also shows that every tracial state of simple \CA s in ${\cal D}$ with
$K_0(A)={\rm ker}\rho_A$ is a ${\cal W}$-trace.
Lemma \ref{PADw} can also be obtained from the proof of \ref{TD0=D}.
\end{rem}

The following is a number theory lemma which may be known.

\begin{lem}\label{NumberT}
Let $a_1, a_2, \cdots, a_n$ be non-zero integers  such that  at least one of them is positive and  one of them is negative. Then, for any $d\in \Z,$ if $a_1x_1+a_2x_2+\cdots+a_n x_n=d$ has an integer solution, then it must have a positive  integer solution.
\end{lem}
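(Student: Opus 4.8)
The plan is to reduce to the case $n=2$ and then invoke a classical divisibility argument. First I would dispose of a trivial case: if some $a_i$ divides $d$ we may be tempted to set $x_i = d/a_i$ and the rest to $0$, but that fails the positivity requirement and also $d/a_i$ may be negative, so this is not actually a shortcut; instead the correct starting point is that the hypothesis guarantees two coefficients of opposite sign. Relabel so that $a_1 > 0$ and $a_2 < 0$, writing $a_2 = -b$ with $b>0$. Fix one integer solution $(x_1^0, x_2^0, \dots, x_n^0)$, which exists by assumption.

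The key step is the observation that for a fixed choice of $x_3, \dots, x_n$, the equation $a_1 x_1 + a_2 x_2 = d - \sum_{j\ge 3} a_j x_j =: d'$ in the two unknowns $x_1, x_2$ has solutions forming the coset $(x_1, x_2) = (x_1^*, x_2^*) + t\,(-a_2, a_1) = (x_1^* - ta_2,\ x_2^* + ta_1)$ for $t \in \Z$, where $(x_1^*, x_2^*)$ is any particular solution. Since $-a_2 = b > 0$ and $a_1 > 0$, \emph{both} coordinates $x_1^* + tb$ and $x_2^* + t a_1$ are strictly increasing in $t$; hence for all sufficiently large $t$ both become positive simultaneously. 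Therefore, keeping $x_3 = x_3^0, \dots, x_n = x_n^0$ fixed and taking $(x_1^*, x_2^*) = (x_1^0, x_2^0)$, we may choose $t$ large enough that $x_1^0 + tb > 0$ and $x_2^0 + t a_1 > 0$, which handles $x_1$ and $x_2$.

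It remains to make $x_3, \dots, x_n$ positive as well without destroying positivity of $x_1, x_2$. For this I would iterate the same move: for each index $j \ge 3$, use the opposite-sign pair consisting of $a_j$ together with whichever of $a_1, a_2$ has sign opposite to $a_j$ — say $a_j > 0$, pair it with $a_2 < 0$ and add a large multiple of $(0,\dots,0, a_j \text{ in slot } 2, \dots, -a_2 \text{ in slot } j, \dots)$... actually the cleanest bookkeeping is: the lattice of solutions to $\sum a_i x_i = d$ is the coset of the sublattice $\{(y_i) : \sum a_i y_i = 0\}$, and this sublattice contains, for every $j$, the vector $v_j$ with $a_1$ in coordinate $j$, $-a_j$ in coordinate $1$, and zeros elsewhere (using $a_1>0$). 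Adding a large positive multiple $N v_j$ to a current solution increases coordinate $j$ by $N a_1$ (pushing it positive) while decreasing coordinate $1$; then a single application of the $(a_1, a_2)$-move as above restores positivity of coordinate $1$ and $2$. Running $j = 3, \dots, n$ in turn, and finishing with one last $(a_1,a_2)$-adjustment, yields an all-positive solution.

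\textbf{Main obstacle.} The only genuine subtlety is the order of operations in the last paragraph: each repair step must not undo the positivity already achieved in earlier coordinates. I expect the hard part to be organizing the induction so that this is manifest — most transparently by first making $x_3,\dots,x_n$ as large as we like (at the cost of $x_1$ possibly going very negative), bounding how negative $x_1$ can get, and only \emph{then} applying the $(-a_2, a_1)$-translation once with $t$ large enough to fix both $x_1$ and $x_2$ while leaving $x_3, \dots, x_n$ untouched (since that translation has zero in those coordinates). With that sequencing the argument is clean and elementary; no appeal to any earlier result of the paper is needed.
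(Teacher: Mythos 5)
Your proof is correct, but it follows a different route from the paper. The paper argues by induction on $n$: it sets $k=a_1x_1^0+\cdots+a_{n-1}x_{n-1}^0$, splits into cases according to whether $k$ and $a_n$ have opposite or equal signs, obtains a positive solution of the $(n-1)$-variable equation (with target $k$ or $-k$) from the induction hypothesis, solves the two-variable equation $\pm kx+a_ny=d$ with $x,y>0$ by the base case, and then scales the $(n-1)$-variable solution by $x$. You instead work directly in the kernel lattice of the linear form: after fixing $a_1>0$, $a_2<0$, you add large multiples of the kernel vectors $v_j$ (with $a_1$ in slot $j$ and $-a_j$ in slot $1$) to force $x_3,\dots,x_n$ positive, and then apply a single translation by a large multiple of $(-a_2,a_1,0,\dots,0)$, which increases both of the first two coordinates and leaves the others untouched. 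Your sequencing remark is exactly the right one, and it makes the argument a clean one-pass construction: no induction, no sign case analysis, and in particular no need to worry about the degenerate situation where the partial sum $k$ vanishes, which the paper's case split quietly passes over. What the paper's induction buys is that every step is literally an instance of the two-variable identity $a(x_0+bm)+b(y_0-am)=d$; what your argument buys is a shorter, more transparent proof that also exhibits the positive solution explicitly from the given one. Either proof suffices for the way the lemma is used later (in Lemma 11.4, to adjust a $K_0$-class in a finite-dimensional algebra), so no adjustment is needed.
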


\begin{proof}
 We will prove it by induction.
Suppose that $a, b\in \Z$ such that $a>0$ and $b<0.$  Suppose also there are $x_0, y_0\in \Z$ such that
$ax_0+by_0=d.$
Then, for any integer $m\in \Z,$ and any $x=x_0+bm$ and $y=y_0-am,$
\beq
a(x_0+bm)+b(y_0-am)=d.
\eneq
Thus, by choosing negative integer $m$ with large $|m|,$ both $x_0+bm$ and $y_0-am$ are positive.
This prove the case $n=2.$

 Suppose the lemma holds for $n-1$ {{with}} $n\ge 3.$
Without lose of generality, let us first assume that $a_1$ and $a_2$ have different signs. Suppose $\{x_1^0,x_2^0,\cdots x_n^0\}$ is an integer solution for $a_1x_1+a_2x_2+\cdots+a_n x_n=d$, Let $k=a_1x_1^0+a_2x_2^0+\cdots+a_{n-1} x_{n-1}^0$. Now we divided it into two cases:

Case 1: $k$ and $a_n$ have opposite signs.
By induction assumption there are positive integers  $x'_1, x'_2, \cdots, x'_{n-1}$ such that
\beq
k=a_1x'_1+a_2x'_2+\cdots+a_{n-1} x'_{n-1},
\eneq
since $a_1a_2<0$ and $n\ge 3.$ On the other hand, by applying the case $n=2,$  we have integers $x>0$ and $y>0$ such that $kx+a_ny=d$.

Let $x_i=x x'_i$ for $i \in \{1,2...n-1\}$  and $x_n=y$ to get desired positive integer solution for
\beq
\sum_{i=1}^n x_i a_i={{x\sum_{i=1}^{n-1} {{x'_i}}a_i+a_ny}}=d.
\eneq

Case 2: $k$ and $a_n$ have the same sign.

By the induction assumption there are positive integers: $x'_1, x'_2, \cdots, x'_{n-1}$ such that
\beq
-k=a_1x'_1+a_2x'_2+\cdots+a_{n-1} x'_{n-1}
\eneq
(recall $a_1a_2<0$).
On the other hand, {{applying}} the case $n=2$ (note that $-k$ and $a_n$ have opposite signs),
 we have $x>0$ and $y>0$ such that $-kx+a_ny=d$. Finally let $x_i=x x'_i$ for $i\in\{1,2...n-1\}$  and $x_n=y$ to get
 the desired positive integer solution.

\end{proof}

\begin{NN}\label{KKzo=Z} {{Recall {{that,}} from \ref{Dcc1}, $\zo$ is an inductive limit of $B_m=W_m \oplus M_{(m!)^2}(A(W, \af_m))$ and r}}ecall that $K_0(\zo)=\Z$ and $K_1(\zo)=\{0\}.$
Let $E_m=M_{(m!)^2}(A(W, \af_m))$ be as in \ref{Dcc1}. For any $m,$ $K_0(E_m)=\Z$ and
$K_1(E_m)=\{0\}.$  Let ${\rm id}: K_0(\zo)\cong K_0(E_m)$ {{be the isomorphism.}}
Then it induces a unique element in $KK(\zo, E_m)$ and will be denote by id.
Let $z_{\Z}=[1]\in \Z=K_0(\zo)$ be the generator of $K_0(\zo).$
Suppose that $C$ is a separable amenable \CA\, satisfies the UCT.
Denote by {{$\kappa_{\zo}\in KK(C\otimes \zo, C)$ the element such that}} $(\kappa_{\zo})_{*i}: K_i(C\otimes \zo)\to K_i(C)\otimes \Z=K_i(C)$ the isomorphism
such that $(\kappa_{\zo})_{*i}(x\otimes z_{\Z})=x$ for $x\in K_i({{C}}),$
given by Kunneth's formula,
$i=0,1.$
\end{NN}

\begin{lem}\label{11Extmz}
Let $C$ be a separable amenable \CA\,  which satisfies the condition in \ref{DFixA} and
which satisfies the UCT.
There exists a sequence of approximate multiplicative \cpc s
$\phi_n: C\otimes \zo\to C\otimes M_{k(n)}$ (for some subsequence $\{k(n)\}$)
which maps strictly positive elements to strictly positive elements such that
\beq\label{11Extmz-1}
[\phi_n]|_{\cal P}=(\kappa_{\zo})|_{\cal P},
\eneq
where $\kappa_{\zo}\in KK(C\otimes \zo, C)$ is an invertible element  which induces
$(\kappa_{\zo})_{*i},$
for every finite subset ${\cal P}\subset \underline{K}(C)$ and all sufficiently large $n.$
\end{lem}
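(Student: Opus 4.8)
\textbf{Proof proposal for Lemma \ref{11Extmz}.}
The plan is to build the maps $\phi_n$ by combining two ingredients: a good finite-dimensional approximation of the tensor factor $\zo$, and the existence/uniqueness machinery for maps into \CA s in ${\cal D}$ developed in the earlier sections. First I would recall from \ref{Dcc1} and \ref{DZ0C_0} that $\zo=B_T=\lim_{m\to\infty}(B_m,\Phi_m)$ with $B_m=W_m\oplus M_{(m!)^2}(A(W,\af_m))=W_m\oplus E_m$, where each $E_m$ is a matrix algebra over an Elliott--Thomsen building block, $K_0(E_m)=\Z$, $K_1(E_m)=0$, and (by \ref{DZ0C_0}) each connecting map carries strictly positive elements to strictly positive elements. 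Since $\zo$ has a unique tracial state and $K_0(\zo)={\rm ker}\rho_{\zo}$, the map $\Phi_{m,\infty}$ at the level of $Cu^{\sim}$ kills $K_0$ and is the identity on ${\rm LAff}_+^{\sim}$; in particular $K_*(B_m)\to K_*(\zo)$ is, essentially by the computation in \ref{PBTKT}, an isomorphism carried by the $E_m$-summand with $K_0$-map ${\rm id}:\Z\to\Z$. Thus for a given finite subset ${\cal P}\subset\underline{K}(C)$ and given $\ep>0$, finite ${\cal F}\subset C\otimes\zo$, we may choose $m$ large and replace $\zo$ (up to an $\ep$-approximation on ${\cal F}$, using a conditional-expectation-type \cpc\ $C\otimes\zo\to C\otimes B_m$) by $C\otimes B_m$, and then project onto $C\otimes E_m$, which by \ref{LEabsorb}-style reasoning does not disturb $\underline{K}$-data on ${\cal P}$ because the $W_m$-summand is $KK$-trivial.

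Next I would use the structure of $E_m$: since $A(W,\af_m)$ sits in a short exact sequence $0\to C_0((0,1),Q\otimes Q)\to A(W,\af_m)\to A\to 0$ with $A$ an AF algebra (indeed the AH-algebra with $TR=0$ fixed at the start of \ref{Dcc1}), and $Q\otimes Q\cong Q$, the algebra $E_m$ is, $KK$-theoretically, very close to $\C$: $K_0(E_m)=\Z$, $K_1(E_m)=0$. Tensoring $C$ with $E_m$, the Künneth formula gives $K_i(C\otimes E_m)\cong K_i(C)$ canonically (the class $z_\Z$ going to the generator of $K_0(E_m)$), and this is exactly the isomorphism $(\kappa_\zo)_{*i}$ after identifying $K_i(C\otimes\zo)=K_i(C\otimes E_m)$. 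So the composite
\[
C\otimes\zo\xrightarrow{\ \approx\ }C\otimes B_m\xrightarrow{\ P_{E_m}\ }C\otimes E_m
\]
already realizes $(\kappa_\zo)|_{\cal P}$ on $\underline{K}$; what remains is to land in a matrix algebra $C\otimes M_{k(n)}$ rather than $C\otimes E_m$, and to do so while preserving strict positivity. For this I would use that $A$ is an AF inductive limit and, more precisely, the approximation \eqref{63-1} and Lemma \ref{Dcc0}: there are finite-dimensional subalgebras and \hm s $s_n\colon A_n\to M_{l(n)}\subset Q$ with $(s_n)_{*0}$ surjective, allowing one to approximate $A(W,\af_m)$ on any finite set by the building blocks $D_n'$ of \ref{Dcc1} (whose spectra are intervals), and then further approximate these on finite sets by matrix algebras since the "endpoint evaluations" become inner after tensoring with $Q$. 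Composing with $\id_C$ and using that $C$ is exact/amenable, one obtains approximately multiplicative \cpc s $\phi_n\colon C\otimes\zo\to C\otimes M_{k(n)}$ which are asymptotically the above composite, hence satisfy \eqref{11Extmz-1} for all large $n$, and (by keeping track of strictly positive elements through every partial map, exactly as in the verification that $\Phi_{n,n+1}$ in \ref{Dcc1} preserves strict positivity) send strictly positive elements to strictly positive elements.

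The main obstacle, I expect, is the bookkeeping needed to guarantee \eqref{11Extmz-1} \emph{exactly} on ${\cal P}$ for all sufficiently large $n$, rather than just approximately: one must choose $m=m(n)\to\infty$ and the finite-dimensional approximations compatibly, invoke weak semiprojectivity of the finite-dimensional (and Elliott--Thomsen) pieces to upgrade "$\ep$-approximate \hm" to "genuine \hm\ close to it" on the relevant finite subsets, and check that the induced maps on $\underline{K}(C)$ — which factor through $K_*(E_m)$ or $K_*(M_{k(n)})$ — are literally $(\kappa_\zo)|_{\cal P}$. This is where the identification $(\kappa_{\zo})_{*i}(x\otimes z_\Z)=x$ of \ref{KKzo=Z}, together with the fact that the $W_m$ (resp.\ $W$) summands contribute nothing to $\underline{K}$ by \ref{LEabsorb}, does the real work. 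Once the $\underline{K}$-picture is pinned down, the construction of the $\phi_n$ themselves is routine: it is a diagonal/telescoping argument over $n$, passing to a subsequence to ensure approximate multiplicativity on a fixed countable dense set, entirely parallel to the construction used in the proof of 18.4 and 13.1 of \cite{GLp1} and in \ref{PADw} above.
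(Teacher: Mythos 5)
Your first half (cut down to a summand $E_m=M_{(m!)^2}(A(W,\af_m))$ of $B_m$, use the quotient $E_m\to M_{(m!)^2}(A)$ onto the AF algebra $A$, and note that the $W_m$-part is $\underline{K}$-trivial) is essentially the route the paper takes. But the last step — getting from $C\otimes A$ into a single matrix algebra $C\otimes M_{k(n)}$ while realizing $(\kappa_{\zo})|_{\cal P}$ — has a genuine gap, and it is exactly the step where the real content of the lemma lies. The generator $z_\Z$ of $K_0(\zo)$ lands in ${\rm ker}\rho_A\subset K_0(A)$, i.e.\ it is an infinitesimal class, killed by every trace; the finite-dimensional approximations you invoke ($s_n\colon A_n\to M_{l(n)}$ from \ref{DC_0}, or ``endpoint evaluations becoming inner after tensoring with $Q$'') are precisely maps whose $K_0$-maps annihilate ${\rm ker}\rho_A$ by construction ($s_{*0}(r,g)=r$). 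Composing with them gives $[\phi_n]$ vanishing on the classes $x\otimes z_\Z$, i.e.\ a $KK$-trivial map, not $(\kappa_{\zo})|_{\cal P}$. Semiprojectivity and a telescoping argument cannot repair this: the obstruction is purely $K$-theoretic, not one of approximate versus exact multiplicativity.

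The paper's proof supplies the missing idea as follows. Write the AF algebra $A$ (with $K_0(A)=\Q\oplus\Z$, ${\rm ker}\rho_A=\Z$) as $\overline{\cup F_n}$ and pick projections $p_n,q_n\in F_n$ with $(j_{n,\infty})_{*0}([p_n]-[q_n])=z_\Z$. Writing $F_n=\bigoplus_i M_{k_i}$ and $x_i=[P_i(p_n)]-[P_i(q_n)]$, one argues that the $x_i$ must take both signs (otherwise $z_\Z$ would be positive, contradicting $z_\Z\in{\rm ker}\rho_A\setminus\{0\}$) and that the nonzero $x_i$ are relatively prime (otherwise $z_\Z$ would be divisible in $K_0(A)=\Q\oplus\Z$). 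The elementary number-theory Lemma \ref{NumberT} then produces \emph{positive} integers $N_i$ with $\sum_i N_ix_i=1$, and these are used as multiplicities of an embedding $\imath\colon F_n\to M_r$, so that $\imath_{*0}([p_n]-[q_n])=[1]\in K_0(M_r)$. Only after this careful choice does the composite $C\otimes\zo\to C\otimes E_m\to C\otimes M_{(m!)^2}(A)\to C\otimes F_n\to C\otimes M_r$ (with a \cpc\ $\Phi_A$ into $F_n$ chosen injective on ${\rm ker}\rho_A$) realize $(\kappa_{\zo})|_{\cal P}$; divisibility of $K_*(Q)$ is not what saves you, and without the positive-solution argument there is no reason the infinitesimal class survives with the correct value. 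Your proposal never addresses this, so as written it does not prove \eqref{11Extmz-1}.
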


\begin{proof}
Let $\ep>0$ and let ${\cal F}\subset C$ be a finite subset.

\Wlog, we may assume that $[L]|_{\cal P}$ is well -defined
for any ${\cal F}$-$\ep$-multiplicative \cpc\, from $C.$
{{We may}} also assume that ${\cal P}$ generates the subgroup
$$
G_{\cal P}\subset K_0(C)\bigoplus K_1(C)\bigoplus \bigoplus_{i=1,0}\bigoplus_{j=1}^m K_i(C, \Z/j\Z)\,\,\,
{\text{for\,\, some\,\,}} m\ge 2.
$$
Let $\dt>0$ and ${\cal G}\subset A$ be a finite subset.
Let $A$ be a unital simple AF-algebra with $K_0(A)=\Q\oplus \Z$ and with ${\rm ker}\rho_A=\Z.$
Write
$$
A=\overline{\cup_{n=1}^{\infty} F_n},
$$
where $1_A\in F_n\subset F_{n+1}$ is a sequence of finite dimensional \CA s. {{Recall that there is an identification of $K_0(\zo)$ with ${\rm ker}\rho_A\cong \Z\subset K_0(A).$}}
Therefore there are sequences of pair of projections
$p_n, q_n\in F_n$ such that
$$
(j_{n, \infty})_{*0}([p_n]-[q_n])=z_\Z,
$$
where $j_{n, \infty}: F_n\to A$ is the embedding and $z_\Z$ is $[1]$ in $\Z\cong {\rm ker}\rho_A.$
\Wlog\,  we may assume that
\beq
[p_n]\not= [q_n]\in K_0(F_n)\rforal n\ge 1.
\eneq
Write
$$
F_n=M_{k_1}\oplus M_{k_2}\oplus \cdots M_{k_l}.
$$
Note that $l\ge 3$ (see 7.7.2 of \cite{BKT}).
Let $P_i: F_n\to M_{k_i}$ be the projection map.
Let $x_i=[P_i(p_n)]-[P_i(q_n)]\in \Z{{=K_0(M_{k_i}),}}$ $i=1,2,...,l.$
Then some of $x_i>0$ and some of $x_i<0.$
{{Otherwise,}} we may assume that
\beq
x_i\ge 0\rforal i\in \{1,2,...,l\}.
\eneq
Then $[p_n]-[q_n]\ge 0$ for all $n.$ It follows
that, for all $k\ge 1,$
\beq
(j_{n, n+k})_{*0}([p_n]-[q_n])\ge 0\andeqn (j_{n, \infty})_{*0}([p_n]-[q_n])\ge 0.
\eneq
{{That is $(j_{n, \infty})_{*0}([p_n]-[q_n])\in K_0(A)_+$.}} This contradicts the fact  that $(j_{n, \infty})_{*0}([p_n]-[q_n])=z_{\Z}.$

Note that, as constructed in section 6,  with $A$ above,
\beq\label{11Extmz-1+}
\zo=\lim_{m\to\infty} (E_m\oplus W_m),
\eneq
where $W_m$ is a single summand of the form $R(\af_m,1)$
for some $0<\af_m<1$ and $E_m=M_{(m!)^2}(A(W, \af_m)).$
Note that $K_i(W_m)=\{0\},$ $i=0,1,$ and
$K_0(A(W,\af_m))=\Z$ and $K_1(A(W,\af_m))=\{0\}.$
Let ${\rm id}\in KK(\zo, E_m)$ be as described in \ref{KKzo=Z}.
Let $\kappa_{00}\in KK(C\otimes \zo, C\otimes E_m)$ be the invertible
element given by $[{\rm id}_C]$ and ${\rm id}.$

By \eqref{11Extmz-1+},  there exists a ${\cal G}$-$\dt$-multiplicative \cpc\,
$\Phi: C\otimes \zo\to C\otimes E_m$ (for sufficiently large $m$)
such that
\beq\label{11Extmz-2}
[\Phi]|_{\cal P}=(\kappa_{00})|_{\cal P}
\eneq
which maps strictly positive elements to strictly positive elements.
Consider the short exact sequence
$$
0\to C_0((0,1), Q)\to E_m\to M_{(m!)^2}(A)\to 0.
$$
Let $\phi_{qa}: E_m\to M_{(m!)^2}(A)$ be the quotient map.
Note that
$(\phi_{qa})_{*0}$ gives an isomorphism from $\Z=K_0(A(W, \af_m))$ onto
${\rm ker}\rho_A.$
Let $\phi_q: C\otimes E_m\to C\otimes M_{(m!)}(A)$ be defined by
${\rm id}_C\otimes \phi_{qa}.$
 Let $\phi_1: C\otimes \zo\to C\otimes M_{(m!)^2}(A)$
be defined by
$\phi_1=\phi_q\circ \Phi.$
For any $\dt_1>0$ and finite subset ${\cal F}_A\subset M_{(m!)^2}(A),$
there is a unital ${\cal F}_A$-$\dt_1$-multiplicative {{completely positive map,}} $\Phi_A: M_{(m!)^2}(A)\to {{M_{(m!)^2}(F_n)}}$
such that
$
[\Phi_A]|_{{\rm ker}\rho_A}
$
is injective.   Note that $\Phi_A$ maps strictly positive elements of $A$  to strictly
positive elements of $F_n.$
{{Recall}}
$$
F_n=M_{k_1}\oplus M_{k_2}\oplus \cdots M_{k_l}
$$
{{and}}
$x_i=[P_i(p_n)]-[P_i(q_n)]\in \Z,$ $i=1,2,...,l.$
\Wlog, we may assume that
\beq
x_i> 0\,\,\mbox{{for}}\,\, i=1,2,..., m^+,\, x_i<0\,\,\mbox{{for}}\,\, i=m^++1,...,l',\,\mbox{{and}}\, \,x_i=0\,\mbox{for}\, 1=l'+1,...,l.
\eneq
We claim that $x_1, x_2,...,x_{l'}$ are relatively prime.
If not,  $x_i=Nx_i',$  {{for some integer $x_i',$}} $i=1,2,...,l,$ for some $N\ge 2.$
Then $N(j_{n,\infty})_{*0}((x_1', x_2',...,x_l'))=
{{z_\Z.}}$ This is impossible since
$K_0(A)=\Q\oplus \Z$ {{and ${{z_\Z}}=[1]$ in the summand $\Z.$}}
It follows from \ref{NumberT} that there are positive integers
$N_1, N_2,...,N_l$ such that
\beq
\sum_{i=1}^l N_i x_i=1.
\eneq
Let $r=\sum_{i=1}N_ik_i.$ Define
$\imath: {{M_{(m!)^2}(F_n)\to M_{(m!)^2}(M_r)}}$ by
\vspace{-0.14in}\beq
\imath((f_1, f_2,...,f_l))=\bigoplus_{i=1}^l \imath_i(f_i),
\eneq
where $\imath_i: M_{k_i}\to M_r$ is defined by
\beq
\imath_i(f_i)=\diag(\overbrace{f_i, f_i,...,f_i}^{N_i})\rforal f_i\in M_{k_i},
\,\,\,i=1,2,...,l.
\eneq
Let $\kappa_{\zo}\in KL(C\otimes \zo, C)$ be defined by, for $j=2,3,...,$
\beq
\kappa_{\zo}(x\otimes z_{\Z})=x\rforal x\in K_i(C\otimes \zo)\oplus K_i(C\otimes \zo, \Z/j\Z),\,\,i=0,1.
\eneq
Note that $(\imath_{*0})([p_n]-[q_n])=[1]\in \Z=K_0({{M_{(m!)^2r}}}).$
Let
$
L=({\rm id}_C\otimes \imath)\circ ({\rm id}_C\otimes \Phi_A)\circ \phi_1: C\otimes \zo\to C\otimes M_{{(m!)^2r}}.
$
By choosing $\dt$ and $\dt_1$ sufficiently small and
${\cal G}$ and ${\cal G}_A$ sufficiently large,
$L$ is ${\cal F}$-$\ep$-multiplicative. Moreover, we compute
that
$$
[L]|_{\cal P}=[\kappa_{\zo}]|_{\cal P}.
$$

\end{proof}

\begin{lem}\label{11Ext1}
Let $A$  and $B$ be {{separable}}  simple \CA s in ${\cal D}$ with $K_0(A)={\rm ker}\rho_A$ and
$K_0(B)={\rm ker}\rho_B,$  respectively,  which
have continuous scale and
 satisfy the UCT. 
Suppose that there is $\kappa\in KL(A, B)$ and
an affine continuous map
$\kappa_T: T(B)\to T(A).$
Then,
there exists a  sequence of approximate multiplicative  \cpc s $\phi_n: A\to B$ such that
\beq
&&[\{\phi_n\}]=\kappa\tand\\
&&\lim_{n\to\infty}\sup \{|\tau\circ \phi_n(a)-\kappa_T(\tau)(a)|\}=0\tforal a\in A_{s.a.}.
\eneq
\end{lem}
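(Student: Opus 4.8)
\textbf{Proof plan for Lemma \ref{11Ext1}.}

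The plan is to build the approximately multiplicative sequence $\{\phi_n\}$ by first realizing the prescribed $KL$-datum alone, then correcting the trace behaviour by an ``off to the side'' absorbing modification, using the machinery already assembled in the paper. First I would reduce to the case where $A$ satisfies the condition of \ref{DFixA}: since $K_0(A)=\ker\rho_A$ and $A\in{\cal D}$, Proposition \ref{PADw} (see also Remark \ref{11R1}) supplies a sequence of approximately multiplicative \cpc s from $A$ into $W$ mapping strictly positive elements to strictly positive elements, and combined with the fullness map $T$ coming from Theorem 7.8/7.9 of \cite{GLp1} this places $A$ in the setting of \ref{DFixA}. Then Theorem \ref{TExistence} produces an asymptotic sequential morphism $\{\psi_n\}$ from $A$ into $B\otimes\zo\otimes{\cal K}$ with $[\{\psi_n\}]=\kappa$ (after composing with the isomorphism $K_*(B\otimes\zo)\cong K_*(B)$ from \ref{KKzo=Z} and using $\kappa\in KL(A,B)\cong KL(A,B\otimes\zo)$). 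Using Lemma \ref{11Extmz} one can absorb the $\zo$-tensor factor: there are approximately multiplicative \cpc s $B\otimes\zo\to B\otimes M_{k(n)}$ realizing $(\kappa_{\zo})|_{\cal P}$ and preserving strict positivity, which after cutting down (and using that $B$ has stable rank one, continuous scale, and the comparison theory of \ref{Tweakunp}/\ref{Ttrzstable}) we may absorb back into $B$. The upshot of this first stage is a sequence $\{\phi_n^{(0)}\}$ of approximately multiplicative \cpc s from $A$ to $B$ with $[\{\phi_n^{(0)}\}]=\kappa$.

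The second stage is to fix the traces. The maps $\phi_n^{(0)}$ carry \emph{some} family of traces $\tau\mapsto\tau\circ\phi_n^{(0)}$, but not the prescribed $\kappa_T$. I would handle the difference using the determinant/existence constructions of Section~10, together with the fact that for $A\in{\cal D}$ with $K_0(A)=\ker\rho_A$ one has $Cu^\sim(A)=K_0(A)\sqcup{\rm LAff}_+^\sim({\tilde T}(A))$ (Corollary \ref{Ccunsim}) and the affine map $\kappa_T:T(B)\to T(A)$ induces an ordered-semigroup morphism $Cu^\sim(A)\to Cu^\sim(B)$ that vanishes on $K_0$. By \cite{Rl} this is realized by a homomorphism-type map at the Cuntz-semigroup level, and by Theorem \ref{Tdert1} (and its packaging in Lemma \ref{L215}, Lemma \ref{Lderttorsion}) one produces, for each finite subset ${\cal F}$, ${\cal P}$, ${\cal U}$ and each $\epsilon>0$, an ${\cal F}$-$\epsilon$-multiplicative \cpc\ $\Phi:A\to\overline{cBc}$ with $[\Phi]|_{\cal P}=0$ and the right trace values on a given finite set, where $c\in B_+$ has small support. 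Then form $\phi_n=\diag(\phi_n^{(0)}\!\restriction_{\text{large corner}},\Phi_n)$ inside $B$ (using continuous scale and stable rank one to reassemble into a genuine \cpc\ into $B$, rescaling the support projections so that $d_\tau(c)\to0$); because $[\Phi_n]|_{\cal P}=0$ the $KL$-class is unchanged, while the traces are pushed towards $\kappa_T$. A diagonal argument over an increasing exhaustion of $A$ by finite subsets and over finer ${\cal P}$, ${\cal U}$ gives the desired $\{\phi_n\}$ with $[\{\phi_n\}]=\kappa$ and $\lim_n\sup_\tau|\tau\circ\phi_n(a)-\kappa_T(\tau)(a)|=0$ for $a\in A_{s.a.}$.

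The main obstacle I anticipate is the simultaneous, \emph{compatible} control of the $KL$-class and the traces: Theorem \ref{Tdert1} only controls a single rank/trace function at a time on a prescribed finite set, and one must arrange that the trace-correcting summand $\Phi_n$ has vanishing $\underline{K}$-class on the relevant ${\cal P}$ while its support shrinks (so as not to disturb the previously arranged trace values on smaller finite sets) — this is exactly the balancing done in Lemma \ref{L215}, and invoking it correctly, together with the bookkeeping that makes the difference $\tau\circ\phi_n^{(0)}-\kappa_T(\tau)$ lie in the image of the constructions of Section~10 uniformly on $T(B)$ (using compactness of $T(B)$, which holds since $B$ has continuous scale), is the delicate part. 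A secondary technical point is verifying that all the cut-downs and diagonal reassemblies stay \cpc\ and approximately multiplicative; this is routine given that $A$ and $B$ have stable rank one and $B$ has continuous scale, but must be checked.
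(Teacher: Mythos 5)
Your first stage is essentially the paper's: reduce via \ref{PADw} to the setting of \ref{DFixA}, use Theorem \ref{TExistence} together with Lemma \ref{11Extmz} to get approximately multiplicative \cpc s into $B\otimes M_K$ realizing $\kappa$ on any prescribed finite subset ${\cal P}$, and absorb into $B$ using comparison and stable rank one. The gap is in your second stage, and it is structural: your architecture is inverted. You keep the $KL$-realizing maps $\phi_n^{(0)}$ on the ``large corner'' and place the trace-correcting summand $\Phi_n$ in a hereditary subalgebra $\overline{cBc}$ with $\sup_\tau d_\tau(c)\to 0$. But a summand supported in such a corner can alter $\tau\circ\phi_n(a)$ by at most roughly $\sup_\tau d_\tau(c)\,\|a\|$, while the traces $\tau\circ\phi_n^{(0)}$ are completely uncontrolled by the existence theorem and in general differ from $\kappa_T(\tau)$ by an amount of order one; a correction of vanishing tracial size (or an iteration of such, as your ``support shrinks'' bookkeeping requires) cannot close that gap. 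The paper does the opposite: using $A\in{\cal D}_0$ (\ref{TD0=D} and 14.8 of \cite{GLp1}) it decomposes $A$ as $\phi_0\oplus\Psi_0$ with $\Psi_0=\mathrm{diag}(\psi_0,\dots,\psi_0)$ ($m!$ copies, $\psi_0$ into $D\in{\cal C}_0^{0'}$) and $\phi_0(e_0)\lesssim a_0$ of arbitrarily small trace, does a second such decomposition on the $B$ side, and compresses the $KL$-carrying composition $\phi_{0,b}\circ L\circ\phi_0$ into a corner of arbitrarily small trace --- the multiplicity $m!$ (resp.\ $(m_1)!$) being exactly what guarantees that this compression does not change $[\cdot]|_{\cal P}$ on torsion, $K_1$ and mod-$p$ classes. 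The trace is then carried by the nearly full summand $h=h_d\circ\Psi_0$, which has $[h]|_{\cal P}=0$ because it factors through the $K$-trivial algebra $M_{m!}(D)$.

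Two further points make your route unworkable as written. First, Robert's classification \cite{Rl} cannot be invoked with domain $A$ (it requires an inductive limit of one-dimensional NCCW complexes with trivial $K_1$); the missing idea is precisely to apply it to $M_{m!}(D)$, with the Cuntz-semigroup morphism $\gamma$ obtained by slightly scaling $\kappa_T^{\sharp}\circ Cu(\imath)$, to produce the homomorphism $h_d:M_{m!}(D)\to B$ whose traces realize $\kappa_T$ up to $\eta$. Second, the tools you cite for the trace control --- Theorem \ref{Tdert1}, Lemma \ref{L215}, Lemma \ref{Lderttorsion} --- govern the de la Harpe--Skandalis determinant and the $U(\tilde{\cdot})/CU(\tilde{\cdot})$ data (they prescribe $\Phi^{\dag}$ on finitely many unitaries, not $\tau\circ\Phi$ on self-adjoint elements); they enter only in the next step (Lemma \ref{11EXt2}), where $\kappa_{uc}$ must additionally be matched, and they do not provide the trace realization needed here.
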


\begin{proof}

Let $\ep>0, \eta>0,$ ${\cal F}\subset A$ be a finite subset and ${\cal H}\subset A_{s.a}$ be
a finite subset.


Fix a finite subset ${\cal P}\subset \underline{K}(A).$
We may assume that, for some $m\ge 1,$
$$
{\cal P}\subset K_0(A)\bigoplus K_1(A)\bigoplus_{j=1}^m (K_0(A, \Z/j\Z)\bigoplus K_1(A, \Z/j\Z).
$$
Moreover, $m!x=0$ for all $x\in {\rm Tor}(K_0(A))\cap {\cal P}.$
Let $G_{0, {\cal P}}$ be the subgroup generated by $K_0(A)\cap {\cal P}.$
We may write $G_{0, {\cal P}}:=F_0\oplus G_0,$ where $F_0$ is free and $G_0$ is torsion.
In particular, $m!x=0$ for all $x\in G_0.$

Choose $\dt>0$ and finite subset ${\cal G}\subset A$ so that $[L]|_{{\cal P}}$ is well defined
for any
${\cal G}$-$\dt$-multiplicative \cpc\, $L$ from $A.$
We may assume that $\dt<\ep$ and ${\cal F}\cup {\cal H}\subset {\cal G}.$
Since both $A$ and $B$ have continuous scale,
$T(A)$ and $T(B)$ are compact
{{(5.3 of \cite{eglnp1}).}}
Choose  $a_0\in A_+$ such that $\|a_0\|=1$ and
\beq\label{11Ext1-n2}
d_\tau(a_0)<\min\{\eta, \dt\}/4\rforal \tau\in T(A).
\eneq
Let  $e_0\in A$ be a strictly positive element of $A$ with $\|e_0\|=1$
such that $\tau(e_0)>15/16$ for all $\tau \in T(A).$


Since $A\in {\cal D}_0$ (see \ref{TD0=D}),   by 10.7 of \cite{eglnp1},
there are ${\cal G}$-$\dt/4$-multiplicative \cpc s
$\phi_0: A\to \overline{\phi_0(A)A\phi_0(A)}$ and $\psi_0: A\to D\subset A$ with $D\in 
{{{\cal C}_0^{(0)}}}$ and
$M_{m!}(D)\perp \phi_0(A)$  such that
\beq\label{11Ext1-n2+}
&&\|x-(\phi_0(x)\oplus \diag(\overbrace{\psi_0(x), \psi_0(x),...,\psi_0(x)}^{m!})\|<\dt/16\rforal x\in {\cal G},\\
&&\phi_0(e_0)\lesssim a_0,\\
 &&t(f_{1/4}(\psi_0(e_0)))>1/4\rforal t\in T(D).
\eneq
Let $\Psi_{0}: A\to M_{m!}(D)\subset A$
be defined by
\beq\label{11Ext1-5}
\Psi_0(a)&=& \diag(\overbrace{\psi_0(x), \psi_0(x),...,\psi_0(x)}^{m!})
\rforal a\in A.
\eneq
Let ${\cal P}_1=[\phi_0]({\cal P})$ and ${\cal P}_2=[\Psi_0]({\cal P}).$
Put ${\cal P}_3={\cal P}\cup {\cal P}_1\cup {\cal P}_2.$
Note
that, since $K_i(D)=\{0\}$ ($i=0,1$),  $\Psi_0|_{{\cal P}\cap K_i(A)}=0,$ $i=0,1.$
Moreover, by \eqref{11Ext1-5},
\beq\label{11Ext1-6}
[\Psi_0]|_{{\cal P}\cap K_i(\Z/j\Z)}=0,\,\,\, i=0,1,\,\, j=2,....,m.
\eneq
Set
\vspace{-0.14in}\beq\label{11Ext1-6+1}
d=\inf\{d_\tau(\phi_0(e_0)): \tau\in T(A)\}.
\eneq
We also have
\vspace{-0.14in}\beq\label{11Ext1-6+2}
[\phi_0]|_{F_0}=[{\rm id}_A]|_{F_0}.
\eneq



Let ${\cal G}_1={\cal G}\cup \phi_0({\cal G}).$
Choose $0<\dt_1<\dt$  and finite subset ${\cal G}_1\subset A$ such that
$[L']|_{{\cal P}_4}$ is well defined for
any ${\cal G}_1$-$\dt_1$-multiplicative \cpc\, from $A.$

It follows from \ref{TExistence}, \ref{PADw}  and \ref{11Extmz} that there exists a ${\cal G}_1$-$\dt_1/4$-multiplicative \cpc\,
$L: A\to B\otimes M_K$
 for some
integer $K$ such that
\beq\label{11Ext1-2}
[L]|_{{\cal P}_3}=\kappa_{\zo}\circ (\kappa_{\zo}^{-1}\circ \kappa)|_{{\cal P}_3}=\kappa|_{{\cal P}_3},
\eneq
{{where $\kappa_{\zo}\in KK(B\otimes \zo, B)$ is as in \ref{KKzo=Z} with $B=C$.}}  \Wlog, we may assume that ${\cal G}_1\subset A^{\bf 1}.$

Let $b_0\in B$ with $\|b_0\|=1$ such that
\beq\label{11Ext1-3}
\tau(b_0)<\min\{\eta, \dt_1, d\}/16(K+1)\rforal \tau\in T(B).
\eneq
Let $e_b\in   B\otimes M_K$
be a strictly positive element of $B\otimes M_K$
such that
\beq\label{11Ext1-3+}
\tau(e_b)>7/8\rforal \tau\in T(B\otimes M_K).
\eneq


Let $Q\subset \underline{K}(B)$ be a finite subset
which contains $[L]({\cal P}_4).$
We assume that
\beq\label{11Ext1-3+1}
{\cal Q}\subset K_0(B)\bigoplus K_1(B)\bigoplus\bigoplus_{i=0,1}\bigoplus_{j=1}^{m_1}K_i(B, \Z/j\Z)
\eneq
for some $m_1\ge 2.$ Moreover, we may assume
that $m_1x=0$ for all $x\in {\rm Tor}(G_{0,b}),$ where $G_{0,b}$ is the subgroup
generated by ${\cal Q}\cap K_0(B).$ \Wlog, we may assume that
$m|m_1.$

Let ${\cal G}_b\subset B\otimes  M_K$ be a  finite subset and
$1/2>\dt_2>0$ be such that
$[\Phi]|_{\cal Q}$ is well defined for any ${\cal G}_b$-$\dt_2$-multiplicative \cpc\,  $\Phi$ from
$B\otimes M_K.$ Note also, by \ref{TD0=D}, $B\in {\cal D}_0.$
There are  ${\cal G}_b$-$\dt_2$-multiplicative \cpc s $\phi_{0,b}: B\otimes M_K\to \overline{\phi_{0,b}(B\otimes M_K)( B\otimes M_K)\phi_{0,b}(B\otimes M_K)}$ and $\psi_{0,b}: B\otimes M_K\to D_b,$ $M_{(m_1)!}(D_b)\subset B\otimes M_K$
with
$D_b\in 
{{{\cal C}_0^{(0)}}}$ such that
\beq\label{11Ext1-4}
\|b-(\phi_{0,b}(b),\diag(\overbrace{\psi_{0,b}(b), \psi_{0,b}(b),..., \psi_{0,b}(b)}^{(m_1)!})\|<\min\{\dt_2, \ep/16, \eta/16\}\rforal b\in {\cal G}_b\\
\andeqn \phi_{0,b}(e_b)\lesssim b_0\andeqn t(\psi_{0,b})>3/4\rforal t\in T(D_b).
\eneq

Note that $K_1(D_b)=\{0\}=K_0(D_b).$ Moreover, as in \eqref{11Ext1-n2+} and \eqref{11Ext1-6}, we may also assume
that
\beq\label{11Ext1-4+}
[\psi_{0,b}]|_{{\rm Tor}(G_{0,b})}=0\andeqn
[\psi_{0,b}]|_{{\cal Q}\cap K_i(B, \Z/j\Z)}=0,\,\,\, j=2,3,...,m_1.
\eneq
Therefore
\beq\label{11Ext1-5n}
&&[\phi_{0,b}]|_{{\rm Tor}(G_{0,b})}=[{\rm id}_B]|_{{\rm Tor}(G_{0,b})},
{[}\phi_{0,b}{]}|_{{\cal Q}\cap K_1(B)}=[{\rm id}_{B}]|_{{\cal Q}\cap K_1(B)}\andeqn\\\label{11Ext1-5n+2}
&&{[}\phi_{0,b}{]}|_{{\cal Q}\cap K_i(B, \Z/j\Z)}=[{\rm id}_B]|_{{\cal Q}\cap K_i(B, \Z/j\Z)},\,\,\, j=2,3,...,m_1.
\eneq
Let $G_{\cal P}$ be the subgroup
generated by ${\cal P}$ and let
$\kappa'=\kappa-[\phi_{0,b}]\circ [L]\circ [\phi_0]$ be defined on $G_{\cal P}.$

Then, by \eqref{11Ext1-2}, \eqref{11Ext1-5n} and \eqref{11Ext1-5n+2},
we compute that
\beq\label{11Ext1-101}
&&\kappa'|_{G_{0,{\cal P}}}=0,\,\,\,
\kappa'|_{{\cal P}\cap K_1(A)}=0\andeqn\\
&&\kappa'|_{{\cal P}\cap K_i(A, \Z/j\Z)}=0,\,\,\, j=2,3,...,m.
\eneq




Let $\imath: M_{m!}(D)\to A$ be the embedding.

Let $\kappa_T^{\sharp}: \Aff(T(A))\to \Aff(T(B))$  {{be given by $\kappa_T.$}} This induces
an order semigroup \hm\, ${\tilde \kappa}^T: {\rm LAff}_+({\tilde T}(A))\to {\rm LAff}_+({\tilde T}(B)).$
By \ref{Ttrzstable} and \ref{Lcompactcon}, one checks easily that $\kappa_T^{\sharp}$ is a Cuntz semigroup
\hm\, and a morphism in ${\bf Cu}.$

Let $\gamma':
Cu(M_{m!}(D))\to {\rm LAff}_+({\tilde T}(B))$ be
the Cuntz semi-group \hm\, given by $\gamma'=\kappa_T^{\sharp}\circ Cu(\imath).$
Put
$\gamma: Cu{{(M_{m!}(D))}}\to
 {\rm LAff}_+({\tilde T}(B))$
defined by
$\gamma(f)=(1-\min\{\eta, \eta_0\}/2(m!))\gamma'(f)$ for all $f \in Cu(M_{m!}(D)).$



Let  $\gamma_0: Cu^{\sim}(M_{m!}(D))\to Cu^{\sim}(B)$  be the morphism induced by
$\gamma$ (note $K_0(M_{m!}(D))=\{0\}$ {{and see also 7.3 of \cite{eglnp1}}}).

By applying 1.0.1 of \cite{Rl}, one obtains a \hm\, $h_{d}: M_{m!}(D)\to B$
such
that
\beq
(h_{d})_{*0}=\gamma_{00}\andeqn
\tau\circ h_{d}(c)=\gamma(\widehat{c})(\tau)\rforal \tau\in T(B)\andeqn c\in (M_{m!}(D))_{s.a.}
\eneq
Define $h: A\to B$ by $h=h_d\circ \Psi_0.$
Then
\beq
[h]|_{{\cal P}}=\kappa'|_{\cal P},
{[}h{]}|_{{\cal P}\cap K_1(A)}=0\andeqn
{[}h{]}|_{{\cal P}\cap K_i(\Z/j\Z)}=0,\,\,\, i=2,3,...,m.
\eneq
Moreover,
\vspace{-0.15in}\beq
\tau(h(a))=\gamma(\widehat{\Psi_0(a)})\rforal a\in A \andeqn \tau\in T(B).
\eneq


Let $e_{d}\in M_{m!}(D)$ be a strictly positive element with $\|e_d\|=1.$
Then, by \eqref{11Ext1-6+1},
\beq\label{11Ext1-17}
d_\tau(h_d(e_d))<1-d \rforal \tau\in T(B).
\eneq
It follows from \eqref{11Ext1-3} that
\beq\label{11Ext1-18}
d_\tau(h(e_d))+d_\tau(\phi_{0,b}(e_0))<1\rforal \tau\in T(B).
\eneq
Note that $B$ has stable rank one
{{(see 11.5 of \cite{eglnp1} and
15.5 of \cite{GLp1}).}}
By omitting  {{a conjugating}} unitary in $B$ \wilog, we may
assume
that $\phi_{0,b}\circ L\oplus h$ maps $A$ into $B.$
Put $\Phi=\phi_{0,b}\circ L\oplus h.$ Then $\Phi$ is ${\cal G}$-$\dt$-multiplicative.
Moreover, we compute that
\beq
[\Phi]|_{\cal P}=\kappa|_{\cal P}\andeqn
\sup\{|\tau(\Phi(x))-\kappa_T(\tau)(x)|: \tau\in T(B_T)\}<\eta\rforal x\in {\cal H}.
\eneq
The lemma then follows.

\end{proof}

\begin{lem}\label{11EXt2}

Let $A$ be a non-unital simple separable \CA\, in ${\cal D}$ with $K_0(A)={\rm ker}\rho_A$
and with continuous scale
which satisfies the UCT.  Let $B_T$ be as in \ref{Dcc1}.
Suppose that there is $\kappa\in KL(B_T,A),$
an affine continuous map
$\kappa_T: T(A)\to T(B_T)$
and a continuous \hm\, $\kappa_{uc}: U({\tilde B_T})/CU({\tilde B_T})\to U({\widetilde{A}})/CU({\widetilde{A}})$ such that $(\kappa, \kappa_T, \kappa_{uc})$ is compatible.
Then there exists a  sequence of approximate multiplicative  \cpc s $\phi_n: B_T\to A$ such that
\vspace{-0.15in}\beq
&&[\{\phi_n\}]=\kappa\\
&&\lim_{n\to\infty}\sup \{|\tau\circ \phi_n(a)-\kappa_T(\tau)(a)|\}=0\rforal a\in (B_T)_{s.a.}\tand\\
&&\lim_{n\to\infty}{\rm dist}(\kappa_{uc}(z), \phi_n^{\dag}(z))=0\rforal z\in U({\tilde B_T})/CU({\tilde B_T}).
\eneq
\end{lem}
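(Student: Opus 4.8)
The plan is to combine the existence theorems for $K$-theory and tracial data (Lemma \ref{11Ext1}) with the existence theorem for the determinant (unitary-mod-commutator) data developed in the previous section (Lemma \ref{L215} and Lemma \ref{Lderttorsion}). The point of difference between \ref{11EXt2} and \ref{11Ext1} is precisely the extra compatible map $\kappa_{uc}$ on $U(\widetilde{B_T})/CU(\widetilde{B_T})$, so the strategy is a two-stage correction: first produce maps realizing $\kappa$ and $\kappa_T$, then add a perturbation (a direct summand of small ``size'' in the Cuntz sense) that fixes up the determinant without disturbing $K$-theory or traces.

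First I would set up the finitary framework. Fix $\varepsilon>0$, $\eta>0$, a finite subset $\mathcal F\subset B_T$, a finite subset $\mathcal H\subset (B_T)_{s.a.}$, a finite subset $\mathcal P\subset\underline K(B_T)$, and a finite subset $\mathcal U\subset U(\widetilde{B_T})$. Using that $B_T=\lim_n(B_n,\Phi_{n,n+1})$ with each $B_n=W_n\oplus M_{(n!)^2}(A(W,\af_n))$ and $K_*(W_n)=0$, I would arrange (as in \ref{DdertF}) that $\mathcal U$ maps into the image of a finitely generated standard subgroup $F\subset K_1(B_T)\subset U(\widetilde{B_T})/CU(\widetilde{B_T})$ via $J_{F,u}$; here one uses that $\widetilde{B_T}\in\mathcal C$-type structure and the splitting \eqref{CUsplit} identifying $U_0/CU$ with $\Aff(T)/\overline{\rho(K_0)}$. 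Since $K_0(B_T)={\rm ker}\rho_{B_T}$ and $K_0(A)={\rm ker}\rho_A$, both $\overline{\rho(K_0)}$-quotients collapse to $\Aff(T(\widetilde{\cdot}))/\Z$, so $\kappa_{uc}$ restricted to $U_0/CU$ is determined by an affine datum compatible with $\kappa_T$ (via $D_{\widetilde A}$, $D_{\widetilde{B_T}}$), and the torsion part of $\kappa_{uc}$ on $F$ is controlled by $\kappa_{*1}$ through \ref{Lderttorsion}.

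Next I would invoke Lemma \ref{11Ext1} to obtain a $\mathcal G$-$\delta$-multiplicative completely positive contractive map $\phi_0\colon B_T\to A$ with $[\phi_0]|_{\mathcal P}=\kappa|_{\mathcal P}$ and $\sup_{a\in\mathcal H}|\tau\circ\phi_0(a)-\kappa_T(\tau)(a)|$ small. This $\phi_0$ need not match $\kappa_{uc}$ on $\mathcal U$; let $\gamma\colon J_{F,u}(F)\to\Aff(T(\widetilde A))/\Z$ be the ``defect'' homomorphism recording $\kappa_{uc}-\phi_0^{\dag}$ on the relevant generators (well-defined up to small error on the free part, and zero on torsion by \ref{Lderttorsion} together with $[\phi_0]|_{\mathcal P\cap K_1}=\kappa|_{K_1}$). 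Now use the decomposition $A\in\mathcal D_0$ (from \ref{TD0=D}, since $K_0(A)={\rm ker}\rho_A$): split off a small orthogonal corner $\overline{cAc}$ with $d_\tau(c)$ small, apply Lemma \ref{L215} with this $\gamma$ and this $c$ to get a $\mathcal F$-$\varepsilon$-multiplicative map $\Phi\colon B_T\to\overline{cAc}$ with $[\Phi]|_{\mathcal P}=0$ and ${\rm dist}(\Phi^{\dag}(\bar z),\gamma(\bar z))$ small for $z\in\mathcal U$. Then set $\phi_1=\phi_0'\oplus\Phi$, where $\phi_0'$ is $\phi_0$ pushed into the complementary hereditary subalgebra (legitimate because $A$ has stable rank one, so one can absorb the small corner back by a unitary conjugation). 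A telescoping/intertwining over the sequence $\{\varepsilon_n\},\{\eta_n\}\to 0$ and exhausting finite subsets then yields the desired sequence $\{\phi_n\}$ with $[\{\phi_n\}]=\kappa$, $\lim_n\sup|\tau\circ\phi_n(a)-\kappa_T(\tau)(a)|=0$ on $(B_T)_{s.a.}$, and $\lim_n{\rm dist}(\kappa_{uc}(z),\phi_n^{\dag}(z))=0$ for all $z\in U(\widetilde A)/CU(\widetilde A)$ — the last using that the standard subgroups $F$ exhaust $K_1(B_T)$ and $U_0/CU$ is handled by the affine (trace-difference) correction.

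The main obstacle is the simultaneous control of $K$-theory, trace, and determinant: the determinant-correcting summand $\Phi$ of Lemma \ref{L215} must have $[\Phi]|_{\mathcal P}=0$ and negligible tracial size while still moving $\Phi^{\dag}$ by a prescribed amount $\gamma$ in $\Aff(T(\widetilde A))/\Z$, and one must check that adding it does not spoil the already-arranged equalities $[\phi_0]|_{\mathcal P}=\kappa|_{\mathcal P}$ and $\tau\circ\phi_0\approx\kappa_T(\tau)$. This is exactly why $\gamma$ must be split into a bounded affine part (realizable with arbitrarily small tracial weight by \ref{Tdert1} once one has a nonzero homomorphism $R(\af,1)\to\overline{cAc}$, available since $A\in\mathcal D$) and a torsion part that must be forced to vanish — which is the content of \ref{Lderttorsion} and requires the compatibility of $\kappa_{uc}$ with $\kappa_{*1}$. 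Verifying that the ``size'' bookkeeping (the $d_\tau(c)$-budget, split among finitely many generators of $F$) closes up, and that the error terms from the two applications ($\phi_0$ from \ref{11Ext1}, $\Phi$ from \ref{L215}) are both within tolerance, is the delicate but essentially routine part; the conceptual work is entirely in organizing the defect $\gamma$ correctly.
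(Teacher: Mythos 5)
Your overall architecture matches the paper's: first realize $(\kappa,\kappa_T)$ by \ref{11Ext1}, then record the defect of $\phi_0^{\dag}$ against $\kappa_{uc}$ on a standard finitely generated subgroup $F$, correct it by a map produced by \ref{L215} with range in a tracially small hereditary subalgebra, use \ref{Lderttorsion} for the torsion part, and let compatibility plus trace control take care of $U_0(\tilde A)/CU(\tilde A)$. That is the right plan, and most of your bookkeeping remarks are sound.

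The gap is in the step where you form $\phi_1=\phi_0'\oplus\Phi$ by ``pushing $\phi_0$ into the complementary hereditary subalgebra'' of the chosen small corner $\overline{cAc}$, justified by stable rank one and a unitary conjugation. That move requires a Cuntz comparison of the form $\langle \phi_0(e_{B_T})\rangle+\langle c\rangle\le \langle e_A\rangle$ (as in the absorption step inside the proof of \ref{11Ext1}, where one checks $d_\tau(h(e_d))+d_\tau(\phi_{0,b}(e_0))<1$ before conjugating). Here it fails: since $\tau\circ\phi_0(a)\approx\kappa_T(\tau)(a)$ and $\kappa_T(\tau)$ is a state on $B_T$, which has continuous scale, one has $d_\tau(\phi_0(e_{B_T}))\ge\tau(\phi_0(e_{B_T}))\approx 1$ for all $\tau\in T(A)$, while $d_\tau(c)\ge\sigma_0>0$ is bounded below. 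So there is no room in $A$ to make $\phi_0$ orthogonal to $\overline{cAc}$ by any unitary conjugation; the needed subequivalence is violated on traces, and $A$ has strict comparison, so this is not a technicality one can wave away. The paper's device is to make the room on the source side instead: since $B_T\in{\cal D}_0$, decompose $b\approx\mathrm{diag}(\Phi_0(b),\Phi_1(b))$ with $\Phi_0$ landing in a ${\cal C}_0^0$ subalgebra $D_b\subset B_T$ of tracial size below the tolerance (controlled also against the constant $M$ bounding the exponentials representing the $U_0$-part of ${\cal U}$). Because $K_*(D_b)=0$, the main summand $\phi_n\circ\Phi_1$ still satisfies $[\phi_n\circ\Phi_1]|_{\cal P}=\kappa|_{\cal P}$ and perturbs traces only slightly, and it is automatically (approximately) orthogonal to $c:=\phi_n(\Phi_0(e_b))$, which is exactly where the correcting map from \ref{L215} is placed. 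With that replacement your argument closes; as written, the combination step does not.
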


\begin{proof}
Let $\ep>0,$ let $\eta>0$ and let $\sigma>0,$ let ${\cal P}\subset \underline{K}(B_T)$ be a finite subset,
let $S_u\subset U({\tilde B_T})/CU({\tilde B_T})$ be a finite subset, let ${\cal H}\subset (B_T)_{s.a.}$ be a finite
subset and
let ${\cal F}\subset B_T$ be a finite subset.

\Wlog, we may assume that ${\cal F}\subset (B_T)^{\bf 1},$ {{and,}}
$[L']|_{\cal P}$ and $(L')^{\dag}|_{S_u}$ are well-defined for any
${\cal F}$-$\ep$-multiplicative \cpc\, from $B_T.$

Let $G_1\subset K_1(B_T)$ be the subgroup generated by ${\cal P}\cap K_1(B_T).$

Fix $\dt>0$ and a finite subset ${\cal G}\subset B_T.$
We assume that $\dt<\min\{\ep/2, \eta/4, \sigma/16\}.$
To simplify notation, \wilog, we may assume
that $G_1\subset F\subset (\Phi_{n_0, \infty})_{*1}(K_1(B_{n_0}))$ for some $n_0\ge 1,$
where $F$ is a finitely generated standard subgroup {{of $K_1(B_T)$}} (see \ref{DdertF}).
We also choose $n_0$ larger than that required by \ref{L215} for $\dt$ (in place of $\ep$) ${\cal G}$
(in place of ${\cal F}$)
${\cal P}$ and $\sigma/16$ (in place of $\dt_0$).
\Wlog, we may write
\beq
S_u=S_{u,1}\sqcup S_{u,0},
\eneq
where $S_{u,1}\subset J_{F,u}(F)$ and $S_{u,0}\subset U_0({\tilde B_T})/CU({\tilde B_T})=\Aff(T({\tilde B_T}))/\Z$
and both $S_{u,1}$ and $S_{u,0}$ are finite subsets.
For $w\in S_{u,0},$
write
\vspace{-0.1in}\beq
w=\prod_{j=0}^{l(w)}\exp({{\sqrt{-1}}}2\pi h_{w,j}),
\eneq
where  $h_{w,0}\in \R$ and $h_{w,j}\in (B_T)_{s.a.},$ $j=1,2,...,l(w).$
Let
\vspace{-0.1in}\beq
{\cal H}_u=\{h_{w,j}: 1\le j\le l(w),\,\,\, w\in S_{u,0}\}\andeqn
M=\max\{\sum_{i=0}^{l(w)}\|h_{w,j}\|: w\in S_{u,0}\}.
\eneq
To simplify notation further, we may assume that $G_1=F.$

Write $G_1=\Z^{m_f}\oplus {\rm Tor}(G_1)$ and
$\Z^{m_f}$ is generated by cyclic and free generators $x_1,x_2,...,x_{m_f}.$
Let ${\rm Tor}(G)$ be generated by $x_{0,1}, x_{0,2},...,x_{0, m_t}.$
Let $u_1, u_2,..., u_{m_f}, u_{1,0},u_{2,0},...,u_{m_t,0}\in U({\tilde B_T})$
be unitaries such
that $[u_i]=x_i,$ $i=1,2,...,m_f,$ and $[u_{j,0}]=x_{0,j},$ $j=1,2,...,m_t.$
Let $\pi_u: U({\tilde B_T})/CU({\tilde B_T})\to K_1(B_T)$ be the quotient map
and let $G_u$ be the subgroup generated by $S_{u,1}.$
Since $(\kappa, \kappa_T, \kappa_u)$ is compatible, \wilog,
we may assume that $\pi_u(G_u)=\{x_1,x_2,...,x_{m_f}\}\cup \{x_{0,1}, x_{0,2},...,x_{0, m_t}\}$
and $S_{u,1}=\{{\bar u}_1, {\bar u}_2,...,{\bar u}_{m_f}, {\bar u}_{1,0},{\bar u}_{2,0},...,{\bar u}_{m_t,0}\}$
as described in \ref{DdertF}, in particular, $k_j{\bar u}_{j,0}=0$ in $U({\tilde B_T})/CU({\tilde B_T}),$
$j=1,2,...,m_t.$

Let $\phi_n: B_T\to A$ be a sequence of approximately multiplicative \cpc s
given by \ref{11Ext1} such that
\beq\label{11Ext2-3}
[\{\phi_n\}]&=&\kappa\andeqn\\\label{11Ext2-3+}
\lim_{n\to\infty}\sup \{|\tau\circ \phi_n(a)-\kappa_T(\tau)(a)|\}&=&0\rforal a\in (B_T)_{s.a.}.
\eneq

Fix a strictly positive element $e_b\in B_T$ with
$\|e_b\|=1$ and $\tau(e_b)\ge 15/16$  and
$\tau(f_{1/2}(e_b))\ge 15/16$ for all $\tau\in T(B_T).$

Let ${\cal F}_b\subset B_T$ be a finite subset which contains ${\cal F}\cup {\cal H}\cup {\cal H}_u.$
and let  $\dt_b>0.$
There are ${\cal F}_a$-$\dt_b$-multiplicative \cpc s $\Phi_0:B_T \to D_b\subset B_T$ with
$D_b\in C_0^{0},$  $\Phi_1: B_T\to B_T$ and $\Phi_1(B_T)\perp D_b$ such that
\beq\label{11Ext2-11}
\|b-(\Phi_0(b)\oplus \Phi_1(b))\|<\dt_b/2\rforal b\in {\cal F}_b\andeqn\\\label{11EXT2-11+}
0<d_\tau(\Phi_0(e_b))<\min\{\eta, \sigma/16\}/4(M+1)\rforal \tau\in T(B_T).
\eneq
Note that $K_0(D_b)=K_1(D_b)=\{0\}.$  Therefore, for any sufficiently large $n,$
\beq\label{11Ex2-12}
&&[\phi_n\circ \Phi_0]|_{\cal P}=0,\,\,\, [\phi_n\circ \Phi_1]|_{\cal P}=\kappa|_{\cal P}\andeqn\\\label{w11Ex2-12+}
&&d_{\tau}(\phi_n(\Phi_0(e_b)))<\min\{\eta, \sigma/16\}/2(M+1)\rforal \tau\in T(A).
\eneq

Fix a sufficiently large $n.$
Define $\lambda=\kappa|_{G_u}-(\phi_n\circ \Phi_1)^{\dag}|_{G_u}: G_u\to U({\widetilde{A}})/CU({\widetilde{A}}).$ Since $(\kappa, \kappa_T, \kappa_u)$ is compatible,
$\pi_u\circ \lambda({\bar u_i})=0$ and $\pi_u\circ \lambda({\bar u}_{0,j})=0,$
$i=1,2,...,m_f$ and $j=1,2,...,m_t.$


Let ${\cal F}_1={\cal F}\cup {\cal H}.$
It follows from \ref{L215} that there exists ${\cal F}_1$-$\min\{\ep/4, \eta/4\}$-multiplicative \cpc\,
$L: B_T\to \overline{cAc},$ where $c=\phi_n\circ \Phi_0(e_b),$
such that
\beq\label{11Ext2-15}
[L]|_{\cal P}=0\andeqn {\rm dist}(L^{\dag}({\bar u}_j), \lambda({\bar u}_j))<{{\sigma/4}},\,\,\,j=1,2,...,m_f.
\eneq

Define $\Psi: B_T\to A$ by
\beq
\Psi(a)=L(a)\oplus \phi_n\circ \Phi_1(a)\rforal a\in B_T.
\eneq
Then $\Psi$ is ${\cal F}$-$\ep$-multiplicative if $n$ is sufficiently large.
{{By \eqref{11EXT2-11+},  by \eqref{11Ext2-3+} and by choosing sufficiently large $n,$
\beq\label{127-nn1}
&&\sup \{|\tau\circ \phi_n(a)-\kappa_T(\tau)(a)|\}<\min\{\sigma/16, \eta\}/(M+1)\rforal a\in {\cal H}_u\andeqn\\
&&\sup\{|\tau(\Psi(b))-\kappa_T(\tau)(b)|: \tau\in T(A)\}<\min\{\sigma/16, \eta\}\rforal b\in {\cal H}.
\eneq
}}
It follows from \eqref{11Ex2-12}, \eqref{11Ext2-15} and the definition of $\lambda$ that
\beq
[\Psi]|_{\cal P}=\kappa|_{\cal P}\andeqn
{\rm dist}(\Psi^{\dag}({\bar u}_j), \kappa_{uc}({\bar u}_j))&<&{{\sigma/2}},\,\,\, j=1,2,...,m_f.
\eneq
By \ref{Lderttorsion}, we may also have
\beq
{\rm dist}(\Psi^{\dag}({\bar u}_{j,0}), \kappa_{uc}({\bar u}_{j,0}))&<&{{\sigma}},\,\,\, j=1,2,...,m_t.
\eneq
By the choice of $M$ and ${\cal H}_u,$  \eqref{11EXT2-11+}, {{and \eqref{127-nn1},}}
and by the assumption
that $(\kappa, \kappa_T, \kappa_{uc})$ is compatible,
\beq
{\rm dist}(\Psi^{\dag}({\bar w}), \kappa_{u,c}({\bar w}))<\sigma\rforal w\in S_{u,0}.
\eneq

\end{proof}

\begin{thm}\label{11ExtT1}
Let $A$ be a separable amenable simple \CA\, in ${\cal D}_0$
with continuous scale
which satisfies the UCT.  Let $B_T$ be as in \ref{Dcc1}.
Suppose that there is $\kappa\in KL(B_T,A),$
an affine continuous map
$\kappa_T: T(A)\to T(B_T)$
and a continuous \hm\, $\kappa_{uc}: U({\tilde B_T})/CU({\tilde B_T})\to U({\widetilde{A}})/CU({\widetilde{A}})$ such that $(\kappa, \kappa_T, \kappa_{uc})$ is compatible.
Then there exists a  \hm\, $\phi: B_T\to A$
 such that
\beq\label{11ExtT1-1}
&&[\phi]=\kappa,\,\,\,
\tau\circ \phi(a)=\kappa_T(\tau)(a)\tforal a\in (B_T)_{s.a.}\tand
\phi^{\dag}=\kappa_{uc}.
\eneq
\end{thm}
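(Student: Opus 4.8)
The plan is to upgrade the approximate existence result \ref{11EXt2} to an exact homomorphism by the standard Elliott approximate-intertwining / one-sided-intertwining argument, using the uniqueness theorem \ref{TUNIq} to control the difference between successive approximants. First I would record the setup: by \ref{11EXt2} there is a sequence of approximately multiplicative c.p.c.\ maps $\phi_n: B_T\to A$ with $[\{\phi_n\}]=\kappa$, with $\tau\circ\phi_n\to\kappa_T(\tau)$ uniformly on finite subsets of $(B_T)_{s.a.}$, and with $\phi_n^{\dag}\to\kappa_{uc}$ pointwise on $U(\widetilde{B_T})/CU(\widetilde{B_T})$. Since $A\in{\cal D}_0$ with continuous scale and $B_T\in{\cal B}_T$ also has continuous scale and lies in ${\cal D}^d$ (it is in ${\cal D}_0$, hence in ${\cal D}^d$ by \ref{DD1}) and satisfies the UCT, the maps in question are $T$-${\cal H}$-full for a suitable map $T$ (because $\kappa_T$ takes values in the compact simplex $T(B_T)$ and $A$ is simple, so $\tau\circ\phi_n$ is bounded below on a strictly positive element; here I would invoke \ref{Rsec53} to replace the fullness hypothesis of \ref{TUNIq} by the trace lower bound, since the relevant maps can be arranged to send strictly positive elements to strictly positive elements).

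Next I would run the telescoping construction. Fix an increasing sequence of finite subsets ${\cal F}_1\subset{\cal F}_2\subset\cdots$ with dense union in $B_T$, decreasing $\epsilon_k\to 0$ with $\sum\epsilon_k<\infty$, and for each $k$ let $\delta_k,\gamma_k,\eta_k$, finite subsets ${\cal G}_k,{\cal H}_{1,k},{\cal H}_{2,k}\subset B_T$, ${\cal P}_k\subset\underline{K}(B_T)$ and ${\cal U}_k\subset U(\widetilde{B_T})$ be the data produced by the uniqueness theorem \ref{TUNIq} (applied with $B_T$ in the role of ``$A$'' there and $A$ in the role of ``$B$'', which is legitimate since $A\in{\cal D}_{{\bf T}(n,k)}$ by \ref{RbTuniq} and \ref{Tweakunp}, $A$ being ${\cal Z}$-stable). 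Then, choosing $n_k$ increasing fast enough, the maps $\psi_k:=\phi_{n_k}$ can be arranged so that consecutive pairs $\psi_k,\psi_{k+1}$ agree on ${\cal P}_k$ in $KL$ (both equal $\kappa$), agree within $\gamma_k$ on ${\cal H}_{2,k}$ in trace (both close to $\kappa_T$), and agree within $\eta_k$ on ${\cal U}_k$ in $U(\widetilde A)/CU(\widetilde A)$ (both close to $\kappa_{uc}$); here one uses that $KL$-classes stabilize and that the trace and determinant convergences are uniform. Applying \ref{TUNIq} gives unitaries $w_k\in\widetilde A$ with $\|\mathrm{Ad}\,w_k\circ\psi_k(a)-\psi_{k+1}(a)\|<\epsilon_k$ for $a\in{\cal F}_k$. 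Setting $W_k=w_1w_2\cdots w_{k-1}$ (suitably), the maps $\mathrm{Ad}\,W_k\circ\psi_k$ form a Cauchy sequence on $\cup{\cal F}_k$ and converge to a $*$-homomorphism $\phi:B_T\to A$.

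Finally I would verify that $\phi$ realizes the prescribed invariant. Continuity of $\underline{K}(-)$, $T(-)$ and the determinant map under pointwise-norm limits of asymptotically multiplicative maps, together with the fact that $[\psi_k]=\kappa$, $\tau\circ\psi_k\to\kappa_T$ and $\psi_k^{\dag}\to\kappa_{uc}$, gives $[\phi]=\kappa$, $\tau\circ\phi=\kappa_T(\tau)$ on $A_{s.a.}$ and $\phi^{\dag}=\kappa_{uc}$ on all of $U(\widetilde{B_T})/CU(\widetilde{B_T})$ (for the last one uses that the $\psi_k^{\dag}$ are defined and convergent on the chosen finite subsets ${\cal U}_k$, which exhaust a generating set, and that $\kappa_{uc}$ is continuous, invoking \ref{PCue} and the splitting \eqref{CUsplit} to pass from the ${\cal U}_k$ to the whole group). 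The main obstacle I expect is the bookkeeping needed to feed \ref{TUNIq} correctly: one must simultaneously arrange the $KL$-agreement, the trace-agreement to within $\gamma_k$, and the $CU$-agreement to within $\eta_k$ of consecutive approximants while the index $n_k$ and the conjugating unitaries are being chosen — and one must make sure the fullness/$T$-${\cal H}$ hypotheses hold uniformly, which is where \ref{Rsec53} (and the ${\cal Z}$-stability input of \ref{Tweakunp} ensuring $K_0(\widetilde A)$ is weakly unperforated, so that ${\bf T}(n,k)=n$ works) is essential. The convergence of the determinant data on torsion elements also requires \ref{Lderttorsion} to be invoked at the level of the approximants, as in the proof of \ref{11EXt2}.
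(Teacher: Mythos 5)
Your proposal is correct and follows essentially the same route as the paper: produce the approximating sequence via \ref{11EXt2}, use \ref{TUNIq} (with the fullness hypothesis supplied by the trace lower bound through 9.6 of \cite{GLp1}, and with ${\bf T}(n,k)=n$ since $K_0(\tilde A)$ is weakly unperforated by \ref{Tweakunp} and \ref{Dzstable}) to conjugate successive approximants within $\ep_k$ on ${\cal F}_k$, and pass to the pointwise-norm limit of $\mathrm{Ad}\,u_k\circ\phi_{n_k}$, which realizes $(\kappa,\kappa_T,\kappa_{uc})$.
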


\begin{proof}
Let $e_b\in B_T$ be a strictly positive element of $B_T$ with $\|e_b\|=1.$
Since $A$ has continuous scale, \wilog, we may assume that
\beq\label{11ExtT1-5}
\min\{\inf\{\tau(e_b): \tau\in T(B_T)\},\inf\{\tau(f_{1/2}(e_b)):\tau\in T(B_T)\}\}>3/4.
\eneq
Let $T: (B_T)_+\setminus \{0\}\to \N\times \R_+\setminus \{0\}$ be given by
{{Theorem 5.7 of \cite{eglnp1}.}}

By \ref{11EXt2}, there exists a sequence of approximately multiplicative  \cpc s $\phi_n: B_T\to A$ such that
\beq
&&[\{\phi_n\}]=\kappa\\
&&\lim_{n\to\infty}\sup \{|\tau\circ \phi_n(a)-\kappa_T(\tau)(a): \tau\in T(A)|\}=0\rforal a\in (B_T)_{s.a.}\andeqn\\
&&\lim_{n\to\infty}{\rm dist}(\kappa_{uc}(z), \phi_n^{\dag}(z))=0\rforal z\in U({\tilde B_T})/CU({\tilde B_T}).
\eneq

Let $\ep>0$ and ${\cal F}\subset B_T$ be a finite subset.

We will apply \ref{TUNIq}. Note that $K_0({\tilde A})$
is weakly unperforated (see \ref{Tweakunp} and \ref{Dzstable}).
 $\dt_{1,1}>0$ (in place of $\dt$), $\gamma_1>0$ (in place of $\gamma$),
$\eta_1>0$ (in place of $\eta$), let ${\cal G}_{1,1}\subset B_T$ (in place of ${\cal G}$) be a finite subset,
${\cal H}_{1,1}\subset (B_T)_+\setminus \{0\}$ (in place of ${\cal H}_1$) be a finite subset, ${\cal P}_1\subset \underline{K}(B_T)$ (in place of ${\cal P}$),
${\cal U}_1\subset U({\tilde U})$ (in place of ${\cal U}$) with
$\overline{{\cal U}}={\cal P}\cap K_1(B_T)$  and let ${\cal H}_{1,2}\subset (B_T)_{s.a.}$ (in place of
${\cal H}_2$) {{be as}} required by Theorem \ref{TUNIq} for $T,$ $\ep$ and ${\cal F}$ (with $T(k,n)=n,$ see \ref{RbTuniq}).

\Wlog, we may assume that ${\cal H}_{1,1}\subset (B_T)_+^{\bf 1}\setminus \{0\}$
and $\gamma_1<1/64.$

Let ${\cal G}_{1,2}\subset B_T$  (in place of ${\cal G}$)  be a finite subset and let
$\dt_{1,2}>0$ be {{as}} required by {{Theorem 5.7 of \cite{eglnp1}}}
for the above ${\cal H}_{1,1}$
(in place of ${\cal H}_1$).
Let $\dt_1=\min\{\dt_{1,1}, \dt_{1,2}\}$  and ${\cal G}_1={\cal G}_{1,1}\cup {\cal G}_{1,2}.$

Choose $n_0\ge 1$ such that
$\phi_n$ is ${\cal G}_1$-$\dt_1/2$-multiplicative,  for all $n\ge n_0,$
\beq
&&[\phi_n]|_{{\cal P}_1}=\kappa|_{{\cal P}_1},\\
&&\sup\{|\tau\circ \phi_n(a)-\kappa_T(\tau)(a)|:\tau\in T(B_T)\}<\gamma_1/2\rforal  a\in {\cal H}_{1,2},\\
&&\tau(f_{1/2}(\phi_n(e_a)))>3/8\rforal \tau\in T(B_T)\andeqn\\
&&{\rm dist}(\phi_n^{\dag}({\bar u}), \kappa_{uc}({\bar u}))<\eta/2\rforal u\in {\cal U}.
\eneq
By applying
5.7 of \cite{eglnp1}, $\phi_n$ are all
$T$-${\cal H}_{1,1}$-full.
By applying Theorem \ref{TUNIq}, we obtain a unitary $u_n\in {\tilde B_T}$ (for each $n\ge n_0$)
such that
\beq\label{11ExtT1-15}
\|u_n^*\phi_n(a)u_n-\phi_{n_0}(a)\|<\ep\rforal a\in {\cal F}.
\eneq

Now let $\{\ep_n\}$ be an decreasing sequence of positive elements
such that $\sum_{n=1}^{\infty}\ep_n<\infty$ and let
$\{{\cal F}_n\}$ be an increasing sequence of finite subsets of $B_T$
such that $\cup_{n=1}^{\infty}{\cal F}_k$ is dense in $B_T.$

By what have been proved, we obtain a subsequence $\{n_k\}$ and
a sequence of unitaries $\{u_k\}\subset {\tilde B_T}$ such that
\beq\label{11ExtT1-16}
\|{\rm Ad}\, u_{k+1}\circ \phi_{n_{k+1}}(a)-{\rm Ad}\, u_{k}\circ \phi_{n_k}(a)\|<\ep_k\rforal a\in {\cal F}_k,
\eneq
$k=1,2,....$
Since $\cup_{n=1}^{\infty}{\cal F}_k$ is dense in $B_T,$  by \eqref{11ExtT1-16},
$\{{\rm Ad}\, u_k\circ \phi_{n_k}(a)\}$ is a Cauchy sequence.
Let
\beq
\phi(a)=\lim_{k\to\infty}{\rm Ad}\, u_k\circ \phi_{n_k}(a)\rforal a\in B_T.
\eneq
Then $\phi: B_T\to A$ is a \hm\, which satisfies \eqref{11ExtT1-1}.

\end{proof}

\begin{lem}\label{11Ext2n}

Let $A$ be a non-unital simple separable \CA\, in ${\cal D}$ with $K_0(A)={\rm ker}\rho_A$
and with continuous scale
which satisfies the UCT.  Let $B_T$ be as in \ref{Dcc1}.
Suppose that there is $\kappa\in KL(A,B_T),$
an affine continuous map
$\kappa_T: T(B_T)\to T(A)$
and a continuous \hm\, $\kappa_{uc}: U({\tilde A})/CU({\tilde A})\to U({\widetilde{B_T}})/CU({\widetilde{B_T}})$ such that $(\kappa, \kappa_T, \kappa_{uc})$ is compatible.
Suppose also that $\kappa|_{K_1(A)}$ is injective.

Then there exists a  sequence of approximate multiplicative  \cpc s $\phi_n: A\to B_T$ such that
\beq\label{11Ext2n-1}
&&[\{\phi_n\}]=\kappa,\\\label{11Ext2n-2}
&&\lim_{n\to\infty}\sup \{|\tau\circ \phi_n(a)-\kappa_T(\tau)(a)|\}=0{{\tforal}} a\in A_{s.a.}\tand\\
&&\lim_{n\to\infty}{\rm dist}(\kappa_{uc}(z), \phi_n^{\dag}(z))=0{{\tforal}} z\in U({\tilde A})/CU({\tilde A}).
\eneq
\end{lem}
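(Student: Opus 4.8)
The target Lemma \ref{11Ext2n} is the ``reversed-direction'' companion of Lemma \ref{11EXt2}: there we produced approximately multiplicative maps $B_T\to A$ realizing a compatible triple $(\kappa,\kappa_T,\kappa_{uc})$, and now we want maps $B_T\to A$ realizing a triple $\kappa\in KL(A,B_T)$, $\kappa_T: T(A)\to T(B_T)$, $\kappa_{uc}: U(\widetilde A)/CU(\widetilde A)\to U(\widetilde{B_T})/CU(\widetilde{B_T})$ --- so the maps still go $B_T\to A$, but the invariant data points from $A$ to $B_T$. Wait: reading the statement carefully, the conclusion \eqref{11Ext2n-1}--\eqref{11Ext2n-2} says $[\{\phi_n\}]=\kappa$ with $\phi_n: B_T\to A$ while $\kappa\in KL(A,B_T)$; so implicitly $\kappa$ must be invertible (or at least $\kappa$ is being used to pull back, via a chosen $KL$-inverse, to an element of $KL(B_T,A)$). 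The plan therefore is: first extract from the hypotheses an element $\kappa'\in KL(B_T,A)$ and compatible $(\kappa',\kappa_T',\kappa_{uc}')$ so that Lemma \ref{11EXt2} applies, then quote Lemma \ref{11EXt2} directly. The role of the extra hypothesis ``$\kappa|_{K_1(A)}$ injective'' is precisely to make the $U/CU$-level data invertible on the relevant torsion-free part, so that $\kappa_{uc}$ can be inverted (or pulled back through $J_{F,u}$, see \ref{DdertF}) to give a genuine continuous homomorphism in the other direction; this is where the standard subgroup machinery of \ref{DdertF} and Lemma \ref{L215} enters.

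Concretely, I would proceed as follows. \textbf{Step 1 (reduce to continuous scale and set up invariants).} Using \cite{Lncs2} and $15.5$ of \cite{GLp1}, we may assume both $A$ and $B_T$ have continuous scale and stable rank one, so $T(A)$, $T(B_T)$ are compact metrizable Choquet simplices (9.3 of \cite{GLp1}); recall $K_0(A)={\rm ker}\rho_A$ and $K_0(B_T)={\rm ker}\rho_{B_T}=G_0$, so the $K_0$-parts carry no order obstruction and $\Aff(T(\widetilde{B_T}))/\overline{\rho(K_0)}=\Aff(T(\widetilde{B_T}))/\Z$ by \ref{DJc}. \textbf{Step 2 (invert the triple).} Because $\kappa\in KL(A,B_T)$ and (by the injectivity hypothesis on $K_1$ together with the UCT and the structure of $K_*(B_T)$, which has arbitrary torsion-free $K_1$ built from the $A(W,\af_n)$'s as in \ref{PBTKT}) one shows $\kappa$ admits a $KL$-one-sided inverse $\bar\kappa\in KL(B_T,A)$ compatible with $\kappa_T$ (via $\kappa_T$ itself, it already goes $T(A)\to T(B_T)$, which is the correct direction for a map $B_T\to A$) and with a $U/CU$-level map $\bar\kappa_{uc}$ obtained by restricting to standard subgroups $F\subset K_1(B_T)$ (see \ref{DdertF}) and pulling $\kappa_{uc}$ back along $J_{F,u}$: on $U_0/CU=\Aff/\Z$ the map is transported by $\kappa_T^\sharp$, and on the free part it is transported using that $\kappa|_{K_1(A)}$ is injective so its image can be split off. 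Check that $(\bar\kappa,\kappa_T,\bar\kappa_{uc})$ is compatible in the sense of \ref{Dcomparible}: the $\rho$-compatibility is automatic since $K_0(B_T)={\rm ker}\rho_{B_T}$, and the $D$-compatibility on $U_0/CU$ follows from the defining formula $D_A(u)(\tau)$ being transported by $\kappa_T$. \textbf{Step 3 (apply Lemma \ref{11EXt2}).} With $(\bar\kappa,\kappa_T,\bar\kappa_{uc})$ in hand, Lemma \ref{11EXt2} produces approximately multiplicative $\phi_n: B_T\to A$ with $[\{\phi_n\}]=\bar\kappa$, asymptotic trace condition $\sup_\tau|\tau\circ\phi_n(a)-\kappa_T(\tau)(a)|\to 0$, and $\dist(\bar\kappa_{uc}(z),\phi_n^\dag(z))\to 0$. \textbf{Step 4 (translate back).} Finally observe that $[\{\phi_n\}]=\bar\kappa$ and $\phi_n^\dag\to\bar\kappa_{uc}$ are exactly the assertions \eqref{11Ext2n-1}--\eqref{11Ext2n-2} once one checks that ``$\phi_n$ induces $\kappa$'' and ``$\phi_n$ induces $\bar\kappa$'' are the same statement under the identifications of Step 2 --- i.e. that $\bar\kappa$ was constructed precisely so that $\phi_n$ realizes $\kappa$ in the sense meant by the statement (this is a bookkeeping point about which direction the $KL$-class is recorded in).

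\textbf{The main obstacle.} The genuinely delicate part is Step 2: producing a compatible triple in the reversed direction, and in particular handling the $U(\widetilde{B_T})/CU(\widetilde{B_T})$ component. The group $U/CU$ is an extension $0\to\Aff(T)/\Z\to U/CU\to K_1\to 0$ (see \eqref{CUsplit}), and while $\kappa_{uc}$ respects this extension, inverting it requires (a) inverting the $K_1$-part, which is where the hypothesis $\kappa|_{K_1(A)}$ injective is used, but note $K_1(B_T)$ need not be finitely generated, so one must work on finitely generated standard subgroups $F$ and take a limit, ensuring the pulled-back maps are continuous with respect to the weak-$*$ topology on $\Aff$; and (b) controlling the torsion of $K_1(B_T)$ via the determinant obstruction handled by Lemma \ref{Lderttorsion} (the $k_j{\bar u}_{j,0}=0$ phenomenon in \ref{DdertF}). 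A secondary, purely notational obstacle is making precise the claim in Step 4 that the statement's $\kappa$ and the constructed $\bar\kappa$ encode the same map; I expect this to dissolve once one fixes conventions, but it should be stated carefully so the reader is not misled by the directional mismatch between $\kappa\in KL(A,B_T)$ and $\phi_n: B_T\to A$. I would also double-check that the asymptotic-trace and $U/CU$ conclusions survive the inverse-limit/standard-subgroup approximation in Step 2 --- this is routine but needs the continuity of $\kappa_T^\sharp$ as a Cuntz-semigroup map, which is available from \ref{Ttrzstable}.
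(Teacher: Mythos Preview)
Your proposal rests on a misreading of the statement --- one the paper's typos invite, since the displayed direction ``$\phi_n:B_T\to A$'' and the quantifier ``$a\in (B_T)_{s.a.}$'' are inconsistent with $\kappa\in KL(A,B_T)$ and $\kappa_{uc}:U(\tilde A)/CU(\tilde A)\to U(\widetilde{B_T})/CU(\widetilde{B_T})$. The intended reading, confirmed by the paper's own proof and by the usage in Theorem \ref{Misothm}, is that the maps go $\phi_n:A\to B_T$. Your Step~2 therefore attacks the wrong problem: there is no need (and no way, under the stated hypotheses) to produce a $KL$-inverse $\bar\kappa\in KL(B_T,A)$. Injectivity of $\kappa|_{K_1(A)}$ says nothing about invertibility on $K_0$ or on the $\Z/n$-coefficient groups, so your ``one-sided $KL$-inverse'' does not exist in general, and Step~4's claim that this is ``a bookkeeping point'' is where the argument actually breaks.

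The paper's approach is a post-composition correction rather than an inversion. First invoke Lemma \ref{11Ext1} to obtain $\phi_n:A\to B_T$ with $[\{\phi_n\}]=\kappa$ and the tracial condition; only $\phi_n^\dag$ may fail to agree with $\kappa_{uc}$. On a finitely generated $G_1\subset K_1(A)$ one forms the defect $\lambda_0=\kappa_{uc}-\phi_n^\dag$ on $J_u(G_1)$; compatibility forces $\lambda_0$ to land in $\Aff(T(\widetilde{B_T}))/\Z$. Here is where injectivity of $\kappa|_{K_1(A)}$ is actually used: it makes $\phi_n^\dag|_{J_u(G_1)}$ injective, so $\lambda_0$ factors through the image $G_b=\phi_n^\dag(J_u(G_1))$ and hence through $\Pi_b(G_b)\subset K_1(B_T)$. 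Divisibility of $\Aff/\Z$ then extends $\lambda_0$ to a homomorphism $\lambda_1:K_1(B_T)\to\Aff(T(\widetilde{B_T}))/\Z$, which one packages as a self-map $\Lambda$ of $U(\widetilde{B_T})/CU(\widetilde{B_T})$ equal to the identity on the affine summand and to ${\rm id}+\lambda_1\circ\Pi_b$ on $J_{ub}(K_1(B_T))$. Applying Lemma \ref{11EXt2} (with both algebras equal to $B_T$) gives $\psi_n:B_T\to B_T$ with $[\psi_n]=[{\rm id}_{B_T}]$, $(\psi_n)_T={\rm id}$, and $\psi_n^\dag=\Lambda$; then $\Phi_n=\psi_n\circ\phi_n:A\to B_T$ realizes the full triple. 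The injectivity hypothesis is thus a device for transferring the defect to the $B_T$ side so it can be corrected there --- not a route to inverting $\kappa$.
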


\begin{proof}
Denote by $\Pi: U({\tilde A})/CU({\tilde A})\to K_1(A)$
{{the}} quotient map and fix
a splitting map $J_u: K_1(A)\to U({\tilde A})/CU({\tilde A}).$
Since $(\kappa, \kappa_T, \kappa_{uc})$ is compatible, it suffices to show
that there are $\{\phi_n\}$ which satisfies \eqref{11Ext2n-1} and \eqref{11Ext2n-2} and
\beq
\lim_{n\to\infty}{\rm dist}(\kappa_{uc}(J_u(\zeta)), \phi_n^{\dag}(J_u(\zeta)))=0\rforal \zeta\in K_1(A).
\eneq

It follows from \ref{11Ext1} that there exists $\{\phi_n\}$  which  satisfies \eqref{11Ext2n-1} and \eqref{11Ext2n-2}.
Let $G_1\subset K_1(A)$ be a finitely generated subgroup.

Choose some sufficiently large $n,$
then $\phi_n^{\dag}$ induces a \hm\, on the {{group}} $J_u(G_1).$
Since $\kappa|_{K_1(A)}$ is injective and $(\kappa, \kappa_T, \kappa_{uc})$ is compatible,
$\phi_n^{\dag}|_{J_u(G_1)}$ has an inverse
$\gamma.$
Let $G_b=\phi_n^{\dag}(J_u(G_1))$ and let $\Pi_b: U({\tilde B_T})/CU({\tilde B_T})\to K_1(B_T)$ be the quotient map. Again, using the fact that $(\kappa, \kappa_T, \kappa_{uc})$
is compatible, $(\Pi_b)|_{G_b}$ is injective.
Let $J_{ub}:K_1(B_T)\to U({\tilde B_T})/CU({\tilde B_T})$ be a {{\hm\,}}  such that
$\Pi_b\circ J_{uc}={\rm id}_{K_1(B_T)}.$

Put
\vspace{-0.15in}\beq\label{11ExtTn2-1}
\lambda_0=((\kappa_{uc}\circ \gamma)\circ J_{uc}-(\phi_n)^{\dag}\circ \gamma\circ J_{uc})|_{\Pi_b(G_b)}.
\eneq

Then, since $(\kappa, \kappa_T, \kappa_{uc})$ is compatible,
$\Pi_b\circ \lambda_0=0.$
Therefore $\lambda_0$ maps from $\Pi_b(G_b)$ to $\Aff(T({\tilde B_T}))/\overline{\rho_{B_T}(K_1({\tilde B_T}))}.$
However, $\Aff(T({\tilde B_T}))/\overline{\rho_{B_T}(K_1({\tilde B_T}))}$ is divisible.
Therefore there is a \hm\, $\lambda_1: K_1(B_T)\to \Aff(T({\tilde B_T}))/\overline{\rho_{B_T}(K_1({\tilde B_T}))}$
such that
\beq
(\lambda_1)|_{\Pi_b(G_b)}=\lambda_0.
\eneq
Now defined $\Lambda: U({\tilde B_T})/CU({\tilde B_T})\to U({\tilde B_T})/CU({\tilde B_T})$  as follows.
\beq
\Lambda|_{\Aff(T({\tilde B_T}))/\overline{\rho_{B_T}(K_1({\tilde B_T}))}}={\rm id}_{\Aff(T({\tilde B_T}))/\overline{\rho_{B_T}(K_1({\tilde B_T}))}},\\
\Lambda|_{J_{ub}(K_1(B_T))}=\lambda_1\circ \Pi_b+({\rm id}_{B_T})^{\dag}.
\eneq

Note that $([{\rm id}_{B_T}], ({\rm id}_{B_T})_T, \Lambda)$ is compatible.
It follows from \ref{11EXt2} that there exists a \hm\, $\psi_n: B_T\to B_T$
such that
\beq
[\psi_n]=[{\rm id}_{B_T}], \,\,\, (\psi_n)_T=({\rm id}_{B_T})_T\andeqn \psi_n^{\dag}=\Lambda.
\eneq
Now let $\Phi_n=\psi_n\circ \phi_n.$
Then, for $z\in J_u(G_1),$   by \eqref{11ExtTn2-1},
\beq
\Phi_n^{\dag}(z)&=&\psi_n^{\dag}\circ \phi_n^{\dag}(z)=\lambda_1\circ \Pi_b\circ \phi_n^{\dag}(z)+\phi_n^{\dag}(z)\\
&=&\lambda_0\circ \phi_n^{\dag}(z)+\phi_n^{\dag}(z)=\kappa_{uc}(z).
\eneq
The lemma  follows immediately from this construction of $\Phi_n.$


\end{proof}

\begin{lem}\label{11Ext2n2}

Let $A$ be a non-unital simple separable \CA\, in ${\cal D}_0$
with continuous scale
which satisfies the UCT.  Let $B_T$ be as in \ref{Dcc1}.
Suppose that there is $\kappa\in KL(A, B_T),$
an affine continuous map
$\kappa_T: T(B_T)\to T(A),$
and a continuous \hm\, $\kappa_{uc}: U({\tilde A})/CU({\tilde A})\to U({\widetilde{B_T}})/CU({\widetilde{B_T}})$ such that $(\kappa, \kappa_T, \kappa_{uc})$ is compatible.
Suppose also that $\kappa|_{K_1(A)}$ is injective.

Then there exists a  \hm\, $\phi: A\to B_T$ such that
\beq\label{11Ext3n-1}
[\phi]=\kappa,\,\,\, \phi_T=\kappa_T\andeqn \phi^{\dag}=\kappa_{uc}.
\eneq
\end{lem}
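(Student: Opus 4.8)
The statement \ref{11Ext2n2} is the "Existence Theorem" producing an actual homomorphism $\phi\colon B_T\to A$ realizing a prescribed compatible triple $(\kappa,\kappa_T,\kappa_{uc})$, under the extra hypothesis that $\kappa|_{K_1(A)}$ is injective. The approach mirrors exactly the passage from \ref{11EXt2} to \ref{11ExtT1}: combine the approximate existence result \ref{11Ext2n} with the uniqueness theorem \ref{TUNIq} via a standard Elliott-type approximate-intertwining (one-sided) argument to pass from a sequence of approximately multiplicative maps to a genuine homomorphism.

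First I would invoke \ref{11Ext2n} to obtain a sequence of approximately multiplicative completely positive contractive maps $\phi_n\colon B_T\to A$ with $[\{\phi_n\}]=\kappa$, with $\lim_n\sup_{\tau\in T(A)}|\tau\circ\phi_n(a)-\kappa_T(\tau)(a)|=0$ for all $a\in (B_T)_{s.a.}$, and with $\lim_n\operatorname{dist}(\kappa_{uc}(z),\phi_n^{\dag}(z))=0$ for all $z\in U(\widetilde{B_T})/CU(\widetilde{B_T})$. (It is here and only here that the hypothesis $\kappa|_{K_1(A)}$ injective is used, exactly as in \ref{11Ext2n}.) Since $A\in{\mathcal D}_0$ has continuous scale and satisfies the UCT, $T(A)$ is compact (9.3 of \cite{GLp1}), $K_0(\widetilde A)$ is weakly unperforated (by \ref{Tweakunp} together with \ref{Dzstable}), and $B_T\in{\mathcal D}_{{\bf T}(n,k)}$ with ${\bf T}(n,k)=n$ in the sense of \ref{RbTuniq}; moreover, by 9.6 of \cite{GLp1} there is a map $T\colon (B_T)_+\setminus\{0\}\to\N\times(\R_+\setminus\{0\})$ so that any of the $\phi_n$ (for $n$ large, relative to a given finite subset) is $T$-full. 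Thus the hypotheses of \ref{TUNIq} are available.

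The main step is the iteration. Fix a decreasing summable sequence $\{\varepsilon_k\}$ and an increasing sequence of finite subsets $\{{\mathcal F}_k\}$ with dense union in $B_T$. For each $k$, feed $\varepsilon_k$ and ${\mathcal F}_k$ into \ref{TUNIq} to get the corresponding data $\delta_k,\gamma_k,\eta_k,{\mathcal G}_k,{\mathcal H}_{1,k},{\mathcal P}_k,{\mathcal U}_k,{\mathcal H}_{2,k}$; then choose $n_k$ (increasing) large enough that $\phi_{n_k}$ is ${\mathcal G}_k$-$\delta_k$-multiplicative, is $T$-${\mathcal H}_{1,k}$-full (via 9.6 of \cite{GLp1}), and so that the $K$-theory, tracial, and determinant data of $\phi_{n_k}$ and $\phi_{n_{k+1}}$ agree on ${\mathcal P}_k$, ${\mathcal H}_{2,k}$, ${\mathcal U}_k$ within the required tolerances (using $[\{\phi_n\}]=\kappa$ and the two limit statements above). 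Applying \ref{TUNIq} yields unitaries $w_k\in\widetilde A$ with $\|\operatorname{Ad}w_k\circ\phi_{n_{k+1}}(a)-\phi_{n_k}(a)\|<\varepsilon_k$ for $a\in{\mathcal F}_k$; composing, setting $u_k=w_1w_2\cdots w_{k-1}$, the sequence $\{\operatorname{Ad}u_k\circ\phi_{n_k}\}$ is Cauchy on the dense set $\bigcup_k{\mathcal F}_k$, hence converges to a homomorphism $\phi\colon B_T\to A$. Finally one checks $[\phi]=\kappa$ (the $\underline K$-information is eventually locally constant along the intertwining, so it is carried to the limit), $\phi_T=\kappa_T$ (pointwise convergence of traces on a dense set plus continuity), and $\phi^{\dag}=\kappa_{uc}$ (the determinant $\dag$-map is continuous in the relevant sense, and $\operatorname{dist}(\phi_{n_k}^{\dag}(z),\kappa_{uc}(z))\to 0$ while conjugation by unitaries does not change the class in $U/CU$).

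I expect the bookkeeping in the iteration step — verifying that $n_k$ can be chosen simultaneously large enough for all the finitely many compatibility conditions of \ref{TUNIq}, and in particular that the fullness constant $T$ can be taken uniform (via 9.6 of \cite{GLp1}) and that the $U/CU$ data passes correctly to the limit — to be the only real obstacle; the rest is a verbatim repeat of the proof of \ref{11ExtT1}. One subtlety worth flagging: one must make sure that along the intertwining the maps $\operatorname{Ad}u_k\circ\phi_{n_k}$ remain genuinely approximately multiplicative with errors summable, so that the limit is multiplicative, which is automatic from $\varepsilon_k$ summable and ${\mathcal F}_k$ exhausting $B_T$. No new ideas beyond \ref{11Ext2n}, \ref{TUNIq}, and the standard Elliott approximate intertwining are needed.
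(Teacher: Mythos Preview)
Your proposal is correct and follows exactly the paper's approach: the paper's proof is a single line stating that it is identical to the proof of \ref{11ExtT1} but with \ref{11Ext2n} in place of \ref{11EXt2}, and you have faithfully unpacked that argument (approximate existence via \ref{11Ext2n}, fullness via 9.6 of \cite{GLp1}, then \ref{TUNIq} plus a one-sided Elliott intertwining). One small slip: in invoking \ref{TUNIq} it is the \emph{target} $A$ (not $B_T$) that must lie in ${\cal D}_{{\bf T}(n,k)}$, which it does since $A\in{\cal D}_0$ is ${\cal Z}$-stable by \ref{Dzstable}; your remark about $K_0(\widetilde A)$ being weakly unperforated already addresses this.
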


\begin{proof}
The proof is exactly the same as that of \ref{11ExtT1} but applying \ref{11Ext2n} instead of \ref{11EXt2}.
\end{proof}

\section{The Isomorphism Theorem for $\zo$-stable \CA s}

\begin{thm}\label{Misothm}
Let $A$ and $B$ be two separable simple  amenable \CA s in ${\cal D}$ with continuous scale which satisfy the UCT.
Suppose that ${\rm ker}\rho_A=K_0(A)$ and ${\rm ker}\rho_B=K_0(B).$
Then $A\cong B$ if and only if
\beq
(K_0(A), K_1(A), T(A))\cong (K_0(B), K_1(B), T(B)).
\eneq
Moreover,
let $\kappa_i: K_i(A)\to K_i(B)$ be an isomorphism as abelian groups ($i=0,1$) and
let $\kappa_T: T(B)\to T(A)$ be an affine homeomorphism.
Suppose that $\kappa\in KL(A,B)$ which gives $\kappa_i$ and
$\kappa_{cu}: U({\tilde A})/CU({\tilde A})\to U({\tilde B})/CU({\tilde B})$ is
a continuous affine isomorphism so that
$(\kappa, \kappa_T, \kappa_{cu})$ is compatible.
Then there is an isomorphism $\phi: A\to B$ such that
\beq\label{Misothm-1-1}
[\phi]=\kappa\,\,\,{\rm (} i=0,1,\,\, \phi_T=\kappa_T\andeqn \phi^{\dag}=\kappa_{cu}
\eneq
\end{thm}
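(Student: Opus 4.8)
The plan is to prove Theorem \ref{Misothm} by the now-standard Elliott intertwining argument, built on top of the two one-sided existence/uniqueness results developed earlier in the paper. First I would reduce the ``only if'' direction to the invariant: if $A \cong B$ then the isomorphism induces an isomorphism of $(K_0, K_1, T)$, so there is nothing to do there; the content is the ``if'' direction and, more precisely, the ``moreover'' statement which produces the concrete isomorphism $\phi$ realizing a prescribed compatible triple $(\kappa, \kappa_T, \kappa_{cu})$. I would first argue that it suffices to prove the moreover statement: given abstract isomorphisms $\kappa_i$ and $\kappa_T$, one lifts the $K$-theory data to an invertible $\kappa \in KL(A,B)$ using the UCT (the groups $K_i(A), K_i(B)$ are countable and $A,B$ satisfy the UCT, so $KK(A,B) = KL(A,B)$ in the relevant sense and invertibility can be arranged), and one produces a compatible $\kappa_{cu}$ from $\kappa_T$ and $\kappa$ via the splitting \eqref{CUsplit} applied to $\tilde A$ and $\tilde B$ — here one uses that $\Aff(T(\tilde A))/\overline{\rho_A(K_0(\tilde A))} = \Aff(T(\tilde A))/\Z$ since $\ker\rho_A = K_0(A)$, and similarly for $B$, so $U_0/CU$ is pinned down by $\kappa_T$ while the $K_1$ part is pinned down by $\kappa$; compatibility in the sense of \ref{Dcomparible} is then a routine check.

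The heart of the argument is then the approximate intertwining between $A$ and $B$. I would proceed in two stages. Stage one: reduce to the model case. By \ref{TD0=D}, since $A, B \in {\cal D}$ with $\ker\rho = K_0$, in fact $A, B \in {\cal D}_0$. By \ref{MainModel} (equivalently by working with a $B_T$ having the matching Elliott invariant), there is a model algebra $B_T$ of the form in \ref{Dcc1} with $(K_0(B_T), K_1(B_T), T(B_T)) \cong (K_0(A), K_1(A), T(A))$; and one may assume $B_T$ has continuous scale. Stage two: show $A \cong B_T$ and $B \cong B_T$, each via intertwining, and compose. To get $A \cong B_T$: use \ref{11ExtT1} (with $A$ in the role of its ``$A$'' and $B_T$ as the source) to produce a homomorphism $\phi: B_T \to A$ realizing a chosen compatible triple, and use \ref{11Ext2n2} to produce $\psi: A \to B_T$ — this is where the hypothesis that the relevant $KL$-class restricted to $K_1$ is injective is used, and it is satisfied because we are using isomorphisms. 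One then runs the two-sided approximate intertwining: at each stage apply the uniqueness theorem \ref{TUNIq} (valid here since $A, B_T$ are in ${\cal D}_0$, hence in the class ${\cal D}_{{\bf T}(n,k)}$ with ${\bf T}(n,k) = n$ by \ref{RbTuniq}, using that $\cal Z$-stability — guaranteed by \ref{Dzstable} — gives weak unperforation of $K_0(\tilde A)$ via \ref{Tweakunp}) to absorb the error unitaries, producing the intertwining diagram whose limit is the desired isomorphism. The fullness hypothesis in \ref{TUNIq} is supplied by Theorem 9.6 of \cite{GLp1} as in the proof of \ref{11ExtT1}.

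In more detail, the intertwining is organized exactly as in the proof of \ref{11ExtT1}: choose dense increasing finite subsets ${\cal F}_n \subset B_T$ and summable $\ep_n$; alternately apply \ref{11ExtT1}/\ref{11Ext2n2} (to get maps going each way with the correct $KL$-class, tracial behaviour, and determinant behaviour) and \ref{TUNIq} (to correct the composites $\psi\circ\phi$ and $\phi\circ\psi$ to be close to identities up to inner automorphism); pass to a subsequence and take the limit of the conjugated maps, which is Cauchy by the telescoping estimate \eqref{11ExtT1-16}. This yields an isomorphism $\phi: A \to B_T$ with $[\phi] = \kappa$, $\phi_T = \kappa_T$, $\phi^\dagger = \kappa_{cu}$ for the prescribed compatible triple connecting $A$ and $B_T$; symmetrically one gets $\psi: B_T \to B$ realizing the triple connecting $B_T$ and $B$; and $\psi \circ \phi: A \to B$ realizes the composite triple. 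Finally, for the full strength of the moreover statement one must arrange that the composite triple $A \to B_T \to B$ equals the originally given $(\kappa, \kappa_T, \kappa_{cu})$: this is done by first fixing an arbitrary compatible triple $A \to B_T$, then choosing the triple $B_T \to B$ to be the composition of its inverse with $(\kappa, \kappa_T, \kappa_{cu})$ — one checks this is again compatible — so the resulting isomorphism induces exactly the prescribed data.

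The main obstacle is not any single step but making sure all the hypotheses of the imported theorems are genuinely met in this combined setting: in particular that $A$ and $B$ (a priori only in ${\cal D}$ with $\ker\rho = K_0$) do land in ${\cal D}_0$ and are $\cal Z$-stable with the weak unperforation needed for \ref{TUNIq}; that the model $B_T$ can be taken with continuous scale and matching invariant; that the compatibility bookkeeping for $(\kappa, \kappa_T, \kappa_{cu})$ is consistent when passing through $B_T$ (the determinant/exponential-length conditions $D_{\tilde A} \leftrightarrow D_{\tilde B}$ of \ref{Dcomparible} must be tracked through the composition); and that the injectivity-on-$K_1$ hypothesis of \ref{11Ext2n2} holds — which it does because we use genuine isomorphisms throughout. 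I expect the \cal Z-stability/weak-unperforation verification and the careful composition of compatible triples through the intermediate $B_T$ to be the most delicate points, though all the real analytic work has already been done in \ref{TUNIq}, \ref{11ExtT1}, \ref{11Ext2n2}, and \ref{TD0=D}.
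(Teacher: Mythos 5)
Your proposal is correct and follows essentially the same route as the paper: reduce to ${\cal D}_0$ via \ref{TD0=D}, lift the invariant to a compatible triple $(\kappa,\kappa_T,\kappa_{cu})$, and run an Elliott approximate intertwining whose existence input is \ref{11ExtT1}, \ref{11EXt2} and \ref{11Ext2n2} (injectivity on $K_1$ holding because the $KL$-elements are invertible) and whose uniqueness input is \ref{TUNIq}, with fullness supplied by 9.6 of \cite{GLp1} (see \ref{Rsec53}) and weak unperforation coming from \ref{Dzstable} and \ref{Tweakunp}. The only organizational difference is that you intertwine $A$ and $B$ each with the model $B_T$ and then compose the two isomorphisms while tracking the compatible triples through the composition, whereas the paper introduces $B_T$ but runs a single direct intertwining between $A$ and $B$ using the same lemmas; both variants amount to the same argument.
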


\begin{proof}
Note it follows from \ref{TD0=D} that $A,\, B\in {\cal D}_0.$
It follows from \ref{MainModel} that there is a non-unital simple \CA\, $B_T$ constructed
in section {{7}} such that
\beq\label{Misothm-1}
K_0(B_T)=K_0(B), K_1(B_T)=K_1(B)\andeqn T(B_T)=T(B).
\eneq
Let $\kappa\in KL(A, B)$ be an invertible element which gives
$\kappa_i$ ($i=0,1$). Let $\kappa_T: T(B)\to T(A)$ be an affine homeomorphism.
By the assumption, $(\kappa, \kappa_T)$ is always compatible.
Choose any $\kappa_{cu}$ so that $(\kappa, \kappa_T, \kappa_{cu})$ is compatible.
Note that there is always at least one: $\kappa_{cu}|_{J_c(K_1(A))}=J_c\circ \kappa|_{K_1(A)}\circ \pi_{cu},$
where $\pi_{cu}: U({\tilde A})/CU({\tilde A})\to K_1(A)$ is the quotient map
and $\kappa_{cu}|_{\Aff(T(A))/\Z}$ {{is}} induced by $\kappa_T.$

Therefore it suffices to show that there is  an isomorphism $\phi: A\to B$ such that
\eqref{Misothm-1-1} holds. We will use the Elliott intertwining argument.

Let $\{{\cal F}_{a,n}\}$ be an increasing sequence of finite subsets of $A$ such that
$\cup_{n=1}^{\infty}{\cal F}_{a,n}$ is dense in $A,$ let $\{{\cal F}_{b,n}\}$ be an increasing
sequence of finite subsets of $B$ such that $\cup_{n=1}^{\infty} {\cal F}_{b,n}$ is dense in
$B.$
Let $\{\ep_n\}$ be a sequence of  decreasing positive numbers  such that
$\sum_{n=1}^{\infty}\ep_n<1.$

Let $e_a\in A$ and $e_b\in B$  be  strictly positive elements of $A$  and $B,$ respectively, with $\|e_a\|=1$
and with $\|e_b\|=1.$
Note that $d_\tau(e_a)=1$ for all $\tau\in T(A)$ and $d_\tau(e_b)=1$ for
all $\tau\in T(B).$

It follows from \ref{11Ext2n2} that there is a \hm\, $\phi_1: A\to B$ such that
\beq\label{Misothm-5}
[\phi_1]=\kappa,\,\,\, (\phi_1)_T=\kappa_T\andeqn \phi_1^{\dag}=\kappa_{cu}.
\eneq
Note that $d_\tau(\phi_1(e_a))=1.$ Therefore
$\phi_1$ maps $e_a$ to a strictly positive element of $B.$
It follows from \ref{11EXt2} that there is a \hm\,
$\psi_1': B\to A$ such that
\beq\label{Misothm-6}
[\psi_1']=\kappa^{-1},\,\, (\psi_1')_T=\kappa_T^{-1}\andeqn (\psi_1')^{\dag}={\rm id}_A^{\dag}\circ ( \phi_1^{\dag})^{-1}.
\eneq
Thus
\vspace{-0.1in}\beq
[\psi_1'\circ \phi_1]=[{\rm id}_A],\,\,\, (\psi_1'\circ \phi_1)_T={\rm id}_{T(A)}\andeqn
(\psi_1'\circ \phi_1)^{\dag}={\rm id}_{U({\tilde A})/CU({\tilde A})}.
\eneq
It follows from \ref{TUNIq} (see also \ref{Rsec53}) that there exists a unitary $u_{1,a}\in {\tilde A}$ such that
\beq\label{Misothm-7}
{\rm Ad}\, u_{1,a}\circ \psi_1'\circ \phi_1\approx_{\ep_1} {\rm id}_A\,\,\,{\rm on}\,\,\, {\cal F}_{a,1}.
\eneq
Put $\psi_1={\rm Ad}\, u_{1,a}\circ \psi_1'.$
Then we obtain the following diagram
\vspace{-0.12in} \begin{displaymath}
\xymatrix{
A \ar[r]^{{\rm id}_A} \ar[d]_{\phi_1} & A\\
B \ar[ur]_{\psi_1}
}
\end{displaymath}
which is approximately commutative on the subset ${\cal F}_{a,1}$ within $\ep_1.$

By applying \ref{11Ext2n2},  there exists a \hm\, $\phi_2': A\to B$ such that
\beq\label{Misothm-8}
[\phi_2']=\kappa,\,\,\, (\phi_2')_T=\kappa_T\andeqn  (\phi_2')^{\dag}={\rm id}_B^{\dag}\circ (\psi_1^{\dag})^{-1}=\kappa_{cu}.
\eneq
Then,
\vspace{-0.1in}\beq
[\phi_2'\circ \psi_1]=[{\rm id}_B],\,\,\, (\phi_2'\circ \psi_1)_T\andeqn (\phi_2'\circ \psi_1)^{\dag}={\rm id}_{U({\tilde B})/CU({\tilde B})}.
\eneq
It follows from \ref{TUNIq} (and \ref{Rsec53}) that there exists a unitary $u_{2,b}\in {\tilde B}$ such that
\beq\label{Misothm-7+}
{\rm Ad}\, u_{2,b}\circ \phi_2''\circ \psi_1\approx_{\ep_2} {\rm id}_B\,\,\,{\rm on}\,\,\, {\cal F}_{b,2}\cup \phi_1({\cal F}_{a,1}).
\eneq
Put $\phi_2={\rm Ad}\, u_{2,b}\circ \phi_2.$ Then we obtain the following diagram:
 \begin{displaymath}
\xymatrix{
A \ar[r]^{{\rm id}_A} \ar[d]_{\phi_1} & A \ar[d]^{\phi_2}\\
B\ar[ur]_{\psi_1}\ar[r]_{{\rm id}_B} & B
}
\end{displaymath}
with the upper triangle approximately commutes on $\mathcal F_{a,1}$ within  $\ep_1$ and the lower triangle approximately commutes on ${\cal F}_{b,2}\cup \phi_1({\cal F}_{a,1})$ within $\ep_2.$
Note also
\beq\label{Misothm-10}
[\phi_2]=\kappa,\,\,\, (\phi_2)_T=\kappa_T\andeqn  (\phi_2)^{\dag}=\kappa_{cu}.
\eneq

We then continue this process, and, by the induction, we obtain an approximate intertwining:
 \begin{displaymath}
 \xymatrix{
A  \ar[r]^{{\rm id}_A}\ar[d]_{\phi_1}  &   A  \ar[r]^{{\rm id}_A}\ar[d]_{\phi_2}  &   A  \ar[r]^{{\rm id}_A}\ar[d]_{\phi_3} &   \cdots \cdots A \\
  B  \ar[r]_{{\rm id}_B}\ar[ru]^{\psi_1}&    B  \ar[r]_{{\rm id}_B}     \ar[ru]^{\psi_2}&   B\ar[r]_{{\rm id}_B}&  \cdots \cdots B  \\
 }
\end{displaymath}

By the Elliott approximate intertwining argument, this implies
that $A\cong B$ and the isomorphism $\phi$  produced by the above diagram meets the requirements
of \eqref{Misothm-1-1}.
\end{proof}

The following theorem and its proof gives the proof of Theorem \ref{TTT1}.

\begin{thm}\label{T1main}
Let $A$ and $B$ be two  stably  projectionless  separable simple amenable \CA s
with
$gTR(A)\le 1$ and $gTR(B)\le 1$  and which satisfy
the UCT.   Suppose that $K_0(A)={\rm ker}\rho_A$ and $K_0(B)={\rm ker}\rho_B.$
Then $A\cong B$ if and only if
\beq
(K_0(A), K_1(A), {\tilde T}(A), \Sigma_A)\cong (K_0(B), K_1(B), {\tilde T}(B), \Sigma_B).
\eneq

\end{thm}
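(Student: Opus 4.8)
The plan is to reduce Theorem \ref{T1main} to the already-established Theorem \ref{Misothm}. Recall that $gTR(A)\le 1$ means, by definition, that there is a strictly positive element $e_A\in P(A)_+$ with $\|e_A\|=1$ such that $A_0:=\overline{e_AAe_A}\in {\cal D}$, and likewise $B_0:=\overline{e_BBe_B}\in {\cal D}$; moreover, using \cite{Lncs2} (quoted in several places above, e.g.\ in the proof of \ref{TD0=D}), we may choose $e_A$ and $e_B$ so that $A_0$ and $B_0$ have continuous scale. The first step is therefore to translate the given isomorphism of invariants $(K_0(A),K_1(A),{\tilde T}(A),\Sigma_A)\cong (K_0(B),K_1(B),{\tilde T}(B),\Sigma_B)$ into the genuine Elliott invariant ${\rm Ell}(A)\cong{\rm Ell}(B)$ in the sense of \ref{DEll} — which is exactly the quadruple above — and then to observe that since $K_0(A)={\rm ker}\rho_A$, the hypothesis $K_0(A_0)={\rm ker}\rho_{A_0}$ holds as well (using Brown's stable isomorphism theorem \cite{Br1} to identify $K_*(A_0)$ with $K_*(A)$, $\rho_{A_0}$ with $\rho_A$, and ${\tilde T}(A_0)$ with ${\tilde T}(A)$).

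Next I would pass to the continuous-scale hereditary subalgebras. The key point is that a cone homeomorphism ${\tilde T}(A)\to {\tilde T}(B)$ carrying $\Sigma_A$ to $\Sigma_B$, together with the group isomorphisms $\kappa_i:K_i(A)\to K_i(B)$, can be arranged to restrict to an affine homeomorphism $T(A_0)\to T(B_0)$ — indeed $\Sigma_{A_0}$ and $\Sigma_{B_0}$ are (after scaling by the chosen strictly positive elements) bounded, so the positive part of the trace cone where the scale functions agree gives compact base simplices, and these are the tracial state spaces of $A_0$ and $B_0$ by \ref{Dconsc} and 9.3 of \cite{GLp1}. One also needs a $\kappa\in KL(A_0,B_0)$ inducing $\kappa_0,\kappa_1$; this exists because $A_0$ satisfies the UCT (it is stably isomorphic to $A$, which does) and $K_0$, $K_1$ are the only $K$-theory data, so any pair of group isomorphisms lifts to an invertible $KL$-element. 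Finally, as noted in the proof of \ref{Misothm}, a compatible $\kappa_{cu}:U(\tilde{A_0})/CU(\tilde{A_0})\to U(\tilde{B_0})/CU(\tilde{B_0})$ always exists: using $K_0(A_0)={\rm ker}\rho_{A_0}$ the splitting \eqref{CUsplit} gives $U(\tilde{A_0})/CU(\tilde{A_0})$ as an extension of $\Aff(T(\tilde{A_0}))/\Z$ by $K_1(A_0)$, and one builds $\kappa_{cu}$ from $\kappa_T$ on the affine part and from $J_c\circ\kappa_1\circ\pi_{cu}$ on the $K_1$ part, which is automatically compatible.

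With all of this in hand, Theorem \ref{Misothm} applies directly to $A_0$ and $B_0$ (both are in ${\cal D}$, have continuous scale, satisfy the UCT, and have $K_0={\rm ker}\rho$), yielding an isomorphism $\phi_0:A_0\to B_0$. Then $\phi_0\otimes{\rm id}_{\cal K}:A_0\otimes{\cal K}\to B_0\otimes{\cal K}$ is an isomorphism, and by Brown's stable isomorphism theorem \cite{Br1} $A_0\otimes{\cal K}\cong A\otimes{\cal K}$ and $B_0\otimes{\cal K}\cong B\otimes{\cal K}$ (both $A_0$ and $B_0$ being full hereditary subalgebras of the separable $\sigma$-unital $A$, $B$ respectively); composing we get $A\otimes{\cal K}\cong B\otimes{\cal K}$. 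To descend from stable isomorphism to $A\cong B$ itself, I would track the scale: the isomorphism $A\otimes{\cal K}\cong B\otimes{\cal K}$ obtained this way carries $\Sigma_A$ to $\Sigma_B$ under the given identification of ${\tilde T}(A)$ with ${\tilde T}(B)$, so it carries a strictly positive element of $A$ (viewed in $A\otimes{\cal K}$) to an element of $B\otimes{\cal K}$ Cuntz-equivalent to a strictly positive element of $B$; since $B$ (hence $B\otimes{\cal K}$) has stable rank one by 15.5 of \cite{GLp1}, two hereditary subalgebras with Cuntz-equivalent strictly positive elements are isomorphic (by \cite{CEs}, as used in \ref{Ldert}), giving $A\cong B$. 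The converse direction is immediate, since an isomorphism induces an isomorphism of all invariants.

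The main obstacle I anticipate is the bookkeeping in the second step: making sure the cone homeomorphism carrying $\Sigma_A$ to $\Sigma_B$ genuinely descends to an \emph{affine homeomorphism} of the compact tracial simplices $T(A_0)\cong T(B_0)$ compatible with the chosen strictly positive elements $e_A$, $e_B$ — i.e.\ that one can choose $e_A$, $e_B$ so that $\Sigma_{A_0}$ and $\Sigma_{B_0}$ are constant value $1$ on their bases and the homeomorphism matches them — and similarly that the resulting data $(\kappa,\kappa_T,\kappa_{cu})$ for $A_0$, $B_0$ really is compatible in the sense of \ref{Dcomparible}, the subtle point being the clause on $D_{\tilde{A_0}}$ and $D_{\tilde{B_0}}$ on $U_0/CU$, which has to be checked against the definition of $\kappa_{cu}$ on the affine part. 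Everything else is either a direct invocation of \ref{Misothm} or a standard stable-isomorphism/continuous-scale argument already used repeatedly in the paper.
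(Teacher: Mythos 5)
Your overall strategy is the same as the paper's: pass to hereditary subalgebras with continuous scale, invoke Theorem \ref{Misothm}, and then recover $A\cong B$ from the stable isomorphism by matching scale functions, using stable rank one and the Cuntz-semigroup classification of hereditary subalgebras. The converse direction and the final descent via Brown's theorem and \cite{CEs} are fine as you describe them, and the existence of a compatible $\kappa_{cu}$ is handled exactly as in the proof of \ref{Misothm}.

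The genuine gap is the step you yourself flag as ``the main obstacle'' and then do not resolve: you propose to choose $e_B\in B$ first (only requiring that $B_0=\overline{e_BBe_B}$ have continuous scale, via \cite{Lncs2}) and then to ``arrange'' the cone homeomorphism to restrict to an affine homeomorphism $T(A_0)\to T(B_0)$. As stated this fails: for an arbitrary such $e_B$, the compact base $T(B_0)=\{s\in{\tilde T}(B): d_s(e_B)=1\}$ has no reason to coincide with $\Gamma_T(T(A_0))=\{s: d_{\Gamma_T^{-1}(s)}(e_A)=1\}$, and no further ``arranging'' of $\Gamma_T$ (which is fixed by the hypothesis, since it must also carry $\Sigma_A$ to $\Sigma_B$) can repair this. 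The missing idea is to go the other way: transport the rank function of $e_A$, i.e.\ set $g(s)=d_{\Gamma_T^{-1}(s)}(e_A)$, verify that $g$ is a bounded continuous affine function on a suitable compact base (the paper first passes to $B''=\overline{b_{00}(B\otimes{\cal K})b_{00}}$ with $b_{00}=\mathrm{diag}(b_0,\dots,b_0)$ chosen so that $d_s(b_{00})>g(s)$, which is why $e_B$ must be sought in $B\otimes{\cal K}$ rather than in $B$ itself), and then use the surjectivity of the rank map on the Cuntz semigroup — Theorem \ref{Ttrzstable}, i.e.\ 15.8 of \cite{GLp1} — to realize $g$ as $d_s(e_B)$ for an actual positive element $e_B\in(B\otimes{\cal K})_+$. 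Continuous scale of $B_0:=\overline{e_B(B\otimes{\cal K})e_B}$ then follows from strict comparison together with 9.3 of \cite{GLp1}, and only after this construction does one get $T(A_0)\cong T(B_0)$ compatibly with $\kappa_i$, so that \ref{Misothm} applies. Without this realization step your reduction does not go through, so it must be supplied rather than deferred as bookkeeping.
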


\begin{proof}
Let
\hspace{-0.12in}\beq
\Gamma:(K_0(A), K_1(A), {\tilde T}(A), \Sigma_A)\to  (K_0(B), K_1(B), {\tilde T}(B), \Sigma_B)
\eneq
be an isomorphism.
Let $\Gamma_T: {\tilde T}(A)\to {\tilde T}(B)$ be the cone  homeomorphism
such that
\beq
\Sigma_B(\Gamma_T(\tau))=\Sigma_A(\tau)\rforal \tau\in {\tilde T}(A).
\eneq
Let $e_A\in  {\rm Ped}(A)_+$ such that $\|e_A\|=1$ such that
$A_0:=\overline{e_AAe_A}$  has continuous scale {{(see
5.3 of
\cite{eglnp1}).}}
Choose $b_0\in P(B)_+\setminus \{0\}$ with $\|b_0\|=1$ such that
$B':=\overline{b_0Bb_0}$ has continuous scale.
Then $T(A_0)$  and $T(B')$ are  metrizable Choquet simpleces.
Moreover $T(A_0)$  and $T(B')$ can be identified with
\beq
T_A=\{\tau\in {\tilde T}(A): d_\tau(a_A)=1\}\andeqn \{s\in {\tilde T}(B'): d_s(b_0)=1\},
\eneq
respectively.
Let $g(t)=d_{\Gamma^{-1}(t)}(e_A)\in {\rm LAff}_f({\tilde T}(B)).$
Since $d_\tau(e_A)$ is continuous and $\Gamma^{-1}$ is a cone homeomorphism,
$g(t)$ is continuous and $g\in \Aff_+(T(B')).$  Since $\Aff_+(T(B'))$ is compact,
$g$ is also bounded. By identifying $B'\otimes {\cal K}$ with
$B\otimes {\cal K},$ we find a positive element $b_{00}=\diag(b_0,...,b_0)\in B\otimes {\cal K},$
where $b_0$ repeats $m$ times so that ${{d_s(b_{00})}}> g(s)$ on $T(B').$
Then $g$ is continuous on $T(B''),$ where $B'':=\overline{{{b_{00}}}(B\otimes {\cal K})b_{00}}.$
It follows \ref{Ttrzstable} that there is $e_B\in B''_+\subset B\otimes {\cal K}$ with $\|e_B\|=1$ such that
$d_s(e_B)=g|_{T(B'')}.$
 Since $B$ has strictly comparison, $B_0:=\overline{e_BBe_B}$ has continuous scale
(see
5.3 of \cite{eglnp1}).
Let
\beq
T_B &=& \{t\in {\tilde T}(B): d_t(e_B)=1\}.
\eneq
Then  $T(A_0)=T_B.$
It follows that $\Gamma$ induces the following isomorphism
\beq
(K_0(A_0), K_1(A_0), T(A_0)\cong (K_0(B_0), K_1(B_0), T(B_0)).
\eneq
It follows from \ref{Misothm} that there is an isomorphism  $\phi_0: A_0\to  B_0$
which induces $\Gamma$ on\\ $(K_0(A_0), K_1(A_0), T(A_0).$
By \cite{Br1}, $\phi_0$ gives an isomorphism from $A_0\otimes {\cal K}$ onto
$B_0\otimes {\cal K}.$
Let $a\in A_+$ with $\|a\|=1$ be a strictly positive element.
Then
\beq
\hat{a}(\tau)=\Sigma_A(\tau)\tforal \tau\in {\tilde T}(A).
\eneq
Let $b\in (B_0\otimes {\cal K})_+$ such that $\phi(a)=b.$
Then
\beq
d_t(b)=\lim_{n\to\infty} t\circ \phi(a^{1/n})\tforal   t\in {\tilde T}(B).
\eneq
Note $\Sigma_B(t)=d_t(b).$ Since $B$ is simple and has stable rank one, this implies  that
$B\cong \overline{b(B_0\otimes {\cal K})b}.$  The theorem follows.
\end{proof}

\begin{cor}\label{TTT2}
Let $A$ and $B$ be in ${\cal D}_0$ which are amenable and  satisfy the UCT.
Then $A\cong B$ if and only if
\beq
{\rm Ell}(A)\cong {\rm Ell}(B).
\eneq
\end{cor}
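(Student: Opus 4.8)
\textbf{Proof proposal for Corollary \ref{TTT2}.} The plan is to reduce this corollary to Theorem \ref{T1main} (equivalently Theorem \ref{Misothm} after passing to hereditary subalgebras with continuous scale). First I would recall that a $C^*$-algebra in ${\cal D}_0$ is by definition separable, non-unital, simple and stably projectionless, and that $gTR(A)\le 1$ means some full hereditary subalgebra $\overline{eAe}$ (with $e\in P(A)_+$, $\|e\|=1$) lies in ${\cal D}$; since ${\cal D}_0\subset {\cal D}$, any $A\in {\cal D}_0$ satisfies $gTR(A)\le 1$ trivially (take $e$ a strictly positive element, or a suitable $e\in P(A)_+$). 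Next I would invoke Theorem \ref{D0kerrho}: every separable $C^*$-algebra stably isomorphic to one in ${\cal D}_0$ has $K_0(A)={\rm ker}\rho_A$. Hence for $A,B\in {\cal D}_0$ amenable and satisfying the UCT, both hypotheses $gTR(A)\le 1$, $gTR(B)\le 1$ and $K_0(A)={\rm ker}\rho_A$, $K_0(B)={\rm ker}\rho_B$ of Theorem \ref{T1main} are automatically met.

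The only remaining point is to match up the invariants: the Elliott invariant ${\rm Ell}(A)$ for a stably projectionless simple $C^*$-algebra with $K_0(A)={\rm ker}\rho_A$ is, by Definition \ref{DEll}, the tuple $(K_0(A),K_1(A),{\tilde T}(A),\Sigma_A)$, and ${\rm Ell}(A)\cong {\rm Ell}(B)$ means precisely that there are group isomorphisms $\kappa_i\colon K_i(A)\to K_i(B)$ together with a cone homeomorphism $\kappa_T\colon {\tilde T}(A)\to {\tilde T}(B)$ compatible with the scale functions, $\Sigma_A(\tau)=\Sigma_B(\kappa_T(\tau))$. This is exactly the data in the hypothesis of Theorem \ref{T1main}. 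So the forward implication ($A\cong B\Rightarrow {\rm Ell}(A)\cong{\rm Ell}(B)$) is immediate since the Elliott invariant is a functor, and the reverse implication is a direct application of Theorem \ref{T1main}.

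Concretely, the steps I would carry out are: (1) observe $A,B\in {\cal D}_0\Rightarrow gTR(A),gTR(B)\le 1$; (2) apply Theorem \ref{D0kerrho} to get $K_0(A)={\rm ker}\rho_A$ and $K_0(B)={\rm ker}\rho_B$; (3) unwind Definition \ref{DEll} to see that an isomorphism ${\rm Ell}(A)\cong {\rm Ell}(B)$ supplies the $\kappa_0,\kappa_1,\kappa_T$ required by Theorem \ref{T1main}; (4) quote Theorem \ref{T1main} to conclude $A\cong B$; (5) note the converse is trivial by functoriality of ${\rm Ell}$. I expect essentially no obstacle here — the corollary is a packaging of Theorem \ref{T1main} once one has recorded the structural facts (membership in ${\cal D}_0$ forces $gTR\le 1$ and $K_0={\rm ker}\rho$), so the only thing to be careful about is that the notion of isomorphism of ${\rm Ell}$ in Definition \ref{DEll} is literally the hypothesis of Theorem \ref{T1main} (including the requirement that $\kappa_T$ and its inverse are continuous cone maps respecting the scale), which it is.
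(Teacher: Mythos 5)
Your proposal is correct and matches the paper's own (very short) proof: the paper likewise invokes Theorem \ref{D0kerrho} to get $K_0={\rm ker}\rho$ for both algebras and then applies Theorem \ref{T1main}. The extra bookkeeping you include (membership in ${\cal D}_0$ gives $gTR\le 1$, and Definition \ref{DEll} supplies exactly the data required by Theorem \ref{T1main}) is implicit in the paper and raises no issues.
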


\begin{proof}
Since $A$ and $B$ are in ${\cal D}_0,$ by \ref{D0kerrho}, $K_0(A)={\rm ker}\rho_A$
and $K_0(B)={\rm ker}\rho_B.$ Therefore  Theorem \ref{T1main} applies.
\end{proof}

\begin{cor}\label{zoselfabsorbing}
Let $A$ be a  stably projectionless simple separable amenable \CA\, which satisfies the UCT and
$gTR(A)\le 1.$  Suppose that $K_0(A)={\rm ker}\rho_A.$
Then $A\otimes \zo\cong A.$

In particular, $\zo\otimes \zo\cong \zo.$
\end{cor}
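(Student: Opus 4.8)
\textbf{Proof proposal for Corollary \ref{zoselfabsorbing}.}
The plan is to deduce the statement directly from the isomorphism theorem \ref{T1main} together with the elementary $K$-theoretic and tracial computations for tensoring with $\zo$ that are already recorded in the introduction and in Section 10. First I would observe that $A\otimes \zo$ is again a stably projectionless separable simple amenable \CA\ satisfying the UCT: amenability and separability are preserved under tensoring with the separable amenable \CA\ $\zo$; the UCT is preserved because $\zo$ satisfies the UCT and one invokes the K\"unneth formula (\cite{RS}); simplicity follows since $\zo$ is simple and nuclear; and $A\otimes \zo$ is stably projectionless because $K_i(A\otimes\zo)=K_i(A)$ as abelian groups (K\"unneth, using $K_i(\zo)=K_i(\C)$) forces $K_0(A\otimes \zo)_+=\{0\}$ when $K_0(A)_+=\{0\}$, and more directly because $A$ stably projectionless and $\zo$ unital implies $M_n(A\otimes\zo)$ has no nonzero projections.

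Next I would check the two hypotheses of \ref{T1main} for $A\otimes\zo$, namely $gTR(A\otimes\zo)\le 1$ and $K_0(A\otimes\zo)={\rm ker}\rho_{A\otimes\zo}$. For the generalized tracial rank: by hypothesis there is $e\in P(A)_+$ with $\|e\|=1$ and $\overline{eAe}\in{\cal D}$, so $\overline{eAe}$ is separable amenable; by \ref{PtensorB_T}'s method, or more precisely by the stability properties of ${\cal D}_0$ under tensoring recorded in 18.5 and 18.6 of \cite{GLp1} (cited in the proof of \ref{PtensorB_T}), $\overline{eAe}\otimes \zo\in{\cal D}_0\subset{\cal D}$, and $\overline{eAe}\otimes\zo=\overline{(e\otimes 1_{\zo})(A\otimes\zo)(e\otimes 1_{\zo})}$ is a full hereditary \SCA\ of $A\otimes\zo$ generated by a projection-like positive element, so $gTR(A\otimes\zo)\le 1$. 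Alternatively, one can note that $A$ is $\zo$-stable-in-the-limit and apply \ref{D0kerrho} after the tensoring step. For the kernel-of-$\rho$ condition: $T(A\otimes\zo)=T(A)$ since $\zo$ is monotracial, and any $x\in K_0(A\otimes\zo)$ pairs with traces through the composite with the map $A\otimes\zo\to A\otimes W$ induced by $\phi_{z,w}\colon \zo\to W$ (constructed in \ref{DWZmaps}), whose target has vanishing $K$-theory by K\"unneth since $K_i(W)=0$; hence $\rho_{A\otimes\zo}(x)=0$ for all $x$. This argument is exactly the one carried out in the proof of \ref{Mainrange} and in \ref{Ttensorscale}/\ref{D0kerrho}, so I would simply cite it.

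With both hypotheses verified, I would compute the Elliott invariant of $A\otimes\zo$ and compare it with that of $A$. As abelian groups $K_i(A\otimes\zo)\cong K_i(A)$, $i=0,1$, by the K\"unneth formula together with $K_0(\zo)=\Z$, $K_1(\zo)=\{0\}$ and the fact that $\Z$ is free, so there are no ${\rm Tor}$ or ${\rm Ext}$ contributions; the canonical isomorphisms are the maps $(\kappa_{\zo})_{*i}$ of \ref{KKzo=Z}. Moreover $\tilde T(A\otimes\zo)=\tilde T(A)$ and $\Sigma_{A\otimes\zo}=\Sigma_A$ under these identifications, again because $\zo$ has a unique tracial state with $d_{t_Z}(1_{\zo})=1$, so a strictly positive element of $A\otimes\zo$ may be taken of the form $e_A\otimes 1_{\zo}$ and $d_{\tau\otimes t_Z}(e_A\otimes 1_{\zo})=d_\tau(e_A)$. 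Hence $(K_0(A\otimes\zo),K_1(A\otimes\zo),\tilde T(A\otimes\zo),\Sigma_{A\otimes\zo})\cong(K_0(A),K_1(A),\tilde T(A),\Sigma_A)$. Applying \ref{T1main} gives $A\otimes\zo\cong A$. The special case $\zo\otimes\zo\cong\zo$ follows by taking $A=\zo$, which is legitimate since $\zo\in{\cal D}_0$ by \ref{DZ0} and \ref{PWTtrace} (so $gTR(\zo)\le 1$) and $K_0(\zo)=\Z={\rm ker}\rho_{\zo}$ by \ref{PBTKT}.

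The only genuinely delicate point is verifying $gTR(A\otimes\zo)\le 1$, i.e.\ that tensoring with $\zo$ keeps us inside the class ${\cal D}$ (equivalently ${\cal D}_0$). I expect this to be the main obstacle, but it is not a new obstacle: it is precisely what \ref{PtensorB_T} records for the model algebras via 18.5--18.6 of \cite{GLp1}, and the same absorption argument applies verbatim with $B_T$ replaced by an arbitrary hereditary \SCA\ in ${\cal D}$ generated by a full positive element of norm one, since ${\cal D}$ is defined through approximate factorization properties that are stable under tensoring with the strongly self-absorbing-like \CA\ $\zo$ (which is itself ${\cal Z}$-stable by \ref{DZ0}). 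So in the write-up I would state this as a short lemma citing \ref{PtensorB_T} and 18.5--18.6 of \cite{GLp1}, and then the corollary is a two-line application of \ref{T1main}.
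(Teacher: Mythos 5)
Your proposal is correct and follows essentially the same route as the paper's proof: establish $gTR(A\otimes\zo)\le 1$ via 18.5--18.6 of \cite{GLp1}, obtain $K_0(A\otimes\zo)={\rm ker}\rho_{A\otimes\zo}$ from the fact that all traces are W-traces (the paper cites 18.3/18.5 of \cite{GLp1}; your factorization through $A\otimes W$ with $K_i(A\otimes W)=0$ is the same mechanism), identify the Elliott invariant with that of $A$, and apply Theorem \ref{T1main}. One small slip: $\zo$ is not unital (it is itself stably projectionless), but stable projectionlessness of $A\otimes\zo$ follows anyway (e.g.\ from simplicity together with $K_0={\rm ker}\rho$, or from the hereditary subalgebra lying in ${\cal D}$), so the argument is unaffected.
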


\begin{proof}
Recall that  $K_0(\zo)=\Z={\rm ker}\rho_{\zo},$
$K_1(\zo)=\{0\}$ and $T(\zo)$ has exactly one point.
{{Let $A_0=\overline{eAe}$ for some $e\in A_+\setminus \{0\}$ such that $A_0\in {\cal D}.$
Since $K_0(A)={\rm ker}\rho_A,$ $A_0\in {\cal D}_0,$ by \ref{TD0=D}.
By {{12.5 of \cite{eglnp1} and 6.6 of \cite{eglnkk0}
(or by 18.5 and 18.6 of \cite{GLp1}),}}
$A_0\otimes \zo\in {\cal D}_0.$
Therefore $gTR(A\otimes \zo)\le 1.$
Moreover,
$K_0(A\otimes \zo)\cong K_0(A)={\rm ker}\rho_A,$
$K_1(A\otimes \zo)\cong K_1(A),$  ${\tilde T}(A\otimes \zo)={\tilde T}(A)$ and $\Sigma_A=\Sigma_{A\otimes\zo}.$
Thus \ref{T1main} applies.}}
\end{proof}

\section{A homotopy lemma}

The purpose of this section is to present  \ref{Chomotopy} which will be used in next section.
The following is known, a proof for the unital case can be found in 12.4 of  \cite{GLN}
\begin{lem}\label{FfullDelta}
Let $C$ be a  separable C*-algebra, and let $\Delta: C_+^{q, {\bf 1}}\setminus \{0\}\to (0,1)$ be an order preserving
map.  There exists  a map  $T: C_+\setminus \{0\}\to \R_+\setminus \{0\}\times \N$
satisfying the following:
For any finite subset ${\cal H} \subset C_+^{\bf 1}\setminus \{0\}$ and any $\sigma$-unital  C*-algebra $A$
 with the strict comparison of positive elements   which is quasi-compact, if $\phi: C \to A$ is a unital \morp\ satisfying
\hspace{-0.1in}\beq\label{Fullm-1}
\tau\circ \phi(h)\ge \Delta(\hat{h})\tforal h\in {\cal H} \tforal \tau\in\mathrm T(A),
\eneq
then $\phi$ is
$T$-${\cal H}$-full.
\end{lem}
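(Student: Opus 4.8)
The plan is to deduce \ref{FfullDelta} from the definition of $T$-${\cal H}$-fullness by extracting, for each $h\in {\cal H}$, a uniform lower bound on $d_\tau(\phi(h))$ from the trace inequality \eqref{Fullm-1}, and then invoking strict comparison to dominate a strictly positive element of $A$ by finitely many copies of $\phi(h)$. First I would recall what needs to be produced: for each nonzero $c\in C_+$ a pair $T(c)=(M(c),N(c))\in (\R_+\setminus\{0\})\times\N$ such that whenever $\phi$ satisfies \eqref{Fullm-1}, for every $a\in A_+^{\bf 1}$ and every $\ep>0$ there are $x_1,\dots,x_{N(c)}\in A$ with $\|x_i\|\le M(c)$ and $\|a-\sum_i x_i^*\phi(c)x_i\|<\ep$ — and crucially $T$ must depend only on $C$ and $\Delta$, not on $A$ or $\phi$.

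The key steps, in order: (1) Fix a nonzero $c\in C_+^{\bf 1}$; by scaling, reduce to the case $\|c\|=1$ and set $h=f_{1/2}(c)/\|f_{1/2}(c)\|$ or rather work with $f_{\eta}(c)$ for a suitable $\eta=\eta(c)$ so that $\hat{f_\eta(c)}$ is comparable to $\hat c$ — I would use the order-preserving hypothesis on $\Delta$ to get $\Delta(\widehat{f_\eta(c)})\ge$ some fixed $\delta(c)>0$. (2) For any $\tau\in T(A)$, from \eqref{Fullm-1} applied to $f_\eta(c)$ (which is in the finite set ${\cal H}$ — here I assume, as in the statement, that the relevant $h$'s are placed in ${\cal H}$; more precisely $T$ is defined so that it only matters for $h\in{\cal H}$, and the functional $d_\tau$ is computed via $\tau(f_\eta(c))$) we get $d_\tau(\phi(c))\ge \tau(\phi(f_\eta(c)))\ge \delta(c)$ for all $\tau\in T(A)$; since $A$ is quasi-compact (quasi-compact in the sense that $\overline{T(A)}^w$ is appropriately bounded away from $0$) this is a genuine uniform bound. (3) Let $e_A$ be a strictly positive element of $A$; then $d_\tau(e_A)\le 1$ for all $\tau$ (after normalization), so for $N(c):=\lceil 1/\delta(c)\rceil+1$ we have $d_\tau(\mathrm{diag}(\phi(c),\dots,\phi(c)))=N(c)d_\tau(\phi(c))>d_\tau(e_A)$ uniformly, and strict comparison of positive elements in $A$ yields $e_A\lesssim \mathrm{diag}(\overbrace{\phi(c),\dots,\phi(c)}^{N(c)})$ in $M_{N(c)}(A)$. (4) Unwinding the Cuntz subequivalence $e_A\lesssim\bigoplus^{N(c)}\phi(c)$ gives, for each $a\in A_+^{\bf 1}$ and $\ep>0$, elements $x_1,\dots,x_{N(c)}$ with $\|a-\sum_i x_i^*\phi(c)x_i\|<\ep$; the norm bound $\|x_i\|\le M(c)$ comes out of the standard estimates in the proof of Rørdam's lemma on Cuntz comparison (one can take $M(c)$ to depend only on $\delta(c)$ and $\|c\|$, not on $A$), so set $M(c)$ accordingly. (5) Finally define $T(c)=(M(c),N(c))$ for all nonzero $c\in C_+$ by the above recipe, and observe it depends only on $C$ and $\Delta$.

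The main obstacle I expect is bookkeeping the uniformity of $M(c)$: one must check that the Cuntz comparison $e_A\lesssim\bigoplus^{N(c)}\phi(c)$ can be realized with $x_i$ whose norms are bounded by a constant extracted purely from the gap $\delta(c)$ and independent of $A,\phi,a,\ep$. This is where I would be careful to quote the quantitative version of Rørdam's lemma (the one giving $\|x_i\|^2\lesssim \|a\|/(\text{gap})$ type bounds), rather than the bare existence statement; the quasi-compactness of $A$ and the normalization of the strictly positive element are what make the gap uniform. Everything else — the reduction via $f_\eta$, the use of the order-preserving property of $\Delta$, and the final unpacking into the fullness inequality — is routine and parallels the unital argument in 12.4 of \cite{GLN}, with $1_A$ replaced throughout by a strictly positive element and tracial states replaced by $d_\tau$ where appropriate.
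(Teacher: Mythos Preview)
The paper does not give its own proof of this lemma; it simply cites 12.4 of \cite{GLN} for the unital case. Your sketch is exactly that standard argument, and it is correct.

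Two small clarifications. First, the detour in step (1) through $f_\eta(c)$ and the order-preserving hypothesis on $\Delta$ is unnecessary: $T$-${\cal H}$-fullness only needs to be verified for $h\in{\cal H}$, and for those $h$ the hypothesis gives $\tau(\phi(h))\ge\Delta(\hat h)$ directly. Just define $T(c)$ in terms of $\Delta(\widehat{c/\|c\|})$ and $\|c\|$.

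Second, your ``main obstacle'' --- the uniform norm bound $M(h)$ --- is handled cleanly without a quantitative R\o rdam lemma. Set $\eta=\Delta(\hat h)/2$; from $t\le \eta+\|h\|f_\eta(t)$ one gets $\tau(f_\eta(\phi(h)))\ge \Delta(\hat h)/(2\|h\|)$, hence $d_\tau(f_\eta(\phi(h)))\ge \Delta(\hat h)/(2\|h\|)$ uniformly. With $N(h)>2\|h\|/\Delta(\hat h)$, strict comparison gives $1_A\lesssim \bigoplus^{N(h)}f_\eta(\phi(h))$; since $1_A$ is a projection there is $v=(v_1,\dots,v_{N(h)})$ with $v^*v=1_A$, $\|v\|=1$, and $vv^*$ in the hereditary subalgebra generated by $\bigoplus f_\eta(\phi(h))$. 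Because $f_{\eta/2}$ acts as the identity on that hereditary subalgebra, setting $x_i=g(\phi(h))v_i$ with $g(t)=(f_{\eta/2}(t)/t)^{1/2}$ yields $\sum x_i^*\phi(h)x_i=1_A$ exactly, with $\|x_i\|\le \|g\|_\infty\le (4/\eta)^{1/2}$. Thus $M(h)=(8/\Delta(\hat h))^{1/2}$ works and depends only on $\Delta$ and $h$. For arbitrary $b\in A_+^{\bf 1}$ replace $x_i$ by $x_ib^{1/2}$.
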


 {{Recall the class of sub-homogeneous $C^*$-algebras $\overline{D_r}$ is defined in 4.8 of \cite{GLN}.
The following is a non-unital version
of 8.4 of \cite{GLN} (see 5.2.7 of \cite{Lncbms}).

\begin{thm}\label{UniqN1}
Let $A_0$ be a non-unital \CA\, such that $A:={\tilde{A_0}}\in  
{{\overline{D_r}}}$
 with finitely generated $K_i(A)$ ($i=0,1$).
Let ${\cal F}\subset A$ be a finite subset, let
$\ep>0$ be a positive number and let $\Delta: A_+^{q, {\bf 1}}\setminus \{0\}\to (0,1)$  be an order preserving map. There exists  a finite subset ${\cal H}_1\subset A_+^{\bf 1}\setminus \{0\},$
$\gamma_1>0,$ $\gamma_2>0,$ $\dt>0,$ a finite subset
${\cal G}\subset A$ and a finite subset ${\cal P}\subset \underline{K}(A),$ a finite subset ${\cal H}_2\subset A$, a finite subset ${\cal U}\subset J_c(K_1(A))$ {\rm (see \eqref{CUsplit}  in \ref{DJc} for the definition of $J_c$)}
for which $[{\cal U}]\subset {\cal P}$
satisfying the following:
For any unital ${\cal G}$-$\dt$-multiplicative \morp s $\phi, \psi: A_0\to C$
for some $C\in {\cal C}_0$ such that
\begin{equation}\label{Uni1-1}
[\phi^{\sim}]|_{\cal P}=[\psi^{\sim}]|_{\cal P},
\end{equation}
\begin{equation}\label{Uni1-2}
\tau(\phi^{\sim}(a))\ge \Delta(\hat{a}),\,\,\, \tau(\psi^{\sim}(a))\ge \Delta(\hat{a}),\quad \textrm{for all $\tau\in T(C) {{\tand}} a\in {\cal H}_1$},
\end{equation}
\begin{equation}\label{Uni1-3}
|\tau\circ \phi^{\sim}(a)-\tau\circ \psi^{\sim}(a)|<\gamma_1 \tforal a\in {\cal H}_2,\tand
\end{equation}
\begin{equation}\label{Uni1-3+1}
{\rm dist}((\phi^{\sim})^{\dag}(u), (\psi^{\sim})^{\dag}(u))<\gamma_2 \tforal u\in {\cal U},
\end{equation}
there exists a unitary $W\in {\tilde C}$ such that
\begin{equation}\label{Uni1-4}
\|W(\phi^{\sim}(f))W^*-(\psi^{\sim}(f))\|<\ep,\tforal f\in {\cal F},
\end{equation}
where $\phi^{\sim}, \psi^{\sim}$ are the unital extension of $\phi$ and $\psi$
from $A$ to ${\tilde C}.$
\end{thm}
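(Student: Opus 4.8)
\textbf{Proof proposal for Theorem \ref{UniqN1}.}

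The plan is to reduce this ``relative'' uniqueness statement (for maps into $C\in {\cal C}_0$, whose unitizations live in ${\cal C}$) to a uniqueness statement for the unital maps $\phi^{\sim},\psi^{\sim}\colon A\to {\tilde C}$ between the unital \CA s $A\in \overline{{\cal D}}_s$ and ${\tilde C}\in {\cal C}$. First I would observe that the data in \eqref{Uni1-1}--\eqref{Uni1-3+1} is precisely the hypothesis-data one feeds into the general unital uniqueness theorem for maps from a \CA\ in $\overline{{\cal D}}_s$ into the class ${\cal C}$: this is where the assumption of \emph{finitely generated} $K_i(A)$ enters, since it lets us replace the $KL$-level condition by a finite $\underline{K}$-condition on ${\cal P}$, and it guarantees that the splitting $J_c$ of \eqref{CUsplit} is well defined on a finitely generated subgroup. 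So the first block of the proof would set up: a finite ${\cal P}\subset \underline{K}(A)$ generating the relevant subgroup together with all its torsion information up to a suitable $m$, a finite ${\cal U}\subset J_c(K_1(A))$ with $[{\cal U}]\subset {\cal P}$, and then invoke the unital version to produce ${\cal H}_1,\gamma_1,\gamma_2,\dt,{\cal G},{\cal H}_2$ with the property that unital ${\cal G}$-$\dt$-multiplicative maps $A\to{\tilde C}$ satisfying the four compatibility conditions are approximately unitarily equivalent on ${\cal F}$ within $\ep$.

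The second step handles the passage between $\phi$ and its unitization. Since $\phi\colon A_0\to C$ and $C$ is non-unital in ${\cal C}_0$, the unital extension $\phi^{\sim}\colon A={\tilde A_0}\to {\tilde C}$ is automatically unital and, if ${\cal G}$ is large enough and $\dt$ small enough, ${\cal G}$-$\dt$-multiplicativity of $\phi$ upgrades to ${\cal G}'$-$\dt'$-multiplicativity of $\phi^{\sim}$ on the unitization (adjoining the unit costs nothing in multiplicativity). The traces and the $K$-theory transport cleanly: $T({\tilde C})=T(C)\cup\{t_0\}$ where $t_0$ is the trace through $\pi_C\colon{\tilde C}\to\C$, and on $t_0$ both $\phi^{\sim}$ and $\psi^{\sim}$ act as $\pi_C\circ\phi^{\sim}=\pi_C\circ\psi^{\sim}=\pi_A$, so \eqref{Uni1-3} on $T(C)$ plus this forced agreement on $t_0$ gives the trace condition on all of $T({\tilde C})$; similarly \eqref{Uni1-2} must be supplemented by a lower bound at $t_0$, which is automatic because $\pi_A(h)$ is a fixed positive element of $\C$, so enlarging ${\cal H}_1$ appropriately absorbs this. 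The fullness needed by the unital theorem comes from \ref{FfullDelta}: the trace lower bound \eqref{Uni1-2} against $\Delta$ forces $\phi^{\sim}$ (and $\psi^{\sim}$) to be $T$-${\cal H}_1$-full in ${\tilde C}$, using that ${\tilde C}\in{\cal C}$ has strict comparison and $T({\tilde C})$ is compact (quasi-compact), which is exactly the input \ref{FfullDelta} requires.

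Finally, with $\phi^{\sim},\psi^{\sim}$ verified to satisfy all hypotheses of the unital uniqueness theorem, we obtain a unitary $W\in{\tilde C}$ with $\|W\phi^{\sim}(f)W^*-\psi^{\sim}(f)\|<\ep$ for all $f\in{\cal F}$, which is \eqref{Uni1-4}, and the theorem follows. I would track the quantifiers in the standard order: fix ${\cal F},\ep,\Delta$; produce ${\cal P}$ and ${\cal U}$ from the finitely-generated $K$-theory; feed these plus ${\cal F},\ep,\Delta$ into the unital theorem to extract ${\cal H}_1,\gamma_1,\gamma_2,{\cal H}_2$ and a first $(\dt_0,{\cal G}_0)$; then shrink $\dt$ and enlarge ${\cal G}$ once more so that unitization and the estimates \eqref{Thmtp-0}-type bookkeeping on $\pi_C$ go through. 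The main obstacle I anticipate is not any single hard estimate but the careful verification that the \emph{relative} hypotheses \eqref{Uni1-1}--\eqref{Uni1-3+1}, stated for $\phi,\psi$ into $C$, really do imply the \emph{unital} hypotheses for $\phi^{\sim},\psi^{\sim}$ into ${\tilde C}$ --- in particular controlling behaviour at the extra trace $t_0$ and on the extra $K$-theory of ${\tilde C}$ versus $C$, and checking that $[\phi^{\sim}]|_{\cal P}=[\psi^{\sim}]|_{\cal P}$ on all of $\underline{K}({\tilde C})$ follows from \eqref{Uni1-1} on $\underline{K}(C)$ together with the common value of $[\pi_C\circ\phi^{\sim}]=[\pi_A]$; this is routine six-term-exact-sequence bookkeeping but must be done honestly, and it is the place where finite generation of $K_i(A)$ is genuinely used.
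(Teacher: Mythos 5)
Your proposal has a genuine gap at its core step. You propose to ``invoke the general unital uniqueness theorem for maps from a \CA\ in $\overline{{\cal D}}_s$ into the class ${\cal C}$'' --- but no such off-the-shelf theorem is available, and in fact that statement \emph{is} (up to the trivial unitization bookkeeping) exactly the theorem being proved: the hypotheses \eqref{Uni1-1}--\eqref{Uni1-3+1} are already formulated for the unital extensions $\phi^{\sim},\psi^{\sim}\colon A\to {\tilde C}$ and the trace condition already runs over $T(C)$, so the passage $C\rightsquigarrow{\tilde C}$, the extra trace $t_0=\mathrm{tr}\circ\pi_C$, and the six-term bookkeeping that you flag as the ``main obstacle'' are not where the difficulty lies. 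Your argument therefore reduces the theorem to itself; nothing in it explains how the unitary $W\in{\tilde C}$ is actually produced.

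The paper's proof supplies precisely the content your reduction skips, and it uses the mapping-torus structure of $C=A(F_1,F_2,h_0,h_1)$ in an essential way: one chooses a fine partition $0=t_0<\cdots<t_n=1$, applies a uniqueness theorem for maps into the finite-dimensional fibers (5.9 of \cite{GLN}) at each $t_i$ and at the endpoint algebra $F_1$ to get unitaries $w_i\in F_2$, $w_e\in F_1$; the unitaries $w_i^*w_{i+1}$ then carry integer-valued obstructions $\lambda_{i,j}$ (traces of logarithms of commutator-type unitaries, computed via the Exel formula), which are controlled by the hypotheses \eqref{Uni1-3}--\eqref{Uni1-3+1}; these obstructions are cancelled by unitaries $z_i$ with prescribed Bott data produced by 7.4 of \cite{GLN} (this is where the $\Delta$-fullness \eqref{Uni1-2} and the finite generation of $K_*(A)$ genuinely enter, not merely to make ${\cal P}$ finite); finally a homotopy lemma (6.7 of \cite{GLN}) connects $z_iw_i^*w_{i+1}z_{i+1}^*$ to $1$ through unitaries almost commuting with $\pi_t\circ\phi^{\sim}$, and the resulting path $W(t)=w_iz_i^*U_i(t)$ is arranged to satisfy $W(0)=h_0^{\sim}(w_ez_e^*)$ and $W(1)=h_1^{\sim}(w_ez_e^*)$, which is exactly what makes $W$ an element of ${\tilde C}$ rather than merely of $C([0,1],F_2)$. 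Any correct proof has to carry out this fiberwise-plus-gluing construction (or an equivalent), and your proposal does not engage with it.
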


\begin{proof}
\Wlog, we may assume that $A$ is  infinite dimensional.


Since $K_*(A)$ is finitely generated,  there is $n_0$ such that $\kappa\in\mathrm{Hom}_\Lambda(\underline{K}(A), \underline{K}(C))$ is determined by its restriction to 
 {{$K_*(A, \Z/n\Z)$,}} $n=0,..., n_0$.



Let ${\cal H}_1'\subset A_+\setminus \{0\}$ (in place of ${\cal H}_1$),
$\dt_1>0$ (in place of $\dt$), ${\cal G}_1\subset A$ (in place of ${\cal G}$) be a finite subset and let ${\cal P}_0\subset \underline{K}(A)$ (in place of ${\cal P}$) be a finite subset required by  4.4.5 of \cite{Lncbms}
(6.7 of \cite{GLN})
for $\ep/32$ (in place of $\ep$), ${\cal F}$ and   {{$\Delta.$}}
We may assume that $\dt_1<\ep/32$ and $(2\dt_1, {\cal G}_1)$
is a $KK$-pair (see the end of
2.12 of \cite{GLN}).

Moreover, we may assume that $\dt_1$ is sufficiently small  that
if $\|uv-vu\|<3\dt_1,$ then the Exel formula
$$
\tau({\rm bott}_1(u,v))={1\over{2\pi\sqrt{-1}}}(\tau(\log(u^*vuv^*))
$$
holds for any pair of unitaries $u$ and $v$ in any unital \CA\, $C$ with tracial rank zero and any $\tau\in T(C)$
(see Theorem 3.6 of \cite{Lnamj}). Moreover
if $\|v_1-v_2\|<3\dt_1,$ then
$$
{\rm bott}_1(u,v_1)={\rm bott}_1(u,v_2).
$$

Let  $g_1, g_2,...,g_{k(A)}\in U(M_{m(A)}(A))$ $(m(A)\ge 1$ is an integer) be a finite subset such that
$\{\bar{g_1}, \bar{g_2},..., \bar{g}_{k(A)}\}\subset J_c(K_1(A))$ and such that $\{[g_1], [g_2],...,[g_{k(A)}]\}$ forms  a set of generators for $K_1(A).$
Let ${\cal U}=\{\bar{g_1}, \bar{g_2},..., \bar{g}_{k(A)}\}\subset J_c(K_1(A))$ be a finite subset.

Let ${\cal U}_0\subset A$ be a finite subset
such that
$$
\{g_1, g_2,...,g_{k(A)}\}\subseteq\{(a_{i,j}): a_{i,j}\in {\cal U}_0\}.
$$

Let $\dt_u=\min\{1/256m(A)^2, \dt_1/16m(A)^2\},$
${\cal G}_u={\cal F}\cup{\cal G}_1\cup {\cal U}_0$ and
let ${\cal P}_u={\cal P}_0
{{\cup \{[g_1],[g_2],...,[g_{k(A)}]\}}}.$


Let $\dt_2>0$ (in place of $\dt$), ${\cal G}_2\subset A$ (in place of ${\cal G}$), ${\cal H}_2'\subset A_+\setminus \{0\}$ (in place of ${\cal H}$), $N_1\ge 1$ (in place of $N$) be the finite subsets and the constants as required by {{7.3}} 
of \cite{GLN}
for
$\dt_u$ (in place of $\ep$), ${\cal G}_u$ (in place of ${\cal F}$), ${\cal P}_u$ (in place of ${\cal P}$)  and
$\Delta$ and with $\bar{g}_j$ (in place of $g_j$), $j=1,2,...,k(A)$ (with $k(A)=r$).

Let $\dt_3>0$  and let ${\cal G}_3\subset A\otimes C(\T)$
be a finite subset satisfying the following:
For any ${\cal G}_3$-$\dt_3$-multiplicative \morp\, $L': A\otimes C(\T)\to C'$ (for any unital \CA\, $C'$ with $T(C')\not=\emptyset$),
 \beq\label{Uni-10}
 |\tau([L']({\boldsymbol{\bt}}(\bar{g}_j))|<1/8N_1,\,\,\,j=1,2,...,k(A).
\eneq
Without loss of generality, we may assume
that
$$
{\cal G}_3=\{g\otimes f: g\in {\cal G}_3'\andeqn f\in \{1,z, z^*\}\},
$$
where
${\cal G}_3'\subset A$ is a finite subset  {{containing $1_A$}} (by  choosing a smaller $\dt_3$ and large ${\cal G}_3'$).

  Let $\ep_1'=\min\{d/27N_1m(A)^2, \dt_u/2,  \dt_2/2m(A)^2, \dt_3/2m(A)^2\}$ and let ${\bar \ep}_1>0$ (in place of $\dt$) and ${\cal G}_4\subset A$ (in place of ${\cal G}$) be a finite subset as
required by  6.4 of \cite{GLN}
 for $\ep_1'$ (in place of $\ep$) and ${\cal G}_u\cup {\cal G}_3'.$
Put
$
\ep_1=\min\{\ep_1', \ep_1'',{\bar \ep_1}\}.
$
Let ${\cal G}_5={\cal G}_u\cup {\cal G}_3'\cup {\cal G}_4.$




Let $\mathcal H_3'\subseteq A_+^{\bf 1}\setminus \{0\}$ (in place of $\mathcal H_1$), $\dt_4>0$ (in place of $\dt$),  ${\cal G}_6\subset A$  (in place of ${\cal G}$), ${\cal H}_4'\subset A_{s.a.}$ (in place of $\mathcal H_2$), ${\cal P}_1\subset \underline{K}(A)$ (in place of ${\cal P}$) and $\sigma>0$
be the finite subsets and constants  as required by  {{Theorem 5.8}} 
of \cite{GLN}
with respect to $\ep_1/{{16}}$ (in place $\ep$) and ${\cal G}_5$ (in place of ${\cal F}$) and $\Delta$.

Choose $N_2\ge N_1$ such that $(k(A)+1)/N_2<1/8N_1.$
 Choose ${\cal H}_5'\subset A_+^{\bf 1} \setminus \{0\}$ and
 $\dt_5>0$ and a finite subset ${\cal G}_7\subset A$ such that, for any $M_m$ and unital ${\cal G}_7$-$\dt_5$-multiplicative \morp\, $L': A\to M_m,$
 if
 ${\rm tr}\circ L'(h)>0\tforal h\in {\cal H}_5',$
 then
 $m\ge 16N_2.$

Put $\dt=\min\{\ep_1/16,  \dt_4/4m(A)^2, \dt_5/4m(A)^2\},$
${\cal G}={\cal G}_5\cup {\cal G}_6\cup {\cal G}_7$, and  ${\cal P}={\cal P}_u\cup {\cal P}_1.$
%
Put
$$
{\cal H}_1={\cal H}_1'\cup {\cal H}_2'\cup {\cal H}_3'\cup {\cal H}_4'\cup {\cal H}_{{5}}'
$$
and let ${\cal H}_2={\cal H}_4'.$
Let $\gamma_1=\sigma$ and let
$0<\gamma_2<\min\{d/16N_2m(A)^2, \dt_u/9m(A)^2, 1/256m(A)^2\}.$

Now suppose that $C\in {\cal C}_0$ and $\phi, \psi: A\to C$ are two unital
${\cal G}$-$\dt$-multiplicative \morp s satisfying the condition of the theorem for the given $\Delta,$ ${\cal H}_1,$ $\dt,$ ${\cal G},$ ${\cal P},$ ${\cal H}_2,$ $\gamma_1,$ $\gamma_2$ and ${\cal U}.$

We write $C=A(F_1, F_2, h_0, h_1),$
$F_1=M_{m_1}\oplus M_{m_2}\oplus \cdots \oplus M_{m_{F(1)}}$ and
$F_2=M_{n_1}\oplus M_{n_2}\oplus \cdots \oplus M_{n_{F(2)}}.$ By the choice of ${\cal H}_5',$ one has that
\beq\label{Uni11+1}
n_j\ge 16 N_2\andeqn m_s\ge 16 N_2,\,\,\, 1\le j \le  F(2), \,\,\, 1\le s\le F(1).
\eneq
Let $q_{F_1,0}=h_0(1_{F_1})$ and $q_{F_1, 1}=h_1(1_{F_1}).$
Define $h_0^{\sim}: {{F_1^\sim:=}}F_1\oplus \C\to F_2$ by
$h_0^{\sim}((a,\lambda))=h_0(a)\oplus \lambda(1-q_{F_1,0})$
and $h_1^{\sim}((a, \lambda))=h_1(a)\oplus \lambda(1-q_{F_1, 1}).$
Then ${\tilde C}=A(F_1\oplus \C, F_2, h_0^{\sim}, h_1^{\sim}).$
Put $\pi^{C\sim}: {\tilde C}\to \C.$
{{Note that $\pi^{C\sim}\circ \phi(a)=0=\pi^{C\sim}\circ \psi(a)\rforal a\in A_0\subset A$, and that $\pi^{C\sim}\circ \phi(1_{A^\sim})=1_\C=\pi^{C\sim}\circ \psi(1_{A^\sim})$. Hence}}
\vspace{-0.1in}\beq\label{December-19-2019}
\pi^{C\sim}\circ \phi(a)=\pi^{C\sim}\circ \psi(a)\rforal a\in A.
\eneq
Let
$0=t_0<t_1<\cdots <t_n=1$
be a partition of $[0,1]$ so that
\beq\label{Uni-11}
\|\pi_{t}\circ \phi^{\sim}(g)-\pi_{t'}\circ \phi^{\sim}(g)\|<\ep_1/16\andeqn
\|\pi_{t}\circ \psi^{\sim}(g)-\pi_{t'}\circ \psi^{\sim}(g)\|<\ep_1/16
\eneq
for all $g\in {\cal G},$ provided $t, t'\in [t_{i-1}, t_i],$ $i=1,2,...,n.$

Applying Theorem
{{5.8}} of \cite{GLN}, one obtains a unitary
$w_i\in F_2$
if $0<i<n,$ $w_0\in h_0(F_1),$
such that
\beq\label{Uni-12}
\|w_i\pi_{t_i}\circ {{\phi^\sim}}(g)w_i^*-\pi_{t_i}\circ {{\psi^\sim}}(g)\|<\ep_1/16\tforal g\in {\cal G}_5,
\eneq
Also there is $w_e'\in F_1$
such that
\beq\label{Uni-2n-12}
\|(w_e')^*\pi_e\circ \phi(g)w_e'-\pi_e\circ \psi(g)\|<\ep_1/16 \rforal g\in {\cal G}_5.
\eneq
Let $\pi^{F_1^{\sim'}}: h_0^{\sim}(F_1^{\sim})\to \C$
and let $\pi': h_0(F_1^{\sim})\to h_0(F_1)$ be the quotient maps.
Put $w_0=h_0(w_e')\oplus (1_{F_2}-q_{F_1,0}),$  $w_n=h_1(w_e')\oplus (1_{F_2}-q_{F_1, 1}),$
$w_0'=h_0(w_e')$ and $w_n'=h_1(w_e').$
Then
\beq
\|w_i^*
{{\pi_{t_i}}}\circ \phi^{\sim}(g)w_i-
{{\pi_{t_i}}}\circ \psi^{\sim}(g)\|<\ep_1/16\rforal g\in {\cal G}_5{{,}}
\eneq
$i=0$ and $i=n.$ {{Denote  $w_e=w'_e\oplus 1_{\C}\in F_1\oplus \C.$ Then $w_0={h}_0^{\sim}(w_e),$
$w_n={h}_1^{\sim}(w_e)$.}}

By \eqref{Uni1-3+1}, there is a unitary  ${{\omega'_j}}\in M_{m(A)}({\tilde C})$
such that $\omega_j\in CU(M_{m(A)}({\tilde C}))$ and
\vspace{-0.1in}\beq\label{Uni-13}
\|\lceil (\phi^{\sim}\otimes {\rm id}_{M_{m(A)}}(g_j^*)\rceil \lceil (\psi^{\sim}\otimes {\rm id}_{M_{m(A)}})(g_j)\rceil  -{{\omega'_j}}\|<\gamma_2,\,\,\,j=1,2,...,k(A).
\eneq
{{By \eqref{December-19-2019} and \eqref{Uni-13}, we have $\|(\pi^{C\sim}\otimes \id_{m(A)})(\omega'_j)-1\|<\gm_2.$ Set  $\omega_j=\omega'_j\cdot (\pi^{C\sim}\otimes \id_{m(A)})(\omega'_j)^*$ (viewing $(\pi^{C\sim}\otimes \id_{m(A)})(\omega'_j)\in M_{m(A)}(\C)\subset M_{m(A)}(\tilde{C})$). Consequently, we have
\beq\label{Uni-13+Dec-19}
\|\lceil \phi^{\sim}\otimes {\rm id}_{M_{m(A)}}(g_j^*)\rceil \lceil (\psi^{\sim}\otimes {\rm id}_{M_{m(A)}})(g_j)\rceil  -{{\omega_j}}\|<2\gamma_2,\,\,\,j=1,2,...,k(A),
\eneq
with an extra condition $\pi^{C\sim} \id_{m(A)} (\omega_j)=1_{m(A)}(\C)$.}}
{{As mentioned in \ref{DtensorMr}, we will use $\pi^{C\sim}$ for  $\pi^{C\sim}\otimes \id_{m(A)}$.}}
({\it {{Note}} that we now have $w_i$ as well as $\omega_i$ in the proof}.)  Write
$$
\omega_j=\prod_{l=1}^{e(j)}\exp(\sqrt{-1}a_j^{(l)})
$$
for some {{self adjoint}} element $a_j^{(l)}\in M_{m(A)}({\tilde C}),$
$l=1,2,...,e(j),$ $j=1,2,...,k(A).$
{{In particular, one can choose $a_j^{(l)}$ such that $\pi^{C\sim} (a_j^{(l)})=0\in M_{m(A)}(\C)$ (see \ref{Thelemma}).}}
Write
\beq\nonumber
a_j^{(l)}=(a_j^{(l,1)}, a_j^{(l,2)},...,a_j^{(l,n_{F(2)})})\andeqn \omega_j=(\omega_{j,1},\omega_{j,2},...,\omega_{j,F(2)})
\eneq
in $C([0,1], F_2)=C([0,1],M_{n_1})\oplus \cdots \oplus C([0,1],M_{n_{{F(2)}}}),$
where $\omega_{j,s}=\prod_{l=1}^{e(j)}\exp(\sqrt{-1}a_j^{(l,s)}),$ $s=1,2,...,F(2).$

Then
$$
\sum_{l=1}^{e(j)}{n_s(t_s\otimes {\rm Tr}_{m(A)})(a_j^{(l,s)}(t))\over{2\pi}}\in \Z,\quad t\in (0,1),
$$
where $t_s$ is the normalized trace on $M_{n_s},$
$s=1,2,...,F(2).$
In particular,
\beq\label{Uni-15}
\sum_{l=1}^{e(j)}n_s(t_s\otimes {\rm Tr}_{m(A)})(a_j^{(l,s)}(t))=\sum_{l=1}^{e(j)}n_s(t_s\otimes {\rm Tr}_{m(A)})(a_j^{(l,s)}(t'))
\rforal t, t'\in (0,1).
\eneq
We also have
\vspace{-0.1in}\beq
(1/2\pi)\sum_{l=1}^{e(j)} m_s(t_{es}\otimes {\rm Tr}_{m(A)})(\pi_e
{{(a_j^{(l)})}}\in \Z,
\eneq
where $t_{es}$ is the tracial state on $M_{m_s}.$ {{Note, for $s=F(1)+1$, one has $\pi_{e,s}(a_j^{(l)})=\pi^{C\sim}(a_j^{(l)})=0$.}}

Let $W_i=w_i\otimes {\rm id}_{M_{m(A)}},$ $i=0,1,....,n$
and $W_e=w_e\otimes {\rm id}_{M_{m(A)}(F_1)}.$
Then {{it follows from (\ref{Uni-12}) and (\ref{Uni-13+Dec-19}) that}}
\begin{eqnarray}\label{Ui-16}
\hspace{-0.6in}&&\|\pi_{{t_i}}(\lceil \phi^{\sim}\otimes {\rm id}_{M_{m(A)}})(g_j^*)\rceil) W_i(\pi_{{t_i}}(\lceil \phi^{\sim}\otimes {\rm id}_{M_{m(A)}})(g_j)\rceil)W_i^*-\omega_j(t_i)\|\\
&&\hspace{0.2in}
<3m(A)^2\ep_1+2\gamma_2<1/32.
\end{eqnarray}
We also have (with $\phi_e=\pi_e\circ \phi^{\sim}$)
\beq\label{Uni-17}
\hspace{-0.4in}\|\lceil (\phi_e\otimes {\rm id}_{M_{m(A)}})(g_j^*)\rceil W_e(\lceil \phi_e\otimes {\rm id}_{M_{m(A)}})(g_j)\rceil)W_e^*-\pi_e(\omega_j)\|<3m(A)^2\ep_1+2\gamma_2<1/32.
\eneq

It follows from \eqref{Ui-16} that there exists selfadjoint elements $b_{i,j}\in M_{m(A)}(F_2)$ such that
\beq\label{Uni-18}
\hspace{-0.2in}\exp(\sqrt{-1}b_{i,j})=\omega_j(t_i)^*(\pi_i(\lceil \phi^{\sim}\otimes {\rm id}_{M_{m(A)}})(g_j^*)\rceil)W_i(\pi_i(\lceil \phi^{\sim}\otimes {\rm id}_{M_{m(A)}})(g_j)\rceil) W_i^*,
\eneq
and $b_{e,j}\in M_{m(A)}({{F_1\oplus \C}})$ such that
\beq\label{Uni-19}
\hspace{-0.2in}\exp(\sqrt{-1}b_{e,j})=\pi_e(\omega_j)^*(\pi_e(\lceil \phi^{\sim}\otimes {\rm id}_{M_{m(A)}})(g_j^*)\rceil)W_e(\pi_e(\lceil \phi^{\sim}\otimes {\rm id}_{M_{m(A)}})(g_j)\rceil) W_e^*,
\eneq
and
\vspace{-0.12in}\beq\label{Uni-20}
\|b_{i,j}\|<{{2\arcsin (3m(A)^2\ep_1/2+\gamma_2)}},\,\,\,j=1,2,...,k(A),\,i=0,1,...,n,e.
\eneq
Write
\beq\nonumber
b_{i,j}=(b_{i,j}^{(1)},b_{i,j}^{(2)},...,b_{i,j}^{F(2)})\in {{M_{m(A)}(F_2)}}\andeqn
b_{e,j}=(b_{e,j}^{(1)}, b_{e,j}^{(2)},...,b_{e,j}^{(F(1))}, {{b_{e,j}^{F(1)+1}}})\in {{M_{m(A)}(F_1\oplus \C)}}.
\eneq
{{From $\pi^{C\sim}(\omega_j)=1$ and definition of $W_e$ and $w_e$, we know that $b_{e,j}^{F(1)+1}=0$.}} We have that
\beq\label{Uni-20+}
{{h_0^{\sim}}}(b_{e,j})=b_{0,j}\andeqn {{h_1^{\sim}}}(b_{e,j})=b_{n,j}.
\eneq
Note that
\beq\label{Uni-21}
(\pi_{{t_i}}(\lceil \phi^{\sim}\otimes {\rm id}_{M_{m(A)}}(g_j^*)\rceil))W_i(\pi_{{t_i}}(\lceil \phi^{\sim}\otimes {\rm id}_{M_{m(A)}})(g_j)\rceil) W_i^*
=\pi_{{t_i}}(\omega_j)\exp(\sqrt{-1}b_{i,j}),
\eneq
$j=1,2,...,k(A)$ and $i=0,1,...,n,e.$
Then,
\beq\label{Uni-22}
{n_s\over{2\pi}}(t_s\otimes {\rm Tr}_{M_{m(A)}})(b_{i,j}^{(s)})\in \Z,
\eneq
where $t_s$ is the normalized trace on $M_{n_s},$
$s=1,2,...,F(2),$ $j=1,2,...,k(A),$ and $i=0,1,...,n.$
We also have
\beq\label{Uni-23}
{m_s\over{2\pi}}(t_s\otimes {\rm Tr}_{M_{m(A)}})(b_{e,j}^{(s)})\in \Z,
\eneq
where $t_s$ is the normalized trace on $M_{m_s},$ $s=1,2,...,F(1),$
$j=1,2,...,k(A).$
Put
$$
\lambda_{i,j}^{(s)}={n_s\over{2\pi}}(t_s\otimes {\rm Tr}_{M_{m(A)}})(b_{i,j}^{(s)})\in \Z,
$$
where $t_s$ is the normalized trace on $M_{n_s},$
$s=1,2,...,n,$ $j=1,2,...,k(A)$ and  $i=0,1,2,...,n.$

Put
$$
\lambda_{e,j}^{(s)}={m_s\over{2\pi}}(t_s\otimes {\rm Tr}_{M_{m(A)}})(b_{e,j}^{(s)})\in \Z,
$$
where $t_s$ is the normalized trace on $M_{m_s},$
$s=1,2,...,F(1)$ and $j=1,2,...,k(A).$
Denote
\beq\nonumber
\lambda_{i,j}=(\lambda_{i,j}^{(1)}, \lambda_{i,j}^{(2)},...,\lambda_{i,j}^{(F(2)})\in \Z^{F(2)},\andeqn
\lambda_{e,j}=(\lambda_{e,j}^{(1)},\lambda_{e,j}^{(2)},...,{{\lambda_{e,j}^{F(1)},0}})\in \Z^{{F(1)+1}}.
\eneq
 We have, by (\ref{Uni-20}), for  $j=1,2,...,k(A)$ and  $i=0,1,2,...,n,$
\beq\label{Uni-24}
|{\lambda_{i,j}^{(s)}\over{n_s}}|<1/4N_1, \,\,\,s=1,2,...,F(2),\,\,\,
|{\lambda_{e,j}^{(s)}\over{m_s}}|<1/4N_1,\,\,\,s=1,2,...,F(1),
\eneq
Define $\af_i^{(0,1)}: K_1(A)\to \Z^{F(2)}$ by mapping $[g_j]$ to $\lambda_{i,j},$ $j=1,2,...,k(A)$, $i=0,1,2,...,n,$ and
define
$\af_e^{(0,1)}: K_1(A)\to \Z^{F(1)}\oplus \Z$ by
mapping $[g_j]$ to $(\lambda_{e,j},0),$ $j=1,2,...,k(A).$
We write $K_0(A\otimes C(\T))=K_0(A)\oplus {\boldsymbol{\bt}}(K_1(A)))$
(see
2.10 of \cite{Lnmemhp}
for the definition
of ${\boldsymbol{\bt}}$).
Define $\af_i: K_*(A\otimes C(\T))\to K_*(F_2)$ as follows:
On $K_0(A\otimes C(\T)),$ define
\beq\label{Uni-25}
\af_i|_{K_0(A)}=[\pi_i\circ \phi]|_{K_0(A)},\,\,\,
\af_i|_{{\boldsymbol{\bt}}(K_1(A))}=\af_i\circ {\boldsymbol{\bt}}|_{K_1(A)}=\af_i^{(0,1)}
\eneq
and on $K_1(A\otimes C(\T)),$ define
$\af_i|_{K_1(A\otimes C(\T))}=0,\,\,\,i=0,1,2,...,n.$

Also define  $\af_e\in {\rm Hom}(K_*(A\otimes C(\T)), K_*(F_1\otimes \C)),$ by
\beq\label{Uni-26}
\af_e|_{K_0(A)}=[\pi_e\circ \phi^{\sim}]|_{K_0(A)},\,\,\,
\af_e|_{{\boldsymbol{\bt}}(K_1(A))}=\af_e\circ {\boldsymbol{\bt}}|_{K_1(A)}=\af_e^{(0,1)}
\eneq
on $K_0(A\otimes C(\T))$ and  $(\af_e)|_{K_1(A\otimes C(\T))}=0.$
Note that
\beq\label{Uni-26+0}
(h_0^{\sim})_{*}\circ \af_e=\af_0\andeqn (h_1^{\sim})_{*}\circ\af_e=\af_n.
\eneq
Since $A\otimes C(\mathbb{T})$ satisfies the UCT,  the map $\alpha_e$ can be lifted to an element of $KK(A\otimes C(\mathbb T), F_1\oplus \C)$ which is still denoted by $\alpha_e$. Then define
\beq\label{Uni-26+}
\af_0=\af_e\times [h_0^{\sim}] \andeqn \af_n=\af_e\times [h_1^{\sim}]
\eneq
in $KK(A\otimes C(\mathbb T), F_2)$.
For $i=1, ..., n-1$, also pick a lifting of $\alpha_i$ in $KK(A\otimes C(\mathbb T), F_2)$, and still denote it by $\alpha_i$.
We estimate that
\beq\label{Uni-26+n}
\|(w_{i}^*w_{i+1})\pi_{t_i}\circ \phi^{\sim}(g)-\pi_{t_i}\circ \phi^{\sim}(g)(w_{i}^*w_{i+1})\|<\ep_1/4\tforal g\in {\cal G}_5,
\eneq
$i=0,1,...,n-1.$
Let $\Lambda_{i,i+1}: C(\T)\otimes A\to F_2$ be a unital \morp\, given
by the pair $w_{i}^*w_{i+1}$ and $\pi_{t_i}\circ \phi$  (by
6.4 of \cite{GLN},
see 2.8 of \cite{Lnmemhp}).
Denote $V_{i,j}=\lceil \pi_{t_i}\circ \phi^{\sim}\otimes {\rm id}_{M_{m(A)}}(g_j) \rceil,$ $j=1,2,...,k(A)$ and $i=0,1,2,...,n-1.$

Write
$$
V_{i,j}=(V_{i,j, 1}, V_{i,j,2},..., V_{i,j,F(2)})\in M_{m(A)}(F_2),\,\,\, j=1,2,...,k(A),\,\,\, i=0,1,2,...,n.
$$
Similarly, write
\vspace{-0.12in}\beq
W_i=(W_{i,1},W_{i,2},...,W_{i,F(2)})\in M_{m(A)}(F_2),\,\,\, i=0,1,2,...,n.
\eneq
We have
\vspace{-0.12in}\beq\label{Uni-27}
\|W_{i}V_{i,j}^*W_{i}^* V_{i,j}V_{i,j}^*W_{i+1}V_{i,j}W_{i+1}^*-1\|<1/16\\
\|W_{i}V_{i,j}^*W_{i}^*V_{i,j}V_{i+1,j}^*
W_{i+1} V_{i+1,j}W_{i+1}^*-1\|<1/16
\eneq
and there is a continuous path $Z(t)$
of unitaries  such that $Z(0)=V_{i,j}$ and $Z(1)=V_{i+1,j}.$ Since
$$
\|V_{i,j}-V_{i+1,j}\|<\dt_1/12,\,\,\,j=1,2,...,k(A),
$$
we may assume that $\|Z(t)-Z(1)\|<\dt_1/6$ for all $t\in [0,1].$
We also write
$$
Z(t)=(Z_1(t), Z_2(t), ...,Z_{F(2)}(t))\in F_2\andeqn t\in [0,1].
$$
We obtain a continuous path
$
W_{i}V_{i,j}^*W_{i}^*V_{i,j}Z(t)^*W_{i+1} Z(t)W_{i+1}^*
$
which is in $CU(M_{nm(A)})$
for all $t\in  [0,1]$ and
$$
\|W_{i}V_{i,j}^*W_{i}^*V_{i,j}Z(t)^*W_{i+1} Z(t)W_{i+1}^*-1\|<1/8\tforal t\in [0,1].
$$
It follows that
$$
(1/2\pi\sqrt{-1})(t_s\otimes {\rm Tr}_{M_{m(A)}})[\log(W_{i,s}V_{i,j,s}^*W_{i,s}^*V_{i,j,s}Z_s(t)^*W_{i+1,s}Z_s(t)W_{i+1,s}^*)]
$$
is a constant integer,  where $t_s$ is the normalized trace on $M_{n_s}.$
In particular,
\beq\label{Uni-28}
&&\hspace{-0.4in}(1/2\pi\sqrt{-1})(t_s\otimes {\rm Tr}_{M_{m(A)}})(\log(W_{i,s}V_{i,j,s}^*W_{i,s}^*W_{i+1,s} V_{i,j,s}W_{i+1,s}^*))\\\label{Uni-28+}
&&\hspace{-0.2in}=(1/2\pi\sqrt{-1})(t_s\otimes {\rm Tr}_{M_{m(A)}})(\log(W_{i,s}V_{i,j,s}^*W_{i,s}^*V_{i, j}V_{i+1,j,s}^*W_{i +1,s}V_{i,j,s}W_{i+1,s}^*)).
\eneq
One also has
\vspace{-0.12in}\begin{eqnarray}\label{Uni-29}
&&\hspace{-0.7in}W_{i}V_{i,j}^*W_{i}^*V_{i, j}V_{i+1,j}^*W_{i+1} V_{i+1,j}W_{i+1}^*
=(\omega_j(t_{i})\exp(\sqrt{-1}b_{i,j}))^*\omega_j(t_{i+1})
\exp(\sqrt{-1}b_{i+1,j})\\\label{Uni-30}
&&=\exp(-\sqrt{-1}b_{i,j})\omega_j(t_{i})^*\omega_j(t_{i+1})
\exp(\sqrt{-1}b_{i+1,j}).
\end{eqnarray}
Note that, by (\ref{Uni-13}) and (\ref{Uni-11}), for $t\in [t_i, t_{i+1}],$
\begin{equation}
\|\omega_j(t_{i})^*\omega_j(t)-1\|<2(m(A)^2)\ep_1/16+2\gamma_2<1/32,
\end{equation}
$j=1,2,...,k(A),$ $i=0,1,..., n-1.$
By Lemma 3.5 of \cite{Lin-AU11},
\begin{equation}\label{LeasyApp}
(t_s\otimes {\rm Tr}_{m(A)})(\log(\omega_{j,s}(t_{i})^*\omega_{j,s}(t_{i+1})))=0.
\end{equation}
It follows that (by the Exel formula (see \cite{HL}), using (\ref{Uni-28+}), (\ref{Uni-30}) and (\ref{LeasyApp}))
\beq\label{Uni-31}
&&\hspace{-0.6in}(t\otimes {\rm Tr}_{m(A)})({\rm bott}_1(V_{i,j}, W_{i}^*W_{i+1}))\\
 \hspace{-0.2in}&=&
({1\over{2\pi \sqrt{-1}}})(t\otimes {\rm Tr}_{m(A)})(\log(V_{i,j}^*W_{i}^*W_{i+1}V_{i,j}W_{i+1}^*W_{i}))\\
 \hspace{-0.2in}&=&({1\over{2\pi \sqrt{-1}}})(t\otimes {\rm Tr}_{m(A)})(\log(W_{i}V_{i,j}^*W_{i}^*W_{i+1,s}V_{i,j}W_{i+1}^*))\\
&=&({1\over{2\pi \sqrt{-1}}})(t\otimes {\rm Tr}_{m(A)})(\log(W_{i}V_{i,j}^*W_{i}^*V_{i,j}V_{i+1,j}^*
W_{i+1}V_{i+1,j}W_{i+1}^*))\\
&=& ({1\over{2\pi \sqrt{-1}}})(t\otimes {\rm Tr}_{m(A)})(\log(\exp(-\sqrt{-1}b_{i,j})\omega_j(t_{i})^*
\omega_j(t_{i+1})\exp(\sqrt{-1}b_{i+1,j}))\\
&=& ({1\over{2\pi \sqrt{-1}}})[(t\otimes {\rm Tr}_{{m(A)}})(-\sqrt{-1}b_{i,j})+(t\otimes {\rm Tr}_{{m(A)}})(\log(\omega_j(t_{i})^*\omega_j(t_{i+1}))\\
&&\hspace{1.4in}+(t\otimes {\rm Tr}_{{m(A)}})(\sqrt{-1}b_{i,j})]\\
&=&{1\over{2\pi}}(t\otimes {\rm Tr}_{{m(A)}})(-b_{i,j}+b_{i+1,j})
\eneq
for all $t\in T(F_2).$
In other words,
\beq\label{Uni-32}
{\rm bott}_1(V_{i,j}, W_{i}^*W_{i+1}))=-\lambda_{i,j}+\lambda_{i+1,j}
\eneq
$j=1,2,...,m(A),$  $i=0,1,...,  n-1.$


Define $\beta_0=0,$ $\bt_1=[\Lambda_{0,1}]-\af_1+\af_0+\bt_0,$
\beq\label{betai}
\bt_i=[\Lambda_{i-1, i}]-\af_i+\af_{i-1}+\bt_{i-1},\,\,\,i=2,3,...,n.
\eneq
Then
\vspace{-0.12in}\beq\nonumber
&&\bt_1([g_j])=\Lambda_{0,1}([g_j])-\lambda_{1,j}+\lambda_{0,j}=0,\\\nonumber
&&\bt_2([g_j])=\Lambda_{1,2}([g_j])-\lambda_{2,j}-\lambda_{1,j}+\bt_1([g_j])=0\andeqn\\\nonumber
&&\bt_i([g_j])=\lambda_{i-1, i}([g_j])-\lambda_{i,j}-\lambda_{i-1, j}-\bt_{i-1}([g_j])=0,\,\,\, i=3,...,n.
\eneq
It follows 5.2.5 of \cite{Lncbms}
that there is $\varrho\in {\rm Hom}_{\Lambda}(\underline{K}(A), \underline{K}(F_1\otimes \C))$
such that
\beq\nonumber
&&\varrho({\boldsymbol{\bt}}(K_1(A)))=0\andeqn\\\nonumber
&&\varrho\times ([h_1^{\sim}]-[h_0^{\sim}])|_{{\boldsymbol{\bt}}(\underline{K}(A))}=\bt_n|_{{\boldsymbol{\bt}}(\underline{K}(A))}.
\eneq
Define $\kappa_0=\af_0+\bt_0+\varrho\times [h_0^{\sim}],$ $\kappa_i=\af_i+\bt_i+\varrho\times [h_0^{\sim}],$ $i=1,2,...,n.$
Note
that, on ${\boldsymbol{\bt}}(\underline{K}(A){)},$
\beq
\kappa_n&=&\af_n+\bt_n+\varrho\times [h_0^{\sim}]
					=\af_n+\varrho\times([h_1^{\sim}]-[h_0^{\sim}])+\varrho\times [h_0^{\sim}]\\
					&=&\af_n+\varrho\times [h_1^{\sim}]
					=(\af_e+\varrho)\times [h_1^{\sim}],
					\eneq
and, by \eqref{Uni-26+},
$\kappa_0=\af_0+\varrho\times [h_0^{\sim}]=\af_e\times [h_0^{\sim}]+\varrho\times [h_0^{\sim}].$
We also have, for each $j=1,2,..., k(A),$
\beq\nonumber
&&\kappa_i([g_j])=\lambda_{i,j} +(h_0^{\sim})_{*0}\circ \varrho([g_j])=\lambda_{i,j},\,\,\,i=0,1,...,n\andeqn\\\nonumber
&&(\varrho +\af_e)([g_j])=\lambda_{e,j}.
\eneq
Applying  7.4 of \cite{GLN}
(using \eqref{Uni-24}, \eqref{Uni1-2}), there are unitaries
$z_i\in F_2$, $i=1,2,...,n-1$,  and  $z_e\in F_1\otimes \C$  with $z_e=z_e'\oplus 1$  such that, for $i=1,2,...,n-1,$
\beq\label{Uni-33}
\hspace{-0.4in}\|[z_i,\, \pi_{t_i}\circ \phi^{\sim}(g)]\|<\dt_u\rforal g\in {\cal G}_u,\,
{\rm Bott}(z_i,\, \pi_{t_i}\circ \phi^{\sim})=(\kappa_i)|_{\boldsymbol{\bt}(\underline{K}(A))},\andeqn
\eneq
\vspace{-0.2in}\beq\label{Uni-47}
\hspace{-0.4in}\|[z_e,\, \pi_e\circ \phi^{\sim}(g)]\|<\dt_u\rforal g\in {\cal G}_u\andeqn
{\rm Bott}(z_e,\, \pi_e\circ \phi^{\sim})&=&(\varrho+\af_e)|_{\boldsymbol{\bt}(\underline{K}(A))}.
\eneq
Put $$z_0=h_0(z_e)\otimes (1_{F_2}-h_0(1_{F_1}))\quad \mathrm{and} \quad z_n=h_1(z_e)\oplus (1_{F_2}-h_1(1_{F_1})).$$
Note that, as above,
\beq\nonumber
{\rm Bott}(z_0,\, \pi_{0}\circ \phi^{\sim})=\kappa_0|_{\boldsymbol{\bt}(\underline{K}(A))}\andeqn
{\rm Bott}(z_n,\, \pi_{0}\circ \phi^{\sim})=\kappa_n|_{\boldsymbol{\bt}(\underline{K}(A))}.
\eneq
Let
\vspace{-0.14in}\beq
U_i=z_iw_iw_{i+1}^*z_{i+1},\,\,\,i=0,1,...,n-1.
\eneq
Then, by \eqref{Uni-33}, \eqref{Uni-47} and \eqref{Uni-26+n},
\beq\label{Uni-50}
\|[U_i,\, \pi_{t_i}\circ \phi^{\sim}(g)]\|<2\dt_u+2\ep_1/4<\dt_1/2\rforal g\in {\cal G}_u.
\eneq
We also compute that (using the choice of $\dt_1$ and \eqref{betai})
\beq\nonumber
{\rm Bott}(U_i,\, \pi_{t_i}\circ \phi^{\sim}) &=&  {\rm Bott}(z_i,\, \pi_{t_i}\circ \phi^{\sim})+{\rm Bott}(w_i^*w_{i+1},\,
\pi_{t_i}\circ \phi^{\sim})\\\nonumber
 &=&{{\rm Bott}(z_{i+1},\, \pi_{t_i}\circ \phi^{\sim})}
= \kappa_i+[\Lambda_{i,i+1}]-\kappa_{i+1}\\\nonumber
&=&\af_i+\bt_i+\varrho\times [h_0]+[\Lambda_{i,i+1}]-(\af_{i+1}+\bt_{i+1}+\varrho\times [h_0])\\\nonumber
\hspace{-0.2in}&=&\af_i+\bt_i+[\Lambda_{i,i+1}]-\af_{i+1}-([\Lambda_{i,i+1}]-\af_{i+1} +\af_i +\bt_i)=0,
\eneq
$i=0,1,...,n-1.$
Note that, by the assumption \eqref{Uni1-2},
\beq\label{Uni-38}
t_s\circ \pi_t\circ \phi(h)\ge \Delta(\hat{h})\tforal h\in {\cal H}_1',
\eneq
where $t_s$ is the normalized trace on $M_{n_s},$ $1\le s\le F(2).$
Then, by this, \eqref{Uni-50}, \eqref{Uni-38} and by applying  6.7 of \cite{GLN}
we obtain a continuous path of unitaries $\{U_i(t): t\in [t_i, t_{i+1}]\}\subset F_2$
such that $U_i(t_i)=1_{F_2}$ and $U(t_{i+1})=z_i(w_i)^*w_{i+1}z_{i+1}^*$ and
\beq\label{Uni-39+}
\|[U_i(t),\, \pi_t\circ \phi^{\sim}(f)]\|<\ep/32\rforal f\in {\cal F},
\eneq
$i=0,1,...,n-1.$ Now define $W(t)=w_iz_i^*U_i(t)$ for $t\in [t_i, t_{i+1}],$ $i=0,1,...,n-1.$
Then $W(t)\in C([0,1], F_2)$ but also
$$
W(0)=w_0z_0^*=h_0^{\sim}(w_ez_e^*)\andeqn W(1)=w_nz_n^*=h_1^{\sim}(w_ez_e^*).
$$
Therefore $W\in {\tilde C}.$
One then checks that, by  (\ref{Uni-11}), (\ref{Uni-39+}) , (\ref{Uni-33}) and (\ref{Uni-12}),
\begin{eqnarray}
&&\|W(t)(\pi_t\circ \phi^{\sim})(f)W(t)^*-(\pi_t\circ \psi^{\sim})(f)\otimes 1_{M_N}\|\\
&<&\|W(t)(\pi_t\circ \phi^{\sim})(f)W(t)^*-W(t)(\pi_{t_i}\circ \phi^{\sim})(f)W^*(t)\|\\
 &&+\|W(t)(\pi_{t_i}\circ \phi^{\sim})(f)W(t)^*-W(t_i)(\pi_{t_i}\circ \phi^{\sim})(f) W(t_i)^*\|\\
 &&+\|W(t_i)(\pi_{t_i}\circ \phi^{\sim})(f) W(t_i)^*-(w_i\pi_{t_i}\circ \phi^{\sim})(f)w_i^*\|\\
 &&+\|w_i(\pi_{t_i}\circ \phi^{\sim})(f)w_i^*-\pi_{t_i}\circ \psi^{\sim}(f)\|\\
 && +\|\pi_{t_i}\circ \psi^{\sim}(f)-\pi_t\circ \phi^{\sim}(f)\|\\
 &<&\ep_1/16+\ep/32+\dt_u+\ep_1/16+\ep_1/16<\ep
\end{eqnarray}
 for all $f\in {\cal F}$ and for $t\in [t_i, t_{i+1}]$.

 \end{proof}

\begin{df}
Let $D$ be a non-unital \CA. Denote by
$C(\T, {\tilde D})^o$ the \SCA\, of $C(\T, {\tilde D})$ generated by
$C_0(\T\setminus \{1\})\otimes 1_{\tilde D}$ and $1_{C(\T)}\otimes D.$
The unitization of $C(\T, {\tilde D})^o$ is $C(\T, {\tilde D})=C(\T)\otimes {\tilde D}.$
Let $C$ be another non-unital \CA, $L: C(\T, {\tilde D})^o\to C$ be a \cpc\,
and $L^{\sim}: C(\T)\otimes {\tilde D}\to {\tilde C}$ be the unitization.
Denote by $z$ the standard unitary generator of $C(\T).$  For any finite subset ${\cal F}\subset C(\T)\otimes D,$
any finite subset ${\cal F}_d\subset {\tilde D},$ and $\ep>0,$
there exists a finite subset ${\cal G}\subset D$ and $\dt>0$ such that, whenever
$\phi: D\to  C$ is a ${\cal G}$-$\dt$ -multiplicative \cpc \, (for any \CA\, $C$)  and
$\|[u, \phi(g)]\|<\dt$ for all $g\in {\cal G},$ there exists a ${\cal F}$-$\ep$-multiplicative
\cpc\, $L': C(\T)\otimes {\tilde D}\to {\tilde C}$ such that
\beq
\|L'(z\otimes 1)-u\|<\ep\andeqn \|L'(1\otimes d)-\phi^{\sim}(d)\|<\ep\tforal d\in {\cal F}_d.
\eneq
We will denote such $L'$ by $\Phi_{u, \phi}.$

Conversely,   there exists a finite subset ${\cal G}'\subset C(\T, {\tilde D})^o$ and $\dt'>0,$
if $L: C(\T, D)^o\to C$ is ${\cal G}'$-$\dt'$-multiplicative \cpc,
there is a unitary $u\in {\tilde C}$ such that
\beq
\|{\tilde L}(z\otimes 1)-u\|<\ep
\eneq
and $\phi=L^{\sim}|_{1\otimes D}$ is a \cpc.
\end{df}

{{In what follows, we use ${\cal A}$ for the family of \CA s which can be  approximated  by
\CA s  {{$D\in {\overline{\cal D}_r}$}} for some integer $r\ge 1,$}}
{{Note that $B_T\subset {{{\cal A}}}.$}}

\begin{lem}\label{TUnHT2}
Let $A=C(\T)\otimes {\tilde D},$ where $D\in 
{{\cal {A}.}}$ 
Let ${\cal F}\subset A$ be a finite subset, let
$\ep>0$ be a positive number and let $\Delta: A_+^{q, {\bf 1}}\setminus \{0\}\to (0,1)$  be an order preserving map. There exists  a finite subset ${\cal H}_1\subset A_+^{\bf 1}\setminus \{0\},$
$\gamma_1>0,$ $\gamma_2>0,$ $\dt>0,$ a finite subset
${\cal G}\subset A,$
and a finite subset ${\cal P}\subset \underline{K}(A),$
a finite subset ${\cal H}_2\subset A,$
 a finite subset ${\cal U}\subset J_c(K_1(A))$  for which $[{\cal U}]\subset {\cal P}$
satisfying the following:
For any unital ${\cal G}$-$\dt$-multiplicative \morp s
$\Phi_{u,\phi}, \Phi_{v, \psi}: A\to {\tilde C}$
for some amenable $C\in {\cal D}^d$ with continuous scale,
where $u, v\in U({\tilde C})$ and
$\phi, \psi: D\to C$ are two ${\cal G}_d$-$\dt$-multiplicative \cpc s
{{(${\cal G}_d=\{g: g\otimes 1\in {\cal G}\}$)}}
such that
\begin{equation}
\label{HT2-1}
[\Phi_{u,\phi}]|_{\cal P}=[\Phi_{v,\psi}]|_{\cal P},
\end{equation}
\begin{equation}\label{HT2-2}
\tau(\Phi_{u,\phi}(a))\ge \Delta(\hat{a}),\,\,\, \tau(\Phi_{v,\psi}(a))\ge \Delta(\hat{a}) \tforal \tau\in T(C) \tand a\in {\cal H}_1,
\end{equation}
\begin{equation}\label{Uni1-3}
|\tau\circ \Phi_{u, \phi}(a)-\tau\circ \Phi_{v,\psi}(a)|<\gamma_1 \tforal a\in {\cal H}_2\tand
\end{equation}
\begin{equation}\label{Uni1-3+100}
{\rm dist}(\Phi_{u,\phi}^{\dag}(y), \Phi_{v, \psi}^{\dag}(y))<\gamma_2\tforal y\in {\cal U},
\end{equation}
there exists a unitary $W\in {\tilde C}$ such that
\begin{equation}\label{Uni1-4}
\|W(\Phi_{u,\phi}(f))W^*-(\Psi_{v, \psi}(f))\|<\ep,\tforal f\in {\cal F}.
\end{equation}
\end{lem}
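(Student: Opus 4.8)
The plan is to reduce Lemma~\ref{TUnHT2} to the uniqueness theorem \ref{UniqN1} for maps into ${\cal C}_0$, using that $C\in {\cal D}_0$ is tracially approximated by \CA s in ${\cal C}_0$. After the usual preliminary reductions we may assume ${\cal F}$ lies in the unit ball of $A$ and contains $z\otimes 1$ together with $\{1\otimes d:d\in {\cal F}_d\}$ for a generating finite subset ${\cal F}_d\subset {\tilde D}$. Since $D\in {\cal B}_T$ is locally approximated by sub-homogeneous \CA s of topological dimension at most $3$ --- and such \CA s, being sub-homogeneous over finite CW complexes, have finitely generated $K$-theory --- and since $\Phi_{u,\phi},\Phi_{v,\psi}$ are approximately multiplicative, we may pass to a sub-homogeneous \SCA\, $D_0\subset D$ with finitely generated $K$-theory that $\ep$-contains ${\cal F}_d$ and work with $A_0:=C(\T)\otimes {\tilde D_0}$ in place of $A$. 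Then $A_0$ is sub-homogeneous of dimension at most $4$, so $A_0\in \overline{\cal D}_s$ for some $s$ (cf. 4.8 of \cite{GLN}), with $K_*(A_0)$ finitely generated, and $C(\T,{\tilde D_0})^o$ is an admissible source for \ref{UniqN1}.

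The key structural input is the tracial splitting of the target. Using $C\in {\cal D}_0$ together with its continuous scale and \ref{DD0}, \ref{DD1} and 14.8 of \cite{GLp1}, for a prescribed finite subset ${\cal G}_C\subset C$ and $\sigma>0$ there are approximately multiplicative \cpc s $\iota_0\colon C\to C_{00}$ and $\iota_1\colon C\to C_1$, where $C_{00}$ and $C_1$ are mutually orthogonal hereditary \SCA s of $C$, $C_1\in {\cal C}_0^{0'}$, $d_\tau(e_{00})<\sigma$ for all $\tau\in T(C)$ (with $e_{00}$ a strictly positive element of $C_{00}$), and $\|c-\diag(\iota_0(c),\iota_1(c))\|<\sigma$ for $c\in {\cal G}_C$. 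Applying this to a finite set containing $\Phi_{u,\phi}({\cal F})\cup \Phi_{v,\psi}({\cal F})$ and passing to unital extensions, we get $\Phi_{u,\phi}\approx_\sigma \diag(\iota_1^\sim\Phi_{u,\phi},\iota_0^\sim\Phi_{u,\phi})$ and $\Phi_{v,\psi}\approx_\sigma \diag(\iota_1^\sim\Phi_{v,\psi},\iota_0^\sim\Phi_{v,\psi})$ on ${\cal F}$. Since $\underline K(C_1)=0$, the main parts $\iota_1^\sim\Phi_{u,\phi},\iota_1^\sim\Phi_{v,\psi}\colon A_0\to {\tilde C_1}$ have trivial $\underline K$-class, whereas the whole $\underline K$-discrepancy lives in the small parts, for which $[\iota_0^\sim\Phi_{u,\phi}]=[\Phi_{u,\phi}]$ and $[\iota_0^\sim\Phi_{v,\psi}]=[\Phi_{v,\psi}]$ on $\underline K(A_0)$, so by \eqref{HT2-1} the small parts agree on the relevant finite subset of $\underline K(A_0)$.

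There are now two matching steps. For the main parts, apply \ref{UniqN1} (in the form valid for maps into full hereditary \SCA s of \CA s in ${\cal C}_0$) to $\iota_1^\sim\Phi_{u,\phi}$ and $\iota_1^\sim\Phi_{v,\psi}$: the $K$-theoretic hypothesis is automatic, the fullness bounds follow from \eqref{HT2-2} after shrinking $\Delta$ slightly (as $\tau\circ\iota_1$ and $\tau$ differ by at most $\tau(e_{00})<\sigma$ on $C$), the tracial-closeness hypothesis of \ref{TUnHT2} gives the one needed in \ref{UniqN1}, and the determinant-closeness follows by composing \eqref{Uni1-3+100} with the (approximately contractive) map induced by $\iota_1$ on $U(\widetilde{\,\cdot\,})/CU(\widetilde{\,\cdot\,})$. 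This yields a unitary $W_1\in {\tilde C_1}\subset {\tilde C}$ with $\|W_1(\iota_1^\sim\Phi_{u,\phi}(f))W_1^*-\iota_1^\sim\Phi_{v,\psi}(f)\|$ small on ${\cal F}$. For the small parts, choose $\sigma$ so small that $e_{00}$ is Cuntz-dominated in $C$ by an element in the image of the full map $\iota_1^\sim\Phi_{v,\psi}$; then, just as in the proof of \ref{TUNIq} (using that $C$ is simple with stable rank one and strict comparison, the stable uniqueness theorem 7.9 of \cite{GLp1}, and the equality of $\underline K$-classes of the small parts), $\diag(\iota_1^\sim\Phi_{v,\psi},\iota_0^\sim\Phi_{u,\phi})$ is approximately unitarily equivalent on ${\cal F}$ to $\diag(\iota_1^\sim\Phi_{v,\psi},\iota_0^\sim\Phi_{v,\psi})$. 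Composing the two conjugations and identifying the diagonal summands with subalgebras of $C$ produces a single $W\in {\tilde C}$ with $\|W\Phi_{u,\phi}(f)W^*-\Phi_{v,\psi}(f)\|<\ep$ for all $f\in {\cal F}$, provided ${\cal H}_1,{\cal H}_2,{\cal P},{\cal U},\dt,\gamma_1,\gamma_2$ and $\sigma,{\cal G}_C,D_0$ were chosen in the appropriate dependency order.

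The main obstacle is twofold. First, the bookkeeping: as in \ref{UniqN1}, the many finite subsets and tolerances must be fixed so that the hypotheses of \ref{UniqN1} genuinely hold for the main parts and the absorption of the small part goes through. Second, and more delicately, the transport of the de la Harpe--Skandalis determinant data: since $K_1(C)$ need not vanish, $U({\tilde C})/CU({\tilde C})$ has both a $J_c(K_1(C))$-summand and an $\Aff(T({\tilde C}))/\Z$-summand, and one must verify that \eqref{Uni1-3+100}, imposed on a generating set ${\cal U}$ of $J_c(K_1(A))$ --- which for $A=C(\T)\otimes {\tilde D}$ includes the class of $z\otimes 1$ (carrying the $u$-versus-$v$ discrepancy) as well as the classes coming from unitaries in ${\tilde D}$ --- controls the determinant of the main parts after composition with $\iota_1$, the residual error then being swept into the small-part step.
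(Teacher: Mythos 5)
Your proposal follows essentially the same route as the paper's proof: reduce to a sub-homogeneous approximation of $D$ with finitely generated $K$-theory (so the domain lies in $\overline{{\cal D}}_s$), split the target $C\in{\cal D}_0$ into a tracially small hereditary piece plus a $K$-theoretically trivial ${\cal C}_0$-type piece, match the large pieces by Theorem \ref{UniqN1} (where the $\underline{K}$-hypothesis becomes automatic), and absorb the small pieces via the stable uniqueness theorem 7.9 of \cite{GLp1}. The only points where the paper is more explicit are exactly the two verifications you flagged: the ${\cal D}_0$-decomposition is taken from the outset with multiplicity $K$ prescribed by 7.9 (so the large part is $K$ diagonal copies of a single full map, which is what makes the absorption step legitimate, rather than a single ${\cal C}_0^{0'}$ summand plus Cuntz domination), and the $\gamma_2$-hypothesis on ${\cal U}$ together with \ref{Ulength} is what supplies the exponential-length bound on $\lceil L_1(z\otimes 1)\rceil\lceil L_2(z\otimes 1)\rceil^*$ in the small hereditary piece, which is the ${\bf L}$-input required by 7.9.
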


\begin{proof}
Let us first reduce the general case  to the case that $D\in 
{{{\overline{\cal D}_r}}}.$
Fix any finite subset ${\cal F}_d\subset D$ and any $\ep_d>0,$  by \ref{Zstable},  there is $D_n\in {{{\overline{\cal D}_r}}}$
such that
\beq
{\rm dist}(x, D_n)<\ep_d\rforal x\in {\cal F}_d.
\eneq
This effectively allows us to assume that $D\in 
{{{\overline{\cal D}_r}}}.$
It should then be noted that $C(\T, {\tilde D})\in 
{{{\overline{\cal D}_{r+1}}}}.$

Now we assume that $D\in 
{{{\overline{\cal D}_r}}}.$

Let ${\bf L}=8\pi,$ $r_0=0,$ $r_1=0,$  ${\bf T}(n,k)=n$ for all $(n,k),$ $s=1$ and $R=7.$
Let $1/2>\ep>0$ and ${\cal F}\subset A$ be a finite subset.
Let $\Delta_0=\Delta/2.$
Let $F': A_+\setminus \{0\}\to \R\times \N$ be given by \ref{FfullDelta} associated with $\Delta_0.$

Put $A_0=C(\T, {\tilde D})^o.$
Let ${\cal F}_I\subset A_0$ be a finite subset  such that, if $x\in {\cal F},$ then
$x=\lambda+y$ for some $y\in {\cal F}_I.$

Let $\dt_0>0$ (in place of $\dt$),  ${\cal G}_0\subset A_0$ (in place of ${\cal G}$) be finite subset,
${\cal P}_0\subset \underline{K}(A_0)$ (in place of ${\cal P}$),
${\cal U}_0\subset U(M_N(A))$ (for some integer $N\ge 1$)
${\cal H}_0\subset (A_0)_+\setminus \{0\}$
(in place of ${\cal H}$) and $K\ge 1$ be an integer required by
Theorem 3.14 of \cite{eglnkk0}
for $A_0,$ $\ep/16$ (in place of $\ep$), ${\cal F}_I$ (in place of ${\cal F}$), ${\bf L},$ $F',$ (in place
$F$), as well as $r_0, r_1, T,$ $s$ and $R$ above.
As in
{{3.15 of \cite{eglnkk0},}} we can choose
${\cal U}_0=\{g_1, g_2,...,g_{k(A)}\}$ so that
$K_1(A)\cap {\cal P}_0=\{[g_1], [g_2],...,[g_{k(A)}]\}.$

Let  $\gamma_1'>0,$ $\gamma_2'>0,$ $\dt'>0,$
${\cal G}'\subset A,$
${\cal H}_1'\subset  (A)^{\bf 1}_+\setminus \{0\}$,
${\cal P}'\subset \underline{K}(A),$ ${\overline{{\cal U}'}}\subset J_c(K_1(A))$
and ${\cal H}_2'\subset A_{s.a.}$
be finite subsets required by \ref{UniqN1}
 for $\min\{\dt_0/4, \ep/16\}$ (in place of $\ep$)
${\cal G}_0$ (in place of ${\cal F}$) and for $\Delta_0$ (in place of $\Delta$).

Put $\gamma_1=\gamma_1'/4,$ $\gamma_2={1\over{2K+1}}\min\{\gamma_2'/16,\ep/64\},$
$\dt=\min\{\dt'/16, \dt_0/16, \gamma_1/16, \gamma_2/16, \ep/2^{10}\},$ ${\cal H}_1={\cal H}_1',$ ${\cal H}_2={\cal H}_2$
and ${\cal G}={\cal G}'.$

Now suppose that $\Phi_1, \Phi_2: A\to {\tilde C}$ are two
${\cal G}$-$\dt$-multiplicative \cpc s  such that
$\Phi_1=\Phi_{u, \phi}$ and $\Phi_2=\Phi_{v, \psi},$ where $u,v$ and $\phi, \psi$ are as given.
Moreover,
 $\Phi_1, \Phi_2$ satisfy the condition \eqref{HT2-1}, \eqref{HT2-2}, \eqref{Uni1-3},
\eqref{Uni1-3+100} and \eqref{Uni1-3+100} for the above mentioned
$\Delta,$ ${\cal P},$ ${\cal H}_1,$ ${\cal H}_2,$ $\gamma_1, $ $\gamma_2$ and ${\cal U}.$

Since $C\in {\cal D}^d,$ there exists  a sequence of positive elements $\{b_n\}$ of $C,$
a sequence of \SCA s $C_{0,n}\in {\cal C}_0,$
two sequences
of \cpc s $\phi_{0,n}: A\to B_n$ and $\phi_{1,n}: C\to C_{0,n}$
such that $C_{0,n}\perp B_n,$
\beq\label{HT1-10}
&&\hspace{-0.2in}\lim_{n\to\infty}\|\phi_{i,n}(ab)-\phi_{i,n}(a)\phi_{i,n}(b)\|=0\rforal a,\, b\in C,\\\label{BT1-11}
&&\hspace{-0.2in}\lim_{n\to\infty}\|x-(\phi_{0,n}(x)\oplus \diag(\overbrace{\phi_{1,n}(x),\phi_{1,n}(x),...,\psi_{1,n}(x)}^K)\|=0
\rforal x\in
C\\
&&\lim_{n\to\infty} \sup_{\tau\in T(C)}d_\tau(b_n)=0,\,\,\,
t(f_{1/4}(\phi_{1,n}(e_C)))\ge 1/2\rforal t\in T(C_{0,n}),\\
  &&\andeqn \tau(f_{1/4}(\phi_{1,n}(e_C))>1/2
\rforal \tau\in T(C),
\eneq
where $e_C\in C$ is a strictly positive element with $\|e_C\|=1,$
$B_n=\overline{b_nCb_n}$
{{(see 9.2 of \cite{eglnp1}).}}
Put $C_n=M_K(C_{0,n}),$ $n=1,2,...$
It should be noted that $C_n\perp B_n,$ $n=1,2,....$
We may assume, \wilog, for all $n,$
\beq\label{BT1-5}
 \sup_{\tau\in T(C)}d_\tau(b_n)<\min\{\gamma_1/64K, \gamma_2/64K, \min\{\Delta_0(\hat{h}): h\in {\cal H}_1\}/4(K+2)\}.
\eneq

Let $u_i, v_i\in M_N({\tilde C})$ ($i=1,2,...,k(A)$) be two unitaries such
that
\beq\nonumber
\|(\Phi_1\otimes {\rm id}_{M_N})(g_i)-u_i\|<\min\{\ep/2^8, \gamma_2/8\} \andeqn
\|(\Phi_2\otimes {\rm id}_{M_N}(g_i)-v_i\|<\min\{\ep/2^8, \gamma_2/8\}.
\eneq
Let 
{{$w_i\in CU({\tilde C})$}} be such that
\beq
\|u_iv_i^*-w_i\|<(5/4)\gamma_2\andeqn w_i=\prod_{j=1}^{m(i)} w_{i,j},\,\,\,
w_{i,j}=w_{1,i, j}^*w_{2,i, j}^*w_{1,i,j}w_{2,i,j},
\eneq
where
$w_{s,i,j}\in U({\tilde C}),$ $s=1,2,$ $j=1,2,...,m(i)$ and $i=1,2,...,k(A).$
Let $m=\max\{m(i): 1\le i\le k(A)\}.$

Write $w_{s,i,j}=\af_{s,i,j}+c(w_{s,i,j}),$ where
$\af_{s,i,j}\in \T\subset \C$ and
$c(w_{s,i,j})\in C,$ $j=1,2,...,m(i).$
Note that $\|c(w_{s,i,j})\|\le 2,$ $j=1,2,...,m(i),$ $i=1,2,...,k(A).$

Define  $\psi_{1,n}: A\to C_n$ by
$\psi_{1,n}(a)=\diag(\overbrace{\phi_{1,n}(a), \phi_{1,n}(a),...,\phi_{1,n}(a)}^K)$ for all $n.$
Put $\Psi_j=\psi_{1,n}\circ \Phi_j: A\to C_n,$ $j=1,2.$

\noindent
Let ${\cal G}_2={\cal G}\cup \{c(w_{s,i,j}):  s=1,2,\,1\le j\le m(i), 1\le i\le k(A)\}.$
We can choose $n$ large enough so that $\psi_{0,n}$ and $\psi_{1,n}$
are ${\cal G}_2$-${\dt\over{2^{12}mN^2}}$-multiplicative.
In particular,
by (\ref{Uni1-3+100}) and the choice of $\gm_2,$
\beq\label{HT1-20}
&&{\rm dist}(\overline{\lceil \phi_{0,n}^{\sim}(u_i) \rceil }, \overline{\lceil \phi_{0,n}^{\sim}
{{(v_i)}}\rceil}) \le \gamma_2'/4
\,\,\,{\rm in}\,\,\, U({\tilde B}_n)/CU({\tilde B}_n) \andeqn\\\label{HT2-20+}
&&{\rm dist}(\overline{\lceil \psi_{1,n}^{\sim}(u_i) \rceil }, \overline{\lceil \psi_{1,n}^{\sim}
{{(v_i)}}\rceil}) \le \gamma_2'/4
\,\,\,{\rm in}\,\,\,U({\tilde C}_n)/CU({\tilde C}_n).
 \eneq

It is standard to check that, by choosing sufficiently large $n,$ we may
assume that $\Psi_j$ are ${\cal G}$-$\dt$-multiplicative \cpc s satisfying the following:
\beq\label{BT1-15}
t\circ \Psi_1(h)\ge \Delta_0(\hat{h}), \,\,\, t\circ \Psi_2(h)\ge \Delta_0(\hat{h})\rforal h\in {\cal H}_1,\\
|t\circ \Psi_1(g)-t\circ \Psi_2(g)|<\gamma_2'\rforal g\in {\cal H}_2.
\eneq
Combining these with \eqref{HT2-20+},
by applying \ref{UniqN1}, one obtains a unitary
$U_1\in {\tilde C}_n$ such that
\beq\label{BT1-16}
\|U_1^*\Psi_1(x)U_1-\Psi_2(x)\|<\min\{\dt_0/4, \ep/4\}\rforal x\in {\cal G}_0.
\eneq
Write $U_1=\lambda\cdot 1_{{\tilde C}_n}+c(U_1),$ where $\lambda\in \T\subset \C$ and $c(U_1)\in C_n.$
Define $V_1=\lambda\cdot 1_{\tilde C}+c(U_1).$ Then $V_1\in U({\tilde C}).$
Note, since $B_n\perp C_n,$ $V_1^*bV_1=b$ for all $b\in B_n.$

Let  $E_n=\overline{C_{0,n}CC_{0,n}}$ and
 $e_{E_n}$ be a strictly positive element with $\|e_{E_n}\|=1.$
Put $\Lambda: A\to C_{1,n}\subset E_n$ by defining
$\Lambda(a)={\rm Ad}\, V_1\circ \phi_{1,n}\circ \Phi_1(a)$ for all $a\in A_0,$
By \eqref{BT1-15}, $\Lambda$ is
$F'$-${\cal H}_1$-full in  $C_{1,n}.$ It follows
it is
$F'$-${\cal H}_1$-full in $E_n.$
By \eqref{BT1-5}, we may assume that
$b_n\lesssim e_{E_n}.$

Let $L_i=\phi_{0,n}\circ \Phi_i,$ $i=1,2.$
By  \eqref{BT1-11}, we assume that $L_i$ is also ${\cal G}$-$2\dt$-multiplicative and
\vspace{-0.1in}\beq\label{BT1-17}
\|L_i(x)\oplus \Psi_i(x)-\Phi_i(x)\|<\dt\rforal x\in {\cal G}.
\eneq
Since $K_i(C_n)=\{0\},$ $i=0,1,$
we conclude that
\beq
[L_1]|_{\cal P}=[\Phi_1]|_{\cal P}=[\Phi_2]|_{\cal P}=[L_2]|_{\cal P}.
\eneq
It follows from \ref{Ulength} and \eqref{HT1-20} that, in $B_n,$
\beq
{\rm cel}(\lceil L_1(z\otimes 1)\rceil \lceil L_2(z\otimes 1)\rceil ^*)<8\pi={\bf L}.
\eneq
It follows from
3.14 of \cite{eglnkk0} that there exists a unitary $W_1\in {\tilde B}$
such that
\vspace{-0.1in}\beq\label{BT1-22}
\|W_1^*(L_1(a)\oplus  S(a))W_1-(L_2(a)\oplus S(a))\|<\ep/16,
\eneq
where $S(a)=\diag(\overbrace{\Lambda(a), \Lambda(a),...,\Lambda(a)}^K)=V_1^*\Psi_1(a)V_1,$ for all $a\in {\cal F}_I.$
Put $W=V_1W_1.$  One then estimates, by \eqref{BT1-17}, \eqref{BT1-22} and \eqref{BT1-16},
\beq
{\rm Ad}\, W\circ \Phi_1&\approx_\dt & {\rm Ad}\, W\circ (L_1\oplus {\rm Ad}\, V_1\circ \Psi_1)\\
&&\approx_{\ep/16} L_2\oplus  V_1\circ \Psi_1\approx_{\ep/4} L_2\oplus \Psi_2\approx_\dt \Phi_2
\,\,\,{\rm on}\,\,\, {\cal F}_I.
\eneq
Therefore
\vspace{-0.12in}\beq
\|{\rm Ad}\, W\circ \Phi_1(a)-\Phi_2(a)\|<\ep\rforal a\in {\cal F}.
\eneq
\end{proof}

\begin{lem}\label{densesp}
Let $A$  be a  non-unital  C*-algebra and $T(A)\not=\emptyset,$
  let $U$ be an infinite dimensional  UHF-algebra
and $B\subset A$ be a hereditary \SCA\, of $B.$
Suppose that there exists  $e\in A_+$ with $\|e\|=1$ and $eb=be=b$ for all
$b\in B.$ Then there is a unitary $w\in {\tilde A\otimes U}$
with the form $w=\exp(i \pi (e\otimes h))$ for some $h\in U_{s.a.}$  with
$\tau_U(h)=0$ (where $\tau_U$ is the unique tracial state of $U$) such that for
any unitary $u=\lambda +x\in {\tilde A}$   with
$\lambda \in \T\subset \C$ and $x\in B,$  one has, for any $b\in B$ and $f\in C(\T),$
\beq\label{densesp-1}
\tau(bf((u\otimes 1)w))=\tau(b)\tau(f(1_A\otimes \exp(i h)))=\tau(b)\int_{\T}f dm
\eneq
and for all $\tau\in T(A\otimes U),$
where $m$ is the normalized Lebesgue
 measure on $\T.$
Moreover, for any $a\in B$  and $\tau\in T(A\otimes U)$, $\tau ((a\otimes 1)  w^j)=0$ if $j\not=0$.
Furthermore, if $A$ has continuous scale, then, for any $\ep>0,$  and any $N\ge 1,$ one can choose $e$ such that
\beq\label{densesp-2}
|\tau((u\otimes 1)w)^j)|<\ep\rforal  0<|j|\le N.
\eneq
\end{lem}

\begin{proof}
Denote by $\tau_U$ the unique trace of $U$. Then any trace $\tau\in T(A\otimes U)$ is a product trace, i.e.,
$$\tau(a\otimes b)=\tau(a\otimes 1)\otimes\tau_U(b),\quad a\in A, b\in U.$$

Pick a selfadjoint element $h\in U$ such that the spectral measure of the  unitary $w_0=\exp(ih)$
is the Lebesque measure (a Haar unitary).  Moreover, ${\rm sp}(h)=[-\pi, \pi]$ and
$\tau(h)=0.$

Then one has, for each $n\in \Z,$
\vspace{-0.12in}$$
\tau_U(w_0^n)=
\left\{
\begin{array}{ll}
1, & \textrm{if $n=0$},\\
0, & \textrm{otherwise}.
\end{array}
\right.
$$
Put $w=\exp(i (e\otimes h))\in {\widetilde{A\otimes U}}.$
Thus $w=\sum_{k=0}^{\infty} {i e^k\otimes h^k\over{k!}}.$
Hence, for any $\tau\in T(A\otimes U)$, one has, for each $n\in \Z,$ {{and any $b\in B$ (note that $eb=be=b$),}}
$$\tau(b((u\otimes 1) w)^n)=\tau(b(u^n\otimes 1)(e\otimes 1)(1\otimes w_0^n))=\tau(bu^n\otimes 1)\tau_U(w_0^n)=
\left\{
\begin{array}{ll}
\tau(b), & \textrm{if $n=0$},\\
0, & \textrm{otherwise};
\end{array}
\right.
$$
and therefore
$$\tau((b\otimes 1)P(u\otimes 1)w))=\tau(b)\tau(P(1\otimes w))=\tau(b)\int_{\T}P(z)dm$$
for any polynomial $P$. Similarly, $\tau(bP(u\otimes w)^*)=\tau(b)\int_TP({\bar z})dm$
for any polynomial $P.$
Since polynomials of $z$ and $z^{-1}$ are dense in $\mathrm{C}(\T)$, one has
$$\tau((b\otimes 1)f((u\otimes 1)w))=\tau(b)\tau(f(1\otimes w))=\tau(b)\int_{\T}fdm,\quad f\in \mathrm{C}(\T),$$
as desired.

For the second part of this lemma,
assume that $A$ has continuous scale.  Then,  for any $\dt>0$ and any integer $N_1\ge 1,$
we can choose $e_1, e\in A_+$ such that $1\ge e\ge e_1,$ $e_1e=ee_1=e_1$
$\tau(e^k)\ge \tau(e_1^{N_1})>1-\dt$ for all $\tau\in T(A)$ and $k\in \N.$
Fix  $N$ and $\ep>0.$
A simple calculation shows the second part of
the lemma follows by choosing sufficiently small $\dt$ and large $N_1.$

\end{proof}

\begin{prop}\label{densesp3}
Let $C$ be a  non-unital  amenable simple \CA\, and let $U$ be an infinite dimensional UHF-algebra.
{{For any $ \dt>0, \dt_c>0,$  $1>\sigma_1,\, \sigma_2>0,$ any finite subset ${\cal G}\subset
{\tilde C}\otimes C(\T),$  any finite ${\cal G}_c\subset {\tilde C},$ any finite subset ${\cal H}_1\subset C(\T)_+\setminus \{0\}$ and any finite subset ${\cal H}_2\subset  (C\otimes C(\T))_{s.a.}$
and any integer $N\ge 1,$
 there exist $\dt_1>0$ and a finite subset
 ${\cal G}_1\subset C$ satisfying the following:}}
For
 any unital
 ${\cal G}_1$-$\dt_1$-multiplicative \morp\, $L: C\to  A$ and a unitary
 $u\in {\tilde A}$ with $\|L(g),\, u]\|<\dt_1$ for all $g\in {\cal G}_1,$
 where $A$ is another non-unital \CA\, with $T(A)\not=\emptyset$ and with continuous scale,
 there exists  a positive element $e\in A$ with $\|e\|=1$ and $h\in U$
satisfying the following:
there are two unital $\mathcal G$-$\dt$-multiplicative \cpc s $L_1, L_2: {\tilde C}\otimes C(\T)\to {\tilde B}$ such that
 \beq\label{densesp3-1}
 |\tau(L_1(f))-\tau(L_2(f))|<\sigma_1\tforal f\in {\cal H}_2,\ \tau\in T(B),\ \textrm{and}\ \label{densesp3-2} \\
 \tau(g(u\exp({{\sqrt{-1}}} e\otimes h)))\ge \sigma_2(\int g dm)\tforal g\in {\cal H}_1,\ \tau\in T(B),
 \eneq
where $B=A\otimes U$ and $m$ is the normalized Lebesgue
 measure
 on $\T,$  and
  \beq\label{densesp3-n11}
 \|L_i(g\otimes 1_{C(\T)})-L^{\sim}(g)\otimes 1_U\|<\dt_c\tforal {{g}}\in {\cal G}_c,\,\,\, i=1,2,\\\label{densesp3-n12}
 \|L_1(g\otimes z^j)-L^{\sim}(g)(u\exp({{\sqrt{-1}}}e\otimes h))^j\|<\dt_c\tforal {{g}}\in {\cal G}_c\andeqn\\\label{densesp3-n13}
 \|L_2(g\otimes z^j)-L(g)^{\sim} \exp({{\sqrt{-1}}}e\otimes h)^j\|<\dt_c\tforal {{g}}\in {\cal G}_c
 \eneq
 and for all $0<|j|\le N,$ where $L^{\sim}: {\tilde C}\to {\tilde A}$ is
 the unital extension of  $L.$  Moreover, $\tau(e\otimes h)=0$ for all $\tau\in A\otimes U.$
%
\end{prop}

\begin{proof}
\Wlog, we may assume that there are  finite subsets ${\cal G}_c, {\cal H}_{c,1}\subset {\tilde C}$
such that ${\cal G}=\{c\otimes 1_{C(\T)}: c\in {\cal G}_c\}\cup \{1, 1_{\tilde C}\otimes z, 1_{\tilde C}\otimes z^*\}$ and
${\cal H}_2=\{c\otimes 1_{C(\T)}: c\in {\cal H}_{c,1}\}\cup \{1\otimes b: b\in {\cal H}_T\},$
where ${\cal H}_T\subset C(\T)_{s.a.}.$ We may assume
that $1_{\tilde C}\in {\cal G}_c,$ $1_{\tilde C}\in {\cal H}_{c,1}$ and $1_{C(\T)}\in {\cal H}_T.$
We may also assume that $\|a\|\le 1$ for all $a\in {\cal G}_c\cup {\cal H}_2.$
Put
$$
{\cal G}_0=\{cd\otimes gf: c, d\in {\cal G}_c\cup {\cal H}_{c,1},\,\, g,f\in \{z, z^*\}\cup {\cal H}_T\}.
$$

Fix $\dt, \dt_c>0, \sigma_1, \sigma_2>0.$
Put $\ep=\min\{\dt/4, \dt_c/4, \sigma_1/4, \sigma_2/4\}.$

Let $\dt_1'>0$ and ${\cal G}_{0m}\subset {\tilde C}$ be a finite subset
such that there is a ${\cal G}_0$-$\ep$-multiplicative \cpc\, $L': {\tilde C}\otimes C(\T)\to D,$
for any \CA\, $D$ and any ${\cal G}_{0m}$-$\dt_1'$-multiplicative \cpc\, $L'': {\tilde C}\to D,$
such that
\beq
\|L'(g\otimes 1_{C(\T)})-L''(g)\|<\ep\rforal g\in {\cal G}_0.
\eneq

Let ${\cal G}_1={\cal G}_0\cup  {\cal G}_{0m}$ and
$\dt_1=\min\{\dt_1'/4, \ep/4\}.$

Now suppose that $L: {\tilde C} \to {\tilde A}$ is a ${\cal G}_1$-$\dt_1$-multiplicative \cpc\, and
$u\in {\tilde A}$ is a unitary.
\Wlog, we may assume that there are  positive elements
$e_1, e\in A$ with $\|e_1\|=1=\|e\|$ such that
\beq
e_1L(g)=L(g)e=L(g)\rforal g\in C, ee_1=e_1e=e_1\andeqn \tau_U(e_1)>1-\ep.
\eneq
Furthermore, \wilog, we may assume that $L(c)=e_1L(c)e_1$ for all $c\in C.$
Let $h\in U$ be as in  \ref{densesp}.
Let $v=\exp ({{\sqrt{-1}}} e\otimes h).$   Note that $\tau_U(e^j)>1-\ep$ for all $j\in \N.$
We can choose $e$ so that both \eqref{densesp-1} and \eqref{densesp-2} hold. This lemma then follows from an easy  application
of \ref{densesp} and
Lemma 2.8 of \cite{Lnmemhp}
(with $L_1=\Phi_{v_1, L}$ and
$L_2=\Phi_{v_2, L},$ where $v_1=u(\exp(ie\otimes h))$ and $v_2=\exp(ie\otimes h)$).

\end{proof}

\begin{cor}\label{CCTW}
Let $C$ be a non-unital separable \CA.
Suppose that there is an embedding $\phi: C\to {\cal W}.$
Then $C(\T, {\tilde C})^o$
satisfies the condition in \ref{DFixA}.
Moreover, there exists an embedding $\Phi: C(\T,{\tilde C})^o\to {\cal W}$ which maps
strictly positive elements to strictly positive elements.
\end{cor}

\begin{proof}
Let $\{e_n\}$ be an approximate identity for ${\cal W}$ such that
$e_{n+1}e_n=e_n$ fort all $n.$
Let $W_1=\overline{e_n{\cal W}e_n}$ Then there exists an isomorphism $\psi_w:{\cal W}\to W_1.$
Put $\phi_0=\psi_w\circ \phi.$
Therefore there is $e\in {\cal W}_+$ with $\|e\|=1$ such that
$e\phi_1(c)=\phi_1(c)e=\phi_1(c)$ for all $c\in C.$
Let ${\cal U}$ be a UHF-algebra of infinite type. Choose $h\in {\cal U}_{s.a.}$ with ${\rm sp}(h)=[-\pi, \pi]$
and $t_{\cal U}(h)=0,$ where $t_{\cal U}$ is the unique tracial state of ${\cal U}.$
Define $x=
{{\sum_{n=1}^{\infty} {(\sqrt{-1}e\otimes h)^n\over{n!}}}}\in {\cal W}\otimes {\cal U}$ and
$u=1_{{\tilde {\cal W}}}+x\in {\tilde {\cal W}}\otimes {\cal U}.$
Note that $u\phi_1(c)=\phi_1(c)u$ for all $c\in C.$
Define $\Psi: C(\T, {\tilde C})\to {\tilde {\cal W}}\otimes {\cal U}$ by
\beq
\Psi(f\otimes 1_{{\tilde C}})=f(u)\rforal f\in C(\T)\andeqn
\Psi(1_{C(\T)}\otimes c)=\phi_1(c)\rforal c.
\eneq
This gives a \hm\, $\Phi: C(\T, {\tilde C})^o\to {\cal W}\otimes {\cal U}.$
By  the proof of \ref{densesp},  we have, for all $c\in C$ and $f\in C(\T),$
\beq
(t_{\cal W}\otimes t_{\cal U})(\Psi(f\otimes c))=t_{\cal W}(\phi_1(c))\int_\T f dm,
\eneq
where $m$ is the normalized Lebesgue measure on $\T.$
It follows that, for any $f\in (C(\T)\otimes {\tilde C})_+,$
\beq
(t_{\cal W}\otimes t_{\cal U})(\Psi(f))=\int_\T t_w(\phi_1(f(t)))dm.
\eneq
This implies $\Phi$ is injective.  Note that ${\cal W}\otimes {\cal U}\cong {\cal W}.$
By replacing ${\cal W}$ by $\overline{\Phi(C(\T, {\tilde C})^o){\cal W}\Phi(C(\T, {\tilde C})^o)},$
we may assume that $\Phi$ maps strictly positive elements to strictly positive elements.
It follows from
{{5.6 of \cite{eglnp1}}}
that $C(\T, {\tilde C})^o$ satisfies the condition in \ref{DFixA}.
\end{proof}

In what follows, if $A$ is a unital \CA, $u$ is a unitary and $p$ is a projection in $A$
such that $\|[p, u]\|<\dt$ for  some sufficiently small
$\dt,$ then $(1-p)+pu$ is close to a unitary with the form
$(1-p)+v,$ where $v$ is a unitary in $pAp.$   As before
this unitary may be chosen to be  $\lceil (1-p)+pu\rceil$
(see \ref{Dceil}).  Moreover, when $\lceil (1-p)+pu\rceil$
is written we also assume that $\|[p, \, u]\|$ is sufficiently small so the notation makes sense.
Therefore, if $L: A\to B$ is  a map which is $\eta$-${\cal F}$-multiplicative,
 $p\in M_n({\tilde A})$ is a projection and  $u\in {\tilde B}$ is a unitary
such that $\|L(x),\, u]\|<\eta$ for all $x\in {\cal F}$ for some sufficiently small $\eta$ and
some large ${\cal F}\subset A,$  then $L(p)$ is close to a projection and
$\|[L(p), \, u_{\underline{n}}]\|<\dt,$ where
$u_{\underline{n}}=u\otimes 1_{M_n}.$  So \eqref{BHfull-1n1} makes sense. Similar items
will appear again later.

\begin{lem}\label{BHfull}
Let $A\in {\cal A}$ be a separable simple \CA\,   
{{with}} continuous {{scale.}}
For any $1>\ep>0$ and any finite subset ${\cal F}\subset A,$ there exist  $\dt>0,$ $\sigma>0$, a finite subset
${\cal G}\subset A,$ a finite subset $\{p_1,p_2,...,p_k, q_1,q_2,...,q_k\}$ of projections of $M_N({\tilde A})$
(for some integer $N\ge 1$) such that
$\{[p_1]-[q_1],[p_2]-[q_2],...,[p_k]-[q_k]\}$ generates a free subgroup $G_u$ of $K_0(A),$
and a finite subset ${\cal P}\subset \underline{K}(A),$
satisfying the following:

Suppose that $\phi: A\to B\otimes 
{{V}}$ is a \hm\, which maps   strictly
positive elements to  strictly positive elements, where $B\in {\cal D}$ has continuous scale
and 
{{$V$}} is a UHF-algebra of infinite type.
If $u\in U({\widetilde{B\otimes 
{{V}}}})$  is a unitary such that
\beq\label{BHfull-1}
&&\|[\phi(x),\, u]\|<\dt\tforal x\in {\cal G},\\\label{BHfull-1n}
&&{\rm Bott}(\phi,\, u)|_{\cal P}=0{{,}}\\\label{BHfull-1n1}
&&{\rm dist}(\overline{\lceil ((1-\phi^{\sim}(p_i))+\phi^\sim(p_i)u_{\underline{N}})(1-\phi^\sim(q_i))+\phi^\sim(q_i)u_{\underline{N}}^*)\rceil}, {\bar 1})<\sigma\andeqn\\\label{BHfull-1n2}
&&\mathrm{dist}(\bar{u}, \bar{1})<\sigma,
\eneq
(where $u_{\underline{N}}=u\otimes 1_{M_N}$),
then there exists a continuous path of unitaries $\{u(t): t\in [0,1]\}\subset U_0({\widetilde{B\otimes 
{{V}}}})$ such
that
\beq\label{BHTL-3}
&&u(0)=u,\,\,\, u(1)=1\\
&&\|[\phi(a),\, u(t)]\|<\ep\tforal a\in {\cal F}\tand for\,\, all\,\, t\in [0,1].
\eneq
\end{lem}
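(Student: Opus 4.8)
# Proof Plan for Lemma \ref{BHfull}

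The plan is to deduce this homotopy lemma from the uniqueness theorem \ref{TUnHT2} by a standard ``Basic Homotopy Lemma'' argument, adapted to the stably projectionless setting. The key point is that a unitary $u$ that nearly commutes with $\phi$ and satisfies the vanishing Bott obstruction, the determinant condition \eqref{BHfull-1n1}, and the distance condition \eqref{BHfull-1n2} can be connected to $1$ through a path that continues to nearly commute with $\phi$. First I would set up the mapping-torus-type object: form the \CA\ $A_0 = C(\T,\widetilde{A})^o$ (in the notation introduced just before \ref{TUnHT2}), and observe that the pair $(\phi,u)$ satisfying \eqref{BHfull-1} gives rise to an approximately multiplicative \cpc\ $\Phi_{u,\phi}: C(\T)\otimes\widetilde A \to \widetilde{B\otimes U}$ with $\Phi_{u,\phi}(z\otimes 1)\approx u$ and $\Phi_{u,\phi}(1\otimes a)\approx\phi^\sim(a)$. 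The trivial comparison map is $\Phi_{1,\phi}$, i.e.\ the one sending $z\otimes 1$ to $1$; this is literally (approximately) a \hm\ factoring through $\widetilde A$. The goal is then to produce a unitary $W$ conjugating $\Phi_{u,\phi}$ to (something close to) $\Phi_{1,\phi}$, and to realize that conjugation through a path.

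The key steps, in order: (1) Choose $\ep,{\cal F}$ and run \ref{TUnHT2} to extract the data $\dt',\gamma_1',\gamma_2',{\cal G}',{\cal H}_1',{\cal H}_2',{\cal P}',{\cal U}'$ for $A = C(\T)\otimes\widetilde A$; pull these back along the two coordinate embeddings of $\widetilde A$ and $C(\T)$ to obtain the finite subsets ${\cal G}\subset A$, ${\cal P}\subset\underline K(A)$, and the projections $\{p_i\},\{q_i\}$ whose classes generate the free part of $K_0(A)$ that ${\cal U}'$ detects; here I use Remark 7.10 of \cite{GLp1} as in the proof of \ref{TUnHT2} to arrange that ${\cal U}'$ consists of unitaries built from $p_i,q_i$, so that condition \eqref{BHfull-1n1} is exactly the hypothesis \eqref{Uni1-3+100} of \ref{TUnHT2} applied to $\Phi_{u,\phi}$ versus $\Phi_{1,\phi}$, while \eqref{BHfull-1n2} plus a compactness/averaging argument handles the $U_0$-part ${\cal U}'\cap U_0$. (2) Check the hypotheses of \ref{TUnHT2}: the $K$-theoretic identity \eqref{HT2-1} follows from ${\rm Bott}(\phi,u)|_{\cal P}=0$ together with the fact that $[\Phi_{1,\phi}]$ is the ``constant'' class; the tracial estimate \eqref{HT2-2} follows because $\phi$ maps strictly positive elements to strictly positive ones and $B\otimes U$ is simple with continuous scale, hence one gets a uniform lower bound $\Delta$ by \ref{FfullDelta}; the trace-difference \eqref{Uni1-3} is automatic since $\tau\circ\Phi_{u,\phi}$ and $\tau\circ\Phi_{1,\phi}$ agree on $1\otimes A$ and $u$ is close to a commutator unitary. (3) Apply \ref{TUnHT2} to get $W\in\widetilde{B\otimes U}$ with $\|W\Phi_{u,\phi}(f)W^* - \Phi_{1,\phi}(f)\| < \ep'$ for $f$ in the relevant finite subset; evaluating at $f = z\otimes 1$ and $f = 1\otimes a$ this says $W u W^*\approx 1$ and $W\phi(a)W^*\approx\phi(a)$ modulo $\ep'$, i.e.\ $u\approx W^*(\text{something }\approx 1)W$ and $W$ nearly commutes with $\phi$. (4) Since $Wu W^*$ is within a small distance of $1$, write $Wu W^* = \exp(ih)$ with $\|h\|$ small, and define $u(t) = W^*\exp(i(1-t)h)W$; this is a continuous path from $u$ (up to $\ep'$) to $1$, and because $W$ nearly commutes with $\phi$ on ${\cal F}$ and $\exp(i(1-t)h)$ is close to $1$ for all $t$, one checks $\|[\phi(a),u(t)]\|<\ep$ for all $a\in{\cal F}$, $t\in[0,1]$. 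A final small adjustment (connecting $u$ to $Wu W^*$-conjugated-back via a short path, since they differ by $<\ep'$) gives the genuine path with $u(0)=u$, $u(1)=1$.

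The main obstacle I expect is \textbf{the determinant/unitary-class bookkeeping in step (1)}: one must arrange the finite set ${\cal U}'\subset J_c(K_1(C(\T)\otimes\widetilde A))$ produced by \ref{TUnHT2} to correspond, under the identification $K_1(C(\T)\otimes\widetilde A)\cong K_1(\widetilde A)\oplus K_0(\widetilde A)$, precisely to the generators $[p_i]-[q_i]$ of the free subgroup $G_u\subset K_0(A)$, and one must verify that the hypotheses \eqref{BHfull-1n1} and \eqref{BHfull-1n2} together control $(\Phi_{u,\phi})^\dag$ versus $(\Phi_{1,\phi})^\dag$ on all of ${\cal U}'$ --- the Bott elements sit in the $K_0$-summand and are governed by the Loring-type unitaries $(1-\phi(p_i))+\phi(p_i)u$ appearing in \eqref{BHfull-1n1}, whereas the $U_0({\widetilde{B\otimes U}})$-part of the unitary group must be handled by a separate argument using \eqref{BHfull-1n2} and the divisibility of $\mathrm{Aff}(T)/\overline{\rho(K_0)}$ (as in \ref{Lderttorsion} and the surjectivity arguments of \ref{PCue}). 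A secondary technical point is ensuring that passing from the approximately multiplicative $\Phi_{u,\phi}$ to an honest path does not introduce $K$-theoretic drift; this is controlled by choosing $\dt$ small enough that all the relevant $\underline K$-invariants are well defined and stable on the $\dt$-multiplicative maps in play, exactly as in the setup paragraphs of \ref{TUnHT2} and \ref{UniqN1}.
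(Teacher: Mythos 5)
Your overall strategy (form $\Phi_{u,\phi}$ on $C(\T)\otimes\widetilde A$ and invoke the uniqueness theorem \ref{TUnHT2}) is the right framework, but the specific comparison you propose --- $\Phi_{u,\phi}$ versus $\Phi_{1,\phi}$ --- cannot work, and the gap is exactly at your step (2). The hypothesis \eqref{HT2-2} of \ref{TUnHT2} requires $\tau(\Phi(a))\ge\Delta(\hat a)$ for test elements $a$ that include functions $g\otimes 1_{\widetilde A}$ with $g\in C(\T)_+$ supported away from $1\in\T$; for $\Phi_{1,\phi}$ one has $\Phi_{1,\phi}(g\otimes 1)\approx g(1)\cdot\phi^\sim(1)=0$, so no strictly positive lower bound $\Delta$ can hold, and the same failure occurs for $\Phi_{u,\phi}$ since the hypotheses \eqref{BHfull-1}--\eqref{BHfull-1n2} give no control whatsoever on the spectral (tracial) distribution of $u$ over $\T$. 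The fullness condition is about the circle direction, not the $A$ direction, so ``$\phi$ maps strictly positive elements to strictly positive elements'' does not help; likewise your claim that \eqref{Uni1-3} is ``automatic'' fails for mixed elements $g\otimes a$. A quick sanity check shows the intended conclusion is also impossible: if \ref{TUnHT2} gave $W$ with $\|W\Phi_{u,\phi}(f)W^*-\Phi_{1,\phi}(f)\|$ small, then evaluating near $f=z\otimes 1$ would give $\|WuW^*-1\|$ small, i.e.\ $\|u-1\|$ small, which is certainly not implied by the hypotheses of the lemma.

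The missing idea --- and what the paper's proof does --- is a spectrum-spreading step before applying \ref{TUnHT2}. Using 20.10 of \cite{GLp1} one chooses $e\in B_+$ commuting appropriately with $\phi(\mathcal G)$ and $h\in U_{s.a.}$ with ${\rm sp}(h)=[-\pi,\pi]$ and $t_U(h)=0$ (this is where the UHF factor $U$ is essential), and compares $L_1=\Phi_{v_1,\phi}$ with $L_2=\Phi_{v_2,\phi}$ where $v_1=u\exp(ie\otimes h)$ and $v_2=\exp(ie\otimes h)$; both now satisfy the $\Delta$-fullness \eqref{HT2-2} and the trace comparison \eqref{Uni1-3}. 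One then checks $[L_1]|_{\mathcal P'}=[L_2]|_{\mathcal P'}$ from ${\rm Bott}(\phi,u)|_{\mathcal P}=0$, handles the free $K_0$-part of $\mathcal U$ via \eqref{BHfull-1n1} together with the computation that $\phi^\sim(p_i)v_2+(1-\phi^\sim(p_i))\in CU$ (using $t_U(h)=0$), handles torsion via \ref{Lderttorsion}, and handles $\overline{z\otimes 1}$ via \eqref{BHfull-1n2}. After \ref{TUnHT2} produces $W$ with $W^*L_2W\approx L_1$, the path is assembled in three pieces: $u\exp(i3t(e\otimes h))$ on $[0,1/3]$, a short exponential correction on $[1/3,2/3]$, and $W^*\exp(i\,3(t-2/3)(e\otimes h))W$ reversed on $[2/3,1]$ --- rather than your single conjugated exponential, which presupposed the impossible estimate $WuW^*\approx 1$. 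Your bookkeeping of the unitary classes (free part via $p_i,q_i$, torsion and $U_0$-part separately) matches the paper's, but without the $\exp(ie\otimes h)$ device the uniqueness theorem simply does not apply.
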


\begin{proof}

Without loss of generality, one only has to prove the statement with assumption that $u\in CU(B\otimes 
{{V}})$
as \eqref{BHfull-1n2} is assumed.
Since $B\otimes 
{{V\otimes V}}\cong B\otimes 
{{V}},$ to simplify notation, \wilog, we may assume
that $B=B\otimes 
{{V}}.$ In particular, $K_0({\tilde B})$ is weakly unperforated (see \ref{Tweakunp}).

In what follows we will use the fact that every C*-algebra in
${\cal D}$ has stable rank one
(11.5 of \cite{eglnp1}).
We will also use $z$ for the generator unitary function on the unit circle.
Let $A_2=C(\T)\otimes {\tilde A}$ and  $m$ is the normalized Lebesgue measure on the unit circle $\T.$ Define
\beq
\Delta(\hat{h})=(1/4) \inf\{\int_\T\tau(h(t))dm: \tau\in T(A)\}
\eneq
for $h\in (A_2)_+^{\bf 1}\setminus \{0\}$ (note, by the assumption, $T(A)$ is compact).
 Let ${\cal F}_1=\{x\otimes f: x\in {\cal F}, f=1, z, z^*\}.$
To simplify notation, {\wilog}, we may assume that ${\cal F}\subset A^{\bf 1}.$
Let $1>\dt_1>0$ (in place of $\dt$), ${\cal G}_1\subset A_2$ be a finite subset
(in place of ${\cal G}$), $1/4>\gamma_1>0, \,1/4>\gamma_2>0,$ ${\cal P}'\subset \underline{K}(A_2)$
(in place of ${\cal P}$)
be a finite subset, ${\cal H}_1\subset (A_2)_+^{\bf 1}\setminus \{0\}$ be a finite subset, ${\cal H}_2\subset (A_2)_{s.a.}$
be a finite subset and ${\cal U}\subset  J_c(K_1(A_2))$
(for some integer $N\ge 1$)  be a finite subset as required by \ref{TUnHT2}
for
$\ep/16$ (in place of $\ep$), ${\cal F}_1$ (in place of ${\cal F}$),  $\Delta$ and $A_2$ (in place of $A$).
Here we assume that $[L]|_{\cal P'}$ is well defined whenever
$L$ is a ${\cal G}_1$-$\dt_1$-multiplicative \cpc\, from $A_2.$ Moreover,
\beq
[L_1]|_{\cal P'}=[L_2]|_{\cal P'},
\eneq
 if both $L_1$ and $L_2$ are  ${\cal G}_1$-$\dt_1$-multiplicative \cpc s
from $A_2$ to a  unital \CA\, and
$\|L_1(g)-L_2(g)\|<\dt_1\rforal g\in {\cal G}_1.$

\Wlog, we may assume that
${\cal G}_1=\{z\otimes 1_{{\tilde A}}, 1_{C(\T)}\otimes a: a\in {\cal G}_{1A}\},$
${\cal H}_1=\{h'\otimes 1_{{\tilde A}}, 1_{C(\T)}\otimes h'': h'\in {\cal H}_{1T}\andeqn h''\in {\cal H}_{1A}\},$
${\cal H}_2=\{h_1\otimes 1_{{\tilde A}}, 1_{C(\T)}\otimes h_2: h_1\in {\cal H}_{2T}\andeqn h_2\in {\cal H}_{2A}\},$
where ${\cal H}_{1T}\subset C(\T)_+^{\bf 1}\setminus \{0\},$ ${\cal H}_{2T}\subset C(\T)_{s.a.},$
${\cal G}_{1,A}\subset {\tilde A},$ ${\cal H}_{1A}\subset A_+^{\bf 1}\setminus \{0\}$
and ${\cal H}_{2A}$ are finite subsets.
Furthermore, we may also assume that
elements in ${\cal H}_{1T}$ and ${\cal H}_{2T}$ are polynomials of $z$ and $z^*$ of
degree no more than $N_1$ and all coefficients with absolute values no more than $M.$
In addition, we assume that ${\cal H}_{1A}\subset {\cal H}_{2A}.$
We may assume that
$
{\cal P}'={\cal P}_1\cup \boldsymbol{\bt}({\cal P}_2)\cup\boldsymbol{\bt}([1_{\tilde A}]),
$
where ${\cal P}_1, {\cal P}_2\subset \underline{K}(A)$ are finite subsets.
We further assume that
\beq
{\rm Bott}(\phi, v(0))|_{{\cal P}_2}={\rm Bott}(\phi, v(t))|_{{\cal P}_2},
\eneq
if $\|[\phi(a),\, v(t)]\|<\dt_1\rforal a\in {\cal G}_{1A}$  and for any continuous
path of unitaries $\{v(t): t\in [0,1]\}.$

We may further assume
that,
\beq\label{BUfull-8}
{\cal U}={\cal U}_1\cup \{\overline{1\otimes z}\}\cup  {\cal U}_2,
\eneq
where ${\cal U}_1=\{\overline{1_{C(\T)}\otimes a}: a\in {\cal U}_1'\subset U({\tilde A})\}$  and
${\cal U}_1'$ is a finite subset,
${\cal U}_2\subset U(M_N(A_2))/CU(M_N(A_2))$ is a finite subset
whose elements represent a finite subset of $\boldsymbol{\bt}(K_0(A)).$
So we may assume that ${\cal U}_2\in J_c(\boldsymbol{\bt}(K_0(A))).$

We may even assume that ${\cal U}_2={\cal U}_{2f}\sqcup {\cal U}_{2t},$
where
${\cal U}_{2f}=\{J_c(g_{1,f}), J_c(g_{2,f}),....,J_c(g_{m(f),f})\}$ and
${\cal U}_{2t}=\{J_c(g_{1,t}), J_c(g_{2,t}), ...,J_c(g_{m(t),t})\},$
where ${\cal P}'\cap \boldsymbol{\bt}(K_0(A))=\{g_{i,f}, g_{j,t}: 1\le i\le m(f),\, 1\le j\le m(t)\}.$
Moreover,  $\{g_{1,f}, g_{2,f},....,g_{m(f),f}\}$ is a set of free generators of
a finitely generated  free subgroup of $\boldsymbol{\bt}(K_0(A))$ and
$\{g_{1,t}, g_{2,t}, ...,g_{m(t),t}\}$ are generators for a finite subgroup
of $\boldsymbol{\bt}(K_0(A)).$  Since $J_c$ is a \hm,
we may assume that there is an integer $k_m\ge 1$ such that $k_m J_c(g_{j,t})=0$ in $U(M_N(A_2))/CU(M_N(A_2)).$
{{Since $g_{i,f}\in \boldsymbol{\bt}(K_0(A))$,}} we may write
that
\beq
g_{i,f}=[(1\otimes (1-p_i)+z\otimes p_i))(1\otimes (1-q_i)+z^*\otimes q_i)],\,\,\,i=1,2,...,m(f).
\eneq
Write
$p_s=(a_{i,j}^{p_s})_{N\times N}$ and $q_s=(a_{i,j}^{q_s})_{N\times N}$ as matrices over ${\tilde A}.$
Let $w_l=(b_{i,j}^l)_{N\times N}$ be unitaries in $M_N({\tilde A})$ such that
$\overline{w_l}=J_c(g_{j,t}),$  $l=1,2,...,m(t).$

 We assume that $(2\dt_1, {\cal P}, {\cal G}_1)$ is a $KL$-triple for $A_2$, $(2\dt_1, {\cal P}_1, {\cal G}_{1A})$ is a $KL$-triple for $A$ (see 2.12 of \cite{GLp1}, for example).
We may also choose $\sigma_1$ and $\sigma_2$ such that
\beq\label{Bufull-9}
&&0<\sigma_1<(1/4)\min\{\gamma_1/16, \inf\{\Delta(\hat{f}): f\in {\cal H}_{1}\}\}/4M(N+1)\andeqn\\
&&\sigma_2=1-\gamma_2/16(N+1)M.
\eneq.
Choose $\dt_2>0$ and a finite subset ${\cal G}_{2A}\subset {\tilde A}$ {{ (and denote ${\cal G}_2:=\{g\otimes f: g\in {\cal G}_{2A}, f=\{1,z,z^*\}$)}} such that, for any two
unital ${\cal G}_2$-$\dt_2$-multiplicative \morp s $\Psi_1, \Psi_2: C(\T)\otimes {\tilde A}\to {\tilde C}$ (any unital \CA\, $C$),
any ${\cal G}_{2A}$-$\dt_2$-multiplicative \morp\, $\Psi_0: {\tilde A} \to {\tilde C}$ and unitary $
{{W}}\in {\tilde C}$ ($1\le i\le k$), {{if
\beq\label{Bf-10+1}
&&\|\Psi_0(g)-\Psi_1(g\otimes 1)\|<\dt_2\rforal g\in {\cal G}_{2A}\\\label{Bf-10+2}
 &&\|\Psi_1(z\otimes 1_{\tilde A})-
 {{W}}\|<\dt_2\andeqn
\|\Psi_1(g)-\Psi_2(g)\|<\dt_2\rforal g\in {\cal G}_2,
\eneq then}} (
$\underline{{{W}}}={{W}}\otimes 1_{M_N}$)
\beq\label{Bufull-10-1+}
&&\lceil (1-\Psi_0(p_i)+\Psi_0(p_i)\underline{
{{W}}})(1-\Psi_0(q_i)+\Psi_0(q_i) \underline{
{{W}}}^*\rceil\\
&&\hspace{0.3in}\approx_{{\gamma_2\over{2^{10}}}}\lceil \Psi_1(((1-p_i)+z\otimes p_i){{(}}(1-q_i)+z^*\otimes q_i)\rceil,\\
&&\|\lceil \Psi_1(x)\rceil -\lceil \Psi_2(x)\rceil\|<{{\gamma_2/2^{10}}}\rforal x\in {\cal U}_2',\\
&&\Psi_1(((1-p_i)+z\otimes p_i)(1-q_i)+z^*\otimes q_i))\\\label{Bf-1510}
&&\hspace{0.3in}\approx_{{\gamma_2\over{2^{10}}}}
\Psi_1({{((1-p_i)+z\otimes p_i)}})\Psi_1((1-q_i)+z^*\otimes q_i)),
\eneq
furthermore for $d_i^{(1)}=p_i,$ $d_i^{(2)}=q_i,$ there are projections ${\bar d}_i^{(j)}\in M_N({\tilde  C})$ and unitaries  ${\bar z}^{(j)}_i\in {\bar d}_i^{(j)}M_N({\tilde C}) {\bar d}_i^{(j)}$ such that
\beq\label{Bufull-10-1++}
&&\Psi_1(((1-d_i^{(j)})+z\otimes d_i^{(j)}))\approx_{{\gamma_2\over{2^{12}}}}(1-{\bar d}_i^{(j)})+{\bar z}^{(j)}_i\andeqn\\\label{Bf-15107+1-1}
&&{\bar d}_i^{(j)}\approx_{\gamma_2\over{2^{12}}} \Psi_1(d_i^{(j)}),\,\,{\bar z}_i^{(1)}\approx_{\gamma_2\over{2^{12}}} \Psi_1(z\otimes p_i),\andeqn
{\bar z}_i^{(2)}\approx_{\gamma_2\over{2^{12}}}  \Psi_1(q_i\otimes z^*),
\eneq
where
$1\le i\le k,$ $j=1,2${{.}}

 Let $\dt_3>0$ and let ${\cal G}_3\subset C(\T, {\tilde A})^o$ be a finite subset
required by \ref{Lderttorsion}  for $C=C(\T, {\tilde A})^o,$ $\gamma_2/2$ (in place of $\ep$) and for all unitaries
 in ${\cal U}_{2t}.$
\Wlog,  we may write
${\cal G}_3={\cal G}_{3A}\cup\{1, z, z^*\},$ where ${\cal G}_{3A}$ is a finite subset
of $A.$

Choose $\dt_A=\min\{\ep/16, \dt_1/16, \dt_2/16,\sigma_1/4, \sigma_2/4\}/8M(N+1)^3$ and
$$
{\cal G}_A={\cal F}\cup {\cal G}_{1A}\cup {\cal G}_{2A}\cup {\cal H}_{1A}\cup {\cal H}_{2A}\cup {\cal U}_1'\cup
\{a_{i,j}^{p_s}, a_{i,j}^{q_s}, b_{i,j}^l: 1\le s\le ,  1\le l\le m(t),\,1\le i,j\le N\}.
$$
Let ${\cal G}_A'\subset A$  be a finite subset such that
every element $a\in {\cal G}_A$ has the form $a=\lambda +b$ for some $\lambda\in \C$ and
$b\in {\cal G}_A'.$  Let
${\cal G}_4={\cal G}_1\cup {\cal G}_2\cup {\cal G}_3\cup {\cal H}_1\cup {\cal H}_2\cup {\cal U}_1.$

Let $\dt_4>0$ (in place of $\dt_1$) and a finite subset  ${\cal G}_5$ (in place of ${\cal G}_1$)
be as required by
%
\ref{densesp3}
for $A$ (in place of $C$), $\dt_1/4$ (in place of $\dt$), $\dt_A$ (in place of $\dt_c$), $\sigma_1,$ $\sigma_2,$  ${\cal H}_1,$ ${\cal H}_2,$  ${\cal G}_4$
(in place of ${\cal G}$), ${\cal G}_A$ (in place of ${\cal G}_c$)  and $N_1.$

By choosing even smaller $\dt_4,$
\wilog, we may assume that
${\cal G}_5=\{a\otimes f: g\in {\cal G}_{5A}\andeqn f=1,z,z^*\}$ with a large finite subset
${\cal G}_{5A}\supset {\cal G}_A.$
Let ${\cal G}_{5A}'\subset A$ be a finite subset such that
every element $g\in {\cal G}_{5A}$ has the form $g=\lambda+x$ for some $\lambda\in \C$
and $x\in {\cal G}_{5A}'.$

Choose $\sigma>0$ so it is smaller than $\min\{\sigma_1/16, \ep/16, \sigma_2/16, \dt_2/16, \dt_3/16,\dt_4/16,  \dt_A/4\}.$

Let $\dt=\sigma$ and ${\cal G}={\cal G}_{5A}'\cup {\cal G}_A.$

Now suppose that $\phi: A\to B$ is a \hm\, and $u\in CU({\tilde B})$
which satisfy the assumption (\ref{BHfull-1}),  \eqref{BHfull-1n} and  (\ref{BHfull-1n1}) for the above mentioned
$\dt,$ $\sigma,$ ${\cal G},$  ${\cal P},$ $p_i,$ and $q_i.$
There is an isomorphism $s: {{V\otimes V}}
\to {{V}}.$ Moreover,
$s\circ\imath$ is approximately unitarily equivalent to the identity map on 
{{$V$,}}
where   
{{${\imath}: V\to V\otimes V$}} is defined by $\imath(a)=a\otimes 1$ (for all $a\in 
{{V}}$).
To simplify notation, without loss of generality, we may assume that
$\phi(A)\subset B\otimes 1\subset B\otimes 
{{V}}.$
Suppose that $u\in U(B)\otimes 
{{1_V}}$ is a unitary which satisfies the assumption. As mentioned at the beginning,  we may assume that $u\in CU(B)\otimes 
{{1_V}}.$

Applying
\ref{densesp3},
we obtain $e\in (B)_+$ with $\|e\|=1$ and $h\in 
{{V_{s.a}}}$ satisfying
the conclusions of
%
\ref{densesp3}.
Note that we may assume, \wilog,
that
\beq\label{BHTL-nn9}
e\phi^{\sim}(g)=\phi^{\sim}(g)e\rforal g\in {\cal G}_{3A}\cup {\cal G}_{5A}\andeqn\\
 e\phi(g)=\phi(g)e=\phi(g)\rforal g\in
{\cal G}_{3A}'\cup {\cal G}_{5A}'.
\eneq
In particular,  for $E=\diag(\overbrace{e,e,...,e}^N)$ and $y=p_i,\, q_i,$ $i=1,2,...,m(f),$
\beq\label{HTcomme}
(\phi^{\sim}\otimes {\rm id}_{M_N})(y)E=E(\phi^{\sim}\otimes {\rm id}_{M_N})(y).
\eneq

Put $v_1=u\exp(ie\otimes h)$ and $v_2=\exp(ie\otimes h).$
Note that  ${\rm sp}(h)=[-\pi, \pi]$ and
$t_{{V}}(h)=0$ and where $t_{{V}}$ is the unique tracial state of ${{V}}.$ 
Let $L_1, L_2: C(\T)\otimes {\tilde A}\to {\widetilde{B\otimes {{V}}}}$  be given by \ref{densesp3}
such that
 \beq\label{densesp3-1}
 |\tau(L_1(f))-\tau(L_2(f))|<\sigma_1\tforal f\in {\cal H}_2,\ \tau\in T(B), \\\label{densesp3-2}
 \tau(g(v_1))\ge \sigma_2(\int g dm)\tforal g\in {\cal H}_1,\ \tau\in T(B), \andeqn
 \eneq
\vspace{-0.35in}\beq\label{densesp3-n11}
 \|L_i(c\otimes 1_{C(\T)})-\phi^{\sim}(c)\otimes 1_{{V}}\|<\dt_A\tforal c\in {\cal G}_c,\,\,\, i=1,2,\\\label{densesp3-n12}
 \|L_1(c\otimes z^j)-\phi^{\sim}(c)(u\exp({{\sqrt{-1}}}e\otimes h))^j\|<\dt_A\tforal c\in {\cal G}_c\andeqn\\\label{densesp3-n13}
 \|L_2(c\otimes z^j)-\phi(c)^{\sim} \exp(ie\otimes h)^j\|<\dt_A\tforal c\in {\cal G}_c
 \eneq
 and for all $0<|j|\le N_1,$ where $\phi^{\sim}: {\tilde A}\to {\widetilde{B\otimes {{V}}}}.$
Note by \eqref{densesp3-n11},  \eqref{densesp3-n12}  and \eqref{densesp3-n13},
we may write $L_1=\Phi_{v_1, \phi}$ and $L_2=\Phi_{v_2, \phi}.$
Let $u(t)=\exp({{\sqrt{-1}}}3t(e\otimes h))$ for $t\in [0,1/3].$
Then
\beq
\|[\phi(a),\, u(t)]\|<\dt_c\rforal a\in {\cal G}_c.
\eneq
In particular,
\vspace{-0.12in}\beq
{\rm Bott}(\phi, v_1)|_{{\cal P}_2}=0.
\eneq
Exactly the same reason, we have that
\vspace{-0.12in} \beq
{\rm Bott}(\phi, v_2)|_{{\cal P}_2}=0.
\eneq
This implies
\vspace{-0.12in} \beq
[L_1]|_{\boldsymbol{\bt}({\cal P}_2)}=[L_2]|_{\boldsymbol{\bt}({\cal P}_2)}.
\eneq
We also have
\vspace{-0.12in}\beq
[L_1]|_{{\cal P}_1}=[\phi]|_{{\cal P}_1}=[L_2]|_{{\cal P}_1}\andeqn [v_1]=[v_2]=0.
\eneq
Therefore
\vspace{-0.12in}\beq\label{BHTL-nn09}
[L_1]|_{{\cal P}'}=[L_2]|_{{\cal P}'}.
\eneq
Then, by \eqref{densesp3-2}
and the choice of $\dt_A,$
we compute (as in \eqref{densesp-1})
that
\beq\label{BHTL-nn10}
\tau(L_i(h))\ge \Delta(\hat{h})\rforal h\in {\cal H}_1,\,\,\,i=1,2.
\eneq
We  also have
\beq
{\rm dist}(L_1^{\dag}(x),L_2^{\dag}(x))<2\dt_A \tforal x\in  {\cal U}_1\cup\{\overline{z\otimes 1_{\tilde A}}\}.
\eneq
Write $V_2=\diag(\overbrace{v_2,v_2,...,v_2}^N)$ and $H=\diag(\overbrace{h,h,...,h}^N).$
As always, we will write  $\phi^{\sim}(y)$ for $\phi^{\sim}\otimes {\rm id}_{M_N}(y)$
for $y=p_i,\, q_i,$ $i=1,2,...,m(f).$
By \eqref{HTcomme},
\beq\label{BHTL-nn10+1}
\psi^{\sim}(p_i)V_2=\exp(i \psi^{\sim}(p_i)E\otimes H)\andeqn \psi^{\sim}(q_i)V_2=\exp(i\psi^{\sim}(q_i)E\otimes H),
\eneq
$i=1,2,...,m(f).$
However,
\beq
\tau(\psi(q_i)E\otimes H)=\tau(\psi(q_i)E)\tau_{{V}}(H)=0\rforal \tau\in T(B\otimes {{V}}).
\eneq
In the next few lines, ${\bf 1}=1_{M_N}.$
Therefore $$\psi^{\sim}(p_i)V_2+({\bf 1}-\psi^{\sim}(p_i)),\,\psi^{\sim}(q_i)V_2+
({\bf 1}-\psi^{\sim}(q_i))\in CU(M_N({\widetilde{B\otimes {{V}}}})),$$
$i=1,2,...,m(f).$
This implies that
\beq\label{BH-free-1}
L_2^{\dag}(x)={\bar 1}\rforal x\in {\cal U}_{2f}.
\eneq
 with $x=(({\bf 1}-p_i)+p_i\otimes z)(({\bf 1}-q_i)+q_i\otimes z^*),$ one then computes  from (\ref{Bf-1510})  and from the assumption \eqref{BHfull-1n1} 
 that
\beq\label{Bf-15107+1}
&&\hspace{-0.4in}{{\overline{\langle L_1(x)\rangle}\approx
_{\gamma_2/2^{10}} \overline{({\bar z}_i^{(1)}\otimes v_2+({\bf 1}-{\bar p}_i))
({\bar z}_i^{(2)}\otimes v_2+({\bf 1}-{\bar q}_i))}}}\\
&&\hspace{-0.3in}{{=\overline{({\bar z}_i^{(1)}+({\bf 1}-{\bar p}_i))({\bar p}_iV_2+({\bf 1}-{\bar p}_i)\otimes 1_{{V}}) ({\bar z}_i^{(2)}+({\bf 1}-{\bar q}_i))
({\bar q}_iV_2+(1-{\bar q}_i))}}}\\
&&=\overline{({\bar z}_i^{(1)}+({\bf 1}-{\bar p}_i))({\bar z}_i^{(2)}+({\bf 1}-{\bar q}_i))}\approx_{\sigma} {\bar 1}.
\eneq
where ${\bar p}_i, {\bar q}_i, {\bar z}_i^{(1)}, {\bar z}_i^{(2)}$ are as above (see the lines below (\ref{Bf-1510})), replacing $\Psi_1$ by $L_1.$  It follows that
\beq\label{BH-free}
{\rm dist}(L_1^{\dag}(x), {\bar 1})<\gamma_2/4 \tforal x\in \{\overline{1\otimes z}\}\cup {\cal U}_{2f}.
\eneq
By the choice of $\dt_3$ and ${\cal G}_4,$ and by applying  \ref{Lderttorsion},
we also have
\beq\label{BTtorsion}
{\rm dist}(\overline{\lceil L_1(w_l) \rceil}, \overline{\lceil L_2(w_l^*)\rceil} )<\gamma_2/2,\,\,\, l=1,2,...,m(t).
\eneq
Combing  \eqref{BH-free-1}, \eqref{BH-free} and \eqref{BTtorsion},  we obtain
that
\beq\label{BH-cu}
{\rm dist}(L_1^{\dag}(w), L^{\dag}_2(w))<\gamma_2\rforal w\in {\cal U}.
\eneq

By  \eqref{BHTL-nn09}, \eqref{densesp3-1}, \eqref{BHTL-nn10} and  \eqref{BH-cu}, and by applying
\ref{TUnHT2}, we obtain a  unitary  $
{{W_1}}\in {\widetilde{B\otimes {{V}}}}$ such that
\beq\label{BHTL-nn111}
\|
{{W_1}}^*L_2(f)
{{W_1}}-L_1(f)\|<\ep/16\rforal f\in {\cal F}_1.
\eneq
Therefore
\beq\label{BHTL-nn112}
&&\hspace{-0.2in}\hspace{-0.2in}\|[L_1(a),\,\,\, {{W_1}}^*v_2{{W_1}}]\|<\ep/8\andeqn \|L_1(a)-{{W_1}}^*L_1(a){{W_1}}\|<\ep/8
\rforal a\in {\cal F}\\\label{BHTL-nn113}
&& \andeqn \|v_1-{{W_1}}^*v_2{{W_1}}\|<\ep/8.
\eneq
Let $v_1^*{{W_1}}^*v_2{{W_1}}=\exp ({{\sqrt{-1}}}h_1)$ for some
$h_1\in {\tilde B}_{s.a.}$ such that $\|h_1\|\le 2\arcsin (\ep/16).$
Now define $u(t)=u\exp(i3t(e\otimes h))$ for $t\in [0,1/3],$  $u(t)=u(1/3)\exp(i3(t-1/3)h_1)$
for $t\in (1/3, 2/3]$ and $u(t)=u(2/3){{W_1}}^*\exp ({{\sqrt{-1}}}(3(t-2/3))(e\otimes h){{W_1}}$
for $t\in (2/3, 1].$  So $\{u(t): t\in [0,1]\}$ is a continuous path of
unitaries in ${\widetilde{B\otimes {{V}}}}$ such that $u(0)=u$ and $u(1)=1_{\tilde B}.$
Moreover, we estimate, by \eqref{BHfull-1},
 \eqref{BHTL-nn112} and \eqref{BHTL-nn112} that
\beq
\|[\phi(a),\,\, u(t)]\|<\ep\rforal a\in {\cal F}.
\eneq

\end{proof}



%

\begin{cor}\label{BHK00}
Let $A\in {\cal M}_0$ with continuous scale.
For any $1>\ep>0$ and any finite subset ${\cal F}\subset A,$ there exist  $\dt>0,$
a finite subset
${\cal G}\subset A$ 
satisfying the following:

Let $B=B_1\otimes 
{{V}},$
where $B_1\in {\cal M}_0$  with continuous scale which satisfies the UCT and  
${{V}}$ is UHF-algebras of infinite type.
Suppose that $\phi: A\to B$ is a  \hm.

If $u\in U({\tilde B})$  is a unitary such that
\beq\label{BHfull-1cc}
&&\|[\phi(x),\, u]\|<\dt\tforal x\in {\cal G},
\eneq
there exists a continuous path of unitaries $\{u(t): t\in [0,1]\}\subset U({\tilde B})$ such
that
\beq\label{BHTL-3}
&&u(0)=u,\,\,\, u(1)=1_{\tilde B},\\
&&\|[\phi(a),\, u(t)]\|<\ep\tforal a\in {\cal F}\tand for\,\, all\,\, t\in [0,1].
\eneq
\end{cor}

\begin{proof}
Fix a finite subset ${\cal G}'\subset A^{\bf 1}$ and $\ep'>0,$  there exist
positive elements $e', e'', e'''\in B\setminus \{0\}$ with $\|e'\|=1=\|e''\|=\|e'''\|,$
$e'e''=e''e'=e'$  such that $e'''e'=e'e'''=0${{, and}}
\beq\label{BHTL-nn0}
\|\phi(g)e'-\phi(g)\|<\ep'/2\andeqn \|e'\phi(g)-\phi(g)\|<\ep'/2\rforal g\in {\cal G}'.
\eneq
Let $\pi^{B\sim}: {\tilde B}\to \C$ be the quotient map.
\Wlog, we may assume that $\pi^{B\sim}(u)=1_\C.$
Since $U({\tilde B})=U_0({\tilde B}),$ we may
write $u=\prod_{i=1}^m\exp(i h_{j0})$ for some
$h_{j0}\in {\tilde B}_{s.a.}.$  Write $h_{j,0}=r_j+h_{j0}',$ where
$r_j\in \R$ and $h_{j0}'\in B_{s.a.}.$  Note that
$\sum_{j=1}^m r_j=2\pi k_u$ for  $k_u\in \Z.$  Therefore $u=\prod_{j=1}^m\exp(i h_{j0}').$
We may also assume, \wilog, that $u=1+x,$ where $x\in \overline{e''Be''}.$
It is easy to find an element $h_0\in \overline{e'''Be''}$ such that
$\tau(h_0)=\sum_{j=1}^m \tau(h_{j0}')$ for $\tau\in T(B).$
Let $u_0(t)=\exp(-{{\sqrt{-1}}} th_0)$ for all $t\in [0,1].$
Note that
\beq\label{BHTL-nn1}
uu_0(0)=u\andeqn uu_0(1)\in CU({\tilde B}).
\eneq
Moreover, by \eqref{BHTL-nn0},
\vspace{-0.1in}\beq
\|\phi(g)u_0(t)-u_0(t)\phi(g)\|<2\ep'\rforal g\in {\cal G}'\andeqn \rforal t\in [0,1].
\eneq
In other words, we have just reduced
the general case to the case that $u\in CU({\tilde B}).$
In other words, we may assume that, \wilog,
that
${\bar u}={\bar 1}.$

Now we will apply \ref{BHfull}.  Note, from the above, we may assume \eqref{BHfull-1n2} holds.
Since $K_0(A)=\{0\},$ \eqref{BHfull-1n1} automatically holds. Since both $A$ and $B$ are $KK$-contractible,
\eqref{BHfull-1n} also holds.
\end{proof}

\begin{lem}\label{LHTkill}
Let $A\in {\cal B}_T$  have continuous scale.
For
any finite subset
${\cal P}\subset \underline{K}(A),$
 there exists $\dt_0>0$ and a finite subset ${\cal G}_0\subset A$ satisfy
the following:
For any $\ep>0,$ any finite subset ${\cal F}\subset A$  and any \hm\,   $\phi: A\to B=B_1\otimes Q$
which maps strictly positive elements to strictly positive elements,
 where $B_1\cong B_1\otimes \zo\in {\cal D}_0$ has continuous scale,
suppose
that  $u\in U({\tilde B})$ satisfies
\beq
\|[\phi(g),\, u]\|<\dt_0 \tforal g\in {\cal G}_0.
\eneq
Then there exists another unitary $v\in U({\tilde B})$ such that
\beq\label{NewHT-2}
&&\|[\phi(g), \, v]\|<\min\{\ep, \dt_0\} \tforal g\in {\cal G}_0\cup {\cal F}\tand \\\label{NewHT-3}
&&{\rm Bott}(\phi,\, uv)|_{\cal P}=0 \andeqn [uv]=0\,\,\, {\rm in}\,\,\, K_1(B).
\eneq
\end{lem}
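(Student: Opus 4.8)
The plan is to exploit the $\zo$-stability of $B$ to manufacture, inside $B$, a correction unitary that kills both the $\mathrm{Bott}$ invariant of $\phi$ with respect to $u$ and the $K_1$-class $[u]$, while approximately commuting with $\phi(A)$. First I would fix the finite subset ${\cal P}\subset \underline{K}(A)$; since $A\in {\cal B}_T$ and $A$ is $\zo$-stable, $K_0({\tilde A})$ is weakly unperforated (by \ref{Tweakunp} applied via \ref{DBT}). Choose $\dt_0>0$ and ${\cal G}_0\subset A$ so that, for any ${\cal G}_0$-$\dt_0$-multiplicative commuting pair $(\phi, u)$, the class $\mathrm{Bott}(\phi, u)|_{\cal P}$ is well defined and stable under small perturbations of $u$; enlarge ${\cal G}_0$ and shrink $\dt_0$ so that $[L]|_{\cal P}$ is well defined for the corresponding approximately multiplicative maps $L: C(\T)\otimes{\tilde A}\to {\tilde B}$ obtained from the pair $(\phi,u)$. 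This determines the output $\dt_0, {\cal G}_0$ that do not depend on $\ep$ or ${\cal F}$, as required.

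Next, given $\ep$, ${\cal F}$, and the \hm\ $\phi: A\to B=B_1\otimes Q$ with $B_1\cong B_1\otimes\zo\in{\cal D}_0$, together with $u$ satisfying $\|[\phi(g),u]\|<\dt_0$: the element $\beta:=\mathrm{Bott}(\phi,u)|_{\cal P}\in \mathrm{Hom}_\Lambda(\underline{K}(A), \underline{K}(B))$ together with $[u]\in K_1(B)$ is a ``discrete'' piece of data. The idea is to realize its negative by a unitary living in a small full hereditary subalgebra of $B$ that is $Q$-stable (equivalently $\zo$-stable after absorbing another copy of $\zo$ into $B_1$), using the existence theorems already developed. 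Concretely: since $B$ has continuous scale and $B_1$ is $\zo$-stable, one can split off an approximately central copy of $Q$ inside $B$ (using that $B\cong B\otimes Q$) and work in a corner $B'\otimes Q\subset B$ with $B'$ full, exactly as in the construction of the embedding maps in \ref{ConstphiW} and \ref{AnuniqW}. One then invokes \ref{TExistence}/\ref{11Ext1} (or more directly the range-of-invariant machinery through \ref{11Extmz}) to produce an approximately multiplicative \cpc\ $\Psi$ from $A$ into this corner, approximately commuting with $\phi(A)$, together with a unitary $w$ in (the unitization of) that corner with $\mathrm{Bott}(\phi\oplus\text{(corner part)}, w)|_{\cal P} = -\beta$ and $[w]=-[u]$ in $K_1(B)$; the weak unperforation of $K_0({\tilde B})$ and the fact that $K_1(B)=K_1(B_1)$ is what makes the relevant lifting problems solvable. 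Setting $v := w$ (adjusted by a scalar in $U_0$ to make it close to $1_{\tilde B}$), we get $\|[\phi(g),v]\|<\min\{\ep,\dt_0\}$ for $g\in{\cal G}_0\cup{\cal F}$, and by additivity of the Bott map, $\mathrm{Bott}(\phi, uv)|_{\cal P} = \mathrm{Bott}(\phi,u)|_{\cal P} + \mathrm{Bott}(\phi,v)|_{\cal P} = \beta - \beta = 0$ while $[uv] = [u]+[v] = 0$ in $K_1(B)$, which is \eqref{NewHT-3}.

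The main obstacle I anticipate is the simultaneous control of the \emph{three} constraints --- approximate commutation with $\phi(A)$ on ${\cal G}_0\cup{\cal F}$, vanishing of the full finite set ${\cal P}\subset\underline K(A)$ (not just $K_1$), and vanishing of $[uv]$ in $K_1$ --- by a single correction unitary $v$ that is genuinely close to $1$ (so that it can be absorbed without disturbing the earlier estimates). This requires carefully arranging the auxiliary copy of $Q$ (or $\zo$) to be sufficiently central, and choosing $w$ in a corner whose ``size'' (controlled by a positive element with small $d_\tau$) is small enough that $v$ commutes with $\phi$ on the prescribed finite sets within $\ep$; the Bott-picture formulas of \ref{BHfull} (the identities relating $\lceil(1-\phi(p)+\phi(p)u)(1-\phi(q)+\phi(q)u^*)\rceil$ to the de la Harpe--Skandalis determinant) are the bookkeeping device that lets one pass from the $K$-theoretic equality to the operator-norm statement \eqref{NewHT-2}. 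A secondary technical point is ensuring the ${\cal P}$-triviality is realized on torsion as well as free parts of $\underline K(A)$; here one uses divisibility of $\Aff(T({\tilde B}))/\overline{\rho_B(K_0({\tilde B}))}$ together with \ref{Lderttorsion}-type arguments, exactly as in the proof of \ref{11Ext1}. I would organize the proof so that the $\ep$-${\cal F}$-dependence enters only through the size of the corner and the approximate centrality, keeping $\dt_0, {\cal G}_0$ fixed as promised.
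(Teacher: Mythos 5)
Your overall outline --- correct $u$ by a unitary $v$ realizing $-{\rm Bott}(\phi,u)|_{\cal P}$ and $-[u]$, produced via the existence machinery inside a small corner of $B$, and conclude by additivity of ${\rm Bott}$ --- is indeed the shape of the paper's argument, but the step you leave vague is exactly the one that carries the proof: how does a unitary, produced abstractly with the right $K$-data in a small hereditary subalgebra, end up approximately commuting with $\phi$ on ${\cal G}_0\cup{\cal F}$ \emph{and} having the prescribed Bott pairing against $\phi$ itself? Approximate centrality of a split-off copy of $Q$ cannot deliver this: a unitary built independently of $\phi$ in an approximately central corner has no reason to pair with $\phi$ to the nonzero class $-\beta$, while a unitary with a prescribed nonzero pairing must interact with an actual summand of $\phi$. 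The paper's mechanism is to rewrite $\phi$ itself, up to the uniqueness theorem \ref{TUNIq}, as a direct sum $\Phi=\phi_{cu}\oplus L\oplus\psi_{b,\sigma}\circ\phi$, where the dominant summand $\psi_{b,\sigma}\circ\phi$ factors through $B\otimes W$ (so it kills all of $\underline{K}(A)$ but carries almost all the trace), and the small summand $L$ together with the correction unitary is obtained by applying the existence theorem \ref{TExistence} not to $A$ but to $C(\T,{\tilde A}_N)^o$ (after reducing ${\cal P}$ to some $A_N$ with finitely generated $K$-theory), with the class $\kappa^{\circledast}$ that agrees with $[\Phi_{u,\phi}]$ on $\underline{K}(A_N)$ and equals its negative on $\boldsymbol{\bt}(\underline{K}({\tilde A}_N))$; the image of $z\otimes 1$ is then a unitary commuting (up to the multiplicativity error) with $L$ and carrying ${\rm Bott}=-\beta$ and class $-[u]$. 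The estimate \eqref{NewHT-2} is obtained only after conjugating $\Phi$ back to $\phi$ via \ref{TUNIq}, and to meet the hypotheses of that uniqueness theorem one must also match the $U({\tilde A})/CU({\tilde A})$-data, which forces the additional homomorphism $\phi_{cu}$ supplied by \ref{11ExtT1} to absorb the determinant discrepancy. Your proposal invokes neither the uniqueness theorem nor this $CU$-correction, and without them there is no passage from ``the invariants agree'' to the commutator estimate; note also that it is the divisibility of $K_i(B)$ (from $B=B_1\otimes Q$), rather than weak unperforation, that lets one choose the class $\kappa$ in $KL(C(\T,{\tilde A}_N),{\tilde B})$.

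A secondary but real error is your insistence that $v$ be ``genuinely close to $1_{\tilde B}$'': a unitary norm-close to $1$ has trivial $K_1$-class and trivial Bott pairing with $\phi$, so it could never satisfy $[v]=-[u]$ and ${\rm Bott}(\phi,v)=-\beta$ unless these already vanish. What is actually needed, and what the paper arranges, is only that $v=W^*(1_{\tilde B}+c)W$ with $c$ supported in a hereditary subalgebra orthogonal to the ranges of the dominant summands of $\Phi$; smallness is measured in trace (to keep the hypotheses of \ref{TUNIq} satisfiable), not in norm. So the ``main obstacle'' you single out is a requirement that is neither achievable nor required, while the genuine obstacles --- the decomposition of $\phi$ with a $W$-factoring bulk, the existence theorem over $C(\T,{\tilde A}_N)^o$, the $CU$-correction, and the final application of \ref{TUNIq} --- are missing from the proposal.
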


\begin{proof}
Define
$\Delta_1(\hat{h})=\inf\{\tau(h): \tau\in T(A)\}$ for $h\in A_+^{\bf 1}\setminus \{0\}.$
Let $\Delta=\Delta_1/2.$
Let $T: A_+^{\bf 1}\setminus \{0\}\to \R_+\setminus \{0\}\times \N$ be the map given by $\Delta$ as in
\ref{FfullDelta}.
Let  ${\cal P}$
be given.

Write $A=\overline{\cup_{n=1}^{\infty}A_n},$ where $A_n=A(W, \af_n)\oplus W_n$ as in {{section 7.}}
\Wlog, we may assume ${\cal F}\subset A_{N'}$ for some
integer $N'$ and ${\cal P}\subset [\imath']({\cal P}_{N'})$ for some finite
subset ${\cal P}_{N'}\subset \underline{K}(A_{N'}),$ where $\imath': A_{N'}\to A$ is the embedding.

Let $\dt_0>0$ and let ${\cal G}_0\subset A_{N'}$ be a finite subset satisfying the following:
${\rm Bott}(L,\, w)|_{\cal P}$ is well defined for
any ${\cal G}_0$-$\dt_0$-multiplicative \cpc\, $L: A\to C$ and
any unitary $w\in {\tilde C}$ with $\|[L(g),\, w]\|<2\dt_0$ for all $g\in {\cal G}_0.$
Moreover, if $w'$ is another unitary, we also require that
\beq
{\rm Bott}(L, ww')|_{\cal P}={\rm Bott}(L, w)|_{\cal P}+{\rm Bott}(\phi, w')|_{\cal P},
\eneq
when $\|[L(g),\, w']\|<2\dt_0$ for all $g\in {\cal G}_0.$

Let $\phi$ and $u$ be given satisfying the assumption
for the above ${\cal G}_0$ and $\dt_0.$

Now fix $\ep>0$ and a finite subset ${\cal F}\subset A.$

Let $\ep_1=\min\{\dt_0/4, \ep/16\}$
and ${\cal F}_1={\cal F}\cup {\cal G}_0.$
Let $\dt_1>0$ (in place of $\dt$),  $\gamma>0,$ $\eta>0,$ ${\cal G}_1\subset A$ (in place of ${\cal G}$) be a finite
subset, ${\cal P}_1\subset \underline{K}(A)$ (in place ${\cal P}$) be a finite subset,
${\cal U}\subset U({\tilde A})$ be a finite subset,
${\cal H}_1\subset A_+\setminus \{0\}$ be a finite subset, and ${\cal H}_2\subset A_{s.a.}$ be a finite
subset required by \ref{TUNIq} for the above $T$ (and for ${\bf T}(n,k)=n$ as $K_0({\tilde B_1})$ is weakly
unperforated). {{Let us assume that ${\cal P}_1$ contains the set $\{[u]\in K_1(A)\subset \underline{K}(A): u\in {\cal U}\}$, by enlarger ${\cal P}_1$ if necessary.}}

\Wlog, we may assume
that ${\cal P}_1\subset [\imath]({\cal P}_N)$ for some finite
subset ${\cal P}_{N}\subset \underline{K}(A_{N}),$ where $N\ge N'$ and  $\imath: A_{N}\to A$ is the embedding.
We assume that $\dt_1<\dt_0.$  \Wlog, by choosing large $N,$ we may assume
that ${\cal G}_1\cup {\cal H}_1\cup {\cal H}_2\subset (A_N)_+^{\bf 1}.$
We may also assume that ${\cal U}\subset U({\tilde A_N}).$
Write $w=\lambda_w+\af(w),$ where $\lambda_w\in \T\subset \C$ and $\af(w)\in A_N.$
As in the remark of  \ref{TUNIq}, we may assume that $[w]\not=0$ and $[w]\in {\cal P}_N$ for all $w\in {\cal U}.$
 Let $G_u$ be the subgroup generated by
$\{\overline{w}: w\in {\cal U}\}.$ We may view $G_u\subset J_c(K_1(A))$ (see the statement of \ref{UniqN1}).
Moreover, for any ${\cal G}_1$-$\dt_1$-multiplicative \cpc\, $L'$ from
$A_N$ to a non-unital \CA\, 
{{$C$}}  induces a \hm\, $\lambda': G_u\to U({\tilde C})/CU({\tilde C})$
(see 14.5 of \cite{Lnloc}).
Furthermore,  since $K_i(A_N)$ is finitely generated, $i=0,1,$ we may assume, with even smaller
$\dt_1$ and larger ${\cal G}_1,$
that
 {{$[\Phi_{u', L'}]$}}defines an element in $KL(C(\T, {\tilde A}_N), 
 {{\tilde C}}),$
if $\|[L'(g),\, 
{{u'}}]\|<\dt_1$ for all $g\in {\cal G}_1.$

Set ${\cal G}= {\cal F}_1\cup {\cal G}_1\cup \{\af(w): w\in {\cal U}\}$
and set
 a rational number
$$
0<\sigma_0<\min\{\inf\{\Delta(\hat{h}): h\in {\cal H}_1\}, \gamma/4\}.
$$

Choose $\dt=\min\{\ep_1/16, \dt_1/16, \gamma/16, \eta/16\}.$
We may write $u=1_{\tilde B}+\af(u),$ where $\af(u)\in B.$
Since $B\otimes Q\cong B,$ $K_i(B)$ is divisible ($i=0,1$). Therefore $KL(A, B)={\rm Hom}(K_*(A), K_*(B))$ and
there  is $\kappa\in KL(C(\T, {\tilde A_N}), {\tilde B})$ such that
\beq\label{LHTkill-2}
[\Phi_{u, \phi\circ \imath}]|_{{\cal P}_{N'}\cup {\boldsymbol{\bt}}({\cal P}_{N'})}=\kappa|_{{\cal P}_{N'}\cup {\boldsymbol{\bt}}({\cal P}_{N'})}\andeqn [u]=\kappa([z\otimes 1_{{\tilde A}_N}]).
\eneq
Note that  $B\cong B\otimes \zo.$  Define $\psi_{b,w}: B\otimes \zo\to
B\otimes {\cal W}$ by letting $\psi_{b,w}(b\otimes a)=b\otimes \psi_{z,w}(a)$
for all $b\in B$ and $a\in \zo,$  where $
 {{\phi_{z,w}:}}\zo\to {\cal W}$
is a \hm\,  defined in \ref{DWZmaps}.
Note also 
{{${\cal W}\otimes Q\cong {\cal W}.$}}There is a \hm\, $\psi_{\sigma,{\cal W}}: {\cal W}\to {\cal W}$ such that
$d_{t_{\cal W}}(\psi_{\sigma,{\cal W}}(e_W))=1-\sigma_0$
and
\beq\label{LHTkill-3}
t_{\cal W}(
{{\psi_{\sigma,{\cal W}}}}(a))=(1-\sigma_0)t_{\cal W}(a)\rforal a\in {\cal W}.
\eneq
Let 
{{$\phi_{w,z}$}} be as in \ref{DWZmaps}.
Note that $t_{\cal W}=t_Z\circ 
{{\phi_{w,z}}}$ and $t_Z=t_{\cal W}\circ 
{{\phi_{z,w}}},$
where $t_{\cal W}$ and $t_Z$ are tracial states of ${\cal W}$ and $\zo,$ respectively.
Let $\psi_{b,\sigma}: B\to B$ be defined by
$\psi_{b, \sigma}(b\otimes a)=
{{b\otimes \phi_{w,z}\circ \psi_{\sigma, {\cal W}}\circ \phi_{z,w}(a)}}$
for all $b\in B$ and $a\in \zo.$
 Note  that
$\psi_{b, \sigma}(B)^{\perp}\not=\{0\}.$

Let $\phi_{\sigma}=\psi_{b,\sigma}\circ \phi$ and {{$u_{\sigma}\in {\tilde B}$ satisfy}} $\af(u_\sigma)=\psi_{b,\sigma}(\af(u)).$
Then,  by \eqref{LHTkill-3}
\beq
|\tau\circ \phi(a)-\tau\circ \phi_{\sigma}(a)|\le
\sigma_0|\tau(a)|\rforal a\in A.
\eneq
In particular,
\beq\label{LHTkill-5}
\tau\circ \psi_{\sigma}(h)\ge (1-\sigma_0)\tau(\phi(h))\ge (1-\sigma_0)\Delta_1(\hat{h})\rforal h\in (A_+)^{\bf 1}\setminus \{0\}.
\eneq
Choose two mutually orthogonal non-zero positive elements
$e_1, e_2\in \psi_{b, \sigma}(B)^{\perp}.$
Note that
\beq
\sum_{i=1}^2\tau(e_i)<\sigma_0\rforal \tau\in T(B).
\eneq

Consider \CA\, $C_0=C(\T, {\tilde A_N})^o.$   By \ref{CCTW},
 $C(\T, {\tilde A})^o$ satisfies the condition in \ref{DFixA}.
It follows from \ref{TExistence} that there exists an asymptotic \cpc s
$L_n: C_0\to B\otimes M_{k(n)}$ such that
\beq
[L_n^{\sim}]|_{{{\cal P}\cup {\boldsymbol{\bt}}({\cal P}\cup \{[1_{\tilde C}]\})}}
=\kappa^{\circledast}|_{{\cal P}\cup {\boldsymbol{\bt}}({\cal P}\cup \{[1_{\tilde C}]\})},
\eneq
where $k(n)\to\infty$ and where
\hspace{-0.1in}\beq\label{01-18-2020+2}
\kappa^{\circledast}|_{\underline{K}(A_N)}=\kappa|_{\underline{K}(A_N)}\andeqn \kappa^{\circledast}|_{{\boldsymbol{\bt}}(\underline{K}({\tilde A_N}))}}=-\kappa|_{{\boldsymbol{\bt}}(\underline{K}({\tilde{A_N}))}.
\eneq
In particular, \beq\label{01-19-2020} \kappa^{\circledast}({\boldsymbol{\bt}}([1_{\tilde A_N}]))=-\kappa({\boldsymbol{\bt}}([1_{\tilde A_N}]))=-[u].\eneq
For each $n,$ there  are two  sequences of \cpc s
$\psi_{0,m}:B\otimes M_{k(n)}\to B_{0,m}\subset B\otimes M_{k(n)}$
and $\psi_{1,m}: B\otimes M_{k(n)}\to D_m\subset B\otimes M_{k(n)}$
such that
\beq
&&\lim_{m\to\infty}\|x-(\psi_{0,m}(x)\oplus  \psi_{1,m}(x))\|=0\rforal x\in B\otimes M_{k(n)},\\
&&\lim_{m\to\infty}\|\psi_{i,m}(ab)-\psi_{i,m}(a)\psi_{i,m}(b)\|=0\rforal a, b\in B\otimes M_{k(n)},\,\,\, i=0,1,\\\label{LHT-kill-13}
&&\lim_{m\to\infty}\sup\{d_\tau(e_{b,m}): \tau\in T(B)\}=0,
\eneq
where {{$D_m\in C_0^0$, $B_{0,m}\perp D_m$, and $e_{b,m}\in B_{0,m}$ is a strictly positive element of $B_{0,m}$. }}
 Since $K_i(D_m)=\{0\},$ $i=0,1,$ by choosing sufficiently large $n$ and $m,$
put $L_n'=\psi_{0,m}\circ L_n,$
we may assume that
$L_n'^{\sim}$ is ${\cal G}$-$\dt/2$-multiplicative (with embedding $\imath: C_0\to C(\T, {\tilde A})^o$) and
\beq\label{01-12-2020}
[L_n'^{\sim}\circ \imath]|_{{\cal P}\cup {\boldsymbol{\bt}}({\cal P}\cup \{[1_{\tilde A_N}])}=\kappa^{\circledast}|_{{\cal P}\cup {\boldsymbol{\bt}}({\cal P}\cup \{[1_{\tilde A_N}])}.
\eneq

Moreover,  by \eqref{LHT-kill-13}, we may assume
that $e_{b,m}\lesssim e_{0,1},$ where $e_{0,1}\in B,$ $e_{0,1}e_1=e_1e_{0,1}=e_{0,1}.$
Since $B$ has almost stable rank one, there is a unitary $w_1\in {\tilde B}$ such that
${\rm Ad}\, w_1\circ L_n'(a)\in B_{0,e}=\overline{e_1Be_1}$ for all $a\in A.$  Put $L_n''={\rm Ad}\, w_1\circ L_n'^{\sim}.$
Let $u_0\in {\widetilde{B_{0,e}}}$ such that $u_0=1_{\tilde{B_{0,e}}}+\af(u_0)$ for some
$\af(u_0)\in (B_{0,e})_{s.a.}$ and
\beq\label{01-18-2020}
\|L_n''(z\otimes 1_{{\tilde{A}_N}})-u_0\|<\dt/16.
\eneq
{{It follows from (\ref{01-12-2020}) and  (\ref{01-19-2020}) that
\beq\label{01-19-2020+}
[u_0]=\kappa^{\circledast}({\boldsymbol{\bt}}([1_{\tilde A_N}]))=-[u]\in K_1(B)
\eneq}}
Define $L: A\to B$ by (for some sufficiently large $n$ as specified above)
\beq\label{01-18-2020+}
L(a)=L''_n(a)\oplus  \psi_{b,\sigma}\circ \phi(a)\rforal a\in A.
\eneq
It is ready to check that $L$ is ${\cal G}_1$-$\dt$-multiplicative.
Let $\lambda': G_u\to U({\tilde B})/CU({\tilde B})$ be a \hm\,
induced by $L.$  Let $\lambda=\phi^{\dag}|_{\overline{{\cal U}}}-\lambda'.$
Since $\psi_{b, \sigma}\circ \phi$ factors through $B\otimes {\cal W},$
$[\psi_{b, \sigma}\circ \phi]=0.$ {{By (\ref{01-12-2020}) and (\ref{LHTkill-2}) and the fact that ${\cal P}_1$ contains the set $\{[u]\in K_1(A)\subset \underline{K}(A): u\in {\cal U}\}$, we know that
$[L]|_{\{[u], u\in G_u\}}=[\phi]|_{\{[u], u\in G_u\}}$. Consequently, }}the map $\lambda$ maps
$G_u$ into $U_0({\tilde B})/CU({\tilde B}).$
Since $U_0({\tilde B})/CU({\tilde B})$ is divisible, we may extend  $\lambda$  to a map from
$J_c(K_1(A))$ into $\Aff(T({\tilde B}))/\Z.$
Choose a non-zero  element $e_0\in B$ with $e_0e_2=e_2e_0=e_0$ such
that $d_\tau(e_0)$ is continuous on $T(B).$
Let $\lambda_T: T(\overline{e_0Be_0})\to T(A)$ be an affine continuous map defined by
$\lambda_T(t)=\tau_A$ for all $t\in T(\overline{e_0Be_0}),$ where ${{\tau_A}}$ is a fixed trace in $T(A).$
Define $\lambda_{cu}: U({\tilde A})/CU({\tilde A})\to U_0({\widetilde{e_0Be_0}})/CU({\widetilde{e_0Be_0}})$
by $\lambda_{cu}|_{J_c(K_1({\tilde A}))}=\lambda$ and
$\lambda_{cu}|_{U_0({\tilde A})/CU({\tilde A})}=\lambda_T^{\sharp},$ i.e.,
$\lambda_{cu}(f)(t)=f(\lambda_T(t))$ for all $t\in T(\overline{e_0Be_0}).$
Define $\lambda_K: \underline{K}(A)\to \underline{K}({\overline{e_0Be_0}})$
by $\lambda_K=0.$ Then $(\lambda_J, \lambda_{cu}, \lambda_T)$ is compatible.
It follows from \ref{11ExtT1} that there exists a \hm\, $\phi_{cu}: A\to \overline{e_0Be_0}$ such
that $([\phi_{cu}], \phi_{cu}^{\dag}, (\phi_{cu})_T)=(\lambda_K, \lambda_{cu}, \lambda_T).$ {{(Note that $\phi_{c,u}\bot L,$ since $e_1, e_2\in \psi_{b,\sigma}(B)^{\bot}$ and $e_1e_2=0$.)}}

Now define $\Phi: A\to B$ by $\Phi(a)=\phi_{cu}(a)\oplus L(a)$ for $a\in A.$
Then $\Phi$ is  ${\cal G}_1$-$\dt$-multiplicative,
\beq\label{LHTkill-25}
&&\tau\circ \Phi(h)\ge \Delta(\hat{h})\rforal h\in {\cal H}_1,\,\,\,\hspace{0.5in} {\rm (by\,\,\eqref {LHTkill-5})}\\
&&|\tau\circ \Phi(h)-\tau\circ \phi(h)\|<\gamma\rforal h\in {\cal H}_2,\\
&&{[}\Phi{]}|_{\cal P}=[\phi]|_{\cal P}\andeqn\\
&&\Phi^{\dag}({\bar w})=\lambda(\bar{w})+\lambda'(\bar{w})=\phi^{\dag}(\bar w)\rforal w\in {\cal U}.
\eneq
{{Let $v'=1_{\tilde B}+\af(u_0)+\psi_{b,\sigma}(\af(u))$. By (\ref{01-18-2020}), (\ref{01-18-2020+}), and $[\psi_{b,\sigma}\circ\phi]=0$, We have
\beq\label{01-18-2020+1}
{\rm Bott}(\Phi, v')|_{\cal P}={\rm Bott}(L_n''|_A, u_0)|_{\cal P}
=[L_n'']\circ \beta|_{\cal P}
\eneq}}
{{Combining with (\ref{01-12-2020}) and (\ref{01-18-2020+2}), one obtains
\beq\label{01-18-2020+3}
{\rm Bott}(\Phi, v')|_{\cal P}=\kappa^{\circledast}\circ\beta|{\cal P}=-\kappa\circ \beta|_{\cal P}.
\eneq}}
By \eqref{LHTkill-25}, $\Phi$ is also
$T$-${\cal H}_1$-full.
By applying \ref{TUNIq}, we obtain a unitary $W\in {\tilde B}$ such that
\beq\label{LHTkill-30}
\|W^*\Phi(f)W-\phi(f)\|<\ep_1\rforal f\in {\cal F}\cup {\cal G}_0.
\eneq
Let $v= W^*(1_{\tilde B}+\af(u_0)+\psi_{b,\sigma}(\af(u)))W.$
Then $v$ is a unitary. {{It follows from (\ref{01-19-2020+}) that
\beq\label{01-19-2020+1}
[v]=-[u]
\eneq}}
We have
\beq
\|[\phi(f), v]\|<\ep_1+\dt\rforal f\in {\cal F}\cup {\cal G}_0.
\eneq
{{Note that ${\rm Bott}(\phi, v)={\rm Bott}(\Phi, v')$. Recall that from (\ref{LHTkill-2}), ${\rm Bott}(\phi, u)|_{\cal P}=\kappa\circ \beta|_{\cal P}$. By  (\ref{01-18-2020+3}) and  (\ref{01-19-2020+1}), we}}  then compute that
\beq
{\rm Bott}(\phi, uv)|_{\cal P}={\rm Bott}(\phi, u)|_{\cal P}+{\rm Bott}(\phi, v)|_{\cal P}=0\andeqn [uv]=0.
\eneq

\end{proof}

\begin{rem}
Lemma \ref{LHTkill} still holds by replacing $Q$ by any UHF-algebra of infinite type
if $K_i(A)$ is finitely generated.
It should be noted that $\dt_0$  and ${\cal G}_0$ are independent
of $\ep$ and ${\cal F}.$

\end{rem}

\begin{lem}\label{LWHTW}
Let $A\in {\cal B}_T$ have continuous scale.
For any $\ep>0$ and any finite subset ${\cal F}\subset A,$ there exists $\dt>0$ and a finite subset
${\cal G}\subset A$ satisfying the following:
Suppose that $\phi: A\to B\cong B\otimes {\cal W},$ where $B\in {\cal D}_0$ with continuous scale,  is a \hm\,
which maps strictly positive elements to strictly positive elements and suppose
that there is a unitary $u\in {\tilde B}$ such that
\beq
\|[\phi(g),\, u]\|<\dt\tforal g\in {\cal G}.
\eneq
Then there exists a continuous path of unitaries $\{u(t): t\in [0,1]\}\subset {\tilde B}$
such that $u(0)=u,$  $u(1)=1_{\tilde B}$ and
\beq
\|[\phi(f),\, u(t)]\|<\ep\tforal f\in {\cal F}.
\eneq
\end{lem}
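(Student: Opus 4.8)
The plan is to deduce \ref{LWHTW} from the two preceding results \ref{BHfull} and \ref{LHTkill}, after exploiting the very special nature of $B$. First note that $B\cong B\otimes W$ forces, via the K\"unneth theorem and $K_\ast(W)=0$, that $K_\ast(B)=\{0\}$; in particular $K_1(\tilde B)=\{0\}$, so $u\in U_0(\tilde B)$ and $U(\tilde B)/CU(\tilde B)=\mathrm{Aff}(T(\tilde B))/\Z$. Moreover, by the uniqueness of $W$ from part I (\cite{GLp1}) one has $W\otimes Q\cong W$, $W\otimes\zo\cong W$ and $W\otimes M_n\cong W$; tensoring the isomorphism $B\cong B\otimes W$ on the right therefore gives $B\cong B\otimes Q$, $B\cong B\otimes\zo$ and $B\cong M_2(B)$. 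Thus, identifying $B$ with $B\otimes Q$, the homomorphism $\phi\colon A\to B$ fits the hypotheses of \ref{BHfull} (with the UHF algebra $U=Q$), and, since $K_\ast(B)=\{0\}$, the $K$-theoretic requirements $\mathrm{Bott}(\phi,\cdot)|_{\cal P}=0$ and $[\cdot]=0$ in \ref{BHfull} are automatically satisfied; moreover, setting $B_1=B$, the hypotheses of \ref{LHTkill} hold too ($B_1\cong B_1\otimes\zo\in{\cal D}_0$, continuous scale). So the whole problem is reduced to managing the de la Harpe--Skandalis determinant conditions \eqref{BHfull-1n1}--\eqref{BHfull-1n2} of \ref{BHfull}.

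Concretely, for the given $\ep$ and ${\cal F}$ I would first fix the output data $\delta_1,\sigma,{\cal G}_1,\{p_i,q_i\},{\cal P}$ of \ref{BHfull}. A general unitary $u$ almost commuting with $\phi({\cal G})$ need not have small image in $U(\tilde B)/CU(\tilde B)=\mathrm{Aff}(T(\tilde B))/\Z$, nor need the associated unitaries $\langle((1-\phi(p_i))+\phi(p_i)u)((1-\phi(q_i))+\phi(q_i)u^\ast)\rangle$ be close to $1$. To correct this I would run the construction at the end of the proof of \ref{LHTkill}: build an auxiliary homomorphism $\phi_{cu}\colon A\to\overline{e_0Be_0}$ with $[\phi_{cu}]=0$ and a prescribed continuous determinant map $\lambda_{cu}$ chosen to cancel the determinant of $u$ and the $\langle\cdot\rangle$-determinants, using the existence theorem \ref{TExistence} together with \ref{11ExtT1} (here one uses that $C(\T,\tilde A)^\circ$ satisfies the condition in \ref{DFixA}, and that $B\cong B\otimes W$ provides $W$-like building blocks as in \ref{DWZmaps} to realize the required determinant data by maps that almost commute with $\phi$), then conjugate $\mathrm{diag}(\phi_{cu},\cdot)$ back onto $\phi$ by the uniqueness theorem \ref{TUNIq}, and extract from this a unitary $v$, almost commuting with $\phi({\cal G}_1\cup{\cal F})$, such that $uv$ satisfies all hypotheses of \ref{BHfull}. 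Because $B\cong M_2(B)$, I can split this construction across two orthogonal full hereditary copies of $B$, distributing the determinant ``budget'' so that $v$ itself \emph{also} meets the hypotheses of \ref{BHfull}.

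Finally I would assemble the path. Applying \ref{BHfull} to $uv$ produces a continuous path of unitaries from $uv$ to $1_{\tilde B}$ that stays within $\ep$ of commuting with $\phi({\cal F})$; applying \ref{BHfull} to $v$ produces a path from $v$ to $1_{\tilde B}$ with the same property, whose reverse, multiplied on the left by $u$, is a path from $u$ to $uv$ almost commuting with $\phi({\cal F})$ (since $u$ almost commutes with $\phi({\cal F})$). Concatenating the path $u\rightsquigarrow uv$ with the path $uv\rightsquigarrow 1_{\tilde B}$ gives the desired $\{u(t)\}$. I expect the genuine difficulty to be the correction step of the second paragraph: keeping simultaneous control of the $U_0/CU$-valued secondary invariants of $uv$ \emph{and} of $v$ through the existence/uniqueness machinery, so that both are admissible inputs for \ref{BHfull}; the collapse of all Bott obstructions (thanks to $K_\ast(B)=\{0\}$) is exactly what reduces this to a purely determinant-theoretic bookkeeping problem, and the $W$-absorption $B\cong B\otimes W$ is what makes every determinant value realizable by unitaries that almost commute with $\phi$.
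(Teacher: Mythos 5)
Your reduction has a genuine gap at the assembly step, and it is exactly the point where the paper's own argument cannot be bypassed. To build the segment of the path from $u$ to $uv$ you propose to apply \ref{BHfull} to the correction unitary $v$ itself. But \ref{BHfull} is not a pure homotopy lemma: besides the Bott conditions (which, as you say, vanish because $K_*(B)=0$), it requires \eqref{BHfull-1n2}, i.e. ${\rm dist}(\bar v,\bar 1)<\sigma$ in $U({\tilde B})/CU({\tilde B})\cong \Aff(T({\tilde B}))/\Z$, and the analogous smallness \eqref{BHfull-1n1} of the $\langle\cdot\rangle$-classes. If both $v$ and $uv$ satisfied \eqref{BHfull-1n2}, then $\bar u=\overline{uv}\cdot\overline{v}^{\,*}$ would be $2\sigma$-close to $\bar 1$; since the lemma you are proving makes no hypothesis on $\bar u$ (and $u$ can almost commute with $\phi({\cal G})$ while its de la Harpe--Skandalis class is an arbitrary element of $\Aff(T({\tilde B}))/\Z$), this is impossible in general. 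Your remark that $B\cong M_2(B)$ lets you ``distribute the determinant budget'' does not help: the determinant is additive over orthogonal corners, so the total class of $v$, which must cancel that of $u$ up to $\sigma$, is fixed and cannot be made small. In other words, the only unitary whose path to $1$ you can get from \ref{BHfull} is one whose $U/CU$-class is already small, and the whole difficulty of \ref{LWHTW} is precisely to connect a unitary with \emph{arbitrary} class to $1$. Note also that in the paper this is not an accident of bookkeeping: the correction scheme you are imitating is \ref{MTHT}, and in the proof of \ref{MTHT} the path connecting the determinant-correcting unitary $(v_2'')^K$ to $1$ is obtained by invoking \ref{LWHTW} itself (applied in a $W$-corner), and \ref{Chomotopy} then combines \ref{MTHT} with \ref{BHfull}. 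So deriving \ref{LWHTW} from \ref{BHfull}+\ref{LHTkill}+an \ref{MTHT}-type correction is circular unless you supply an independent substitute for that path, which your proposal does not.

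For comparison, the paper's proof goes in the opposite logical direction and never uses \ref{BHfull} or \ref{LHTkill}. It exploits $A\cong A\otimes\zo$ and the maps of \ref{DWZmaps} to replace $\phi$ (up to the uniqueness theorems \ref{TUNIq} and \ref{TUnHT2}, and the existence theorem \ref{11ExtT1}) by a homomorphism that factors through $A\otimes W$; in the general case one first splits off pieces with dagger maps $\pm(1-1/k)\phi^{\dag}$ on $J_c(K_1(A))$ so that the relevant summand has trivial dagger, reducing to that special case. Once the map factors through $A\otimes W$, both the $K$-theoretic and the $U/CU$ obstructions vanish on the domain side, and the Basic Homotopy Lemma 20.11 of \cite{GLp1} for $W$-tensored domains produces the path with no hypothesis on $\bar u$ at all. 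That domain-side collapse is what your codomain-side observation $K_*(B)=0$ cannot replace, because it leaves the $\Aff(T({\tilde B}))/\Z$-valued obstructions alive. If you want to salvage your outline, you would have to prove directly that any unitary almost commuting with $\phi({\cal F})$ can be multiplied into ${\rm CU}$-small position by a unitary that comes equipped with its own almost-commuting path to $1$ (e.g. an exponential $\exp(2\pi i b)$ with prescribed $\hat b$ and $b$ almost central relative to $\phi(A)$, plus the finer corrections for the $\langle\cdot\rangle$-classes); but producing such $b$ and such corrections already requires rearranging $\phi$ along a $W$-tensor factorization, i.e. essentially the paper's argument.
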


\begin{proof}
Note that, by \ref{zoselfabsorbing},  $A\cong A\otimes \zo.$ We identify $A$ with
$A\otimes \zo.$
Let $
{{\phi_{w,z}}}: {\cal W}\to \zo$
be defined in \ref{DWZmaps}.
Let $\psi_{w,a}: A\otimes {\cal W}\to A\otimes \zo$ defined by
$\psi_{w,a}(a\otimes w)=a\otimes 
{{\phi_{w,z}}}$ for all $a\in A$
and
$w\in {\cal W}.$
Put $A_1=A\otimes {\cal W}.$ Fix $\ep>0$ and a finite subset ${\cal F}\subset A.$

Note $T(A)=T(A\otimes {\cal W})$ and $\rho_{{\tilde A}}(K_0({\widetilde{A\otimes {\cal W}}}))=\Z.$
It follows from \ref{11ExtT1} that there exists a \hm\, $h_{a,w}: A\to A\otimes {\cal W}$ such that
$(h_{a,w})_T={\rm id}_{T(A)}$ and $h_{a,w}^{\dag}|_{J_c(K_1(A))}={\bar 1}$ and
$h_{a,w}^{\dag}|_{\Aff(T({\tilde A}))/\Z}={\rm id}_{\Aff(T({\tilde A}))/Z}.$

Let ${\cal F}_1=h_{a,w}({\cal F}).$
Choose ${\cal G}_w\in A\otimes {\cal W}$ and $\dt_W>0$ which are required
by \ref{BHK00}
for {{$A\otimes {\cal W}\in {\cal M}_0$,}} ${\cal F}_1$ and $\ep/16.$


Suppose that $\psi: A\to B$ is a \hm\, which maps strictly positive elements to
strictly positive elements and suppose that there is a unitary $v\in {\tilde B}$
such that
\beq
\|[\psi(g),\, v]\|<\dt_W/2\rforal g\in \psi_{w,a}({\cal G}_w)
\eneq
and suppose that $\psi^{\dag}$ maps $J_c(K_1(A))$ to ${\bar 1}.$

Consider  \hm\, $\psi': A\to B$ defined by
$\psi'=\psi\circ \psi_{w,a}\circ h_{a,w}.$ Note that $[\psi']=[\psi]$ in $KL(A, B)$
(since $B\cong B\otimes {\cal W}$)
and $\tau\circ \psi'=\tau\circ \psi$ for all $\tau\in T(B)$  and $\psi^{\dag}=(\psi')^{\dag}$ {{(Note that $(\psi')^{\dag}$ maps $J_c(K_1(A))$ to ${\bar 1}$)}}.
Therefore, by \ref{TUNIq} (and \ref{Rsec53}), there is a unitary  $V\in {\tilde B}$
such that
\beq\label{LWHTW-1}
\|V^*\psi'(g)V-\psi(g)\|<\min\{\dt_W/2, \ep/16\}\rforal g\in \psi_{w,a}({\cal G}_w)\cup {\cal F}.
\eneq

Define  $\psi_W: A\otimes {\cal W}\to B$ by $\psi_W={\rm Ad}\, V\circ \psi\circ \psi_{w,a}.$
Then
\beq
\|[\psi_W(g), \, v]\|<\dt\rforal g\in {\cal G}_W.
\eneq
 Note $A\otimes {\cal W}\in  {\cal M}_0.$ It follows from \ref{BHK00}
that there exists a continuous path of unitaries
$\{v(t): t\in [0,1]\}\subset U({\tilde B})$ with $v(0)=u$ and $u(1)=1_{\tilde B}$ such  that
\beq
\|[\psi_W(g),\, v(t)]\|<\ep/16\rforal  g\in {\cal F}_1.
\eneq
Therefore,
\beq
\|[{\rm Ad}\, V\circ \psi'(f), v(t)]\|<\ep/16\rforal f\in {\cal F}.
\eneq
It follows from this and \eqref{LWHTW-1}
that
\beq
\|[\psi(f),\, v(t)]\|<\ep/8\rforal f\in {\cal F}.
\eneq

Now we consider the general case that $\psi^{\dag}(J_c(K_1(A)))\not={\bar 1}.$
Let
\beq
&&\Delta_A(\hat{a})=\inf \{\tau(a): \tau\in T(A)\}\rforal a\in A_+^{\bf 1}\setminus \{0\}\andeqn\\
&&\hspace{-0.4in}\Delta_0(\hat{c})=\inf\{( \int fdm)\Delta_A(a): c\ge f\otimes a, \,\,\, f\in C(\T),\,\, \, a\in A\}
\eneq
for all $ c\in C(\T, {\tilde A})^o,$
where $m$ is the normalized Haar  measure on $\T.$
Put $\Delta=\Delta_0/2.$

Put $A_c=C(\T, {\tilde A})^o.$ Let ${\cal H}_1\subset ( ({\tilde A_c})_+^{\bf 1}\setminus \{0\}$ be a finite subset,
$\gamma_1>0,$ $\gamma_2>0,$ $\dt_c>0,$
${\cal G}_1\subset {\tilde A_c}$ (in place of ${\cal G}$)
and  ${\cal P}\subset \underline{K}({\tilde{A_c}}),$
${\cal H}_2\subset A_c$ and
${\cal U}\subset J_c(K_1({\tilde A}_c))$  be finite subsets  with  $[{\cal U}]\subset {\cal P}$
 be required by \ref{TUnHT2} for $\min\{\dt_W/4, \ep/16\}$ (in place of $\ep$) and $\psi_{a,w}({\cal G}_W)$ (in place
 of ${\cal F}$) and $\Delta.$
 With smaller $\dt_c>0,$ $\gamma_i,$ \wilog, we may assume
 that ${\cal H}_1=\{g\otimes 1_{\tilde A}: g\in {\cal H}_{1,T}\}\cup \{1_{C(\T)}\otimes a: a\in {\cal H}_{1,A}\},$
 and ${\cal G}_1=\{g\otimes 1_{\tilde A}: g\in {\cal G}_{1,T}\}\cup \{1_{C(\T)}\otimes a: a\in {\cal G}_{1,A}\},$
 ${\cal H}_2=\{g\otimes 1_{{\tilde A}}: g\in {\cal H}_{2,T}\}\cup \{1_{C(\T)}\otimes a: a\in {\cal H}_{2,A}\},$
 where ${\cal H}_{1,T}, {\cal H}_{2,T},$ ${\cal G}_{1,T}\subset C(\T),$ and
 ${\cal H}_{1,A}, {\cal H}_{2,A},$ ${\cal G}_{1,A}\subset {\tilde A}$ are finite subsets.

 Let ${\cal G}'={\cal G}_{1,A}\cup {\cal F}$ and $\dt'=\min\{\dt_c/2, \dt_W/2, \ep/16\}.$
 Let $0<\dt<\dt'$ and ${\cal G}\supset {\cal G}'$ be finite subset such that
 any ${\cal G}$-$\dt$-multiplicative \cpc\, $L'$ from $A$ to a
 \CA\,  $C$ and any unitary $u'\in {\tilde C}$ with property
 $\|[L'(g),\, u']\|<2\dt$ for all $g\in {\cal G}$ gives
 a ${\cal G}_1$-$\dt$-multiplicative \cpc\, from $C(\T, {\tilde A})$ to {{${\tilde C}.$}} 

 Suppose that $\phi: A\to B$ is a \hm\, which maps strictly positive elements to
 strictly positive elements and $u\in {\tilde B}$
 such that
 \beq
 \|[\phi(g),\, u]\|<\dt\rforal g\in {\cal G}.
 \eneq
 Note that $B\otimes Q\otimes Q\cong B.$ We may assume that $\phi(A)\subset B\otimes 1_Q\otimes 1_Q$
 and $u\in {\tilde B}\otimes 1_Q\otimes 1_Q.$
 Let $\{e_n\}$ be an approximate identity for $A.$
Consider $v_n=u(\exp(ie_n\otimes h)),$ where $h\in Q\otimes 1_Q$ with ${\rm sp}(h)=[-\pi, \pi]$ and $t_Q(h)=0$ and where
$t_Q$ is the tracial state of $Q.$
Let $p_k, q_{1,k}, q_{2,k}\in 1_Q\otimes Q$ be  mutually orthogonal projections with $t_{{Q}}(p_k)=1-1/k,$ 
{{$t_Q(q_{i,k})=1/2k,$}}$i=1,2,$ and $p_k\oplus q_{1,k}\oplus q_{2,k}=1_Q\otimes 1_Q,$ $k=1,2,....$
Put $B_k=B\otimes p_k,$ $B_{i,k}=B\otimes q_{i,k},$ $i=1,2,$ $k=1,2,....$
By \ref{11ExtT1},  there are \hm s $
{{\Psi_{i,k}:}} A\to B_{i,k}$ such that
$\tau(\Psi_{i,k}(a))=(1/2k)\tau(\phi(a))$ for all $a\in A$
and
\beq\label{LWHTW-9}
\Psi_{1,k}^{\dag}|_{J_c(K_1(A))}=-(1-{1\over{k}})\phi^{\dag}|_{J_c(K_1(A))}\andeqn
\Psi_{2,k}^{\dag}|_{J_c(K_1(A))}=(1-{1\over{k}})\phi^{\dag}|_{J_c(K_1(A))},
\eneq
 $k=1,2,....$
Define $\psi_{n,k}': A\to C_k:=B_k\oplus B_{1,k}$ by $\psi'_{n,k}(a)=\phi(a)\otimes p_k\oplus \Psi_{1,k}(a)$ for all $a\in A,$
and define $\psi_{n,k}: A\to B\otimes 1_Q\otimes 1_Q$ by
$\psi_{n,k}(a)=\psi_{n,k}'(a)\oplus \Psi_{2,k}(a)$ for all $a\in A,$ $k=1,2,....$
Write $v_n=\lambda+\af(v_n)$ for some $\lambda\in \T$ and $\af(v_n)\in B\otimes 1_Q\otimes 1_Q.$
Let $v_{n,k}=\lambda\cdot 1_{\tilde C_k} +\af(v_n)(p_k\oplus q_{1,k})$ and
$w_{n,k}=\lambda\cdot 1_{\tilde B}+\af(v_n)(p_k\oplus q_{1,k}).$
Choose a \cpc\, $L_{n,k}=\Phi_{w_{n,k},
{{\psi_{n,k}}}}: C(\T,{\tilde A})^o\to B\otimes Q\otimes Q$ induced
by the unitary $w_{n,k}$ and $\psi_{n,k}.$ Let $\Phi_{v_n, \phi}: C(\T, {\tilde A})^o\to B\otimes Q\otimes Q$
be the one  induced by $v_n$ and $\phi.$

Note that $U({\tilde B})/CU({\tilde B})=\Aff(T({\tilde B}))/\Z.$
By applying \ref{densesp},
for all sufficiently large $n$ and $k$ (we then fix a pair $n$ and $k$)
\beq
&&\tau(L_{n,k}(h))\ge \Delta_0(\hat{h})/2= \Delta(\hat{h})\rforal \tau\in T(B)\andeqn \rforal  h\in {\cal H}_1,\\
&&|\tau(L_{n,k}(h))-\tau(\Phi_{v_n,\phi})(h)|<\gamma_1\rforal h\in {\cal H}_2\andeqn\\
&&{\rm dist}(L_{n,k}^{\dag}({\bar w}), \Phi_{v_n, \phi}^{\dag}({\bar w}))<\gamma_2\rforal w\in {\cal U}.
\eneq
It follows from \ref{TUnHT2} that there exists a unitary $U\in {\widetilde{B\otimes Q\otimes Q}}$ such that
\beq\label{LWHTW-20}
&&\|U^*\psi_{n,k}(g)U-\phi(g)\|<\min\{\dt_W/4, \ep/16\}\rforal g\in \psi_{w,a}({\cal  G}_W)\andeqn\\\label{LWHTW-21}
&&\|U^*w_{n,k}U-v_n\|<\min\{\dt_W/4, \ep/16\}.
\eneq

Now consider ${\rm Ad}\, U\circ \psi_{n,k}': A\to D_k:=U^*C_kU$ and the unitary $U^*v_{n,k}U\in {\tilde D}_k.$
Note, by \eqref{LWHTW-9},  $({\rm Ad}\, U\circ \psi_{n,k}')^{\dag}|_{J_c(K_1(A))}={\bar 1}.$
So we reduce this case to the case that has been proved.
 Thus there is a continuous path of unitaries $\{V(t): t\in [2/3, 1]\}\subset {\tilde D}_k$
 such that $V(2/3)=U^*v_{n,k}U$ and $V(1)=1_{\tilde D_k}$ and
\beq
\|[{\rm Ad}\, U\circ \psi_{n,k}'(f),\, V(t)]\|<\ep/8\tforal f\in {\cal F}.
\eneq
Note that  $U^*w_{n,k}U=\lambda\circ 1_{\tilde B}+U^*\af(v_{n,k})U.$
Write $V(t)=\lambda(t)\cdot 1_{\tilde D_k}+\af(V(t))$ for some $\lambda(t)\in \T$ and $\af(V(t))\in D_k.$
Put $Z(t)=\lambda(t)\cdot 1_{\tilde B}+\af(V(t)).$ Then $Z(2/3)=U^*w_{n,k}U$ and $Z(1)=1_{\tilde B}.$
Since $B_{2,k}\perp C_k,$  we have that
\beq
\|[{\rm Ad}\, U\circ \psi_{n,k}(g),\, Z(t)]\|<\ep/8\rforal f\in {\cal F}.
\eneq
By \eqref{LWHTW-21}, we may write  $v_n^*U^*w_{n,k}U=\exp(i b)$ for some $b\in {\tilde B}_{s.a.}$ with $\|b\|\le 2\arcsin(\ep/32).$
Define $Z(t)=v_n\exp({{\sqrt{-1}}}(3(t-1/3) b)$ for $t\in [1/3, 2/3).$  Then $Z(1/3)=v_n.$
We also have
\beq
\|{\rm Ad}\, U\circ \psi_{n,k}(g),\, Z(t)]\|<\ep/8\tforal t\in [1/3, 1].
\eneq
It follows that
\beq
\|[\phi(g),\, Z(t)]\|<\ep/8+\ep/16\rforal t\in [1/3, 1].
\eneq
Define $Z(t)=u(\exp(3{{\sqrt{-1}}}te_n\otimes h))$ for $t\in [0, 1/3).$ Then
$Z(0)=u$ and $\{Z(t): t\in [0,1]\}$ is a continuous path of unitaries in ${\tilde B}.$
Moreover,
\beq
\|[\phi(g),\, Z(t)]\|<\ep\rforal g\in {\cal F}\andeqn t\in [0,1].
\eneq

\end{proof}

\begin{thm}\label{MTHT}
Let $A\in {\cal B}_T$ have continuous scale.
Let ${\cal P}\subset \underline{K}(A)$ be a finite subset, let
$\{p_1,p_2,...,p_k, q_1,q_2,...,q_k\}$ be  projections of $M_s({\tilde A})$
(for some integer $s\ge 1$) such that
$\{[p_1]-[q_1],[p_2]-[q_2],...,[p_k]-[q_k]\}\subset {\cal P}$ generates a free subgroup $G_{u0}$ of $K_0(A),$
let
$\sigma>0,$  $\ep_0>0$ and  ${\cal F}_0\subset A$ be a finite subset.
There exist $\dt_0>0$ and
${\cal G}_0\subset A$  such that the following hold:
For
any $\ep>0,$
any finite subset ${\cal F}\subset A,$
 any \hm\,  $\phi: A\to B=B_1\otimes Q$
 which maps strictly positive elements to strictly positive elements,
 where $B_1\cong B_1\otimes \zo\in {\cal D}_0$ has continuous scale,
and any
  unitary $u\in U({\tilde B})$ such that
\beq
\|[\phi(g),\, u]\|<\dt_0 \rforal g\in {\cal G}_0,
\eneq
there exists  a continuous path
of unitaries $\{v(t): t\in [0,1]\}\subset U({\tilde B})$ such that
\beq\label{MTHT-2}
&&\|[\phi(g), \, v(0)]\|<\ep \tforal g\in {\cal G}_0\cup {\cal F},\\\label{MTHT-2+}
&&\|{[}\phi(f), \, v(t)]\|<\ep_0\tforal f\in {\cal F}_0,
 \\\label{MTHT-3-2020}
&&{\rm Bott}(\phi,\, uv(1))|_{\cal P}=0,\,\,\, [uv(1)]=0\andeqn\\\label{MTHT-3+}
&&\hspace{-0.2in}{\rm dist}(\overline{\lceil ((1_s-\phi(p_i))+(uv(1))_{\underline{s}}\phi(p_i))(1_s-\phi(q_i))+(uv(1))_{\underline{s}}^*\phi(q_i))\rceil}, {\bar 1})<\sigma,
\eneq
where $1_s=1_{M_s}$ and $(uv(1))_{\underline{s}}=uv(1)\otimes 1_{M_s}.$
\end{thm}

\begin{proof}
Define
$\Delta_1(\hat{h})=\inf\{\tau(h): \tau\in T(A)\}$ for $h\in A_+^{\bf 1}\setminus \{0\}.$
Let $\Delta=\Delta_1/2.$
Let $T: A_+^{\bf 1}\setminus \{0\}\to \R_+\setminus \{0\}\times \N$ be the map given by $\Delta$ as in
\ref{FfullDelta}.
Let $\ep_0,$ $\sigma,$  ${\cal F}_0,$ ${\cal P}$
and $\{p_1,...,p_k, q_1, q_2,...,q_k\}\subset M_s({\tilde A})$ be given.
In what follows, if $v'$ is a unitary, $v'_{\underline{s}}=v'\otimes 1_{M_s}.$


Write $p_l=(a_{i,j}^{p_l})_{s\times s}$ and $q_l=(a_{i,j}^{q_l})_{s\times s},$ where
$a_{i,j}^{p_l}, a_{i,j}^{q_l}\in {\tilde A},$ $1\le i,j\le s,$ $1\le l\le k.$
Let ${\cal F}_p$ be a finite subset  in $A$ such that
$a_{i,j}^{p_l}, a_{i,j}^{q_l}\in \C\cdot 1+{\cal F}_p.$

In what follows, if $L': A\to C'$ is a map, we will
continue to use $L'$ for $L'^{\sim}: {\tilde A}\to {\tilde C'}$ and
$L'\otimes {\rm id}_{M_s}$ as well as $L^{'\sim}\otimes {\rm id}_{M_s}$ when
it is convenient.  Moreover, $1_s:=1_{M_s}.$

Let
$\dt_0'>0$ and let ${\cal G}_0'\subset A$ be a finite subset satisfying the following:
${\rm Bott}(L,\, w)|_{\cal P}$ is well defined for
any ${\cal G}_0'$-$\dt_0'$-multiplicative \cpc\, $L: A\to C$ and
any unitary $w\in {\tilde C}$ with $\|[L(g),\, w]\|<2\dt_0'$ for all $g\in {\cal G}_0.$
Also, if $w'$ is another unitary, we also require that
\beq
{\rm Bott}(L, ww')|_{\cal P}={\rm Bott}(L, w)|_{\cal P}+{\rm Bott}(\phi, w')|_{\cal P},
\eneq
when $\|[L(g),\, w']\|<\dt_0'$ for all $g\in {\cal G}_0'.$
Moreover, for any ${\cal G}_0'$-$\dt_0'$-multiplicative \cpc\, $L'$ from
$A$ to a non-unital \CA\, $C'$  induces a \hm\, $\lambda': G_u\to U({\tilde C})/CU({\tilde C})$
(see 14.5 of \cite{Lnloc}).  Furthermore, using 14.5 of \cite{Lnloc} again,
we assume that, for any unitary $w'\in M_s({\tilde C})$ with the property
that $\|[L'(g),\,w']\|<2\dt_0'$ for all $g\in {\cal G}_0',$  $\Phi_{w', L'}$ induces a \hm\, $\lambda_{L', w'}$ from
$G_{u0}$ to $U({\tilde C})/CU({\tilde C})$ and, for $1\le i\le k,$
\beq
\hspace{-0.2in} {\rm dist}(\overline{\lceil (1_s-L'(p_i)+w_{\underline{s}}'L'(p_i))(1_s-L'(q_i)+(w_{\underline{s}}')^*L'(q_i))\rceil}, \lambda_{L', w'}([p_i]-[q_i]))<\sigma/16,
\eneq
where  $w_{\underline{s}}'=w'\otimes 1_s.$
We may assume that $\dt_0'$ is smaller than $\dt_0$ in \ref{LHTkill} and ${\cal G}_0'$ is larger than
${\cal G}_0$  in \ref{LHTkill} for  the above ${\cal P}.$

Let $\dt_W>0$ and let ${\cal G}_W\subset A$ be a finite subset
required by \ref{LWHTW} for $\min\{\ep_0/4, \dt_0'/2\}$ (in place of $\ep$) and ${\cal F}_0\cup {\cal G}_0'.$
Put $\dt_0''=\min\{\dt_0'/4, d_W/4\}$ and ${\cal G}_0''={\cal G}_0'\cup {\cal G}_W\cup {\cal F}_0\cup {\cal F}_p.$


Let $\ep_1=\min\{\dt_0''/4, \ep_0/16, \sigma/16\}/2^{10}(s+1)^2.$
Let $\dt_1>0$ (in place of $\dt$),  $\gamma>0,$ $\eta>0,$ ${\cal G}_1\subset A$ (in place of ${\cal G}$) be a finite
subset, ${\cal P}_1\subset \underline{K}(A)$ (in place ${\cal P}$) be a finite subset,
${\cal U}\subset U({\tilde A})$ be a finite subset,
${\cal H}_1\subset A_+\setminus \{0\}$ be a finite subset, and ${\cal H}_2\subset A_{s.a.}$ be a finite
subset required by \ref{TUNIq} for $\ep_1$ (in place of $\ep$) and ${\cal G}_0''$ (in place of ${\cal F}$) the above $T$
(and ${\bf T}(n,k)=n$).

We assume that $\dt_1<\dt_0''$ and
that ${\cal G}_1\cup {\cal H}_1\cup {\cal H}_2\subset (A)_+^{\bf 1}.$
Write $w=\lambda_w+\af(w),$ where $\lambda_w\in \T\subset \C$ and $\af(w)\in A.$
As in the remark of  \ref{TUNIq}, we may assume that $[w]\not=0$ and $[w]\in {\cal P}$ for all $w\in {\cal U}.$
 Let $G_u$ be the subgroup generated by
$\{\overline{w}: w\in {\cal U}\}.$ We may view $G_u\subset J_c(K_1(A))$ (see the statement of \ref{UniqN1}).

Note that  $B\cong B\otimes \zo.$  Define $\psi_{b,W}: B\otimes \zo\to
B\otimes {\cal W}$ by letting $\psi_{b,W}(b\otimes a)=b\otimes 
{{\phi_{z,w}}}(a)
$
for all $b\in B$ and $a\in \zo,$  where $
{{\phi_{z,w}}}: \zo\to {\cal W}$
is a \hm\,  defined in \ref{DWZmaps}.  Note that, by 6.8 of \cite{eglnkk0},
$B\otimes {\cal W}$ is in ${\cal M}_0$ with continuous scale.

Set ${\cal G}_2={\cal G}_0\cup {\cal G}_1\cup \{\af(w): w\in {\cal U}\}$
and set
 a rational number
$$
0<\sigma_0<\min\{\inf\{\Delta(\hat{h}): h\in {\cal H}_1\}, \gamma/4\}.
$$
{{Let $\{e_n\}$ be an approximate identity for $A$ such that $e_{n+1}e_n=e_n$ and $(e_nAe_n)^\perp\not=\{0\}$ for all $n.$
Define $\psi_n(a)=\phi(e_nae_n)$ for all $a\in A.$ Then $\lim_{n\to\infty}\|\psi_n(a)-\phi(a)\|=0$ for all $a\in  A.$
Choose a sufficiently large $n$ such that $\psi_n|_{\cal P}=\phi|_{\cal P}.$ Therefore,}}
\wilog,  we may assume that there are $e_A, e_A'\in A_+$ with $\|e_A'\|=\|e_A\|=1$
such that, {{with $\psi_A(a)=\phi(e_Aae_A)$ }}
\beq\label{MTHT-3-0}
\hspace{-0.2in}e_Ag=ge_A=g\rforal g\in {\cal G}_2,\,\,\, e_A'e_A=e_A,  ({\overline{e_A'Ae_A'}})^{\perp}\not=\{0\}{{\andeqn \phi|_{\cal P}=\psi_A|_{\cal P}.}}
\eneq
Choose  a pair of mutually orthogonal non-zero  positive elements $e_0, e_0'\in ({\overline{e_A'Ae_A'}})^{\perp}$ such that
\beq\label{MTHT-3-1}
d_\tau(e_0+e_0')<\sigma_0\rforal \tau\in T(A).
\eneq
Choose an integer $K\ge 1$ such that \beq\label{01-14-2020}
{{1/K<\min\{\sigma_0/4, \inf\{d_\tau(e_0): \tau\in T(A)\}\}}}\eneq and
 choose $\dt_0=\min\{\ep_1/16, \dt_1/16, \gamma/16, \eta/16\}/64(s+1)^3(K+1)^2.$
 Put ${\cal G}_0={\cal G}_2\cup\{e_A, e_A', e_0, e_0'\}.$

 Now let $\phi$ and $u$ be given satisfying the assumption
for the above ${\cal G}_0$ and $\dt_0.$ Let $\ep>0$ and ${\cal F}\subset A$ be a finite subset.
We may write $u=1_{\tilde B}+\af(u),$ where $\af(u)\in B.$ Put ${\cal Q}={\cal P}\cup {\boldsymbol{\bt}}({\cal P}).$



Note also ${\cal W}\otimes Q\cong {\cal W}.$  Let $e_q\in Q$ be a projection
with $t_U(e_q)=1/K,$ where $t_Q$ is the tracial state of $Q.$
Define  $\psi_{1/K,W}: {\cal W}\to {\cal W}\otimes Q$  by $\psi_{1/K,W}(a)=a\otimes e_q$ for all $a\in {\cal W}.$
Then
\beq\label{MTHT-3}
t_{\cal W}(\psi_{1/K,W}(a))=(1/K)t_{\cal W}(a)\rforal a\in {\cal W}.
\eneq
Let $
{{\phi_{w,z}}}$ be as in \ref{DWZmaps}.
Note that $t_{\cal W}=t_Z\circ 
{{\phi_{w,z}}}$ and $t_Z=t_{\cal W}\circ 
{{\phi_{z,w}}},$
where $t_{\cal W}$ and $t_Z$ are tracial states of ${\cal W}$ and $\zo,$ respectively.
Let $\psi_{b, 1/K}: B\to B$ be defined by
$\psi_{b, 1/K}(b\otimes a)=b\otimes 
{{\phi_{w,z}}}\circ \psi_{1/K,W}\circ 
{{\phi_{z,w}}}(a)$
for all $b\in B$ and $a\in \zo.$   Let $\psi_{b,w, 1/K}:
B\to B\otimes {\cal W}\otimes e_q$ be defined by
$\psi_{b,w,1/K}(b\otimes a)=b\otimes \psi_{1/K, W}\circ 
{{\phi_{z,w}}}(a)$ for all
$b\in B$ and $a\in \zo.$

By applying \ref{LHTkill}, there is a unitary $v_1\in {\tilde B}$
such that
\beq\label{MTHT-4}
&&\|[\phi(g),\, v_1]\|<\min\{\dt_0, \ep\}\rforal g\in {\cal F}\cup  {\cal G}_0\andeqn\\\label{MTHT-4+1}
&&{\rm Bott}(\phi,\, uv_1)|_{\cal P}=0\andeqn [uv_1]=0.
\eneq
\vspace{-0.1in}Note that
\beq\label{MTHT-5}
\|[\phi(g),\, uv_1]\|<\dt_0+\min\{\dt_0, \ep\}\rforal g\in {\cal G}_0.
\eneq
We may write $uv_1=1_{\tilde B}+\af(uv_1)$ for some $\af(uv_1)\in B.$
Define $\psi': A\to B$ by $\psi'(a)=\psi_{b, 1/K}\circ \phi(a)$ for all $a\in A.$ Using {{\eqref{01-14-2020},}} 
by replacing $\psi'$ by ${\rm Ad}\, w_1\circ \psi'$ for some unitary $w_1,$ we may assume
that $\psi'(A)\subset  B_0:=\overline{e_{0,b}Be_{0,b}},$ where
$e_{0,b}=\phi(e_0).$ Let $v_2'=1_{\tilde B}+\psi_{b, 1/K}(\af(uv_1)),$
$v_2=((v_2')^*)^K$ and $v_2''=1_{\tilde B_0}+\psi_{b, 1/K}(\af(uv_1)).$
Note that $[\psi']|_{\cal P}=0,$ since it factors through $B\otimes {\cal W}.$
Moreover
\beq\label{MTHT-5+1}
{\rm Bott}(\psi',\, v_2'')|_{\cal P}=0\andeqn {\rm Bott}(\psi', (v_2'')^K)|_{\cal P}=0.
\eneq
Let $\lambda_{\phi,uv_1}: G_{u0}\to U(M_s({\tilde B}))/CU(M_s({\tilde B}))$ be the \hm\,
induced by $\phi$ and $uv_1,$ via a map $\Phi_{uv_1, \phi}.$  Then \eqref{MTHT-4+1} implies
that $\lambda_{\phi, uv_1}$ maps $G_{u0}$ to $\Aff(T({\tilde B}))/\Z$ (see also \cite{GLX}).
Let $\lambda_{\psi', v_2'}: G_{u0}\to \Aff(T({\tilde B})/\Z$ be the \hm\,
induced by $\Phi_{v_2', \psi'}.$  Since $\tau\circ \psi_{b,1/K}(b)=(1/K)\tau(b)$ for all $b\in B$
and for all $\tau\in T(B),$
it is straightforward that we may write
\beq\label{MTHT-6}
\lambda_{\psi', v_2'}([p_i]-[q_i])=
(1/K)\lambda_{\phi, uv_1}([p_i]-[q_i]),
\eneq
$i=1,2,...,k.$
It follows that, by the choice of $\dt_1$ and $\dt_2,$  since $v_2=((v_2')^*)^K,$
\beq\label{MTHT-7}
{\rm dist}(\zeta_i',\,-(\lambda_{\phi, uv_1}([p_i]-[q_i])))<\sigma/16,
\eneq
where $\zeta_i'=\overline{\lceil ((1_s-\psi'(p_i))+(v_2)_{\underline{s}}\psi'(p_i))((1-\psi'(q_i)+(v_2^*)_{\underline{s}}\psi'(q_i)))\rceil},$ $i=1,2,...,k.$
As in the proof of \ref{LHTkill}, by applying \ref{11ExtT1}, we obtain a
\hm, $\psi_{cu}: A\to \overline{e_{b,0}'Be_{b,0}'},$ where $e_{b,0}'=\phi(e_0'),$ such
that
\beq\label{MTHT-7+1}
[\psi_{cu}]=0\,\,\,{\rm in}\,\,\, KL(A, B)\andeqn \psi_{cu}^{\dag}=-(\psi')^{\dag}.
\eneq

Define
$\psi: A\to B$ by
$\psi(a)=\psi_{cu}(a)\oplus \psi'(a)\oplus \phi(e_Aae_A)$ for all $a\in A.$
Then $\psi$ is  ${\cal G}_2$-$2\dt_2$-multiplicative {{(see  the last part of \eqref{MTHT-3-0})}},
\beq\label{MTHT-25}
&&\tau\circ \psi(h)\ge \Delta(\hat{h})\rforal h\in {\cal H}_1,\\
&&|\tau\circ \psi(h)-\tau\circ \phi(h)|<\gamma\rforal h\in {\cal H}_2,\\
&&{[}\psi{]}|_{\cal P}=[\phi]|_{\cal P}\andeqn\\
&&\psi^{\dag}({\bar w})=-(\psi')^{\dag}({\bar w})+((\psi')^{\dag}({\bar w})+\phi^{\dag}(\bar{w}))=\phi^{\dag}(\bar w)\rforal w\in {\cal U}.
\eneq
 By \eqref{MTHT-25}, $\psi$ is
$T$-${\cal H}_1$-full.
By applying \ref{TUNIq} (as $K_0({\tilde B})$ is weakly unperforated), we obtain a unitary $U\in {\tilde B}$ such that
\beq\label{LHTkill-30}
\|U^*\psi(f)U-\phi(f)\|<\ep_1\rforal f\in  {\cal G}_0'.
\eneq
Let $v= v_1U^*(v_2)U.$ Then $v$ is a unitary.
We have
\beq
\|[\phi(f), v]\|<2\ep_1+(K+1)\dt_0\rforal f\in {\cal G}_0'.
\eneq
We then compute that, by \eqref{MTHT-4+1}, \eqref{LHTkill-30} and \eqref{MTHT-5+1},  and by  the fact
that $\phi(e_A)v_2=v_2\phi(e_A)=\phi(e_A),$
\beq
{\rm Bott}(\phi, uv)|_{\cal P}&=&{\rm Bott}(\phi, uv_1)|_{\cal P}+{\rm Bott}(\phi, U^*v_2U)|_{\cal P}\\
&=& 0+{\rm Bott}(\psi, v_2)|_{\cal P}\\\label{MTHT-18n1}
&=&0+ {\rm Bott}(\phi(e_A\cdot e_A), 1)+{\rm Bott}(\psi', v_2)|_{\cal P}=0.
\eneq
Put $\Psi={\rm Ad}\, U\circ \psi,$  $\psi''={\rm Ad}\, U\circ \psi'$ and $u_2=U^*v_2U.$
Put $\ep_s=s^2\ep_1$ and $\dt_2=(K+1)\dt_0.$
We have (recall $w'_{\underline{s}}=w'\otimes 1_s$)
\beq
&&\hspace{-0.5in}
(1_s-\phi(p_i){{)}}+(uv)_{\underline{s}}\phi(p_i)\\
&&\hspace{-0.2in}=
(1_s-\phi(p_i){{)}}+(uv_1u_2)_{\underline{s}}\phi(p_i)\\
&&\hspace{-0.2in}\approx_{\ep_s} (1_s-\phi(p_i){{)}}+(uv_1)_{\underline{s}}(u_2)_{\underline{s}}\Psi(p_i)\hspace{0.7in} ({\rm using}\,\, \eqref{LHTkill-30})\\
&&\hspace{-0.2in}\approx_{2s^2\dt_2} (1_s-\phi(p_i))+(uv_1)_{\underline{s}}\Psi(p_i)(u_2)_{\underline{s}}\Psi(p_i)\hspace{0.5in} \\
&&\hspace{-0.2in}\approx_{2\ep_s} {{(1_s-\phi(p_i))(1_s-\Psi(p_i)) +(uv_1)_{\underline{s}}\phi(p_i)\Psi(p_i)(u_2)_{\underline{s}}\Psi(p_i)}}\\\label{1802n}
&&\hspace{-0.2in}\approx_{2\ep_s}((1_s-\phi(p_i){{)}}+(uv_1)_{\underline{s}}\phi(p_i))((1_s-\Psi(p_i))+(u_2)_{\underline{s}}\Psi(p_i)).
\eneq
Similarly,
\beq\label{1802n2}
\hspace{-0.3in}(1_s-\phi(q_i){{)}}+(uv)_{\underline{s}}\phi(q_i)
\approx_{6\ep_s}((1_s-\phi(q_i){{)}}+(uv_1)_{\underline{s}}
\phi(q_i)){{(}}(1_s-\Psi(q_i))+
(u_2)_{\underline{s}}\Psi(q_i)).
\eneq
Put
$$
Z_i=\lceil((1_s-\Psi(p_i))+(u_2)_{\underline{s}}\Psi(p_i))((1_s-\Psi(q_i))+(u_2)^*_{\underline{s}}\Psi(q_i))\rceil.
$$
Then, since we have assumed that $\psi'(A)\subset \overline{e_{0,b}Be_{0,b}},$ one computes, by \eqref{MTHT-3-0},  that
\beq
\overline{Z_i}=\zeta_i',\,\,\,i=1,2,...,k.
\eneq
Then, in $U(M_s({\tilde B})/CU(M_s({\tilde B})),$   for $i=1,2,...,k,$  by \eqref{1802n} and \eqref{1802n2},

\beq
&&\hspace{-0.3in}\overline{\lceil ((1_s-\phi(p_i){{)}}+(uv)_{\underline{s}}\phi(p_i))
({{(1_s-\phi(q_i))}}+(uv)_{\underline{s}}^*\phi(q_i))\rceil}\\
&&\approx_{12\ep_s} \overline{\lceil ((1_s-\phi(p_i)+(uv_1)_{\underline{s}}\phi(p_i))Z_i(
{{(1_s-\phi(q_i))}}+(uv_1)^*_{\underline{s}}\phi(q_i){{)}}\rceil}\\
&&= \overline{\lceil ((1_s-\phi(p_i)+(uv_1)_{\underline{s}}\phi(p_i))
(
(1_s-\phi(q_i){{)}}+(uv_1)^*_{\underline{s}}\phi(q_i))\rceil}\overline{Z_i}\\\label{1802n3}
&&\approx
\lambda_{\phi, uv_1}([p_i]-[q_i]))\overline{Z_i}\approx_{\sigma/16}{\bar 1}.\,\,\,\hspace{0.6in} {\rm (see \eqref{MTHT-7})}
\eneq

Now back to $\psi'.$ Let $\phi_{00}: A\to B_W:=B\otimes {\cal W}\otimes e_q$ be defined by
$\phi_{00}=\psi_{b,w,1/K}\circ \phi.$
Then
\beq
\|[\phi_{00}(g),\, ((v_2'')^*)^K)]\|<2K\dt_0<\dt_1/2\rforal g\in {\cal G}_0.
\eneq
By the choice of $\dt_W$ and ${\cal G}_W$ and by applying \ref{LWHTW}, there exists
a continuous path of unitaries $\{V(t): t\in [0,1]\}$ in ${\widetilde{B\otimes {\cal W}\otimes e_q}}$
such that $V(0)=1_{\tilde B_W},$ $V(1)=(v_2''^*)^K$ and
\beq
\|[\phi_{00}(g),\, V(t)]\|<\min\{\ep_0/4, \dt_0'/2\}\rforal  g\in {\cal F}\cup {\cal G}_0'.
\eneq
Write $V(t)=\lambda(t)\cdot 1_{{\tilde B_W}}+\af(V(t))$ for some $\lambda(t)\in \T$ and $\af(V(T))\in B_W.$
Put
\beq
v(t)=v_1 U^*(\lambda(t)\cdot 1_{\tilde B}+\af(V(t)))U \rforal t\in [0,1].
\eneq
Then we have
\beq
\|[\phi(f),\, v(t))]\|<\min\{\ep_0,\dt_0''\}\rforal f\in {\cal F}_0.
\eneq
Note that $v(0)=v_1$ and $v(1)=v.$  So, \eqref{MTHT-2+} holds.
Also, by \eqref{MTHT-4},  \eqref{MTHT-2} holds and, by \eqref{MTHT-18n1}, {{\eqref{MTHT-3-2020}}} 
 holds.
Moreover, by the choice of $\ep_1$ and by \eqref{1802n3},  \eqref{MTHT-3+} also holds.

\end{proof}

\begin{cor}\label{Chomotopy}
Let $A\in {\cal B}_T$  have  continuous scale.
For any $1>\ep_0>0$ and any finite subset ${\cal F}_0\subset A,$ there exist  $\dt>0$ and
a finite subset ${\cal G}\subset A$  satisfying the following:

For any $\ep>0$ and any finite subset ${\cal F}\subset A$ and any
\hm\,  $\phi: A\to B\otimes Q$  which maps   strictly
positive elements to  strictly positive elements, where $B\cong B\otimes \zo\in {\cal D}_0$ has continuous scale.
If $u\in U({\widetilde{B\otimes Q}})$  is a unitary such that
\beq\label{BHfull-100}
&&\|[\phi(x),\, u]\|<\dt\tforal x\in {\cal G},
\eneq
there exists a unitary $v\in {\widetilde{B\otimes Q}}$ such that
\beq
\|[\phi(f),\, v]\|<\ep\tforal f\in {\cal F},
\eneq
and there exists a continuous path of unitaries $\{u(t): t\in [0,1]\}\subset U_0({\widetilde{B\otimes Q}})$ such
that
\beq\label{BHTL-3}
&&u(0)=uv,\,\,\, u(1)=1\\
&&\|[\phi(a),\, u(t)]\|<\ep_0\tforal a\in {\cal F}_0\tand for\,\, all\,\, t\in [0,1].
\eneq

\end{cor}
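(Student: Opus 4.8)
\textbf{Proof plan for \ref{Chomotopy}.}
The corollary is essentially an ``$\epsilon$-statement'' version of Theorem \ref{MTHT}, stripped of the explicit bookkeeping of the class ${\cal P}$, the free subgroup $G_{u0}$, and the Bott-element data. The plan is to extract from \ref{MTHT} a homotopy that first kills all the obstruction invariants (via multiplication by a $v$ that nearly commutes with $\phi$), and then contracts the resulting unitary to $1$ while keeping the commutator with ${\cal F}_0$ small. Concretely, first I would fix $\ep_0$ and ${\cal F}_0$; since $A\in{\cal B}_T$, by \ref{DBT} and \ref{Tweakunp} the relevant $K_0$ groups are weakly unperforated, so the hypotheses of \ref{MTHT} are available. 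I would then choose a finite subset ${\cal P}\subset\underline{K}(A)$ large enough to detect all relevant $K$-theoretic obstructions on the subalgebras $A_n$ of $A$ appearing up to a prescribed level, together with projections $\{p_1,\dots,p_k,q_1,\dots,q_k\}\subset M_s(\tilde A)$ whose differences generate a chosen free subgroup $G_{u0}$ of $K_0(A)$ (this is possible because, as in section 6, $K_0(A)$ is the direct limit of the $K_0(A_n)$ and we only need a finite piece). Feed $\ep_0$, ${\cal F}_0$, ${\cal P}$, $\{p_i,q_i\}$ and some small $\sigma>0$ into \ref{MTHT}; this produces $\dt_0>0$ and a finite subset ${\cal G}_0\subset A$. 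I would then set $\dt=\dt_0$ and ${\cal G}={\cal G}_0$ (enlarging ${\cal G}$ and shrinking $\dt$ slightly so that ${\rm Bott}(\phi,w)|_{\cal P}$ is well-defined for $w$ nearly commuting with $\phi({\cal G})$, and so that small perturbations do not change it).

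Next, given $\phi:A\to B\otimes Q$ and $u\in U(\widetilde{B\otimes Q})$ with $\|[\phi(x),u]\|<\dt$ for $x\in{\cal G}$, I apply \ref{MTHT} (with $B_1=B$, noting $B\cong B\otimes\zo\in{\cal D}_0$) to obtain a continuous path $\{v(t):t\in[0,1]\}\subset U(\widetilde{B\otimes Q})$ with $\|[\phi(g),v(0)]\|<\ep$ on ${\cal G}\cup{\cal F}$, $\|[\phi(f),v(t)]\|<\ep_0$ on ${\cal F}_0$, ${\rm Bott}(\phi,uv(1))|_{\cal P}=0$, $[uv(1)]=0$ in $K_1$, and the de la Harpe--Skandalis determinant of the associated group homomorphism vanishing to within $\sigma$ on the generators $[p_i]-[q_i]$. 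Set $v:=v(0)$; this is the unitary required by the conclusion, and the estimate $\|[\phi(f),v]\|<\ep$ on ${\cal F}$ is immediate. For the contractibility part, I would set $w:=uv(1)$. Here is the crux: I need a continuous path from $uv$ to $1$ through unitaries that commute with $\phi({\cal F}_0)$ up to $\ep_0$. The path $\{v(t)\}$ already connects $v(0)=v$ to $v(1)$, hence $uv=uv(0)$ is connected to $uv(1)=w$ through unitaries $\{uv(t)\}$ with $\|[\phi(f),uv(t)]\|\le\|[\phi(f),u]\|+\|[\phi(f),v(t)]\|$, which is $<\dt+\ep_0$ on ${\cal F}_0$ provided ${\cal F}_0\subset{\cal G}$ and $\dt$ is small; so it remains to connect $w$ to $1$.

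For this last step I would invoke \ref{LHTkill} (and, if the Bott obstruction has been arranged to vanish, a routine stability/semiprojectivity argument to adjust $w$ further within its class) together with \ref{LWHTW}: because ${\rm Bott}(\phi,w)|_{\cal P}=0$, $[w]=0$, and the determinant obstruction is $<\sigma$, $w$ lies in (approximately) the class of the trivial homomorphism's associated unitary, and \ref{LWHTW} — whose hypotheses are exactly that $B\cong B\otimes W$-stably, which follows from $B\cong B\otimes\zo$ and the existence of $\psi_{z,w}:\zo\to W$ in \ref{DWZmaps} combined with $B\otimes Q\cong B\otimes W\otimes Q$ after tensoring — provides a continuous path from $w$ to $1$ with commutators with $\phi({\cal F}_0)$ bounded by $\ep_0$. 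Concatenating the path $\{uv(t)\}$ from $uv$ to $w$ with this path from $w$ to $1$, and reparametrizing, yields the desired $\{u(t):t\in[0,1]\}$ with $u(0)=uv$, $u(1)=1$, and $\|[\phi(a),u(t)]\|<\ep_0$ on ${\cal F}_0$ throughout. The main obstacle I anticipate is the careful arrangement of constants so that $\sigma$, the level $n$ of the finite subgroup $G_{u0}\subset K_0(A)$, and ${\cal P}$ are all chosen \emph{before} $\phi$ and $u$ are seen, yet are large enough that \ref{LHTkill} and \ref{LWHTW} apply to the particular $w$ produced; this is the standard ``quantifier juggling'' that pervades these arguments, but here it is slightly delicate because $K_0(A)$ need not be finitely generated, so one must truncate to a finite piece and check that the obstruction genuinely lives there — which is legitimate since $A=\overline{\bigcup_n A_n}$ and the relevant unitaries and projections can be pushed into some $A_n$ up to arbitrarily small error.
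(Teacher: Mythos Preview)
Your two-step architecture is right: use \ref{MTHT} to produce a $v$ and a path that kills the Bott, $K_1$, and determinant obstructions of $uv(1)$, then contract $uv(1)$ to $1$ along a path that nearly commutes with $\phi({\cal F}_0)$. The gap is in the contraction step and, consequently, in how you choose the constants.

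You invoke \ref{LWHTW} for the contraction, but its hypothesis is that the target algebra is $W$-absorbing, i.e.\ $B\otimes Q\cong (B\otimes Q)\otimes W$. This fails in general: $K_0(B\otimes Q)=K_0(B)\otimes\Q$, whereas $K_0((B\otimes Q)\otimes W)=0$ by K\"unneth (since $K_*(W)=0$). Your justification ``$B\otimes Q\cong B\otimes W\otimes Q$ after tensoring'' collapses to $B\otimes Q\cong B\otimes W$, which is false unless $K_*(B)\otimes\Q=0$. So \ref{LWHTW} is simply not available here. (Nor does \ref{LHTkill} help at this stage: it kills the Bott obstruction but does not produce a homotopy to $1$.)

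The lemma you actually need is \ref{BHfull}: it is stated precisely for targets of the form $B\otimes U$ with $U$ an infinite-type UHF algebra, and it takes as \emph{input} the vanishing of ${\rm Bott}(\phi,\cdot)|_{\cal P}$, the determinant bound ${\rm dist}(\cdot,{\bar 1})<\sigma$ on the generators $[p_i]-[q_i]$, and ${\rm dist}(\bar u,\bar 1)<\sigma$, and returns the contraction path. This forces the correct order of quantifiers: given $\ep_0$ and ${\cal F}_0$, first apply \ref{BHfull} to obtain $\dt_1$, $\sigma$, ${\cal G}_1$, the projections $\{p_i,q_i\}$, and ${\cal P}$; \emph{then} feed $\min\{\dt_1,\ep_0\}$, $\sigma$, ${\cal G}_1\cup{\cal F}_0$, ${\cal P}$, and $\{p_i,q_i\}$ into \ref{MTHT} to obtain $\dt_0$ and ${\cal G}_0$. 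Your proposal chooses ${\cal P}$, $\sigma$, and the projections ``large enough'' without reference to any downstream lemma, and since the downstream lemma you name is the wrong one, there is no mechanism guaranteeing the output of \ref{MTHT} lands in the domain of applicability of anything. Once you swap in \ref{BHfull} and reverse the dependency (constants from \ref{BHfull} first, then \ref{MTHT}), the concatenation argument you describe goes through exactly as you wrote it.
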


\begin{proof}
This is a combination of  \ref{MTHT} and \ref{BHfull}.
Let $\ep_0>0$ and ${\cal F}_0$ be given.
Let $\dt_1>0,$ $\sigma>0$,
${\cal G}_1\subset A$ be a finite subset, let  $\{p_1,p_2,...,p_k, q_1,q_2,...,q_k\}$ be  projections of $M_N({\tilde A})$
(for some integer $N\ge 1$) such that
$\{[p_1]-[q_1],[p_2]-[q_2],...,[p_k]-[q_k]\}$ generates a free subgroup $G_u$ of $K_0(A),$
and ${\cal P}\subset \underline{K}(A)$ be finite subset required by \ref{BHfull}.

Let $\dt_0>0$ and ${\cal G}_0$ be required by \ref{MTHT} for
$\min\{\dt_1, \ep_0\}$ (in place of $\ep_0$), $\sigma$ and ${\cal G}_1\cup {\cal F}_0$ (in place of ${\cal F}_0$)
and ${\cal P}$ and $G_u.$

Now suppose that $\phi$ and $u$ satisfy the assumption for this pair of $\dt_0$ and ${\cal G}_0.$
Let $\ep>0$ and ${\cal F}\subset A$ be given.
Then, by applying \ref{MTHT}, there is a unitary $v\in {\tilde B_1}=B\otimes Q$
and a continuous path of unitaries $\{v(t): t\in [0, 1/2]\})\}\subset {\tilde B_1}$  such that
$v(0)=v,$
\beq
&&\hspace{-0.2in}\|[\phi(f), \, v]\|<\ep\rforal f\in {\cal F},\\
&&\hspace{-0.2in}\|[\phi(g),\, v(t)]\|<\ep_0\rforal g\in {\cal F}_0\\
&&\hspace{-0.2in}{\rm Bott}(\phi,\, uv(1/2))|_{\cal P}=\{0\},\,\, [uv(1/2)]=0\andeqn\\
&&\hspace{-0.4in}{\rm dist}(\overline{\lceil ((1_s-\phi(p_i))+(uv(1/2))_{\underline{s}}\phi(p_i))(1_s-\phi(q_i))+(uv(1/2))_{\underline{s}}^*\phi(q_i))\rceil}, {\bar 1})<\sigma,
\eneq
where $1_s=1_{M_s}$ and $(uv(1/2))_{\underline{s}}=uv(1/2)\otimes 1_{M_s}.$
Note,  since $B$ is non-unital, it is easy to see that we may assume, \wilog, that  everything mentioned above
lie in $M_N({\tilde B}_0),$ where $B_0$ is a hereditary \SCA\, of $B$ so that $B_0^{\perp}\not=\{0\}.$
By the proof of \ref{BHK00}, therefore
one may assume $uv(1/2)\in CU({\tilde B}).$
It follows from \ref{BHfull} that there is a continuous path of unitaries $\{u(t): t\in [1/2, 1]\}\subset {\tilde B_1}$
such that
$u(1/2)=uv(1/2),$ $u(1)=1_{\tilde B_1}$ and
\beq
\|[\phi(f),\, u(t)]\|<\ep_0\rforal f\in {\cal F}_0\rforal t\in [1/2,1].
\eneq
Finally, define $u(t)=uv(t)$ for $t\in [0,1/2].$
\end{proof}

\section{Finite nuclear dimension}

The following proposition follows from the definition immediately.

\begin{prop}\label{Pdtad}
Let $A$ be a  non-unital separable amenable simple \CA.
Then $A$ has tracially approximate divisible property in the sense of
{{10.1 of \cite{eglnp1}}} if and only if the
following holds:

For any $\ep>0,$ any finite subset ${\cal F}\subset A,$ any  integer $n\ge 1$ and any non-zero
elements $a_0\in A_+\setminus \{0\},$
there are  mutually orthogonal positive elements
$e_i,$  $i=0, 1,2,...,n,$ elements $w_i,$ $i=1,2,...,n,$  such that
$w_i^*w_i=e_1^2,$ $w_iw_i^*=e_i^2,$
$i=1,2,...,n,$ $e_0\lesssim a_0$ and
\beq
\|x-\sum_{i=0}^ne_ixe_i\|<\ep\tand \|w_ix-xw_i\|<\ep,\,\,\, 1\le i\le n, \tforal x\in {\cal F}.
\eneq
\end{prop}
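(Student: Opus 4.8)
The plan is to unwind Definition 14.1 of \cite{GLp1} and match it term by term with the condition in the statement, the one non-formal ingredient being the standard structure theorem of Winter and Zacharias for completely positive order zero maps. Recall that ``$A$ has the tracially approximate divisible property'' asserts, for given $\ep>0$, finite ${\cal F}\subset A$, integer $n\ge 1$ and $a_0\in A_+\setminus\{0\}$, the existence of a \cpc\ $\phi\colon M_n\to A$ of order zero together with a positive element $d\in A_+$ satisfying $d\perp\phi(M_n)$ and $d\lesssim a_0$, such that $\phi$ is approximately central on ${\cal F}$ (i.e.\ $\|[\phi(b),x]\|<\ep'$ for all $x\in{\cal F}$ and all $b$ in the unit ball of $M_n$) and $d+\phi(\mathbf 1_{M_n})$ behaves as a local unit for ${\cal F}$, namely $\|x-(d+\phi(\mathbf 1_{M_n}))^{1/2}x(d+\phi(\mathbf 1_{M_n}))^{1/2}\|<\ep'$ for $x\in{\cal F}$; here $\ep'$ may be chosen as small as one wishes. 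Throughout I would take ${\cal F}={\cal F}^*$ and fix $\ep'$ small enough that all the square-root/functional-calculus and Cuntz-comparison losses incurred below stay under $\ep$.

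For the implication from tracial approximate divisibility to the stated condition: let $\{e_{ij}\}$ be the matrix units of $M_n$ and set $e_i=\phi(e_{ii})^{1/2}$ and $w_i=\phi(e_{i1})$ for $i=1,\dots,n$. By the order zero structure theorem write $\phi(\cdot)=h\pi(\cdot)$ with $\pi\colon M_n\to M(C^*(\phi(M_n)))$ a $*$-homomorphism and $h\ge 0$ commuting with $\pi(M_n)$; then $w_i^*w_i=h^2\pi(e_{11})=\phi(e_{11})^2=e_1^2$, $w_iw_i^*=h^2\pi(e_{ii})=\phi(e_{ii})^2=e_i^2$, and $\phi(e_{ii})\phi(e_{jj})=0$ for $i\ne j$, so the $e_i$ ($1\le i\le n$) are mutually orthogonal. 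Put $e_0=d^{1/2}$; since $d\perp\phi(M_n)$ this $e_0$ is orthogonal to every $e_i$, and $e_0\lesssim d\lesssim a_0$. As the $\phi(e_{ii})$ are mutually orthogonal with sum $\phi(\mathbf 1_{M_n})$ and $d\perp\phi(M_n)$, one has $(d+\phi(\mathbf 1_{M_n}))^{1/2}=e_0+\sum_{i=1}^n e_i$, whence for $x\in{\cal F}$
\[
\textstyle x\;\approx_{\ep'}\;\bigl(e_0+\sum_i e_i\bigr)x\bigl(e_0+\sum_j e_j\bigr)\;=\;\sum_{i,j=0}^n e_ixe_j\;\approx\;\sum_{i=0}^n e_ixe_i,
\]
since each off-diagonal term obeys $\|e_ixe_j\|=\|e_i[x,e_j]\|$, controlled via functional calculus by $\|[x,\phi(e_{jj})]\|<\ep'$ (with $e_0$ commuting exactly with the other blocks). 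Likewise $\|[w_i,x]\|$ is controlled by $\|[\phi(e_{i1}),x]\|<\ep'$. A harmless scalar rescaling (perturbing $e_1$ slightly) restores the exact identity $w_i^*w_i=e_1^2$ if needed, and taking $\ep'$ small gives all the required inequalities.

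For the converse, assume the data $e_0,\dots,e_n$, $w_1,\dots,w_n$ is given for a sufficiently small tolerance. The family $\{e_i^2,w_i\}_{i=1}^n$ then satisfies, within that tolerance, the canonical matrix-unit relations of $M_n$ (equivalently the generating relations of $M_n\otimes C_0((0,1])$ once functional calculus in $e_1$ is incorporated); by weak semiprojectivity of $M_n$, or by a direct perturbation replacing $e_i,w_i$ by nearby elements with exact relations via functional calculus on $e_1^2$ and a polar-type adjustment, one obtains a genuine \cpc\ $\phi\colon M_n\to A$ of order zero with $\|\phi(e_{i1})-w_i\|$ and $\|\phi(e_{ii})-e_i^2\|$ small and $\phi$ approximately central on ${\cal F}$ (inherited from the $\|[w_i,x]\|$ bounds and $\sum e_ixe_i\approx x$). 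Setting $d=e_0^2$ we get $d\perp\phi(M_n)$, $d\lesssim a_0$, and $d+\phi(\mathbf 1_{M_n})\approx\sum_{i=0}^n e_i^2$, which is a local unit for ${\cal F}$ by hypothesis — precisely the data demanded by Definition 14.1.

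The main obstacle is not conceptual but organizational: one must propagate a single parameter $\ep'$ through all the functional-calculus and Cuntz-comparison slack (the $a^{1/2}$-continuity estimates, the off-diagonal commutator terms $e_ixe_j$, and the perturbation to exact order-zero relations) so that the cumulative error stays below $\ep$, and one must be careful, since $A$ is non-unital, never to invoke a unit of $A$ but only the local unit $d+\phi(\mathbf 1_{M_n})$ (resp.\ $\sum_{i=0}^n e_i^2$) relative to ${\cal F}$. One should also keep the index conventions straight: the $n+1$ blocks $e_0,\dots,e_n$ with $e_0$ the defect block correspond to an order zero map on $M_n$ together with the orthogonal defect $d$, not to a map on $M_{n+1}$. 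With these bookkeeping points handled, both implications are immediate consequences of the order zero structure theorem, which is why the proposition is essentially a restatement of the definition.
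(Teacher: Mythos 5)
The paper offers no argument here: the proposition is introduced with the single remark that it ``follows from the definition immediately,'' so the only question is whether your unwinding is itself correct. Your overall route --- translating between the order-zero-map picture and the $(e_i,w_i)$ picture via the Winter--Zacharias structure theorem --- is the natural one, but the forward direction contains a concrete error. With $e_i:=\phi(e_{ii})^{1/2}$ and $w_i:=\phi(e_{i1})$ one gets $w_i^*w_i=\phi(e_{11})^2=e_1^4$ and $w_iw_i^*=e_i^4$, not $e_1^2$ and $e_i^2$; your displayed computation silently uses $e_1=\phi(e_{11})$, and the suggested ``harmless scalar rescaling'' of $e_1$ cannot reconcile $\phi(e_{11})^{1/2}$ with $\phi(e_{11})$ (no scalar does, unless $\phi(e_{11})$ is a projection). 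The correct choice is $e_i:=\phi(e_{ii})$ and $e_0:=d$: the relations then hold exactly, and the cutting estimate still goes through because for a positive contraction $c=d+\phi(1_{M_n})$ one has $\|cxc-x\|\le 2\|c^{1/2}xc^{1/2}-x\|$, while the off-diagonal terms $e_ixe_j$ ($i\ne j$) are controlled using only approximate centrality of the $\phi(e_{ii})$ (note $e_ixe_0=[e_i,x]e_0$ because $e_ie_0=0$, so no centrality of $d$ is needed --- your parenthetical about $e_0$ commuting with the other blocks is beside the point). In the converse direction no semiprojectivity is needed either: the given relations are exact, and writing $w_j=v_je_1$ (polar decomposition; $v_i^*v_j=0$ for $i\ne j$ by orthogonality of the $e_i$) the map $b\mapsto\sum_{i,j}b_{ij}w_iw_j^*$ is an exact c.p.c.\ order zero map with image orthogonal to $d:=e_0^2\lesssim a_0$, $\psi(e_{ii})=e_i^2$, and approximate centrality on ${\cal F}$ inherited directly from the commutator bounds on the $w_i$.

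The more basic issue is that your entire proof is calibrated against a from-memory paraphrase of Definition 14.1 of \cite{GLp1} (order zero map into $A$, orthogonal defect $d\lesssim a_0$, $d+\phi(1_{M_n})$ a local unit on ${\cal F}$) which is never checked against the source. Since the proposition has no content other than the equivalence with that specific definition, this is the one step that cannot be waved through: if 14.1 is instead phrased, as the neighbouring statements 14.4/14.5 of \cite{GLp1} quoted in this paper are, through decompositions $x\approx \phi_0(x)\oplus{\rm diag}(\psi_1(x),\dots,\psi_1(x))$ with a subalgebra of the form $M_n(D_1)\subset A$ and a small complementary corner, then a further (routine but necessary) bridge --- for instance cutting by an approximate identity of $D_1$ as in the proof of Lemma \ref{LMS33}, or the polar-decomposition construction sketched above --- has to be inserted before your argument applies. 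With the normalization fixed and the definition pinned down, the rest of your bookkeeping is fine and matches the paper's view of the statement as an immediate reformulation.
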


\begin{thm}\label{TD0=fn}
Let $A$ be a non-unital separable simple \CA\, with continuous scale and with finite nuclear dimension which  satisfies the UCT.
Suppose that $T(A)\not=\emptyset$ and every tracial state of $A$ is a ${\cal W}$-trace.
Then $A\in {\cal D}_0$.
\end{thm}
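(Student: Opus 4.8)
\textbf{Proof proposal for Theorem \ref{TD0=fn}.}

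The plan is to combine the structural results already built in this paper for the model algebras $B_T$ with the classification-type uniqueness and existence theorems, and use a ${\cal Z}$-absorption argument together with the $\zo$-stable isomorphism theorem to identify $A$ as a member of ${\cal D}_0$. First I would reduce to the case that $A$ is ${\cal Z}$-stable: since $A$ has finite nuclear dimension, by the Tikuisis--White--Winter type results (or, within this circle of ideas, by the appendix of this paper applied after one knows $A$ lies in the right class), $A\cong A\otimes {\cal Z}$. Actually it is cleaner to go the other way: since $A$ has finite nuclear dimension and is simple, separable, non-unital with continuous scale, $A$ is ${\cal Z}$-stable, hence $A\cong A\otimes {\cal Z}$, and in particular $A$ has strict comparison for positive elements (R\o rdam, cf. 3.5 of \cite{GLp1}) and stable rank one. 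Combined with the hypothesis that every tracial state of $A$ is a $W$-trace, Theorem \ref{D0kerrho}'s proof machinery (via 18.2 and 18.3 of \cite{GLp1}) shows $K_0(A)={\rm ker}\rho_A$: being a $W$-trace exactly produces the approximating \cpc s into ${\cal C}_0^{0'}$-algebras whose traces recover $\tau$, which forces the pairing of $K_0$ with traces to vanish.

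Next I would invoke Theorem \ref{MainModel}: there is a separable simple amenable $C^*$-algebra $B_T\in {\cal D}_0$ (of the form constructed in Section 6, hence ${\cal Z}$-stable, with finite nuclear dimension, continuous scale, and stable rank one) with
$$
(K_0(B_T), K_1(B_T), T(B_T))\cong (K_0(A), K_1(A), T(A)),
$$
and moreover with $K_0(B_T)={\rm ker}\rho_{B_T}$. So both $A$ and $B_T$ are separable simple amenable ${\cal Z}$-stable algebras satisfying the UCT with isomorphic Elliott invariants (in the reduced form appropriate to continuous scale), with $K_0={\rm ker}\rho$ on both sides. The goal is then to show $A\cong B_T$, which immediately gives $A\in {\cal D}_0$ since ${\cal D}_0$ is closed under isomorphism. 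To get the isomorphism I would run the Elliott intertwining argument exactly as in the proof of Theorem \ref{Misothm}, but now one of the two algebras, $A$, is a priori only known to be ${\cal Z}$-stable with finite nuclear dimension rather than in ${\cal D}$. The existence side (\ref{11Ext2n2} and \ref{11ExtT1}) produces approximate morphisms $B_T\to A$ and $A\to B_T$ with prescribed $KL$-class, trace behaviour, and determinant; these existence theorems only require $A$ to satisfy the UCT, $K_0(A)={\rm ker}\rho_A$, continuous scale, and the $W$-trace property via \ref{PADw} and \ref{11R1}. The uniqueness side (\ref{TUNIq}) requires that the target lies in ${\cal D}_{{\bf T}(n,k)}$, which holds for ${\cal Z}$-stable algebras in ${\cal D}$ — so here is where I need to know $A\in {\cal D}$, not merely that it has finite nuclear dimension.

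The main obstacle, therefore, is precisely this gap: to apply the uniqueness theorem with $A$ as target I must first know $A\in {\cal D}$ (equivalently, that $A$ has generalized tracial rank at most one). The route I propose is to establish $A\in {\cal D}$ directly from finite nuclear dimension plus the $W$-trace hypothesis. Since $A$ is ${\cal Z}$-stable it has tracially approximately divisible property in the sense of 14.1 of \cite{GLp1} (this is where Proposition \ref{Pdtad} enters: ${\cal Z}$-stability yields the orthogonal positive elements $e_i$ and the intertwining partial isometries $w_i$). Using the $W$-trace hypothesis one produces, for given $\ep$, ${\cal F}$, and $b\in A_+\setminus\{0\}$, a \cpc\ $\psi: A\to D$ with $D\in {\cal C}_0^{0'}$ approximating the trace and with $t(f_{1/4}(\psi(a)))$ bounded below (this is the content of the $W$-trace property combined with \ref{PADw}-style lifting and weak semiprojectivity of algebras in ${\cal C}_0$); the complementary \cpc\ $\phi: A\to A$ with $\phi(a)\lesssim b$ comes from strict comparison and the divisibility. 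Checking that $\phi(A)\perp\overline{\psi(A)A\psi(A)}$ can be arranged to hold (or be achieved after a perturbation by a unitary, using stable rank one) is the routine but delicate part. This verifies the defining condition of ${\cal D}$ in Definition \ref{DD0} with ${\mathfrak f}_a$ chosen as the uniform lower bound produced above; then $A\in {\cal D}$, hence (by Theorem \ref{TD0=D}, using $K_0(A)={\rm ker}\rho_A$) $A\in {\cal D}_0$, and we are done without even needing the intertwining argument. If that direct verification runs into trouble, the fallback is the two-sided plan of the previous paragraph, with the observation that $A\otimes\zo\cong A$ (\ref{zoselfabsorbing}, available once $A\in{\cal D}$ with $K_0(A)={\rm ker}\rho_A$, or \ref{PtensorB_T}-style reasoning via 18.5–18.6 of \cite{GLp1} for finite nuclear dimension directly), which lets one replace $A$ by $A\otimes\zo$ and then apply \ref{Mclass3}-type classification; but the cleanest statement is the direct one: finite nuclear dimension $\Rightarrow$ ${\cal Z}$-stable $\Rightarrow$ tracially approximately divisible, and tracial approximate divisibility together with every trace being a $W$-trace is exactly the package needed to land in ${\cal D}_0$.
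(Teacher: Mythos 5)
Your opening reduction is the same as the paper's: it suffices to show that $A$ (equivalently $A\otimes{\cal Z}$, using \cite{aTz}) is tracially approximately divisible in the sense of 14.1 of \cite{GLp1}, since that property together with the $W$-trace hypothesis gives $A\in{\cal D}_0$ by 17.6 and (the proof of) 18.6 of \cite{GLp1}. The problem is the sentence on which your whole direct route (a) rests: ``Since $A$ is ${\cal Z}$-stable it has the tracially approximately divisible property.'' You assert this without proof, and it is exactly the hard content of the theorem. The paper does not deduce the divisibility from ${\cal Z}$-stability abstractly; it only knows it for the UHF-stabilizations. Concretely, $A\otimes M_\p$, $A\otimes M_\q$ and $A\otimes Q$ are approximately divisible for free and inherit the $W$-trace property, so they lie in ${\cal D}_0$ and satisfy $K_0=\ker\rho$; the paper then uses the isomorphism theorem \ref{Misothm} to identify them with the corresponding stabilizations of $B=A\otimes\zo$, applies the uniqueness theorem \ref{TUNIq} to intertwine $\phi_\p\otimes{\rm id}_{M_\q}$ and $\phi_\q\otimes{\rm id}_{M_\p}$ by a unitary, and — this is what Section 15 exists for — uses the homotopy lemma \ref{Chomotopy} to kill the unitary obstruction at the endpoints of $[0,1]$, so that the divisibility elements of $B\otimes{\cal Z}_{\p,\q}$ can be transported into $A\otimes{\cal Z}_{\p,\q}\subset A\otimes{\cal Z}\cong A$ and then corrected by semiprojectivity of $C_0((0,1])\oplus M_n(C_0((0,1]))$ to satisfy the exact relations $w_i^*w_i=e_1^2$, $w_iw_i^*=e_i^2$ with $e_0\lesssim a_0$. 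To make your one-line claim rigorous you would have to produce, in the non-unital continuous-scale setting, approximately central lifts of a cone over $M_n$ whose complement is \emph{uniformly} small over $T(A)$ (to use strict comparison for $e_0\lesssim a_0$) and which act as an approximate unit on ${\cal F}$; nothing in the paper (or in your proposal) supplies such a central-sequence statement, and the authors evidently chose the much longer UHF-plus-homotopy route precisely because they do not have it.

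Your fallback route (b) does not repair this: the uniqueness theorem \ref{TUNIq} and the existence machinery of Sections 10--12 require the target (here $A$) to be in ${\cal D}$, and \ref{zoselfabsorbing} requires $gTR(A)\le 1$, so intertwining $A$ with $B_T$, or invoking \ref{Mclass3} to get $A\cong A\otimes\zo$, presupposes what is to be proved — a circularity you half-acknowledge but do not resolve. (Knowing $A\otimes\zo\in{\cal D}_0$ via 18.5--18.6 of \cite{GLp1} is not enough, since one still needs $A\cong A\otimes\zo$.) The peripheral steps of your proposal are fine and agree with the paper: finite nuclear dimension gives ${\cal Z}$-stability by \cite{aTz} (not the quasidiagonality theorem of \cite{TWW}), strict comparison follows, and the $W$-trace hypothesis gives $K_0(A)={\rm ker}\rho_A$ as in \ref{D0kerrho}. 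But the central divisibility claim is a genuine gap, not a routine verification.
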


\begin{proof}
Since every tracial state of $A$ is a ${\cal {\cal W}}$-trace, by
{{12.3 of \cite{eglnp1} (see 18.3 of \cite{GLp1}),}}
$K_0(A)={\rm ker}\rho_A.$
Suppose that $A$ is tracially approximately divisible.
 Then, since we assume that every tracial state  of $A$  is a ${\cal W}$ trace, by \ref{Pdivisiblehere} of this
 paper and {{6.5}} of \cite{eglnkk0}
and the proof of 18.6 of \cite{GLp1}),
$A\in {\cal D}_0.$
In particular, $A\otimes \zo$ and $A\otimes U$ are in ${\cal D}_0$ for every
UHF-algebra $U$ of infinite type.
Therefore it suffices to show that $A$  is tracially approximately divisible.

It follows from \cite{aTz} that $A\cong A\otimes {\cal Z}.$
Put $B=A\otimes \zo,$ $B_q=B\otimes Q$ and $A_q=A\otimes Q.$ Pick a pair of relatively prime supernatural numbers $\p$ and $\q$ such that $M_{\p}\otimes M_{\q}=Q.$  Let
\beq
{\cal Z}_{\p, \q}=\{f\in C([0,1], Q): f(0)\in M_\p\andeqn f(1)\in M_\q\}\andeqn\\
D\otimes {\cal Z}_{\p,\q}=\{f\in C([0,1], D\otimes Q): f(0)\in D\otimes M_\p\andeqn f(1)\in D\otimes M_\q\}
\eneq
for  any \CA\, $D.$ Note, by \cite{RW}, ${\cal Z}$ is a stationary inductive limit of ${\cal Z}_{\p, \q}$
with trace cllapsing connecting map.

{{Let $\ep>0,$  let ${\cal F}\subset A\otimes {\cal Z},$
let $a_0\in (A\otimes {\cal Z})_+\setminus \{0\},$  and let $n\ge 1$ be an integer.}} Put $\eta=\inf\{d_\tau(a_0): \tau\in T(A\otimes {\cal Z})\}.$
Since $A$ is assumed to have continuous scale, one may find a positive element
$f_e\in A\otimes {\cal Z}$  with $\|f_e\|=1$ such that
\beq\label{LFNZ-1}
\tau(f_e)>1-\eta/16(n+1)^3\rforal \tau\in T(A\otimes {\cal Z}).
\eneq
We assume that $f_e\in {\cal F}.$
\Wlog, we may also assume that ${\cal F}\subset A\otimes {\cal Z}_{\p,\q}.$
We may further  assume, \wilog, that
there is $0<1/2<d_0<1$ such that
\beq\label{1910-152-n1}
f(t)=f(1)\rforal t>d_0
\eneq
 and for all $f\in {\cal F}.$
Note $\Aff(T(B))=\Aff(T(A))$ and $U({\tilde B})/CU({\tilde B})=U({\tilde A})/CU({\tilde A}).$
There is a $KK$-equivalence $\kappa\in KL(B,A)$ which is compatible
to the identifications {\ {$\kappa_T: \Aff(T(B))\to \Aff(T(A))$ and $\kappa_{cu}: U({\tilde B})/CU({\tilde B})\to U({\tilde A})/CU({\tilde A})$ above.}}
We will consider the triple
$(\kappa, \kappa_T, \kappa_{cu}).$
Let $\phi_\p: B\otimes M_\p \to A\otimes M_\p$  and
$\phi_\q: B\otimes M_\q\to   A\otimes M_\q$  be  isomorphisms given by \ref{Misothm}
and induced by $(\kappa\otimes [{\rm id}_{M_\p}], \kappa_T, \kappa_{cu}\otimes ({\rm id}_{M_\p})_{cu}),$
and by $(\kappa\otimes [{\rm id}_{M_\q}], \kappa_T, \kappa_{cu}\otimes ({\rm id}_{M_\q})_{cu}).$
Let $\psi_\p: B\otimes M_\p\otimes M_\q=B\otimes Q \to A\otimes M_\p\otimes M_\q=A\otimes Q$
given by $\psi_\p=\phi_\p\otimes {\rm id}_{M_\q}$ and
let $\psi_\q=\phi_\q\otimes {\rm id}_{M_\p}: B\otimes Q\to A\otimes Q.$
Then
\beq
([\psi_\q], (\psi_\q)_T, \psi_\q^{\dag})=([\psi_\p], (\psi_\p)_T, \psi^{\dag}_\p).
\eneq

Let ${\cal F}_1=\{f(1): f\in {\cal F}\}$ in  $A\otimes M_\p\otimes M_\q.$
Let ${\cal G}_{1,b}=\{\psi_\q^{-1}(f): f\in {\cal F}_1\}\subset B\otimes Q.$
Fix an $\ep>0.$  Put $C_{00}=C_0((0,1])\oplus M_n(C_0((0,1])))$
and $C_g=\{(f,0), (0, f\otimes e_{i,i}), (0,f\otimes e_{1,i}): 1\le i\le n\}$ form a set of generators, where
$f\in C_0((0,1])$ is the identity function on $[0,1]$ and  $\{e_{i,j}\}_{1\le i,j\le n }$ is
a system of  matrix units for $M_n.$ It is well known that
$C_{00}$ is semi-projective.
Let $\dt_c>0$ {{satisfy}} the following:
if $L: C_{00}\to C'$ is a $C_g$-$\dt_c$-multiplicative \cpc\, for a \CA\, $C',$
there exists a \hm\, $h_c: C_{00}\to C'$ such that
\beq
\|h_c(g)-L(g)\|<\min\{\ep, \eta\}/64(n+1)^3\rforal g\in C_g.
\eneq
Let $\ep_0=\min\{\ep/(n+1)^316, \dt_c/4, \eta/(n+1)^316\}.$

Let $\dt>0$ and ${\cal G}\subset A\otimes Q$ be a finite subset required by \ref{Chomotopy} for
$\ep_0$  and ${\cal F}_1.$ \Wlog, we may assume
that ${\cal G}\subset (A\otimes Q)^{\bf 1}$ and ${\cal F}_1\subset {\cal G}.$
Let $\ep_1=\min\{\ep_0/2, \dt/4\}$ and ${\cal G}_1=\psi_q^{-1}({\cal G})\cup {\cal G}_{1,b}\subset B\otimes Q.$

It follows from
\ref{TUNIq} (see \ref{Rsec53})
that there exists a unitary $u\in {\widetilde{A\otimes Q}}$ such that
\beq\label{LFNZ-10}
\|u^*\psi_\p (g)u-\psi_\q(g)\|<\ep_1/4 \rforal g\in {\cal G}_1.
\eneq
Write $u=\lambda +\af(u)$ for some $\af(u)\in A\otimes Q.$
Choose $e_{00}, e_{01}\in (A\otimes Q)_+$ with $\|e_{00}\|=\|e_{01}\|=1$
such that $e_{00}e_{01}=e_{00}$ and $\|e_{00}x-x\|<\ep_1/16$ and
$\|x-xe_{00}\|<\ep_1/16$ for all $x\in {\cal G}_1$ and $x=\af(u).$
We also assume that  there is a non-zero $e_{00}'\in A\otimes Q$  {{such that}} $e_{00}'e_{01}=0.$
There is a unitary $u_1\in \C\cdot 1_{{\widetilde{A\otimes Q}}}+\overline{e_{00}(A\otimes Q)e_{00}}$
such that $\|u_1-u\|<\ep/8.$ Since $A\otimes Q\in {\cal D}_0,$
 by 11.5 of \cite{eglnp1}, it has stable rank one.
{{Thus there}} is a unitary $u_2\in \C\cdot 1_{{\widetilde{A\otimes Q}}}+\overline{e_{00}'(A\otimes Q)e_{00}'}$
such that $[u_2]=-[u]$ in $K_1(A).$
Put $u_3=uu_2.$ Then, since $e_{00}'e_{01}=0,$ by \eqref{LFNZ-10},
\beq\label{LFNZ-11}
\|u_3^*\psi_\p(g)u_3-\psi_\q(g)\|<\ep_1/2\rforal g\in {\cal G}_1.
\eneq
But now $u_3\in U_0({\widetilde{A\otimes Q}}).$
There is a continuous path of unitaries
$\{u(t): t\in [0, d]\}\subset U({\widetilde{A\otimes Q}})$ such that $u(0)=1$ and
$u(t)=u_3$ for all $t\in [d, 1]$ and for some $0<d_0<d<1.$
Define
\beq
\gamma(f)(t)=\begin{cases}\psi_\p^{-1}( u(t)f(t))u(t)^*)& \,\,\ t\in [0, d];\\
                                  {(1-t)\over{1-d}}\psi^{-1}_\p(u(d)f(d)u(d)^*)+{(t-d)\over{1-d}}\psi_\q^{-1}(f(1)) &
                                  \,\,\, t\in (d, 1].
                                  \end{cases}
\eneq
Note that $\gamma(f)\in B\otimes {\cal Z}_{\p,\q}.$
For $f\in {\cal F},$ let $g=\psi_\q^{-1}(f(1))=\psi_\q^{-1}(f(d)),$  by  \eqref{LFNZ-11},
\beq
\|g-\psi^{-1}_\p(u(d)\psi_\q(g)u(d)^*)\|<\ep_1/2.
\eneq
In other words, if $f\in {\cal F},$
\beq\label{LFNZ-11+}
\|\psi^{-1}_\p(u(d)f(d)u(d)^*)-\psi_\q^{-1}(f(1))\|<\ep_1/2
\eneq

Let ${\cal F}_2=\{\gamma(f): f\in {\cal F}\}\subset B\otimes {\cal Z}_{\p,\q}.$
{{Note that $B$ is a simple \CA\, and $B\cong B\otimes \zo,$ $B$ has tracially approximate divisible property
(see  \ref{TD0=D}  and \ref{Ptensor}).
Since
${\cal Z}_{\p,\q}$  is unital and $B$ has tracially approximate divisible property,}}
there
exist mutually orthogonal positive elements
$e_i,$  $i=0, 1,2,...,n,$ elements $w_i,$ $i=1,2,...,n,$ in $ B\otimes {\cal Z}_{\p,\q}$ such that
$w_i^*w_i=e_1^2,$ $w_iw_i^*=e_i^2,$ $e_0e_i=0,$ $i=1,2,...,n,$  and
\beq\label{LFNZ-12n}
\|x-\sum_{i=0}^ne_ixe_i\|<\ep_1/4,\,\,\|xw_i-w_ix\|<\ep_1/4,\,\,1\le i\le n\rforal x\in {\cal F}_2\andeqn\\
 d_\tau(e_0)\le \eta/4\rforal \tau\in T(B\otimes {\cal Z}_{\p,\q}).
\eneq
Since $f_e\in {\cal F},$ \eqref{LFNZ-12n} also implies
that
\beq\label{LFNZ-13}
\sum_{i=1}^n\tau(e_i)\ge 1-\ep_0/4-\eta/16n^2\rforal \tau\in T(B\otimes {\cal Z}_{\p,\q}).
\eneq
\Wlog, we may assume
that $e_i(t)=e_i(1)$ and $w_i(t)=w_i(1)$ for all $t\in [d_1, 1]$ for some
$d_1>d> d_0.$

Let ${\cal G}_2={\cal G}_1\cup\{e_i(1), w_i(1): 1\le i\le n\}.$
By applying \ref{TUNIq} again, we obtain another unitary $u_4\in {\widetilde{A\otimes Q}}$
such that
\beq\label{LFNZ-14}
\|u_4^*(u_3^*\psi_\p(g)u_3)u_4-\psi_\q(g)\|<\ep_1/16\rforal g\in {\cal G}_2.
\eneq
Therefore {{(see also \eqref{LFNZ-11}),}} for any $g\in {\cal G}_1,$
\vspace{-0.1in}\beq
\|[{\rm Ad}\, u_3\circ \psi_\p(g),\, u_4]\|<\ep_1.
\eneq
It follows from \ref{Chomotopy} that there exists a unitary $u_5$ and a continuous path of
unitaries  $\{v(t): t\in [d_1,r]\}$ in ${\widetilde{A\otimes Q}}$
(for some $1>r>d_1$)  with
$v(r)=u_4u_5$ and $v(d_1)=1_{{\widetilde{A\otimes Q}}}$ such that
\beq\label{LFNZ-15}
&&\|[{\rm Ad}\, u_3\circ \psi_\p(g),\, u_5]\|<\ep_1/16\rforal g\in {\cal G}_2\andeqn\\\label{LFNZ-15+1}
&&\|[{\rm Ad}\, u_3\circ \psi_\p(f),\, v(t)]\|<\ep_0\rforal f\in {\cal F}_1 {{\andeqn t\in [d_1, r]}}.
\eneq
It follows from \eqref{LFNZ-14} and \eqref{LFNZ-15} that
\beq\label{LFNZ-15+2}
\|v(r)^*(u_3^*\psi_\p(g){{u}}_3){{v}}(r)-\psi_\q(g)\|<\ep_1/8\rforal g\in {\cal G}_2.
\eneq
Now define
\beq
&&\hspace{-0.65in}b_i'=\begin{cases} u^*(t)\psi_\p(e_i(t))u(t)  & t\in [0,d_1],\\
                              v^*(t)u_3^*\psi_\p(e_i(t))u_3v(t) & t\in [d_1, r],\\
                ({(1-t)\over{1-r}}v(r)^*u_3^*\psi_\p(e_i(1))u_3v(r))+{(t-r)\over{1-r}}\psi_\q(e_i(1))    & t\in (r,1],\cr
                \end{cases}
                i=0,1,2,...,n\andeqn\\
                &&\hspace{-0.65in}z_i'= \begin{cases} u^*(t)\psi_\p(w_i(t))u(t)  & t\in [0,d_1],\\
                              v^*(t)u_3^*\psi_\p(w_i(t))u_3v(t) & t\in [d_1, r],\\
                ({(1-t)\over{1-r}}v(r)^*u_3^*\psi_\p(w_i(1))u_3v(r))+{(t-r)\over{1-r}}\psi_\q(w_i(1))    & t\in (r,1],\cr
                \end{cases} \,\,i=1,2,...,n.
         \eneq
 { {From  the definition of $e_i$ and $w_i,$  and
 \eqref{LFNZ-15+2}
  we have}}
  \beq\label{June-2018-1}
\hspace{-0.1in}\|(z'_i)^*z'_i -{{(b'_1)^2}}\|<\ep_1,~\|z'_i(z'_i)^*-{{(b'_i)^2}}\|<\ep_1,~\mbox{for all}~i\geq 1~~\mbox{and}~~ \|b'_ib'_l\|<\ep_1 ~~\mbox{for all} ~i\not=l.
\eneq

         For the next  few estimates, recall that $f(t)=f(1)$ for all $t\in [d_0,1],$
         $e_i(t)=e_i(1)$ for all $t\in [d_1, 1],$ and $u(t)=u(d)$ for all $t\in [d, d_1].$

 For $t\in [0,d_0],$  since $\gamma(f)\in {\cal F}_2,$
 by \eqref{LFNZ-12n},
 \beq
 \|f(t)-\sum_{i=0}^n b_i' (t)f(t)b_i'(t)\|<\ep_1\tforal f\in {\cal F}{.}
 \eneq
For $t\in [0,d_1],$ by the definition of $\gamma(f),$  by \eqref{LFNZ-11+}, {{the definition of $b_i',$}} and \eqref{LFNZ-12n},
we have
\beq
f(t)\approx_{\ep_1/2} u(t)^*\psi_\p(\gamma(f(t)))u(t)\approx_{\ep_1/4} \sum_{i=0}^n b_i'(t)u(t)^*\psi_\p(\gamma(f(t)))u(t)b_i'(t)\\
\approx_{\ep_1/2} \sum_{i=0}^n b_i' (t)f(t)b_i'(t)\,\,\,\rforal f\in {\cal F}.
\eneq
For $t\in [d_1,r],$ {{by \eqref{1910-152-n1},}}  \eqref{LFNZ-11},  {{\eqref{LFNZ-15+1}, and \eqref{LFNZ-12n},}} with $g=\psi_\q^{-1}(f(1)),$
\beq
&&\hspace{-0.4in}f(t)=f(1)\approx_{\ep_1} u_3^*\psi_\p(g)u_3\approx_{\ep_0} v(t)^*u_3^*\psi_\p(g)u_3v(t)\\
&&\approx_{\ep_1/2} {\rm Ad}\, u_3v(t)\circ \psi_\p(\gamma(f(t)))
\approx_{\ep_1/4} {\rm Ad}\, u_3v(t)\circ \psi_\p(\sum_{i=0}^n e_i(t)\gamma(f(t))e_i(t))\\
\vspace{-0.1in}&&\hspace{0.9in}\approx_{\ep_1} \sum_{i=0}^n b_i'(t) f(t)b_i'(t).
\eneq
On $[r,1],$  by the above,  and by \eqref{LFNZ-15+2}, as $e_i(1)\in {\cal G}_2,$
 \beq
 f(t)=f(1)\approx_{3\ep_1} \sum_{i=0}^n b_i'(r) f(r)b_i'(r)\approx_{4(n+1)\ep_1/8}
 \sum_{i=0}^n b_i'(t)f(t)b_i'(t).
 \eneq
 Coming all  {{the four estimates}} above, we have that
 \beq\label{LFNZ-13-1}
 \|f-\sum_{i=0}^n b_i' fb_i'\|<(n+1)\ep_1+\ep_0<\ep/16(n+1)^2\rforal f\in {\cal F}.
 \eneq
 We also compute
 that
 \beq\label{LFNZ-13-11}
 \|z_i'f-fz_i'\|<2\ep_1+\ep_0,\,\, \, 1\le i\le n,\rforal f\in {\cal F}.
 \eneq
 By the semi-projectivity of $C_{00}$ and \eqref{June-2018-1}, and choice of $\dt_c$ ($\ep_1<\dt_c$),
 we obtain $b_i, z_j\in A\otimes {\cal Z}_{\p,\q},$ $i=0,1,2,...,n,$ $j=1,2,...,n,$
 such that
 \beq
 \|b_i-b_i'\|<\min\{\ep,\eta\}/(64n^2)\andeqn \|z_j-z_j'\|<\min\{\ep,\eta\}/(64n^2),\\
 b_ib_l=0\,\,\,{\rm if}\,\,\, i\not=l,\,\,\, z_i^*z_i=b_1^2,\,\,\,z_iz_i^*=b_i^2,
 \eneq
 $i,l=0,1,2,...,n$ and $j=1,2,...,n.$
 By \eqref{LFNZ-13-1} and \eqref{LFNZ-13-11},
 \beq
 \|f-\sum_{i=0}^n b_ifb_i\|<\ep\andeqn \|z_if-fz_i\|<\ep,\,\,\,1\le i\le n,\rforal x\in {\cal F}.
 \eneq
 We also estimate, by \eqref{LFNZ-13},
 that
\vspace{-0.1in} \beq
 \tau(\sum_{i=1}^n b_i)>1-\eta/2\rforal \tau\in T(A\otimes {\cal Z}).
 \eneq
 It follows that $d_\tau(b_0)<\eta$ for all $\tau\in T(A\otimes {\cal Z}).$
 This implies that $b_0\lesssim a_0.$
 Therefore $A\otimes {\cal Z}$ has the tracial approximate divisible property (see \ref{Pdtad}).
\end{proof}

\begin{lem}\label{Lqtrace}
Let $A$ be a separable simple ${\cal Z}$-stable  \CA\,  with
continuous scale, {{and}} with $T(A)\not=\emptyset,$ and $QT(A)=T(A).$
Let $x\in {\rm ker}\rho_A.$
Then there exists  a \hm\,  $\psi: A\to M_4(A)$ which maps
strictly positive elements  to strictly positive elements,
$\psi_{*0}(x)=0$ and $(\tau\otimes Tr)(\psi(a))=4\tau(a)$ for all
$a\in A$ and $\tau\in T(A).$
where $Tr$ is the  standard trace on $M_4.$

\end{lem}

\begin{proof}
We first assume that $A$ is stably projectionless. By A8 of the appendix of \cite{eglnkk0},
there exists a projection
$p\in M_r({\tilde A})$  such that $[p]=[1_A]-x$ in $K_0({\tilde A})$ for some integer $r>0.$
By  A6 of the appendix of \cite{eglnkk0},
{{we may assume}} that
$p\in M_2({\tilde A}).$  Denote by $1_2\in M_2({\tilde A})$ the identity of $M_2({\tilde A}).$
Put $q=1_2-p.$ Note that $p+q=1_2.$
Write $\{e_{ij}\}_{2\times 2}$ as the matrix unit  for $M_2.$
By replacing $p$ by $Z^*pZ,$ where $Z$ is a unitary matrix with scalar entires, we may assume
that $\pi(p)=e_{11},$ where $\pi$ is the map induced by the quotient map ${\tilde A}\to \C.$
Later we will also use $\pi$ for the quotient map $M_2(M(A))\to M_2(M(A)/A).$
Note that we also have $\pi(q)=e_{22}.$

We have $\tau(p)=\tau(q)$ for all $\tau\in T(A).$
Let $A_1=pM_2(A)p$ and $A_2=qM_2(A)q.$
Let $a_p\in A_1$ and $a_q\in A_2$ be strict positive element of $A_1$ and $A_2,$ respectively.
We also assume that $0\le a_p\le 1$ and $0\le a_q\le 1.$
Then
\beq
d_\tau(a_p)=\tau(p)=\tau(q)=d_\tau(a_q)\rforal \tau\in T(A).
\eneq
By \cite{aTz}, $A$ is ${\cal Z}$-stable.  Note that we assume that $A$ is stably projectionless.
Then, {{by  Theorem 1.2 of \cite{Rz},}}  $a_p\sim a_q$ in $Cu(A).$  Also, by \cite{Rz}, $A$ almost has  stable rank one.
{{By \ref{Srk1}}} there is a partial isometry $w\in M_2(A)^{**}$ such
that $w^*a, aw\in M_2(A)$ and $ww^*a=aww^*=a$ for all $a\in A_1$  and $wb, bw^*\in M_2(A)$ for all $b\in A_2$ such that $w^*a_pw:=b_q$ is a strictly positive element
of $A_2.$

Moreover,
\beq\label{151-n1}
w^*(a_p)^{1/n}w=b_q^{1/n}\rforal n.
\eneq
Consider $W={{pwq+qw^*p}}.$ Then, for any $a\in M_2(A),$
$W^*a, aW\in M_2(A).$  In fact, we may write
$$
a=pap+paq+qap+qaq
$$
for any  $a\in M_2(A).$ Then, for any $a\in M_2(A),$
\beq{{
W^*a=qw^*pap+qw^*paq+pwqap+pwqaq\in M_2(A)\andeqn aW\in M_2(A).}}
\eneq
{{(Note that $M_2(A)$ is an ideal in $M_2(\tilde{A})$ and $p,q\in M_2(\tilde{A})$.)}}
Therefore $W\in M(M_2(A))=M_2(M(A)).$ Since $p+q=1_2,$
$a_p+b_q$ {{is}} a strictly positive element of $M_2(A).$
{{Hence $a_q^{1/n}+b_q^{1/n}\to 1_2$ in the strict topology.}}
{{We also have, by \eqref{151-n1},
\beq
W^*(a_p^{1/n}+b_q^{1/n})W&=&w^*a_p^{1/n}w+wb_q^{1/n}w^*=b_q^{1/n}+w(w^*a_q^{1/n}w)w^*\\
&=&b_q^{1/n}+(ww^*)a_p^{1/n}(ww^*)=b_q^{1/n}+a_q^{1/n}.
\eneq
}}
{{It follows that $W^*W=1_2.$  As $W^*=W,$}}
 $W$ is a {{self adjoint}} unitary in $M_2(M(A))$ and $\phi(a)=W^*aW$ for all $a\in A$ defines
an automorphism of $M_2(A).$
Note that $a_p^{1/n}$ converges strictly to the identity of $M(A_1).$ Note also
\beq\nonumber
a_p^{1/n} (a_p+b_q)^{1/2}=a_p^{1/n}a_p^{1/2}\to 1_{\tilde A_1}(a_p+b_q)^{1/2}\andeqn
(a_p+b_q)^{1/2} a_p^{1/n}\to (a_p+b_q)^{1/2}1_{\tilde A_1}
\eneq
(in norm).
It  follows that $a_p^{1/n}$ converges to a projection $p'\in M_2(M(A))$ strictly.
Exactly the same argument shows that $b_p^{1/n}$ converges  strictly to a projection $q'\in M_2(M(A)).$
Since $a_p+b_q$ is a strictly positive element of $M_2(A),$
this implies that $p'+q'=1_2.$
Since $p\in M_2(M(A))$ and
$pa_p^{1/n}=a_p^{1/n},$ $p\ge p'.$ Also $pq'=q'p=0.$  Similarly $q\ge q'$ and $qp'=p'q=0.$
Since $p+q=1_2$ and $p'+q'=1_2,$
this implies that $p=p'$ and $q=q'.$
Since $W\in M_2(M(A)),$ it follows that
\beq\label{151-n2}
W^*pW=q.
\eneq

We now show  that $W^*M_2({\tilde A})W=M_2({\tilde A}).$
Write
\beq
W=\begin{pmatrix} w_{11} & w_{12}\\
                                w_{21} & w_{22}\end{pmatrix}.
                                \eneq
Note that  $\pi(p)=e_{11}$ and $\pi(q)=e_{22}.$  {{Since $W$ is a self adjoint unitary}}, by \eqref{151-n2},
$\pi(W)^*e_{11}\pi(W)=e_{22}$, {{$\pi(W)^*e_{22}\pi(W)=e_{11}$, and $e_{11}\pi(W)=\pi(W) e_{22}$.}}
Hence
\beq
{{\pi(w_{11})=0=\pi(w_{22}).}}
\eneq
In other words,
\beq
\pi(W)=\begin{pmatrix} 0 & \pi(w_{12})\\
                              \pi(w_{21}) & 0\end{pmatrix}.
                             \eneq
Then, since $W$ is a unitary,
\beq\nonumber
&&\pi(W^*)\begin{pmatrix} 0 & 1\\
                                      1& 0\end{pmatrix} \pi(W)=\begin{pmatrix} \pi(w_{21}^*) & 0\\
                                                                                           0 & \pi(w_{12}^*)\end{pmatrix}\pi(W)=
                                                                                           \begin{pmatrix}  0 & \pi(w_{21}^*w_{21}) \\
                                                                                             \pi(w_{12}^*w_{12}) &0\end{pmatrix}=\begin{pmatrix} 0 & 1\\
                                      1& 0\end{pmatrix}.
\eneq
Since $e_{1,1},$ $e_{2,2}$ and $\begin{pmatrix} 0 & 1\\
                                      1& 0\end{pmatrix}$ generates $M_2,$ this implies that $\pi(W^*)s\pi(W)\in M_2$ for any scalar matrix $s.$ Therefore
$W^*M_2({\tilde A})W=M_2({\tilde A}).$
This extends $\phi$  from $M_2({\tilde A})$ to  $M_2({\tilde A})$ as an isomorphism.
It follows that $\phi_{*0}(2[1_{\tilde A}])=2[1_{\tilde A}].$ Put $y=\phi_{*0}([1_{\tilde A}])-[1_{\tilde A}].$
Then $2y=0.$  Also
{{\beq
\phi_{*0}(x)=\phi_{*0}([1_{\tilde{A}}]-[p])=\phi_{*0}([1_{\tilde{A}}])-[q]\\
=(\phi_{*0}([1_{\tilde{A}}])-[1_{\tilde A}]) {{+[1_{\tilde A}]}}-(2[1_{\tilde A}]-[p])\\ \label{Lqtrace-10}
=y+([p]-[1_{\tilde A}])=
y-x.
\eneq}}
Define $\psi: A\to M_4(A)$ by
\beq
\psi(a)=\diag(a,a, \phi(a), \phi(a))\rforal a\in A.
\eneq
Note that
\beq
\psi_{*0}(x)=2x+{{2(y-x)}}=2x-2x=0.
\eneq

In case that $A$ is not stably projectionless, let $e\in M_m(A)$ be a nonzero projection.
Put $B=eM_m(A)e.$ Then $B$ is a unital simple \CA\, with nonzero quasidiagonal traces.
Then the conclusion follows from 5.5 of \cite{Dmaps}.

\end{proof}

\begin{cor}\label{Z0ML}
Let $A$ be a separable simple \CA\, which is ${\cal Z}$-stable and $QT(A)=T(A)\not=\emptyset.$
For any finitely generated subgroup $G_0\subset {\rm ker}\rho_A,$ there exists an integer $m\ge 1$ and
a \hm\, $\psi: A\to M_m(A)$ which maps
strictly positive elements  to strictly positive elements,
$\psi_{*0}(x)=0$ {{for all $x\in G_0$}} and $(\tau\otimes Tr_m)(\psi(a))=m\tau(a)$ for all
$a\in A$ and $\tau\in T(A),$
where $Tr_m$ is the  standard trace on $M_m.$

\end{cor}

\begin{proof}
Let $x_1,x_2,...,x_k\in G_0$ be a set of generators of $G_0.$
We prove the corollary by induction.
By \ref{Lqtrace}, it holds for $k=1.$  Suppose
that it holds for all integers $1\le k'<k.$
Let $G_{0,1}\subset G_0$ which  is generated  by $x_1, x_2,...,x_{k-1}.$
By the inductive assumption, there exists a \hm\, $\psi_1: A\to M_{m'}(A)$
which maps strictly positive elements to strictly positive elements,
$(\psi_1)_{*0}(x)=0$ for all $x\in G_{0,1}$ and
$(\tau\otimes Tr_{m'})(\psi_1(a))=m'\tau(a)$ for all $a\in A.$

Let $y=(\psi_1)_{*0}(x_k)$ and $B=M_{m'}(A).$
Lemma \ref{Lqtrace} shows
that there exists a \hm\, $\phi: B\to M_{4}(B)$  such that $\phi$ maps strictly positive elements to strictly positive elements,
$(\phi)_{*0}(y)=0$ and
$(\tau\otimes Tr_{4})(\psi(b))=4\tau(a)$ for all $b\in A$ and $\tau\in T(B).$
Let $m=4m'.$  Define $\psi: A\to M_m(A)$ by
\beq
\psi(a)={{\phi\circ \psi_1(a)}}
\rforal a\in A.
\eneq
Then $(\psi)_{*0}(x)=0$ for all $x\in G_0.$
Lemma follows.
\end{proof}

\begin{thm}\label{Tz01}
Let $A$   be  a finite separable simple  \CA s  with finite nuclear dimension which satisfies
the UCT.  Suppose that  $T(A)\not=\emptyset$ and  $K_0(A)={\rm ker}\rho_A.$
Then $gTR(A)\le 1.$
\end{thm}

\begin{proof}
Note that, by \cite{Wnz}, that $A$ is ${\cal Z}$-stable. It follows that there exists $e\in A_+\setminus \{0\}$
such that $eAe$ has continuous scale.   \Wlog, we may assume that $A=eAe.$
It follows from \cite{TWW} that every tracial state  of $A$ is quasidiagonal.
We will prove that  every tracial state of $A$ is a ${\cal W}$-trace. Then \ref{TD0=fn} applies.
We will follow exactly the same proof of
{{7.4 of \cite{eglnkk0}.}}

As in the proof of 7.4 of \cite{eglnkk0}, it suffices to show
that every tracial state of $A\otimes Q$ is a ${\cal W}$-trace.
Therefore, from now on, in this proof, we assume that $A=A\otimes Q.$
Let $K_0(A)=\cup_{n=1}^{\infty}G_n,$ where $G_n\subset G_{n+1}$ is a sequence
of finitely generated subgroups.
Let ${\cal P}\subset \underline{K}(A)$ be a finite subset and $G_{\cal P}$ be the subgroup
 generated by ${\cal P}.$
 We may assume that ${\cal P}{{\cap K_0(A)}}\subset G_n$ for some integer $n\ge 1.$
 It follows from \ref{Z0ML} that there exists a \hm\, $\phi_n: A\to A\otimes M_m(\C)\to A\otimes Q$
 which maps strictly positive elements to strictly positive elements,
 $\phi_{*0}(x)=0$ for all $x\in G_n$ and $\tau(\phi(a))=\tau(a)$ for all $a\in A$ and
 $\tau\in T(A\otimes Q).$
 By \cite{TWW}, every tracial state of $A\otimes Q$ is quasidiagonal,
 there exists a sequence  of \cpc s ${{\psi}}_k: A\otimes Q\to Q$ such that
 \beq
 \lim_{k\to\infty}\|{{\psi}}_k(ab)-{{\psi}}_k(a){{\psi_k}}(b)\|=0\andeqn
 \lim_{k\to\infty}\tau({{\psi}}_k(a))=\tau(a)\rforal a,\, b\in A.
 \eneq
 For each $n$ choose $k(n)$ such that
 $L_n: A\otimes Q\to Q$ defined by
 $L_n(a)=\psi_{k(n)}\circ \phi_n(a)$ for all $a\in A$ has the property
 that $[L_n]|_{G_n}=0$ and
 \beq
  \lim_{n\to\infty}\|L_n(ab)-L_n(a)L_n(b)\|=0\andeqn
 \lim_{n\to\infty}{{{\rm tr}_Q}}(L_n(a))=\tau(a)\rforal a\in {{ A,}}
 \eneq
 {{where ${\rm tr}_Q$ is the unique trace on $Q$.}}
Since both $A\otimes Q$ and $Q$ are divisible, {{and $K_1(Q)=\{0\},$}} for any finite subset
${\cal P}\subset \underline{K}(A),$ $[L_n]|_{\cal P}=\{0\}$ for all sufficiently large $n.$

 {{By Lemma 7.2}} and the proof of 7.4 of \cite{eglnkk0})
  {{there}} exists a sequence of \cpc s $\Phi_n: A\to {\cal W}$ such that
 \beq
 \lim_{n\to\infty} \|\Phi_n(ab)-\Phi_n(a)\Phi_n(b)\|=0\andeqn
 \tau(a)=\lim_{n\to\infty}t_{\cal W}\circ \Phi_n(a)\rforal a\in A.
 \eneq
{{To see this, let}}  ${\cal P}\subset \underline{K}(A)$ be a finite subset.
 {{Then}} $[L_n]|_{\cal P}=0$ for all sufficiently large $n.$
 In the proof of 7.4 of \cite{eglnkk0} let us  replace $\psi$ there by  $L_n$ above.  Since $[L_n]|_{\cal P}=0$, we obtain  $[\Psi_0]|_{\cal P}=[\Psi_1]|_{\cal P}$  with  $L_n$ in place of $\psi$---namely, $\Psi_0$ is defined to be $m$ copies of $L_n$ (in place of $\psi$ there) and $\Psi_1$ is defined to be $m+1$ copies of $L_n$ (in place of $\psi$ there). The fact $[\Psi_0]|_{\cal P}=[\Psi_1]|_{\cal P}$ is used to connect $\Psi_0$ and $\Psi_1.$
Thus, by the same proof of
 {{7.4}} of \cite{eglnkk0}), {{we can construct
  $\{\Phi_n\}$ as required}}.
   {{This proves}}
  that  every tracial state of $eAe$ is a ${\cal W}$-trace. It follows from \ref{TD0=fn} that
$eAe\in {\cal D}_0.$

\end{proof}

\begin{thm}\label{TMT}
Let $A_1$  and $A_2$ be  two  separable simple \CA s  with  finite nuclear dimension  which satisfy
the UCT.  Suppose that $K_0(A_i)={\rm ker}\rho_{A_i}$
 ($i=0,1$).
Then
$A_1\cong A_2$ if and only if
\beq
(K_0(A), K_1(A), {\tilde T}(A), \Sigma_A)\cong (K_0(B), K_1(B), {\tilde T}(B), \Sigma_B).
\eneq
Moreover,
{{in case that ${\tilde{T}}(A)\not=\{0\},$}}
both $A$ and $B$ are  stably isomorphic to one of $B_T$ constructed in section 7.

\end{thm}

\begin{proof}
Since $A$ and $B$ have
finite nuclear dimension, $A$ and $B$ are both stably finite or purely infinite (which
are the case that ${\tilde T}(A)={\tilde T}(B)=\{0\}$).
By \cite{KP} and \cite{Pclass}, we may assume that $A$ and $B$ are stably finite and by \cite{BR},
${\tilde T}(A), {\tilde T}(B)\not=\{0\}.$

It follows from \cite{aTz} that both $A$ and $B$ are ${\cal Z}$-stable.
Let $e_A\in A_+$ with $\|e_A\|=1$
and $e_B\in B_+$ with $\|e_B\|=1$
such that both $A_0:=\overline{e_AAe_A}$ and ${{B_0}}:=\overline{e_BBe_B}$ have continuous scales
{{(see 5.2 of
\cite{eglnp1}).}}
It follows from
\ref{Tz01} that  both $A_0$ and $B_0$ are in ${\cal D}_0$ {{which implies $gTR(A)\le 1$ and $gTR(B)\le 1.$}}
 Then Theorem
\ref{T1main} applies.


\end{proof}

\begin{cor}\label{Czo}
Let $A$   be  a stably finite separable simple \CA s  with  finite nuclear dimension  which satisfies
the UCT. Then the following are equivalent:

(1) $A$ is isomorphic to $\zo;$

(2) $A$ has a unique tracial state, $K_0(A)={\rm ker}\rho_A=\Z$ and  $K_1(A)=\{0\}$  and

(3) $A$ is stably projectionless and has a unique tracial state, $K_0(A)=\Z$ and $K_1(A)=\{0\}.$
\end{cor}

\begin{proof}
The equivalence of (1) and (2) follows from \ref{TMT}.
It is obvious that (2) implies (3).  If $A$ is stably projectionless, then, by  A8  of \cite{eglnkk0},
$K_0(A)=\Z={\rm ker}\rho_A.$ Therefore (3) implies (2).

\end{proof}






Finally we offer the following result (as Theorem \ref{TTT3}).

\begin{thm}\label{Mclass3}
Let $A$ and $B$ be two  separable simple \CA s with finite nuclear dimension which satisfy
the UCT.
Then $A\otimes \zo\cong B\otimes \zo$ if
and only if
\beq
(K_0(A), K_1(A), {\tilde T}(A), \Sigma_{A})\cong (K_0(B), K_1(B), {\tilde T}(B), \Sigma_{B}).
\eneq
{{(We emphasis that there is no order on $K_0$-groups. Also in case that ${\tilde T}(A)=\{0\},$ we view $\Sigma_A=0.$)}}
\end{thm}

\begin{proof}
First, if $A$ is infinite,
it follows ${\tilde T}(A)=\{0\}.$
Moreover since $A$ has finite nuclear dimension, it is purely infinite.
Since ${\rm Ell}(A)\cong {\rm Ell}(B),$
${\tilde T}(B)=\{0\}.$ So $B$ is also not stably finite.
As $B$ has finite nuclear dimension, $B$ is also purely infinite.
Thus, the infinite case is covered by the classification of non-unital purely infinite simple \CA s
(see \cite{KP} and \cite{Pclass}).

We now assume both $A$ and $B$ are finite. We only need to show the ``if" part.

Put $A_1=A\otimes \zo$ and $B_1=B\otimes \zo.$
Then
we have, {{ignoring the order structure on $K_0(A)$,}}
\beq\label{Mclass3-2}
(K_0(A_1), K_1(A_1), {\tilde T}(A_1), \Sigma_{A_1})&=&
(K_0(A), K_1(A), {\tilde T}(A), \Sigma_A)\\\label{Mclass3-3}
(K_0(B_1), K_1(B_1), {\tilde T}(B_1), \Sigma_{B_1})&=&
(K_0(B), K_1(B), {\tilde T}(B), \Sigma_B).
\eneq

Let $e_A\in ({\rm Ped}(A_1))_+$ with $\|e_A\|=1$ and $e_B\in ({\rm Ped}(B_1))_+$ with $\|e_B\|=1$ such
that $A_0={\rm Ped}(A_0)$ and $B_0={\rm Ped}(B_0),$ where
$A_0:=\overline{e_A(A_1)e_A}$ and $B_0:=\overline{e_B(B_1)e_B}.$
It follows from Proposition
12.5 of \cite{eglnp1}  that all tracial states of $A_0\otimes \zo$ and $B_0\otimes \zo$
are ${\cal W}$ traces.  It follows from 6.6 of \cite{eglnkk0} (see 17.6 and the proof of  18.6 of \cite{GLp1}) that
$A_0\otimes \zo, B_0\otimes \zo \in {\cal D}_0.$
Note
$A_0$ and $B_0$ are hereditary \SCA s of $A_1\otimes \zo$ and $B_1\otimes \zo,$ respectively.
Note  also $A_1\otimes \zo\cong A_1$ and $B_1\otimes \zo\cong B_1,$  by \ref{zoselfabsorbing}.
Therefore $gTR(A_1)\le 1$ and $gTR(B_1)\le 1.$  Since $A_0, B_0\in {\cal D}_0,$  by \ref{D0kerrho},
$K_0(A_1)={\rm ker}\rho_{A_1}$ and $K_0(B_1)={\rm ker}\rho_{B_1}.$
Thus the theorem follows from \eqref{Mclass3-2}, \eqref{Mclass3-3} and Theorme \ref{T1main}.


\end{proof}

\section{Appendix}
In this appendix, we show that separable amenable \CA\, in ${\cal D}$ are ${\cal Z}$-stable.
The proof is a non-unital version of Matui and Sato's proof in \cite{MS} which
is identical to the unital case with only a few modification.  We will follow steps
of their proof as well as the notation in \cite{MS}.

\begin{lem}[cf. 2.4 of \cite{MS}]\label{LMS24}
Let $A$ be a separable simple \CA\, with continuous scale  and with $T(A)\not=\emptyset$ and let
$a\in A_+\setminus \{0\}.$
Then there exits $\af>0$ such that
\beq
\af \liminf_{n\to\infty}\inf_{\tau\in T(A)}\tau(f_n)\le \liminf_{n\to\infty}\inf_{\tau\in T(A)}\tau(f_n^{1/2}af_n^{1/2})
\eneq
for any central sequence $(f_n)_n$ of positive contractions of $A.$
\end{lem}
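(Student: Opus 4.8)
The plan is to exploit simplicity together with the continuous-scale hypothesis to bound $a$ below, in the Cuntz sense, by a suitable ``chunk'' of a strictly positive element, and then use the order-zero / trace comparison machinery already available for $\mathcal D$-algebras. First I would fix a strictly positive element $e\in A_+$ with $\|e\|=1$ and recall that, since $A$ is simple and has continuous scale, there are elements $x_1,\dots,x_m\in A$ and $\eta>0$ with $\sum_{i=1}^m x_i^* a x_i = f_\eta(e)$ (this is the standard consequence of fullness of $a$, as used already in \ref{PCue} and \ref{LfullW}); moreover, again by continuous scale, $\inf_{\tau\in T(A)}\tau(f_\eta(e))=:\beta_0>0$ (here $T(A)$ is compact, cf. 9.3 of \cite{GLp1}). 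Set $M=\max_i\|x_i\|$ and $\alpha=\beta_0/(2mM^2)$, or whatever explicit constant the computation below forces; this $\alpha$ will be the one in the statement.

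Next I would run the central-sequence estimate. Let $(f_n)_n$ be a central sequence of positive contractions. For each $i$ and each $\tau\in T(A)$ we have
\[
\tau\big(f_n^{1/2}(x_i^*ax_i)f_n^{1/2}\big)=\tau\big(x_i^*(f_n^{1/2}af_n^{1/2})x_i\big)+o(1)
=\tau\big((f_n^{1/2}af_n^{1/2})x_ix_i^*\big)+o(1),
\]
where $o(1)\to 0$ uniformly in $\tau$ because $(f_n)_n$ is central and $T(A)$ is compact (uniform convergence of $\tau([f_n,b])\to 0$ for fixed $b$; this uniformity is where compactness of $T(A)$, hence continuous scale, is used). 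Summing over $i$ and using $\sum_i x_i^* a x_i=f_\eta(e)$ together with $0\le x_ix_i^*\le M^2\cdot 1$ (extending to $\tilde A$), one gets
\[
\tau\big(f_n^{1/2}f_\eta(e)f_n^{1/2}\big)\le mM^2\,\tau\big(f_n^{1/2}af_n^{1/2}\big)+o(1)
\]
uniformly in $\tau$. On the other hand $f_n^{1/2}f_\eta(e)f_n^{1/2}\ge f_n^{1/2}f_\eta(e)^2 f_n^{1/2}\ge$ (after another central-commutator swap) $\ge f_\eta(e)^{1/2}f_n^2 f_\eta(e)^{1/2}+o(1)$; combined with $\tau(f_\eta(e)^{1/2}f_n^2 f_\eta(e)^{1/2})=\tau(f_n f_\eta(e) f_n)+o(1)\ge \tau(f_n f_n)+o(1)-\tau(f_n(1-f_\eta(e))f_n)$, and since $1-f_\eta(e)$ need not be small I would instead argue directly: $\tau(f_n^{1/2}f_\eta(e)f_n^{1/2})\to$ behaves, in the central-sequence limit, like $\tau(f_n)\cdot$(nothing), so cleaner is to pass to the limit $n\to\infty$ and use that for central $(f_n)$ and any positive contraction $b$, $\liminf_n\inf_\tau\tau(f_n^{1/2}bf_n^{1/2})\ge (\inf_\tau\tau(b))\cdot\liminf_n\inf_\tau\tau(f_n)$ — no, that inequality is false in general. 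So the right route is: $f_\eta(e)$ is a \emph{fixed} element, hence $f_n^{1/2}f_\eta(e)f_n^{1/2}$ and $f_\eta(e)^{1/2}f_nf_\eta(e)^{1/2}$ have the same trace in the limit, and $f_\eta(e)^{1/2}f_nf_\eta(e)^{1/2}\ge f_\eta(e)^{1/2}f_n^2f_\eta(e)^{1/2}$ gives, after one more swap, $\tau\ge \tau(f_n f_\eta(e)^2 f_n)\ge \beta_0^{?}$...

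Honestly, the cleanest formulation, and the one I would actually write, uses the comparison already packaged for $A\in\mathcal D$: since $f_\eta(e)$ is full and $A$ has strict comparison of positive elements (true for $A\in\mathcal D$, cf. \ref{Ttrzstable} and 13.4 of \cite{GLp1}), one has $\liminf_n\inf_\tau \tau(f_n^{1/2}f_\eta(e)f_n^{1/2})\ge \beta_0\cdot\liminf_n\inf_\tau\tau(f_n^2)$ is \emph{still} not automatic, so the actual key lemma to invoke is the ``$\sigma$-unital continuous-scale'' version: $\lim_n\big(\inf_\tau\tau(f_n^{1/2}f_\eta(e)f_n^{1/2})-\inf_\tau\tau(f_\eta(e)^{1/2}f_nf_\eta(e)^{1/2})\big)=0$ and, writing $g=f_\eta(e)$, iterating the commutator swap $k$ times shows $\inf_\tau\tau(g^{1/2}f_ng^{1/2})=\inf_\tau\tau(g^{1/2}f_n^{1/2}\cdot f_n^{1/2}g^{1/2})\ge \inf_\tau\tau((g^{1/2}f_n^{1/2})^{2}\cdot)\dots$ converging to $\inf_\tau\tau(f_n)$ up to $o(1)$ because $g$ is a full positive contraction in a simple continuous-scale algebra (so $g^{1/m}\to 1$ strictly and uniformly on the compact $T(A)$). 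This last uniform-strict-convergence fact is exactly the content of 2.4-type arguments in \cite{MS}. \textbf{The main obstacle} I anticipate is precisely this uniformity: justifying that $\inf_{\tau}\tau(g^{1/2}f_n g^{1/2})$ and $\inf_\tau\tau(f_n)$ differ by $o(1)$ as $n\to\infty$ for a fixed full positive contraction $g$, which requires combining continuous scale (compactness of $T(A)$, no trace escaping to $0$) with the centrality of $(f_n)_n$; once that is in hand, chaining the three inequalities yields $\alpha\liminf_n\inf_\tau\tau(f_n)\le \liminf_n\inf_\tau\tau(f_n^{1/2}af_n^{1/2})$ with $\alpha=1/(mM^2)$ (or half of it, to absorb error terms), completing the proof.
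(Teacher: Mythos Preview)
Your proposal has a genuine gap, and you actually put your finger on it yourself: with a \emph{fixed} element $g=f_\eta(e)$ chosen at the outset, there is no reason to have
\[
\liminf_{n\to\infty}\inf_{\tau\in T(A)}\tau(gf_n)\ \ge\ c\cdot \liminf_{n\to\infty}\inf_{\tau\in T(A)}\tau(f_n)
\]
for a constant $c>0$ independent of the sequence $(f_n)_n$. The best you get from $\tau(gf_n)=\tau(f_n)-\tau((1-g)^{1/2}f_n(1-g)^{1/2})$ is $\tau(gf_n)\ge \tau(f_n)-(1-\beta_0)$, and this is worthless once $\beta:=\liminf_n\inf_\tau\tau(f_n)\le 1-\beta_0$. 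Your attempts to rescue this via $g^{1/m}\to 1$ do not help either: the decomposition $\sum_i x_i^*ax_i=g$ is tied to $g$, not to $g^{1/m}$, and $f_\eta(e)$ is not strictly positive (so its range projection is not $1$).

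The paper's proof avoids this by \emph{not} fixing the target in advance. It first invokes strong uniform fullness (9.5 of \cite{GLp1}): there are constants $M(a),N(a)$ depending only on $a$ such that \emph{any} positive contraction $b\in A$ can be approximated by $\sum_{i=1}^{N(a)}x_i^*ax_i$ with $\|x_i\|\le M(a)$. This pins down $\af=\dfrac{3}{4M(a)^2N(a)}$ once and for all, independent of any later choice. Only \emph{then}, given $(f_n)_n$ with $\beta=\liminf_n\inf_\tau\tau(f_n)>0$, does one use continuous scale to pick $e\in A_+^{\bf 1}$ so close to $1$ in trace that $\tau((1-e)f_n)<\beta/8$ for all $\tau$, and decompose \emph{this} $e$ via the uniform constants. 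The chain
\[
\beta\le \liminf_n\inf_\tau\tau(ef_n)+\tfrac{\beta}{8}
\le \liminf_n\inf_\tau\sum_i\tau(f_n^{1/2}a^{1/2}x_ix_i^*a^{1/2}f_n^{1/2})+\tfrac{\beta}{4}
\le M(a)^2N(a)\cdot \liminf_n\inf_\tau\tau(f_n^{1/2}af_n^{1/2})+\tfrac{\beta}{4}
\]
then gives the result. The missing idea in your approach is precisely this decoupling: the decomposition constants must be uniform over all targets, so that the target $e$ can be chosen after $\beta$ is known.
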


\begin{proof}
By
{{5.6  of \cite{eglnp1},}}
 $A$ is strongly uniformly full in $A.$
Therefore there are  $M(a), N(a)>0$  such that,  for $b\in A_+$ with $\|b\|\le 1$ and for any $\ep>0,$
there are $x_i\in A$ with $\|x_i\|\le M(a),$ $i=1,2,..., N(a)$ such that
\beq
\|\sum_{i=1}^{N(a)} x_i^*ax_i-b\|<\ep.
\eneq
Put $\af_0=M(a)^2N(a)$ and $\af={4\over{3\af_0}}.$
Let $\{f_n\}_n$ be given. We may assume
that $$\liminf_{n\to\infty}\inf_{\tau\in T(A)}\tau(f_n)=\bt>0,$$
{{otherwise there is nothing to prove.}}
Since $A$ has continuous scale, there exists
$e\in A_+$ with $\|e\|=1$ such that
\beq
\tau((1-e^{1/2})c(1-e^{1/2}))<\bt/8\rforal \tau\in T(A)
\eneq
for any $c\in A_+$ with $\|c\|=1.$
Then there are $y_i\in A$ such that $\|y_i\|\le M(a),$ $i=1,2,...,N(a)$
such that
\vspace{-0.12in}\beq
\|\sum_{i=1}^{N(a)}y_i^*ay_i-e\|<\bt/8,\,\,\, i=1,2,....
\eneq
One also has that
\vspace{-0.14in}\beq
\tau((1-e)f_n)<\bt/8,\,\,\, n\in \N.
\eneq
Then, keeping in mind that $(f_n)_n$ is a central sequence,
\beq\nonumber
&&\hspace{-0.2in}\bt=\liminf_{n\to\infty}\inf_{\tau\in T(A)}\tau(f_n)
\le \liminf_{n\to\infty}\inf_{\tau\in T(A)}\tau(ef_n)+\bt/8
\le  \liminf_{n\to\infty}\inf_{\tau\in T(A)}\sum_{i=1}^{N(a)}\tau(y_{i}^*ay_{i}f_n) +\bt/4
\\\nonumber
&&
\hspace{-0.2in}=\liminf_{n\to\infty}\inf_{\tau\in T(A)}\sum_{i=1}^{N(a)}\tau(y_i^*a^{1/2}f_na^{1/2}y_i)+\bt/4
=\liminf_{n\to\infty}\inf_{\tau\in T(A)}\sum_{i=1}^{N(a)}\tau(f_n^{1/2}a^{1/2}y_iy_i^*a^{1/2}f_n^{1/2})+\bt/4\\\nonumber
&&\le \af_0\liminf_{n\to\infty}\inf_{\tau\in T(A)}\tau(f_n^{1/2}af_n^{1/2})+\bt/4.
\eneq
Thus
\vspace{-0.12in}\beq
3\bt/4\le \af_0\liminf_{n\to\infty}\inf_{\tau\in T(A)}\tau(f_n^{1/2}af_n^{1/2}).
\eneq
\end{proof}

\begin{df}[2.1 of \cite{MS}]\label{DMS21}
Let $A$ be a separable \CA\, with $T(A)\not=\emptyset$ and
let $\phi: A\to  A$ be a  completely positive linear map. Suppose
that $T(A)$ is compact.  Recall  that $\phi$ is said to be excised in small central sequence
if for any  central sequence $(e_n)_n$ and $(f_n)_n$ of positive contractions  in $A$ satisfying
\vspace{-0.12in}\beq\label{DMS21-1}
\lim_{n\to\infty}\sup_{\tau\in T(A)}\tau(e_n)=0\andeqn
\lim_{m\to\infty}\liminf_{n\to\infty}\inf_{\tau\in T(A)}\tau(f_n^m)>0,
\eneq
there exists  $s_n\in A$  with $\|s_n\|\le \|\phi\|^{1/2}$ and $n\in \N$ such that
\vspace{-0.12in}\beq
\lim_{n\to\infty}\|s_n^*as_n-\phi(a)e_n\|=0\rforal a\in A \andeqn \lim_{n\to\infty}\|f_ns_n-s_n\|=0.
\eneq
\end{df}

\begin{lem}[2.5 of \cite{MS}]\label{LMS25}
Let $A$ be a separable simple \CA\, with $T(A)\not=\emptyset$ with continuous scale.
Suppose also that $A$ has the strict comparison for positive elements.
Let $(e_n)_n$ and $(f_n)_n$ be as \eqref{DMS21-1}.
Then for any $a\in A_+$ with $\|a\|=1,$   there exists a sequence
$(r_n)_n$  in $A$ such that
\beq
\lim_{n\to\infty}\|r_n^*f_n^{1/2} af_n^{1/2}r_n-e_n\|=0\tand\,\,\,\limsup_{n\to\infty}\|r_n\|
=\limsup_{n\to\infty}\|e_n\|^{1/2}.
\eneq
\end{lem}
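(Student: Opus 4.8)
\textbf{Proof proposal for Lemma \ref{LMS25}.}

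The plan is to follow the corresponding step in Matui--Sato \cite{MS} (their Lemma 2.5), adapted to the non-unital setting where $A$ has continuous scale rather than being unital. First I would reduce the problem to producing, for each $n$, an element $r_n$ with $r_n^*(f_n^{1/2}af_n^{1/2})r_n$ close to $e_n$ and with norm controlled by $\|e_n\|^{1/2}$ up to an error tending to $0$. The starting point is \ref{LMS24}: there is $\af>0$ so that
\[
\af\,\liminf_{n\to\infty}\inf_{\tau\in T(A)}\tau(f_n)\le \liminf_{n\to\infty}\inf_{\tau\in T(A)}\tau(f_n^{1/2}af_n^{1/2}),
\]
and since $(f_n)_n$ satisfies the lower bound in \eqref{DMS21-1}, the left-hand side is positive; hence $b_n:=f_n^{1/2}af_n^{1/2}$ is, for large $n$, uniformly ``large'' in trace while $(e_n)_n$ is uniformly ``small'' in trace. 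The key comparison input is that $A$ has strict comparison for positive elements (which holds for $A\in{\cal D}$ with continuous scale, by \ref{Ttrzstable} together with R\o rdam's theorem, as used elsewhere in the paper), so that $e_n\lesssim b_n$ for all large $n$ in a quantitative, uniform way.

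The main steps, in order, would be: (1) fix $a$ with $\|a\|=1$ and set $b_n=f_n^{1/2}af_n^{1/2}$; use \ref{LMS24} and \eqref{DMS21-1} to get $\delta_0:=\af\,\liminf_n\inf_\tau\tau(f_n)>0$ and a lower bound $\inf_\tau\tau(f_\epsilon(b_n))\ge\delta_0/2$ for a suitable cut-off $f_\epsilon$ and all large $n$; (2) using $\lim_n\sup_\tau\tau(e_n)=0$, deduce that for all large $n$ and all $\tau\in T(A)$, $d_\tau(e_n)\le\tau(e_n)^{?}$... more precisely $\tau(e_n)<\tau(f_\epsilon(b_n))$, hence by strict comparison $e_n\lesssim f_\epsilon(b_n)\lesssim b_n$ in the Cuntz semigroup; (3) by the standard Cuntz-subequivalence/polar-decomposition argument (Kirchberg--R\o rdam, or Lemma 2.2-type statements already cited in the paper, e.g. the argument of \cite{Rr11}), produce $d_n\in A$ with $d_n^*b_nd_n$ close to $e_n$; one must then rescale $d_n$ to $r_n$ so that $\|r_n\|^2$ does not exceed $\|e_n\|$ asymptotically. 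For the norm bound: since $r_n^*b_nr_n\approx e_n$ and $\|b_n\|\le 1$, one has $\|e_n\|\lesssim\|r_n\|^2$; conversely, by choosing the subequivalence witness carefully (composing with a unitary and a cut-down supported where $e_n$ is genuinely of size $\approx\|e_n\|$, in the hereditary subalgebra $\overline{e_nAe_n}$), one arranges $\|r_n\|^2\le\|e_n\|+o(1)$. This gives $\limsup_n\|r_n\|=\limsup_n\|e_n\|^{1/2}$.

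The step I expect to be the main obstacle is step (3): getting the norm of $r_n$ down to exactly $\limsup\|e_n\|^{1/2}$ rather than merely bounded. The subequivalence $e_n\lesssim b_n$ only gives elements $d_n$ with $d_n^*b_nd_n\to e_n$ with no a priori norm control, and the naive estimate $\|d_n\|$ can blow up when $b_n$ has small eigenvalues near the support of $e_n$. The fix, exactly as in \cite{MS}, is to work relative to the hereditary subalgebra generated by $e_n$: first replace $e_n$ by $g(e_n)$ for an appropriate continuous function and absorb the discrepancy, and use the continuity of the rank functions (\ref{Ttrzstable}, \ref{Lcompactcon}) to keep the comparison uniform in $\tau$ so that a single $N$ works for all traces. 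The compactness of $T(A)$ (guaranteed by continuous scale, via 9.3 of \cite{GLp1}) is what makes the ``$\inf_\tau$'' and ``$\sup_\tau$'' manipulations legitimate and uniform. Once the uniform quantitative comparison is in hand, the extraction of $r_n$ with the sharp norm bound is the same polar-decomposition manipulation used repeatedly in the paper, and the verification $\lim_n\|r_n^*f_n^{1/2}af_n^{1/2}r_n-e_n\|=0$ is then routine.
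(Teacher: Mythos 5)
Your outline follows the same route as the paper, which simply reruns the proof of Lemma 2.5 of \cite{MS} with \ref{LMS24} in place of their Lemma 2.4: lower-bound the traces of $b_n:=f_n^{1/2}af_n^{1/2}$, use $\sup_\tau\tau(e_n)\to0$, and convert strict comparison into a witness $r_n$ via a R\o rdam-type lemma. But two points in your step (2)--(3) are not right as written, and the second is a genuine gap. The smaller one: strict comparison compares $d_\tau$, not $\tau$, and smallness of $\sup_\tau\tau(e_n)$ gives no control whatsoever on $d_\tau(e_n)$, so you cannot conclude $e_n\lesssim f_\epsilon(b_n)$; you must pass to $(e_n-\epsilon)_+$, use $d_\tau((e_n-\epsilon)_+)\le\epsilon^{-1}\tau(e_n)$, compare $(e_n-\epsilon)_+$ instead of $e_n$, and recover the stated limit by a diagonal argument in $\epsilon$. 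Your ``$d_\tau(e_n)\le\tau(e_n)^{?}$'' leaves exactly this unresolved.

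The serious gap is the norm bound, which is the whole point of the lemma (it is what feeds $\limsup_n\|r_n\|\le 1$ into the proof of \ref{PMS22}). If all you know is $(e_n-\epsilon)_+\lesssim (b_n-\delta)_+$ at some fixed small level $\delta>0$ --- which is all that \ref{LMS24} applied to $a$ and to $f_n$ itself can give --- then the best possible bound for a witness is $\|r_n\|^2\le\delta^{-1}\|e_n\|$, and this is optimal: if $b_n$ is of size about $\delta$ on the part where $e_n$ has to be produced, any $r_n$ with $r_n^*b_nr_n\approx e_n$ must satisfy $\|r_n\|^2\gtrsim\|e_n\|/\delta$. Your proposed remedy (replacing $e_n$ by $g(e_n)$ and working inside $\overline{e_nAe_n}$) trims the wrong side: the obstruction is smallness of $b_n$, not of $e_n$, so no cut-down of $e_n$ can shrink $\|r_n\|$. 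The actual mechanism in \cite{MS} is to compare into the top spectral part of $b_n$: one shows $\liminf_n\inf_\tau\tau\bigl((b_n-(1-\eta))_+\bigr)>0$ for every $\eta>0$, and this is where the hypothesis $\lim_m\liminf_n\inf_\tau\tau(f_n^m)>0$ of \eqref{DMS21-1} (with all powers $m$) is really used: one applies \ref{LMS24} to the central sequences $(f_n^m)_n$ and to the nonzero element $(a-(1-\eta'))_+$ rather than to $a$, together with the asymptotic commutation of $f_n$ with $a$. Once $(e_n-\epsilon)_+\lesssim (b_n-(1-\eta))_+$, a witness of the form $r_n=g(b_n)y_n$, with $y_n^*y_n=(e_n-2\epsilon)_+$, $y_ny_n^*\in\overline{(b_n-(1-\eta))_+A(b_n-(1-\eta))_+}$ and $g(t)=t^{-1/2}$ on $[1-\eta,1]$, gives $r_n^*b_nr_n=(e_n-2\epsilon)_+$ and $\|r_n\|^2\le(1-\eta)^{-1}\|e_n\|$; letting $\eta,\epsilon\to0$ yields the asserted equality of limsups (the lower bound being automatic from $\|b_n\|\le1$). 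Without this ingredient your argument only delivers $\limsup\|r_n\|\le C\limsup\|e_n\|^{1/2}$ with an uncontrolled constant $C$, which does not prove the lemma and is not sufficient for its use in \ref{PMS22}.
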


\begin{proof}
The proof of this is exactly the same as that of Lemma 2.5 of \cite{MS} using
\ref{LMS24} instead of 2.4 in \cite{MS}.
\end{proof}

\begin{prop}[2.2 of \cite{MS}]\label{PMS22}
Let $A$ be a separable amenable simple \CA\, with $T(A)\not=\emptyset$ and  with continuous scale.
Suppose that $A$ has strict comparison for positive elements.
Let $\omega$ be a non-zero pure state of $A,$  $c_i, d_i\in A,$ $i=1,2,...,N.$
Then a completely positive linear map  $\phi: A\to A$ defined by
$\phi(a)=\sum_{i,j=1}^N \omega(d_i^*ad_j)c_i^*c_j$
can be excised by small central sequences.
\end{prop}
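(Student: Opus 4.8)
\textbf{Plan of proof for Proposition \ref{PMS22}.} The statement is the non-unital analogue of Lemma 2.2 of \cite{MS}, and the strategy is to follow Matui--Sato's argument while replacing their use of ambient unitality/comparison by the tools already assembled in this appendix, chiefly \ref{LMS24} and \ref{LMS25}. First I would record the standard reduction: since $\omega$ is a pure state of the amenable \CA\, $A,$ the map $\phi(a)=\sum_{i,j}\omega(d_i^*ad_j)c_i^*c_j$ factors (up to the approximations we are allowed) as a composition $A\to M_N\to A,$ where the first map is an ${\cal F}$-$\ep$-multiplicative \cpc\, obtained from $\omega$ and the $d_i$ (approximately multiplicative because $\omega$ is pure, hence $A\to \pi_\omega(A)''$ with a rank-one vector state is approximately multiplicative on finite sets), and the second is determined by the matrix $(c_i^*c_j).$ Thus it suffices to excise $\phi$ on an arbitrary finite subset ${\cal F}\subset A$ and for arbitrary $\ep>0,$ and for this it is enough to produce, given central sequences $(e_n)_n,\,(f_n)_n$ satisfying \eqref{DMS21-1}, elements $s_n$ with $\|s_n\|\le \|\phi\|^{1/2}$ implementing $s_n^*as_n\approx \phi(a)e_n$ and $f_ns_n\approx s_n.$

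The core of the argument is the construction of $s_n.$ Following \cite{MS}, I would first treat the model case $\phi(a)=\omega(a)p$ for a single positive contraction $p,$ and then assemble the general case by a matrix amplification over $M_N$ (the algebra $A$ being stable under tensoring by matrices for the purposes of these estimates, and the elements $c_i$ packaging into a single column). In the model case, pick $a_0\in A_+$ with $\|a_0\|=1$ in the hereditary subalgebra picked out by $\omega$ so that $\omega$ is excised on ${\cal F}$ in the usual local sense: there are contractions $b_n\in A_+$ with $b_n x b_n\approx \omega(x)b_n^2$ for $x\in {\cal F}\cup\{a_0\}.$ Now apply \ref{LMS25} to the central sequences $(e_n)_n,\,(f_n)_n$ and to $a_0$ to get $(r_n)_n$ with $r_n^* f_n^{1/2}a_0 f_n^{1/2} r_n\to e_n$ and $\limsup\|r_n\|=\limsup\|e_n\|^{1/2}\le 1.$ Then set, schematically, $s_n = f_n^{1/2} a_0^{1/2} r_n \cdot (\text{a correction from } b_n \text{ and } p),$ and check that $f_n s_n\approx s_n$ (because $(f_n)_n$ is central and $f_n f_n^{1/2}\approx f_n^{1/2}$) and $s_n^* x s_n\approx \omega(x)\,p\,e_n$ for $x\in {\cal F}$ (using that $(e_n)_n,(f_n)_n$ are central so they commute past the $c_i,d_i,p$ up to vanishing error, the excision property $b_n x b_n\approx\omega(x)b_n^2,$ and the defining property of $r_n$). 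The norm bound $\|s_n\|\le\|\phi\|^{1/2}$ follows by tracking the factors and using $\limsup\|r_n\|\le 1$ and $\|p\|\le\|\phi\|.$ For the general $N$-term case one does the same with $a_0$ replaced by a strictly positive element of the relevant hereditary subalgebra and $p$ replaced by the positive element $\sum c_i^*c_i$ together with the obvious column vector of the $c_i$; the algebraic bookkeeping is exactly as in the proof of 2.2 of \cite{MS}.

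The main obstacle — and the only place where non-unitality genuinely intervenes — is ensuring that all the ``commute past'' estimates survive without a unit: in \cite{MS} one freely writes $f_n s_n\approx s_n$ and uses the unit to split $\phi$ through $M_N,$ whereas here one must control everything inside hereditary subalgebras and use the continuous-scale hypothesis (via the fullness/compactness arguments already invoked in \ref{LMS24}) to keep $\sup_{\tau}\tau(e_n)\to 0$ meaningful after multiplying by the auxiliary contractions. Concretely, the delicate point is that $\limsup_n\|e_n\|^{1/2}$ rather than $\sup_\tau\tau(e_n)^{1/2}$ governs the norm of $r_n$ in \ref{LMS25}, so one needs $\|e_n\|\le 1$ (which is given) and must verify that the passage from the trace condition \eqref{DMS21-1} to the Cuntz comparison $e_n\lesssim f_n^{1/2}a_0 f_n^{1/2}$ used inside \ref{LMS25} is uniform in $\tau$; this is exactly what strict comparison for positive elements plus compactness of $T(A)$ (continuous scale) buys us, and it is already packaged into \ref{LMS24}. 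Once that uniformity is in hand, the rest is the routine $\ep/3$-style estimate, and I would present it compactly rather than in full detail, citing the parallel computation in \cite{MS}.
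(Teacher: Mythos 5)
Your route is essentially the paper's: excise the pure state $\omega$ via Proposition 2.2 of \cite{AAP} on the finite set $\{d_i^*xd_j\}$, use Lemma \ref{LMS25} (whose uniformity indeed rests on strict comparison and \ref{LMS24}, as you say) to produce $r_n$ with $r_n^*(\cdot)r_n\to e_n$, assemble $s_n$ from the excisor, an $f$-type element, $r_n$ and the $c_i$, and cite the bookkeeping of Lemma 2.2 of \cite{MS}. However, two steps fail as you have written them.

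First, your justification of $\lim_n\|f_ns_n-s_n\|=0$ is the claim that $f_nf_n^{1/2}\approx f_n^{1/2}$, which is false for a general positive contraction (take $f_n$ with spectrum concentrated near $1/2$); centrality of $(f_n)_n$ does not help, since the problematic product involves $f_n$ with itself. What is needed, and what the paper does, is to first invoke Lemma 2.3 of \cite{MS} to replace $(f_n)_n$ by a second central sequence $(\tilde f_n)_n$ on which $f_n$ acts approximately as a unit while the double-limit trace condition $\lim_m\liminf_n\inf_{\tau}\tau(\tilde f_n^m)=\lim_m\liminf_n\inf_{\tau}\tau(f_n^m)$ is preserved; \ref{LMS25} is then applied with $\tilde f_n$ and with $a^2$ (where $a$ is the excising element of norm one), and the element $s_n=\sum_{i}d_i\,a\,\tilde f_n^{1/2}r_nc_i$ satisfies $f_ns_n\approx s_n$ because $f_n$ commutes asymptotically past $d_ia$ and absorbs $\tilde f_n^{1/2}$. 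Without this replacement the condition $f_ns_n\approx s_n$ is simply not available, so this is a genuine gap at the heart of the construction (your schematic $s_n$ also leaves the $d_i$ unplaced; they must sit to the left of the excisor so that $s_n^*xs_n$ produces $a\,d_i^*xd_j\,a\approx\omega(d_i^*xd_j)a^2$, but that would be absorbed by following the computation of \cite{MS} literally).

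Second, the norm normalization. You assert $\|s_n\|\le\|\phi\|^{1/2}$ ``by tracking the factors,'' but even in the unital case of \cite{MS} the bound comes from evaluating $s_n^*1s_n\approx\phi(1)e_n$, not from multiplying norms of factors, and a crude factor estimate only gives $\|s_n\|\le\sum_i\|d_i\|\,\|c_i\|$, which has nothing to do with $\|\phi\|^{1/2}$. In the non-unital setting there is no unit to evaluate at, and this is precisely the one point the paper singles out as new: one checks that $\|s_n^*bs_n\|\le\|\phi\|+\ep$ for every positive contraction $b$, and then replaces $s_n$ by $E_ns_n$ for suitable elements $E_n$ of an approximate identity of $A$, so that $\|E_ns_n\|^2=\|s_n^*E_n^2s_n\|$ yields the bound demanded by Definition \ref{DMS21} while the two excision identities are preserved. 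Your proposal needs both of these repairs before the appeal to the computation of \cite{MS} is legitimate.
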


\begin{proof}

Let $\ep>0$ and let ${\cal F}\subset A$ be a finite subset.  It suffices to show that
there exists $s_n\in A,$ $n\in \N,$ such that $\|s_n\|\le \|\phi\|^{1/2}+\ep$ and
\beq\label{MS22-1}
\lim_{n\to\infty}\|s_n^*as_n-\phi(a)e_n\|<\ep\andeqn \lim_{n\to\infty}\|f_ns_n-s_n\|=0.
\eneq
Let ${\cal G}=\{d_i^*ad_j: a\in {\cal F}, 1\le i,j\le N\}$ and let $\dt=\ep/N^2.$

By Proposition 2.2 of \cite{AAP}, there is $a\in A_+$ with $\|a\|=1$ such that
$\|a(\omega(x)-x)a\|<\dt$ for all  $x\in {\cal G}.$
Let $\{e_n\}_n$ and $\{f_n\}_n$ be as in \eqref{DMS21-1}.  By 2.3 of \cite{MS}, there is a central sequence
$\{{\tilde f}_n\}_n$ of positive contractions of $A$ such that
$\{{\tilde f}_nf_n\}_n=\{f_n\}_n$ in $A_{\infty}$ and
\beq
\lim_{m\to\infty}\liminf_{n\to\infty}\inf_{\tau\in T(A)}\tau({\tilde f}_n^m)=
\lim_{m\to\infty}\liminf_{n\to\infty}\inf_{\tau\in T(A)}\tau(f_n^m).
\eneq
Applying \ref{LMS25}  to $\{e_n\}_n,$ $\{f_n\}_n,$  and $a^2,$ we obtain $r_{n}\in A,$ $n\in \N,$ satisfying
\beq\label{PMS22-10}
\lim_{n\to\infty}\|r_{n}^* {\tilde f}_n^{1/2}a^2{\tilde f}_n^{1/2}r_{n}-e_n\|=0\andeqn \limsup_{n\to\infty}\|r_{n}\|\le 1.
\eneq
Define
\vspace{-0.15in}\beq
s_n=\sum_{i=1}^N d_ia{\tilde f}_n^{1/2}r_{n}c_i,\,\,\,n=1,2,....
\eneq
The rest of the proof is exactly the same as that of proof of Proposition 2.2  in \cite{MS} with
one exception. We need to address the norm
of $s_n.$ Note that, by \eqref{MS22-1},
\beq
\|s_n^*bs_n\|\le \|\phi\|+\ep\rforal b\in A_+^{\bf 1}.
\eneq
Therefore by replacing $s_n$ by $E_ns_n$ for some $E_n\in A_+^{\bf 1}$ as subsequence of
an approximate identity of $A,$ we may assume $\|s_n\|\le \|\phi\|^{1/2}.$
\end{proof}

\begin{lem}[3.1 of \cite{MS}]\label{LMS31}
Let $A$ be a separable amenable simple non-elementary
 \CA, and let $\omega$ be a non-zero pure state of $A.$
 Then any \cpc\, $\phi: A\to A$ can be approximated
 point-wisely in norm by \cpc s $\psi$ of the from
 \beq
 \psi(a)=\sum_{l=1}^N \sum_{i,j=1}^N \omega(d_i^*ad_j)c_{l,i}^*c_{l,j}\tforal a\in A,
 \eneq
where $c_{l,i},\, d_i\in A,$ $l,i=1,2,...,N.$
\end{lem}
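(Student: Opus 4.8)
\textbf{Proof plan for Lemma \ref{LMS31}.}

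The statement is purely a local-approximation fact about completely positive contractions into a separable amenable simple non-elementary \CA, and it does not involve traces or the structure of ${\cal D}$; I would follow the argument of Matui--Sato (3.1 of \cite{MS}) essentially verbatim, supplying the non-unital bookkeeping. First I would use amenability of $A$: by the Choi--Effros lifting / completely positive approximation property, $\phi$ factors approximately (point-norm) through finite-dimensional \CA s, i.e. for any finite subset ${\cal F}\subset A$ and $\ep>0$ there are $n$, a \cpc\ $\alpha:A\to M_n$ and a \cpc\ $\beta:M_n\to A$ with $\|\beta\circ\alpha(a)-\phi(a)\|<\ep$ for $a\in{\cal F}$. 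This reduces the problem to approximating a single map of the form $\beta\circ\alpha$, and since both $\alpha$ and $\beta$ are finite sums of maps of rank-one type, it reduces further to approximating elementary maps $a\mapsto \psi(e_{ij}) $ composed with matrix units.

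The second step is the key mechanism: since $\omega$ is a non-zero pure state of the simple non-elementary \CA\ $A$, the GNS representation $\pi_\omega$ is irreducible and acts on an infinite-dimensional Hilbert space $H_\omega$ with cyclic vector $\xi_\omega$. By Kadison transitivity, for the given $n$ I can choose elements $d_1,\dots,d_n\in A$ such that the vectors $\pi_\omega(d_i)\xi_\omega$ are (approximately) orthonormal, equivalently $\omega(d_i^*d_j)\approx \delta_{ij}$; then the map $V:M_n\to A$-bimodule data is captured by $\omega(d_i^* a d_j)$ reproducing the $(i,j)$ ``matrix entry'' of $a$ relative to this system. Concretely, for any $a\in A$ the matrix $(\omega(d_i^* a d_j))_{i,j}$ approximates $\alpha(a)$ after identifying $M_n$ appropriately, and then feeding this through $\beta$, written as $\beta(e_{ij})=\sum_l c_{l,i}^* c_{l,j}$ via a Stinespring/Kraus decomposition of the \cpc\ $\beta$ (which exists with finitely many terms since the domain $M_n$ is finite-dimensional), yields exactly the asserted form $\psi(a)=\sum_{l}\sum_{i,j}\omega(d_i^* a d_j)c_{l,i}^* c_{l,j}$. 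Combining the estimates from these two reductions gives $\|\psi(a)-\phi(a)\|$ small on ${\cal F}$, and one checks $\psi$ can be taken contractive (or renormalized by cutting down with an approximate unit, as was done for $s_n$ in the proof of \ref{PMS22}).

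The main obstacle — really the only non-formal point — is producing the $d_i$ with $\omega(d_i^* a d_j)$ faithfully reproducing the finite-dimensional compression: this is where non-elementariness and purity of $\omega$ are used, via transitivity of the irreducible representation $\pi_\omega$ on an infinite-dimensional space, so that $n$ orthonormal vectors in $H_\omega$ of the form $\pi_\omega(d_i)\xi_\omega$ exist for every $n$. Once that is in hand, everything else is the standard finite-dimensional CP approximation plus the amenable completely positive approximation property, and I would simply cite \cite{MS} for the remaining routine estimates, noting that nothing in their argument used the unit of $A$ except in the final normalization step, which is handled here exactly as in the proof of \ref{PMS22} above (cutting $s_n$, resp. $\psi$, down by a suitable positive contraction from an approximate identity).
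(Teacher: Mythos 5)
Your overall architecture (CPAP factorization $\phi\approx\sigma\circ\rho$ through $M_N$, Kraus form $\sigma(e_{ij})=\sum_l c_{l,i}^*c_{l,j}$, and the entries of $\rho$ realized as $\omega(d_i^*\,\cdot\,d_j)$) is the same as the paper's, which simply follows Matui--Sato 3.1 with a non-unital adjustment. But the step you yourself flag as the only non-formal point is where your argument has a genuine gap: Kadison transitivity only gives you $d_1,\dots,d_N$ with $\pi_\omega(d_i)\xi_\omega$ (approximately) orthonormal, and then $a\mapsto\bigl(\omega(d_i^*ad_j)\bigr)_{i,j}$ is just the compression of $\pi_\omega(a)$ to the span of these particular vectors --- a specific c.p.c.\ map, with no relation to the \emph{given, arbitrary} map $\rho:A\to M_N$ produced by the CPAP. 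Your sentence ``for any $a\in A$ the matrix $(\omega(d_i^*ad_j))$ approximates $\alpha(a)$ after identifying $M_n$ appropriately'' is unjustified and false as stated. What is actually needed is that \emph{every} c.p.c.\ map $A\to M_N$ is a point-norm limit of such compressions, with the $d_i$ chosen depending on $\rho$ and the finite set; this is where purity of $\omega$ and non-elementariness really enter, and the mechanism is not transitivity but Voiculescu's theorem (equivalently the matrix form of Glimm's lemma): since $A$ is simple and non-elementary, $\pi_\omega(A)$ contains no non-zero compact operators, so the Stinespring representation of $\rho$ can be approximately realized by vectors of the form $\pi_\omega(d_i)\xi_\omega$ inside $H_\omega$. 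This is exactly the point the paper makes (``$\pi(A)$ does not contain any non-zero compact operators $\dots$ So Voiculescu theorem applies''), and without it your proof does not go through.

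A secondary, lesser issue: the non-unital normalization is not just ``cut down by an approximate unit at the end.'' The paper replaces $\rho$ and $\sigma$ by $\rho(e_n)^{-1/2}\rho(\cdot)\rho(e_n)^{-1/2}$ and $\sigma(\rho(e_n)^{1/2}\cdot\rho(e_n)^{1/2})$ for a suitable element $e_n$ of an approximate identity, so that $\rho(e_n)$ becomes the unit of a corner of $M_N$ and the unital argument of Matui--Sato can be quoted verbatim; you should spell out some such device rather than gesture at the cut-down used for $s_n$ in the proof of Proposition \ref{PMS22}, which addresses a different normalization.
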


\begin{proof}
The proof is identical to that of 3.1 of \cite{MS}.  Unital condition
can be easily removed.
In the first place that unital condition is mentioned,  by using an approximate
identity $\{e_n\}$ of $A,$  and consider $\rho(e_n)^{-1/2} \rho(\,\cdot\,) \rho(e_n)^{-1/2}$
and $\sigma(\rho(e_n)^{1/2}\, \cdot \, \rho(e_n)^{1/2})$
for some large $n,$ we can assume that $\rho(e_n)$ is the unit of $M_N,$
by considering a hereditary \SCA\, of a full matrix algebras exactly the way as described
in that proof.  Then, since we assume that $A$ is simple and
non-elementary, $\pi(A)$ does not contain any non-zero compact operators on ${\cal H}$
in the second paragraph of that proof.  So Voiculescu  theorem applies.
The rest of proof are unchanged.
\end{proof}

\begin{lem}\label{LMS33}
Let $A\in {\cal D}$ be separable \CA\,  with continuous scale.
Then, for any integer $k\ge 1,$ there exists
an order zero c.p.c. map
$\psi:  M_k\to A_{\infty}\cap A'$ such that
\beq
\lim_{n\to\infty}\inf\{|\tau(c_n^m)-1/k|:\tau\in T(A)\}=0\tforal m\in \N,
\eneq
where $c_n=\psi(e)$ and $e\in M_k$ is a minimal
rank one projection of $M_k.$
\end{lem}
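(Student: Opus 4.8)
\textbf{Proof proposal for Lemma \ref{LMS33}.}
The plan is to produce the order zero map from $M_k$ into $A_\infty\cap A'$ by combining the decomposition available in the definition of $\mathcal D$ (the map $\psi$ of \ref{DD0}, or rather its iterated/amplified form in \ref{Pwwdivisible}) with the excision machinery set up in the appendix (Propositions \ref{PMS22}, \ref{LMS31}, and the technical lemmas \ref{LMS24}, \ref{LMS25}). The strategy mirrors Matui--Sato's construction of a unital order zero map $M_k\to F(A)$, adapted to the non-unital, stably projectionless setting; the key is that $A\in\mathcal D$ with continuous scale already carries, for every $\varepsilon,\mathcal F, b$, an approximately multiplicative decomposition $x\mapsto\mathrm{diag}(\phi(x),\psi(x))$ with $\psi$ landing in an algebra $D\in\mathcal C_0'$ (indeed in $M_n(D_1)$ via \ref{Pwwdivisible}) that is ``large'' in trace while $\phi$ is ``small'' in trace.

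First I would fix $k$ and run the iterated division of \ref{Pwwdivisible} (equivalently \ref{DD0} combined with \ref{DD1}, since $A\in\mathcal D_0$ gives $A\in\mathcal D^d$ by \ref{DD1}, and by \ref{TD0=D}/\ref{D0kerrho} the relevant $\mathcal D$-algebras with $\ker\rho=K_0$ lie in $\mathcal D_0$) to obtain, for a summable sequence $\varepsilon_n\downarrow 0$ and an exhausting sequence of finite subsets $\mathcal F_n$, c.p.c.\ maps $\phi_n,\psi_n\colon A\to A$ with $\psi_n$ of the form $\mathrm{diag}(\psi_{1,n},\dots,\psi_{1,n})$ ($k$ copies), $\phi_n(e_A)\lesssim b_n$ with $d_\tau(b_n)\to 0$ uniformly, $t(f_{1/4}(\psi_n(e_A)))$ bounded below uniformly, and $\|x-\mathrm{diag}(\phi_n(x),\psi_n(x))\|<\varepsilon_n$ on $\mathcal F_n$. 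The $k$ mutually orthogonal and mutually equivalent ``corners'' of $\psi_n$ supply a c.p.c.\ order zero map $M_k\to A$ which is approximately central, approximately multiplicative on $M_k$, and whose value $c_n$ on a minimal projection satisfies $\tau(c_n^m)\approx 1/k$ up to the trace-mass carried away by the $\phi_n$-part. The point where I need to be careful is that $\psi_n(e_A)$ need not be close to $e_A$, so the naive trace estimate only gives $\tau(c_n^m)\ge (1-\text{something small})/k$; to upgrade ``$\ge$'' to ``$=$ in the limit'' I would use that $d_\tau(\phi_n(e_A))\to 0$ uniformly in $\tau\in T(A)$ (continuous scale makes $T(A)$ compact by 9.3 of \cite{GLp1}, so ``uniform'' is automatic from a weak*-compactness argument), which forces the $\phi_n$-corner to be trace-negligible and hence $\tau(c_n^m)\to 1/k$ uniformly.

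The remaining issue — and I expect this to be the main obstacle — is obtaining a genuine \emph{order zero} map into $A_\infty\cap A'$ rather than merely an approximately order zero, approximately central sequence. For this I would follow the Matui--Sato scheme: reduce the verification of the order zero relations and the centrality to an excision problem, apply \ref{LMS31} to approximate the relevant c.p.c.\ maps by finite sums of ``$\omega$-compressions'', then apply \ref{PMS22} to excise each such map along the small central sequences $(e_n)$ (coming from $d_\tau(\phi_n(e_A))\to 0$, so $\sup_\tau\tau(e_n)\to 0$) and $(f_n)$ (coming from $\psi_n(e_A)$, whose powers have trace bounded below, exactly the hypothesis \eqref{DMS21-1}). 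The strict comparison hypothesis needed in \ref{LMS24}, \ref{LMS25}, \ref{PMS22} holds for $A\in\mathcal D$ with continuous scale by 13.4 of \cite{GLp1}, and amenability is assumed throughout; simplicity and non-elementarity (needed for \ref{LMS31} and Voiculescu's theorem) hold since stably projectionless simple $\mathcal D$-algebras are non-elementary. Passing to the sequence algebra and using a diagonal argument over the data $(\varepsilon_n,\mathcal F_n)$ then yields the desired $\psi\colon M_k\to A_\infty\cap A'$ with $c_n=\psi(e)$ satisfying $\lim_n\sup_\tau|\tau(c_n^m)-1/k|=0$ for every $m$. I would present the trace estimate carefully but treat the excision/order-zero bootstrap as a direct transcription of \cite{MS}, since the excerpt has already done the work of porting the needed lemmas to the non-unital case.
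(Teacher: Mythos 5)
Your first two paragraphs are essentially the paper's argument: use the divisibility built into membership in $\mathcal D$ (14.5 of \cite{GLp1} in the $\mathcal D_0$ case, i.e.\ the $k$-fold diagonal form of $\psi_n$ with image in $M_k(D_n)$), note that continuous scale makes $T(A)$ compact and that $d_\tau(\phi_n(e_A))\to 0$ uniformly, and conclude that suitably chosen elements $c_n$ from an approximate identity of the $D_n$-corner satisfy $\sup_\tau|\tau(c_n^m)-1/k|\to 0$. Up to that point you are on the paper's track.

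The third paragraph, however, introduces a step that is both unnecessary and would not work as described. There is no ``approximately order zero'' map that needs to be upgraded: the decomposition gives you, at each finite stage $n$, genuine matrix data inside $A$ --- orthogonal positive elements $e_{1,j_n,n,i}$ ($i=1,\dots,k$) together with elements $w_{i,n}$ satisfying $w_{i,n}^*w_{i,n}=e_{1,j_n,n,1}$ and $w_{i,n}w_{i,n}^*=e_{1,j_n,n,i}$ exactly, and since $A$ is stably projectionless the C*-subalgebra they generate is isomorphic to the cone $C_0((0,1])\otimes M_k$. This is already an honest order zero map $M_k\to A$ for each $n$; approximate centrality of the sequences $(e_{1,j_n,n,i})_n$, $(w_{i,n})_n$ then gives an exact order zero map into $A_\infty\cap A'$ after passing to the sequence algebra. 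By contrast, the excision lemmas \ref{LMS24}, \ref{LMS25}, \ref{PMS22}, \ref{LMS31} are designed to produce the elements $s_n$ of property (SI); they do not manufacture the order zero (cone) relations for a map out of $M_k$, and in this paper the relevant non-unital analogue of Matui--Sato's Lemma 3.4 is itself obtained by iterating the construction of the present lemma, so leaning on that circle of results would be circular. A second, smaller gap: your reduction to $\mathcal D^d$ via \ref{DD1}/\ref{TD0=D} uses $K_0(A)=\ker\rho_A$, which is not assumed here; the lemma is stated for all separable $A\in\mathcal D$ with continuous scale. For general $A\in\mathcal D$ the definition only yields $\psi_n\colon A\to D_n\in\mathcal C_0'$ without the $k$-fold diagonal shape, and the paper closes this case by observing (as in 14.2--14.3 of \cite{GLp1}) that $D_n$ may be taken $\mathcal Z$-stable, so the $k$ orthogonal, mutually equivalent pieces can still be produced inside $D_n$; your proposal needs some such argument to cover the general case.
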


\begin{proof}
This proof can be extracted  from the proof of  {{10.4 of \cite{eglnp1}.}}
First keep in mind, by  {{9.4 of \cite{eglnp1},}}
$A$ has strict comparison for positive elements.
In the case that $A\in {\cal D}_0,$ this directly follows from
{{10.7 of \cite{eglnp1}.}}  In this case,  {{by 10.7 of \cite{eglnp1},}}
there are two sequences of \SCA s
$A_{0,n},$ $M_k(D_n)$ of $A,$ two sequences
of \cpc s $\phi_n^{(0)}: A\to A_{0,n}$  and
$\phi_n^{(1)}: A\to D_n\in {\cal C}_0^{0'}$ with $M_k(D_n)\perp A_{0,m}$ satisfy the following:
\beq\label{TDappdiv-1}
&&\lim_{n\to\infty}\|\phi_n^{(i)}(ab)-\phi_n^{(i)}(a)\phi_n^{(i)}(b)\|=0\rforal a,\, b\in A, \,\,i=0,1,\\\label{TDappdiv-1+}
&&\vspace{-0.1in}\hspace{-0.2in}\lim_{n\to\infty}\|a-(\phi_n^{(0)}(a)\oplus \diag(\overbrace{\phi_n^{(1)}(a),\phi_n^{(1)}(a),...,\phi_n^{(1)}(a)}^k)\|=0\rforal a\in A,\\\label{TDappdiv-1+2}
&&\lim_{n\to\infty} \sup_{\tau\in T(A)}d_\tau(c_n)=0,\\\label{TDappdiv-2}
&&\tau(f_{1/4}(\psi_n^{(1)}(a_0)))\ge d \rforal \tau\in T(D_n)
\eneq
 and $\phi_n^{(1)}(a_0)$ is a strictly positive element in $D_n,$
 where $c_n$ is a strictly positive element of $A_{0,n}$ and $1>d>0.$
It is easy to see (see the proof of  9.1 of \cite{eglnp1})
 that
\vspace{-0.12in}\beq\label{TDappdiv-6}
\hspace{-0.2in}\lim_{n\to\infty}\sup\{|\tau(a)-\tau\circ \diag(\overbrace{\phi_n^{(1)}(a), \phi_n^{(1)}(a),...,\phi_n^{(1)}(a)}^k)|: \tau\in T(A)\}=0\rforal a\in A.
\eneq
Let $e_{0,n}$ and $e_{1,n}$ be approximate identities for $A_{0,n}$ and $D_n,$ respectively.
Define $e_{j,l, n}=f_{1/2l}(e_{j,n}),$ $j=0,1,$  $l\in \N.$
Then  $\{e_{0,l,n}\}_l$  and $\{e_{1,l,n}\}_l$ are approximate identities  for $A_{0,n}$ and $D_n,$ respectively.
Define
${\bar e}_{1,l,n}=\diag(\overbrace{e_{1,m,n}, e_{1,m,n},...,e_{1,m,n}}^k).$
Put $E_{l,n}=e_{0,l,n}+{\bar e}_{1,l,n}.$
Then
since $T(A)$ is compact, as we assume $A$ has continuous scale,
$\lim_{l\to\infty}\sup_{\tau\in T(A)}\tau(E_{m,n})=1.$

Therefore,  by \eqref{TDappdiv-1+2},  it is easy to   choose  a subsequence
$j_n$
such that
\beq
\lim_{n\to\infty} \sup_{\tau\in T(A)}| \tau(e_{1,j_n,n}^m)-1/k|=0\rforal m\in \N,
\eneq
and by \eqref{TDappdiv-1+}, $\{e_{1,j_n,n}\}$ is a central sequence.
Note that we identify $e_{1,j_n,n}$ with\\ $\diag(e_{1,j_n,n},\overbrace{0,...,0}^{k-1})\subset M_k(D_n).$
Put $e_{1,j_n,n,i}=\diag(\overbrace{0,...,0,}^{i-1}, e_{1,j_n,n,i},0,...,0),$ $i=1,2,...,k.$
There are $w_{i,n}\in M_k(D_n)$ such that
$w_{i,n}^*w_{i,n}=e_{1,j_n,n,1}$ and $w_{i,n}w_{i,n}^*=e_{1,j_n,n,i},$ $i=2,3,...,k.$
Since $A$ is stably projectionless, the \SCA\, generated by $e_{1,j_n,n,i}$ and $w_{i,n}$
is isomorphic to $C_0(C(0,1], M_k).$ Note $\{w_{i,n}\}$ can be chosen to be central (by \eqref{TDappdiv-1} and
\eqref{TDappdiv-1+}.
Put $c_n=e_{1,j_n,n}.$ We obtain a \cpc $\psi: M_k\to A_{\infty}\cap A'.$

In the case that $A\in {\cal D},$  $M_k(D_n)$ is replaced by $D_n$ and \eqref{TDappdiv-1+} is replaced by
\beq\label{TDappdiv-1+22}
\lim_{n\to\infty}\|a-\diag(\phi_n^{(0)}(a), \diag(\phi_n^{(1)}(a))\|=0\rforal a\in A.
\eneq
But, as in the proof of {{10.4 of \cite{eglnp1},}}
the algebra $D$ in that proof is ${\cal Z}$-stable.
Therefore, in the proof of
{{10.2 of \cite{eglnp1},}} one has that  {{(as (e.10.6) there)}}
\vspace{-0.1in}\beq
\|[\phi_{n,m}(x), y]\|<\ep/16K^2\rforal x\in {\cal F}
\eneq
and $y\in \{d''^{1/2}, d'',  v'', e_j'', w_j'', j=1,2,...,K\}.$
Note that one can choose $K=nk$ and
using $n$ copies of $e_j''$ and $w_j'',$ the same argument above also produces
the \cpc map $\phi$ from $M_k.$
\end{proof}

\begin{lem}\label{LMS2to3}
Let $A$ be a separable amenable simple \CA\, in ${\cal D}$  with continuous scale.
Then every completely positive linear map $\phi: A\to A$ can be excised by small central sequences.
\end{lem}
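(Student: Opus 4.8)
The plan is to follow the Matui--Sato scheme exactly as in Lemma 2.2 through Proposition 3.2 of \cite{MS}, replacing each unital ingredient by the non-unital analogues already established in this appendix. First I would reduce to the case that $\phi$ has a special algebraic form: by \ref{LMS31}, any \cpc\ $\phi: A\to A$ is approximated point-wise in norm by maps of the form $\psi(a)=\sum_{l=1}^N\sum_{i,j=1}^N\omega(d_i^*ad_j)c_{l,i}^*c_{l,j}$ for a fixed non-zero pure state $\omega$ of $A$ (note $A\in{\cal D}$ is simple and non-elementary, so \ref{LMS31} applies). A finite direct-sum argument reduces the excision property for such a $\psi$ to the excision property for each summand $a\mapsto\sum_{i,j=1}^N\omega(d_i^*ad_j)c_i^*c_j$, which is precisely the statement of \ref{PMS22}. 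Finally, I would check that the class of \cpc s excisable by small central sequences is closed under point-wise norm limits --- given $\phi_k\to\phi$ point-wise with each $\phi_k$ excisable and the norm bounds $\|s_n^{(k)}\|\le\|\phi_k\|^{1/2}$ available, a diagonal sequence over $k$ and $n$ (combined with the central sequence hypotheses \eqref{DMS21-1}) produces the required $s_n$ for $\phi$ with $\|s_n\|\le\|\phi\|^{1/2}$; this is the routine limiting argument from \cite{MS}.

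The inputs I would invoke are: \ref{PMS22} (the key excision statement, which in turn rests on \ref{LMS25}, \ref{LMS24}, Proposition 2.2 of \cite{AAP}, and Lemma 2.3 of \cite{MS}), \ref{LMS31}, and the fact from 13.4 of \cite{GLp1} that $A\in{\cal D}$ with continuous scale has strict comparison for positive elements and that $T(A)$ is compact (9.3 of \cite{GLp1}), so that all the traces appearing in \ref{PMS22} and \ref{LMS24} make sense and $T(A)$ being compact allows uniform-over-$T(A)$ estimates. I would spell out that the hypotheses of \ref{PMS22} --- separable, amenable, simple, $T(A)\neq\emptyset$, continuous scale, strict comparison --- are all satisfied by $A$ here: amenability and separability are assumed, simplicity and $T(A)\neq\emptyset$ follow since $A\in{\cal D}$ is a non-unital simple \CA\ with continuous scale (which forces $T(A)$ compact and non-empty), and strict comparison is 13.4 of \cite{GLp1}.

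The main obstacle --- or rather the only genuine point requiring care beyond citing \cite{MS} --- is the bookkeeping of the norm bound $\|s_n\|\le\|\phi\|^{1/2}$ through both the reduction steps and the limiting argument, since in the non-unital setting one cannot simply cut down by a unit. This is handled exactly as at the end of the proof of \ref{PMS22}: one replaces $s_n$ by $E_ns_n$ for a suitable element $E_n$ of an approximate identity of $A$, which costs only an arbitrarily small error in \eqref{MS22-1} while restoring $\|s_n\|\le\|\phi\|^{1/2}$. For the direct-sum decomposition one notes $\|\sum_l\phi_l\|$ may be estimated summand by summand, and for the point-wise limit one absorbs the discrepancy $\|\phi_k\|-\|\phi\|\to 0$ into the $E_n$-truncation as well. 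Once these norm estimates are organized, the proof is a verbatim transcription of the proof of Proposition 3.2 of \cite{MS}, so I would keep it to a short paragraph rather than reproducing their argument in full.
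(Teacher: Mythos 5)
Your use of \ref{LMS31} to reduce to maps of the form $\psi(a)=\sum_{l=1}^N\phi_l(a)$ with $\phi_l(a)=\sum_{i,j=1}^N\omega(d_i^*ad_j)c_{l,i}^*c_{l,j}$, and the remark that excisability passes to point-wise norm limits, are both fine and agree with the paper. The genuine gap is the sentence ``a finite direct-sum argument reduces the excision property for such a $\psi$ to the excision property for each summand.'' That step is not routine and is in fact the heart of the lemma. If you apply \ref{PMS22} to each $\phi_l$ with the same pair $(\{e_n\}_n,\{f_n\}_n)$ from \eqref{DMS21-1}, you get $s_{l,n}$ with $s_{l,n}^*as_{l,n}\approx\phi_l(a)e_n$ and $f_ns_{l,n}\approx s_{l,n}$, but for $s_n=\sum_{l}s_{l,n}$ the cross-terms $s_{l,n}^*as_{l',n}$ with $l\neq l'$ have no reason to vanish, so $s_n^*as_n$ need not be close to $\phi(a)e_n$. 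Nor can you absorb the $l$-sum into a larger index set so as to quote \ref{PMS22} directly: because of the Kronecker delta in $l$, $\psi$ is not of the single-block form $\sum_{i,j}\omega(\tilde d_i^*a\tilde d_j)\tilde c_i^*\tilde c_j$. The norm bookkeeping you single out as the main obstacle is comparatively minor.

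The paper's proof addresses exactly this point. It first produces, using the non-unital analogue of Lemma 3.4 of \cite{MS} (proved by repeating the construction of \ref{LMS33} inside $\overline{f_nAf_n}$ --- this is precisely where the hypothesis $A\in{\cal D}$, and not merely strict comparison plus continuous scale, enters this lemma), central sequences $\{f_{l,n}\}_n$, $l=1,\dots,N$, of positive contractions which are mutually orthogonal in $A_\infty\cap A'$, satisfy $\{f_nf_{l,n}\}_n=\{f_{l,n}\}_n$, and still have $\lim_m\liminf_n\inf_{\tau\in T(A)}\tau(f_{l,n}^m)>0$. Then \ref{PMS22} is applied to $\phi_l$ with the cut-down pair $(\{e_n\}_n,\{f_{l,n}\}_n)$, so that $f_{l,n}s_{l,n}\approx s_{l,n}$; the orthogonality and centrality of the $f_{l,n}$ then give $\|s_{l,n}^*as_{l',n}\|=\|s_{l,n}^*f_{l,n}af_{l',n}s_{l',n}\|+o(1)\to 0$ for $l\neq l'$, which is what makes $s_n=\sum_l s_{l,n}$ excise $\phi=\sum_l\phi_l$ (and also gives $f_ns_n\approx s_n$ via $f_nf_{l,n}\approx f_{l,n}$). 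Since your proposal never invokes \ref{LMS33} or any divisibility property of the central sequence algebra, the argument as written does not go through; you need to add this orthogonal splitting of $\{f_n\}_n$ before summing over $l$.
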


\begin{proof}
Let $\phi: A\to A$ be a \cpc\, (so we assume $\|\phi\|=1$ \wilog). Let $\{e_n\}_n$ and $\{f_n\}_n$ be as in \ref{DMS21}.
By \ref{LMS24}, we may assume that there exists a pure state $\omega$ of $A$   and
$c_{l,i} d_i\in A,$ $l,i=1,2,...,N,$ such that
\beq
\phi(a)=\sum_{l=1}^N\sum_{i,j=1}^N\omega(d_i^*ad_j)c_{l,i}^*c_{l,j}\rforal a\in A.
\eneq
Set $\phi_l(a)=\sum_{i,j=1}^N\omega(d_i^*ad_j)c_{l,i}^*c_{l,j}$ for all $a\in A,$ $l=1,2,...,N.$
Thus $\phi=\sum_{l=1}^N\phi_l.$
Note that Lemma 3.4 of \cite{MS} holds for non-unital case, in particular, holds
for the case $A\in {\cal D}$ which can also be directly proved by repeatedly using the construction
in \ref{LMS33} in $\overline{f_nAf_n}.$  Therefore we also have a central
sequence $\{f_{l,n}\}_n,$ $l=1,2,...,N,$ of positive contractions in $A$ such that
$\{f_nf_{l,n}\}_n=\{f_{l,n}\},$ $\{f_{l,n}f_{l',n}\}_n=0,$ $l\not=l',$ $l=1,2,...,N,$ in $A_{\infty}\cap A',$
and
\beq
\lim_{m\to\infty}\limsup_{n\to\infty}\inf_{\tau\in T(A)}\tau(f_{l,n}^m)>0.
\eneq
Applying  \ref{PMS22} to $\phi_l,$  $\{e_n\}_n$ and $\{f_{l,n}\}_n,$  we obtain  a sequence
$\{s_{l,n}\}_n$ in $A^{\bf 1}$ such that
\vspace{-0.08in}\beq
\lim_{n\to\infty}\|s_{l,n}^*as_{l,n}-\phi_l(a)e_n\|=0\andeqn
\lim_{n\to\infty}\|f_ns_{l,n}-s_{l,n}\|=0.
\eneq
Put $s_n=\sum_{l=1}^Ns_{l,n}.$  One estimates that (recall that $\|s_{l,n}\|\le  1$)
\vspace{-0.1in}\beq\nonumber
\|f_ns_n-s_n\| &\le & \sum_{l=1}^N\|f_ns_{l,n}-s_{l,n}\|\\\nonumber
\vspace{-0.08in} &\le & \sum_{l=1}^N(\|f_ns_{l,n}-f_nf_{l,n}s_{l,n}\|+\|f_nf_{l,n}s_{l,n}-f_{l,n}s_{l.n}\|+\|f_{l,n}s_{l,n}-s_{l,n}\|)\\\nonumber
\vspace{-0.08in} &\le & \sum_{l=1}^N(\|f_n\|\|s_{l,n}-f_{l,n}s_{l,n}\|+\|f_nf_{l,n}-f_{l,n}\|\|s_{l,n}\|+\|f_{l,n}s_{l,n}-s_{l,n}\|)\to 0,
\eneq
as $n\to\infty.$
If $l\not=l',$ then, since $\{f_{l,n}\}_n$ is central and $\{f_{l,n}f_{l',n}\}_n=0$ in $A_{\infty},$
\beq
\lim_{n\to\infty}\|s_{l,n}^*as_{l',n}\|=\lim_{n\to\infty}\|s_{l,n}^*f_{l,n}af_{l',n}s_{l.n}\|=0.
\eneq
Therefore, for all $a\in A,$
\vspace{-0.12in}\beq
\lim_{n\to\infty}\|s_n^*as_n-\phi(a)e_n\|=\lim_{n\to\infty}\|\sum_{l=1}^Ns_{l,n}^*as_{l,n}-\phi_l(a)e_n\|=0.
\eneq
\end{proof}

\begin{df}[cf. 4.1 of \cite{MS}]\label{DMS41perp}
Let $A$ be a separable \CA\, with $T(A)\not=\emptyset$ and
with $T(A)$ compact.  We say $A$ has property (SI) if for any central sequence
$\{e_n\}_n$ and $\{f_n\}_n$ which satisfy \eqref{DMS21-1}, there exists
a central sequence $\{s_n\}_n$ in $A$  such that
\beq
\lim_{n\to\infty}\|f_ns_n-s_n\|=0\tand \{s_n^*s_n\}_n-\{e_n\}_n\in A^{\perp},
\eneq
where $A^{\perp}=\{\{b_n\}_n\in A_{\infty}: \{b_n\}_nA=A\{b_n\}_n=0\}.$
\end{df}

\begin{lem}\label{LMS4to1}
Let $A$ be a separable amenable \CA\, in ${\cal D}$ with continuous scale.
Then $A$ has (SI).
\end{lem}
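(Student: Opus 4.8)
\textbf{Plan of proof for Lemma \ref{LMS4to1}.} The statement asserts that every separable amenable $A\in{\cal D}$ with continuous scale has property (SI), and the proof is exactly the non-unital version of Matui--Sato's original argument (their Proposition~4.8 / the $W$-trace reformulation), so I would follow their steps in order, quoting the pieces already established in this appendix. First I would recall that, by 13.4 of \cite{GLp1}, $A$ has strict comparison for positive elements, and that since $A$ has continuous scale $T(A)$ is compact (9.3 of \cite{GLp1}), so that all the hypotheses needed in \ref{DMS21}, \ref{LMS24}, \ref{LMS25}, \ref{PMS22}, \ref{LMS2to3} are in force. Fix central sequences $(e_n)_n$ and $(f_n)_n$ of positive contractions in $A$ satisfying \eqref{DMS21-1}.

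The heart of the argument is to feed these two sequences through the excision machinery of \ref{LMS2to3}. Because $A$ is amenable and simple, one can produce a completely positive contractive map $\phi\colon A\to A$ (in fact one may take $\phi$ to be, up to small perturbation, of the finite-rank form in \ref{LMS31}, or simply invoke \ref{LMS2to3} directly for an arbitrary c.p.c.\ map) and apply \ref{LMS2to3}: $\phi$ can be excised by small central sequences, i.e.\ there exist $s_n\in A$ with $\|s_n\|\le 1$ such that
\[
\lim_{n\to\infty}\|s_n^*as_n-\phi(a)e_n\|=0\ \ (a\in A),\qquad \lim_{n\to\infty}\|f_ns_n-s_n\|=0.
\]
Choosing $\phi$ appropriately — concretely, approximating the identity map of $A$ locally by such c.p.c.\ maps and then using a reindexing/diagonal argument over an increasing exhaustion of $A$ by finite subsets together with a decreasing sequence of tolerances — one arranges that the $s_n$ become a central sequence and that $s_n^*s_n$ behaves like $e_n$ modulo $A^{\perp}$: that is, $\{s_n^*as_n\}_n=\{a e_n\}_n$ in $A_\infty$ for every $a\in A$, which is precisely the condition $\{s_n^*s_n\}_n-\{e_n\}_n\in A^{\perp}$ once one also checks $\lim_n\|[s_n,a]\|=0$. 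The normalization $\|s_n\|\le\|\phi\|^{1/2}$ from \ref{PMS22}/\ref{LMS2to3} keeps the sequence bounded, and the relation $\lim_n\|f_ns_n-s_n\|=0$ is carried through verbatim.

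The step I expect to be the main obstacle — and the one requiring the most care — is the passage from ``$\phi$ can be excised'' (a statement about a single c.p.c.\ map and a pair of sequences) to ``$\{s_n^*s_n\}_n\equiv\{e_n\}_n \bmod A^{\perp}$ with $(s_n)_n$ central'', i.e.\ reconstructing property (SI) as in 4.1 of \cite{MS} from the excision property. This is the reindexing argument: one must run \ref{LMS2to3} along an increasing sequence of finite subsets ${\cal F}_k\subset A$ with $\bigcup_k{\cal F}_k$ dense, with tolerances $\varepsilon_k\to 0$, obtaining for each $k$ an index $n_k$ and an element $s_{n_k}$ that excises the identity-like map on ${\cal F}_k$ within $\varepsilon_k$ against $(e_n)_n,(f_n)_n$; then a standard $\varepsilon$-test/diagonal selection produces the desired central sequence. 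The amenability of $A$ enters (via \ref{LMS31} and the completely positive approximation property) to guarantee that the identity map is a point-norm limit of the finite-rank c.p.c.\ maps that \ref{PMS22} can handle. Everything else — strict comparison, compactness of $T(A)$, the orthogonalization lemma 3.4 of \cite{MS} used inside \ref{LMS2to3}, and the order-zero maps from $M_k$ supplied by \ref{LMS33} — is already in place, so the remaining work is bookkeeping rather than new ideas.
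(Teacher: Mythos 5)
Your overall route is the paper's route—apply the excision result \ref{LMS2to3} to (essentially) $\mathrm{id}_A$ and convert the excising sequence into the sequence required by (SI)—but the step you flag as the "main obstacle" is not where the difficulty lies, and the step that actually carries the non-unital difficulty is missing. First, no reindexing/diagonal argument over finite subsets is needed to pass from "excision" to a statement about all of $A$: by Definition \ref{DMS21} and Lemma \ref{LMS2to3}, the single sequence $(s_n')_n$ already satisfies $\lim_n\|(s_n')^*as_n'-ae_n\|=0$ for \emph{every} $a\in A$ (the approximation of an arbitrary c.p.c.\ map by the finite-rank maps of \ref{LMS31} is internal to the proof of \ref{LMS2to3}), so your exhaustion-by-finite-subsets scheme reproves something already available.

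The genuine gap is the passage from $(s_n')^*as_n'\approx ae_n$ to the two conditions of (SI): $\{s_n^*s_n\}_n-\{e_n\}_n\in A^\perp$ and centrality of $(s_n)_n$. In Matui--Sato's unital argument one simply evaluates at $a=1$ to get $\|s_n^*s_n-e_n\|\to0$; here there is no unit, excision controls $s^*as$ but not $as^*s$, and your claim that the excision identity "is precisely" the $A^\perp$ condition "once one also checks $\lim_n\|[s_n,a]\|=0$" is circular, because the only available route to that centrality uses the very relation $a(s_n^*s_n-e_n)\to0$ you are trying to establish. The paper's fix, which your plan does not contain, is the approximate-identity cut-down: fix an approximate identity $(d_n)$, pass to a subsequence so that $\|(s_n')^*d_ns_n'-d_ne_n\|\to0$ and $\|[f_n,d_n^{1/2}]\|\to0$, and set $s_n=d_n^{1/2}s_n'$. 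Then $s_n^*s_n=(s_n')^*d_ns_n'$ is itself of the excised form, giving $\|s_n^*s_n-d_ne_n\|\to0$, hence $a(s_n^*s_n-e_n)\to0$ for all $a$ (so $\{s_n^*s_n\}_n-\{e_n\}_n\in A^\perp$), while $\|f_ns_n-s_n\|\to0$ is preserved; centrality of $(s_n)_n$ then follows from the commutator-norm computation $\|[s_n,a]\|^2\approx\|a^*(s_n^*s_n-e_n)a\|\to0$. Without this device (or an equivalent substitute for "$a=1$"), your reindexing produces nothing beyond what \ref{LMS2to3} already gives, and the conclusion of the lemma does not follow.
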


\begin{proof}
Let $\{e_n\}_n$ and $\{f_n\}_n$ be as in \eqref{DMS21-1}.
Then, by \ref{LMS2to3}, ${\rm id}_A$ can be excised in small central  sequences.
Thus there is a sequence $s_n'\in A^{\bf 1}$ such that
$\lim_{n\to\infty}\|(s_n')^*a(s_n')-ae_n\|=0$ for all $a\in A$ and $\lim_{n\to\infty}\|f_ns_n-s_n\|=0.$
Fix an approximate identity $\{d_n\}$ of $A.$
By passing  to $s_{n_k}', e_{n_k}'$ and $f_{n_k},$ if necessary, we may assume
further that
\beq
\lim_{n\to\infty}\|(s_n')^*d_n(s_n')-d_ne_n\|=0\andeqn \lim_{n\to\infty}\|f_nd_n^{1/2}-d_n^{1/2}f_n\|=0.
\eneq
Define $s_n=d_n^{1/2}s_n',$ $n=1,2,....$
Then
\beq
\lim_{n\to\infty}\|s_n^*s_n-d_ne_n\|=0\andeqn
\lim_{n\to\infty}\|f_ns_n-s_n\|=\lim_{n\to\infty}\|d_n^{1/2}(f_ns_n'-s_n')\|=0.
\eneq
Moreover, for any $a\in A,$ since $\{d_n\}$ is an approximate identity for $A,$
\beq\label{LMS4to1-10}
\lim_{n\to\infty} \|a(s_n^*s_n)-ae_n)\|\le \lim_{n\to\infty}\|a(s_n')^*d_n(s_n')-ad_ne_n\|+
\lim_{n\to\infty}\|ad_ne_n-ae_n\|=0.
\eneq
It follows that $\{s_n^*s_n\}_n-\{e_n\}_n\in A^{\perp}.$
Moreover, for $a\in A,$ by \eqref{LMS4to1-10},
\beq
\lim_{n\to\infty}\|[s_n,\, a]\|^2=\lim_{n\to\infty}\|as_n^*s_na-a^*s_n^*as_n-s_n^*a^*s_na+s_n^*a^*as_n\|\\
=\lim_{n\to\infty}\|as_n^*s_na-a^*e_na\|=\lim_{n\to\infty}\|a(s_n^*s_n-e_n)a\|=0.
\eneq
Therefore $\{s_n\}_n$ is a central sequence.
\end{proof}

\begin{thm}\label{TMST}
Every  separable amenable \CA\, in ${\cal D}$  is ${\cal Z}$-stable.
\end{thm}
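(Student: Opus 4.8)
The statement to be proved is that every separable amenable \CA\ in ${\cal D}$ is ${\cal Z}$-stable. The plan is to follow the strategy of Matui and Sato (\cite{MS}): combine property (SI) with the existence of a suitable order zero map from matrix algebras into the central sequence algebra, and feed both into the standard characterization of ${\cal Z}$-stability. First I would reduce to the case of continuous scale: given $A\in {\cal D}$, pick $e\in P(A)_+$ with $\|e\|=1$ such that $\overline{eAe}$ has continuous scale (this is possible by the results on continuous scale quoted in the paper, e.g.\ \cite{Lncs2}), prove $\overline{eAe}\otimes {\cal Z}\cong \overline{eAe}$, and then tensor with ${\cal K}$ and use Brown's stable isomorphism theorem together with the fact that ${\cal Z}$-stability passes to hereditary subalgebras and stabilizations of separable \CA s. So it suffices to treat $A\in {\cal D}$ with continuous scale, where $T(A)$ is compact (by 9.3 of \cite{GLp1}).

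\textbf{Key steps.} With continuous scale in hand, the proof assembles the following ingredients, all of which are already available in the excerpt's appendix. Step one: $A$ has property (SI) in the sense of \ref{DMS41perp} --- this is precisely \ref{LMS4to1}, which in turn rests on \ref{LMS2to3} (every \cpc\ $A\to A$ is excised by small central sequences), which uses \ref{PMS22}, \ref{LMS31}, \ref{LMS25} and \ref{LMS24}. Step two: for each $k\ge 1$ there is an order zero \cpc\ $\psi\colon M_k\to A_\infty\cap A'$ with $\tau(\psi(e)^m)\to 1/k$ uniformly in $\tau\in T(A)$ for all $m$, where $e\in M_k$ is a rank one projection --- this is \ref{LMS33}. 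Step three: combine these two facts exactly as in Matui--Sato's proof that (SI) plus the order zero map yields, for each $k$, a unital embedding of the dimension drop algebra (equivalently, the generators relations guaranteeing ${\cal Z}_{k,k+1}$-stability) into $A_\infty\cap A'$. Concretely, one uses the order zero map to produce, in $A_\infty\cap A'$, approximately central positive contractions $c_n$ with the right trace values, then uses (SI) to correct the ``small'' discrepancy (the element $1-\sum$ of the pieces, which is small in trace) into an honest relation modulo $A^\perp$, obtaining the required unitary/order zero data. Step four: invoke the standard result (Toms--Winter, as used in \cite{MS}) that a separable simple nuclear \CA\ admitting, for every $k$, such an approximately central embedding is ${\cal Z}$-stable; since $A$ is separable, amenable (nuclear), and simple, this applies and gives $A\otimes {\cal Z}\cong A$.

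\textbf{Main obstacle.} Nearly every structural lemma needed has already been stated and proved in the appendix of the excerpt (the non-unital adaptations of \cite{MS}), so the remaining work is the bookkeeping of the final assembly. The one genuinely delicate point is the non-unital reduction and the handling of the central sequence algebra $A_\infty\cap A'$ in the absence of a unit: one must be careful that ``approximately central'' and ``order zero into $A_\infty\cap A'$'' behave correctly, that the relevant sequences can be taken with norms controlled, and that $A^\perp$ (the annihilator appearing in \ref{DMS41perp}) plays the role that the ideal $\{0\}$ plays in the unital Matui--Sato argument. I expect the hardest step to be verifying that property (SI) together with \ref{LMS33} actually produces the order zero embeddings of the prime dimension drop algebras with the precise near-centrality and trace estimates required by the Toms--Winter theorem --- i.e.\ carrying the Matui--Sato matching argument (their Sections 4--5) through verbatim in the non-unital, continuous-scale setting, checking at each use of an approximate identity that no boundary term obstructs the estimates. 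Once that is done, ${\cal Z}$-stability of $A$ (hence of the original algebra in ${\cal D}$ after undoing the hereditary/stabilization reduction) follows immediately.
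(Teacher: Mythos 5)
Your proposal follows essentially the same route as the paper's own proof: reduce to the continuous-scale case via a hereditary subalgebra, use \ref{LMS33} to get the order zero map $M_k\to A_\infty\cap A'$ with the trace condition, use property (SI) from \ref{LMS4to1} to produce $s$ with $s^*s+\Phi(1_{M_k})=1$ and $\Phi(e)s=s$ in $A_\infty\cap A'/A^\perp$, and then invoke the Matui--Sato argument (in its non-unital form, as in Propositions 5.3 and 5.6 of \cite{aTz}) to conclude $A\otimes{\cal Z}\cong A$. The point you flag as the remaining obstacle is exactly what the paper disposes of by that citation, so the plan is correct as it stands.
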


\begin{proof}
Let $A\in {\cal D}.$  It suffices to show
that a non-zero hereditary \SCA\, of $A$ is ${\cal Z}$-stable.
Therefore,  by  {{11.7 of \cite{eglnp1},}}
we may assume that  $A$ has continuous scale.

Fix any integer $k>1.$ By Lemma \ref{LMS33}, we obtain a central sequence
$\{c_{i,n}\}_n$ in $A,$ $i=1,2,...,k,$  such that $\{c_{i,n}c_{j,n}^*\}_n=\dt_{i,j}\{c_{1,n}^2\}_n$
in $A_{\infty}$ and
\beq\label{TMST-1}
\lim_{n\to\infty}\sup_{\tau\in T(A)}|\tau(c_{1,n}^m)-1/k|=0\rforal m\in \N.
\eneq
Thus we obtain an order zero \cpc\, $\phi: M_k\to A_{\infty}\cap A'$
such that $\phi(e)=\{c_{1,n}\}_n$ for a minimal projection $e\in M_k.$
Let $\{d_n\}$ be an approximate identity for $A.$
Then $\{d_n\}_n$ is a central sequence.
Then $\overline{\{d_n\}_n}$ is the identity of $A_{\infty}\cap A'/A^{\perp},$
where $\overline{\{d_n\}_n}$ is the image of $\{d_n\}_n$ in $A_{\infty}\cap A'/A^{\perp}.$
We may choose such $\{d_n\}$ so that
$\{d_n-\sum_{i=1}^Nc_{i,n}^*c_{i,n}\}_n\in (A_{\infty})_+.$
Note that, since $A$ has continuous scale,
$\lim_{n\to\infty} \sup_{\tau\in T(A)}\tau(d_n)=1.$
Let $\{e_n\}$ be a  central sequence of positive contraction such that
$\{e_n\}_n=\{d_n-\sum_{i=1}^kc_{i,n}^*c_{i,n}\}_n.$
As in \ref{LMS33} $\{c_{i,n}\}_n$ can be chosen so that
\beq
\limsup_{n\to\infty}\sup_{\tau\in T(A)}\tau(e_n)=0
\eneq
which can also be computed directly from \eqref{TMST-1}.
Then, we also have
\beq
\lim_{m\to\infty}\liminf_{n\to\infty}\inf_{\tau\in T(A)}\tau(c_{1,n}^m)=1/k.
\eneq
By the property (SI), we obtain a central sequence $\{s_n\}$ in $A^{\bf 1}$ such
that
\beq
\{s_n^*s_n\}_n-\{e_n\}_n\in A^{\perp}\andeqn
\lim_{n\to\infty}\{c_{1,n}s_n\}_n=\{s_n\}_n \,\,\, {\rm in}\,\,\, A_{\infty}.
\eneq
Thus we obtain an order zero \cpc\, $\Phi: M_k\to A_{\infty}\cap A'/A^{\perp}$ induced
by $\phi$ and $s=\overline{\{s_n\}_n}\in A_{\infty}\cap A'/A^{\perp}$ such that,
\beq
s^*s+\Phi(1_{M_k})=1\andeqn \Phi(e)s=s\,\,\, {\rm in}\,\,\, A_{\infty}\cap A'/A^{\perp}
\eneq
This implies that $A\otimes {\cal Z}\cong A$  as in the  proof of (iv) $\Longrightarrow$ (i)
in section 4 of \cite{MS}, see also,  for example,  Proposition 5.3 and 5.6 of \cite{aTz}.

\end{proof}

\begin{rem}
More general result related to this appendix will appear elsewhere.
\end{rem}

\providecommand{\href}[2]{#2}

\end{document}